\theoremstyle{definition}
\newtheorem{nul}{}[section]
\newtheorem{dfn}[nul]{Definition}
\newtheorem{rmk}[nul]{Remark}
\newtheorem{cnstr}[nul]{Construction}
\newtheorem{cnv}[nul]{Convention}
\newtheorem{ntn}[nul]{Notation}
\newtheorem{exm}[nul]{Example}
\newtheorem{rec}[nul]{Recollection}
\newtheorem{qst}[nul]{Question}
\newtheorem*{dfn*}{Definition}
\newtheorem*{axm*}{Axiom}
\newtheorem*{ntn*}{Notation}
\newtheorem*{exm*}{Example}
\newtheorem*{exr*}{Exercise}
\newtheorem*{int*}{Intuition}
\newtheorem*{qst*}{Question}
\newtheorem*{rmk*}{Remark}
\theoremstyle{plain}
\newtheorem{thm}[nul]{Theorem}
\newtheorem{prop}[nul]{Proposition}
\newtheorem{summary}[nul]{Summary}
\newtheorem{lem}[nul]{Lemma}
\newtheorem{cor}[nul]{Corollary}
\newtheorem*{thm*}{Theorem}
\newtheorem*{prop*}{Proposition}
\newtheorem*{cor*}{Corollary}
\newtheorem*{lem*}{Lemma}
\newtheorem*{cnj*}{Conjecture}
\let\oldwidetilde\widetilde
\protected\def\widetilde{\oldwidetilde}
\DeclareMathOperator*{\colim}{\mathrm{colim}}
\DeclareMathOperator{\Map}{\mathrm{Map}}
\DeclareMathOperator{\Hom}{\mathrm{Hom}} 
\DeclareMathOperator{\End}{\mathrm{End}}
\DeclareMathOperator{\Ss}{\mathbb{S}}
\DeclareMathOperator{\F}{\mathbb{F}}
\DeclareMathOperator{\A}{\mathcal{A}}
\DeclareMathOperator{\D}{\mathcal{D}}
\DeclareMathOperator{\E}{\mathbb{E}}
\DeclareMathOperator{\sE}{\mathrm{E}}
\DeclareMathOperator{\MO}{\mathrm{MO}}
\DeclareMathOperator{\MU}{\mathrm{MU}}
\DeclareMathOperator{\bS}{\mathbb{S}}
\DeclareMathOperator{\Spec}{\mathrm{Spec}}
\DeclareMathOperator{\Pic}{\mathrm{Pic}}
\DeclareMathOperator{\Ext}{\mathrm{Ext}}
\DeclareMathOperator{\Gr}{\mathbf{Gr}}
\DeclareMathOperator{\Fil}{\mathbf{Fil}}
\DeclareMathOperator{\Fun}{\mathrm{Fun}}
\DeclareMathOperator{\Sp}{\mathrm{Sp}}
\newcommand{\BP}{\mathrm{BP}}
\newcommand{\Syn}{\mathrm{Syn}}
\newcommand{\HZ}{\mathrm{H}\mathbb{Z}}
\newcommand{\Tot}{\mathrm{Tot}}
\newcommand{\tmf}{\mathrm{tmf}}
\newcommand{\Z}{\mathbb{Z}}
\newcommand{\R}{\mathbb{R}}
\newcommand{\G}{\mathbb{G}}
\newcommand{\C}{\mathbb{C}}
\newcommand{\ta}{\text{\footnotesize {\dn t}}}
\newcommand{\SHR}{\mathcal{SH}(\mathbb{R})}
\newcommand{\SHC}{\mathcal{SH}(\mathbb{C})}
\newcommand{\SH}{\mathcal{SH}}
\newcommand{\SHgc}{\mathcal{SH}(\mathbb{R})^{\scaleto{\mathrm{\bf \hspace{-1.5pt}A\hspace{-1.8pt}T}}{3.75pt}}_{i2}}
\newcommand{\Ft}{\mathbb{F}_2}
\newcommand{\Zt}{\mathbb{Z}_2}
\newcommand{\HFt}{\mathrm{M}\mathbb{F}_2}
\newcommand{\MFp}{\mathrm{M}\mathbb{F}_p}
\newcommand{\uFt}{\underline{\mathbb{F}}_2}
\newcommand{\HZt}{\mathrm{M}\mathbb{Z}_{2}}
\newcommand{\uZt}{\underline{\mathbb{Z}}_{2}}
\newcommand{\uZ}{\underline{\mathbb{Z}}}
\newcommand{\MGL}{\mathrm{MGL}}
\newcommand{\MUR}{\mathrm{MU}_{\mathbb{R}}}
\newcommand{\Mod}{\mathrm{Mod}}
\newcommand{\piu}{\underline{\pi}}
\newcommand{\En}{\mathbb{E}_n}
\newcommand{\CC}{\mathcal{C}}
\newcommand{\CCdef}{\mathcal{C}_{\mathrm{def}}}
\newcommand{\Comod}{\mathrm{Comod}}
\newcommand{\id}{\mathrm{id}}
\newcommand{\Ab}{\mathrm{Ab}}
\newcommand{\Abh}{\mathrm{Ab}^{\heartsuit}}
\newcommand{\uAb}{\underline{\mathrm{Ab}}}
\newcommand{\uAbh}{\underline{\mathrm{Ab}}^{\heartsuit}}
\def\Groth{\mathrm{Groth}}
\def\MUR{\mathrm{MU}_{\mathbb{R}}}
\def\even{\mathrm{even}}
\def\twist{\mathrm{tw}}
\def\Mfg{\mathcal{M}_{\mathrm{fg}}}
\def\Ind{\mathrm{Ind}}
\def\QCoh{\mathrm{QCoh}}
\def\IndCoh{\mathrm{IndCoh}}
\def\SHCa{\mathcal{SH}(\mathbb{C})^{\scaleto{\mathrm{\bf \hspace{-1.5pt}A}}{3.75pt}}}
\def\SHCat{\mathcal{SH}(\mathbb{C})^{\scaleto{\mathrm{\bf \hspace{-1.5pt}A\hspace{-1.8pt}T}}{3.75pt}}_{ip}}
\def\SHk{\mathcal{SH}(k)}
\def\SHka{\mathcal{SH}(k)^{\scaleto{\mathrm{\bf \hspace{-1.5pt}A}}{3.75pt}}}
\def\SHkt{\mathcal{SH}(k)^{\scaleto{\mathrm{\bf \hspace{-1.5pt}T}}{3.75pt}}}
\def\SHkat{\mathcal{SH}(k)^{\scaleto{\mathrm{\bf \hspace{-1.5pt}A\hspace{-1.8pt}T}}{3.75pt}}}
\def\SHRa{\mathcal{SH}(\mathbb{R})^{\scaleto{\mathrm{\bf \hspace{-1.5pt}A}}{3.75pt}}}
\def\SHRt{\mathcal{SH}(\mathbb{R})^{\scaleto{\mathrm{\bf \hspace{-1.5pt}T}}{3.75pt}}}
\def\SHRat{\mathcal{SH}(\mathbb{R})^{\scaleto{\mathrm{\bf \hspace{-1.5pt}A\hspace{-1.8pt}T}}{3.75pt}}_{i2}}
\def\SHRatch{\mathcal{SH}(\mathbb{R})^{\scaleto{\mathrm{\bf \hspace{-1.5pt}A\hspace{-1.8pt}T}}{3.75pt}, \mathrm{C}-\heartsuit}_{i2}}
\def\slice{\mathrm{slice}}
\def\cb{\mathrm{cb}}
\def\diag{\mathrm{Diag}}
\def\Q{\mathbb{Q}}
\def\RP{\mathbb{RP}}
\def\DM{\mathrm{DM}}
\def\nur{\nu_{\mathbb{R}}}
\def\TMF{\mathrm{TMF}}
\def\uHom{\underline{\mathrm{Hom}}}
\def\th{\otimes-\heartsuit}
\def\ch{\mathrm{C}-\heartsuit}
\def\cn{\mathrm{C}}
\def\at{\scaleto{\mathrm{\bf \hspace{-1.5pt}A\hspace{-1.8pt}T}}{3.75pt}}
\def\att{\scaleto{\mathrm{\bf A\hspace{-1.8pt}T}}{3.75pt}}
\def\gceff{\scaleto{\mathrm{\bf A\hspace{-1.8pt}T}}{3.75pt}\mathrm{-eff}}
\def\eff{\mathrm{eff}}
\def\Sm{\textbf{Sm}}
\def\cell{\mathrm{cell}}
\def\ko{\mathrm{ko}}
\def\ku{\mathrm{ku}}
\def\triv{\mathrm{triv}}
\def\kq{\mathrm{kq}}
\def\kgl{\mathrm{kgl}}
\def\CAlg{\mathrm{CAlg}}
\def\o{\mathbbm{1}}
\def\Re{\mathrm{Re}}
\def\b{\mathrm{Be}}
\def\P{\mathbb{P}}
\def\AA{\mathbb{A}}
\DeclarePairedDelimiter\abs{\lvert}{\rvert}%
\let\oldabs\abs
\def\abs{\@ifstar{\oldabs}{\oldabs*}}
\let\oldtocsection=\tocsection
\let\oldtocsubsection=\tocsubsection
\let\oldtocsubsubsection=\tocsubsubsection
\renewcommand{\tocsection}[2]{\hspace{0em}\oldtocsection{#1}{#2}}
\renewcommand{\tocsubsection}[2]{\hspace{1em}\oldtocsubsection{#1}{#2}}
\renewcommand{\tocsubsubsection}[2]{\hspace{2em}\oldtocsubsubsection{#1}{#2}}
\newcommand{\NB}[1]{\todo[color=gray!40]{#1}}
\newcommand{\TODO}[1]{\todo[color=red]{#1}}
\newcommand{\NB}[1]{}
\newcommand{\TODO}[1]{}
\renewcommand{\todo}[1]{}
\renewcommand{\todo}[1]{}
\title{Galois reconstruction of Artin--Tate $\R$-motivic spectra}
\author{Robert Burklund}
\address{Department of Mathematics, MIT, Cambridge, MA, USA}
\email{burklund@mit.edu}
\author{Jeremy Hahn}
\address{Department of Mathematics, MIT, Cambridge, MA, USA}
\email{jhahn01@mit.edu}
\author{Andrew Senger}
\address{Department of Mathematics, Harvard University, Cambridge, MA, USA}
\email{senger@math.harvard.edu}
\begin{document}
\begin{abstract}
We explain how to reconstruct the category of Artin--Tate $\R$-motivic spectra as a deformation of the purely topological $C_2$-equivariant stable category.
The special fiber of this deformation is algebraic,
and equivalent to an appropriate category of $C_2$-equivariant sheaves on the moduli stack of formal groups.
As such, our results directly generalize the cofiber of $\tau$ philosophy that has revolutionized classical stable homotopy theory.

A key observation is that the Artin--Tate subcategory of $\R$-motivic spectra is easier to understand than the previously studied cellular subcategory.
In particular, the Artin--Tate category contains a variant of the $\tau$ map, which is a feature conspicuously absent from the cellular category.

\end{abstract}
\maketitle

\setcounter{tocdepth}{1}
\tableofcontents
\vbadness 5000


\section{Introduction} \label{sec:intro}

A striking surprise of modern computational stable homotopy theory is that the category $\SH(\C)$ of $\C$-motivic spectra, as introduced by Morel and Voevodsky \cite{MorelVoevodsky},
has found use not only in algebraic geometry, but also in the computation of stable homotopy groups of spheres, a question of purely topological origin.  In particular, the $p$-completed bi-graded homotopy groups of the unit in $\SH(\C)$ record---in a precise sense---the Adams--Novikov spectral sequence for the sphere spectrum, including all differentials and extensions.  The ultimate expression of this connection is the discovery that the \emph{cellular} subcategory of $p$-complete $\C$-motivic spectra is equivalent to a category of \emph{synthetic} Adams--Novikov spectral sequences, and in particular has a purely homotopy theoretic description without reference to algebraic geometry.


The first evidence that a large sector of the motivic category contains only topological information appears in the work of Thomason on algebraic K-theory and \'etale cohomology \cite{Thomason}. As a consequence of this work, Thomason proves that for well-behaved $\C$-schemes $X$, there is an equivalence $\mathrm{K}^{\mathrm{alg}} (X) [\beta^{-1}]^{\wedge} _{\ell} \simeq \mathrm{KU} (X(\C))^{\wedge} _{\ell}$, where $\beta$ is a certain Bott element. 
These ideas have been refined over the years, culminating in the following theorem.

\begin{thm}[\cite{VoeOpenProbs, DI, LevineComparison, LevineSlicemU, Ctau, ctauactalol, Pstragowski, cmmf}]\label{thm:C-main}\

  In $\SHC$
  there is a map $\tau : \Ss^1 \to (\G_m)_p$
  which enjoys the following properties:
  \begin{enumerate}
  \item[(1.\hspace{1em}]\hspace{-1.1em}$\mathrm{Generically\ Topological})$ The full subcategory of $\tau$-local objects in $\SHC^{cell}_p$ is equivalent to $\Sp_p$.
  \item[(2.\hspace{1em}]\hspace{-1.1em}$\mathrm{Algebraic\ Degeneration})$ The cofiber of $\tau$ (often denoted $C\tau$) is a commutative algebra, and the category of dualizable modules over $C\tau$ is equivalent to the category of $p$-completions of dualizable objects in $\QCoh(\mathcal{M}_{\mathrm{fg}})$, where $\Mfg$ is the moduli stack of formal groups.
  \item[(3.\hspace{1em}]\hspace{-1.1em}$\mathrm{Galois\ Reconstruction})$ There is a purely topological construction of $\SHC_p^{cell}$.
  \end{enumerate}
  
  We summarize this situation by saying that $\SH(\C)_p^{cell}$ is a $1$-parameter deformation of $p$-complete spectra with parameter $\tau$ and a purely algebraic special fiber.
\end{thm}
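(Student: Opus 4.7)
The plan is to establish the three parts in turn, following the chain of results summarized by the citations. The common thread is the motivic class $\tau$, which I would construct as a generator of $\pi_{1,1}((\G_m)_p^{\wedge})$ coming from the unit map of the sphere into the motivic Eilenberg--MacLane spectrum $\HZ$. Concretely, $\tau$ corresponds to a non-trivial class in $\pi_{0,-1}$ of the $p$-complete motivic sphere, detectable in motivic cohomology.

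For part (1), the strategy is to identify $\tau$-inversion with the Betti realization functor. Levine's comparison theorem gives an isomorphism between the $p$-complete classical stable stems and the bigraded motivic stable stems after $\tau$ is inverted. After extending this along the compact generator $\tau^{-1}\Ss^{0,0}$, one concludes that Betti realization induces a fully faithful, essentially surjective functor from the $\tau$-local subcategory of $\SHC_p^{cell}$ into $\Sp_p$. The essential content is packaged into a careful comparison of slice and Postnikov filtrations.

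For part (2), I would first use Gheorghe's construction to endow $C\tau$ with an $\E_\infty$-ring structure. The main step is then to identify $\mathrm{Mod}_{C\tau}$ with the Ind-completion of $\QCoh(\Mfg)^{\wedge}_p$. The key input is that $C\tau \wedge \MU$ inherits a Hopf algebroid structure whose comodule category is exactly $\QCoh(\Mfg)^{\wedge}_p$, together with the observation that the motivic Adams--Novikov spectral sequence collapses after tensoring with $C\tau$. This identification, due to Gheorghe--Wang--Xu and refined by Pstragowski, then restricts to dualizable objects on both sides and proves the statement.

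For part (3), Pstragowski's category of synthetic spectra $\mathrm{Syn}_{\MU}$, built solely from the pair $(\Sp_p, \MU)$, serves as the topological model. This category comes equipped with its own $\tau$-map, realizes $\Sp_p$ upon $\tau$-inversion, and realizes $\QCoh(\Mfg)^{\wedge}_p$ upon taking the cofiber of $\tau$. The equivalence $\mathrm{Syn}_{\MU} \simeq \SHC_p^{cell}$ is then a matching of symmetric monoidal stable $\infty$-categories respecting compact generators. The hardest step is part (3): one must match the two categories while intertwining the two $\tau$-maps, and the subtle point is verifying that the compact generator of $\mathrm{Syn}_{\MU}$ has the same bigraded mapping spectra as the motivic sphere in $\SHC_p^{cell}$. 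This cellular comparison is the content of the main theorem of Pstragowski, subsequently refined in cmmf.
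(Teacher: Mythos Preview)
This theorem is stated in the paper as a summary of results from the cited references, and the paper does not supply its own proof. The paragraph following the theorem simply attributes the various parts: generic topologicality to Dugger--Isaksen and Pstr\k{a}gowski, the $\E_\infty$-structure on $C\tau$ to Gheorghe, the identification of $C\tau$-modules to Gheorghe--Wang--Xu, and the Galois reconstructions to Pstr\k{a}gowski and to Gheorghe--Isaksen--Krause--Ricka. Your sketch is broadly consistent with those attributions and would serve as a reasonable expository outline of the literature.

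One small correction: your description of $\tau$ as ``coming from the unit map of the sphere into the motivic Eilenberg--MacLane spectrum $\HZ$'' is not the characterization the paper (or the standard references) uses. The paper characterizes $\tau \in \pi_{0,-1}^{\C}\Ss_p$ by the property that it maps to $1$ under Betti realization (citing Hu--Kriz--Ormsby), and in \Cref{sec:ta} gives an explicit construction via compatible systems of $p$-power roots of unity and the multiplication-by-$p^k$ map on $\G_m$. The element detected in motivic cohomology is also commonly called $\tau$, but that class lives in $\pi_{*,*}\HFt$ and is a priori a different object; the lift to the $p$-complete sphere is the nontrivial part.
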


Although it does not appear in the statement, understanding the algebraic cobordism spectrum, $\MGL$, is key to proving this theorem. 
The importance of $\MGL$ to the study of cellular motivic spectra goes back to Voevodsky's work on the effective slice filtration \cite{VoeOpenProbs}.
In that work, Voevosky conjectured that the effective slice filtration of the unit over an algebraically closed field 
may be described in terms of the Adams-Novikov spectral sequence. 
Levine later proved Voevodsky's conjectures \cite{LevineComparison, LevineSlicemU}.
Voevodsky's notion of ``rigid homotopy groups'' also foreshadowed \emph{algebraic degeneration}.
In this framework he and Rezk predicted that the rigid Adams spectral sequence is equivalent to the algebraic Novikov spectral sequence \cite[p. 20-21]{VoeOpenProbs}, a result later proven in different language by Gheorghe--Wang--Xu \cite[Theorem 1.17]{ctauactalol}.
The fact that the category of $p$-complete $\C$-motivic spectra is \emph{generically topological} is strongly suggested by results of Dugger--Isaksen \cite[Section 2.6]{DI}, though to the best of our knowledge the full result did not appear in print until \cite{Pstragowski}.\todo{I can't find an earlier one at least, but that could easily just be me. Piotr does beat Tom's etale rigidity paper.}
The commutative algebra structure on $C\tau$ was first constructed by Gheorghe \cite{Ctau},
and the category of modules over $C\tau$ was then identified by Gheorghe--Wang--Xu in \cite{ctauactalol}.
Finally, 
Pstragowski \cite{Pstragowski} and Gheorghe--Isaksen--Krause--Ricka \cite{cmmf}
independently provided two different \emph{Galois reconstructions} of cellular $p$-complete $\C$-motivic spectra. 


With the case of $\C$ resolved we raise the following natural question:

\begin{qst}\label{qst:main}
  To what extent can this theorem be extended to a general field $k$?
\end{qst}

As stated, this question is too imprecise to admit a definitive answer, so we begin by refining two points of ambiguity: the appropriate subcategory of $\SHk$ one should consider and the meaning of purely topological. In our study of this question we have found that the appropriate subcategory is the category of Artin--Tate motivic spectra (defined below). Notably, over $\R$, if one restricts to the further subcategory of cellular objects, it becomes significantly more difficult to construct a topological model.

\begin{dfn}
  The category of Artin--Tate motivic spectra over $k$ is the smallest stable full subcategory
  $\SHkat \subset \SHk$,
  closed under tensor products and colimits, that contains the motives of finite \'etale $k$-algebras, 
  $\P_k^1$ and $(\P_k^1)^{-1}$.
\end{dfn}

The phrase `purely topological' has a double meaning.
On the one hand it refers to the input to the construction;
it should not depend directly on the arithmetic of $k$, instead using only the absolute Galois group, $G$, together with the character $G \to \widehat{\Z}^\times$ induced by the maximal cyclotomic extension of $k$ \footnote{More specifically, the $p$-complete category should only depend on the $\Z_p^\times$ component of this map.}.
On the other hand it asks for a construction which uses only the ``standard machinery of homotopy theory.''

The authors are not the first to take up questions of this nature.
Positselski has studied the question of Galois reconstruction for the category $\DM(k;\F_\ell)^{\at}$ of mixed Artin-Tate motives with mod $\ell$ coefficients \cite{DMRecon,GalKosz}. With a few exceptions, he has shown that when $k$ is a finite, local or global field, then $\DM(k,\F_\ell)^{\at}$ may be viewed as a derived category of filtered discrete $G$-modules with restricted sub-quotients. 

In a different direction, work of Bachmann, Elmanto and {\O}stv{\ae}r shows that, up to a completion, $\SH(S)$ is generically \'etale for a wide range of schemes $S$ \cite{TomEtaleI, TomEtaleII}. In particular, Bachmann--Elmanto--{\O}stv{\ae}r show that, after a suitable completion, \'etale localization corresponds to inverting $\tau$. Note that $\tau$ may not exist in the homotopy of the completed unit, so care must be taken to interpret this statement.
Furthermore, Bachmann showed that, again up to completion, the \'etale motivic category is equivalent to the category of hypercomplete sheaves of spectra on the small \'etale site.
Specializing to the case $S = \Spec k$, we find that, up to completion, the category of $\tau$-local objects in $\SH(k)$ admits a description in terms of Borel $G$-equivariant spectra.

Previously, work of Behrens and Shah \cite{BehrensShah} had taken up the question of when a suitable map $\tau$ exists over $\R$. 
Although there is no map $\tau:\Ss^1 \to (\G_m)^{\wedge}_2$ in $\SH(\R)$,
they prove that $\tau$ exists whenever a different class $\rho:\Ss^0 \to \G_m$ has been killed.
If only $\rho^2$ is killed, but not $\rho$ itself, then $\tau$ does not necessarily exist, but $\tau^2$ does.
Continuing in this way, they make sense of inverting $\tau$ in any $\rho$-complete situation.



In this paper we resolve \Cref{qst:main} in the case $k=\R$.
This begins with the observation that if one does not insist that the target of $\tau$ must be a completion of $\G_m$, then a suitable replacement can easily be constructed.

\begin{thm}\label{thm:main}
  In $\SHRat$
  there is an invertible object $Q$ and a map $\ta : \Ss^1 \to Q_2$
  which enjoys the following properties:
  \begin{enumerate}
  \item[(GT)] The full subcategory of $\ta$-local objects in $\SHR^{\at}_{i2}$ is equivalent to $\Sp_{C_2,i2}$.
  \item[(AD)] The cofiber of $\ta$ (denoted $C\ta$) is a commutative algebra, and the category of dualizable modules over $C\ta$ is equivalent to the category of dualizable objects in the derived category of Mackey-functor $\MU_*\MU$-comodules \footnote{See \Cref{sec:cta} for a precise definition of this category.}.
  \item[(GR)] There is a purely topological construction of $\SHRat$.
    In particular, we construct a commutative algebra $R_\bullet$ in filtered, $C_2$-equivariant spectra
    such that the category of filtered modules over $R_\bullet$ is equivalent to $\SHRat$.
    The commutative algebra $R_\bullet$ is the even slice--d\'ecalage of the $\MUR$-Adams tower for the sphere \footnote{See \Cref{sec:top} for precise definitions.}.    
  \end{enumerate}
  
  We summarize this by saying that $\SHR^{\at}_{i2}$ is a $1$-parameter deformation of $C_2$-equivariant stable homotopy theory with parameter $\ta$ and purely algebraic special fiber.
\end{thm}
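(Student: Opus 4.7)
The plan is to execute an $\R$-motivic, $C_2$-equivariant refinement of the Pstragowski and Gheorghe--Isaksen--Krause--Ricka framework that proved Theorem~\ref{thm:C-main}, with the Real cobordism spectrum $\MUR$ replacing the r\^ole of $\MU$ throughout. The three conclusions are logically interlocking, so the first task is to construct the invertible object $Q$ and the map $\ta$. Since Behrens--Shah showed that no $\ta : \Ss^1 \to (\G_m)^{\wedge}_2$ can exist in $\SHR$, I would allow $Q$ to be an Artin--Tate twist of $\G_m$ built from a finite \'etale $\R$-algebra; this is precisely why the Artin--Tate, rather than cellular, framework is essential, since such twists are not cellular. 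I would then obtain $\ta$ itself as the first $k$-invariant of the even slice--d\'ecalage of the $\MUR$-Adams tower for the sphere, a choice which makes the proof of (GR) essentially definitional once $R_\bullet$ is properly constructed.

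Having fixed $\ta$, I would prove (GT) and (AD) in parallel. For (GT), the $\ta$-inverted Artin--Tate unit should be identified with the $i_2$-complete $C_2$-equivariant sphere by combining the \'etale rigidity results of Bachmann--Elmanto--{\O}stv{\ae}r---which supply a Borel $C_2$-equivariant description after \'etale localization---with an upgrade from Borel to genuine $C_2$-equivariance furnished by the $i_2$-completion and the arithmetic of $\Spec \R$. For (AD), I would first equip $C\ta$ with a commutative algebra structure equivariantly \`a la Gheorghe, then compute its $\MUR$-homology to recover the Real Hopf algebroid structure, and finally use a Bousfield/descent argument to identify the dualizable $C\ta$-modules with the derived category of $\MU_*\MU$-comodules in $C_2$-Mackey functors.

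The principal obstacle, I expect, lies in executing (GR) with sufficient precision. Building $R_\bullet$ as a filtered commutative algebra in $\Sp_{C_2}$ demands equivariant control of the Hill--Hopkins--Ravenel even slice filtration and its d\'ecalage, including verification that the filtration is multiplicative in the appropriate synthetic sense and that the d\'ecalage assembles coherently as a commutative algebra. The equivalence $\Mod_{R_\bullet}^{\mathrm{fil}} \simeq \SHR^{\at}_{i2}$ is then established by constructing a comparison functor and checking it on the special and generic fibers, where (AD) and (GT) apply respectively; a deformation-theoretic rigidity argument (analogous to the $\C$-motivic case) propagates the equivalence from the fibers to the whole filtered category, yielding the advertised purely topological reconstruction.
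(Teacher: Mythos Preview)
Your proposal has the logical architecture inverted at several crucial junctures, and one step that does not exist.

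\textbf{Construction of $\ta$.} You propose to extract $\ta$ as the first $k$-invariant of $R_\bullet$. But this lives in filtered $C_2$-spectra, not in $\SHRat$; to transport it to the motivic category you would already need the comparison functor, which you intend to build \emph{using} $\ta$. The paper avoids this circularity by constructing $Q$ explicitly as the affine quadric $\{x^2+y^2=1\}$, a form of $\G_m$ whose $\R$-points contain all roots of unity, and then imitating the unstable construction of $\tau$ with $\G_m$ replaced by $Q$ (Section~\ref{sec:ta}). The only property used thereafter is that $\b(\ta)=1$.

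\textbf{The equivalence in (GR).} Your plan to ``propagate the equivalence from the fibers to the whole filtered category'' via a deformation-theoretic rigidity argument is not how either the $\C$-motivic or the $\R$-motivic case works, and no such rigidity principle is available in general: a one-parameter family is not determined by its generic and special fibers. The paper instead runs the argument in the opposite direction. Using $\ta$ and the Heller--Ormsby functor $c_{\C/\R}$, one checks that $(\SHRat,\Sp_{C_2,i2})$ is a \emph{deformation pair} in the sense of Appendix~\ref{app:def}; Barr--Beck monadicity (Proposition~\ref{prop:rigid}) then gives $\SHRat\simeq\Mod(\Sp_{C_2,i2}^{\Fil};i_*\Ss_2)$ for a specific $i_*\Ss_2$ computed from the motivic side. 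Only afterwards is $i_*\Ss_2$ identified with $R_\bullet$, by relating $i_*$ to the effective slice filtration (Section~\ref{sec:eff}) and invoking Heard's comparison of the motivic and equivariant slice towers for $\MGL$.

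\textbf{On (GT) and (AD).} The paper does not pass through \'etale rigidity or a Borel-to-genuine upgrade for (GT); it uses Heller--Ormsby's full faithfulness of $c_{\C/\R}$ together with a direct computation of $\pi_{*,*,*}^\R\HFt$ showing that $C\ta$ has vanishing homotopy in negative weight. For (AD), the commutative algebra structure on $C\ta$ is not built by hand \`a la Gheorghe but is inherited from (GR), since $C\ta = i^*(C\tau)$ where $C\tau$ is the cofiber of the filtration shift.
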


The romanization of the Devanagari letter $\ta$ is `ta' \footnote{However, the reader who is not familiar with the Devanagari alphabet is advised that the pronunciation of $\ta$ in the majority of South Asian languages using Devanagari is given in IPA by /\textsubbridge{t}\textschwa/, which is closer to `tuh' in English than `ta.'}  and
our choice of this symbol will be explained in \Cref{rmk:ta-explanation}. As in the case over $\C$, we are also able to give an explicit formula for the homotopy groups of $C\ta$, though since we have not yet set up the appropriate notion of homotopy groups we will defer an explicit statement until later.
The subscript $i2$ appearing in the theorem statement refers to the category of modules over the $2$-completion of the unit. 
The reader who is worried by this departure from the norm may wish to read the section on notations and conventions before proceeding.

\ 

With the resolution of \Cref{qst:main} in the case of $\R$, the authors are hopeful that we might see this question resolved in its entirety in the near future.


	

\subsection{Examples of Artin--Tate motivic spectra}\ 

In this subsection, which is preparatory to all later material, we give a more comprehensive introduction to the category of Artin--Tate motivic spectra over a field. This begins with introducing the smaller subcategories of Artin and Tate objects, which together generate the category of Artin--Tate objects. The bulk of the subsection is spent building a collection of important examples of Artin--Tate motivic spectra. Over $\R$ a special role is played by the invertible objects and so we focus specific attention there. We close the section by discussing homotopy groups.

\begin{dfn}
  The categories of Artin, Tate and Artin--Tate motivic spectra over $k$ are the stable, full subcategories of $\SHk$, closed under tensor products and colimits, with the following generators:
  \begin{itemize}
  \item The category of Artin motivic spectra, $\SHka$, is generated by the motives of finite \'etale $k$-algebras.
  \item The category of Tate motivic spectra, $\SHkt$, is generated by the motives of $\P_k^1$ and $(\P_k^1)^{-1}$ \footnote{This category is sometimes referred to as the cellular category following \cite{DICell}.}.
  \item The category of Artin--Tate motivic spectra, $\SHkat$, is generated by the motives of finite \'etale $k$-algebras together with $\P_k^1$ and $(\P_k^1)^{-1}$.
  \end{itemize}
  Since these subcategories are closed under tensor products and colimits, they each inherit the structure of a stable, presentably symmetric monoidal category from $\SHk$. \footnote{In the sequel, we will state without remark results that are only known to be true after the characteristic of $k$ is inverted. The reader is invited to restrict themself to $k$ of characteristic zero if they prefer.}
\end{dfn}

We now turn to examples of objects in each of these categories, starting with the trivial and heading towards the non-trival.

\begin{exm} \label{exm:Sp-unit}
  Every stable, presentably symmetric monoidal category admits a unique symmetric monoidal functor from the category of spectra \cite[Corollary 4.8.2.19]{HA} \footnote{Another way of saying this is that $\Sp$ is the initial object of $\mathrm{CAlg}(Pr^{L,\mathrm{stab}})$.}.
  This gives us objects $X \otimes \o_k$ for every spectrum $X$.
  Of particular interest are the integer simplicial suspensions of the unit, $\Ss^{n} \otimes \o_k$.
\end{exm}

Inductively applying the homotopy purity theorem \cite[Theorem 3.2.23]{MorelVoevodsky} to the decomposition $\P^n _k = \mathbb{A}^{n} _k \coprod \P^{n-1} _k$, we learn that:

\begin{exm}[{\cite[Example 2.12]{DICell}}]
  For each $n$, the motivic spectrum associated to $\P^n_k$ is Tate.
\end{exm}

More generally, using a Bialynicki-Birula decomposition, Wendt shows that any smooth projective variety which admits a $\G_m$-action whose fixed points are discrete and rational is Tate \cite{Wendt} \footnote{In the case where the fixed points are not rational, the authors wonder what conditions are necessary to guarantees the motive is Artin--Tate.}.

\begin{exm}[{\cite[Theorem 6.4]{DICell}, \cite[Proposition 8.1]{HM}, \cite[Proposition 8.12]{etaPeriodic}}]
  Each of the following commutative algebras is Tate:
  \[ \MGL,\quad \kq,\quad \kgl,\quad \mathrm{M}\Z,\quad \mathrm{M}\F_p. \]
\end{exm}

\begin{exm} \label{exm:galois-corr}
  Essentially by definition, the Galois correspondence provides functors
  \begin{center}
    \begin{tikzcd}
      {\left\{ \footnotesize\parbox{2.25cm}{finite, continuous Gal($\overline{k}/k$)-sets} \right\}} \ar[r] &
      {\left\{ \footnotesize\parbox{1.5cm}{finite, etale $k$-algebras} \right\}^{op}} \ar[r, hook] \ar[d] &
      \mathrm{Sm}_k \ar[d] \\
      & \SHka \ar[r, hook] &
      \SHk,
    \end{tikzcd}
  \end{center}
  which give our first examples of Artin objects.
\end{exm}

Since the functor in the example above sends disjoint unions to sums there is no loss in generality if we restrict to transitive Gal($\overline{k}/k$)-sets (field extensions). In the case of $\R$ this gives only two objects: $\Spec(\R)$, which we will temporarily denote $\o_\R$ since is it the monoidal unit of the category, and $\Spec(\C)$. Using the natural map we get a cofiber sequence
\begin{align} \Spec(\C) \to \o_\R \xrightarrow{a} \Ss^{\C}. \label{eq:pic-C} \end{align}
On the level of $\C$-points (with Galois action), this cofiber sequence gives the representation sphere $\Ss^{\sigma}$.

Given a quadric in the plane, $V$, we can take its closure in $\P_k^2$ to obtain $\overline{V}$ which is a form of $\P_k^1$. If we let $x_1,\dots,x_n$ denote the points at infinity, then by the homotopy purity theorem \cite[Theorem 3.2.23]{MorelVoevodsky} we have a cofiber sequence
\[ V \to \overline{V} \to \bigoplus_{i=1,\dots,n} \P_{k(x_i)}^1. \]
Under the assumption that $V$ admits a rational point, $\overline{V}$ is just $\P^1_k$, and so we obtain our first non-trivial example of a motive which is neither Artin nor Tate, but is Artin--Tate:

\begin{exm}
  The motive of an affine quadric which admits a rational point is Artin--Tate.
\end{exm}

Over $\R$ there is a particular affine quadric of interest to us.
It is the object $Q$ which appeared in the statement of \Cref{thm:main}.

\begin{exm}
  Let $Q \coloneqq \{x^2+y^2=1\} \subset \AA^2 _{\R}$, which we shall call the algebraic circle.
  $Q$ has the additional property that it is a form of $\G_m$ \footnote{By Cartier duality, forms of $\G_m$ are classified by rank 1 lattices with Galois action. $Q$ corresponds to the unique nontrival action.}.
  Its group scheme structure comes from the usual rule for multiplication of complex numbers,
  namely
  \[ (x_1,y_1) \cdot (x_2,y_2) = (x_1 x_2 - y_1 y_2,\ x_1 y_2 + y_1 x_2). \]

  In \Cref{sec:ta} we will construct the maps $\ta: \Ss^1 \to Q_p$ using our explicit understanding of this group scheme.
\end{exm}

\begin{prop}[{\cite[Proposition 1.1]{HuPicard}}] \label{prop:pic-Q}
  There is an equivalence $Q \otimes \Ss^\C \simeq \P^1 _\R$.
  In particular, both $\Ss^\C$ and $Q$ are invertible.
\end{prop}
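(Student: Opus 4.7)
My plan is to apply the Morel--Voevodsky homotopy purity theorem to the open-closed decomposition of the projective closure $\overline{Q} := \{x^2 + y^2 = z^2\} \subset \P^2_\R$ of $Q$. Three observations set this up: $\overline Q$ is a smooth conic containing the $\R$-rational point $(1{:}0{:}1)$, so $\overline Q \simeq \P^1_\R$; the complement $\overline Q \setminus Q$ is cut out by $z = 0$ and equals $\Spec(\R[x,y]/(x^2+y^2)) \cong \Spec \C$; and the normal bundle $\nu$ of this closed point in $\overline{Q}$, being a line bundle on $\Spec \C$, is trivial.

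Homotopy purity then produces a cofiber sequence $\Sigma^\infty_+ Q \to \Sigma^\infty_+ \P^1_\R \to \mathrm{Th}(\nu)$ in $\SH(\R)$, and the standard formula $\mathrm{Th}(\o_Z) \simeq Z_+ \wedge \P^1$ identifies the last term with $\Sigma^\infty_+ \Spec(\C) \otimes \P^1_\R$. Using the compatible basepoints $(1,0) \in Q$ and $(1{:}0{:}1) \in \overline Q$ to split off common unit summands reduces this to a cofiber sequence on reduced motives
\[ Q \to \P^1_\R \to \Sigma^\infty_+ \Spec(\C) \otimes \P^1_\R, \]
where I am now using $Q$ and $\P^1_\R$ to denote the reduced, invertible objects. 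Since $\P^1_\R$ is invertible, tensoring with its inverse produces
\[ Q \otimes (\P^1_\R)^{-1} \to \o \to \Sigma^\infty_+ \Spec(\C). \]

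To conclude, I compare this with the monoidal dual of the defining sequence $\Sigma^\infty_+ \Spec(\C) \to \o \to \Ss^\C$. Because $\Spec \C$ is finite \'etale over $\R$, its motive is self-dual (Atiyah duality requires no Tate twist in dimension zero), so the dualized sequence reads $\Ss^{-\C} \to \o \to \Sigma^\infty_+ \Spec(\C)$, with the second map the algebra unit / transfer. The main obstacle is verifying that the map $\o \to \Sigma^\infty_+ \Spec(\C)$ produced by homotopy purity agrees, up to a unit in the rank-one group $\pi_0\mathrm{Map}(\o, \Sigma^\infty_+ \Spec \C)$, with this algebra unit; I would pin this down by tracking the Pontryagin--Thom collapse through the purity construction, with $C_2$-realization as a backup to fix the generator. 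Once matched, equating fibers yields $Q \otimes (\P^1_\R)^{-1} \simeq \Ss^{-\C}$, equivalently $Q \otimes \Ss^\C \simeq \P^1_\R$; invertibility of $Q$ is then immediate from that of $\Ss^\C$ and $\P^1_\R$.
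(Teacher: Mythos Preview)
Your approach is correct and aligns with the purity framework the paper sets up in the paragraph immediately preceding the proposition (the cofiber sequence $V \to \overline V \to \bigoplus_i \P^1_{k(x_i)}$ specializes for $Q$ to exactly the sequence you derive). The paper itself cites \cite{HuPicard} rather than giving a proof, so there is no detailed argument to compare against beyond that setup.

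Your acknowledged gap---matching the purity collapse $\o \to \Sigma^\infty_+\Spec(\C)$ with the transfer $\epsilon^\vee$---is real but fillable exactly as you suggest. The group $[\o,\Sigma^\infty_+\Spec(\C)]_{\SHR}$ is $\Z$ (by the $(i^*,i_*)$ adjunction it identifies with $[\o_\C,\o_\C]_{\SHC}$), so it suffices to show the purity map is a generator. Your backup plan works: under either $C_2$-Betti realization or base change to $\C$, the purity collapse for two points in $\P^1$ becomes the degree-one pinch map, which corresponds to $\pm 1$; since the comparison $[\o_\R,\Spec(\C)_+] \to [\o_\C,\o_\C \oplus \o_\C]$ is $n \mapsto (n,n)$, the purity class is $\pm 1$. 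That closes the argument.
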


At this point we are ready to define the appropriate notion of homotopy groups for studying $\SHR^{\at}$.
In the case of $\SHC^{\at}$, the category has a bi-graded family of compact invertible generators given by the spheres $\Ss^{p - 2w} \otimes (\P_\C^1)^{\otimes w}$. Therefore, it is typical to study objects at the level of their bigraded homotopy groups, given by
\[ \pi_{p,w}^\C(X) \coloneqq \pi_0\Map ( \Ss^{p-2w} \otimes (\P_\C^1)^{\otimes w}, X ). \]
Similarly, $\SHRt$ has a bi-graded family of compact invertible generators given by the spheres $\Ss^{p-2w} \otimes (\P_\R^1)^{\otimes w}$, and it is typical to study objects through their bi-graded homotopy groups.
Using \Cref{prop:pic-Q} and \Cref{eq:pic-C}, the category $\SHR^{\at}$ has a tri-graded family of compact invertible generators given by the spheres $\Ss^{p-w} \otimes (\Ss^\C)^{\otimes q-w} \otimes (\P_\R^1)^{\otimes w}$, and we will study most objects through their tri-graded homotopy groups.

\begin{ntn}
  We endow the categories $\Sp$, $\Sp_{C_2}$, $\SHC$ and $\SHR$ with Picard gradings by spheres as follows:
  \begin{center}
    \renewcommand{\arraystretch}{1.1}
    \begin{tabular}{|l|l|l|} \hline
      Category & Picard & Spheres \\\hline\hline
      $\Sp$ & $\Z$ & $\Ss^p \simeq (S^1)^{\otimes p}$ \\\hline
      $\Sp_{C_2}$ & $\Z \times \Z$ & $\Ss^{p+q\sigma} \simeq (S^1)^{\otimes p} \otimes (S^\sigma)^{\otimes q}$ \\\hline
      $\SHC$ & $\Z \times \Z$ & $\Ss^{p,w} \simeq (S^1)^{\otimes p-2w} \otimes (\P_\C^1)^{\otimes w}$ \\\hline
      $\SHR$ & $\Z \times \Z \times \Z$ & $\Ss^{p,q,w} \simeq (S^1)^{\otimes p-w} \otimes (S^\C)^{\otimes q-w} \otimes (\P_\R^1)^{\otimes w}$ \\\hline
    \end{tabular}
    \renewcommand{\arraystretch}{1.0}
  \end{center}
  
  For any $p,q,w \in \Z$, and any $\R$-motivic spectrum $X$, we let $\pi^{\R}_{p,q,w} X$ denote the group of homotopy classes of maps,
  \[ (S^1)^{\otimes p-w} \otimes (S^\C)^{\otimes q-w} \otimes (\P_\R^1)^{\otimes w} \to X. \]
\end{ntn}

\begin{exm}
  For convenience we record what the invertible objects considered in this section look like under this new notation:
  \begin{center}
    \begin{tabular}{ccc}   
      $\Ss^{0,0,0} \cong \o_\R$ \qquad\qquad\qquad &
      $\Ss^{1,0,0} \cong S^1$ \qquad\qquad\qquad & 
      $\Ss^{0,1,0} \cong S^{\C}$ \\
      $\Ss^{0,1,1} \cong \G_m$ \qquad\qquad\qquad &
      $\Ss^{1,0,1} \cong Q$ \qquad\qquad\qquad &
      $\Ss^{1,1,1} \cong \P_\R^1$
    \end{tabular}
  \end{center}
  In the case of $\Ss^{0,0,0}$ we will often drop the indices for brevity, writing only $\Ss$.
  The two maps between Picard elements considered thus far, $\ta$ and $a$, become
  \[ \ta \in \pi_{0,0,-1}^\R \Ss_p \qquad\quad \text{ and } \qquad\quad a \in \pi_{0,-1,0}^\R \Ss. \]
\end{exm}

\begin{rmk}
  The tri-graded spheres in $\SHR^{\at}$ which are Tate are precisely the spheres of the form $\Ss^{p,q,q}$.
  For this reason the Tate category only sees a `slice' of the total information present in the tri-graded homotopy groups.

  The tri-graded spheres in $\SHR^{\at}$ which are Artin are precisely the spheres of the form $\Ss^{p,q,0}$.
  The Artin category similarly only sees a `slice' of the total homotopy groups.

\end{rmk}

\subsection{Comparison functors}\ 

The two simplest ways to interrogate a category are to study specific objects 
and to study the network of functors which relate it to other categories.
While the previous subsection provided preperatory background on specific objects, this subsection sets up the suite of functors which we will use to produce and study more general objects.
In the specific case of $\R$ this means studying the various ways we can move between $\SHR$, $\SHC$, $\Sp_{C_2}$ and $\Sp$.



\begin{rec} \label{rec:fields}
  Given a finite extension of fields $i : k \to \ell$, there are two pairs of adjunctions and one extra functor coming from the six functor formalism
  \[ i^* : \SHk \rightleftarrows \SH(\ell) : i_*, \qquad i_! : \SH(\ell) \rightleftarrows \SHk : i^! \quad \mathrm{ and } \quad i_{\sharp} : \SH(\ell) \to \SHk \]
  where $i^*$ is symmetric monoidal.
  Since $i$ is smooth, proper and unramified we have equivalences $i_! \simeq i_* \simeq i_{\sharp}$ and $i^! \simeq i^*$ \cite[Theorems 6.9 and 6.18]{hoyois}.
  These equivalences tell us that
  \begin{enumerate}
  \item $i^*$ and $i_*$ both commute with all limits and colimits.
  \item If $X$ is a smooth $\ell$-scheme, then $i_*X \simeq X$, where the second copy of $X$ is considered as a $k$-scheme.
  \end{enumerate}
  \todo{Restrictions ?}
\end{rec}

In \Cref{app:boilerplate}, as an example of the techniques showcased there, we show that for fields of characteristic zero
there is an equivalence of presentably symmetric monoidal categories
\[ \SH(\ell) \simeq \Mod ( \SHk ; \Spec(\ell) ), \]
and $i_*i^*(-) \simeq \Spec(\ell) \otimes -$.
This equivalence tells us that no new information enters the picture when we pass to a field extension.
Specializing to the case of $\C/\R$ the above equivalence becomes
\[ \SHC \simeq  \Mod ( \SHR ; \Spec(\C) ). \]

Using the descriptions of $i_*$ and $i_*i^*$ we can conclude that both $i^*$ and $i_*$ restrict to the full subcategory of Artin--Tate objects. Thus we obtain a similar description of the Artin--Tate category of a field extension:
\[ \SH(\ell)^{\at} \simeq \Mod ( \SHkat ; \Spec(\ell) ). \]
Specializing to the case of $\C/\R$ we will sometimes denote $\Spec(\C)$ by $Ca$ since it is (by the definition of $a$) the cofiber of the map $a: \Ss^{0,-1,0} \to \Ss^{0,0,0}$.
The above equivalence becomes
\[ \SHC^{\at} \simeq \Mod ( \SHR^{\at} ; Ca ). \]

At this point we turn to studying the case of $\R$ more closely.
Though some of the things we do after this point have obvious analogs in other cases,
many of our key maneuvers implicitly rely on the fact that $\R$ has a finite (and well-understood) absolute Galois group.
In particular, we will now assume that the reader is familiar with $C_2$-equivariant homotopy theory as in \cite{HHR}.
Equivariant homotopy theory first enters the picture through the Betti realization functors of \cite[Section 3.3]{MorelVoevodsky}.

\begin{rec}

  There is a commutative diagram of symmetric monoidal left adjoints:

  \begin{center}
    \begin{tikzcd}[column sep=tiny, row sep=tiny]
      
      \SHR \ar[rrrrrrr,"(-)_\C"] \ar[ddddddd,"\b"] & & & & & & & \SHC \ar[ddddddd,"\b"] \\
      & \Ss^{p,q,w} \ar[rrrrr,mapsto] \ar[ddddd,mapsto] & & & & & \Ss^{p+q,w} \ar[ddddd,mapsto] \\
      & & & & & & & \\
      & & & & & & & \\
      & & & & & & & \\
      & & & & & & & \\
      & \Ss^{p+q\sigma} \ar[rrrrr,mapsto] & & & & & \Ss^{p+q} \\
      \Sp_{C_2} \ar[rrrrrrr,"\Phi^e"] & & & & & & & \Sp.

    \end{tikzcd}
  \end{center}
  

  In the above diagram, $(-)_\C$ is the base change functor, $\Phi^e$ is the underlying functor and $\b$ are the Betti realization functors, induced by the assigment $X \mapsto X(\C)$.
  The inner square describing these functors on Picard objects can be verified by considering $Q$, $\G_m$ and $S^1$ directly \footnote{The observation that spurred the authors to begin this project was that upon restricting to categories of Tate objects the induced square on Picard groups is not a pullback, but with Artin--Tate objects it is in fact a pullback.}.
\end{rec}

\begin{rmk}
  Since the Betti realization of $\Ss^{p,q,w}$ is $\Ss^{p+q\sigma}$, we think of $w$ as recording the motivic weight and $p,q$ as providing a copy of $RO(C_2)$ in each weight.
  The Tate spheres are those of the form $\Ss^{p,q,q}$, so in the bigraded world the number of $\sigma$'s in the $RO(C_2)$-grading must equal the motivic weight.
  This restriction blinds one to the existence of a weight shifting element $\ta$ in $RO(C_2)$ grading $0$, which is the true analog of the $\C$-motivic $\tau$.
\end{rmk}

Having discussed Betti realization, we now introduce the less well-known functors $c$ and $c_{\C/\R}$ which provide sections of Betti realization.


\begin{rec} \label{rec:c}
  There are symmetric monoidal left adjoints $c$ and $c_{\C/\R}$,
  which fit into the commutative diagram
  \begin{center}
    \begin{tikzcd}[sep=huge]
      \Ss^{p+q\sigma} \ar[r, mapsto] & \Ss^{p,q,0} \ar[r, mapsto] & \Ss^{p+q\sigma} \\
      \Sp_{C_2} \ar[rr, "\mathrm{id}_{\Sp_{C_2}}", bend left=20] \ar[r, "c_{\C/\R}"] \ar[d,"\Phi^e"] & \SHR \ar[r, "\b"] \ar[d,"(-)_\C"] & \Sp_{C_2} \ar[d,"\Phi^e"] \\
      \Sp \ar[rr, "\mathrm{id}_{\Sp}"', bend right=20] \ar[r, "c"] & \SHC \ar[r, "\b"] & \Sp \\
      \Ss^{p} \ar[r, mapsto] & \Ss^{p,0} \ar[r, mapsto] & \Ss^p .
    \end{tikzcd}
  \end{center}

  The functor $c$ is the unique symmetric monoidal left adjoint coming from the fact that $\Sp$ is the unit of $Pr^{L, \mathrm{stab}}$ (see \Cref{exm:Sp-unit}).
  The functor $c_{\C/\R}$ is constructed in \cite{HellerOrmsby}, by beginning with the functor
  \[ \left\{ \mathrm{finite} \ \ \  C_2\mathrm{-sets} \right\} \to \SHR \]
  from \Cref{exm:galois-corr} and then extending it to all of $\Sp_{C_2}$.
\end{rec}

Since they are symmetric monoidal left adjoints, it is easy to see that the composites $\b \circ c : \Sp \to \Sp$ and $\b \circ c_{\C/\R} : \Sp_{C_2} \to \Sp_{C_2}$ are equivalent to the identity. In fact, upon restricting to the appropriate target category we uncover something more interesting:

\begin{thm}[{\cite{LevineComparison,HellerOrmsby,HellerOrmsbyII}}] \label{thm:c-ff}
  The symmetric monoidal functors $c$ and $c_{\C/\R}$ factor through the respective categories of Artin objects and provide equivalences,
  \[ c : \Sp \xrightarrow{\simeq} \SHCa \qquad \text{ and } \qquad c_{\C/\R} : \Sp_{C_2} \xrightarrow{\simeq} \SHRa. \]
\end{thm}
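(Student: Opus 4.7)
The plan is to prove this in two stages: first verify that $c$ and $c_{\C/\R}$ factor through the Artin subcategories (a formal step), then establish that the resulting factored functors are equivalences (the substantive step, which appeals to the comparison theorems of Levine and Heller--Ormsby).

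For the factorization, both $c$ and $c_{\C/\R}$ are symmetric monoidal and colimit-preserving, while the Artin subcategories $\SHCa$ and $\SHRa$ are by definition closed under colimits and tensor products inside $\SHC$ and $\SHR$. It therefore suffices to check that a set of generators of $\Sp$ and $\Sp_{C_2}$ lands in the corresponding Artin subcategory. The sphere generates $\Sp$ and $c(\Ss) = \o_\C$ is the monoidal unit, hence trivially Artin. Likewise, the orbits $\Sigma^\infty_+ C_2/H$ for $H \leq C_2$ generate $\Sp_{C_2}$, and their images under $c_{\C/\R}$ are precisely $\o_\R$ and $\Spec(\C)$, which are Artin by definition as motives of finite \'etale $\R$-algebras.

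For the equivalence, denote the factored functors by $\tilde{c}: \Sp \to \SHCa$ and $\tilde{c}_{\C/\R}: \Sp_{C_2} \to \SHRa$. Both are symmetric monoidal left adjoints between stable presentable categories, and each hits a set of generators of its target: by definition $\SHCa$ is generated under colimits and tensor products by $\o_\C = \tilde{c}(\Ss)$, while $\SHRa$ is generated by $\o_\R$ and $\Spec(\C)$, both in the image of $\tilde{c}_{\C/\R}$. It therefore remains to check that the functors induce equivalences on mapping spectra between the compact generators. In the $\C$-case the only such check is $\Map_{\Sp}(\Ss, \Ss) \to \Map_{\SHCa}(\o_\C, \o_\C)$, i.e.\ that the bigraded motivic stable stems of the sphere in weight zero over $\C$ agree with the classical stable stems; this is precisely the content of Levine's comparison theorem.

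In the $\R$-case one must verify the analogous statement on all four mapping spectra between the generators $\o_\R$ and $\Spec(\C)$. Using the six-functor equivalence $\SHC \simeq \Mod(\SHR; \Spec(\C))$ recalled earlier, any mapping spectrum involving $\Spec(\C)$ as source or target can be computed after base change to $\C$, reducing those checks to the $\C$-case already handled by Levine. The remaining and most substantive check is that the $RO(C_2)$-graded mapping spectrum $\Map_{\SHR}(\o_\R, \o_\R)$ agrees with the $C_2$-equivariant sphere; this is Heller--Ormsby's theorem and is the main obstacle. Their proof proceeds by computing the $\R$-motivic Artin stable stems via a $\rho$-Bockstein spectral sequence and matching the result, term by term and differential by differential, with the $C_2$-equivariant homotopy fixed point spectral sequence converging to $\pi_\star^{C_2}\Ss$.
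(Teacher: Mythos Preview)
The paper does not prove this theorem; it is quoted from the cited references and used as a black box throughout. Your proposal therefore goes beyond what the paper itself offers, and the outline you give is sound.

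The factorization step is correct and routine. For the equivalence, your reduction of the three mapping-spectrum checks involving $\Spec(\C)$ to Levine's theorem is valid: using $\Spec(\C) \simeq i_*\o_\C$ together with the ambidexterity $i^! \simeq i^*$ and the commuting square relating $c_{\C/\R}$, $c$, $(-)_\C$, and $\Phi^e$ (all recalled in the paper prior to this theorem), each such check unwinds to a mapping spectrum in $\SHC$ computed by $c$, hence to Levine. None of this is circular.

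The remaining check---that $c_{\C/\R}$ induces an isomorphism $\pi_n^{C_2}\Ss \to \pi_{n,0,0}^\R\Ss$ for all $n \in \Z$---is indeed the substantive content contributed by Heller--Ormsby. Your phrasing of this as an $RO(C_2)$-graded statement about $\Map_{\SHR}(\o_\R,\o_\R)$ folds back in one of the checks you already reduced to Levine (the underlying part of the enriched hom), which is harmless but slightly redundant. Your one-line description of their method is in the right spirit---the argument in \cite{HellerOrmsbyII} does hinge on a spectral-sequence comparison---though the precise spectral sequences and the mechanism of comparison are not exactly a $\rho$-Bockstein versus homotopy-fixed-point matching as you describe.
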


\begin{cor} \label{cor:betti-iso}
  The induced maps,
  \[ \pi_p\Ss \xrightarrow{c} \pi^\C _{p,0}\Ss \xrightarrow{\b} \pi_p\Ss  \qquad \text{and} \qquad \pi^{C_2}_{p+q\sigma}\Ss \xrightarrow{c_{\C/\R}} \pi^\R _{p,q,0}\Ss \xrightarrow{\b} \pi^{C_2} _{p+q\sigma}\Ss \]
  are isomorphisms for all $p$ and $q$.
\end{cor}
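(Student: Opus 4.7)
The plan is to deduce this corollary immediately from \Cref{thm:c-ff} together with the fact, noted just before its statement, that the composites $\b \circ c$ and $\b \circ c_{\C/\R}$ are equivalent to the identity functors on $\Sp$ and $\Sp_{C_2}$ respectively.

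First, I would observe that $\SHCa$ is by construction a full subcategory of $\SHC$, and both the source sphere $\Ss^{p,0}$ and the target unit $\Ss$ lie in $\SHCa$, since $c(\Ss^p) \simeq \Ss^{p,0}$ and $c(\Ss) \simeq \Ss$ by \Cref{rec:c}. Consequently $\pi^\C_{p,0}\Ss$ may be computed as a $\Hom$-group inside $\SHCa$, and applying the equivalence $c : \Sp \xrightarrow{\simeq} \SHCa$ of \Cref{thm:c-ff} yields that the map $\pi_p\Ss \xrightarrow{c} \pi^\C_{p,0}\Ss$ is an isomorphism. The argument for $c_{\C/\R}: \pi^{C_2}_{p+q\sigma}\Ss \to \pi^\R_{p,q,0}\Ss$ is identical, replacing $\SHCa$ by $\SHRa$ and using that $c_{\C/\R}(\Ss^{p+q\sigma}) \simeq \Ss^{p,q,0}$.

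Second, since $\b \circ c \simeq \id_\Sp$ and $\b \circ c_{\C/\R} \simeq \id_{\Sp_{C_2}}$, the two composites appearing in the statement of the corollary act as the identity on homotopy groups. Combined with the isomorphisms just established, this forces the map induced by $\b$ in each composite to be an isomorphism as well, completing the proof. There is no substantive obstacle: the entire argument is a formal two-out-of-three manipulation, and all the genuine content has been packaged into \Cref{thm:c-ff}, which is attributed to Levine and Heller--Ormsby.
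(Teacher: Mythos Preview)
Your proof is correct and is exactly the intended argument: the paper presents this as an immediate corollary of \Cref{thm:c-ff} without spelling out any further details, and your two-out-of-three reasoning is precisely how one unpacks it.
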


\begin{rmk}
  This corollary provides an identification of $\pi_{0,0,0}^{\R}\Ss$ with the Burnside ring of $C_2$.
  It is isomorphic to $\Z \oplus \Z$, with generators $1$ and $[C_2]$ and the relation $[C_2]^2 = 2[C_2]$.

  Morel's identification of $\pi_{n,n}\o_k$ in terms of Milnor--Witt K-theory provides another way of assigning names to elements of $\pi^\R_{0,0,0}\Ss$ \cite{Morelpizero}.
  In these terms $\pi_{0,0,0}^\R\Ss$ is generated by $1$ and $\eta [-1]$, subject to the relation $(\eta [-1])^2 = -2 \eta [-1]$. We denote by $\rho$ the element $-[-1]$.
  The translation between the two bases described above is given by $\eta \rho = 2 - [C_2]$.
\end{rmk}



\begin{rmk}
  When working over a general field one might think to replace the Betti realization by some variant of etale localization (see \cite{EldenJay}). The analog of this theorem would be that the category of Artin objects is already etale local.
  However, in many examples (such as finite fields) one consequence of the Morel connectivity theorem \cite{MorelConn} is that this is not true. An important precursor to answering \Cref{qst:main} will be producing the variant of equivariant homotopy theory which appears as the category of Artin objects.
\end{rmk}
  
\begin{rmk}
  Using the functors $c$ and $c_{\C/\R}$ the mapping spaces in $\SHC$ and $\SHR$ can be upgraded into mapping spectra and mapping $C_2$-spectra respectively. 
\end{rmk}



\begin{summary}\label{prop:comp}
  There are commuting diagrams
  \begin{center}
  \begin{tikzcd}
    \SHR \ar[r, "(-)_\C"] \ar[d,"\b"] &
    \SHC \ar[d, "\b"] &
    \Sp_{C_2} \ar[rr, "\mathrm{id}_{\Sp_{C_2}}", bend left=20] \ar[r, "c_{\C/\R}"'] \ar[d,"\Phi^e"] &
    \SHR \ar[r, "\b"'] \ar[d,"(-)_\C"] &
    \Sp_{C_2} \ar[d,"\Phi^e"] \\
    \Sp_{C_2} \ar[r,"\Phi^e"] &
    \Sp &
    \Sp \ar[rr, "\mathrm{id}_{\Sp}"', bend right=20] \ar[r, "c"] &
    \SHC \ar[r, "\b"] &
    \Sp ,
  \end{tikzcd}
  \end{center}
  
  and various functors considered in this section enjoy the following properties:
  \begin{enumerate}
  \item Each of $(-)_{\C}$, $\b$, $\Phi^e$, $c$ and $c_{\C/\R}$ is a symmetric monoidal left adjoint.
  \item Both $(-)_{\C}$ and $\Phi^e$ are right adjoints as well.
  \item All of the functors restrict to the categories of Artin--Tate objects and retain the properties listed in (1) and (2). In later sections we will almost exclusively deal with these restrictions so we will not use distinct notation for them.
  \item On Picard elements the functors in the digrams above behave as follows:
  \end{enumerate}  
  \begin{center}
    \begin{tabular}{llll}
      $ \b(\o_\R^{p,q,w}) \cong \o_{C_2}^{p+q\sigma} $ &
      $ \b(\o_\C^{s,w}) \cong \Ss^s$ &
      $ c_{\C/\R}(\o_{C_2}^{p+q\sigma}) \cong \o_\R^{p,q,0}$ \\
      $ c(\Ss^s) \cong \o_\C^{s,0}$  &
      $ (\o_\R^{p,q,w})_\C \cong \o_\C^{p+q,w}$ &
      $ \Phi^e(\o_{C_2}^{p+q\sigma}) \cong \Ss^{p+q}$ &
    \end{tabular}                  
  \end{center}
\end{summary}


\begin{proof}
  The only one of these which we have not already discussed is (2).
  The functor $(-)_\C$ can be described as tensoring up to $Ca$.
	Since $Ca$ is dualizable this functor commutes with all limits and colimits. Similarly, we may conclude that $\Phi^e$ commutes with all limits and colimits.
\end{proof}

\subsection{$\R$-motivic spectra as a deformation}\ 

In this subsection we refine the statement of \Cref{thm:main} into a sequence of precise claims which we will verify across the remainder of the paper. We summarized both \Cref{thm:C-main} and \Cref{thm:main} by saying that the category of Artin--Tate motivic spectra is a 1-parameter deformation of a purely topological category. To start, we clarify this, first over $\C$ and then over $\R$.

\begin{enumerate}
\item There is a distinguished element of the $p$-complete homotopy groups over $\C$,
  $\tau \in \pi_{0,-1}\Ss_p$, which maps to $1$ under Betti realization \footnote{The Betti realization map $\pi^{\C}_{0,-1}\Ss_p \to \pi_0\Ss_p$ is an isomorphism so the latter property uniquely identifies $\tau$.}. \cite[Lemma 23]{HKO}
\item The Betti realization functor factors through the category of $\tau$-local objects and provides a symmetric monoidal equivalence
  \[ \Mod ( \SHCat ; \Ss_p[\tau^{-1}] ) \simeq \Sp_{ip}. \]
  The idea for this goes back to \cite[Section 2.6]{DI}, but was first proven in \cite{Pstragowski}.  
\item The category $\SHCat$ can be equipped with the structure of a $\Sp_{ip}^{\Fil}$-algebra.
  More explicitly, this means we have a symmetric monoidal left adjoint
  \[ i^* : \Sp_{ip}^{\Fil} \to \SHCat, \]
  which sends the shift map in $\Sp_{ip}^{\Fil}$ to $\tau$. As a consequence of this $C\tau$ acquires the structure of a commutative algebra, a fact originally proven by Gheorghe \cite{Ctau}.
\item The category of modules over $C\tau$ is equivalent to a renormalization of the derived category of even $\BP_*\BP$-comodules. More specifically we have
  \[ \Mod ( \SHCat ; C\tau ) \simeq \IndCoh ( \Mfg )_{ip}. \]
  This equivalence sends $C\ta \otimes \Ss^{s,w}$ to $\Sigma^{s-2w} \omega_{\mathbb{G}/\Mfg} ^{\otimes w}$, 
  and on the level of homotopy groups it induces an equivalence
    \[ \pi_{s,w}^\C(C\tau) \cong \Ext_{\BP_*\BP}^{2w-s,w}(\BP_*, \BP_*). \] \todo{There is a small $p$-completion issue here...}
    The equivalence of categories is due to Gheorghe--Wang--Xu \cite{ctauactalol}, though the above isomorphism of groups was first proven by Isaksen \cite[Proposition 6.2.5]{StableStems} and then upgraded to an isomorphism of rings with all higher structure by Gheorghe \cite{Ctau}.
  \item There is an equivalence of symmetric monoidal categories between $\SHCat$ and $\Syn_{\MU, ip}^{\even}$, where the latter is the category of \emph{even} $\MU$-\emph{synthetic spectra} constructed in \cite{Pstragowski}. Notably, Pstr\k{a}gowski's construction uses no algebraic geometry, and requires only knowledge of the homotopy commutative ring object $\MU$ in $\Sp$. The comparison between these two categories proceeds via the close relationship between $\MGL$ and $\MU$. 
\item The adjunction $i$ is affine in the sense that there is a commutative algebra $R^{\C}_\bullet$ in $\Sp_{ip}^{\Fil}$ and an equivalence of symmetric monoidal categories under $\Sp_{ip}^{\Fil}$
  \[  \SHCat \simeq \Mod ( \Sp_{ip}^{\Fil} ; R^{\C}_\bullet ). \]
  Moreover, the commutative algebra $R^\C_\bullet$ admits an explicit construction which uses no algebraic geometry. It is given by
  \[ R_\bullet^{\C} \coloneqq \mathrm{Tot}^* \left( \tau_{\geq 2\bullet} \MU_p^{\otimes * + 1} \right). \]
    This construction uses only the commutative algebra $\MU$ in $\Sp$, and again the comparison proceeds via the close relationship between $\MGL$ and $\MU$. This approach is due to Gheorghe--Isaksen--Krause--Ricka \cite{cmmf}.
\end{enumerate}

In fact, every aspect of the picture over $\C$ extends to $\R$ in the simplest reasonable way.
The reader who has previously studied $\R$-motivic spectra may find this rather surprising (as the authors did), and we suggest that this highlights the primacy of the Artin--Tate category over the Tate category.

\begin{enumerate}
\item There is a distinguished element of the $p$-complete homotopy groups over $\R$,
  $\ta \in \pi_{0,0,-1}\Ss_p$, which maps to $1$ under Betti realization \footnote{As above, the Betti realization map $\pi^{\R}_{0,0,-1}\Ss_p \to \pi^{C_2}_0\Ss_p$ is an isomorphism so $\ta$ is uniquely identified.}. We will construct $\ta$ in \Cref{sec:ta}.
\item The Betti realization functor factors through the category of $\ta$-local objects, and provides a symmetric monoidal equivalence
  \[ \Mod ( \SHRat ; \Ss_2[\ta^{-1}] ) \simeq \Sp_{C_2,i2}. \]
  This will be proven as the main theorem of \Cref{sec:homology}.  
\item The category $\SHRat$ can be equipped with the structure of a $\Sp_{i2}^{\Fil}$-algebra.
  More explicitly, this means we have a symmetric monoidal left adjoint
  \[ i^* : \Sp_{i2}^{\Fil} \to \SHRat \]
  which sends the shift map in $\Sp_{i2}^{\Fil}$ to $\ta$.
  As a consequence of this $C\ta$ acquires the structure of a commutative algebra.
  We may also regard $\SHRat$ as a $\Sp_{C_2}$-algebra via the functor $c_{\C/\R}$.
  Tensoring these two functors together we obtain a symmetric monoidal left adjoint
  \[ i^* : \Sp_{C_2, i2}^{\Fil} \to \SHRat. \]
    These statements will be proven in \Cref{sec:top} as part of \Cref{prop:top-diagram}.
\item The category of modules over $C\ta$ is equivalent to a renormalization of the derived category of an abelian category of equivariant $\BP_*\BP$-comodules. More precisely, we have an equivalence of presentably symmetric monoidal categories
  \[ \Mod ( \SHRat ; C\ta ) \simeq \Mod(\Sp_{C_2} ; \uZt) \otimes_{\Z} \IndCoh ( \Mfg ). \]
  This equivalence sends $C\ta \otimes \Ss^{p,q,w}$ to $\Sigma^{(p-w)+(q-w)\sigma} \uZt \otimes \omega_{\mathbb{G}/\Mfg} ^{\otimes w}$ and on the level of tri-graded rings of homotopy groups it induces an isomorphism \footnote{If one wants the tensor product can be moved outside the Ext, but then it must be taken in a derived sense.}
  \[ \pi_{p,q,w}^\R C\ta \cong \bigoplus_{w+a-s = p} \Ext_{\BP_*\BP}^{s,2w}(\BP_*, \BP_* \otimes \pi_{a + (q-w) \sigma}^{C_2} \uZt). \]
  This will be proven as the main theorem of \Cref{sec:cta}.
\item The adjunction $i$ is affine in the sense that there is a commutative algebra
  $R^{\R}_\bullet$ in $\Sp_{C_2,i2}^{\Fil}$ and an equivalence of presentably symmetric monoidal categories under $\Sp_{C_2,i2}^{\Fil}$
  \[ \SHRat \simeq \Mod ( \Sp_{C_2,i2}^{\Fil} ; R^{\R}_\bullet ). \]
  Moreover, the commutative algebra $R^\R_\bullet$ admits an explicit construction which uses no algebraic geometry. It is given by
  \[ R_\bullet^{\R} \coloneqq \mathrm{Tot}^* \left( P_{2\bullet} \MU_{\R,2}^{\otimes * + 1} \right), \]
  where $P_{n}$ is the functor which takes the $n^{\mathrm{th}}$ slice cover of a $C_2$ spectrum.
    This construction uses only the commutative algebra $\MU_\R$ in $\Sp_{C_2}$ introduced by Landweber \cite{LandweberI, LandweberII}. 
  The comparison proceeds via understanding the close relationship between $\MGL$ and $\MU_\R$ over $\R$.
  This will be proven as the main theorem of \Cref{sec:top}.
\end{enumerate}




Using the second special element of the tri-graded homotopy groups of the sphere, $a \in \pi_{0,-1,0}^\R \Ss$, we can delve further into the structure of $\SHRat$. In order to do this we begin with a digression on the element $a_{\sigma}$ in $C_2$-equivariant homotopy theory.

\begin{prop}
  The category $\Sp_{C_2}$ can be viewed as a 1-parameter family with coordinate $a_\sigma$, special fiber $\Sp$ and generic fiber $\Sp$ in the sense that:
  \begin{itemize}
  \item The category of $a$-local objects can be identified with spectra, i.e.
    there is an equivalence of presentably symmetric monoidal categories
    \[ \Mod ( \Sp_{C_2} ; \Ss[ a_{\sigma}^{-1} ] ) \simeq \Sp. \]
  \item The cofiber of $a_\sigma$, which we will denote $Ca_\sigma$, can be endowed with a commutative algebra structure and there is an equivalence of presentably symmetric monoidal categories
    \[ \Mod ( \Sp_{C_2} ; Ca_{\sigma} ) \simeq \Sp. \]
    \item There is a monoidal left adjoint
      \[ i^* : \Sp^{\Gr} \to \Sp_{C_2}, \]
      which sends $\o(1)$ to $\Ss^{\sigma}$ \footnote{The authors will return to the question of whether this functor can be upgraded to a monoidal functor in from filtered spectra in a future work.}. Moreover, this adjunction is affine in the sense that we have an equivalence of categories
      \[ \Sp_{C_2} \simeq \Mod ( \Sp^{\Gr} ; R_\bullet^{C_2}), \]
      where $R_n^{C_2} \simeq \Sigma\R \mathrm{P}^{-n-1}_{-\infty}$.
  \end{itemize}  
\end{prop}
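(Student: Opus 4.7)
The plan is to verify each of the three claims in turn, using standard tools of $C_2$-equivariant stable homotopy theory.

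For the first claim, inverting $a_\sigma$ in $\Ss$ produces the colimit
\[ \widetilde{EC_2} \simeq \colim\bigl(\Ss \xrightarrow{a_\sigma} \Ss^\sigma \xrightarrow{a_\sigma} \Ss^{2\sigma} \to \cdots\bigr), \]
the representation sphere of the infinite sign representation. Since $a_\sigma$ acts invertibly on any $\widetilde{EC_2}$-module, and the $\widetilde{EC_2}$-local equivalences coincide with the $\Phi^{C_2}$-equivalences, a classical theorem of Lewis--May--Steinberger identifies $\Mod(\Sp_{C_2}; \widetilde{EC_2}) \simeq \Sp$ via geometric fixed points.

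For the second claim, the standard cofiber sequence $(C_2)_+ \to \Ss \xrightarrow{a_\sigma} \Ss^\sigma$ in $\Sp_{C_2}$ identifies $Ca_\sigma \simeq \Sigma(C_2)_+$ as an underlying $C_2$-spectrum. A commutative algebra structure on $Ca_\sigma$ can be produced via the equivalence $\Sigma(C_2)_+ \simeq \Sigma^\sigma (C_2)_+$ (which holds because $(C_2)_+$-modules are underlying plain spectra, and so $\Sigma$ and $\Sigma^\sigma$ act identically on them) combined with the canonical $\E_\infty$-ring structure on $(C_2)_+$ itself coming from the group multiplication on $C_2$; alternatively it may be inherited from the filtered model produced in the third claim. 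Modules over $(C_2)_+$ in $\Sp_{C_2}$ are identified with $\Sp$ by the Wirthm\"uller equivalence (restriction to the trivial subgroup), and the same equivalence transports to $Ca_\sigma$.

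For the third claim, the existence of $i^*: \Sp^{\Gr} \to \Sp_{C_2}$ comes from the universal property of $\Sp^{\Gr} \simeq \QCoh(B\G_m)$ as the free presentable stable symmetric monoidal category on an invertible object, here applied to $\Ss^\sigma \in \Sp_{C_2}$. Affineness then reduces via Barr--Beck--Lurie to two checks on the right adjoint $i_*$: first, $i_*$ preserves colimits because $i^*$ sends the compact generator $\o(n)$ to the compact dualizable object $\Ss^{n\sigma}$; second, $i_*$ is conservative because the $\{\Ss^{n\sigma}\}_{n\in\Z}$ generate $\Sp_{C_2}$, as $(C_2)_+$ lies in the thick subcategory they span by the very cofiber sequence used above. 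This yields $\Sp_{C_2} \simeq \Mod(\Sp^{\Gr}; R^{C_2})$ with $R^{C_2} = i_*\Ss$. Unwinding the adjunction one reads off $R^{C_2}_n = \Map_{\Sp_{C_2}}(\Ss^{n\sigma}, \Ss) \simeq (\Ss^{-n\sigma})^{C_2}$.

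The remaining identification $(\Ss^{-n\sigma})^{C_2} \simeq \Sigma \R P^{-n-1}_{-\infty}$ is the main obstacle. It should follow from the isotropy separation cofiber sequence
\[ (\Ss^{-n\sigma})_{hC_2} \to (\Ss^{-n\sigma})^{C_2} \to \Phi^{C_2}(\Ss^{-n\sigma}), \]
where the left term is the Thom spectrum $(BC_2)^{-nL}$ of $-nL$ over $BC_2 = \R P^\infty$ (with $L$ the tautological line bundle) and the right term is $\Ss$, combined with James periodicity and Lin's theorem (the Segal conjecture for $C_2$) to control the formal limit as the bottom dimension goes to $-\infty$. The remaining parts of the proposition are formal applications of standard equivariant machinery.
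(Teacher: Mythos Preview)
Your overall strategy---verifying affineness via Barr--Beck and compact rigid generation---matches the paper's, but there is a genuine obstruction in your construction of $i^*$ for the third claim. You invoke the universal property of $\Sp^{\Gr}$ as the free presentable stable \emph{symmetric} monoidal category on an invertible object, applied to $\Ss^\sigma$. That universal property, however, produces a symmetric monoidal functor only when the target invertible object is \emph{strictly} invertible, i.e.\ has Euler characteristic~$1$; here $\chi(\Ss^\sigma) = 1 - [C_2] \neq 1$ in the Burnside ring, so no symmetric monoidal $i^*$ with $i^*(\o(1)) \simeq \Ss^\sigma$ can exist. This is exactly why the proposition asserts only a \emph{monoidal} left adjoint and an equivalence of plain (not symmetric monoidal) categories. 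The paper constructs $i^*$ by a different route: choose a map $S^1 \to B\Pic(\Sp_{C_2})$ picking out $\Ss^\sigma$, loop it to a monoidal functor $\Z \to \Pic(\Sp_{C_2})$, and tensor up to spectra.

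For the first two claims the paper's route is also slightly cleaner than yours. Rather than first identifying the algebras and then arguing about their module categories, it starts from the symmetric monoidal left adjoints $\Phi^{C_2}, \Phi^e : \Sp_{C_2} \to \Sp$, applies the rigidity criterion directly to get $\Mod(\Sp_{C_2}; R_i) \simeq \Sp$ with $R_i$ the image of $\Ss$ under the (lax symmetric monoidal) right adjoint, and only afterwards identifies $R_1 \simeq Ca_\sigma$ and $R_2 \simeq \Ss[a_\sigma^{-1}]$. This avoids your suspension bookkeeping: in the paper's convention $a_\sigma \in \pi_{-\sigma}^{C_2}\Ss$, so $Ca_\sigma = \mathrm{cofib}(\Ss^{-\sigma} \to \Ss) \simeq (C_2)_+$ directly (not $\Sigma(C_2)_+$), and the commutative algebra structure comes for free from the lax monoidal right adjoint rather than from the somewhat garbled argument you give for transporting the ring structure on $(C_2)_+$ across a suspension.
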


This result is certainly well known to experts. 
We give a proof in \Cref{app:boilerplate} as Examples \ref{exm:c2-underlying} and \ref{exm:c2-graded}.
Since therein the pair of functors $\Phi^e$ and $\Phi^{C_2}$ out of $\Sp_{C_2}$ become identified with modding out by $a_\sigma$ and inverting $a_\sigma$ respectively, this proposition produces a diagram of symmetric monoidal left adjoints
\begin{center}
  \begin{tikzcd}
    \Sp & & \Sp_{C_2} \ar[ll, "\Phi^{C_2}"']  \ar[dl, "{(-)[a_\sigma^{-1}]}"] \ar[dr, "Ca \otimes -"'] \ar[rr, "\Phi^e"] & & \Sp \\
     & \Mod ( \Sp_{C_2} ; \Ss[a_\sigma^{-1}] ) \ar[ul, "\simeq"'] & & \Mod ( \Sp_{C_2} ; Ca ) \ar[ur, "\simeq"] . & 
  \end{tikzcd}
\end{center}

We are now free to use the functor $c_{\C/\R}$ to push our description of $\Sp_{C_2}$ as a 1-parameter deformation into $\SHRat$ and obtain a description of that category as a 2-parameter deformation of $\Sp_{i2}$ with coordinates $\ta$ and $a$. For example, we can give $Ca$ the structure of a commutative algebra since $c_{\C/\R}(a_\sigma) = a$. Note that by \Cref{prop:comp} the commutative algebra structure obtained in this way is the same as the one coming from the equivalence $Ca \simeq \Spec(\C)$. 

While the 1-parameter deformations considered up to now had only a special and a generic fiber, when considered as a 2-parameter deformation $\SHRat$ has various ``limiting behavoirs'' which we presently study. Since each parameter can be set to either $0$ or $1$,
or left unspecified, there are 9 total categories of interest.
We summarize what we know in the following table:

{\renewcommand{\arraystretch}{1.4}
\begin{center}
  \begin{tabular}{|c|c|c|c|c|} \hline
    & unspecified & $\ta = 0$ & $\ta = 1$ \\\hline
    unspecified & $\SHRat$ & $\Mod(\Sp_{C_2, i2} ; \uZt) \otimes_{\Z} \IndCoh ( \Mfg )$ & $\Sp_{C_2,i2}$ \\\hline
    $a = 0$ & $\SHC^{\at}_{i2}$ & $\IndCoh ( \Mfg )_{i2}$ & $\Sp_{i2}$ \\\hline
    $a = 1$ & ? & $\Mod(\Sp ; \F_2[u_{2\sigma}]) \otimes_\Z \IndCoh ( \Mfg )$ & $\Sp_{i2}$ \\\hline
  \end{tabular}
\end{center}
}


The only identification in this table which we have not yet discussed is the one in the bottom middle.
However, one can easily obtain this from the identification of $C\ta$-modules and \Cref{lem:tensor-mod}:
\begin{align*}
  \Mod( \SHRat ; C\ta[a^{-1}] ) &\simeq \Sp_{i2} \otimes_{\Sp_{C_2,i2}} \Mod( \SHRat ; C\ta ) \\
  &\simeq \Sp_{i2} \otimes_{\Sp_{C_2,i2}} \Mod(\Sp_{C_2,i2} ; \uZt) \otimes_{\Z} \IndCoh ( \Mfg ) \\
  &\simeq \Mod(\Sp_{i2} ; \Phi^{C_2} \uZt) \otimes_{\Z} \IndCoh ( \Mfg ) \\
  &\simeq \Mod(\Sp ; \F_2[u_{2\sigma}]) \otimes_{\Z} \IndCoh ( \Mfg ) ,
\end{align*}
where the final step is just the identification of $\Phi^{C_2}\uZ$ with $\F_2[u_{2\sigma}]$ where $u_{2\sigma}$ is a polynomial generator in degree 2. As a corollary, we can give a description of the tri-graded homotopy groups of several objects in $\SHRat$ in terms of better understood categories.

\begin{cor}
  There are isomorphisms of rings:
  \begin{enumerate}
  \item $\pi_{p,q,w}^\R(Ca) \cong \pi_{p+q,w}^\C\Ss $,
  \item $\pi_{p,q,w}^\R(Ca \otimes C\ta) \cong \Ext_{\BP_*\BP}^{2w-p-q,w}(\BP_*, \BP_*) $,
  \item $\pi_{p,q,w}^\R(Ca[\ta^{-1}]_p) \cong \pi_{p+q}\Ss_p $,
  \item $\pi_{p,q,w}^\R(C\ta) \cong \bigoplus_{w+a-s = p} \Ext_{\BP_*\BP}^{s,2w}(\BP_*, \BP_* \otimes \pi_{a + (q-w) \sigma}^{C_2} \uZt)$,
  \item $\pi_{p,q,w}^\R(C\ta[a^{-1}]) \cong \bigoplus_{w+2a-s = p} \left( \F_2\{u^{2a}\} \otimes_{\F_2} \Ext_{\BP_*\BP}^{s,2w}(\BP_*, \BP_*/2)\right) $,    
  \item $\pi_{p,q,w}^\R(\Ss_2[\ta^{-1}]) \cong \pi_{p,q}^{C_2}\Ss_2$,
  \item $\pi_{p,q,w}(\Ss_2[a^{-1}, \ta^{-1}]) \cong \pi_{p}\Ss_2$.
  \end{enumerate}  
\end{cor}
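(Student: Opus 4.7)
The strategy is uniform throughout: each isomorphism is obtained by transporting the mapping-out from $\Ss^{p,q,w}$ across one of the symmetric monoidal equivalences established in the preceding deformation description, and then reading off the answer in the target category. Because every equivalence in question is symmetric monoidal and sends Picard spheres to the Picard spheres listed in \Cref{prop:comp}, the induced isomorphism on homotopy groups is automatically a ring isomorphism. The plan is simply to walk down the list.

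For (1)--(3) the plan is to use the base-change equivalence $\SHC^{\at} \simeq \Mod(\SHRat; Ca)$ together with the adjunction identity $\pi^\R_{p,q,w}(X) \cong [Ca \otimes \Ss^{p,q,w}, X]_{Ca\text{-mod}}$ for any $Ca$-module $X$. By the Picard table, $(-)_\C$ sends $\Ss^{p,q,w}_\R$ to $\Ss^{p+q,w}_\C$, which immediately yields (1). Then (2) follows by composing (1) with the $\C$-motivic formula $\pi^\C_{s,w}(C\tau) \cong \Ext^{2w-s,w}_{\BP_*\BP}(\BP_*,\BP_*)$, and (3) follows by composing (1) with the generic topological equivalence $\Mod(\SHCat; \Ss_p[\tau^{-1}]) \simeq \Sp_{ip}$. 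In both cases one uses that $\ta$ base-changes to $\tau$, which is forced by the Betti realization comparison.

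For (4) and (5) the plan is to invoke the $\R$-motivic $C\ta$-module equivalence
\[ \Mod(\SHRat; C\ta) \simeq \Mod(\Sp_{C_2, i2}; \uZt) \otimes_{\Z} \IndCoh(\Mfg) \]
sending $C\ta \otimes \Ss^{p,q,w}$ to $\Sigma^{(p-w)+(q-w)\sigma}\uZt \otimes \omega^{\otimes w}_{\mathbb{G}/\Mfg}$. Mapping into $\uZt \otimes \mathcal{O}$ and decomposing the internal $\Hom$ in the tensor-product category as a direct sum over the $C_2$-equivariant degree $a$ produces (4): the first tensor factor contributes $\pi^{C_2}_{a+(q-w)\sigma}\uZt$ while the second contributes $\Ext^{s,2w}_{\BP_*\BP}(\BP_*,\BP_*)$ with $s = w + a - p$. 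For (5), I would invert $a$ on both sides; by \Cref{lem:tensor-mod} and the identification $\Phi^{C_2}\uZt \simeq \F_2[u_{2\sigma}]$ recorded just above the corollary, the right-hand side becomes $\Mod(\Sp_{i2}; \F_2[u_{2\sigma}]) \otimes_{\Z} \IndCoh(\Mfg)$. The factor $\F_2\{u^{2a}\}$ in the answer then accounts for the powers of the polynomial generator $u_{2\sigma}$, while the coefficient $\BP_*/2$ arises because $\Phi^{C_2}\uZt$ has order $2$.

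For (6) and (7) the plan is to use the Galois reconstruction equivalence $\Mod(\SHRat; \Ss_2[\ta^{-1}]) \simeq \Sp_{C_2, i2}$, under which $\Ss^{p,q,w}$ maps to $\Ss^{p+q\sigma}$ by the Betti realization formula; this gives (6) directly. For (7), further inverting $a_\sigma$ on the $C_2$-equivariant side collapses to $\Sp_{i2}$ via $\Phi^{C_2}$ by the proposition presenting $\Sp_{C_2}$ as a $1$-parameter family, and $\Phi^{C_2}\Ss^{p+q\sigma} \simeq \Ss^p$ yields the stated answer. The main obstacle, modest as it is, lies in (5): one must carefully reconcile the shift $(q-w)\sigma$ with powers of $u_{2\sigma}$ and verify that the bookkeeping condition $w + 2a - s = p$ matches the intrinsic tri-grading on $\R$-motivic spheres. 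All other identifications reduce to tracking Picard elements through the table in \Cref{prop:comp}.
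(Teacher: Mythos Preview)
Your proposal is correct and follows essentially the same approach as the paper: each isomorphism is read off from the corresponding cell in the $3\times 3$ table of categorical equivalences by tracking the tri-graded spheres via the Picard formulas in \Cref{prop:comp}, with (4) and (5) quoting the homotopy formula packaged into \Cref{thm:Cta} and its $a$-inverted consequence computed immediately before the corollary. The paper in fact states the corollary without a written proof, treating it as immediate from the surrounding discussion.
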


We now turn to the category of $a$-local objects, only labelled ``?'' in the table above. It is one of the most mysterious actors in the story told in this paper and we shall devote \Cref{sec:a-local} to its study. As we shall see, it records a deformation of the category of spectra sitting between that which corresponds to the Adams--Novikov spectral sequence and that which corresponds to the classical Adams spectral sequence. The role it plays in mediating between these two spectral sequences remains to be understood.

Since $a = c_{\C/\R}(a_\sigma)$ and $a_\sigma = \b (a) $, we can construct a diagram of symmetric monoidal left adjoints
\begin{center} \begin{tikzcd}
    \SHR \ar[r, "{(-)[a^{-1}]}"] \ar[d,"\b"] &
    \Mod ( \SHR ; \Ss[a^{-1}] ) \ar[d,"\b"] \\    
    \Sp_{C_2} \ar[r, "\Phi^{C_2}"] &
    \Sp .
\end{tikzcd} \end{center}
As the diagram shows, the category $\Mod(\SHR; \Ss_2 [a^{-1}])$ can be viewed as the target category of an ``$\R$-motivic geometric fixed points'' functor.
Since $\Phi^{C_2} (\MU_\R) \simeq \MO$, one might guess that $\Mod(\SHRat; \Ss_2 [a^{-1}])$ is related to Pstragowski's synthetic category $\Syn_{\MO}$ \footnote{The spectrum $\MO$ is a sum of copies of $\F_2$, and as we shall see this implies $\Syn_{\MO} \simeq \Syn_{\F_2}$.}. Indeed, we construct a comparison functor.

\begin{prop}
  There is a symmetric monoidal left adjoint
  \[\Re_{\F_2} : \Mod(\SHRat; \Ss_2 [a^{-1}]) \to \Syn_{\F_2}\]
  which sends $\Ss^{p,q,w}$ to $\Ss^{p,w}$.
\end{prop}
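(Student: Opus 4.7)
The plan is to exploit the filtered-module presentation $\SHRat \simeq \Mod(\Sp_{C_2,i2}^{\Fil}; R^\R_\bullet)$ from item~(5) of the refined theorem, together with Pstragowski's description of $\Syn_{\F_2}$ as modules over the décalage $R^{\F_2}_\bullet := \Tot^*(\tau_{\geq \bullet}\, \F_2^{\otimes *+1})$ of the $\F_2$-Adams tower in $\Sp_{i2}^{\Fil}$.

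First, I would identify the source category with a filtered-module category over $\Sp_{i2}^{\Fil}$. Since $a = c_{\C/\R}(a_\sigma)$, and level-wise application of $\Phi^{C_2}$ yields an equivalence $\Sp_{C_2,i2}^{\Fil}[a_\sigma^{-1}] \simeq \Sp_{i2}^{\Fil}$, the $a$-localization of $\SHRat$ takes the form
\[
\Mod(\SHRat; \Ss_2[a^{-1}]) \;\simeq\; \Mod\!\left(\Sp_{i2}^{\Fil};\; \Phi^{C_2} R^\R_\bullet\right),
\]
with $\Phi^{C_2}$ applied level-wise. Next, I would compute $\Phi^{C_2} R^\R_\bullet$. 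Since $R^\R_\bullet = \Tot^*\!\left(P_{2\bullet}\, \MU_{\R,2}^{\otimes *+1}\right)$ and $\Phi^{C_2}$ is a symmetric monoidal left adjoint, the crux is the multiplicative identification $\Phi^{C_2}(P_{2n}\MU_\R) \simeq \tau_{\geq n}\MO$, which rests on the HHR description of the slice-associated graded of $\MU_\R$ in terms of $H\uZ$ in controlled degrees. Applied term-by-term, this gives
\[
\Phi^{C_2} R^\R_\bullet \;\simeq\; \Tot^*\!\left(\tau_{\geq \bullet}\,\MO_2^{\otimes *+1}\right),
\]
which is precisely the presentation of $\Syn_{\MO,i2}$.

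Finally, the canonical $E_\infty$-ring map $\MO \to \F_2$ coming from the $\F_2$-orientation of real bundles gives the desired comparison. Since $\MO$ is a wedge of suspensions of $\F_2$, this map induces an equivalence $\Syn_{\MO,i2} \simeq \Syn_{\F_2}$, and pushing forward modules along the induced map of filtered $E_\infty$-algebras produces the symmetric monoidal left adjoint $\Re_{\F_2}$. The assignment $\Ss^{p,q,w} \mapsto \Ss^{p,w}$ is a bookkeeping check: \Cref{prop:comp} gives $\Phi^{C_2}\Ss^{p+q\sigma} \simeq \Ss^p$, so after $a$-inversion the $(q-w)$-index (the $\sigma$-grading above the motivic weight) collapses, leaving the bidegree $(p,w)$ native to $\Syn_{\F_2}$. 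The principal obstacle is upgrading the slice-level identification $\Phi^{C_2}(P_{2\bullet}\MU_\R) \simeq \tau_{\geq \bullet}\MO$ to an equivalence of filtered $E_\infty$-algebras and verifying compatibility with the totalization defining $R^\R_\bullet$; this should follow from the HHR slice machinery together with the coherence results developed in the sections constructing item~(5).
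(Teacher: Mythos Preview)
Your overall architecture matches the paper's: pass to the filtered model, apply $\Phi^{C_2}$ levelwise, and compare to the d\'ecalage presentation of $\Syn_{\MO}\simeq\Syn_{\F_2}$. But two of your claimed equivalences are not equivalences, and this matters.

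First, $\Phi^{C_2}$ is a symmetric monoidal \emph{left} adjoint, so there is only a comparison map $\Phi^{C_2}\Tot^*(\,\cdots)\to\Tot^*\Phi^{C_2}(\,\cdots)$, not an identification. Second, and more seriously, the ``crux'' identification $\Phi^{C_2}(P_{2n}\MU_{\R})\simeq\tau_{\ge n}\MO$ is false. The slices of $\MU_{\R}$ are of the form $\Sigma^{k\rho}H\underline{\Z}$, and $\Phi^{C_2}H\underline{\Z}$ has $\pi_*\cong\F_2[y]$ with $|y|=2$; hence $\Phi^{C_2}$ of a single even slice already has homotopy in infinitely many degrees, and $\Phi^{C_2}P^{2n-1}\MU_{\R}$ is not $(n{-}1)$-truncated. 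What \emph{is} true, and what the paper uses, is only that $\Phi^{C_2}$ of a regular slice $2n$-connective object is $n$-connective (this is built into the definition of the regular slice filtration). That yields a factorization \emph{map} $\Phi^{C_2}(P_{2\bullet}\MU_{\R}^{\otimes *+1})\to\tau_{\ge\bullet}(\MO^{\otimes *+1})$, hence after totalization a map of filtered $E_\infty$-rings $\Phi^{C_2}R_\bullet\to\Tot^*\tau_{\ge\bullet}\MO^{\otimes *+1}$. Base-change along this ring map is what produces $\Re_{\F_2}$.

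If your two equivalences held, you would in fact prove that $\Mod(\SHRat;\Ss_2[a^{-1}])\simeq\Syn_{\F_2}$, i.e.\ that $\Re_{\F_2}$ is an equivalence. The paper explicitly notes that it is not: the class $u=u_\sigma/a_\sigma$ survives in $\pi_{*,*,*}^\R(C\ta[a^{-1}])$ and is non-nilpotent, which rules out any such identification with either a $\BP$- or $\F_2$-synthetic category. So the obstacle you flag at the end is not a coherence issue to be discharged by HHR machinery; the underlying pointwise statement is simply wrong, and you must downgrade both steps to maps rather than equivalences.
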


On the other side, an interesting functor into $\Mod(\SHRat; \Ss_2 [a^{-1}])$ may be constructed using the Bachmann--Hoyois motivic norm functors.

\begin{rec} \label{rec:norms}
  In \cite{norms}, Bachmann and Hoyois construct symmetric monoidal norm functors along finite \'etale maps.
  These norm functors may be thought of as an indexed tensor product and in the case of $\R$ they fit into the following diagram showing compatability with the Hill--Hopkins--Ravenel norm functors in equivariant homotopy theory \cite[Section 11]{norms}:
  \begin{center}
  \begin{tikzcd}
  \SHC \ar[r, "\mathrm{Nm}_{\C}^\R"] \ar[d, "\b"] & \SHR \ar[d, "\b"] \\
  \Sp \ar[r, "\mathrm{Nm}_e^{C_2}"] & \Sp_{C_2}.
  \end{tikzcd}
  \end{center}
  From \cite[{Example 3.5, Lemma 4.4, Example 4.10}]{norms} we can conclude that on bigraded spheres $\mathrm{Nm}_\C^{\R}(\Ss_\C^{s,w}) \simeq \Ss^{s,s,2w}$. The functor $\mathrm{Nm}_{\C}^\R$ is polynomial of degree 2 and when applied to a direct sum we have a formula \cite[Corollary 5.13]{norms}
  \[ \mathrm{Nm}_{\C}^\R(X \oplus Y) \simeq \mathrm{Nm}_{\C}^\R(X) \oplus i_*(X \otimes Y) \oplus \mathrm{Nm}_{\C}^\R(Y). \]
  Since $\mathrm{Nm}_{\C}^\R$ commutes with sifted colimits, it follows from the above that it restricts to a functor on the categories of Artin--Tate objects
  \[\mathrm{Nm}_{\C}^\R : \SHR^{\at} \to \SHC^{\at}.\]
\end{rec}

While the norm functor $\mathrm{Nm}_{\C}^\R$ is not exact, it becomes exact after $a$ is inverted.

\begin{prop}
  The composite
  \[ \SHC_{i2}^{\at} \xrightarrow{\mathrm{Nm}_\C^\R} \SHRat \xrightarrow{(-)[a^{-1}]} \Mod ( \SHRat ; \Ss_2[a^{-1}] ) \]
  is a symmetric monoidal left adjoint.
  On Picard elements this composite sends $\Ss_2^{s,w}$ to $\Ss_2^{s,*,2w}[a^{-1}]$.
  The map $\tau$ is sent to $\ta^2$.
\end{prop}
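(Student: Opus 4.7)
The plan is to verify the three assertions of the proposition largely independently. Symmetric monoidality is immediate, since the composite of two symmetric monoidal functors ($\mathrm{Nm}_\C^\R$ by \Cref{rec:norms}, $(-)[a^{-1}]$ by general properties of smashing localizations) is symmetric monoidal. To show that the composite is also a left adjoint, I would invoke the general fact that a functor between presentable $\infty$-categories preserves all small colimits if and only if it preserves sifted colimits and finite coproducts. Preservation of sifted colimits follows because $\mathrm{Nm}_\C^\R$ preserves sifted colimits (noted in \Cref{rec:norms}) and $(-)[a^{-1}]$ is a left adjoint.

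The main content of left adjointness is preservation of finite coproducts, and this is where the $a$-inversion becomes essential. Using the direct sum formula
\[ \mathrm{Nm}_\C^\R(X \oplus Y) \simeq \mathrm{Nm}_\C^\R(X) \oplus i_*(X \otimes Y) \oplus \mathrm{Nm}_\C^\R(Y) \]
from \Cref{rec:norms}, the obstruction to additivity is the cross-effect $i_*(X \otimes Y)$. The key observation is that for any $Z \in \SHC^{\at}_{i2}$, the image $i_*(Z)$ is canonically a module over $i_*(\o_\C) \simeq Ca$, under the identification $\SHC^{\at}_{i2} \simeq \Mod(\SHR^{\at}_{i2}; Ca)$ recalled earlier. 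Since $Ca$ is the cofiber of $a$, it is $a$-torsion, so $i_*(X \otimes Y)[a^{-1}] \simeq 0$ and the cross-effect disappears after localization. Combined with preservation of sifted colimits, this shows the composite preserves all small colimits.

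The Picard computation is then a direct application of the formula $\mathrm{Nm}_\C^\R(\Ss_\C^{s,w}) \simeq \Ss_\R^{s,s,2w}$ of \Cref{rec:norms}: once $a$ is inverted one has $S^\C \simeq \Ss$ in the localized category, so that $\Ss^{p,q,w}[a^{-1}]$ is independent of $q$. This is precisely the content of the notation $\Ss^{s,*,2w}[a^{-1}]$.

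The final claim $F(\tau) \simeq \ta^2$ is, to my mind, the subtlest point. A tri-degree bookkeeping based on the Picard formula places both $\mathrm{Nm}_\C^\R(\tau)$ and $\ta^2$ in $\pi_{0,0,-2}^\R\Ss_2$. Using the compatibility of $\mathrm{Nm}_\C^\R$ with the Hill--Hopkins--Ravenel norm under Betti realization one computes
\[ \b\bigl(\mathrm{Nm}_\C^\R(\tau)\bigr) \simeq \mathrm{Nm}_e^{C_2}\bigl(\b(\tau)\bigr) \simeq \mathrm{Nm}_e^{C_2}(1) \simeq 1 \simeq \b(\ta^2), \]
where the last equality uses the defining property of $\ta$. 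The main obstacle is that Betti realization alone need not detect equality in $\pi_{0,0,-2}^\R\Ss_2$; I would address this by using the explicit construction of $\ta$ from \Cref{sec:ta} in terms of the group scheme $Q$ together with the fact that $Q$ sits inside the Weil restriction $R_{\C/\R}\G_m$ as the kernel of the norm character. The map $\tau$ is built from the group structure of $\G_m$, so after applying $\mathrm{Nm}_\C^\R$ it factors through the ``$Q$-part'' of $R_{\C/\R}\G_m$ twice, which one can unwind to match $\ta \otimes \ta$ after inverting $a$.
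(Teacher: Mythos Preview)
Your arguments for symmetric monoidality, the left adjoint property, and the Picard computation match the paper's proof essentially verbatim: sifted colimits plus the direct sum formula with the $Ca$-module observation for the cross-effect.

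The divergence is in the final claim. You correctly compute $\b(\mathrm{Nm}_\C^\R(\tau)) = 1 = \b(\ta^2)$, but then assert that ``Betti realization alone need not detect equality in $\pi_{0,0,-2}^\R\Ss_2$'' and propose a workaround via the Weil restriction $R_{\C/\R}\G_m$. This is where you go astray: Betti realization \emph{is} an isomorphism in this degree. Indeed, \Cref{prop:weak-one-sided} (equivalently, the vanishing of $\pi_{*,*,w}^\R C\ta$ for $w<0$) shows that multiplication by $\ta$ induces an isomorphism $\pi_{0,0,-1}^\R\Ss_2 \to \pi_{0,0,-2}^\R\Ss_2$, and more generally that $\pi_{0,0,0}^\R\Ss_2 \to \pi_{0,0,-n}^\R\Ss_2$ is an isomorphism for all $n\geq 0$. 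Hence inverting $\ta$---which is Betti realization---is injective on $\pi_{0,0,-2}^\R\Ss_2$, and your Betti computation already finishes the proof. This is exactly what the paper does.

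Your alternative route through $Q \subset R_{\C/\R}\G_m$ and ``unwinding'' the norm is not a complete argument as written: the phrase ``factors through the $Q$-part twice'' is suggestive but does not constitute a proof, and making it precise would be considerably more work than simply invoking the isomorphism above.
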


In conclusion, we have a pair of functors 
\[\SHC^{\at}_{i2} \xrightarrow{\mathrm{Nm}_\C^\R(-)[a^{-1}]} \Mod( \SHRat; \Ss_2 [a^{-1}]) \xrightarrow{\Re_{\F_2}} \Syn_{\F_2},\]
which are each the identity on spectra after inverting $\ta$ and $\tau$.
It would be very interesting to obtain a better computational understanding of the $a$-local category and the behavior of these two functors. More ambitiously, we ask:

\begin{qst}
  Is there a good description of the full subcategory of $a$-local objects in $\SHRat$?
  What geometric information does the $a$-localization of a smooth projective variety $X$ remember?
\end{qst}


The careful reader will have noticed that we switched from $p$-completing when discussing $\C$ to $2$-completing when discussing $\R$. The reason for this is that at an odd prime the category $\SHR^{\at}_{ip}$ admits the following simple description in terms of $\SHCat$.

\begin{prop}
  There is an equivalence of categories
  \[ \SHR^{\at}_{ip} \simeq \Sp_{ip} \times \SHCat \times \SHCat. \]
\end{prop}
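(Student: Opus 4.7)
The plan is to exploit the idempotent decompositions of $\pi_{0,0,0}^{\R}\Ss_{ip}$ that become available once $2$ is invertible. By the discussion in the previous subsections, the element $[\C:\R] \in \pi_{0,0,0}^{\R}\Ss_{ip}$ satisfies $[\C:\R]^2 = 2[\C:\R]$, so at an odd prime $p$ we obtain a pair of orthogonal idempotents $e = [\C:\R]/2$ and $1 - e$ in the endomorphism ring of the unit.

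First, these idempotents induce a splitting
\[\SHR^{\at}_{ip} \simeq \mathcal{A} \times \mathcal{B},\]
where $\mathcal{A}$ is the full subcategory on which $e$ acts as the identity (equivalently, the category of $Ca$-modules) and $\mathcal{B}$ is the orthogonal $a$-local subcategory. By the base-change equivalence $\SHC^{\at} \simeq \Mod_{\SHR^{\at}}(Ca)$ established earlier, $\mathcal{A} \simeq \SHCat$, producing one of the two $\SHCat$ factors in the statement.

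Second, I would argue that $\mathcal{B} \simeq \Sp_{ip} \times \SHCat$. The crucial structural input is that $\Phi^{C_2}\MUR \simeq \MO$ is a sum of shifts of $\mathrm{H}\F_2$, and so vanishes after $p$-completion at an odd prime. Combined with the topological description of $\SHR^{\at}_{ip}$ as modules over $R_\bullet^{\R}$ provided by property (GR), this causes enough degeneration in the $a$-local slice tower to yield a second idempotent decomposition of $\mathcal{B}$. One summand, detected by a motivic analogue of geometric fixed points, is equivalent to $\Sp_{ip}$; the other, built using the norm functor $\mathrm{Nm}^\R_\C$ together with its additivity formula
\[\mathrm{Nm}^\R_\C(X \oplus Y) \simeq \mathrm{Nm}^\R_\C(X) \oplus i_*(X \otimes Y) \oplus \mathrm{Nm}^\R_\C(Y),\]
is equivalent to $\SHCat$.

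The main obstacle is this second decomposition. I expect it will require an odd-primary analysis of the filtered algebra $R_\bullet^{\R}[a^{-1}]$, showing that it splits as a product of two commutative algebras whose module categories realize the two summands described above. Once that is established, combining both splittings yields the desired equivalence $\SHR^{\at}_{ip} \simeq \Sp_{ip} \times \SHCat \times \SHCat$.
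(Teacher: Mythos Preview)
Your first splitting is mis-identified, and this error propagates through the rest of the argument. The idempotent $e = [C_2]/2$ cuts $\SHR^{\at}_{ip}$ into a \emph{plus part} (where $[C_2]=2$) and a \emph{minus part} (where $[C_2]=0$), but these are not what you claim. In the minus part one has $\eta\rho = 2 - [C_2] = 2$, so $\rho$ is a unit, and Bachmann's theorem $\SHR[\rho^{-1}] \simeq \Sp$ identifies the minus part with $\Sp_{ip}$ --- just one factor, not $\Sp_{ip} \times \SHCat$. Conversely, the plus part is \emph{not} equivalent to $\SHCat$: it turns out to be $\SHCat \times \SHCat$.

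The specific mistake is the parenthetical ``equivalently, the category of $Ca$-modules''. The equivalence $\Mod(\SHR^{\at};Ca) \simeq \SHC^{\at}$ does not say that the full subcategory of $\SHR^{\at}_{ip}$ on which $e$ acts as $1$ is $\SHCat$; it describes a module category over a non-idempotent algebra, and the forgetful functor $\SHCat \to \SHR^{\at}_{ip}$ is not fully faithful. In fact, in the plus part one shows that $a=0$ (so $\Spec(\C)$ splits as $\Ss^{0,0,0}_{p,\eta} \oplus \Ss^{1,-1,0}_{p,\eta}$), and then a motivic Adams spectral sequence computation gives $\pi^{\R}_{s,q,w}\Ss_{p,\eta}=0$ for $q$ odd. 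This vanishing produces a block decomposition of the plus part into spheres with $q$ even and $q$ odd, and each block is shown to be equivalent to $\SHCat$ via base change to $\C$. None of the machinery you propose for the second splitting --- the filtered model, $\Phi^{C_2}\MU_\R \simeq \MO$, or the norm --- is needed or used.
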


This will be proven in \Cref{sec:odd}.

\subsection{Potpourri}\

In this subsection, we collect a number of additional results about the category of Artin--Tate $\R$-motivic spectra that are worth highlighting.


\subsubsection{Trigraded $\R$-motivic homology and Steenrod algebra}
From a computational point of view, an important first step in studying the trigraded homotopy groups of $\R$-motivic spectra is the computation of the trigraded homology of a point and the trigraded dual Steenrod algebra. Leaning on known bigraded computations, we make these computations in \Cref{sec:homology}.
%

\begin{prop}
As trigraded commutative rings, with $|\ta|=(0,0,-1)$, there are isomorphisms
$$\pi^{\R}_{p,q,w} \HZt \cong \left(\pi^{C_2}_{p+q\sigma}\uZt\right)[\ta],$$
$$\pi^{\R}_{p,q,w} \HFt \cong \left(\pi^{C_2}_{p+q\sigma}\uFt\right)[\ta].$$
Here, $\pi^{C_2}_{p+q\sigma}\uZt$ and $\pi^{C_2}_{p+q\sigma}\uFt$ are placed in degrees $(p,q,0)$
\end{prop}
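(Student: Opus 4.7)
The plan is to combine the well-known bigraded Tate computations of $\pi^{\R}_{*,*}\HZt$ and $\pi^{\R}_{*,*}\HFt$ (Voevodsky, Hu--Kriz, and others) with the identifications $\b\HZt \simeq \uZt$ and $\b\HFt \simeq \uFt$, using the element $\ta$ to interpolate between the Tate slice $q = w$ and the Artin slice $w = 0$ of the Artin-Tate tri-grading.

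First, I would identify the $\ta$-inverted answer. By part (GT) of \Cref{thm:main}, the localization $\SHRat[\ta^{-1}]$ is equivalent to $\Sp_{C_2,i2}$, and under this equivalence $\HZt$ corresponds to $\uZt$. Consequently,
\[
\pi^{\R}_{p,q,w}\HZt[\ta^{-1}] \;\cong\; \pi^{C_2}_{p+q\sigma}\uZt,
\]
independent of $w$, which matches the $\ta$-inversion of the proposed polynomial ring $\bigl(\pi^{C_2}_{p+q\sigma}\uZt\bigr)[\ta]$.

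Second, I would show $\pi^{\R}_{*,*,*}\HZt$ embeds into its $\ta$-inversion with image exactly $\ta^{-w} \cdot \pi^{C_2}_{p+q\sigma}\uZt$ for $w \leq 0$ and zero for $w > 0$. The Tate slice $q = w$ is handled directly by the classical bigraded $\R$-motivic Eilenberg--Mac Lane computation, which yields both $\ta$-torsion-freeness and the correct weight concentration on that slice. To propagate off the Tate slice, one uses the cofiber sequence $\Spec(\C) \to \Ss \xrightarrow{a} \Ss^{0,1,0}$ from \Cref{eq:pic-C}: its associated long exact sequences relate $\pi^{\R}_{*,*,*}\HZt$ with $\pi^{\R}_{*,*,*}(\Spec(\C) \otimes \HZt)$, and the latter is controlled by $\C$-motivic data via the equivalence $\SHCat \simeq \Mod(\SHRat; \Spec(\C))$, where the analogous $\C$-motivic result is known. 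An induction on the power of $a$ needed to reach the Tate slice then carries the conclusion to all tri-degrees.

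The $\HFt$ case follows by the identical argument with $\uFt$ in place of $\uZt$, or by tensoring the cofiber sequence $\HZt \xrightarrow{2} \HZt \to \HFt$ with Artin-Tate spheres. The main technical obstacle is the second step: transporting $\ta$-torsion-freeness and the weight concentration from the Tate slice to the full Artin-Tate trigrading via the $a$-cofiber sequence. Once this is in hand, the multiplicative polynomial ring structure on $\pi^{\R}_{*,*,*}\HZt$ is pinned down by the commutative ring structure on $\HZt$ together with the match to its $\ta$-inverted form.
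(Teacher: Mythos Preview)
Your computational core in step~2 is essentially the paper's own argument: start from the known bigraded Tate slice $q=w$ (Voevodsky's motivic cohomology of $\R$ and $\C$), use the cofiber sequence $\Spec(\C)\otimes(\Ss^{\C})^{\otimes n-1}\to(\Ss^{\C})^{\otimes n-1}\to(\Ss^{\C})^{\otimes n}$ to induct outward in the $q$-direction (controlling the third term by $\C$-motivic input via the $Ca$-module identification), and thereby establish vanishing for $w>0$ and that Betti realization is an isomorphism for $w\le 0$. The paper proves exactly these two lemmas and then assembles the ring isomorphism using $\b(\ta)=1$.

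The genuine gap is in step~1. Part (GT) of \Cref{thm:main} is \Cref{thm:comparison-invert-ta}, whose proof (via \Cref{lem:Cta-neg-w} and \Cref{prop:weak-one-sided}) \emph{depends on} \Cref{thm:pthomology}: the vanishing of $\pi^{\R}_{*,*,w}C\ta$ for $w<0$ is established by reducing to the $E_1$-page of the $\HFt$-Adams spectral sequence and invoking the very computation you are trying to prove. So invoking (GT) here is circular. Fortunately you do not need it: drop step~1 entirely and rephrase step~2 as directly proving (i) $\pi^{\R}_{p,q,w}\HZt=0$ for $w>0$ and (ii) $\b:\pi^{\R}_{p,q,w}\HZt\to\pi^{C_2}_{p+q\sigma}\uZt$ is an isomorphism for $w\le 0$. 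The induction on $n$ along the $a$-cofiber sequence does both, and then the ring map $(\pi^{C_2}_{*}\uZt)[\ta]\to\pi^{\R}_{*,*,*}\HZt$ is obtained by inverting $\b$ on the Artin slice $w=0$ and using $\b(\ta)=1$. Note also that the Tate slice alone cannot certify $\ta$-torsion-freeness, since multiplication by $\ta$ moves you off that slice; this is another reason to prove (i) and (ii) directly rather than frame things as an embedding into an $\ta$-inversion you have not yet computed.
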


\begin{rmk}\label{rmk:ta-explanation}
  There are well-known elements $\tau \in \pi_{1,-1-1} ^{\R} \HFt$ and $u_{\sigma} \in \pi_{1-\sigma} ^{C_2} \uFt$. Write $u \in \pi_{1,-1,0} ^{\R} \HFt$ for the corresponding element.
  Then there is a relation (see \Cref{cor:rels})
  \[\tau = \ta \cdot u,\]
  i.e.
  \[\mathrm{tau} = \mathrm{ta} \cdot u.\]
  This was the original motivation for our choice of the character $\ta$.
\end{rmk}

\begin{prop} 
The trigraded dual Steenrod algebra $\pi^{\R}_{*,*,*} \left(\HFt \otimes \HFt\right)$ is isomorphic to
$$\left(\HFt\right)_{***} [\tau_0,\tau_1,\cdots,\xi_1,\xi_2,\cdots]/(\tau_i^2 = \ta (a \tau_{i+1} + u \xi_{i+1} + a \tau_0 \xi_{i+1}))$$
Here, $|\tau_i|=(2^i,2^{i}-1,2^{i}-1)$ and $|\xi_{i}|=(2^i-1,2^i-1,2^i-1)$.
\end{prop}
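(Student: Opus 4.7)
The plan is to lift the Hu--Kriz computation of the $C_2$-equivariant dual Steenrod algebra along the Artin--Tate deformation of \Cref{thm:main}. Concretely, I would (i) produce generators $\tau_i, \xi_j$ in the stated tridegrees, (ii) identify the quadratic relation via the generic and special fiber descriptions, and (iii) argue that the resulting presentation is exhaustive.

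For step (i), Voevodsky's bigraded $\R$-motivic dual Steenrod algebra produces classes of the expected bigradings, and these lift to the trigraded Artin--Tate category (for instance by applying $c_{\C/\R}$ to the Hu--Kriz equivariant classes, or by Galois descent from the $\C$-motivic dual Steenrod algebra). The individual $(p,q)$ splitting of each stem is forced by Betti realization: Hu--Kriz give $|\tau_i^{C_2}| = 2^i + (2^i-1)\sigma$ and $|\xi_j^{C_2}| = (2^j-1)(1+\sigma)$, and these are the only trigradings compatible with both \Cref{cor:betti-iso} and the generic-fiber equivalence $\Mod(\SHRat;\Ss_2[\ta^{-1}]) \simeq \Sp_{C_2,i2}$.

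For step (ii), inverting $\ta$ in $\SHR^{\at}_{i2}$ lands in $\Sp_{C_2,i2}$, so the $\ta$-periodization of the relation must agree with the Hu--Kriz identity $\tau_i^2 = a_\sigma\tau_{i+1} + u_\sigma\xi_{i+1} + a_\sigma\tau_0\xi_{i+1}$. On the other hand, the special fiber $\Mod(\SHRat;C\ta)$ is, by \Cref{thm:main} (AD), governed by equivariant $\BP_*\BP$-comodules, and the image of $\HFt \otimes \HFt$ there is the classical mod $2$ dual Steenrod algebra, so $\tau_i^2 = 0$ after setting $\ta = 0$. These two constraints force a relation of the form $\tau_i^2 = \ta\cdot W$ whose $\ta$-inversion matches the Hu--Kriz right-hand side; since $c_{\C/\R}(a_\sigma) = a$ and $c_{\C/\R}(u_\sigma) = u$ (by construction), the unique such lift is $W = a\tau_{i+1} + u\xi_{i+1} + a\tau_0\xi_{i+1}$.

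For step (iii), modding by $\ta$ yields the classical mod $2$ dual Steenrod algebra base-changed over the special-fiber coefficients, which introduces no additional generators; hence the $\ta$-Bockstein spectral sequence for $\HFt \otimes \HFt$ degenerates and the presentation above is complete. The main obstacle is ensuring that the relation is correct on the nose rather than modulo higher powers of $\ta$ (i.e.\ ruling out corrections of the form $\tau_i^2 = \ta W + \ta^2 W'$); this is handled by a direct degree count using the explicit coefficient ring $\pi^\R_{*,*,*}\HFt \cong (\pi^{C_2}_{*+*\sigma}\uFt)[\ta]$ from \Cref{thm:pthomology}, which leaves no room for $W'$ in the relevant tridegree.
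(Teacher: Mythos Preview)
Your approach is genuinely different from the paper's, and considerably more indirect. The paper's argument is almost a change of notation: Voevodsky has already computed the \emph{bigraded} $\R$-motivic dual Steenrod algebra, including the relation $\tau_i^2 = \rho\tau_{i+1} + \tau\xi_{i+1} + \rho\tau_0\xi_{i+1}$. The generators $\tau_i,\xi_i$ are represented by maps out of Tate spheres $\Ss^a \otimes \G_m^{\otimes b}$, so they already live in tridegrees of the form $(p,q,q)$; no lifting is required, and the bigraded relation holds verbatim as a trigraded identity. The known splitting $\HFt\otimes\HFt \simeq \bigoplus \Sigma^{a,b,b}\HFt$ then gives the module structure over the trigraded coefficients of \Cref{thm:pthomology}, and substituting $\rho=\ta\cdot a$ and $\tau=\ta\cdot u$ from \Cref{cor:rels} into Voevodsky's relation produces the stated formula directly.

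Your route instead reconstructs the relation from the generic and special fibers of the deformation. In principle this can be made to work, but there are real issues. First, it invokes the parts of \Cref{thm:main} that are proved \emph{later} in the paper (and whose proofs in turn rely on the splitting of $\HFt\otimes\HFt$, so one must be careful about logical order). Second, your production of generators via $c_{\C/\R}$ is wrong as stated: $c_{\C/\R}$ sends $\Ss^{p+q\sigma}$ to $\Ss^{p,q,0}$, so it lands in weight zero, whereas $\tau_i,\xi_i$ have weight $2^i-1>0$; relatedly, the assertion $c_{\C/\R}(u_\sigma)=u$ does not parse, since $c_{\C/\R}(\uFt)$ is an Artin object and is not $\HFt$. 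Third, your special-fiber claim that one recovers ``the classical mod $2$ dual Steenrod algebra'' is not quite right: what one actually sees after tensoring with $C\ta$ is the free module on monomials $\prod\tau_j^{\epsilon_j}\xi_j^{a_j}$ over the equivariant coefficients, with $\tau_i^2$ becoming zero only because the relation is $\ta$-divisible. The degree-count you invoke at the end to exclude $\ta^2$-corrections is precisely the bookkeeping that the paper's approach avoids entirely by taking the exact bigraded relation as input.
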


\begin{rmk}
The reader may notice that the `negative cone' in the $C_2$-equivariant homology of a point appears in the above formulas.  This is a feature of the Artin--Tate category, as opposed to the Tate category.
\end{rmk}

We expect that a computer could be coaxed into computing the $\HFt$-Adams spectral sequence for the tri-graded homotopy of $C\ta$, and that the natural maps between this spectral sequence and the $\HFt$-Adams spectral sequence for the tri-graded homotopy of the sphere would provide many of the differentials in the latter. This technique would likely be the most efficient way of computing both tri-graded $\R$-motivic stems and (in the long-run) the bi-graded $C_2$-equivariant homotopy groups. However, as the quad-graded nature of this computation would begin to tax our ability to visualize data it is unlikely that such a computation could be accomplished without the aid of a well-developed software suite for manipulating spectral sequence data.

%
%

\subsubsection{The effective slice filtration}
In \Cref{sec:eff}, we study Voevodsky's effective slice filtration. 
The effective slice filtration associates to any object $E \in \SHk$ a tower
\[ \dots \to f_{2} E \to f_1 E \to f_0 E \to f_{-1} E \to f_{-2} E \to \dots \to E.\]
The main result of the section is the following proposition, proven as \Cref{thm:eff-fil} which identifies the Betti realization of this tower.

\begin{prop}
  The functor $i_* : \SHRat \to \Sp_{C_2, i2}^{\Fil}$ sending $E$ to the tower
  \[ \dots \to \Map_{\SHRat} (\Ss_2 ^{0,0,1}, E) \xrightarrow{\ta} \Map_{\SHRat} (\Ss_2 ^{0,0,0}, E) \xrightarrow{\ta} \Map_{\SHRat} (\Ss_2 ^{0,0,-1}, E) \to \dots \]
  is equivalent to the functor $\b \circ f_\bullet : \SHRat \to \Sp_{C_2, i2}^{\Fil}$ taking $E \in \SHRat$ to the Betti realization of its effective slice tower.
\end{prop}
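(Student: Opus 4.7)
The plan is to identify both functors appearing in the proposition with the right adjoint $i_*$ of the symmetric monoidal left adjoint $i^* : \Sp_{C_2, i2}^{\Fil} \to \SHRat$ furnished by \Cref{thm:main}(GR). The tower description is immediate from adjunction: since $i^*$ is symmetric monoidal and sends the shift map of $\Sp_{C_2, i2}^{\Fil}$ to $\ta$, it carries the unit placed in filtration degree $n$ to $\Ss^{0,0,n}_2$, so adjunction yields $(i_* E)_n \simeq \Map_{\SHRat}(\Ss^{0,0,n}_2, E)$ with the displayed $\ta$-structure maps. This identifies the first functor of the proposition with $i_*$, so everything reduces to showing $i_* \simeq \b \circ f_\bullet$.

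The key input is that, by the construction in \Cref{sec:top}, the commutative algebra $R^{\R}_\bullet = \Tot^*(P_{2\bullet} \MU_{\R,2}^{\otimes *+1})$ is the Betti realization of the slice-decaled $\MGL$-Adams tower for the sphere, and the $\R$-motivic analog of the Voevodsky--Levine theorem identifies this $\MGL$-Adams tower with the effective slice tower of $\Ss_2 \in \SHR$. Hence $R^{\R}_\bullet \simeq \b(f_\bullet \Ss_2)$, which under the Galois reconstruction equivalence $\SHRat \simeq \Mod(\Sp_{C_2, i2}^{\Fil}; R^{\R}_\bullet)$ reads $i_* \Ss_2 \simeq \b(f_\bullet \Ss_2)$, since $i_*$ is restriction of scalars along the unit map. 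To promote this from the sphere to general $E \in \SHRat$, I will appeal to the fact---built into the construction of \Cref{sec:top}---that the reconstruction equivalence realizes $E$ as $\b(f_\bullet E)$ equipped with its natural $R^{\R}_\bullet$-module structure coming from the $\MGL$-Adams tower acting on the slice tower of $E$.

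The main obstacle will be making this final step fully precise: verifying that the Galois reconstruction equivalence of \Cref{sec:top} is explicitly realized by $E \mapsto \b(f_\bullet E)$, naturally in $E$. This rests on a careful matching of the effective slice filtration in $\SHR$ with the $\ta$-adic filtration in $\SHRat$, which in turn comes down to the compatibility of $\MGL$ with $\MU_\R$ under Betti realization, including on the level of even slice covers $P_{2\bullet}$. Once this functorial identification is in hand, the proposition follows immediately by combining the adjunction description of $i_*$ with the natural equivalence $i_* E \simeq \b(f_\bullet E)$.
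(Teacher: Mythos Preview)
Your proposal has a genuine circularity problem. In the paper, the identification of $i_*\Ss_2$ with $R^\R_\bullet$ (and more generally the explicit description of the Galois reconstruction equivalence) is deduced \emph{from} the proposition you are trying to prove, not the other way around. Concretely, the proof that $i_*\MGL_2^{\otimes k}\simeq P_{2\bullet}\MU_{\R,2}^{\otimes k}$ (\Cref{prop:P-MGL-power}) invokes \Cref{thm:eff-fil} to rewrite $i_*$ in terms of $\b\circ f_\bullet$, and this is the key step in the eventual identification $i_*\Ss_2\simeq R^\R_\bullet$. So you cannot appeal to ``the construction in \Cref{sec:top}'' to supply $i_*\Ss_2\simeq\b(f_\bullet\Ss_2)$: that is exactly what \Cref{thm:eff-fil} is being used to establish.

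Even setting aside the circularity, your ``main obstacle'' is the entire content of the proposition. You want the reconstruction equivalence to be realized by $E\mapsto\b(f_\bullet E)$, but the reconstruction equivalence as constructed in \Cref{prop:top-diagram} is simply $E\mapsto i_*E$; saying the two agree is precisely the statement to be proved. The paper's actual argument is more direct and avoids both issues. It constructs a span
\[
i_* \;\longleftarrow\; \diag\circ i_*\circ f_\bullet \;\longrightarrow\; \b\circ f_\bullet
\]
of lax symmetric monoidal functors and shows each leg is an equivalence: the first because $\Ss_2^{0,0,n}$ is $n$-effective, and the second by combining the $\ta$-local equivalence \Cref{thm:comparison-invert-ta} with \Cref{prop:eff-condition}, which characterizes effective $n$-connectivity of Artin--Tate objects purely in terms of the vanishing of $\pi^\R_{p,q,w}(C\ta\otimes E)$ for $w<n$. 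This characterization is the substantive ingredient you are missing; once you have it, the comparison is immediate and requires no appeal to the identification of $R^\R_\bullet$.
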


\begin{rec}
  Voevodsky has defined the ``rigid homotopy groups'' \cite[Definition 5.1]{VoeOpenProbs} of an $\R$-motivic spectrum $X$ to be
  \[\pi_{p,q,w} ^{\R,rig} X \coloneqq \pi_{p,q,w} ^{\R} s_w X,\]
  where $s_w X$ is $n$th effective slice of $X$, i.e. the cofiber of $f_{n+1} X \to f_n X$.
\end{rec}

Using the notion of rigid homotopy groups, he observed the phenomenon of \emph{algebraic degeneration} in motivic homotopy theory.
Since the associated graded of this tower also computes the homotopy groups of $X \otimes C\ta$, it follows from the proposition that $\pi_{p,q,w} ^{\R,rig} X \cong \pi_{p,q,w} ^{\R} X \otimes C\ta$ when $X \in \SHRat$.
This shows that Voevodsky's ideas about algebraic degeneration line up with notions coming from the cofiber of $\ta$ (or $\tau$) philosophy.

The proposition also implies that the so-called $C_2$-effective spectral sequence (see \cite{Kong}) is interchangeable with the $\ta$-Bockstein spectral sequence.


\subsubsection{The functor $\nur$}
%
%
In \Cref{sec:nu}, we will construct a lax symmetric monoidal functor
\[\nur : \Sp_{C_2, i2} \to \SHRat,\]
which is a section of the Betti realization functor and sends $\Ss_2^{n\rho}$ to $\Ss_2^{n,n,n}$.
This functor is defined similarly to Pstragowski's synthetic analog functor \cite[Definition 4.3]{Pstragowski}.
Unlike the section $c_{\C/\R}$ of Heller--Ormsby, the functor $\nur$ is not exact.
However, it is better adapted to the construction of interesting $\R$-motivic spectra, as the following result shows.

\begin{prop}
  There are equivalences of commutative algebras,
  \begin{center}
  \begin{tabular}{lll}
    $ \nur \uFt \simeq \HFt,  \qquad\qquad$ &
    $ \nur \uZt \simeq \HZt, \qquad\qquad$ &    
    $ \nur \MU_{\R,2} \simeq \MGL_2, $ \\
    $ \nur \ku_{\R,2} \simeq \kgl_2, $ &
    $ \nur \ko_{C_2,2} \simeq \kq_2.$
  \end{tabular}
  \end{center}
\end{prop}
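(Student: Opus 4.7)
The plan is to first establish the base case $\nur \MU_{\R,2} \simeq \MGL_2$, and then reduce the remaining four identifications to it by presenting each target spectrum as an iterated quotient of $\MU_{\R,2}$ by a regular sequence of elements of $\pi_{*\rho}^{C_2} \MU_\R$. The strategy closely parallels Pstragowski's analogous identifications $\nu \MU \simeq \MGL$, $\nu \H\F_p \simeq \HFt$ (mod notation) in the $\C$-motivic setting.

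For the base case, the construction of $\nur$ as a synthetic-analog functor is designed so that, for any $X \in \Sp_{C_2,i2}$ whose $\MU_\R$-homology is projective over the Real bordism coefficient ring and whose $\MU_\R$-Adams spectral sequence collapses in an appropriate sense, $\nur X$ is characterized by the property that it Betti-realizes to $X$ and lifts the $\MU_\R$-Adams tower of $X$ to the $\MGL$-Adams tower in $\SHRat$. Applying this to $X = \MU_{\R,2}$, I would observe that $\MGL_2$ Betti-realizes to $\MU_{\R,2}$ and has an $\MGL$-Adams tower that Betti-realizes to the $\MU_\R$-Adams tower of $\MU_{\R,2}$. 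Hence $\nur \MU_{\R,2} \simeq \MGL_2$ as commutative algebras, which settles the third identification.

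Next, I would derive the remaining four equivalences by induction on the length of regular sequences. Following the Hu--Kriz / Hill--Hopkins--Ravenel machinery, $\uZt$ arises from $\MU_{\R,2}$ by killing a regular sequence in $\pi_{*\rho}^{C_2}\MU_{\R}$; further killing $2$ gives $\uFt$. Similarly, $\ku_{\R,2}$ and $\ko_{C_2,2}$ are obtained from $\MU_{\R,2}$ by killing analogous regular sequences (combined with inverting the appropriate Bott elements). On the motivic side, the same regular sequences lift via the work of Hopkins--Morel, Spitzweck, and Bachmann--Hoyois to present $\HZt$, $\HFt$, $\kgl_2$, $\kq_2$ as iterated quotients of $\MGL_2$. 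Since $\nur$ is lax symmetric monoidal, once I know that $\nur$ sends the cofiber sequence $\Sigma^v M \xrightarrow{x} M \to M/x$ to the corresponding motivic cofiber sequence (whenever $M$ is $\MU_\R$-projective and $x$ acts freely on $(\MU_\R)_* M$), all four remaining identifications fall out by matching up the equivariant and motivic regular sequences under Betti realization.

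The main obstacle is precisely this quotient-preservation property of $\nur$: one must verify that for $\MU_\R$-projective $M$ and a non-zero-divisor $x \in \pi^{C_2}_{*\rho}\MU_{\R}$ on $(\MU_\R)_* M$, the map $\nur(x)$ is well-defined in $\SHRat$ and has cofiber $\nur(M/x)$. This is a synthetic-analog flatness statement, which requires carefully tracking the $\MU_\R$-Adams filtrations before and after killing $x$, and showing that the relevant Tor-term vanishing that holds topologically persists under $\nur$. This is the technical heart of the argument; once it is established, the five equivalences follow by combining the base case with the inductive quotient construction.
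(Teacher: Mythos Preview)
Your approach is genuinely different from the paper's, and it has a real gap in the $\kq_2$ case.

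The paper does not argue via regular sequences at all. Instead it introduces the notion of a \emph{slice simple} object: $X \in \SHRat$ is slice simple if $\Map_{\SHRat}(\Ss_2^{0,0,n},X) \to \b(X)$ factors through an equivalence with $P_{2n}\b(X)$. The key criterion (Proposition~\ref{prop:slice-simple-nu}) is that $L_{\MGL_2} X \simeq L_{\MGL_2}\nur(\b(X))$ as commutative algebras if and only if $\MGL_2 \otimes X$ is slice simple. One then checks this condition directly for each of the five spectra. For $\MGL_2, \HZt, \HFt, \kgl_2$, the tensor $\MGL_2 \otimes E$ splits as pure suspensions of $E$, reducing to slice simplicity of $E$ itself, which follows from Heard's slice comparison theorem (for $\MGL_2$, $\kgl_2$) and the trigraded homology of a point (for $\HZt$, $\HFt$). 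For $\kq_2$, the paper uses $\kq_2 \otimes C\eta \simeq \kgl_2$ together with $\eta = 0$ in $\pi_{0,1,1}^\R \MGL_2$ to exhibit $\MGL_2 \otimes \kq_2$ as a retract of $\MGL_2 \otimes \kgl_2$.

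Your regular-sequence strategy is viable for $\MGL_2$, $\HZt$, $\HFt$, and $\kgl_2$: the paper's property~(4) of $\nur$ (Theorem~\ref{thm:nu-properties}) gives exactly the cofiber-preservation criterion you need, and since $\piu_{n\rho-1}^{C_2}\MU_{\R,2} = 0$, the hypothesis is automatic for quotients by elements of $\pi_{*\rho}^{C_2}\MU_{\R}$. The filtered colimit over partial quotients is handled by property~(2). However, your claim that $\ko_{C_2,2}$ is obtained from $\MU_{\R,2}$ by killing a regular sequence (plus inverting Bott elements) is not established anywhere and is almost certainly false: already non-equivariantly, $\ko$ is not a quotient of $\MU$ by a regular sequence. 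The $\kq_2$ case genuinely requires a different idea, such as the paper's retract trick via $C\eta$.

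A secondary issue: your approach yields equivalences of underlying spectra, but the statement asserts equivalences of \emph{commutative algebras}. The quotient construction does not produce commutative algebra maps, so you would need an additional argument to match the $\E_\infty$ structures. The paper's slice-simple criterion is designed so that the comparison map $\tau_{\geq 0}^{\cn} X \to X$ and $\tau_{\geq 0}^{\cn} X \to \nur(\b(X))$ are maps of commutative algebras from the start, sidestepping this problem entirely.
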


On the one hand, this proposition suggests that each of the $2$-complete $\R$-motivic homology theories above is not particularly far from being purely topological.
On the other hand, it suggests that one may profitably define $\R$-motivic analogs of $2$-complete $C_2$-equivariant homology theories by applying $\nu_\R$.
For example, if we had a good definition of $2$-complete $C_2$-equivariant connective topological modular forms $\tmf_{C_2, 2}$, then we could define a spectrum of $\R$-motivic modular forms as $\nu_{\R} \tmf_{C_2, 2}$. Unfortunately, no such definition is currently known.

\subsubsection{Further deformations}

One of our motivations for this project was the hope that a deformation-theoretic description of Artin--Tate $\R$-motivic stable homotopy theory would suggest other profitable deformations of classical stable homotopy theories.

In \Cref{app:def-const}, we show how a $1$-parameter deformation of a stable presentably symmetric monoidal category $\CC$ may be associated to a choice of object $E \in \CC$ and filtration $\{\CC_{\geq k}\}$ of $\CC$.
The case of $\SHCat$ is associated to the case where $E = \MU_{p}$, $\CC = \Sp_{ip}$ and $\CC_{\geq k}$ consists of the $2k$-connective objects. 
The case of $\SHRat$, on the other hand, is associated to the case where $E = \MU_{\R, 2}$, $\CC = \Sp_{C_2, i2}$ and $\CC_{\geq k}$ consists of the regular slice $2k$-connective objects.
Note that while the completions are necessary for the link to motivic homotopy theory, for the purpose of studying classical homotopy theory one may just as well work with integral deformations.

This suggests several other deformations that may be profitable to study.
\begin{enumerate}
\item We can take the deformation of $C_{2^n}$-equivariant homotopy theory with respect to the norm $\mathrm{N}_{C_2} ^{C_{2^n}} \MU_\R$ and the even slice filtration.
\item At odd primes, one could try to construct the deformation associated to the hypothetical spectrum $\mathrm{BP}_{\mu_p}$ and the even regular slice filtration.
\item One could take the deformation of $\Sp_{C_p}$ associated to $\underline{\mathbb{F}}_p$ and the regular slice filtration.
At the prime $2$, this is connected to a variant of the $\underline{\mathbb{F}}_2$-Adams spectral sequence.
Dylan Wilson has suggested that this variant may be more tractable at odd primes than the $\underline{\mathbb{F}}_p$-Adams spectral sequence, considering his forthcoming work with Krishanu Sankar proving that $\underline{\mathbb{F}}_p \otimes \underline{\mathbb{F}}_p$ is not a free $\underline{\mathbb{F}}_p$-module.
\item The deformation of $\Syn_{\MU}$ based on $\nu \F_p$ and an appropriately chosen filtration would likely shed much light on the motivic Adams spectral sequence over $\C$. Such a category might be called a ``bisynthetic''.
\end{enumerate}

%


\subsection*{Notations and conventions}\ 



Throughout this paper the term category will refer to an $\infty$-category as developed by Joyal and Lurie. In some places we use the term $1$-category, which is short-hand for 1-truncated $\infty$-category. We will also assume the reader is familiar with higher algebra as developed in \cite{HA}.

Throughout this paper, filtered and graded objects will be ubiquitous.
We adopt the convention that a filtered object in a category $\CC$ is a diagram of the form
  \[ \cdots \to C_2 \to C_1 \to C_0 \to C_{-1} \to C_{-2} \to \cdots, \]
i.e. that the maps decrease the index variable. We provide a more complete introduction to filtered objects in \Cref{app:fil}, where we set up notation for several standard constructions.


Let $\CC$ denote a stable, presentably symmetric monoidal category.
We will let $\CC_p$ denote the category of $p$-complete objects in $\CC$; this category acquires a symmetric monoidal structure through the completion of the symmetric monoidal structure on $\CC$.
Similarly, if $X$ is an an object of $\CC$, we let $X_p$ denote the $p$-completion of $X$.
If $\CC$ has a set of compact generators, then we let $\CC_{ip}$ denote the ind-completion of the full subcategory of $\CC$ generated under finite colimits and retracts by the $p$-completions of compact objects. In the situation where the unit is compact and all compact objects are dualizable, this definition admits the following simplification: $\CC_{ip} \simeq \Mod ( \CC ; \o_p)$. The tensor product can then be described as the relative tensor product over $\o_p$. The reason for this equivalence is that tensoring with a dualizable object commutes with limits, so for $X$ dualizable $X_p \simeq X \otimes \o_p$.

Our reason for using $\CC_{ip}$ over $\CC_p$ is that, even if the unit in $\CC$ is compact, the unit in $\CC_p$ may not be compact. On the other hand, compactness of the unit in $\CC$ implies compactness of the unit in $\CC_{ip}$. This will make certain arguments more direct in the $ip$ case.
The reader who strongly prefers the usual notion of $p$-completion will be relieved to know that in convenient cases (such as those in which we work throughout this paper) $(\CC_{ip})_p \simeq \CC_p$. Therefore, all the main theorems above admit $p$-complete analogs. 

When $\CC$ is a stable category and $X,Y \in \CC$ are objects, we will typically let $\Map_{\CC} (X,Y)$ denote the spectrum of maps from $X$ to $Y$.
One exception is when $\CC$ comes with a natural enrichment to $C_2$-spectra, such as when $\CC = \SHR$. In that case, we will regard $\Map_{\CC} (X,Y)$ as a $C_2$-spectrum.
When we want to access the underlying space of maps, we shall use the notation $\Omega^{\infty} \Map_{\CC} (X,Y)$.









\subsection*{Acknowledgements}\

We thank
Tom Bachmann,
Mark Behrens,
Dan Isaksen,
Piotr Pstr\k{a}gowski and
Zhouli Xu
for helpful conversations.
During the course of this work, the middle author was supported by NSF grant DMS-1803273,
and the last author by an NSF GRFP fellowship under Grant No. 1745302.

\section{Constructing the element ta} \label{sec:ta}
The map $\tau$ and its properties are the most striking feature of the category of $p$-complete motives over $\C$. In this section we construct a map $\ta$ which plays the role of $\tau$ over $\R$ and verify its basic property: its Betti realization is $1$. 

\begin{thm}\label{thm:ta}
  For all primes $p$, there is a class $\ta \in \pi_{0,0,-1} ^\R \Ss_p$ with the following properties:
  \begin{enumerate}
    \item Under Betti realization, $\ta$ goes to $1 \in \pi_{0} ^{C_2} \Ss_p$.
    \item Under base change to $\C$, $\ta$ goes to $\tau \in \pi_{0,-1} ^\C \Ss_p$.
  \end{enumerate}
\end{thm}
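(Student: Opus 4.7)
The plan is to construct $\ta$ explicitly using the group scheme structure on $Q$, exploiting the observation that the $p^n$-torsion $Q[p^n]$ is canonically isomorphic to the \emph{constant} étale group scheme $\underline{\Z/p^n}$ over $\R$. This is special to $Q$: a direct computation from $Q = \Spec(\R[x,y]/(x^2+y^2-1))$ shows that the Galois action on $Q(\C) \cong \C^\times$ is $z \mapsto \bar z^{-1}$, which fixes every root of unity. Consequently, we have canonical inclusions $\underline{\Z/p^n} \hookrightarrow Q$ of $\R$-group schemes, together with the distinguished system of $\R$-points $\zeta_{p^n} = (\cos(2\pi/p^n), \sin(2\pi/p^n)) \in Q(\R)$ that pick out the generators under these identifications.

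From this, I would extract $\ta$ via the Kummer short exact sequence
\[ 0 \to \underline{\Z/p^n} \to Q \xrightarrow{[p^n]} Q \to 0 \]
of étale group schemes. After $p$-completion, this yields a compatible tower of fiber sequences in $\SHRat$, and the system $\{\zeta_{p^n}\}$ determines a compatible class in the inverse limit whose Kummer connecting map produces the desired $\ta \in \pi^\R_{0,0,-1} \Ss_p$. Schematically, $\ta$ is a ``motivic logarithm'' of the canonical generator of $T_p(Q) \cong \Z_p$ (untwisted, because the torsion is constant), made precise via the $p$-completed Kummer boundary.

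The two properties then follow by functoriality. Betti realization carries the entire construction to the topological Kummer fiber sequence $\underline{\Z/p^n} \to S^1 \xrightarrow{[p^n]} S^1$ of $C_2$-spaces with trivial action, whose $p$-completed inverse limit is the identity of $\Ss^1_p$; hence $\b(\ta) = 1 \in \pi^{C_2}_0 \Ss_p$. For base change to $\C$, the Galois twist disappears: $Q$ becomes $\G_m$ and $\underline{\Z/p^n}$ becomes $\mu_{p^n}$, and the analogous construction with $\mu_{p^n} \hookrightarrow \G_m$ is precisely the Hu--Kriz--Ormsby definition of $\tau$ in \cite{HKO}. The main obstacle is the middle step: the Kummer sequence is only étale-exact, so upgrading it to a genuine fiber sequence in $\SHRat$ requires either invoking étale comparison after $p$-completion via \cite{TomEtaleI, TomEtaleII}, or arguing directly that the Nisnevich-local failure of exactness is invisible after $p$-completion. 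The latter is expected because $\underline{\Z/p^n}$, being a finite disjoint union of copies of $\Spec(\R)$, is already Nisnevich-represented and behaves identically to its étale sheafification inside $\SHRat$.
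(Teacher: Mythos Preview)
Your key observation—that the $p$-power torsion in $Q$ is constant over $\R$, equivalently that all roots of unity lie in $Q(\R)$—is exactly right and is precisely the paper's starting point. The paper phrases this as the inclusion $S^1 = Q(\R) \subset Q(\C) \cong \C^\times$, and then picks the same compatible system $\{\zeta_{p^k}\}$ you describe.

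Where you diverge is in the packaging, and this is where you manufacture an obstacle that is not actually there. You frame the construction via the Kummer fiber sequence and then flag that it is only \'etale-exact. The paper never needs $\underline{\Z/p^k} \to Q \xrightarrow{[p^k]} Q$ to be a fiber sequence in any topology. Instead it forms the commutative square of motivic spaces
\begin{center}
\begin{tikzcd}
\Spec(\R) \coprod \Spec(\R) \ar[r] \ar[d, "{(\zeta_{p^k},1)}"'] & \Spec(\R) \ar[d, "1"] \\
Q \ar[r, "{[p^k]}"'] & Q
\end{tikzcd}
\end{center}
and takes horizontal \emph{cofibers}, producing $\ta_k : S^1 \to Q/[p^k]$. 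The only input needed to interpret this stably is the elementary \Cref{lem:sus-pk}: the map $\Sigma^\infty[p^k] : \Ss^{1,0,1} \to \Ss^{1,0,1}$ is homotopic to multiplication by $p^k$, checked by Betti realization together with the isomorphism $\pi^\R_{0,0,0}\Ss \cong \pi^{C_2}_0\Ss$ of \Cref{cor:betti-iso}. With that in hand, $Q/[p^k]$ is stably $\Ss^{1,0,1}/p^k$, so $\ta_k \in \pi^\R_{0,0,-1}\Ss/p^k$, and one passes to the limit. No \'etale comparison, no $p$-completion subtleties, and the somewhat vague ``Kummer connecting map'' is replaced by an explicit cofiber map.

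Your arguments for (1) and (2) are essentially the paper's. For (2), base change to $\C$ turns the square above into the $\G_m$-square defining $\tau$. For (1), the paper also exploits that $\b(Q) \simeq S^1$ carries trivial $C_2$-action (your $z \mapsto \bar z^{-1}$ computation), but is a bit more careful than ``trivial action, so done'': it first reads off from (2) that the \emph{underlying} of $\b(\ta)$ is $\b(\tau)=1$, and then argues separately that each $\b(\ta_k)$, being the stabilization of an unstable map $S^1 \to S^1/p^k$ of trivial-action $C_2$-spaces, lands in the image of $\Z/p^k \hookrightarrow \pi^{C_2}_0 \Ss/p^k$ and is therefore determined by its underlying.
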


Surprisingly, as with the element $\tau$ in previous work, we will not need to use any information about the construction of $\ta$ besides properties (1) and (2) outside this section.
Since the construction of $\ta$ will be completely analogous to the construction of $\tau$ over $\C$, we begin by recalling that construction. The construction below is inspired by \cite[Remark on p. 22]{HKO} and \cite[Section 4]{TomEtaleII}.

\subsection{Constructing $\tau$ over $\C$}\

We begin by fixing a compatible sequence of primitive $(p^k)^{\mathrm{th}}$ roots of unity
$\{\zeta_{p^k}\}_{k \geq 0}$. 
We may then construct the following diagram of varieties over $\C$
\begin{center}
  \begin{tikzcd}
    \Spec(\C) \coprod \Spec(\C) \ar[d,"{(\zeta_{p^k},1)}"] \ar[r] & \Spec(\C) \ar[d,"1"] \\
    \G_m \ar[r,"{[p^k]}"] & \G_m .
  \end{tikzcd}
\end{center}
After passing to the associated diagram of motivic spaces we can add a third column by taking cofibers:
\begin{center}
  \begin{tikzcd}
    \Spec(\C) \coprod \Spec(\C) \ar[d,"{(\zeta_{p^k},1)}"] \ar[r] & 
    \Spec(\C) \ar[d, "1"] \ar[r] & 
    S^1 \ar[d,"\tau_k"] \\
    \G_m \ar[r,"{[p^k]}"] & \G_m \ar[r] & \G_m /{[p^k]}.
  \end{tikzcd}
\end{center}
Using the compatibility of the chosen primitive $(p^k)^{\mathrm{th}}$ roots of unity,
the maps $\tau_k$ so defined are compatible in the sense that there are commutative diagrams
\begin{center}
  \begin{tikzcd}
    S^1 \ar[r,"\mathrm{id}"] \ar[d,"\tau_k"] & S^1 \ar[d,"\tau_{k-1}"] \\
    \G_m /{[p^k]} \ar[r] & \G_m /{[p^{k-1}]}.
  \end{tikzcd}
\end{center}


Stabilizing and using the fact that $\Sigma^\infty [p^k] : \Ss^{1,1} \to \Ss^{1,1}$ is equivalent to $p^k$ over $\C$,\footnote{This follows from \Cref{cor:betti-iso}, since the Betti realization of $[p^k]$ is clearly $p^k$.} we find that we have constructed a compatible system of elements $\tau_k \in \pi_{0,-1} ^\C \Ss/p^k$. Passing to the limit, we obtain the element $\tau \in \pi_{0,-1} ^\C \Ss_p$.

\begin{prop} \label{prop:tau-1}
  For some choice of $\{\zeta_{p^k}\}_{k \geq 0}$, we have $\b(\tau) = 1$. \footnote{It seems likely that the choice of roots of unity which yields $1$ is $ \exp\left( \frac{2\pi i}{p^k} \right)$.}
\end{prop}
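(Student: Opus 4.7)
The plan is to compute each $\b(\tau_k) \in \pi_0 \Ss/p^k \cong \Z/p^k$ and then pass to the inverse limit to obtain $\b(\tau) \in \pi_0 \Ss_p \cong \Z_p$. Since $\b$ is an exact symmetric monoidal functor, applying it to the defining cofiber diagram for $\tau_k$ produces an analogous diagram of spectra: $\Sigma^\infty_+ \G_m$ becomes $\Sigma^\infty_+ S^1 \simeq \Ss \oplus \Ss^1$, the map $[p^k]$ restricts to the identity on the $\Ss$-summand and to multiplication by $p^k$ on $\Ss^1$, and $\zeta_{p^k}, 1 \in \G_m(\C)$ become specified points of $S^1$. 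The bottom horizontal cofiber is therefore $\Ss^1/p^k$ and the top is $\Ss^1$, so $\b(\tau_k)$ represents a class in $\pi_1(\Ss^1/p^k) \cong \pi_0 \Ss/p^k \cong \Z/p^k$.

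To identify this class I would retain the unstable Moore space model $M(\Z/p^k,1) = D^2 \cup_{p^k} S^1$, where $\Sigma^\infty M(\Z/p^k,1) \simeq \Ss^1/p^k$. Tracing through the mapping cone construction, $\b(\tau_k)$ is represented by the loop in $M(\Z/p^k,1)$ based at the center of $D^2$ that runs radially outward to $\zeta_{p^k} \in \partial D^2$ and then radially back from $1 \in \partial D^2$; these two boundary points are identified in $M(\Z/p^k,1)$ precisely because $\zeta_{p^k}^{p^k} = 1$. Filling the sector of $D^2$ between the two radii homotopes this loop onto the short boundary arc in $\partial D^2$ from $1$ to $\zeta_{p^k}$, which the $p^k$-fold attaching map stretches into a full generator of $\pi_1(S^1) \to \pi_1 M(\Z/p^k,1) \cong \Z/p^k$.

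For the standard choice $\zeta_{p^k} = \exp(2\pi i/p^k)$ this computation yields $\b(\tau_k) = \pm 1$ in $\Z/p^k$, and the inverse limit then produces $\b(\tau) = \pm 1 \in \Z_p$. If the sign comes out to be $-1$, we simply replace the compatible system $\{\zeta_{p^k}\}$ by $\{\zeta_{p^k}^{-1}\}$, which is again a compatible family of primitive roots, to arrange $\b(\tau) = 1$. The only real obstacle is the orientation bookkeeping in the Moore-space picture; working with the unstable model rather than the stable splitting $\Sigma^\infty_+ S^1 \simeq \Ss \oplus \Ss^1$ (which can disguise the geometric content of $\b(\tau_k)$) makes the computation essentially immediate from the fact that the $p^k$-fold cover of $S^1$ turns the short angular arc of width $2\pi/p^k$ into a full loop.
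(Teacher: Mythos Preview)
Your argument is correct and follows the same overall outline as the paper: compute $\b(\tau_k)$ unstably as an element of $\pi_1$ of the Betti realization of $\G_m/[p^k]$, deduce it is a unit in $\Z/p^k$, and then adjust the system of roots of unity. The difference lies only in how the unstable computation is carried out. The paper identifies $\C^\times/[p^k]$ with $B\mu_{p^k}$ via the classifying map of the principal $\mu_{p^k}$-bundle $[p^k]:\C^\times\to\C^\times$, and then observes that the composite $S^1\to B\mu_{p^k}$ is adjoint to $(\zeta_{p^k},1):S^0\to\mu_{p^k}$, hence hits a generator; this shows $\b(\tau)$ is a $p$-adic unit, and the full $\Z_p^\times$-action on compatible systems is then invoked to arrange $\b(\tau)=1$. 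Your approach instead models the cofiber concretely as the Moore space $S^1\cup_{p^k}D^2$ and reads off the class by a sector-filling homotopy, obtaining the sharper conclusion $\b(\tau_k)=\pm 1$ directly; this lets you get away with the single adjustment $\zeta_{p^k}\mapsto\zeta_{p^k}^{-1}$ rather than the full unit-group action. Both routes are short; yours gives more explicit information at the cost of the orientation bookkeeping you flag, while the paper's avoids that bookkeeping by only claiming ``unit'' and absorbing the ambiguity into the $\Z_p^\times$-action.
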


\begin{proof}
  We will show that $\b(\tau) \in \pi_0 \Ss_p = \Z_p$ is a $p$-adic unit.
  Then, using the action of $Aut(\mu_{p^\infty}) = \Z_p^\times$ on systems of $(p^k)^{\mathrm{th}}$ roots of unity, we may change $\tau$ by multiplication by any $p$-adic unit.

It now suffices to show that the Betti realizations of the unstable maps $\tau_k : S^1 \to \G_m / [p^k]$ are surjective on $\pi_1$. Since $[p^k] : \C^\times \to \C^\times$ is a principal $\mu_{p^k}$-fibration, it is classified by a map $\C^\times \to B\mu_{p^k}$ and we can form the following diagram:
\begin{center}
  \begin{tikzcd}
    S^0 \ar[d, "{(\zeta_{p^k},1)}"] \ar[r] & 
    S^0 \ar[d, "{(\zeta_{p^k},1)}"] \ar[r] & 
    * \ar[d] \ar[r] & 
    S^1 \ar[d] \\
    \mu_{p^k} \ar[r] &
    \C^\times \ar[r,"{[p^k]}"] & 
    \C^\times \ar[d] \ar[r] & 
    \C^\times / [p^k] \ar[dl, dashed] \\
    & & B\mu_{p^k} 
  \end{tikzcd}
\end{center}

The dashed map $\C^\times / [p^k] \to B\mu_{p^k}$ is an isomorphism on $\pi_1$ after Betti realization, so it suffices to show that the composite map $S^1 \to \C^\times / [p^k] \to B\mu_{p^k}$ is surjective on Betti-realized $\pi_1$. This follows from the fact that the map $S^1 \to B\mu_{p^k}$ is adjoint to the map $S^0 \xrightarrow{(\zeta_{p^k},1)} \mu_{p^k}$.
\end{proof}

\subsection{Constructing $\ta$ over $\R$}\

We now imitate the construction of $\tau$ above to construct $\ta$ and prove \Cref{thm:ta}.
The key point is that there are two forms of $\G_m$ over $\R$.
While the roots of unity $\zeta_n$ do not lie in
\[ \{\pm 1\} \subset \G_m (\R) \subseteq \G_m (\C) = \C^\times, \]
the other form of $\G_m$ over $\R$ (which is the ``algebraic circle'', $Q$, given by $\{x^2 + y^2 = 1\}$) has
\[ S^1 = Q(\R) \subseteq Q(\C) \cong \G_m (\C) = \C^\times. \]
As such, we are free to imitate the construction of $\tau$ above with $\G_m$ replaced by the algebraic circle $Q$.

Let $\{\zeta_{p^k}\}_{k \geq 0}$ be the system of primitive $(p^k)^{\mathrm{th}}$ roots of unity satisfying the conclusion of \Cref{prop:tau-1}. As before, we form the diagram of $\R$-motivic spaces:
\begin{center}
  \begin{tikzcd}
    \Spec(\R) \coprod \Spec(\R) \ar[d,"{(\zeta_{p^k},1)}"] \ar[r] & 
    \Spec(\R) \ar[d, "1"] \ar[r] & 
    S^1 \ar[d,"\ta_k"] \\
    Q \ar[r,"{[p^k]}"] &
    Q \ar[r] &
    Q /{[p^k]}.
  \end{tikzcd}
\end{center}
Before finishing the construction and proving \Cref{thm:ta} we need a short lemma which identifies the map $[p^k]$.

\begin{lem}\label{lem:sus-pk}
  The map $\Sigma^\infty [p^k] : \Ss^{1,0,1} \to \Ss^{1,0,1}$ is homotopic to $p^k : \Ss^{1,0,1} \to \Ss^{1,0,1}$.
\end{lem}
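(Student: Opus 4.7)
The plan is to apply \Cref{cor:betti-iso}, which supplies an isomorphism $\b : \pi_{0,0,0}^\R \Ss \xrightarrow{\sim} \pi^{C_2}_0 \Ss$. Since $\Ss^{1,0,1}$ is invertible, $\Sigma^\infty[p^k]$ is an element of this source, so it will suffice to show that $\b(\Sigma^\infty[p^k]) = p^k$ in the Burnside ring $\pi^{C_2}_0 \Ss$.

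The first task is to identify $\b(Q)$ explicitly as a pointed $C_2$-space. Betti realization sends $Q$, pointed at the identity $(1,0)$ of its group scheme structure, to $Q(\C) = \{(x,y) \in \C^2 : x^2+y^2=1\}$ with $C_2$ acting by componentwise complex conjugation. Using the parametrization $a = x+iy$, with $a^{-1} = x-iy$, this identifies $Q(\C)$ with $\C^\times$ equipped with the involution $a \mapsto \overline{a^{-1}}$. The fixed points of this involution comprise the unit circle $S^1 \subset \C^\times$, and the retraction $z \mapsto z/|z|$ is $C_2$-equivariant (a short direct check). Thus $\b(Q) \simeq \Ss^1$ with trivial $C_2$-action, consistent with the formula for $\b$ on Picard elements in \Cref{prop:comp}.

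Under this identification, $\b([p^k])$ becomes the $p^k$-power map on $S^1$ as a multiplicative subgroup of $\C^\times$. This map has ordinary degree $p^k$ and also degree $p^k$ on fixed points. Since endomorphisms of the unit in $\Sp_{C_2}$ are detected by the pair of integers $(\text{underlying degree}, \text{fixed point degree})$, and the embedding $\Z \hookrightarrow \pi^{C_2}_0 \Ss$ sends $n$ to $(n,n)$, we conclude $\b(\Sigma^\infty[p^k]) = p^k$ in $\pi^{C_2}_0 \Ss$. Invoking \Cref{cor:betti-iso} then gives $\Sigma^\infty[p^k] = p^k$ in $\pi^\R_{0,0,0}\Ss$, as desired.

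There is no real obstacle; the only substantive ingredient is the $C_2$-equivariant deformation retraction of $Q(\C)$ onto $Q(\R) = S^1$, which is routine. A conceptually cleaner alternative would exploit the abelian group scheme structure on $Q$ directly and invoke the general principle that the $n$-power map on an abelian group object becomes multiplication by $n$ after stabilization, but the Betti realization route is more direct given the machinery already set up.
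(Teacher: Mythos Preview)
Your proof is correct and follows essentially the same approach as the paper: reduce to the Betti realization via \Cref{cor:betti-iso}, then check that the resulting self-map of $\Ss^1$ has degree $p^k$ on both the underlying spectrum and the (geometric) fixed points, which determines it in $\pi_0^{C_2}\Ss$. You simply supply more detail for the middle step, explicitly identifying $Q(\C)$ with its $C_2$-action and the equivariant retraction onto $S^1$, whereas the paper just asserts the result of this computation.
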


\begin{proof}
  Since the map $\pi_{0,0,0} ^\R \to \pi_{0} ^{C_2}$ induced by Betti realization is an isomorphism by \Cref{cor:betti-iso}, it suffices to show this after Betti realization.
  Upon Betti realization, this is a map $\Ss^1 \to \Ss^1$ which induces multiplication by $p^k$ on both geometric fixed points and on the underlying spectrum.
   The map $\pi_0^{C_2} \to \Z \oplus \Z$ given by (underlying, geometric fixed points) is injective.
%
\end{proof}

\begin{proof}[Proof of \Cref{thm:ta}]
  Stabilizing and applying \Cref{lem:sus-pk},
  we obtain a compatible system of classes $\ta_k \in \pi_{0,0,-1} ^\R \Ss/p^k$
  which give rise to a class $\ta \in \pi_{0,0,-1} ^\R \Ss_p$.
  It follows immediately from the definition that the base change of $\ta$ to $\C$ is $\tau$.
  Since the Betti realization of $\tau$ is $1$, we find that the underlying of the Betti realization of $\ta$ is $1$. 

  In order to show that the Betti realization of $\ta$ is $1$,
  it therefore suffices to show that it is multiplication by some $p$-adic integer.
  To do this, it suffices to do so modulo $p^k$ for all $k$.
  Modulo $p^k$, the Betti realization of $\ta$ arises as the stabilization of a map $S^1 \to S^1 / p^k$ of $C_2$-equivariant spaces, and all such maps are given by multiplication by a $p$-adic integer.
\end{proof}

\section{The ta-local category} \label{sec:homology}
In this section, we show that Betti realization identifies the category of $\ta$-local Artin-Tate $\R$-motivic spectra with the category of $C_2$-spectra. Indeed, we know from \Cref{thm:ta} that $\b(\ta) = 1$, so that $\b$ factors through the category of $\ta$-local objects. The main theorem of this section is that the induced functor is an equivalence.

\begin{dfn} \label{dfn:ta-inverted-betti}
  Let $\Ss_2[\ta^{-1}]$ denote the commutative algebra given by
  \[ \Ss_2 [\ta^{-1}] \coloneqq \colim\left( \Ss_2^{0,0,0} \stackrel{\ta}{\to} \Ss_2^{0,0,1} \stackrel{\ta}{\to} \Ss_2^{0,0,2} \stackrel{\ta}{\to} \cdots \right). \]
  The category of modules over $\Ss_2 [\ta^{-1}]$ in $\SHgc$ is equivalent to the catgoery of $\ta$-local objects in $\SHgc$.\footnote{Here we use the fact that one may invert an element in the Picard-graded homotopy of a commutative algebra and the associated description of the module category as a localization from the proof of \cite[Proposition 4.3.17]{ECII}.}
  Since the Betti realization of $\ta$ is $1$, Betti realization factors through $\ta$-localization providing a symmetric monoidal functor
  \[ \b : \Mod ( \SHgc ; \Ss_2 [\ta^{-1}] ) \to \Sp_{C_2, i2}. \]
\end{dfn}

\begin{thm} \label{thm:comparison-invert-ta}
  Betti realization induces an equivalence of symmetric monoidal categories
  \[ \b : \Mod ( \SHgc ; \Ss_2 [\ta^{-1}] ) \xrightarrow{\simeq} \Sp_{C_2, i2} \]
  with inverse equivalence given by $Y(-) \coloneqq c_{\C/\R}(-)[\ta^{-1}]$.
  Simply put, inverting $\ta$ in $\SHgc$ recovers $\Sp_{C_2, i2}$. 
\end{thm}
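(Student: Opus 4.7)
The plan is to realize this as an adjoint equivalence. First I would set up the adjunction $Y \dashv \bar{\b}$, where $\bar{\b}$ denotes the factorization of $\b$ through the $\ta$-local subcategory; this is automatic from Recollection \ref{rec:c} (giving $c_{\C/\R} \dashv \b$) combined with the adjunction between $\ta$-localization and the inclusion of $\ta$-local objects. Since both $Y$ and $\bar{\b}$ are symmetric monoidal, to exhibit the equivalence it suffices to show (i) the unit is an equivalence and (ii) $\bar{\b}$ is conservative, since these two facts jointly force the counit to be an equivalence via the triangle identities.

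The unit is essentially immediate. For $X \in \Sp_{C_2, i2}$,
\[ \bar{\b}(Y(X)) = \b\!\left(c_{\C/\R}(X)[\ta^{-1}]\right) \simeq \b(c_{\C/\R}(X))[\b(\ta)^{-1}] \simeq X, \]
where the three equivalences use, in order, that $\b$ is a left adjoint and hence preserves the sequential colimit defining $\ta$-inversion, that $\b \circ c_{\C/\R} \simeq \mathrm{id}$ by Recollection \ref{rec:c}, and that $\b(\ta) = 1$ by Theorem \ref{thm:ta}. A chase of the triangle identities confirms this composite is in fact the unit map.

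For conservativity of $\bar{\b}$, the cleanest route is an $\HFt$-Adams argument leveraging the trigraded homology computation of Proposition \ref{thm:pthomology}: since $\pi^{\R}_{*,*,*}\HFt \simeq (\pi^{C_2}_{*+*\sigma}\uFt)[\ta]$ becomes $(\pi^{C_2}_{*+*\sigma}\uFt)[\ta^{\pm 1}]$ after $\ta$-inversion, one sees that $\HFt[\ta^{-1}]$ corresponds under Betti realization to $\uFt$. Thus, if $\bar{\b}(M) \simeq 0$, then the $\HFt[\ta^{-1}]$-homology of $M$ vanishes, and convergence of the $\HFt$-Adams spectral sequence on the $\ta$-local Artin--Tate category then yields $M \simeq 0$. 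The main obstacle is securing this convergence uniformly on $\Mod(\SHgc; \Ss_2[\ta^{-1}])$, which likely requires constructing a compatible t-structure or invoking a nilpotence-style detection result for $\HFt$. A backup plan is to verify the counit directly on the compact generators $\Ss^{p,q,0}[\ta^{-1}]$ and $Ca[\ta^{-1}] \otimes \Ss^{p,q,0}[\ta^{-1}]$ (which $\bar{\b}$ sends to the compact generators $\Ss^{p+q\sigma}$ and $(C_2)_+ \otimes \Ss^{p+q\sigma}$ of $\Sp_{C_2,i2}$), reducing the problem to showing mapping spectra agree after $\ta$-inversion; this in turn follows from Corollary \ref{cor:betti-iso} together with the Adams-style homology computation above.
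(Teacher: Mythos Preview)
Your framework rests on the claimed adjunction $c_{\C/\R} \dashv \b$, but Recollection~\ref{rec:c} does not assert this: it says only that $c_{\C/\R}$ and $\b$ are each symmetric monoidal \emph{left} adjoints satisfying $\b \circ c_{\C/\R} \simeq \mathrm{id}$. The actual right adjoint of $c_{\C/\R}$ is the $C_2$-enriched mapping object $\Map_{\SHRat}(\Ss_2^{0,0,0}, -)$, and showing that this agrees with $\b$ on $\ta$-local objects is essentially the content of the theorem. So the adjunction $Y \dashv \bar{\b}$ is not automatic; establishing it already requires the key input that the paper isolates. Your primary conservativity route (uniform $\HFt$-Adams convergence on the $\ta$-local category) is in any case left unresolved.

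The paper instead proves directly that $Y$ is an equivalence. Full faithfulness combines Heller--Ormsby (Theorem~\ref{thm:c-ff}) with Lemma~\ref{prop:weak-one-sided}, which says the map induced by $\ta^n$ on $\Map(c_{\C/\R}A, c_{\C/\R}B)$ is already an equivalence for $n \ge 0$; this reduces to showing $\pi^\R_{*,*,w} C\ta = 0$ for $w < 0$ (Lemma~\ref{lem:Cta-neg-w}), proved via the motivic Adams spectral sequence and Proposition~\ref{thm:pthomology}. Essential surjectivity is then immediate since $\Ss_2^{p,q,w}[\ta^{-1}] \simeq Y(\Ss_2^{p+q\sigma})$, and $\b$ is the inverse because $\b \circ Y \simeq \mathrm{id}$. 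Your backup plan moves in this direction, but Corollary~\ref{cor:betti-iso} only handles weight zero; the missing ingredient is precisely the negative-weight vanishing of $C\ta$, which is what makes $\ta$-localization harmless on maps out of weight-zero spheres.
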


The main step in our proof of this theorem is showing that $Y$ is fully-faithful.
Since $c_{\C/\R}$ is fully-faithful, as proved by Heller and Ormsby (see \Cref{thm:c-ff}), this reduces to studying the interaction of $\ta$ and $c_{\C/\R}$. Our method of proof is to prove the appropriate statement at the level of $\HFt$ by direct computation, extend to dualizable objects by descent, then extend to all objects by colimits. This stategy requires knowledge of the tri-graded homology of a point and the strucutre of the tri-graded dual Steenrod algebra as an input. Since this is of independent interest we have separated it out as its own subsection. 

\subsection{The homology of a point}\ 

In this subsection we compute the tri-graded homology of a point and the structure of the dual Steenrod algebra. Although we only use coarse information extracted from these computations in this paper, we hope that they are useful to readers more interested in computations. The reader who has heard that $\R$-motivic computations are simplified by the absence of the ``negative cone'' may be surprised to learn that it is present in the tri-graded picture.

\begin{prop} \label{thm:pthomology}
As tri-graded commutative rings, with $|\ta|=(0,0,-1)$, there are isomorphisms
\[ \pi^{\R}_{p,q,w} \HZt \cong \left(\pi^{C_2}_{p+q\sigma}\uZt\right)[\ta] \quad \text{ and } \quad \pi^{\R}_{p,q,w} \HFt \cong \left(\pi^{C_2}_{p+q\sigma}\uFt\right)[\ta], \] 
where $\pi_{p+q\sigma} ^{C_2} \uZt$ and $\pi_{p+q\sigma} ^{C_2} \uFt$ are each placed in tri-degree $(p,q,0)$.
\end{prop}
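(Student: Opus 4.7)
The plan is to construct a natural map of tri-graded rings from the right-hand side to $\pi^\R_{p,q,w}\HFt$, and verify it is an isomorphism by combining Voevodsky's bi-graded $\R$-motivic computation with Hu--Kriz's $C_2$-equivariant computation.

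First, the equivalence $\b \HFt \simeq \uFt$ is adjoint, under $c_{\C/\R} \dashv \b$, to a morphism of commutative algebras $c_{\C/\R}\uFt \to \HFt$ in $\SHRat$. Heller--Ormsby's identification $\pi^\R_{p,q,0} c_{\C/\R}\uFt \cong \pi^{C_2}_{p+q\sigma}\uFt$ from \Cref{thm:c-ff}, combined with multiplication by powers of $\ta \in \pi^\R_{0,0,-1}\Ss_2$, defines a ring homomorphism
\[ \phi \colon \left(\pi^{C_2}_{p+q\sigma}\uFt\right)[\ta] \to \pi^\R_{p,q,w}\HFt, \]
and an analogous map with $\uFt$ replaced by $\uZt$ for the integral case.

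To show $\phi$ is an isomorphism for $\HFt$, I would first observe that on the Tate slice $q = w$ the codomain is Voevodsky's $\F_2[\tau, \rho]$, and the generators are matched by $\phi(\ta \cdot u_\sigma) = \ta u = \tau$ and $\phi(\ta \cdot a_\sigma) = \ta a = \rho$ (see \Cref{rmk:ta-explanation}). For the full tri-graded answer, I would run the $a$-Bockstein spectral sequence associated to the cofiber sequence $\Spec(\C) \to \Ss \xrightarrow{a} \Ss^\C$; its $E_1$-page is $\pi^\R_{p,q,w}(\HFt \otimes Ca) \cong \pi^\C_{p+q,w}\HFt = \F_2[\tau_\C]$ via the projection formula, and its output, combined with Hu--Kriz's positive-and-negative cone description of $\pi^{C_2}_{p+q\sigma}\uFt$, determines the Artin part $\pi^\R_{p,q,0}\HFt$. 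Multiplication by $\ta$ then inductively propagates this identification to all $w \leq 0$.

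The main obstacle is the vanishing $\pi^\R_{p,q,w}\HFt = 0$ for $w > 0$, a form of effectivity of $\HFt$. I would establish this by showing $\HFt$ lies in the effective cover of $\SHRat$, using that $\pi^\C_{p+q,w}\HFt = 0$ for $w > 0$ together with the $a$-Bockstein input above. The $\HZt$ case then follows by a parallel argument using the Bockstein cofiber sequence $\HZt \xrightarrow{2} \HZt \to \HFt$ and the known description of $\pi^{C_2}_{p+q\sigma}\uZt$.
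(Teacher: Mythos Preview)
Your approach is essentially the same as the paper's: both use the cofiber sequence $\Spec(\C)\otimes(\Ss^{\C})^{n-1}\to(\Ss^{\C})^{n-1}\to(\Ss^{\C})^{n}$ (your ``$a$-Bockstein'') to spread outward from the known Tate diagonal $q=w$, establish vanishing for $w>0$ the same way, and relate $\HFt$ and $\HZt$ via the mod-$2$ Bockstein.

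Two points where your write-up diverges from the paper and needs adjustment. First, the adjunction $c_{\C/\R}\dashv\b$ you invoke is not available: both $c_{\C/\R}$ and $\b$ are symmetric monoidal \emph{left} adjoints, and the paper never claims they form an adjoint pair. The paper sidesteps this by running the argument in the other order: it first proves (\Cref{lem:Zcalc}) that Betti realization $\pi^{\R}_{p,q,w}\HZt\to\pi^{C_2}_{p+q\sigma}\uZt$ is an isomorphism for all $w\le 0$, and only then inverts at $w=0$ to obtain the ring map. Second, rather than running a Bockstein spectral sequence and determining its differentials by comparison with Hu--Kriz, the paper applies the $5$-lemma directly to the map of long exact sequences induced by Betti realization on the $a$-cofiber sequence; the base case $q=w$ is the known bigraded computation and the outer terms are $\C$-motivic, where Betti realization is already known to be an isomorphism. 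This is logically tighter than your spectral-sequence formulation, which as written is a bit circular (you use the target computation to pin down the differentials that are supposed to produce it).
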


Before we move on to the proof of \Cref{thm:pthomology},
we remind the reader of the structure of $\pi_{p+q\sigma} ^{C_2} \uZt$ and $\pi_{p+q\sigma} ^{C_2} \uFt$.
We also explain how to translate between the older names of elements in the bi-graded homology of a point and the names arising from this proposition.

\begin{rec} \label{thm:eqpthom} \todo{This needs a citation}
  The bi-graded homotopy groups of $\uFt$ are given by
  \[\pi_{p+q\sigma} ^{C_2} \uFt \cong \F_2 [a_\sigma, u_{\sigma}] \oplus \F_2 \left\{ \frac{\theta}{a_\sigma ^k u_{\sigma} ^{n}} \vert k,n \geq 0 \right\},\]
  where $|a_\sigma| = -\sigma$, $|u_{\sigma}| = 1-\sigma$, and $|\theta| = 2\sigma - 2$.
   The multiplicative structure is that of a trivial square-zero extension over $\F_2 [a_\sigma, u_\sigma]$: more informally, it is given by $\theta^2 = 0$.
  This is pictured in \Cref{F2-fig}.
  
  The bi-graded homotopy groups of $\uZt$ are given by
  \[\pi_{p+q\sigma} ^{C_2} \uZt \cong \Z_2 [a_\sigma,u_{2\sigma}]/(2a_\sigma) \oplus \Z_2 \left\{ \frac{2}{u_{2\sigma} ^{n}} \vert n \geq 1 \right\} \oplus \F_2 \left\{ \frac{\theta}{a_\sigma ^k u_{2\sigma} ^{n}} \vert k,n \geq 0 \right\}\]
  where $|a_\sigma| = -\sigma$, $|u_{2\sigma}| = 2-2\sigma$, and $|\theta| = 3\sigma - 3$.
  Again, the term involving $\theta$ sits in a trivial square-zero extension with rest of the ring.
  This is pictured in \Cref{uZ-fig}.
\end{rec}

\begin{figure}[t]
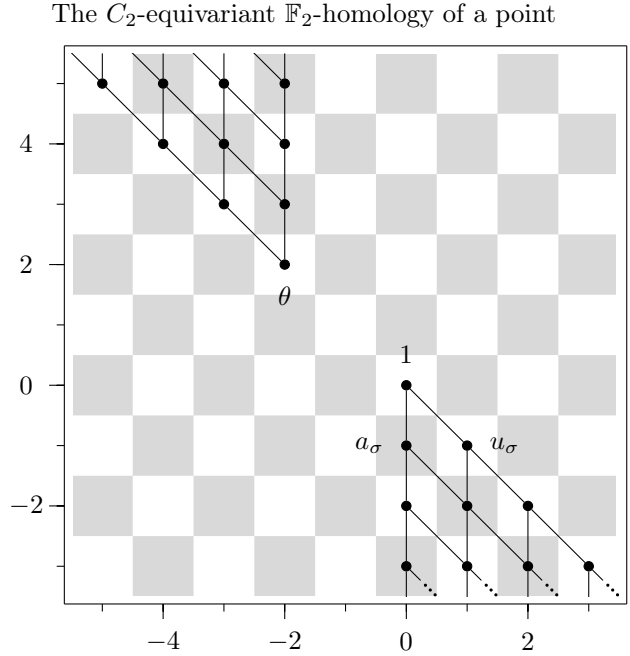

  \centering
  The $C_2$-equivariant $\F_2$-homology of a point \\\vspace{6pt}
  \begin{sseqpage}[ xscale=0.8, yscale=0.8, x range = {-5}{3}, y range = {-3}{5}, tick step = 2, major tick step=2, minor tick step= 1, class labels = {left}, classes = fill, grid = chess, axes type = frame ]
    \class["1" above](0,0)
    \class["a_\sigma"](0,-1) \structline
    \class(0,-2) \structline
    \class(0,-3) \structline
    \class(0,-4) \structline
    
    \class["u_\sigma" right](1,-1) \structline(0,0)
    \class(1,-2) \structline \structline(0,-1)
    \class(1,-3) \structline \structline(0,-2)
    \class(1,-4) \structline \structline(0,-3)
    
    \class(2,-2) \structline(1,-1)
    \class(2,-3) \structline \structline(1,-2)
    \class(2,-4) \structline \structline(1,-3)
    
    \class(3,-3) \structline(2,-2)
    \class(3,-4) \structline \structline(2,-3)
    
    \class(4,-4) \structline(3,-3)
    
    \class["\theta" below](-2,2)
    \class(-2,3) \structline
    \class(-2,4) \structline
    \class(-2,5) \structline
    \class(-2,6) \structline
    
    \class(-3,3) \structline(-2,2)
    \class(-3,4) \structline \structline(-2,3)
    \class(-3,5) \structline \structline(-2,4)
    \class(-3,6) \structline \structline(-2,5)
    
    \class(-4,4) \structline(-3,3)
    \class(-4,5) \structline \structline(-3,4)
    \class(-4,6) \structline \structline(-3,5)
    
    \class(-5,5) \structline(-4,4)
    \class(-5,6) \structline \structline(-4,5)
    
    \class(-6,6) \structline(-5,5)
    
  \end{sseqpage}
  \caption{ {\footnotesize
      The bigraded homotopy groups of $\underline{\F}_2$ depicted on a $(p,q)$ coordinate system.
      Vertical lines going down denote multiplication by $a_\sigma$.
      Diagonal lines going down-right denote multiplication by $u_{\sigma}$.}}
  \label{F2-fig}
\end{figure}

\begin{figure}[t]
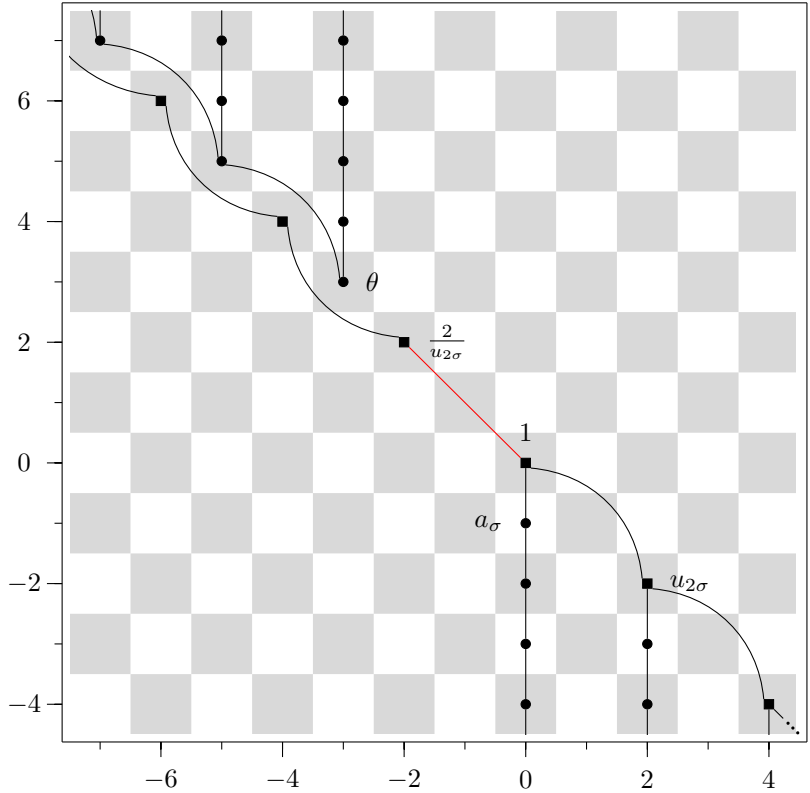

  \centering
  The $C_2$-equivariant integral homology of a point \\\vspace{6pt}
  \begin{sseqpage}[ xscale=0.8, yscale=0.8, x range = {-7}{4}, y range = {-4}{7}, tick step = 2, major tick step=2, minor tick step= 1, class labels = {left}, classes = fill, grid = chess, axes type = frame ]
    \class["1" above, rectangle](0,0)
    \class["a_\sigma"](0,-1) \structline
    \class(0,-2) \structline
    \class(0,-3) \structline
    \class(0,-4) \structline
    \class(0,-5) \structline
    
    \class["u_{2\sigma}" right, rectangle](2,-2) \structline[bend left = 40](0,0)(2,-2)
    \class(2,-3) \structline 
    \class(2,-4) \structline 
    \class(2,-5) \structline 

    \class[rectangle](4,-4) \structline[bend left = 40](4,-4)(2,-2)
    \class(4,-5) \structline
    \class[rectangle](6,-6) \structline(6,-6)(4,-4)

    \class["\frac{2}{u_{2\sigma}}" right, rectangle](-2,2) \structline[red](0,0)
    \class[rectangle](-4,4) \structline[bend right = 40](-4,4)(-2,2)
    \class[rectangle](-6,6) \structline[bend right = 40](-4,4)(-6,6)
    \class[rectangle](-8,8) \structline[bend right = 40](-8,8)(-6,6)
    
    \class["\theta" right](-3,3) 
    \class(-3,4) \structline 
    \class(-3,5) \structline 
    \class(-3,6) \structline
    \class(-3,7) \structline
    \class(-3,8) \structline 
    
    \class(-5,5) \structline[bend left = 40](-3,3)
    \class(-5,6) \structline
    \class(-5,7) \structline
    \class(-5,8) \structline 
    
    \class(-7,7) \structline[bend left = 40](-5,5)
    \class(-7,8) \structline
    
    \class(-9,9) \structline[bend left = 40](-7,7)  
    
  \end{sseqpage}
  \caption{ {\footnotesize
      The bigraded homotopy groups of $\uZt$ depicted on a $(p,q)$ coordinate system.
      Squares denote copies of $\Z_2$ and circles denotes copies of $\F_2$.
      Vertical lines going down denote multiplication by $a_\sigma$.
      Diagonal lines going down-right denote multiplication by $u_{2\sigma}$. 
      The red line indicates a $u_{2\sigma}$ multiplication that hits twice a generator.
      Not all $u_{2\sigma}$ multiplications are shown.}}
  \label{uZ-fig}
\end{figure}

\begin{rec} \label{rec:elements}
  Under Betti realization, $\HZt$ and $\HFt$ are sent to $\uZt$ and $\uFt$ \cite{HellerOrmsby}.
  The following are some commonly encountered homotopy elements and their Betti realizations:
  \begin{itemize}
  \item Stabilizing the inclusion of the fixed points $S^0 \to S^{\sigma}$, we obtain $a_\sigma \in \pi_{-\sigma} ^{C_2} \Ss$. This class maps to the corresponding class in $\uZt$ and $\uFt$. 
  \item We let $u \in \pi_{1,-1,0} ^\R \HFt$ denote the element corresponding to $u_\sigma \in \pi_{1-\sigma} ^{C_2} \uFt$ under the isomorphism of \Cref{thm:pthomology}.
  \item The class $\rho \in \pi_{0,-1,-1} ^{\R} \Ss$ is defined to be the stabilization of the inclusion $\{\pm 1\} \hookrightarrow \G_m$. Under Betti realization, $\rho$ goes to $a_\sigma$.
  \item The element $\tau \in \pi_{1,-1,-1}^\R \HFt$ corresponds to $-1$ under the 
    isomorphism $\pi_{1,-1,-1} ^\R \HFt \cong \mu_2 (\R)$. \todo{Is B-L needed for this one, or is this an easy case?} Under Betti realization, $\tau$ goes to $u_\sigma$.
  \end{itemize}
\end{rec}

\begin{cor}\label{cor:rels}
  There are relations $\rho = \ta \cdot a$ in $\pi_{0,-1,-1} ^\R \Ss_2$ and $\tau = \ta \cdot u \in \pi_{1,-1,-1} ^{\R} \HFt$.
\end{cor}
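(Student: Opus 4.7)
The plan is to deduce each relation by computing the relevant homotopy group and exhibiting both sides as a common generator.

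For $\tau = \ta \cdot u$ in $\pi^\R_{1,-1,-1}\HFt$, I would apply the structural description from the preceding proposition on the tri-graded homology of a point. The tri-degree $(1,-1,-1)$ component of the polynomial ring $(\pi^{C_2}_{*+*\sigma}\uFt)[\ta]$ is $\ta \cdot \pi^{C_2}_{1-\sigma}\uFt = \F_2\{\ta u\}$. Both $\tau$, via the identification $\pi_{1,-1,-1}^\R\HFt \cong \mu_2(\R)$ sending it to the non-trivial element, and $\ta u$, from $u \neq 0$ together with the polynomial structure in $\ta$, are the unique non-zero element of this one-dimensional $\F_2$-vector space, and hence agree.

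For $\rho = \ta \cdot a$ in $\pi^\R_{0,-1,-1}\Ss_2$, I would first identify the ambient group. Translating between gradings, the tri-graded sphere $\Ss^{0,-1,-1}$ corresponds to the bi-graded $\Ss^{-1,-1}$, so by Morel's theorem $\pi^\R_{-1,-1}\Ss \cong K^{MW}_{-1}(\R) \cong W(\R) \cong \Z$; after 2-completion, $\pi^\R_{0,-1,-1}\Ss_2 \cong \Z_2$. On the equivariant side, a standard tom~Dieck-splitting computation gives $\pi^{C_2}_{-\sigma}\Ss_2 \cong \Z_2$, generated by $a_\sigma$. The Betti realization map $\pi^\R_{0,-1,-1}\Ss_2 \to \pi^{C_2}_{-\sigma}\Ss_2$ is therefore a homomorphism of free $\Z_2$-modules of rank one, and it sends $\rho \mapsto a_\sigma$ by the recollection on elements; being a surjection between free $\Z_2$-modules of rank one, it is an isomorphism, so in particular injective.

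Having established this injectivity, it suffices to check that $\b(\rho) = \b(\ta \cdot a)$. By Theorem 2.1, $\b(\ta) = 1$, while the defining cofiber sequence $\Spec(\C) \to \Spec(\R) \xrightarrow{a} \Ss^\C$ Betti-realizes to the $C_2$-equivariant sequence $(C_2)_+ \to S^0 \to S^\sigma$, identifying $\b(a)$ with $a_\sigma$. Hence $\b(\ta \cdot a) = 1 \cdot a_\sigma = a_\sigma = \b(\rho)$, yielding $\rho = \ta \cdot a$.

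The main obstacle is invoking Morel's computation of $\pi^\R_{-1,-1}\Ss$ as Milnor--Witt K-theory in order to know that Betti realization is injective in this degree. A purely Hurewicz-theoretic argument via $\HFt$ or $\HZt$ only yields the congruence $\rho \equiv \ta \cdot a \pmod{2}$, since both target groups are $\F_2$, and would leave the full $2$-complete relation unresolved; Morel's theorem is what bridges this gap.
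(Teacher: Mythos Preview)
Your argument for $\tau = \ta \cdot u$ is correct and matches the paper's: both sides lie in $\pi^\R_{1,-1,-1}\HFt \cong \F_2\{\ta u\}$ and are nonzero.

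For $\rho = \ta \cdot a$, your overall strategy---establish that Betti realization is injective on $\pi^\R_{0,-1,-1}\Ss_2$, then compare images---is exactly the paper's. But your computation via Morel's theorem contains a sign error: the bigraded group $\pi^{\text{bi}}_{-1,-1}\Ss$ is $K^{MW}_{1}(\R)$, not $K^{MW}_{-1}(\R)$. (The symbol $\rho = -[-1]$ lives in $K^{MW}_1$; it is $\eta \in \pi_{1,1}\Ss$ that lives in $K^{MW}_{-1} \cong W$.) And $K^{MW}_1(\R)$ is not $\Z$: from the fiber-product description $K^{MW}_1 \cong K^M_1 \times_{K^M_1/2} I$ one finds $K^{MW}_1(\R) \cong \R_{>0} \times \Z$. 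Your approach is salvageable---$\R_{>0}$ is uniquely divisible, hence dies mod $2^n$, and with Morel connectivity controlling $\pi_{-2,-1}\Ss$ one can push through to $\pi^\R_{0,-1,-1}\Ss_2 \cong \Z_2$---but this requires substantially more care than you indicated, and the passage from $K^{MW}_1(\R)$ to the homotopy of the $2$-completed sphere is not just $2$-completion of the group.

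The paper sidesteps all of this by forward-referencing \Cref{prop:weak-one-sided}, which shows directly that multiplication by $\ta$ induces an isomorphism $\pi^\R_{0,-1,0}\Ss_2 \to \pi^\R_{0,-1,-1}\Ss_2$; combined with the Heller--Ormsby isomorphism at weight zero (\Cref{cor:betti-iso}), this gives the required injectivity of Betti realization without ever computing either group explicitly. That proposition is proved via the motivic Adams spectral sequence, reducing to the vanishing of $\pi_{*,*,w}C\ta$ for $w < 0$, so the paper's route trades the external input of Morel's $K^{MW}$ computation for the internal input of the tri-graded dual Steenrod algebra.
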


\begin{proof}
  In \Cref{prop:weak-one-sided}, we will show that Betti realization induces an isomorphism
  $\pi_{0,-1,-1} ^\R \Ss_2 \to \pi_{-\sigma}^{C_2} \Ss_2$,
  therefore for the first relation it suffices to note that $\b (\rho) = 1 \cdot a_\sigma = \b ( \ta \cdot a)$. Similarly, by \Cref{thm:pthomology} and \Cref{thm:eqpthom}, we know that $\pi_{1,-1,-1} ^\R \HFt \cong \F_2 \{\ta \cdot u\}$. Since $\b (\tau) = u_\sigma \neq 0$, the second relation follows.
\end{proof}

Assuming the tri-graded homology of a point, the tri-graded dual Steenrod algebra can easily by deduced from results of Voevodsky.

\begin{thm} \label{thm:Steenrod}
The tri-graded $\R$-motivic dual Steenrod algebra $\pi^{\R}_{*,*,*} \left(\HFt \otimes \HFt\right)$ is isomorphic to
$$\left(\HFt\right)_{*,*,*} [\tau_0,\tau_1,\cdots,\xi_1,\xi_2,\cdots]/(\tau_i^2 = \ta a \tau_{i+1} + \ta u \xi_{i+1} + \ta a \tau_0 \xi_{i+1}),$$
where $\abs{\tau_i}=(2^i,2^{i}-1,2^{i}-1)$ and $\abs{\xi_{i}}=(2^i-1,2^i-1,2^i-1)$.
\end{thm}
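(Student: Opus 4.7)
The plan is to reduce the tri-graded computation to Voevodsky's known bi-graded (Tate) dual Steenrod algebra over $\R$, together with the identification of $\pi^{\R}_{*,*,*}\HFt$ provided by \Cref{thm:pthomology}. Voevodsky's calculation (as extended by Hoyois--Kelly--{\O}stv{\ae}r) establishes that
\[
\pi_{*,*}^{\R}(\HFt \otimes \HFt) \cong \pi_{*,*}^{\R}(\HFt)[\tau_0,\tau_1,\dots,\xi_1,\xi_2,\dots]/(\tau_i^2 - \tau\xi_{i+1} - \rho\tau_{i+1} - \rho\tau_0\xi_{i+1}),
\]
with $\abs{\tau_i}_{\mathrm{bi}} = (2^{i+1}-1, 2^i-1)$ and $\abs{\xi_i}_{\mathrm{bi}} = (2^{i+1}-2, 2^i-1)$. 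The relation expresses $\tau_i^2$ in terms of lower generators, so as a module over $\pi_{*,*}^{\R}(\HFt)$ this ring is free on admissible monomials $\tau_0^{\epsilon_0}\tau_1^{\epsilon_1}\cdots \xi_1^{n_1}\xi_2^{n_2}\cdots$ with $\epsilon_i \in \{0,1\}$ and $n_j \geq 0$.

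Next, I would promote this to a wedge decomposition of $\HFt \otimes \HFt$ at the spectrum level. By choosing spectrum-level representatives of each admissible monomial via the bi-degrees, we assemble a map $\bigvee_M \Sigma^{s_M, w_M}\HFt \to \HFt \otimes \HFt$; Voevodsky's calculation says this map is a bi-graded $\pi_{*,*}^{\R}$-isomorphism, hence an equivalence of cellular $2$-complete $\HFt$-module spectra. Because a bi-graded Tate sphere $\Ss^{s,w}$ coincides with the tri-graded sphere $\Ss^{s-w, w, w}$, this is equally a wedge decomposition in $\SHRat$ indexed by admissible monomials in prescribed tri-degrees.

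Applying $\pi_{*,*,*}^{\R}$ to this decomposition and combining with \Cref{thm:pthomology} yields the additive structure of $\pi_{*,*,*}^{\R}(\HFt \otimes \HFt)$ as a free $(\HFt)_{*,*,*}$-module on the admissible monomials, with tri-degrees $\abs{\tau_i} = (2^i, 2^i-1, 2^i-1)$ and $\abs{\xi_i} = (2^i-1, 2^i-1, 2^i-1)$ obtained by the formula $\Ss^{s,w}_{\mathrm{Tate}} = \Ss^{s-w, w, w}_{\mathrm{tri}}$. The multiplicative relation lifts verbatim from the bi-graded to the tri-graded setting, since both sides of the Voevodsky relation sit in a common Tate tri-degree; substituting $\tau = \ta u$ and $\rho = \ta a$ via \Cref{cor:rels} rewrites it as $\tau_i^2 = \ta a \tau_{i+1} + \ta u \xi_{i+1} + \ta a \tau_0 \xi_{i+1}$, as claimed. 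The only potential subtlety is the passage from bi-graded module freeness to an honest spectrum-level wedge splitting, but this is essentially automatic since the dual Steenrod algebra is visibly polynomial-exterior and the generators admit canonical spectrum-level lifts.
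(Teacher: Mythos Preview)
Your proposal is correct and follows essentially the same route as the paper: both arguments rest on the spectrum-level wedge decomposition of $\HFt \otimes \HFt$ into Tate suspensions of $\HFt$ (which the paper cites directly from \cite{HoyoisSteenrod}, while you reconstruct it from the bi-graded isomorphism via cellularity), then read off the tri-graded module structure from \Cref{thm:pthomology} and translate Voevodsky's bi-graded relations through $\rho = \ta a$ and $\tau = \ta u$ from \Cref{cor:rels}. The only cosmetic difference is that the paper emphasizes that the generators $\tau_i,\xi_i$ are represented by maps from Tate spheres (no $\Ss^{\C}$ factors), which is exactly your observation that both sides of the relation sit in a common Tate tri-degree.
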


\begin{proof}
%
  We will deduce this from the classical bigraded computation of the $\R$-motivic Steenrod algebra. The key input is the fact that $\HFt \otimes \HFt$ decomposes as a direct sum of $\R$-motivic spectra of the form $\Sigma^{p,q,q} \HFt$. This is stated as \cite[Theorem 1.1 (3)]{HoyoisSteenrod}, which in the case of a characteristic zero base field such as $\R$ follows from work of Voevodsky \cite{Voe03, Voe10}.

  The decomposition of $\HFt \otimes \HFt$ given in \cite[Theorem 1.1]{HoyoisSteenrod} shows that, as a $\left(\HFt\right)_{*,*,*}$-module, $\pi_{*,*,*} ^{\R} (\HFt \otimes \HFt)$ is freely generated by monomials in the $\xi_i$ and $\tau_i$.  Since each $\tau_i$ and $\xi_i$ is represented by a map of the form $\Ss^{a} \otimes \G_m^{b} \to \HFt \otimes \HFt$ (with no copies of $\Ss^{\C}$ in the domain), we may read off the formulas for products of $\tau_i$ and $\xi_i$ from the standard bigraded computations \cite[Theorem 12.6]{Voe03}.
  To translate the formulas into tri-graded notation, we need only make use of the relations $\rho = \ta \cdot a$ and $\tau = \ta \cdot u$ of \Cref{cor:rels}.
\end{proof}

We now proceed to the proof of \Cref{thm:pthomology}.
The main steps of the proof are split across the next several lemmas, the key input being our knowledge of the bigraded $2$-complete motivic cohomology of both $\R$ and $\C$. 

\begin{lem} \label{lem:HomologyPointC}
  For integers $p,w \in \Z$, there are isomorphisms
  \[ \pi_{p,w,w}^{\R} \HZt \cong \Z_2[\tau^2, \rho] / (2\rho). \]
  For integers $p,q,w \in \Z$, there are isomorphisms
  \[ \pi^{\R}_{p,q,w} \left( \Spec(\C) \otimes \HZt \right) \cong \pi^{\C}_{p+q,w}(\HZt) \]
  and
  \[\pi^{\C} _{p,w} \HZt \cong \Z_2 [\tau].\]
\end{lem}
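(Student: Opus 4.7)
The plan is to handle the three isomorphisms in order, reducing each to a classical computation plus Picard-grading bookkeeping. I would begin with $\pi^{\C}_{p,w}\HZt \cong \Z_2[\tau]$, which is Voevodsky's computation of the integral bigraded motivic cohomology of a point over an algebraically closed field of characteristic zero (see \cite{Voe03}). After $2$-completion one obtains the stated formula with $\abs{\tau} = (0,-1)$.

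For the base change isomorphism, I would argue purely formally. Using the identification $\Spec(\C) \otimes X \simeq i_\ast i^\ast X$ discussed after \Cref{rec:fields}, the adjunction $i^\ast \dashv i_\ast$ yields
\[ \Map_{\SH(\R)}\bigl(\Ss^{p,q,w}_\R,\ \Spec(\C) \otimes \HZt\bigr) \simeq \Map_{\SH(\C)}\bigl(i^\ast \Ss^{p,q,w}_\R,\ i^\ast \HZt\bigr). \]
The Picard table in \Cref{prop:comp} records $i^\ast \Ss^{p,q,w}_\R \simeq \Ss^{p+q,w}_\C$, and since $\HZt$ is constructed naturally as the $2$-completion of the integral motivic cohomology spectrum, one has $i^\ast \HZt_\R \simeq \HZt_\C$. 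Taking $\pi_0$ gives the asserted isomorphism $\pi^{\R}_{p,q,w}(\Spec(\C) \otimes \HZt) \cong \pi^{\C}_{p+q,w}\HZt$.

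For the Tate piece $\pi^{\R}_{p,w,w}\HZt$, I would invoke the bigraded computation of the $2$-complete motivic cohomology of $\Spec(\R)$, which follows from Voevodsky's resolution of the Milnor conjecture \cite{Voe03} combined with Morel's identification of the zero-line with Milnor--Witt $K$-theory. This yields $\pi^{\R}_{\ast,\ast}\HZt \cong \Z_2[\tau^2,\rho]/(2\rho)$ with bigraded degrees $\abs{\tau^2}_{bi} = (0,-2)$ and $\abs{\rho}_{bi} = (-1,-1)$; the relation $2\rho = 0$ reflects the fact that $2\cdot[-1] = 0$ in $K^{MW}_1(\R)/2$. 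Comparing the two Picard descriptions of Tate spheres, namely $\Ss^{a,b}_{bi} = (\Ss^1)^{a-2b} \otimes (\P^1_\R)^b$ and $\Ss^{p,w,w}_{tri} = (\Ss^1)^{p-w} \otimes (\P^1_\R)^w$, one finds the conversion $\Ss^{p,w,w}_{tri} \simeq \Ss^{p+w,w}_{bi}$, which transports the bigraded formula to the tri-graded statement.

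The main obstacle here is not conceptual but combinatorial: one must carefully track three different indexing conventions (bigraded $\C$-motivic, bigraded $\R$-motivic, and tri-graded $\R$-motivic) and verify that distinguished classes such as $\tau$, $\tau^2$, and $\rho$ sit in compatible degrees under the conversion, and that the torsion relation $2\rho = 0$ survives the translation. Once this bookkeeping is done, the proof is essentially complete by citation.
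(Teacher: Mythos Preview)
Your approach is essentially the same as the paper's: the base-change isomorphism via the $(i^*, i_*)$ adjunction is exactly the six-functor argument the paper cites, and both motivic cohomology computations reduce to Voevodsky's resolution of the Bloch--Kato / Beilinson--Lichtenbaum conjectures. Your extra bookkeeping on the Picard conversion $\Ss^{p,w,w}_{tri} \simeq \Ss^{p+w,w}_{bi}$ is correct and helpful.

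One small correction: Morel's identification of $\pi_{n,n}\o_k$ with Milnor--Witt $K$-theory is about the \emph{sphere spectrum}, not $\HZt$. For the diagonal of motivic cohomology you want Nesterenko--Suslin/Totaro, which identifies $\pi_{n,n}\HZ$ with Milnor $K$-theory $K^M_n(k)$; the relation $2\rho = 0$ then comes from $(-1)^2 = 1$ in $\R^\times = K^M_1(\R)$. This does not affect the validity of your argument, since the actual input you need is Voevodsky's theorem, but the Morel citation should be dropped.
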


\begin{proof}
  The base change isomorphism is a corollary of the six functor formalism. \todo{Cite something from the intro here?}
  The computations of motivic cohomology are a corollary of Voevodsky's solution of the Bloch--Kato and Beilinson--Lichtenbaum conjectures \cite{Voe03b}. 
  %
%
\end{proof}

\begin{lem} \label{lem:Z-positive-w}
  The groups $\pi_{p,q,w}^{\R} \HZt$ and $\pi_{p,q,w}^{\R} \HFt$ are zero for $w > 0$.
\end{lem}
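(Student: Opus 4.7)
The plan is to leverage the cofiber sequence $\Ss^{0,-1,0} \xrightarrow{a} \Ss^{0,0,0} \to \Spec(\C)$ together with \Cref{lem:HomologyPointC} to reduce the vanishing to a calculation in the Tate slice. My first step is to smash with $\HZt$ and extract the associated long exact sequence
\[ \cdots \to \pi_{p,q+1,w}^{\R} \HZt \xrightarrow{a} \pi_{p,q,w}^{\R} \HZt \to \pi_{p,q,w}^{\R} (\Spec(\C) \otimes \HZt) \to \pi_{p-1,q+1,w}^{\R} \HZt \to \cdots. \]
By \Cref{lem:HomologyPointC}, the boundary term is identified with $\pi_{p+q,w}^{\C} \HZt \cong \Z_2[\tau]$, which is concentrated in bi-degrees satisfying $p+q = 0$ and $w \leq 0$. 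In particular, for $w > 0$ it vanishes for all $p, q$, so multiplication by $a$ becomes an isomorphism $\pi_{p, q+1, w}^{\R} \HZt \xrightarrow{\cong} \pi_{p, q, w}^{\R} \HZt$ throughout the entire tri-graded plane.

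For fixed $p$ and $w > 0$, iterating this isomorphism shows that the groups $\pi_{p,q,w}^{\R}\HZt$ are mutually isomorphic as $q$ varies over $\Z$. To pin them down, I would specialize to the Tate slice $q = w$ and apply the other half of \Cref{lem:HomologyPointC}: there is an isomorphism $\pi_{p,w,w}^{\R} \HZt \cong \Z_2[\tau^2, \rho]/(2\rho)$. With $|\tau^2| = (2,-2,-2)$ and $|\rho| = (0,-1,-1)$, every monomial $(\tau^2)^a \rho^b$ lies in tri-degree $(2a, -2a-b, -2a-b)$, so its third coordinate is $\leq 0$. Hence $\pi_{p,w,w}^{\R} \HZt = 0$ for $w > 0$, and propagating back through the $a$-isomorphism yields the $\HZt$ case in all tri-degrees.

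For the $\HFt$ statement, the cleanest path is to deduce it from the $\HZt$ case via the long exact sequence attached to the cofiber sequence $\HZt \xrightarrow{2} \HZt \to \HFt$: for $w > 0$, both outer groups in $\pi_{p,q,w}^{\R} \HZt \to \pi_{p,q,w}^{\R} \HFt \to \pi_{p-1,q,w}^{\R} \HZt$ are already known to vanish, so the middle one does too. I do not expect serious obstacles here; the main care required is bookkeeping in the tri-grading, namely verifying that the cofiber sequence and the action of $a$ induce the shifts claimed, and that the specialization $q = w$ really does pick out the Tate summand computed in \Cref{lem:HomologyPointC}.
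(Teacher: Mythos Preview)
Your proof is correct and follows essentially the same argument as the paper. Both use the cofiber sequence defining $\Spec(\C)$ as $Ca$ to produce a long exact sequence in which the $\C$-motivic terms vanish for $w>0$, reducing the claim to the Tate slice $q=w$ already handled by \Cref{lem:HomologyPointC}; the paper phrases the induction as varying the power of $\Ss^{\C}$ while you phrase it as multiplication by $a$, but these are the same map up to an invertible twist, and both reduce the $\HFt$ case to $\HZt$ via the mod-$2$ cofiber sequence.
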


\begin{proof}
  Considering the cofiber sequence $\HZt \xrightarrow{2} \HZt \to \HFt$, we reduce to the case of $\HZt$.
  Using \Cref{lem:HomologyPointC}, we see that this lemma is true for tri-degrees of the form $(p,w,w)$.
  For each integer $n$, consider the cofiber sequence
$$\Spec(\C) \otimes (\bS^{\C})^{\otimes n-1} \to (\bS^{\C})^{\otimes n-1} \to (\bS^{\C})^{\otimes n}.$$
Tensoring this with $\HZt$ and applying $\pi_{p,w,w}$, we obtain a long exact sequence
$$\pi^{\C}_{p+w-n+1,w}  \HZt \to \pi^{\R}_{p,w-n+1,w} \HZt \to \pi^{\R}_{p,w-n,w} \HZt \to \pi^{\C}_{p+w-n,w} \HZt.$$

When $w>0$, \Cref{lem:HomologyPointC} thus ensures an isomorphism between $\pi^{\R}_{p,w-n+1,w} \HZt$ and $\pi^{\R}_{p,w-n,w} \HZt$.  As $n$ varies, using the base case $n=0$, we conclude that these groups always vanish.
\end{proof}

\begin{lem} \label{lem:Zcalc}
  When $w \le 0$, the Betti realization maps
  $ \pi^{\R}_{p,q,w} \HZt \to \pi^{C_2}_{p+q\sigma} \uZt $
  and 
  $ \pi^{\R}_{p,q,w} \HFt \to \pi^{C_2}_{p+q\sigma} \uFt $
  are isomorphisms.
\end{lem}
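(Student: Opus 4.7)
The plan is to first reduce the $\HFt$ case to the $\HZt$ case by applying the five-lemma to the cofiber sequence $\HZt \xrightarrow{2} \HZt \to \HFt$, whose Betti realization is $\uZt \xrightarrow{2} \uZt \to \uFt$. For $\HZt$ I fix $w \leq 0$ and prove the isomorphism for all $(p,q) \in \Z^2$ by induction on $|q - w|$, using the cofiber sequence $\Spec(\C) \otimes \HZt \to \HZt \xrightarrow{a} \Ss^\C \otimes \HZt$ and its Betti realization $(C_2)_+ \otimes \uZt \to \uZt \xrightarrow{a_\sigma} \Ss^\sigma \otimes \uZt$ to set up a map of long exact sequences on $\pi^\R_{*,*,w}$ and $\pi^{C_2}_{*+*\sigma}$.

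The main input is that Betti realization is already an isomorphism on the $\Spec(\C)$-terms for every $w \leq 0$. Indeed, by \Cref{lem:HomologyPointC}, the group $\pi^\R_{p,q,w}(\Spec(\C) \otimes \HZt) \cong \pi^\C_{p+q,w}\HZt$ is $\Z_2$ generated by $\tau^{-w}$ when $p+q=0$ and is zero otherwise, while $\pi^{C_2}_{p+q\sigma}((C_2)_+ \otimes \uZt) \cong \pi_{p+q}\HZt$ is $\Z_2$ when $p+q=0$ and zero otherwise. The relation $\b(\tau) = 1$ from \Cref{prop:tau-1} identifies the generators.

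For the base case $q = w$, I compare \Cref{lem:HomologyPointC}, which identifies $\pi^\R_{p,w,w}\HZt \cong \Z_2[\tau^2, \rho]/(2\rho)$, with the structure of $\pi^{C_2}_{p+w\sigma}\uZt$ in the range $q = w \leq 0$. In this range only the positive cone $\Z_2[a_\sigma, u_{2\sigma}]/(2a_\sigma)$ of \Cref{thm:eqpthom} contributes, since the negative cone and $\theta$-classes all live in $q > 0$; and the relations $\b(\tau^2) = u_{2\sigma}$ and $\b(\rho) = a_\sigma$ coming from \Cref{rec:elements} yield a termwise isomorphism.

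For the inductive step, I apply the five-lemma to the segment
\[ \pi^\R_{p+1, q-1, w}\HZt \to \pi^\R_{p,q,w}F \to \pi^\R_{p,q,w}\HZt \to \pi^\R_{p,q-1,w}\HZt \to \pi^\R_{p-1, q, w}F \]
and its Betti analog, where $F = \Spec(\C) \otimes \HZt$. Assuming iso at column $q-1$ for all $p$ together with iso on the $F$-terms forces iso at column $q$ for all $p$; a symmetric five-lemma application at the adjacent $\HZt$-position propagates iso from column $q$ back to column $q-1$. Iterating upward and downward from the base case $q = w$ then gives iso for all $q \in \Z$. The main subtlety is the bookkeeping of the two five-lemma applications needed for both directions of the induction; once this is set up, all the required computational inputs are contained in the preceding lemmas.
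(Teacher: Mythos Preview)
Your proposal is correct and follows essentially the same route as the paper: reduce to $\HZt$ via the Bockstein cofiber sequence, establish the base case $q=w$ by matching $\Z_2[\tau^2,\rho]/(2\rho)$ against the positive cone of $\pi^{C_2}_{*}\uZt$, and then propagate in both directions along the $a$-cofiber sequence using the five-lemma and the known isomorphism on the $\Spec(\C)$-terms. The paper indexes the induction by $n=w-q$ rather than $|q-w|$, but the logic is identical.
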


\begin{proof}
    Considering the cofiber sequence $\HZt \xrightarrow{2} \HZt \to \HFt$, we reduce to the case of $\HZt$. We first check that this true when $w=q$.
  \Cref{thm:eqpthom}, \Cref{rec:elements} and \Cref{lem:HomologyPointC} imply that the Betti realization map is an isomorphism for degrees of the form $(p,w,w)$ with $ w \leq 0$.
  As in the previous lemma we tensor the cofiber sequence
  \[ \Spec(\C) \otimes (\bS^{\C})^{\otimes n-1} \to (\bS^{\C})^{\otimes n-1} \to (\bS^{\C})^{\otimes n} \]
  with $\HZt$ and take homotopy groups in order to spread out to other cases. 
  Using $\beta$ as notation for Betti realization we obtain maps of of exact sequences of abelian groups
  \begin{center}
    \begin{tikzcd}
      \pi^{\C}_{p+w-n+1,w}  \HZt \arrow{r} \arrow{d}{\cong} & \pi^{\R}_{p,w-n+1,w} \HZt \arrow{r} \arrow{d}{\beta_{p,n-1}} & \pi^{\R}_{p,w-n,w} \HZt \arrow{r} \arrow{d}{\beta_{p,n}} & \pi^{\C}_{p+w-n,w} \HZt \arrow{d}{\cong} \\
    \pi_{p+w-n+1} \Zt \arrow{r} & \pi^{C_2}_{p+(w-n+1)\sigma} \uZt \arrow{r} & \pi^{C_2}_{p+(w-n)\sigma} \uZt \arrow{r} & \pi_{p+w-n} \Zt,
    \end{tikzcd}
  \end{center}
  with our goal being to prove that the Betti realization maps $\beta_{p,n}$ are isomorphisms for all $p,n \in \Z$.

  Using the 5-lemma we conclude that ($\beta_{p,n-1}$ is iso) + ($\beta_{p-1,n-1}$ is iso) implies ($\beta_{p,n}$ is iso). Similarly, the 5-lemma also implies that ($\beta_{p,n}$ is iso) + ($\beta_{p+1,n}$ is iso) implies ($\beta_{p,n-1}$ is iso). As noted above, we have already learned that $\beta_{p,0}$ is an isomorphism for all $p$, so we may now induct outwards from this case to conclude.
\end{proof}



\begin{proof}[Proof of \Cref{thm:pthomology}]
By \Cref{lem:Zcalc}, the map
\[\pi^\R _{p,q,0} \HZt \to \pi^{C_2}_{p+q\sigma}\uZt,\]
induced by Betti realization is an isomorphism.
Taking the inverse, we obtain a ring map
\[\pi^{C_2}_{p+q\sigma}\uZt \to \pi^\R _{p,q,0} \HZt.\]
This extends to a ring map
\[\left(\pi^{C_2}_{p+q\sigma} \uZt \right) [\ta] \to \pi_{p,q,*} \HZt.\]
  It follows from \Cref{thm:ta} and Lemmas \ref{lem:Z-positive-w} and \ref{lem:Zcalc} that this map is an isomorphism.
The same proof works for $\HFt$.
%
%
%
%
\end{proof}

\subsection{The proof of \Cref{thm:comparison-invert-ta}} \label{subsec:ccr}\ 

Recall that we are proving that $\b$ is an equivalence on the $\ta$-local category with inverse $Y$.
We will check directly that $Y$ is an equivalence of categories by showing that it is fully faithful and essentially surjective. That $\b$ is the inverse of $Y$ follows from the fact that $\b \circ Y = \b \circ c_{\C/\R}$ is the identity on $\Sp_{C_2,i2}$. Before we can make progress on this goal we will need a pair of lemmas.

\begin{lem} \label{lem:Cta-neg-w}
  The tri-graded homotopy of $C\ta$ is zero in negative weights.
\end{lem}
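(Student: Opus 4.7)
The plan is to reduce the weight-$w<0$ vanishing for $C\ta$ to the analogous $\C$-motivic vanishing for $C\tau$ via Galois descent along $\C/\R$.

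First, I would identify $Ca \otimes C\ta$ with $i_* C\tau$. Since base change to $\C$ sends $\ta$ to $\tau$ by \Cref{thm:ta}, and the equivalence $\SHC^{\at} \simeq \Mod(\SHR^{\at}; Ca)$ from \Cref{rec:fields} realizes $Ca \otimes (-)$ as $i_* i^*(-)$, adjunction gives
\[ \pi^\R_{p,q,w}(Ca \otimes C\ta) \cong \pi^\C_{p+q,w}(C\tau). \]
By \Cref{thm:C-main}(2), these groups are identified with $\Ext^{2w-(p+q),w}_{\BP_*\BP}(\BP_*, \BP_*)$, which vanish for $w<0$ since $\BP_*$ and $\BP_*\BP$ are concentrated in non-negative internal degrees.

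Next, I would invoke $Ca$-descent along the finite degree-$2$ Galois extension $\C/\R$. Descendability of $Ca$ in $\SHR^{\at}_{i2}$ exhibits $C\ta$ as a totalization
\[ C\ta \simeq \Tot\bigl( Ca^{\otimes \bullet+1} \otimes C\ta \bigr). \]
The projection formula, combined with the decomposition $i^*(Ca) \cong \mathbbm{1}_\C \oplus \mathbbm{1}_\C$ arising from $\C\otimes_\R\C \cong \C\times\C$, identifies each term $Ca^{\otimes s+1}\otimes C\ta$ with a finite direct sum of copies of $Ca\otimes C\ta$. Consequently, every term in the cosimplicial diagram has vanishing $\pi^\R_{*,*,w}$ for $w<0$ by the first step.

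The descent spectral sequence associated to this totalization therefore has $E_1$-page identically zero in weight $w<0$, and descendability guarantees its convergence, yielding $\pi^\R_{p,q,w}(C\ta) = 0$ for $w<0$. The main obstacle will be to verify $Ca$-descent and convergence of the associated spectral sequence in the $2$-completed Artin--Tate setting, although I expect both to follow from standard descendability arguments for finite Galois covers such as those of \cite{TomEtaleI, TomEtaleII}.
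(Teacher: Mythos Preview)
Your approach has a genuine gap at the convergence step. The crucial claim---that $Ca$ is descendable in $\SHRat$, or equivalently that $C\ta$ is $a$-complete---is false. This is discussed explicitly in \Cref{sec:a-comp}: the negative cone in $\pi^{C_2}_{\star}\uZt$ contributes infinitely $a$-divisible classes $\frac{\theta}{a^n}\otimes 1$ to $\pi^{\R}_{*,*,*}(C\ta)$, so $C\ta$ is not $a$-complete and the totalization $\Tot(Ca^{\otimes\bullet+1}\otimes C\ta)$ does not recover $C\ta$. The general \'etale descent results you cite do not help: in the $\R$-motivic setting they produce descent only after passing to the \'etale-local (equivalently $\ta$-local) category, precisely the opposite regime from $C\ta$. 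What your argument does correctly establish is that $a$ acts isomorphically on $\pi^{\R}_{p,*,w}(C\ta)$ for each fixed $w<0$; but without an independent vanishing input in some $q$-range there is no way to conclude these groups are zero.

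The paper instead runs the $\HFt$-based motivic Adams spectral sequence for $C\ta$, whose convergence is known by \cite{MAdamsConv}. Using Voevodsky's splitting of $\HFt^{\otimes s+1}$ as a sum of non-negative weight suspensions of $\HFt$ together with \Cref{thm:pthomology}, one checks that the $E_1$-page already vanishes in negative weight. The trade-off is that the $\HFt$-Adams approach requires the trigraded homology of a point as input, whereas your approach (had it worked) would have been more self-contained.
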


\begin{proof}
  Our method of proof will be to apply the motivic Adams spectral sequence to $C\ta$.
  Since this spectral sequence converges strongly for $\Ss_2$ by \cite{MAdamsConv}, it also converges strongly for $C\ta$. This spectral sequence takes the form
  $$E_1^{s,t} = \pi_{t,q,w}(C\ta \otimes \HFt^{\otimes {s+1}}) \implies \pi_{t-s,q,w} C\ta.$$
  It suffices therefore to check at the level of the $E_1$-page that $\pi_{t,q,w} \HFt^{\otimes {s+1}} = 0$ for $w<0$.
	
  For this, we use the known \cite{Voe03,Voe10,HoyoisSteenrod} description of $\HFt \otimes \HFt$ in $\SHR$.  One has that $\HFt \otimes \HFt \simeq \oplus_{(x_i,y_i,y_i)} \Sigma^{x_i,y_i,y_i} \HFt$, where the $x_i$ and $y_i$ range over non-negative integers.  The result now follows immediately from \Cref{thm:pthomology}.
\end{proof}

\begin{lem} \label{prop:weak-one-sided}
  Let $n$ denote a non-negative integer.  Given a pair of $C_2$-spectra $X, Y \in \Sp_{C_2, i2}$, the natural map induced by $\ta^n$
  \[ \Map_{\SHRat}(c_{\C/\R} X, c_{\C/\R} Y) \to \Map_{\SHRat}(c_{\C/\R} X, \Ss_2^{0,0,n} \otimes c_{\C/\R} Y) \]
  is an equivalence.
\end{lem}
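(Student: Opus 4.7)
The plan is to reformulate the statement as a mapping-spectrum vanishing involving the cofiber of $\ta^n$, reduce to a check on a single tri-graded homotopy group by exploiting compact generation, and then prove the required vanishing by a filtration argument built on \Cref{lem:Cta-neg-w}.

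First, I would observe that the map in the statement is induced by post-composition with the map $\ta^n \otimes c_{\C/\R} Y : c_{\C/\R} Y \to \Ss_2^{0,0,n} \otimes c_{\C/\R} Y$, so its cofiber is $\Map_{\SHRat}(c_{\C/\R} X,\, c_{\C/\R} Y \otimes C(\ta^n))$, where $C(\ta^n) \coloneqq \cof(\ta^n : \Ss \to \Ss^{0,0,n})$. The lemma is therefore equivalent to the vanishing of this mapping $C_2$-spectrum for all $X, Y \in \Sp_{C_2, i2}$.

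Next, since $c_{\C/\R}$ is a symmetric monoidal left adjoint whose values on the compact generators $\Ss_2^{p+q\sigma}$ of $\Sp_{C_2, i2}$ are the compact objects $\Ss_2^{p,q,0}$ of $\SHRat$, standard colimit arguments in both $X$ and $Y$ (mapping out of a compact object in a stable category preserves all colimits) reduce the claim to the vanishing $\pi^{\R}_{p,q,0}(C(\ta^n)_2) = 0$ for all $p,q \in \Z$. Rewriting $C(\ta^n) \simeq \Ss^{0,0,n} \otimes C\ta^n$, where we adopt the convention $C\ta^n \coloneqq \cof(\ta^n : \Ss^{0,0,-n} \to \Ss)$ compatible with \Cref{lem:Cta-neg-w}, this becomes the assertion $\pi^{\R}_{p,q,-n}(C\ta^n_2) = 0$.

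Finally, I would prove this vanishing by induction on $k$ running from $1$ up to $n$, using the octahedral cofiber sequence
\[ \Sigma^{0,0,-(k-1)} C\ta \longrightarrow C\ta^k \longrightarrow C\ta^{k-1} \]
arising from the factorization $\ta^k : \Ss^{0,0,-k} \xrightarrow{\ta} \Ss^{0,0,-(k-1)} \xrightarrow{\ta^{k-1}} \Ss$. Applying $\pi^{\R}_{*,*,-n}$, the outer term becomes $\pi^{\R}_{*,*,-n+k-1}(C\ta_2)$, which vanishes for every $1 \leq k \leq n$ by \Cref{lem:Cta-neg-w}. Since the base case $k=1$ is itself that lemma, the induction yields $\pi^{\R}_{*,*,-n}(C\ta^n_2) = 0$ as desired.

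The main obstacle is not conceptual: the real content has already been packaged into \Cref{lem:Cta-neg-w}, and the remaining work is careful bookkeeping. One must verify that the colimit reductions to compact generators go through in the $2$-completed ind-completed category $\Sp_{C_2,i2}$, and keep consistent conventions for whether $C\ta^n$ is formed from the weight-decreasing or weight-increasing version of $\ta^n$, so that the single weight shift $\Ss^{0,0,n} \otimes (-)$ correctly converts the target vanishing statement into the input supplied by \Cref{lem:Cta-neg-w}.
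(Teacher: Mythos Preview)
Your proposal is correct and follows essentially the same route as the paper. Both arguments reduce to compact generators $\Ss_2^{p+q\sigma}$ in $X$ and $Y$, translate the statement into a vanishing of tri-graded homotopy in a fixed negative weight, and then deduce it from \Cref{lem:Cta-neg-w} by peeling off one copy of $\ta$ at a time; the paper phrases this last step as ``$\ta:\pi^{\R}_{*,*,-m}\Ss_2 \to \pi^{\R}_{*,*,-m-1}\Ss_2$ is an isomorphism for each $m\ge 0$'' via the cofiber sequence of $\ta$, while you phrase it as the equivalent filtration of $C\ta^n$ by suspensions of $C\ta$ via the octahedral axiom.
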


\begin{proof}
  Since $c_{\C/\R} $ commutes with colimits it will suffice to prove the proposition as $X$ ranges through a family of compact generators of $\Sp_{C_2, i2}$. In particular, it will suffice to assume $X \simeq \Ss_2^{p+q\sigma}$ is a representation sphere.  Since $c_{\C/\R} X \simeq \Ss^{p,q,0} _2$ is compact, it furthermore suffices to prove the proposition as $Y$ ranges through a family of compact generators. In particular, it suffices to assume that $Y \simeq \Ss^{a+b\sigma}_2$ is also a representation sphere. 
	
  At this point, the proposition reduces to a claim about 
  $$\Map_{\SHRat}(c_{\C/\R} \Ss_2^{p+q\sigma}, \Ss_2^{0,0,n} \otimes c_{\C/\R} \Ss_2^{a+b\sigma}) \simeq \Map_{\SHRat}(\Ss_2^{p-a,q-b,-n},\Ss_2),$$
  or in other words a claim about the tri-graded stable stems $\pi^{\R}_{*,*,*} \Ss_2$.  We must show that multiplication by $\ta^n$ is an isomorphism $\pi^{\R}_{*,*,0} \Ss_2 \to \pi^{\R}_{*,*,-n} \Ss_2$.  Equivalently, we must check that $\ta:\pi^{\R}_{*,*,-n} \Ss_2 \to \pi^{\R}_{*,*,-n-1} \Ss_2$ is an isomorphism for each $n \ge 0$.  Examining the cofiber sequence
  \[\Sigma^{-1,0,0} C\ta \to \Ss_2^{0,0,-1} \stackrel{\ta}{\to} \Ss_2 \to C\ta, \]
  we may use \Cref{lem:Cta-neg-w} to conclude.
\end{proof}


We are now ready to complete the proof of \Cref{thm:comparison-invert-ta}.

  To check that $Y$ is fully faithful, we must prove that for any pair $A, B \in \Sp_{C_2,i2}$ the composite
\[ \Map_{\Sp_{C_2,i2}}(A,B) \to \Map_{\SHRat}(c_{\C/\R}A, c_{\C/\R}B) \to \Map_{\SHRat}(c_{\C/\R}A [\ta^{-1}], c_{\C/\R}B [\ta^{-1}]) \]
is an equivalence. The first map is an equivalence as a consequence of the fully-faithfulness of $c_{\C/\R}$ as proven by Heller-Ormsby, see \Cref{thm:c-ff}. The second map factors as
\begin{align*}
  \Map_{\SHRat}(c_{\C/\R}A, c_{\C/\R}B)
  &\xrightarrow{\simeq} \Map_{\SHRat}(c_{\C/\R}A, c_{\C/\R}B [\ta^{-1}]) \\
  &\xrightarrow{\simeq} \Map_{\SHRat}(c_{\C/\R}A [\ta^{-1}], c_{\C/\R}B [\ta^{-1}])
\end{align*}
where first map is an equivalence as a consequence of \Cref{prop:weak-one-sided}
and the second map is an equivalence since inverting $\ta$ is a localization.

Since $Y$ is now fully-faithful and colimit preserving, to check that it is essentially surjective it suffices to check that its image contains a family of compact generators. One such family consists of the objects $\Ss^{p,q,w}_2[\ta^{-1}]$ as $p,q,w$ range over the integers, since $\Ss_2^{p,q,w}[\ta^{-1}] \simeq \Ss_2^{p,q,0}[\ta^{-1}] \simeq Y(\Ss_2^{p+q\sigma})$.

\section{Galois reconstruction} \label{sec:top}
In this section, we provide a Galois reconstruction of $\SHRat$. In other words, we show how to reconstruct $\SHRat$ from $C_2$-equivariant homotopy theory.
As in the case of $\C$, understanding the close connection between $\MGL$ and its Betti realization is the essential step in reconstruction.
In \Cref{app:def}, we have set up a general framework for reconstruction results of this kind. We will rely heavily on the work there, so we suggest the reader familiarize themself with the material and notations therein before proceeding.

Before we state the main theorem, let us recall the definition of the regular slice filtration of $C_2$-spectra.

\begin{dfn}
  We say that a $C_2$-spectrum $X$ is regular slice $n$-connective if $\Phi^e X$ is $n$-connective and $\Phi^{C_2} X$ is $\lceil \frac{n}{2} \rceil$-connective.\footnote{Here we work with the regular slice filtration because of its good multiplicative properties.}

  We let $\Sp_{C_2} ^{\geq n}$ denote the full subcategory of $\Sp_{C_2}$ consisting of the regular slice $n$-connective $C_2$-spectra. This is a coreflective subcategory, and we let $P_{n} : \Sp_{C_2} \to \Sp_{C_2} ^{\geq n}$ denote the right adjoint to the inclusion. We let $P^n$ denote the $n^{\mathrm{th}}$ regular slice truncation functor and let $P^n _n$ denote the $n^{\mathrm{th}}$ regular slice functor.
\end{dfn}

\begin{cnstr}
  Since the functors $\Phi^e$ and $\Phi^{C_2}$ are monoidal, the hypotheses of \Cref{cnstr:trunc-tower} are satisfied and we may assemble the categories $\Sp_{C_2}^{\geq n}$ into a coreflective symmetric monoidal subcategory $\Sp_{C_2, \geq 0}^{\Fil, 2\mathrm{slice}} \subset \Sp_{C_2}^{\Fil}$ which consists of those $X_\bullet$ which are regular slice $2n$-connective in position $n$. This provides a lax symmetric monoidal connective cover functor $\tau_{\geq 0}^{2\mathrm{slice}}$ which takes the $(2n)^{\mathrm{th}}$-slice cover at position $n$. The composition $\tau_{\geq 0}^{2\mathrm{slice}}(Y(-))$ is equivalent to even slice tower functor $P_{2\bullet}$ and demonstrates that this functor is lax symmetric monoidal.
\end{cnstr}


The fundamental construction of the section is the following commutative algebra in filtered $C_2$-spectra:
\[ R_\bullet \coloneqq \mathrm{Tot}^* \left( P_{2\bullet} \MU_{\R,2}^{\otimes * + 1} \right). \]

\begin{thm}[Galois reconstruction] \label{thm:filt-model}
  There is an equivalence of presentably symmetric monoidal categories under $\Sp_{C_2, i2} ^{\Fil}$,
  \[\SHRat \simeq \Mod(\Sp^{\Fil}_{C_2, i2}; R_\bullet),\]
  where $\Sp_{C_2, i2}$ acts on the left through $c_{\C/\R}$.
\end{thm}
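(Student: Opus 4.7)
The plan is to apply monadic descent. From the data established earlier in the paper---the $\Sp_{C_2, i2}$-algebra structure on $\SHRat$ via $c_{\C/\R}$, together with the element $\ta \in \pi^{\R}_{0,0,-1} \Ss_2$---the deformation machinery of \Cref{app:def} produces a symmetric monoidal left adjoint $i^{*} : \Sp_{C_2, i2}^{\Fil} \to \SHRat$ sending the shift map to $\ta$. Writing $i_{*}$ for its right adjoint, set $R'_\bullet \coloneqq i_{*}(\o_{\R})$; this is naturally a commutative algebra in $\Sp_{C_2, i2}^{\Fil}$, and there is a canonical $\Sp_{C_2, i2}^{\Fil}$-linear comparison functor $\SHRat \to \Mod(\Sp_{C_2, i2}^{\Fil}; R'_\bullet)$. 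Proving the theorem splits into two tasks: (A) checking that $i^{*} \dashv i_{*}$ is monadic (which identifies the target with $\SHRat$), and (B) identifying $R'_\bullet$ with the explicit cosimplicial formula for $R_\bullet$.

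For (A), I would appeal to Barr--Beck--Lurie. The functor $i_{*}$ records $\Map_{\SHRat}(\Ss^{0,0,\bullet}_{2}, X)$ as a filtered $C_2$-spectrum. Since the defining Artin--Tate generators $\Spec(\C)$ and $\P^1_\R$ are built from the tri-graded spheres $\Ss^{p,q,w}_{2}$ via the cofiber sequence $\Ss^{0,-1,0} \xrightarrow{a} \Ss \to Ca$ and the invertibility of $\Ss^{1,1,1}$ (\Cref{prop:pic-Q}), and since each $\Ss^{p,q,w}_{2}$ factors as $c_{\C/\R}(\Ss_{2}^{p + q\sigma}) \otimes \Ss^{0,0,w}_{2}$, vanishing of $i_{*}X$ as a filtered $C_2$-spectrum forces all mapping spectra out of these generators to vanish, hence $X \simeq 0$. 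This yields conservativity; preservation of colimits follows from compactness of $\Ss^{0,0,n}_{2}$, each of which is an invertible object whose inverse $\Sp_{C_2, i2}$-shift lands on the compact unit. Monadicity then produces the desired equivalence $\SHRat \simeq \Mod(\Sp_{C_2, i2}^{\Fil}; R'_\bullet)$ as $\Sp_{C_2, i2}^{\Fil}$-linear presentably symmetric monoidal categories.

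For (B), the $2$-complete unit in $\SHRat$ admits an $\MGL_{2}$-Amitsur resolution $\o_{\R} \simeq \lim_{\Delta} \MGL_{2}^{\otimes \bullet + 1}$, convergent because $\MGL$ is an Adams-type detecting algebra after $2$-completion. Applying $i_{*}$ gives $R'_\bullet \simeq \Tot^{*}\bigl( i_{*}(\MGL_{2}^{\otimes {*}+1}) \bigr)$, so the problem reduces to identifying $i_{*}(\MGL_{2}^{\otimes k})$ with $P_{2\bullet}\MU_{\R,2}^{\otimes k}$ in $\Sp_{C_2, i2}^{\Fil}$ for each $k$. Two inputs combine to give this. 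First, the proposition of \Cref{sec:eff} identifies $i_{*}$ with the composite $\b \circ f_{\bullet}$, where $f_{\bullet}$ is Voevodsky's effective slice tower; so the target is to compute $\b(f_{\bullet}\MGL_{2}^{\otimes k})$. Second, \Cref{sec:nu} supplies an equivalence $\nur \MU_{\R,2} \simeq \MGL_{2}$ of commutative algebras, which passes to tensor powers. Using the compatibility, under Betti realization, of the effective slice filtration on $\SHR$ (à la Hoyois/Spitzweck) with the even regular slice filtration on $\Sp_{C_2}$ (à la Hill--Hopkins--Ravenel), the filtered $C_2$-spectrum $\b(f_{\bullet}\nur \MU_{\R,2}^{\otimes k})$ is precisely $P_{2\bullet}\MU_{\R,2}^{\otimes k}$, and the cosimplicial formula for $R_\bullet$ follows.

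The main obstacle is Step (B), specifically the slice-by-slice matching of the $\MGL$-Adams tower with the regular-slice tower of $\MU_\R$. This is the substantive homotopical content of the theorem and rests on two non-formal ingredients already established: the $\b \circ f_\bullet$ description of $i_{*}$ in \Cref{sec:eff} and the calculation $\nur \MU_{\R,2} \simeq \MGL_{2}$ of \Cref{sec:nu}. Once Step (B) is complete, the monadic descent of Step (A) is the purely formal packaging that converts the identification of $R_\bullet$ into the stated equivalence of $\Sp^{\Fil}_{C_2, i2}$-linear symmetric monoidal categories.
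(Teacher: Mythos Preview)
Your overall strategy matches the paper's: Step (A) is the monadicity/affineness argument (packaged there as the ``deformation pair'' machinery of \Cref{app:def}, specifically \Cref{prop:filt-mod}), and Step (B) is the $\MGL_2$-cobar identification of $i_*\Ss_2$. The paper carries out exactly this two-step plan.

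However, your Step (B) contains a circular detour. You invoke $\nur \MU_{\R,2} \simeq \MGL_2$ from \Cref{sec:nu}, but that result (\Cref{thm:recover}) is proved via \Cref{prop:slice-simple-nu}, whose input in the case $E = \MGL_2$ is \Cref{prop:P-MGL}---which is precisely the identification $i_*\MGL_2 \simeq P_{2\bullet}\MU_{\R,2}$ you are trying to establish here. The detour is also unnecessary: the ``compatibility under Betti realization of the effective slice filtration with the even regular slice filtration'' that you cite at the end \emph{is} Heard's theorem (\Cref{thm:slice-comp}), and it applies directly to $\MGL^{\otimes k}$ with no reference to $\nur$. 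The paper's route is: (i) $i_* \simeq \b \circ f_\bullet$ by \Cref{thm:eff-fil}; (ii) Heard's theorem gives $\b(f_\bullet \MGL^{\otimes k}) \simeq P_{2\bullet}\MU_\R^{\otimes k}$; (iii) this is promoted to an equivalence of commutative algebras by observing that $i_*\MGL_2^{\otimes k}$ is levelwise slice-connective, so the canonical map to $Y(\MU_{\R,2}^{\otimes k})$ factors through the slice cover as a map of commutative algebras (\Cref{prop:P-MGL-power}). Delete the $\nur$ step and your argument becomes the paper's.
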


The commutative algebra $R_\bullet$ can be called the ``d\'ecalage of the $\MU_{\R,2}$-Adams tower with respect to the even slice filtration''. The commutative algebra $R_\bullet$ is also the image of the unit in $\Sp_{C_2,i2}$ under a certain lax symmetric monoidal functor.

\begin{cnstr} \label{cnstr:gamma-star}
  Applying \Cref{cnstr:shear}, we obtain a lax symmetric monoidal functor
  \[ \mathrm{Sh}(P_{2\bullet} ; \MU_{\R,2}) : \Sp_{C_2,i2} \to \Sp^{\Fil}_{C_2, i2}. \]
  By construction, $R_\bullet \simeq \mathrm{Sh}(P_{2\bullet} ; \MU_{\R,2})(\Ss_2)$, so this functor factors through the category of modules over $R_\bullet$. Composing with the equivalence of \Cref{thm:filt-model}, this defines a lax symmetric monoidal functor,
  \[ \Gamma_* :  \Sp_{C_2, i2} \to \SHRat. \]  
\end{cnstr}

This functor is analogous to the functor $\Gamma_* : \Sp_{2} \to \SHC_{2}$ studied in \cite{cmmf}, and may also be compared with the synthetic analogue functor of Pstragowski \cite{Pstragowski}. We will study this functor more closely in \Cref{sec:t-structures}.

The proof of \Cref{thm:filt-model} will be carried out in two steps: first we will prove that there is an equivalence
  \[\SHRat \simeq \Mod(\Sp^{\Fil}_{C_2, i2}; i_* (\Ss_2))\]
for some lax symmetric monoidal functor $i_* : \SHRat \to \Sp^{\Fil} _{C_2, i2}$.
We will then construct an equivalence $i_* (\Ss_2) \simeq R_\bullet$ of commutative rings in $\Sp^{\Fil} _{C_2, i2}$.
The key step in the construction of this equivalence is the identification of $i_* (\MGL_2)$ as a commutative algebra in $\Sp^{\Fil} _{C_2, i2}$.
In order make this identification, we will show that $i_*$ admits a description in terms of Voevodsky's effective slice filtration.



\subsection{The filtered model}\

In this subsection we prove the first half of Galois reconstruction,
namely that there is a filtered model for $\SHRat$.

\begin{prop} \label{prop:top-diagram}
  There is a diagram of symmetric monoidal left adjoints
  \begin{center}
    \begin{tikzcd}
      {\Sp_{C_2,i2}} \ar[r] \ar[d, "\mathrm{Id}"] &
      {\Sp_{C_2,i2}^{\Fil}} \ar[r, " - \otimes i_*\Ss"] \ar[dr, "i^*"] &
      {\Mod ( \Sp_{C_2,i2}^{\Fil} ; i_*\Ss_2)} \ar[r, "\Re"] \ar[d, "\simeq"] &
      {\Sp_{C_2,i2}} \ar[d, "\mathrm{Id}"] \\
      {\Sp_{C_2,i2}} \ar[rr, "c_{\C/\R}"] & &
      \SHRat \ar[r, "\b"] &
      {\Sp_{C_2,i2}}
    \end{tikzcd}
  \end{center}
  such that $i^*(\Ss_2^{p+q\sigma}(w)) \simeq \Ss_2^{p,q,w}$.
\end{prop}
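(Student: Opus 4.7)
The plan is to build $i^*$ as a universal symmetric monoidal left adjoint out of $\Sp^{\Fil}_{C_2,i2}$, and then identify $\SHRat$ with the Eilenberg--Moore category of the resulting adjunction via Barr--Beck--Lurie. To begin, regard $\SHRat$ as a presentably symmetric monoidal $\Sp_{C_2,i2}$-linear category by means of $c_{\C/\R}$ (\Cref{prop:comp}). By the universal property of filtered objects to be developed in the appendices (namely, that $\Sp^{\Fil}_{C_2,i2}$ is the free such category on an invertible object $L$ equipped with a map $L \to \mathbf{1}$), the pair $(\Ss^{0,0,-1}_2, \ta)$ provided by \Cref{thm:ta} determines a symmetric monoidal left adjoint $i^* : \Sp^{\Fil}_{C_2,i2} \to \SHRat$ sending $\Ss_2^{p+q\sigma}(w)$ to $c_{\C/\R}(\Ss_2^{p+q\sigma}) \otimes (\Ss^{0,0,-1}_2)^{\otimes -w} \simeq \Ss_2^{p,q,w}$, which establishes the claim on Picard elements.

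Next, since $i^*$ preserves the natural compact generators of $\Sp^{\Fil}_{C_2,i2}$, it admits a right adjoint $i_*$; concretely, the value of $i_*E$ at filtration level $n$ is the $C_2$-spectrum $\Map_{\SHRat}(\Ss^{0,0,n}_2, E)$, using the $\Sp_{C_2,i2}$-enrichment of \Cref{rec:c}. We then apply Barr--Beck--Lurie to $i^* \dashv i_*$. Preservation of $i_*$-split totalizations is automatic from exactness of $i_*$ in the stable presentable setting, so the essential check is conservativity of $i_*$. This is straightforward: adjunction together with \Cref{thm:c-ff} gives $\pi_{p,q,w}^{\R} E \cong \pi^{C_2}_{p+q\sigma}(i_*E)_w$, and the tri-graded spheres $\Ss^{p,q,w}_2$ form a compact generating family for $\SHRat$, so vanishing of $i_*E$ forces $E \simeq 0$. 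Consequently the comparison functor yields the desired equivalence $\SHRat \simeq \Mod(\Sp^{\Fil}_{C_2,i2}; i_*\Ss_2)$ under $\Sp_{C_2,i2}$.

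Finally, to close the diagram we identify the composite of this equivalence with $\b$. Under $i^*$ the filtration parameter (the canonical shift-to-unit map) is sent to $\ta$, which has Betti realization equal to $1$ by \Cref{thm:ta}; hence $\b \circ i^*$ inverts the filtration parameter. Combining this observation with \Cref{thm:comparison-invert-ta}, we find that $\b \circ i^*$ is canonically equivalent to the symmetric monoidal left adjoint $\Sp^{\Fil}_{C_2,i2} \to \Sp_{C_2,i2}$ that inverts the shift (the ``generic fiber'' functor). This functor descends to modules over $i_*\Ss_2$ and provides the desired $\Re$, with commutativity of both outer squares following by construction.

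The main obstacle here is the clean formulation and verification of the universal property of $\Sp^{\Fil}_{C_2,i2}$ in the $\Sp_{C_2,i2}$-linear setting, which we defer to the appendix machinery; once this is in hand, the monadicity step and the Betti-realization identification proceed cleanly given Picard generation of $\SHRat$ and the $\ta$-inversion theorem already established.
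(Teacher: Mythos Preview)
Your proposal is correct and follows essentially the same arc as the paper, but the packaging differs in an instructive way.

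The paper reduces everything to verifying that $(\SHRat,\Sp_{C_2,i2})$ is a \emph{deformation pair} in the sense of \Cref{dfn:def-pair}, after which the appendix machinery (\Cref{prop:filt-mod}) produces the entire diagram in one stroke. The two axioms to check are (i) that $\b$ induces an equivalence $\Hom_{\SHRat}(\Ss_2^{0,0,a},\Ss_2^{0,0,b})\simeq\Hom_{\Sp_{C_2,i2}}(\Ss_2,\Ss_2)$ for $a\le b$, and (ii) compact rigid generation by the trigraded spheres. Axiom (i) is exactly \Cref{prop:weak-one-sided}; axiom (ii) is immediate. The construction of $i^*$ in \Cref{lem:fil-build} does \emph{not} directly feed in the element $\ta$; rather it uses the mapping-space equivalence of axiom (i) to identify a certain full subcategory of $\SHRat\times\Z^{\Fil}$ with $\mathcal{D}\times\Z^{\Fil}$, and $\ta$ is recovered a posteriori as the unique preimage of $1$ under $\b$.

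Your route inverts this: you feed in $\ta$ directly via a universal property of $\Sp_{C_2,i2}^{\Fil}$, then run Barr--Beck--Lurie. This is cleaner in spirit but requires you to formulate and prove the universal property carefully (the paper's \Cref{lem:fil-build} is morally a proof of it, and relies on the same input \Cref{prop:weak-one-sided}). One small gap worth flagging: Barr--Beck--Lurie identifies $\SHRat$ with modules over the \emph{monad} $i_*i^*$, not immediately with modules over the \emph{algebra} $i_*\Ss_2$. The passage between these is the projection formula, which is what the paper's \Cref{prop:rigid} supplies from rigid generation; you use this implicitly but should name it.
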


Note that \Cref{prop:filt-mod} produces a diagram of this type,
so in order to prove the proposition we only need to endow $\SHRat$ and $\Sp_{C_2,i2}$ with the structure of a deformation pair in the sense of \Cref{dfn:def-pair}.

\begin{proof}
  We begin with the diagram
  \begin{center} \begin{tikzcd}
      & \SHRat \ar[dr, "\b"] & \\
      {\Sp_{C_2,i2}} \ar[ur, "c_{\C/\R}"] \ar[rr, "\mathrm{Id}"] & &
      {\Sp_{C_2,i2}}.
  \end{tikzcd} \end{center}
  In order to make $i^*$ behave as desired on Picard elements, we pick
  $i_0(w) = \Ss_2^{0,0,w}$. Since $\b (\Ss_2^{0,0,w}) \simeq \Ss_2^{0}$, this factors through the kernel of the map on Picard groups induced by $\b$.

  To conclude, we now need to verify the two conditions in the definition of a deformation pair.
  The first condition is implied by \Cref{prop:weak-one-sided}.  
  To verify the second condition, we note that the representation spheres $\Ss^{p+q\sigma} _2$ form a set of compact dualizable generators for $\Sp_{C_2,i2}$ and the tri-graded spheres $\Ss^{p,q,w} _2$ form a set of compact dualizable generators for $\SHRat$.
\end{proof}

\begin{rmk}
  Unraveling the definitions in \Cref{app:def}, we find that for $X \in \SHRat$ there is a natural identification
  \[ i_* (X)_n \simeq \Map_{\SHRat} (\Ss_2 ^{0,0,n}, X), \]
  and that the natural maps
  \[ \Map_{\SHRat} (\Ss_2 ^{0,0,n}, X) \simeq i_* (X)_n \to i_* (X)_{n-1} \simeq \Map_{\SHRat} (\Ss_2 ^{0,0,n-1}, X) \]
  are induced by $\ta : \Ss^{0,0,n-1} _2 \to \Ss^{0,0,n} _2$.
\end{rmk}

The task of identifying $i_*\Ss_2$ with $R_\bullet$ will occupy us for the remainder of the section.

\subsection{The effective slice filtration} \label{sec:eff}\ 

%


In this section, we relate the functor $i_*$ defined in the previous subsection to Voevodsky's effective slice filtration.
We begin by recalling the definition of the effective slice filtration \cite{VoeOpenProbs}.

\begin{dfn} \label{dfn:eff-slice}
  Let $\Sm / \R$ denote the category of smooth and separated $\R$-schemes of finite type.
  We let $\SHR^{\eff} _{i2, \geq n} \subset \SHR_{i2}$ denote the full subcategory generated under small colimits by the collection $\{\Ss_2^{p,q,q} \otimes X_+ \vert X \in \Sm / \R, p,q \in \Z, q \geq n\}$.
  We denote the right adjoint of the inclusion by $f_n : \SHR_{i2} \to \SHR^{\eff} _{i2, \geq n}$. There are natural transformations $f_{n+1} \to f_{n}$ and we let $s_n$ denote the cofiber of this map.

  Since $\SHR^{\eff} _{i2, \geq n} \subset \SHR_{i2}$ is a compactly generated stable subcategory, the functors $f_n$ and $s_n$ preserve all colimits. Moreover, the tensor product of an $n$-effective object with an $m$-effective object is $(m+n)$-effective, so that the effective slice tower functor $f_\bullet : \SHR_{i2} \to \SHR_{i2} ^{\Fil}$ is lax symmetric monoidal.
\end{dfn}

The main result of this section is the following:

\begin{prop} \label{thm:eff-fil}
  The lax symmetric monoidal functor $i_* : \SHRat \to \Sp_{C_2, i2}^{\Fil}$ of the previous subsection is equivalent to the lax symmetric monoidal functor $\b \circ f_\bullet : \SHRat \to \Sp_{C_2, i2}^{\Fil}$ taking $E \in \SHRat$ to the Betti realization of its effective slice tower
  \[\dots \to \b (f_{2} E) \to \b (f_1 E) \to \b (f_0 E) \to \b (f_{-1} E) \to \b (f_{-2} E) \to \dots.\]
\end{prop}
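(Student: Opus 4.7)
The plan is to build a canonical lax symmetric monoidal natural transformation $\alpha : i_* \to \b \circ f_\bullet$ and verify it is an equivalence by reducing to a rigidity statement about $c^R$ and $\b$ on the effective subcategory.

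I construct $\alpha$ as follows. Since $\Ss_2^{0,0,n}$ is $n$-effective, the counit $f_n E \to E$ induces an equivalence
\[ i_*(E)_n \simeq \Map_{\SHR}(\Ss_2^{0,0,n}, E) \simeq \Map_{\SHR}(\Ss_2^{0,0,n}, f_n E), \]
and applying the natural map on enriched mapping $C_2$-spectra induced by the symmetric monoidal functor $\b$, together with the identification $\b(\Ss_2^{0,0,n}) \simeq \Ss^0$, yields $\alpha_n(E) : i_*(E)_n \to \b(f_n E)$. Compatibility with the $\ta$-transition maps on the source and the effective-cover transition maps on the target is immediate from naturality, and the lax symmetric monoidal structure on $\alpha$ is inherited from the constituent functors $i_*$, $f_\bullet$, and $\b$.

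To verify $\alpha_n(E)$ is an equivalence, I use the tensor-equivalence $\Ss^{0,0,n} \otimes (-) : \SHR^{\eff}_{i2,\geq 0} \xrightarrow{\simeq} \SHR^{\eff}_{i2, \geq n}$ and set $F := \Ss^{0,0,-n} \otimes f_n E \in \SHR^{\eff}_{i2, \geq 0}$; the claim then reduces to showing that the canonical comparison $c^R(F) \to \b(F)$ is an equivalence for every $F \in \SHR^{\eff}_{i2,\geq 0}$, where $c^R$ is the right adjoint of $c_{\C/\R}$. Both $\b$ (as a left adjoint) and $c^R$ (exact, and with left adjoint $c_{\C/\R}$ preserving compact generators, since $c_{\C/\R}(\Ss^{p+q\sigma}) \simeq \Ss^{p,q,0}$ is compact in $\SHR_{i2}$) preserve all colimits, so it suffices to check on a family of compact generators $F = \Ss^{p,q,q} \otimes X_+$ with $X \in \Sm/\R$ and $q \geq 0$. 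For the Tate-sphere case $X = \Spec(\R)$, writing $\Ss^{p,q,q} \simeq \Ss^{0,0,q} \otimes c_{\C/\R}(\Ss^{p+q\sigma})$ and applying \Cref{prop:weak-one-sided} with $n = q$ together with $c^R \circ c_{\C/\R} \simeq \id$ from \Cref{thm:c-ff} produces $c^R(\Ss^{p,q,q}) \simeq \Ss^{p+q\sigma} \simeq \b(\Ss^{p,q,q})$. For general smooth $X$, I would use Nisnevich descent to refine to pieces which split into finite \'etale $\R$-algebras, where \Cref{thm:c-ff} applies directly.

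The main obstacle is the descent step for a general smooth $\R$-scheme $X$: identifying $c^R(X_+)$ with $\b(X_+) \simeq X(\C)_+$ amounts to a motivic-to-topological rigidity statement substantially more subtle than the Artin case. The most likely routes are either an application of the etale rigidity theorem of Bachmann--Elmanto--{\O}stv{\ae}r after $2$-completion, or a careful Nisnevich-descent argument that uses the ``one-sided $\ta$-periodicity'' of \Cref{prop:weak-one-sided} to confine the comparison to a regime where effective objects are $\ta$-local enough that $c^R$ and $\b$ necessarily coincide.
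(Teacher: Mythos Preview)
Your construction of the comparison $\alpha$ and the reduction to showing that $c^R(F) \to \b(F)$ is an equivalence for $F$ in the effective $0$-connective subcategory are both correct in spirit. The genuine gap is the choice of subcategory: you land in $\SHR^{\eff}_{i2,\geq 0}$, which is generated by $\Ss^{p,q,q}\otimes X_+$ for \emph{all} smooth $\R$-schemes $X$, and this forces you into a global motivic rigidity statement that is far beyond what is needed (and is the obstacle you identify). But $i_*$ is only defined on $\SHRat$, and for Artin--Tate $E$ the paper's \Cref{prop:eff-slice-at-compare} gives $f_nE \simeq f_n^{\att}E \in \SHR^{\at,\gceff}_{i2,\geq n}$. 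Hence your $F=\Ss^{0,0,-n}\otimes f_nE$ lies in $\SHR^{\at,\gceff}_{i2,\geq 0}$, whose compact generators are just the trigraded spheres $\Ss^{p,q,w}$ with $w\geq 0$. For these your own argument via \Cref{prop:weak-one-sided} and full faithfulness of $c_{\C/\R}$ already works, and no descent over smooth $X$ is required.

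By comparison, the paper avoids the generator check altogether. It builds a span of lax symmetric monoidal functors $i_* \leftarrow \diag(i_*\circ f_\bullet) \to \b\circ f_\bullet$, proves the left leg is an equivalence by $n$-effectivity of $\Ss_2^{0,0,n}$ (exactly your first observation), and proves the right leg by combining \Cref{thm:comparison-invert-ta} (identifying $\b$ with inverting $\ta$) with the homotopy-group characterization of \Cref{prop:eff-condition}: membership in $\SHR^{\at,\gceff}_{i2,\geq n}$ is equivalent to $\ta$ acting isomorphically on $\pi^\R_{p,q,w}$ for $w<n$, whence $i_*(f_nE)_n \to \b(f_nE)$ is an equivalence directly. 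The span also packages the lax symmetric monoidal compatibility cleanly, whereas your direct $\alpha$ would need a bit more care there.
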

%
%
%

As a first step, we rephrase \Cref{dfn:eff-slice} to be more natural in our trigraded context:

\begin{lem}
  The full subcategory $\SHR^{\eff} _{i2, \geq n} \subset \SHR_{i2}$ is generated under small colimits by the collection $\{\Ss_2 ^{p,q,w} \otimes X_+ \vert X \in \Sm / \R, p,q,w \in \Z, w \geq n\}$.
  As a consequence, the suspension functors provide equivalences,
  \[ \Sigma^{p,q,w} : \SHR^{\eff} _{i2, \geq n} \to \SHR^{\eff} _{i2, \geq n+w}. \]
\end{lem}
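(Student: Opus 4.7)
The plan is to prove the two inclusions, with the content lying in showing that each proposed generator $\Ss_2^{p,q,w} \otimes X_+$ with $w \geq n$ lies in $\SHR^{\eff}_{i2, \geq n}$. The easy inclusion is immediate: every Tate generator $\Ss_2^{p,q,q} \otimes X_+$ with $q \geq n$ is itself of the more general form, with $w = q \geq n$.

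The main obstacle and key observation is that the Artin sphere $\Ss^\C \simeq \Ss^{0,1,0}$ contributes no effective weight, despite not being a Tate sphere. For this, recall from the discussion around \Cref{exm:galois-corr} the cofiber sequence $\Spec(\C) \to \Ss \to \Ss^\C$. Since $\Spec(\C) \to \Spec(\R)$ is finite \'etale, both $\Spec(\C)$ and $\Ss = \Spec(\R)_+$ belong to the generating set of $\SHR^{\eff}_{i2, \geq 0}$, and stability forces $\Ss^{0,1,0}$ (as a cofiber) and $\Ss^{0,-1,0}$ (as a shifted fiber) to lie in $\SHR^{\eff}_{i2, \geq 0}$ as well.

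With this in hand, decompose an arbitrary proposed generator as
\[ \Ss_2^{p,q,w} \otimes X_+ \simeq \Ss^{p-w, 0, 0} \otimes \Ss^{0, q-w, 0} \otimes \Ss^{w,w,w} \otimes X_+. \]
The simplicial sphere $\Ss^{p-w,0,0} = S^{p-w}$ lies in $\SHR^{\eff}_{i2, \geq 0}$ by stability, $X_+$ is a generator of $\SHR^{\eff}_{i2, \geq 0}$, and $\Ss^{0,q-w,0}$ lies in $\SHR^{\eff}_{i2, \geq 0}$ by applying the multiplicative property of the effective slice filtration (noted after \Cref{dfn:eff-slice}) to the previous paragraph. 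The remaining factor $\Ss^{w,w,w}$ is a generator of $\SHR^{\eff}_{i2, \geq w}$ by definition (for either sign of $w$). Multiplicativity then places the whole tensor product in $\SHR^{\eff}_{i2, \geq 0+0+w+0} = \SHR^{\eff}_{i2, \geq w} \subseteq \SHR^{\eff}_{i2, \geq n}$, completing the inclusion.

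Finally, the consequence about suspension equivalences is formal: the autoequivalence $\Sigma^{p,q,w}$ of $\SHR_{i2}$ carries the new generating set $\{\Ss_2^{p',q',w'} \otimes X_+ : w' \geq n\}$ of $\SHR^{\eff}_{i2, \geq n}$ bijectively onto the analogous generating set of $\SHR^{\eff}_{i2, \geq n+w}$ (via $w' \mapsto w+w'$), and since both $\Sigma^{p,q,w}$ and $\Sigma^{-p,-q,-w}$ preserve small colimits, they restrict to mutually inverse equivalences between these two subcategories.
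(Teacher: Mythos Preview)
Your proposal is correct and follows essentially the same route as the paper: both arguments reduce to showing that the Artin sphere $\Ss^{0,\pm 1,0}$ lies in $\SHR^{\eff}_{i2,\geq 0}$ using a cofiber sequence involving $\Spec(\C)$ and the unit, and then conclude by multiplicativity of the slice filtration. The paper uses the sequence $\Ss^{0,0,0} \to \Spec(\C) \to \Ss^{1,-1,0}$ to get $\Ss^{0,-1,0}$ after a desuspension, while you use the companion sequence $\Spec(\C) \to \Ss \to \Ss^{0,1,0}$; your phrase ``shifted fiber'' for $\Ss^{0,-1,0}$ is slightly loose (it is the fiber of the unit map $\Ss \to \Spec(\C)$, not of the map in the sequence you wrote), but the logic is sound.
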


\begin{proof}
  Since $\Sigma^{0,1,1} : \SHR^{\eff} _{i2, \geq n} \to \SHR^{\eff} _{i2, \geq n+1}$ is clearly an equivalence of categories, it suffices to prove the generation statement in the case that $n = 0$.
  Since $\SHR^{\eff} _{i2, \geq 0}$ is closed under tensor products, it suffices to show that $\Ss_2^{0,-1,0} \in \SHR^{\eff} _{i2, \geq 0}$.

  Now, $\SHR^{\eff} _{i2, \geq 0}$ is clearly closed under $\Sigma^{-1,0,0}$, so it suffices to show that $\Ss_2 ^{1,-1,0} \in \SHR^{\eff} _{i2, \geq 0}$.
  This follows from the existence of a cofiber sequence
  \[ \Ss^{0,0,0} \to \Spec \C \to \Ss^{1,-1,0}. \]
  The second statement is a clear consequence of the generation statement.
\end{proof}

We now define an alternative filtration on $\SHRat$ which is easier to analyze; following an argument of Heard \cite{SliceComp}, itself an adaptation of an argument of Pelaez \cite{Pelaez}, we will prove that this filtration is in fact equivalent to the effective slice filtration.

\begin{dfn}
  We let $\SHR^{\at, \gceff} _{i2, \geq n}$ denote the full subcategory generated under small colimits by the collection $\{\Ss_{2} ^{p,q,w} \vert p,q,w \in \Z, w \geq n\}$.
  We let
  \[ f_n ^{\att} : \SHRat \to \SHR^{\at, \gceff}_{i2, \geq n} \]
  denote the right adjoint of the inclusion. There are natural transformations $f_{n+1} ^{\att} \to f_{n} ^{\att}$, and we denote the cofiber by $s_n ^{\att}$.

  Since $\SHR^{\at, \gceff}_{i2, \geq n} \subset \SHRat$ is a compactly generated stable subcategory, the functors $f_n ^{\att}$ and $s_n ^{\att}$ preserve colimits.
\end{dfn}

It is clear that $\SHR^{\at,\gceff} _{i2, \geq n} \subset \SHRat \cap \SHR^{\eff} _{i2, \geq n}$.
%

\begin{lem} \label{lem:susp-slice}
  Given $E \in \SHR_{i2}$, there are natural equivalences $f_k \Sigma^{p,q,w} E \simeq \Sigma^{p,q,w} f_{k-w} E$ and $s_k \Sigma^{p,q,w} E \simeq \Sigma^{p,q,w} s_{k-w} E$.
  If $E \in \SHRat$, the analagous fact holds for $f_k ^{\att}$ and $s_k ^{\att}$.
\end{lem}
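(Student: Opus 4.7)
The plan is to use the fact, established in the preceding lemma, that $\Sigma^{p,q,w}$ restricts to an equivalence $\SHR^{\eff}_{i2,\geq k-w} \xrightarrow{\simeq} \SHR^{\eff}_{i2,\geq k}$ (and similarly for the Artin--Tate variant, since the generating spheres $\Ss_2^{p,q,w}$ are closed under suspension by picard generators). Since the functors in play are right adjoints and exact, the whole statement is a formal consequence of this.

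First I would handle $f_k$. Given $X \in \SHR^{\eff}_{i2,\geq k}$, write $Y = \Sigma^{-p,-q,-w} X$, which lies in $\SHR^{\eff}_{i2,\geq k-w}$ by the cited equivalence. Using that $\Sigma^{p,q,w}$ is invertible on $\SHR_{i2}$ together with the universal property of $f_{k-w}$, we compute
\[
\Map(X, \Sigma^{p,q,w} f_{k-w} E) \simeq \Map(Y, f_{k-w} E) \simeq \Map(Y, E) \simeq \Map(X, \Sigma^{p,q,w} E).
\]
As $X$ ranges through $\SHR^{\eff}_{i2,\geq k}$, this exhibits $\Sigma^{p,q,w} f_{k-w} E$ as the value of the right adjoint to the inclusion applied to $\Sigma^{p,q,w} E$, i.e.\ we obtain the natural equivalence $f_k \Sigma^{p,q,w} E \simeq \Sigma^{p,q,w} f_{k-w} E$.

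Next, I would deduce the statement for $s_k$ from the cofiber sequence $f_{k+1} E \to f_k E \to s_k E$. Applying $\Sigma^{p,q,w}$ (which is exact, being an equivalence of stable categories) and using the equivalence just established in both positions, we obtain
\[
\Sigma^{p,q,w} f_{k-w+1} E \to \Sigma^{p,q,w} f_{k-w} E \to \Sigma^{p,q,w} s_{k-w} E,
\]
which is naturally identified with the defining cofiber sequence for $s_k \Sigma^{p,q,w} E$; comparing cofibers yields the desired $s_k \Sigma^{p,q,w} E \simeq \Sigma^{p,q,w} s_{k-w} E$.

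Finally, the Artin--Tate case $f_k^{\att}$, $s_k^{\att}$ goes through by exactly the same argument once one observes that the generating family $\{\Ss_2^{p,q,w} : w \geq n\}$ for $\SHR^{\at, \gceff}_{i2,\geq n}$ is manifestly closed under all of the suspensions $\Sigma^{a,b,c}$, so that the analogous shift equivalence $\Sigma^{p,q,w} : \SHR^{\at,\gceff}_{i2,\geq k-w} \xrightarrow{\simeq} \SHR^{\at,\gceff}_{i2,\geq k}$ holds and the adjunction computation above applies verbatim. There is no substantive obstacle here; the only mild point of care is keeping track that everything takes place inside the correct subcategory ($\SHRat$ versus $\SHR_{i2}$), which is automatic because $\SHRat$ is closed under suspension.
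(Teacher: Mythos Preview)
Your proof is correct and is essentially the same as the paper's: both rest entirely on the equivalence $\Sigma^{p,q,w} : \SHR^{\eff}_{i2,\geq n} \xrightarrow{\simeq} \SHR^{\eff}_{i2,\geq n+w}$ (and its Artin--Tate analog) from the preceding lemma. The paper simply states that the result follows directly from this fact, whereas you have spelled out the formal adjunction calculation and the cofiber-sequence deduction for $s_k$; there is no substantive difference.
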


\begin{proof}
  This follows directly from the fact that $\Sigma^{p,q,w} : \SHR^{\eff} _{i2, \geq k} \to \SHR^{\eff}_{i2, \geq k +w}$ and $\Sigma^{p,q,w} : \SHR^{\at, \gceff} _{i2, \geq k} \to \SHR^{\at, \gceff}_{i2, \geq k +w}$ are equivalences of categories.
\end{proof}

\begin{lem} \label{prop:eff-slice-at-compare}
  Given  $E \in \SHRat$, there are natural equivalences $f_n E \simeq f_n ^{\att} E$ for all $n \in \Z$.
\end{lem}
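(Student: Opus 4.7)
The plan is to show that the natural comparison map $\alpha_E \colon f_n^{\att} E \to f_n E$, arising from the inclusion $\SHR^{\at, \gceff}_{i2, \geq n} \subset \SHR^{\eff}_{i2, \geq n}$, is an equivalence when $E \in \SHRat$. Both $f_n$ and $f_n^{\att}$ preserve small colimits, and the formation of $\alpha_E$ therefore does as well. Since $\SHRat$ is generated under colimits by the compact tri-graded spheres $\Ss_2^{p,q,w}$, it suffices to prove the claim for $E = \Ss_2^{p,q,w}$. Applying \Cref{lem:susp-slice} to both $f_n$ and $f_n^{\att}$ reduces this further to showing $f_n \Ss_2 \simeq f_n^{\att} \Ss_2$ for every $n \in \Z$.

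For the latter, I would verify that the map $f_n^{\att} \Ss_2 \to \Ss_2$ satisfies the universal property defining $f_n \Ss_2$. Its source lies in $\SHR^{\at, \gceff}_{i2, \geq n} \subset \SHR^{\eff}_{i2, \geq n}$, so the only content is that the cofiber $s_n^{\att} \Ss_2$ is right-orthogonal to every generator of $\SHR^{\eff}_{i2, \geq n}$. Concretely, one must show
\[ \Map_{\SHR_{i2}}\bigl( \Sigma^{p,q,w} X_+, \; s_n^{\att} \Ss_2 \bigr) = 0 \qquad \text{for } X \in \Sm/\R,\ w \geq n. \]
Using the adjunction $\Sigma^{p,q,w} \dashv \Sigma^{-p,-q,-w}$ together with \Cref{lem:susp-slice}, this is equivalent to $\Map(X_+, s_{n-w}^{\att} \Sigma^{-p,-q,-w} \Ss_2) = 0$ for $w \geq n$, i.e.\ the vanishing of mapping spectra from the non-Artin--Tate object $X_+$ into an Artin--Tate spectrum that is the slice of a tri-graded sphere.

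To handle this, I would follow the strategy of Pelaez \cite{Pelaez}, as employed in the $C_2$-equivariant setting by Heard \cite{SliceComp}: resolve $X_+$ by its $\MGL_2$-based (Adams-type) cosimplicial filtration. Since $\MGL$ is Tate, each smash power $\MGL_2^{\otimes k}$ is Artin--Tate, and the terms of the associated filtration of $\MGL_2^{\otimes k} \otimes X_+$ land in $\SHR^{\at, \gceff}_{i2, \geq n}$ at the appropriate slice level. Consequently the target mapping spectrum can be computed as a totalization of mapping spectra out of Artin--Tate building blocks into the Artin--Tate object $s_n^{\att} \Ss_2$, each of which vanishes by the definition of $f_n^{\att}$.

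The main obstacle is the last step: producing the $\MGL$-resolution of $X_+$ in a way that is compatible with both the effective slice filtration (so that the filtration bounds are at slice level $\geq n$ in the appropriate sense) and the Artin--Tate subcategory (so that the associated graded is generated by tri-graded spheres). The delicate point in the Pelaez--Heard argument is the interaction of the $\MGL$-based filtration with the effective slice filtration, and ensuring strong convergence of the resulting totalization spectral sequence on mapping spectra; the $2$-completeness built into our category is what makes this work.
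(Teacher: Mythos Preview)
Your overall strategy is essentially the paper's: reduce to the sphere via colimit-preservation and \Cref{lem:susp-slice}, then invoke the Pelaez--Heard argument for the key step. The paper packages the reduction by citing Heard's abstract comparison criterion \cite[Theorem 2.20]{SliceComp}, whose hypotheses (1)--(3) you are effectively re-deriving, and then refers to \cite[Theorem 3.15]{SliceComp} for the statement $s_n^{\att}\Ss_2 \simeq s_n\Ss_2$.

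There is, however, a genuine error in your orthogonality condition. The cofiber of $f_n^{\att}\Ss_2 \to \Ss_2$ is \emph{not} $s_n^{\att}\Ss_2$; by definition $s_n^{\att}$ is the cofiber of $f_{n+1}^{\att} \to f_n^{\att}$. As written, your displayed vanishing
\[
\Map_{\SHR_{i2}}\bigl( \Sigma^{p,q,w} X_+,\ s_n^{\att} \Ss_2 \bigr) = 0 \quad \text{for } w \geq n
\]
is false already for $X=\Spec\R$, $p=q=0$, $w=n$: for instance $s_0\Ss_2 \simeq \HZt$ receives the unit map from $\Ss_2$. The correct statement (Heard's condition (3)) is orthogonality of $s_n^{\att}\Ss_2$ against $\SHR^{\eff}_{i2,\geq n+1}$; equivalently, in your ``universal property'' framing you must work with the full cofiber $\cofib(f_n^{\att}\Ss_2 \to \Ss_2)$, which then requires a separate convergence argument over the whole tower of slices $s_m^{\att}\Ss_2$ for $m<n$.

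A smaller gap: in your final paragraph, $\MGL_2^{\otimes k}\otimes X_+$ is not Artin--Tate for general smooth $X$, so the $\MGL$-resolution does not directly land in $\SHR^{\at,\gceff}$. The actual Pelaez--Heard argument instead identifies the slices $s_n\Ss_2$ explicitly as motivic Eilenberg--MacLane spectra (via the slice filtration of $\MGL$ and the Hopkins--Morel theorem) and checks directly that these lie in the Artin--Tate subcategory and satisfy the required orthogonality.
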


\begin{proof}
  It is easy to see that the categories $\SHR^{\eff} _{i2, \geq n} \subset \SHR_{i2}$ and $\SHR^{\at, \gceff} _{i2, \geq n} \subset \SHRat$ define slice filtrations in the sense of \cite[Definition 2.1]{SliceComp}.
  The result will therefore follow from \cite[Theorem 2.20]{SliceComp} if we can verify three conditions. Let $\iota : \SHRat \hookrightarrow \SHR_{i2}$ denote the inclusion. Then we must show that the following hold for all $E \in \SHRat$:
  \begin{enumerate}
    \item The natural map $\varinjlim_{n} \iota(f_n ^{\att} E) \to \iota(\varinjlim_{n} f_n ^{\att} E))$ is an equivalence.
    \item $\iota(f_n ^{\att} E) \in \SHR^{\eff} _{i2, \geq n}$.
    \item $\Map_{\SHR_{i2} } (X, \iota(s_n ^{\att} E)) \simeq 0$ for all $X \in \SHR^{\eff} _{i2, \geq n+1}$.
  \end{enumerate}
  Condition (1) is clear from the fact that $\SHRat$ is closed under colimits in $\SHR_{i2}$, and condition (2) follows from the fact that $\SHR^{\at, \gceff} _{i2, \geq n} \subset \SHR^{\eff} _{i2, \geq n}$.

  To prove condition (3), we note that, since $s_n ^{\att}$ commutes with filtered colimits and $\SHR^{\eff} _{i2, \geq n+1}$ is compactly generated, it suffices to prove the statement for generators of $\SHR^{\at, \gceff} _{i2, \geq n}$, namely the trigraded spheres $\Ss_2 ^{p,q,w}$ where $w \geq n$.
  Since $s_n ^{\att} \Ss_2^{p,q,w} \simeq \Sigma^{p,q,w} s_{n-w} ^{\att} \Ss_2^{0,0,0}$ by \Cref{lem:susp-slice}, it suffices to show this for $\Ss_2^{0,0,0}$.

  This follows from the equivalence $s_{n} ^{\att} \Ss_2 ^{0,0,0} \simeq s_{n} \Ss_2^{0,0,0}$, which may be proved exactly as in \cite[Theorem 3.15]{SliceComp}.
\end{proof}

We are now free to use $f_n$ and $f^{\att} _n$ interchangeably. The following proposition gives us the needed control over $f^{\att} _n$:

\begin{prop} \label{prop:eff-condition}
  Let $E \in \SHRat$. Then $E \in \SHR^{\at, \gceff} _{i2, \geq n}$ if and only if
  \[ \pi^{\R} _{p,q,w} (C\ta \otimes E) = 0 \]
  for all $w < n$, i.e. if and only if $\ta : \pi^{\R} _{p,q,w+1} E \to \pi^{\R} _{p,q,w} E$ is an isomorphism for all $w < n$.
\end{prop}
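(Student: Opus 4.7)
The plan is to first reduce the two phrasings to a single one, then prove both directions by a generation-and-induction argument. The equivalence between the vanishing of $\pi^\R_{p,q,w}(C\ta \otimes E)$ for $w < n$ and the statement that $\ta : \pi^\R_{p,q,w+1} E \to \pi^\R_{p,q,w} E$ is an isomorphism for $w < n$ is immediate from tensoring the cofiber sequence $\Ss^{0,0,-1}_2 \xrightarrow{\ta} \Ss^{0,0,0}_2 \to C\ta$ with $E$ and reading off the resulting long exact sequence. I will work exclusively with the $C\ta$-phrasing below.

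For the ``only if'' direction, let $\mathcal{E} \subseteq \SHRat$ denote the full subcategory of objects $E$ satisfying $\pi^\R_{p,q,w}(C\ta \otimes E) = 0$ for all $p,q$ and all $w < n$. Since $C\ta \otimes (-)$ preserves colimits and the tri-graded spheres are compact, the condition defining $\mathcal{E}$ is stable under small colimits (filtered colimits and direct sums commute with the homotopy functors, and pushouts preserve vanishing via the long exact sequence and the five-lemma). It therefore suffices to verify that $\mathcal{E}$ contains the generating spheres $\Ss^{p',q',w'}_2$ with $w' \geq n$. For such a sphere, $C\ta \otimes \Ss^{p',q',w'}_2 \simeq \Sigma^{p',q',w'} C\ta$, so $\pi^\R_{p,q,w}(C\ta \otimes \Ss^{p',q',w'}_2) \cong \pi^\R_{p-p',q-q',w-w'} C\ta$; by \Cref{lem:Cta-neg-w} this vanishes whenever $w - w' < 0$, which is guaranteed by $w < n \leq w'$.

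For the ``if'' direction, set $C := \mathrm{cofib}(f_n^{\att} E \to E)$; it suffices to show $C = 0$. As a right adjoint between stable categories, $f_n^{\att}$ preserves cofiber sequences, so $f_n^{\att} C = 0$; by the defining adjunction this forces $\pi^\R_{p,q,w} C = 0$ for all $p,q$ and all $w \geq n$. On the other hand, applying the ``only if'' direction to $f_n^{\att} E \in \SHR^{\at, \gceff}_{i2, \geq n}$ and combining with the hypothesis on $E$ via the cofiber sequence $C\ta \otimes f_n^{\att} E \to C\ta \otimes E \to C\ta \otimes C$, we obtain $\pi^\R_{p,q,w}(C\ta \otimes C) = 0$ for $w < n$. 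Downward induction on $w$ starting from $w = n-1$, using the long exact sequence
\[ \pi^\R_{p,q,w+1} C \xrightarrow{\ta} \pi^\R_{p,q,w} C \to \pi^\R_{p,q,w}(C\ta \otimes C) \to \pi^\R_{p-1,q,w+1} C, \]
then yields $\pi^\R_{p,q,w} C = 0$ for all $w$: the outer terms vanish by the inductive hypothesis, and the middle term vanishes by the preceding step. Since the tri-graded spheres are compact generators of $\SHRat$, we conclude $C = 0$, whence $E \simeq f_n^{\att} E \in \SHR^{\at, \gceff}_{i2, \geq n}$. The main obstacle is this ``if'' direction, which requires combining the vanishing of $\pi^\R_{*,*,w} C$ in weights $w \geq n$ (from the $f_n^{\att}$-machinery) with the vanishing of $C\ta \otimes C$ in weights $w < n$ (from the hypothesis), so that a single downward induction in the weight coordinate sweeps everything to zero.
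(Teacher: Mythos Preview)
Your proof is correct and takes essentially the same approach as the paper: reduce the ``only if'' direction to the vanishing of $\pi^\R_{*,*,w}C\ta$ in negative weights, and for the ``if'' direction use that both $f_n^{\att}E$ and $E$ have $\ta$ acting isomorphically in weights below $n$ together with the isomorphism on $\pi^\R_{*,*,w}$ for $w\geq n$. The only cosmetic difference is that you package the ``if'' direction via the cofiber $C$ and a downward induction on its homotopy groups, whereas the paper argues directly that $f_n^{\att}E\to E$ is an isomorphism on each $\pi^\R_{p,q,w}$; these are the same induction, organized differently.
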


\begin{proof}
  To show that $\pi^{\R} _{p,q,w} (C\ta \otimes E) = 0$ for all $w < n$ if $E \in \SHR^{\at, \gceff} _{i2,\geq n}$, it suffices to prove this when $E  = \Ss^{p,q,w}$ for $w \geq n$, which follows from \Cref{prop:weak-one-sided}.

  On the other hand, suppose that $E \in \SHRat$ satisfies $\pi^{\R} _{p,q,w} (C\ta \otimes E) = 0$ for all $w < n$.
  We will show that $f_n ^{\att} E \to E$ is an equivalence.
  First, we note that it is an equivalence on $\pi^{\R} _{p,q,w}$ for all $w \geq n$ by definition.
  By assumption, $\ta : \pi^{\R}_{p,q,w+1} E \to \pi^{\R} _{p,q,w} E$ is an isomorphism for all $w < n$.
  On the other hand, by the above $\ta : \pi^{\R}_{p,q,w+1} (f_n ^{\att} E) \to \pi^{\R} _{p,q,w}  (f_n ^{\att} E)$ is also an isomorphism for all $w < n$.
  This implies that $f_n ^{\att} E \to E$ in fact induces an isomorphism on all $\pi_{p,q,w} ^{\R}$, as desired.
\end{proof}
%
%

Finally, we are ready to prove \Cref{thm:eff-fil}.

\begin{proof}[Proof of \Cref{thm:eff-fil}]
  Given a bifiltered object $X_{\bullet, *}$, we let $\diag(X_{\bullet, *}) = X_{\bullet, \bullet}$ denote the filtered object obtained by restricting along the diagonal map $\Z^{\Fil} \hookrightarrow \Z^{\Fil} \times \Z^{\Fil}$.
  Then there is a span of lax symmetric monoidal functors:
  \begin{center}
    \begin{tikzcd}
      \diag \circ i_* \circ f_\bullet \ar[r] \ar[d] & i_* \\
      \b \circ f_\bullet, &
    \end{tikzcd}
  \end{center}
  where the horizontal map is induced by the natural transformation $f_\bullet \to Y$ and the vertical map is induced by the natural transformation $i_* \to Y \circ \b$. (Here as in \Cref{app:fil}, $Y$ is the functor taking an object to its constant filtered object.)

  Applied to $E \in \SHRat$, in filtration $n$ this span looks like
  \begin{center}
    \begin{tikzcd}
      \Map_{\SHRat} (\Ss_2 ^{0,0,n}, f_n E) \ar[r] \ar[d] & \Map_{\SHRat} (\Ss_2 ^{0,0,n}, E) \\
      \b (f_n E). &
    \end{tikzcd}
  \end{center}
  The horizontal map is an equivalence by $n$-effectivity of $\Ss_2 ^{0,0,n}$, and the vertical map is an equivalence by \Cref{thm:comparison-invert-ta} and \Cref{prop:eff-condition}.
\end{proof}
%
%
%

\subsection{Identification of $i_* \MGL_2$}\ 

In this subsection, we will identify the commutative algebra in filtered $C_2$-spectra given by $i_* \MGL_2$.
This requires two main inputs.
The first is the description of the underlying filtered object in terms of the effective slice filtration from the previous subsection.
The second is a theorem of Heard which relates the effective slice filtration of $\MGL$ to the regular slice filtration of its Betti realization $\MU_\R$.

\begin{prop} \label{prop:P-MGL}
  There is an equivalence of commutative algebras in filtered $C_2$-spectra
  \[ i_* \MGL_2  \simeq P_{2\bullet} \MU_{\R,2}. \]
\end{prop}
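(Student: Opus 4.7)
The plan is to reduce to a comparison between the effective slice filtration of $\MGL_2 \in \SHRat$ and the regular even slice filtration of $\MU_{\R,2} \in \Sp_{C_2, i2}$ via Betti realization. By \Cref{thm:eff-fil}, the lax symmetric monoidal functor $i_*$ is equivalent to $\b \circ f_\bullet$, so evaluating on the commutative algebra $\MGL_2 \in \SHRat$ yields an equivalence of commutative algebras in $\Sp^{\Fil}_{C_2, i2}$:
\[ i_* \MGL_2 \simeq \b(f_\bullet \MGL_2). \]
It therefore suffices to produce an equivalence of filtered commutative algebras $\b(f_\bullet \MGL_2) \simeq P_{2\bullet} \MU_{\R,2}$.

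For the underlying filtered equivalence, I would appeal to the comparison theorem of Heard \cite{SliceComp}, which identifies the Betti realization of the effective slice tower of $\MGL$ with the regular even slice tower of $\MU_\R$. The comparison proceeds by first observing that Betti realization sends $\SHR^{\at, \gceff}_{i2, \geq n}$ into the subcategory of regular slice $2n$-connective $C_2$-spectra (indeed, both filtrations are generated by objects whose connectivity is already pinned down by their underlying spectrum and geometric fixed points). The universal property of the coreflection $P_{2n}$ then provides a natural transformation $\b \circ f_n \to P_{2n} \circ \b$, and Heard's result, together with the well-known identification of the effective slices of $\MGL$ and the regular slices of $\MU_\R$, asserts that this transformation evaluates to an equivalence on $\MGL_2$ for each $n$.

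The main obstacle will be upgrading this underlying equivalence to an equivalence of filtered commutative algebras. Both $f_\bullet$ and $P_{2\bullet}$ are lax symmetric monoidal, the latter having been noted in the construction of $\tau_{\geq 0}^{2\mathrm{slice}}$, and $\b$ is strong symmetric monoidal, so each side carries a canonical commutative algebra structure refining the equivalence produced above. The natural transformation $\b \circ f_\bullet \to P_{2\bullet} \circ \b$ should be lax symmetric monoidal essentially by construction, since both filtrations arise as coreflections onto subcategories closed under tensor product and Betti realization preserves this closure. Once multiplicativity of the comparison map is verified, evaluating at $\MGL_2$ and chaining with the first displayed equivalence completes the proof.
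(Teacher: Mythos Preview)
Your overall strategy matches the paper's: use \Cref{thm:eff-fil} to identify $i_*\MGL_2$ with $\b(f_\bullet \MGL_2)$ as commutative algebras, invoke Heard's theorem for the underlying filtered equivalence, and then upgrade to a commutative algebra equivalence. However, there is a gap in how you obtain the multiplicative comparison map.

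You assert that Betti realization sends $\SHR^{\at,\gceff}_{i2,\geq n}$ into regular slice $2n$-connective $C_2$-spectra, and hence that the universal property of $P_{2n}$ furnishes a lax monoidal natural transformation $\b \circ f_n \to P_{2n}\circ\b$. This is false: the subcategory $\SHR^{\at,\gceff}_{i2,\geq n}$ is generated by spheres $\Ss_2^{p,q,w}$ with $w\geq n$ and \emph{no} constraint on $p,q$, so for instance $\Ss_2^{-1,0,0}$ is $0$-effective while its Betti realization $\Ss_2^{-1}$ is not regular slice $0$-connective. There is therefore no such natural transformation, and the lax monoidal comparison you sketch cannot be built this way.

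The paper avoids this by reversing the order of the two inputs. One first uses Heard's theorem to verify that, for the specific object $\MGL_2$, the filtered $C_2$-spectrum $i_*\MGL_2$ happens to lie in the coreflective subcategory $\Sp_{C_2,\geq 0}^{\Fil,2\mathrm{slice}}$. One then considers the map of commutative algebras $i_*\MGL_2 \to Y(\MU_{\R,2})$ coming from $\MGL_2 \to \MGL_2[\ta^{-1}]$; this is multiplicative because $i_*$ is lax monoidal. Since the source lies in the coreflective subcategory and the coreflector $\tau_{\geq 0}^{2\mathrm{slice}}$ is lax monoidal, this map factors through a commutative algebra map $i_*\MGL_2 \to P_{2\bullet}\MU_{\R,2}$, which Heard's theorem then shows is an equivalence of underlying filtered objects. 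The multiplicativity is thus obtained for free from the coreflection, rather than from a natural transformation that does not exist.
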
     

We prove this as a special case of a slightly stronger result where we allow tensor powers of $\MGL_2$. This generalization will be useful in the sequel.

\begin{prop} \label{prop:P-MGL-power}
  The objects $i_* \MGL_2^{\otimes k}$ are regular slice $2n$-connective in position $n$.
  This yields a natural factorization of the map to the constant filtered object
  \begin{center} \begin{tikzcd}
      & P_{2\bullet} \MU_{\R,2}^{\otimes k} \ar[dr] & \\
      i_* \MGL_2^{\otimes k} \ar[ur, "\simeq"] \ar[rr] & & Y \MU_{\R,2}^{\otimes k},
  \end{tikzcd} \end{center}
  where the indicated map
  is an equivalence of commutative algebras in filtered $C_2$-spectra.
\end{prop}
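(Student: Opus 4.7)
The plan is to combine \Cref{thm:eff-fil} with Heard's slice comparison theorem \cite{SliceComp} to reduce the statement to a tractable comparison on Betti realizations of effective slices. By \Cref{thm:eff-fil}, the lax symmetric monoidal functor $i_*$ is equivalent to $\b \circ f_\bullet$, so applied to the commutative algebra $\MGL_2^{\otimes k}$ we obtain an equivalence of filtered commutative algebras
\[i_* \MGL_2^{\otimes k} \simeq \b\bigl(f_\bullet \MGL_2^{\otimes k}\bigr).\]
The problem thus reduces to showing that $\b(f_n \MGL_2^{\otimes k})$ is regular slice $2n$-connective, and that the induced map to $\MU_{\R,2}^{\otimes k}$ identifies it with $P_{2n}\MU_{\R,2}^{\otimes k}$ multiplicatively.

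For the regular slice connectivity, the case $k=1$ is essentially Heard's slice comparison \cite{SliceComp}: the Betti realization functor sends the effective slice tower of $\MGL_2$ to the regular slice tower of $\MU_{\R,2}$. For higher $k$, I would exploit that $\MGL_2^{\otimes k}$ is an $\MGL_2$-algebra whose $\MGL_2$-module structure is controlled by iterated coproducts on $\MU_*\MU$; concretely one can exhibit $\MGL_2^{\otimes k}$ as a filtered colimit of finite sums of shifts $\Sigma^{a,b,b}\MGL_2$, along which both the effective slice filtration and (after Betti realization) the regular slice filtration commute with the colimit and shifts. This propagates Heard's comparison from $\MGL_2$ to all of its tensor powers, yielding the required connectivity.

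Once the connectivity is in hand, the universal property of the regular slice cover produces a canonical lift
\[\b(f_\bullet \MGL_2^{\otimes k}) \to P_{2\bullet}\MU_{\R,2}^{\otimes k}\]
of the augmentation to the constant filtered object on $\MU_{\R,2}^{\otimes k}$. Since $\b \circ f_\bullet$ and $P_{2\bullet}$ are both lax symmetric monoidal, this lift upgrades to a map of commutative algebras in filtered $C_2$-spectra. To show the map is an equivalence, I would pass to associated graded and match slices: the effective slices $s_n \MGL_2^{\otimes k}$ and the regular slices $P_{2n}^{2n}\MU_{\R,2}^{\otimes k}$ are each known to be equivariant Eilenberg--MacLane spectra indexed by the same Landweber-type Hopf algebroid data, and Betti realization identifies the two descriptions.

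The main obstacle is the passage from $k=1$ to arbitrary $k$ while preserving the commutative algebra structure; the essential inputs are the splitting of $\MGL_2^{\otimes k}$ into shifts of $\MGL_2$, the multiplicativity of both slice filtrations, and the compatibility of the slice descriptions under $\b$.
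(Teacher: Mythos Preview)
Your overall strategy matches the paper's: both use \Cref{thm:eff-fil} to identify $i_*$ with $\b \circ f_\bullet$, invoke Heard's slice comparison, and then factor through the regular slice cover using its lax symmetric monoidal structure to obtain the equivalence at the level of commutative algebras.

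The one place you make unnecessary work for yourself is the passage from $k=1$ to general $k$. Heard's theorem, as stated in the paper (\Cref{thm:slice-comp}), already applies to $\MGL^{\otimes k}$ for all $k$: under Betti realization the effective slice tower of $\MGL^{\otimes k}$ goes to the even regular slice tower of $\MU_{\R}^{\otimes k}$. So there is no need to split $\MGL_2^{\otimes k}$ as a colimit of shifts of $\MGL_2$ and bootstrap; the comparison is available directly. This also sidesteps the concern you raise about preserving the commutative algebra structure through the splitting---a concern which is real, since the splitting is only as $\MGL_2$-modules and would not by itself transport the multiplicative structure.

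For the commutative algebra upgrade of the factorization, the paper phrases it via the coreflective subcategory $\Sp_{C_2,\geq 0}^{\Fil,2\mathrm{slice}}$: once one knows $i_*\MGL_2^{\otimes k}$ lies in this subcategory, the lax symmetric monoidal coreflector $\tau_{\geq 0}^{2\mathrm{slice}}$ applied to the map $i_*\MGL_2^{\otimes k} \to Y(\MU_{\R,2}^{\otimes k})$ automatically produces the commutative algebra factorization through $P_{2\bullet}\MU_{\R,2}^{\otimes k}$. This is equivalent to what you mean by ``the universal property of the regular slice cover,'' but packaged more cleanly. The equivalence of the lift then comes for free from Heard's theorem rather than from a separate check on associated graded.
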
     



The result of Heard that we will need is the following:

\begin{thm}[\cite{SliceComp}] \label{thm:slice-comp}
  Under Betti realization, the slice tower of $\MGL ^{\otimes k}$ goes to the even part of the regular slice tower of $\MUR^{\otimes k}$.
  More precisely, there is a commutative diagram
  \begin{center} \begin{tikzcd}[sep=small]
      \cdots \ar[r] &
      \b (f_{2} \MGL^{\otimes k}) \ar[r] \ar[d, "\simeq"] &
      \b (f_1 \MGL^{\otimes k}) \ar[r] \ar[d, "\simeq"] &
      \b (f_0 \MGL^{\otimes k}) \ar[r] \ar[d, "\simeq"] &
      \cdots \b (\MGL^{\otimes k}) \ar[d, "\simeq"] \\
      \cdots \ar[r] &
      P_{4}  (\MU_{\R}^{\otimes k}) \ar[r] &
      P_2  (\MU_{\R}^{\otimes k}) \ar[r] &
      P_0  (\MU_{\R}^{\otimes k}) \ar[r] &
      \cdots \MU_\R^{\otimes k} .
  \end{tikzcd} \end{center}  
  Moreover, the odd regular slices of $\MUR$ vanish.
\end{thm}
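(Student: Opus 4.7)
The plan is to proceed in three stages: first, establish that Betti realization is compatible with the two slice filtrations up to the factor of two arising from $\dim \rho_{C_2} = 2$; second, identify the associated graded pieces on both sides; and third, invoke the Hill--Hopkins--Ravenel slice description to deduce the vanishing of odd regular slices of $\MUR^{\otimes k}$.

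First, I would verify that Betti realization restricts to a functor
\[ \b : \SHR^{\eff}_{\geq n} \to \Sp_{C_2}^{\geq 2n\rho} \]
carrying $n$-effective $\R$-motivic spectra to regular slice $2n$-connective $C_2$-spectra. Since $\SHR^{\eff}_{\geq n}$ is generated under colimits by $\Ss^{p,q,q} \otimes X_+$ with $X \in \Sm/\R$ and $q \geq n$, it suffices to check this on generators. Their Betti realizations are representation sphere suspensions of the $C_2$-space $X(\C)_+$, and using $\b(\Ss^{1,1,1}) \simeq \Ss^{\rho}$ together with the fact that $X(\C)$ with its $C_2$-action has connective underlying spectrum and connective fixed-point spectrum, the required estimate follows. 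Applying this to the effective slice tower of $\MGL^{\otimes k}$ produces a natural map of filtered $C_2$-spectra,
\[ \b(f_\bullet \MGL^{\otimes k}) \to P_{2\bullet} \MUR^{\otimes k}, \]
compatible with the canonical maps to the constant tower on $\MUR^{\otimes k}$.

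The main step is to show this map is an equivalence, which I would establish by comparing associated graded pieces. On the motivic side, the slices $s_n(\MGL^{\otimes k})$ are described by the Hopkins--Morel theorem (proved in characteristic zero by Levine) as $\Sigma^{2n,n} \HZ \otimes \pi_{2n}(\MU^{\otimes k})$. On the equivariant side, the Hill--Hopkins--Ravenel description identifies the $2n$-th regular slice of $\MUR^{\otimes k}$ as $\Sigma^{n\rho} \uZ \otimes \pi_{2n}(\MU^{\otimes k})$ and shows that its odd regular slices vanish, the polynomial generators $\overline{r}_i$ all sitting in even regular slice degrees. The hardest part will be verifying that the comparison map intertwines these two slice identifications: I would handle this by tracking the polynomial generators of $\pi_{2*}\MU$ through Betti realization (using $\b(\HZ) \simeq \uZ$ and $\b(\Sigma^{2n,n} \HZ) \simeq \Sigma^{n\rho} \uZ$), reducing to the case $k=1$ where both sides' slice generators come directly from $\pi_{2*}\MU$, and then propagating to tensor powers via the multiplicative structure of both slice filtrations. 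The vanishing of odd regular slices is then a direct consequence of the HHR description.
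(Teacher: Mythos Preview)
The paper does not prove this theorem; it is cited from Heard's work \cite{SliceComp} and used as a black box. So there is no in-paper proof to compare against. Nonetheless, your sketch has a genuine gap in the first step.

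You claim that Betti realization sends $\SHR^{\eff}_{\geq n}$ into the regular slice $2n$-connective $C_2$-spectra, and propose to check this on generators $\Ss^{p,q,q} \otimes X_+$ with $q \geq n$. But the integer $p$ is unconstrained in the definition of $\SHR^{\eff}_{\geq n}$. The Betti realization of $\Ss^{p,q,q}$ is $\Ss^{p+q\sigma}$, whose geometric fixed points are $\Ss^{p}$; for $p$ very negative this is not $n$-connective, so $\Ss^{p+q\sigma}$ is not regular slice $2n$-connective (recall the paper's characterization: regular slice $2n$-connective requires $\Phi^{e}$ to be $2n$-connective and $\Phi^{C_2}$ to be $n$-connective). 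Concretely, $\Ss^{-100,n,n}$ is $n$-effective, but $\b(\Ss^{-100,n,n}) \simeq \Ss^{-100+n\sigma}$ is nowhere near regular slice $2n$-connective. Hence there is no natural transformation $\b \circ f_n \to P_{2n} \circ \b$ arising from a containment of filtrations in the way you describe, and your construction of the comparison map fails.

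The actual argument in \cite{SliceComp} proceeds not via a general filtration-preservation statement for $\b$, but by working specifically with $\MGL^{\otimes k}$: one verifies a set of axioms (in the spirit of the Pelaez--Heard machinery invoked in \Cref{prop:eff-slice-at-compare}) which allow identifying the Betti realization of the effective slice tower with the even regular slice tower for this particular spectrum. The identification of slices you outline in your second step---Hopkins--Morel on the motivic side, HHR on the equivariant side---is indeed the computational core, but the comparison map itself must be built more carefully, using the explicit form of the slices rather than an abstract connectivity bound.
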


Applying \Cref{thm:complete-compare}, we are able to deduce the $2$-completed analogue of 
\Cref{thm:slice-comp}. \todo{ehhh}

\begin{proof}
  For notational brevity we set $E \coloneqq \MGL_2^{\otimes k}$.
  Note that it is a consequence of \Cref{thm:complete-compare} that $\b (E) $ is equivalent as a commutative algebra to $\MU_{\R,2}^{\otimes n}$.
  Using \Cref{thm:eff-fil}, we can conclude that $i_*E$ is given by the filtered $C_2$-spectrum, 
  \[\cdots \to \b (f_{2} E) \to \b (f_1 E) \to \b (f_0 E) \to \b (f_{-1} E) \to \b (f_{-2} E) \to \cdots.\]
  By \Cref{thm:slice-comp}, this is equivalent to the tower
  \[ \cdot \to P_{4} \b (E) \to P_2 \b (E) \to P_0 \b (E) \to P_{-2} \b (E) \to P_{-4} \b (E) \to \cdots.\]
  Now, consider the natural map of commutative algebras $i_*E \to Y (\b (E))$.
  Looking at the explicit description of $i_*E$ we can conclude it lies in the coreflective subcategory $\Sp_{C_2, \geq 0}^{\Fil, 2\mathrm{slice}}$. Thus we obtain a diagram of commutative algebras
 \begin{center} \begin{tikzcd}
      & P_{2\bullet} (\b (E)) \ar[dr] & \\
      i_*E \ar[ur, "\simeq"] \ar[rr] & &  Y(\b (E)),
  \end{tikzcd} \end{center}
  where the first map is an equivalence by the above.
\end{proof}

\subsection{Identification of $i_* (\Ss_2)$}\

In order to finish the proof of \Cref{thm:filt-model}, we need to prove the following proposition:

\begin{prop}
  There is an equivalence of commutative algebras in $\Sp_{C_2,i2}^{\Fil}$ between $i_*\Ss_2$ and $R_\bullet$.
\end{prop}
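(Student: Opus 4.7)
The plan is to compare both sides using the cobar resolution of $\Ss_2$ against $\MGL_2$ in $\SHRat$. Concretely, in $\SHRat$ we have the cosimplicial object $\MGL_2^{\otimes \bullet + 1}$ with its augmentation from $\Ss_2$, and I would first establish that this is a resolution, i.e.
\[ \Ss_2 \xrightarrow{\simeq} \mathrm{Tot}^*\bigl(\MGL_2^{\otimes * + 1}\bigr) \]
as commutative algebras in $\SHRat$. This is an $\MGL_2$-nilpotent completeness statement for the $2$-complete sphere in the Artin--Tate category. After $2$-completion in $\SHRat$, the standard $\MGL$-Adams convergence in the $\R$-motivic setting (arguing as for \cite{MAdamsConv} and using that everything in sight lies in $\SHRat_{i2}$) supplies this.

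Next, I would apply the lax symmetric monoidal functor $i_*$ to this cosimplicial commutative algebra. Since $i_*$ is a right adjoint in the deformation pair of \Cref{prop:top-diagram}, it preserves all limits, so in particular it commutes with $\mathrm{Tot}^*$. This yields an equivalence of commutative algebras in $\Sp^{\Fil}_{C_2, i2}$
\[ i_* \Ss_2 \xrightarrow{\simeq} \mathrm{Tot}^*\bigl(i_*\MGL_2^{\otimes * + 1}\bigr). \]
Now I would invoke \Cref{prop:P-MGL-power} to replace the cosimplicial object $i_*\MGL_2^{\otimes * + 1}$ with $P_{2\bullet} \MU_{\R,2}^{\otimes * + 1}$; the key is that the equivalence in that proposition was established as an equivalence of commutative algebras in filtered $C_2$-spectra, and it is natural in the cobar coface and codegeneracy maps (the coface maps come from the unit map $\Ss_2 \to \MGL_2$ and multiplication $\MGL_2 \otimes \MGL_2 \to \MGL_2$, both of which are compatible with taking $i_*$ and with their Betti-realized counterparts for $\MU_{\R,2}$). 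This produces the desired equivalence of cosimplicial commutative algebras in $\Sp^{\Fil}_{C_2, i2}$, and hence after totalization an equivalence
\[ i_*\Ss_2 \simeq \mathrm{Tot}^*\bigl(P_{2\bullet}\MU_{\R,2}^{\otimes * + 1}\bigr) = R_\bullet. \]

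The main obstacle is the first step: verifying $\MGL_2$-nilpotent completeness of $\Ss_2$ in $\SHRat$. One can reduce this via \Cref{thm:eff-fil} and \Cref{thm:comparison-invert-ta} to showing that the associated graded pieces (equivalently, the $\MGL_2$-based Adams spectral sequence computing the $C\ta$-homotopy) converge, which in turn follows from the description of $\MGL_2 \otimes C\ta$ inside the algebraic category of $C_2$-equivariant $\MU_*\MU$-comodules set up in \Cref{sec:cta}, together with the standard descent/flat-affineness of $\MU_{\R,2}$ over the even slice filtration. All other steps — commuting $i_*$ with $\mathrm{Tot}^*$, upgrading the identification of $i_*\MGL_2^{\otimes k}$ to a cosimplicial equivalence, and reading off the formula for $R_\bullet$ — are formal consequences of what has already been set up.
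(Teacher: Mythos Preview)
Your overall strategy is exactly the paper's: resolve $\Ss_2$ by the cobar on $\MGL_2$, push through $i_*$ (which commutes with $\Tot$), and use \Cref{prop:P-MGL-power} to identify $i_*\MGL_2^{\otimes k}$ with $P_{2\bullet}\MU_{\R,2}^{\otimes k}$. The paper also handles the cosimplicial coherence a bit more transparently: rather than arguing by hand that the levelwise equivalences are ``natural in the coface and codegeneracy maps,'' it observes that the comparison arises from the natural transformation $i_*(-) \to Y(\b(-))$ applied to $\mathrm{cb}(\MGL_2)$, and then factors through the coreflection $\tau_{\geq 0}^{2\mathrm{slice}}$, which is lax symmetric monoidal. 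This makes the map $i_*\mathrm{cb}(\MGL_2) \xrightarrow{\simeq} \tau_{\geq 0}^{2\mathrm{slice}} Y(\mathrm{cb}(\MU_{\R,2}))$ automatically a map of cosimplicial commutative algebras.

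There is one genuine issue in your writeup: your proposed justification of $\MGL_2$-nilpotent completeness of $\Ss_2$ leans on the description of $\MGL_2 \otimes C\ta$ from \Cref{sec:cta}. That section uses \Cref{thm:filt-model}, whose proof requires the very proposition you are proving, so this route is circular. The fix is much simpler than what you propose: $\HFt$ is an $\MGL_2$-module, so $\HFt$-completeness implies $\MGL_2$-completeness, and $\Ss_2$ is $\HFt$-complete by \cite{MAdamsConv}. This is the argument the paper gives (in a footnote), and it avoids any appeal to later material.
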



\begin{proof}
  Recall that $\mathrm{cb}$ is the functor which sends a commutative algebra $A$ to the cosimplicial commutative algebra given by its cobar complex $A^{\otimes(\ast+1)}$. Consider the natural map
  \[ \mathrm{cb}(\MGL_2) \to \mathrm{cb}(\MGL_2[\ta^{-1}]) \]
  in $\SHRat$. After applying $i_*$, we may use $i_*((-)[\ta^{-1}]) \simeq Y(\b (-))$ and the equivalence $\b (\MGL_2) \simeq \MU_{\R,2}$ to obtain a map
  \[ i_*\mathrm{cb}(\MGL_2) \to Y(\mathrm{cb}(\MU_{\R,2})). \]
  Applying \Cref{prop:P-MGL-power}, we obtain a factorization
  \[ i_*\mathrm{cb}(\MGL_2) \xrightarrow{\simeq} \tau_{\geq 0}^{2\mathrm{slice}} Y(\mathrm{cb}(\MU_{\R,2})) \to Y(\mathrm{cb}(\MU_{\R,2})). \]
  Taking totalizations, we obtain
  \begin{align*}
    i_*(\Ss_2)
    &\simeq i_*((\Ss_2)_{\MGL_2}^{\wedge})
    \simeq i_* (\mathrm{Tot}( \mathrm{cb}(\MGL_2)) )
    \simeq \mathrm{Tot}( i_* (\mathrm{cb}(\MGL_2)) ) \\
    &\simeq \mathrm{Tot}( \tau_{\geq 0}^{2\mathrm{slice}} Y(\mathrm{cb}(\MU_{\R,2})))
    \simeq \mathrm{Sh}(P_{2\bullet} ; \MU_{\R,2})(\o)
    \simeq R_\bullet 
  \end{align*} 
  where the first equivalence is the $\MGL_2$-completeness of $\Ss_2$ \footnote{One argument for this is that $\HFt$-completeness implies $\MGL_2$-completeness and $\Ss_2$ is $\HFt$-complete by \cite{MAdamsConv}.} and the final equivalence is the definition of $R_\bullet$.
\end{proof}

We close with a simple corollary which we will make use of in \Cref{sec:t-structures}.

\begin{cor} \label{cor:gamma-MGL}
  There is an equivalence $\MGL_2 \simeq \Gamma_*(\MU_{\R,2})$ of commutative algebras in $\SHRat$.
\end{cor}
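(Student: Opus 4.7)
The plan is to unravel the definition of $\Gamma_*(\MU_{\R,2})$ and then mimic the proof of the preceding proposition, which identified $i_*(\Ss_2)$ with $R_\bullet$.

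Unwinding \Cref{cnstr:gamma-star} and \Cref{thm:filt-model}, the commutative algebra $\Gamma_*(\MU_{\R,2}) \in \SHRat$ corresponds, under the equivalence $\SHRat \simeq \Mod(\Sp^{\Fil}_{C_2,i2}; R_\bullet)$, to the commutative $R_\bullet$-algebra $\mathrm{Sh}(P_{2\bullet}; \MU_{\R,2})(\MU_{\R,2})$. Concretely this is $\mathrm{Tot}^*\left(P_{2\bullet}\, \MU_{\R,2}^{\otimes *+2}\right)$, obtained by feeding $\MU_{\R,2}$ into the final slot of the construction that produces $R_\bullet$. Therefore it suffices to build an equivalence $i_*(\MGL_2) \simeq \mathrm{Sh}(P_{2\bullet}; \MU_{\R,2})(\MU_{\R,2})$ of commutative $R_\bullet$-algebras in $\Sp^{\Fil}_{C_2,i2}$.

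To construct this equivalence, I would start from the $\MGL_2$-completeness of $\Ss_2$ (cited in the previous proof), which after tensoring with $\MGL_2$ gives $\MGL_2 \simeq \mathrm{Tot}(\MGL_2 \otimes \mathrm{cb}(\MGL_2))$ in $\SHRat$. Applying the lax symmetric monoidal functor $i_*$, which commutes with totalization, yields
\[ i_*(\MGL_2) \simeq \mathrm{Tot}\left(i_*\bigl(\MGL_2 \otimes \mathrm{cb}(\MGL_2)\bigr)\right). \]
In each cosimplicial degree the term is $i_*(\MGL_2^{\otimes k+1})$, which by \Cref{prop:P-MGL-power} is naturally equivalent, as a commutative algebra in filtered $C_2$-spectra, to $\tau_{\geq 0}^{2\mathrm{slice}} Y(\MU_{\R,2}^{\otimes k+1})$. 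Assembling these identifications over the cosimplicial direction yields
\[ i_*(\MGL_2) \simeq \mathrm{Tot}\left(\tau_{\geq 0}^{2\mathrm{slice}}\, Y\bigl(\mathrm{cb}(\MU_{\R,2}) \otimes \MU_{\R,2}\bigr)\right) \simeq \mathrm{Sh}(P_{2\bullet}; \MU_{\R,2})(\MU_{\R,2}), \]
which is the desired equivalence.

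The main obstacle, such as it is, lies not in producing the underlying equivalence but in checking that it is an equivalence of commutative $R_\bullet$-algebras rather than merely of filtered $C_2$-spectra. This requires noting that every functor in the chain ($i_*$, $Y$, $\tau_{\geq 0}^{2\mathrm{slice}}$, and totalization) is lax symmetric monoidal, and that the comparison map produced by \Cref{prop:P-MGL-power} is natural at the level of cosimplicial commutative algebras, so that the resulting identification upgrades to one of commutative algebra objects over $R_\bullet \simeq i_*(\Ss_2)$. Once this bookkeeping is carried out, the argument is essentially forced by the shape of the shear construction.
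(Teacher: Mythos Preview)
Your proposal is correct and follows the same reduction as the paper: both identify $i_*\MGL_2 \simeq \mathrm{Sh}(P_{2\bullet};\MU_{\R,2})(\MU_{\R,2})$ as commutative $R_\bullet$-algebras. The execution differs in one place. You redo the totalization on the motivic side, writing $i_*\MGL_2$ as $\mathrm{Tot}\bigl(i_*(\MGL_2^{\otimes \bullet+2})\bigr)$ and identifying each term via \Cref{prop:P-MGL-power}. The paper instead short-circuits this using \Cref{exm:E-collapse}: since $\MU_{\R,2}\otimes\mathrm{cb}(\MU_{\R,2})$ admits a contracting homotopy, $\mathrm{Sh}(P_{2\bullet};\MU_{\R,2})(\MU_{\R,2})$ collapses directly to $P_{2\bullet}\MU_{\R,2}$, which was already identified with $i_*\MGL_2$ in \Cref{prop:P-MGL}; the paper then just records the evident commuting square of commutative algebras over $i_*\Ss_2 \simeq R_\bullet$. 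Your route is slightly longer but has the virtue of being a transparent parallel to the preceding proof of $i_*\Ss_2 \simeq R_\bullet$.

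One small quibble: the equivalence $\MGL_2\simeq\mathrm{Tot}(\MGL_2\otimes\mathrm{cb}(\MGL_2))$ does not literally follow by ``tensoring the $\MGL_2$-completeness of $\Ss_2$ with $\MGL_2$,'' since tensoring does not commute with totalization in general. The correct justification is that $\MGL_2$ is an $\MGL_2$-module and hence trivially $\MGL_2$-complete, or equivalently that the augmented cosimplicial object $\MGL_2 \to \MGL_2 \otimes \mathrm{cb}(\MGL_2)$ is split. This is exactly the same contracting homotopy that the paper exploits on the $\MU_{\R,2}$ side.
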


\begin{proof}
  By \Cref{thm:filt-model}, it suffices to show that there is an equivalence $i_* \MGL_2 \simeq \mathrm{Sh}(P_{2\bullet} ; \MU_{\R,2}) (\MU_{\R,2})$ of commutative algebras over $i_* \Ss_2$.
  Unraveling the construction of the equivalence $i_*\Ss_2 \simeq \mathrm{Sh}(P_{2\bullet} ; \MU_{\R,2}) (\Ss_2)$ we can build a diagram of commutative algebras in $\Sp_{C_2, i2} ^{\Fil}$
  \begin{center} \begin{tikzcd}
      i_*\Ss_2 \ar[r, "\simeq"] \ar[d] &
      \mathrm{Sh}(P_{2\bullet} ; \MU_{\R,2})(\Ss_2) \ar[r] \ar[d] &
      \mathrm{Sh}(P_{2\bullet} ; \MU_{\R,2})(\MU_{\R,2}) \ar[dl, dashed, "\simeq"] \\
      i_*\MGL_2 \ar[r, "\simeq"] &
      P_{2\bullet}\MU_{\R,2} ,
  \end{tikzcd} \end{center}
  where the dashed equivalence comes from \Cref{exm:E-collapse}.
\end{proof}

\section{Modules over the cofiber of ta} \label{sec:cta}
In this section, we study the category of $C\ta$-modules.
Our main theorem states that this category admits an explicit, algebraic description.
Below, we use $\mathrm{IndCoh}(\mathcal{M}_{\mathrm{fg}})$ to denote $\mathrm{Ind}$ of the thick subcategory of $\mathrm{MU}_*\mathrm{MU}$-comodules generated by even shifts of the unit. For more details on this category, see \Cref{dfn:IndCoh}.

\begin{thm} \label{thm:Cta}
  There is an equivalence of presentably symmetric monoidal categories under $\Sp_{C_2, i2}$
  \begin{align} \Mod ( \SHR^{\at} _{i2} ; C\ta ) \simeq \Mod ( \Sp_{C_2, i2}; \uZt) \otimes_{\Z} \mathrm{IndCoh}(\mathcal{M}_{\mathrm{fg}}), \label{eqn:cta} \end{align}
  where $\Sp_{C_2, i2}$ acts on the left-hand-side through $c_{\C/\R}$ and on the left factor of the right-hand-side. 
	On Picard elements this equivalence takes
  \[ C\ta \otimes \Ss_2^{p,q,w} \mapsto \Sigma^{(p-w)+(q-w)\sigma} \uZt \otimes \omega_{\mathbb{G}/\Mfg} ^{\otimes w}. \]
  As a consequence of this equivalence of categories, there is an isomorphism of tri-graded commutative rings:\footnote{The tensor product may be moved outside the Ext, but then it must be taken in the derived sense.}
  \[ \pi_{p,q,w}^\R(C\ta) \cong \bigoplus_{w+a-s = p} \Ext_{\MU_*\MU}^{s,2w}(\MU_*, \MU_* \otimes \pi_{a + (q-w) \sigma}^{C_2} \uZt). \]
\end{thm}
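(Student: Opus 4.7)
My plan is to transport the question through the Galois reconstruction of \Cref{thm:filt-model} into filtered $C_2$-equivariant homotopy theory, and there identify the relevant associated graded algebra using the Hill--Hopkins--Ravenel slice theorem. This parallels the Gheorghe--Wang--Xu proof of the analogous identification of $\Mod(\SHCat; C\tau)$ with $\IndCoh(\Mfg)_{ip}$ in the complex case. Under the equivalence $\SHRat \simeq \Mod(\Sp^{\Fil}_{C_2,i2}; R_\bullet)$, the element $\ta$ corresponds to the shift map of filtered objects; since $C\ta$ is a commutative algebra, I obtain
\[ \Mod(\SHRat; C\ta) \simeq \Mod(\Sp^{\Fil}_{C_2,i2}; R_\bullet \otimes C\ta) \simeq \Mod(\Sp^{\Gr}_{C_2,i2}; \gr R_\bullet), \]
reducing the problem to an algebraic computation of the graded commutative algebra $\gr R_\bullet$ in $\Sp^{\Gr}_{C_2,i2}$.

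To compute $\gr R_\bullet$, I would unwind $R_\bullet = \Tot^*(P_{2\bullet}\MU_{\R,2}^{\otimes *+1})$ together with \Cref{prop:P-MGL-power}: the associated graded piece in weight $n$ and cosimplicial position $k$ is the $(2n)$th pure slice $P_{2n}^{2n}(\MU_{\R,2}^{\otimes k+1})$. By the Hill--Hopkins--Ravenel slice theorem, together with the vanishing of the odd slices of $\MU_\R$ recorded in \Cref{thm:slice-comp}, each such pure slice is a sum of $\uZt$-modules of the form $\Sigma^{m\rho}\uZt$ indexed by monomials in polynomial generators; assembling these identifications across the cosimplicial direction yields an equivalence of graded commutative algebras
\[ \gr R_\bullet \simeq \uZt \otimes \mathrm{cb}(\MU_*\MU), \]
where the right-hand side is the totalized cobar complex of the Hopf algebroid $(\MU_*, \MU_*\MU)$ in evenly-graded $\Z$-modules. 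Base-change along $\Z \to \uZt$ then produces
\[ \Mod(\Sp^{\Gr}_{C_2,i2}; \gr R_\bullet) \simeq \Mod(\Sp_{C_2,i2}; \uZt) \otimes_\Z \IndCoh(\Mfg), \]
using that the right factor is, by definition, the evenly-graded stable $\MU_*\MU$-comodule category. The Picard element translation is then bookkeeping: $\Ss_2^{p,q,w}$ sits in the filtered picture as $\Ss_2^{p+q\sigma}$ shifted into weight $w$, so after modding out $\ta$ it lands in the weight-$w$ summand where the $\Sigma^{w\rho} = \Sigma^{w + w\sigma}$ shift coming from the slice theorem is absorbed into the $\uZt$-factor, leaving $\Sigma^{(p-w)+(q-w)\sigma}\uZt \otimes \omega^{\otimes w}$.

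The tri-graded homotopy formula is then a consequence: $\pi_{p,q,w}^\R C\ta$ is computed by mapping $\Ss_2^{p,q,w}$ into the unit of $\Mod(\Sp_{C_2,i2}; \uZt) \otimes_\Z \IndCoh(\Mfg)$, and the tensor product structure on mapping spectra decomposes the answer into a contribution $\Ext^{s,2w}_{\MU_*\MU}(\MU_*, \MU_* \otimes -)$ from the $\IndCoh(\Mfg)$-factor and a contribution $\pi^{C_2}_{a+(q-w)\sigma}\uZt$ from the $\uZt$-factor, summed according to $w + a - s = p$ which records how the cohomological and representation-sphere gradings assemble. The main obstacle is the identification of $\gr R_\bullet$ in the second paragraph: although Hill--Hopkins--Ravenel settle the case of $\MU_\R$ itself, promoting this to a functorial and multiplicative identification of the slice associated gradeds of the full $\MU_{\R,2}$-cobar tower with the cobar of $(\MU_*,\MU_*\MU)$ requires a careful cosimplicial argument, and it is precisely here that the Hopf algebroid structure of the moduli stack of formal groups enters. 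Once this algebra-level identification is in hand, the rest is formal descent.
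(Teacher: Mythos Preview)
Your overall strategy matches the paper's: pass through Galois reconstruction to reduce to identifying $\Gr(R_\bullet)$ in $\Sp^{\Gr}_{C_2,i2}$, then compute this using the slice theorem for $\MU_{\R}$. The first reduction is exactly the paper's \Cref{lem:gr-of-comparison}, and your appeal to Hill--Hopkins--Ravenel for the slices is correct.

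However, there is a genuine gap in the second paragraph. Your claimed identification $\gr R_\bullet \simeq \uZt \otimes \mathrm{cb}(\MU_*\MU)$ is not literally correct: the $n$th graded piece of $\gr R_\bullet$ is $\Sigma^{n\rho}$-shifted, so the right-hand side must carry a $\rho$-twist. In the paper's notation the correct statement is $\Gr(R_\bullet) \simeq \twist^{\rho}(\underline{p_*\mathcal{O}_{\Mfg}})$, where $\twist^{\rho}$ tensors the $n$th graded piece by $\Ss^{n\rho}$. You acknowledge this shift later as ``bookkeeping,'' but it is not: the functor $\twist^{\rho}$ is a priori only \emph{monoidal}, and making it \emph{symmetric} monoidal---which you need to transport commutative algebra structures and hence compare module categories symmetric monoidally---requires the Euler characteristic computation $\chi(\Sigma^{\rho}\uZt) = 1$. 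The paper computes $\chi(\Ss^{\rho}) = [C_2] - 1$ in the Burnside ring, which becomes $1$ only after passing to $\uZt$ (where $[C_2] = 2$). This is precisely what the paper flags as ``the main subtlety of the argument'' and occupies Sections~5.2--5.3; without it you cannot conclude that $\Mod(\Sp^{\Gr}_{C_2,i2}; \Gr R_\bullet)$ is symmetric monoidally equivalent to the untwisted target.

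There is also a secondary issue: even granting the twist, promoting the levelwise slice identifications to an equivalence of \emph{commutative algebras} in cosimplicial graded $C_2$-spectra is delicate. The paper handles this by observing (the ``minor miracle'') that the relevant cosimplicial diagrams land in the heart of a $t$-structure, so the comparison reduces to a $1$-categorical check (\Cref{lem:main-lem}). Your phrase ``assembling these identifications across the cosimplicial direction'' hides exactly this step.
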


Over $\C$ the corresponding result is the main theorem of \cite{ctauactalol}.
Upon tensoring with $\Spec(\C)$, our theorem recovers theirs. 

\begin{ntn}
  In the above theorem and throughout the section, we follow the convention of writing $\CC \otimes_R \D$ for the tensor product of two presentable $R$-linear categories (instead of $\CC \otimes_{\Mod_R} \D$).
\end{ntn}

\begin{rmk}
  In the statement of the above theorem, we use the $\Z$-linear structure on $\Mod(\Sp_{C_2, i2};\uZt)$ coming from the symmetric monoidal functor $(\_) \otimes_{\uZ} \uZt : \Mod_\Z \to \Mod(\Sp_{C_2, i2};\uZt)$, where $\_ : \Mod_{\Z} \to \Mod(\Sp_{C_2}; \uZ)$ is the symmetric monoidal functor discussed in \Cref{rmk:underline} below.
\end{rmk}

\begin{rmk}
  We will prove in \Cref{lem:Cta-t-structure} that the category $\Mod ( \Sp_{C_2, i2}; \uZt) \otimes_{\Z} \mathrm{IndCoh}(\mathcal{M}_{\mathrm{fg}})$ admits a $t$-structure with heart the category of comodules in $C_2$-Mackey functors over the Hopf algebroid $(\piu_{*\rho} ^{C_2} (\MU_{\R,2}), \piu_{*\rho}^{C_2} (\MU_{\R, 2} \otimes \MU_{\R, 2}))$.
  
  We may therefore view any such comodule as an element of $\Mod ( \Sp_{C_2, i2}; \uZt) \otimes_{\Z} \mathrm{IndCoh}(\mathcal{M}_{\mathrm{fg}})$.
\end{rmk}

In fact, we are able to explicitly describe the trigraded homotopy groups of $Y \otimes C\ta$ for a more general collection of $Y$ than just trigraded spheres.

\begin{dfn}
  We say that $X \in \Sp_{C_2, i2}$ is \emph{$\MU_{\R,2}$-projective}
  if $\MU_{\R,2} \otimes X$ is a retract (as an $\MU_{\R,2}$-module) of
  $\bigoplus_\alpha \Sigma^{n_\alpha \rho} \MU_{\R,2}$
  for some set of integers $n_\alpha$.
\end{dfn}

Examples of $\MU_{\R,2}$-projective $X$ include $\Ss_2 ^0$ and $\MU_{\R,2}$, as well as any object of $\Sp_{C_2, i2}$ built out of cells of the form $\Ss_2^{n\rho}$.

\begin{thm} \label{thm:Cta-homotopy}
  If $X \in \Sp_{C_2, i2}$ is $\MUR$-projective,
  then under the equivalence of \Cref{thm:Cta}  
  $C\ta \otimes \Gamma_*(X)$ corresponds to the $\piu_{*\rho} ^{C_2} (\MU_{\R,2} \otimes \MU_{\R,2})$-comodule
  $\piu_{*\rho} ^{C_2} (\MU_{\R,2} \otimes X)$.

  Moreover, there is an isomorphism of trigraded groups,
  \[ \pi_{p,q,w} ^\R (\Gamma_* (X) \otimes C\ta) \cong \bigoplus_{w+a-s = p} \Ext_{(\MU_2)_* \MU_2}^{s,2w}((\MU_2)_*, (\MU_2)_{*} (\Phi^{e} (X)) \otimes_{\Z_2} \pi_{a + (q-w) \sigma}^{C_2} \uZt), \]
  compatible with the $\pi^{\R} _{p,q,w}C\ta$-module structure in the expected way.
  
%
\end{thm}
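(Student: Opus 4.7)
The plan is to exploit \Cref{thm:Cta} and the explicit construction of $\Gamma_*$ to reduce both assertions to a calculation inside the algebraic category $\Mod(\Sp_{C_2, i2}; \uZt) \otimes_{\Z} \IndCoh(\Mfg)$. By \Cref{cor:gamma-MGL} and unwinding \Cref{cnstr:gamma-star}, the functor $\Gamma_*$ sends $X \in \Sp_{C_2, i2}$ to the object corresponding to the filtered module $\mathrm{Sh}(P_{2\bullet}; \MU_{\R,2})(X)$, whose underlying object is assembled from the even regular slice covers of $\MU_{\R,2}^{\otimes \bullet + 1} \otimes X$. Smashing with $C\ta$ then passes to the associated graded of this filtration; on the topological side, this extracts the even regular slice associated graded of the $\MU_{\R,2}$-Adams tower of $X$.

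The first step is to identify $\Gamma_*(X) \otimes C\ta$ with the claimed comodule. When $X$ is $\MU_{\R,2}$-projective, $\MU_{\R, 2} \otimes X$ is a retract of a direct sum of regular representation suspensions of $\MU_{\R,2}$, so its regular slice data is concentrated in even degrees. Assembling these slices via \Cref{thm:slice-comp} (and its $2$-complete variant) produces precisely the $\piu_{*\rho}^{C_2}(\MU_{\R,2} \otimes \MU_{\R,2})$-comodule $\piu_{*\rho}^{C_2}(\MU_{\R,2} \otimes X)$, enhanced with its $\piu^{C_2}_{*\rho}\uZt$-Mackey functor structure. Under the equivalence of \Cref{thm:Cta}, this is exactly the corresponding object in the heart of the $t$-structure on $\Mod(\Sp_{C_2, i2}; \uZt) \otimes_\Z \IndCoh(\Mfg)$ established in \Cref{lem:Cta-t-structure}.

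To extract the $\Ext$ formula I would then apply the $\MGL_2$-Adams resolution
\[ \Gamma_*(X) \simeq \Tot(\MGL_2^{\otimes \bullet + 1} \otimes \Gamma_*(X)), \]
whose convergence follows from $\HFt$-completeness of $\Ss_2$ as in the proof of \Cref{thm:filt-model}, smash with $C\ta$, and take $\pi_{p,q,w}^\R$. By the first part applied to $\MU_{\R,2}^{\otimes n} \otimes X$, the resulting cobar spectral sequence has $E_1$-page equal to the cobar complex for the Hopf algebroid $((\MU_2)_*, (\MU_2)_* \MU_2)$ with coefficients in $(\MU_2)_*(\Phi^e X) \otimes_{\Z_2} \pi^{C_2}_{\bullet + \bullet \sigma} \uZt$. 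The $\MU_{\R,2}$-projectivity of $X$ guarantees flatness of the input comodule and hence collapse at $E_2$, producing the stated $\Ext$ isomorphism.

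The main obstacle will be bookkeeping of the tri-grading. One must carefully untangle how a tri-degree $(p,q,w)$ decomposes into a cohomological $\Ext$-degree $s$, an internal weight $2w$ coming from tensor powers of $\omega_{\mathbb{G}/\Mfg}$, and an equivariant representation degree $a + (q-w)\sigma$ subject to $w + a - s = p$, consistent with how the Picard generators $\Ss_2^{p,q,w}$ correspond under \Cref{thm:Cta} to $\Sigma^{(p-w)+(q-w)\sigma} \uZt \otimes \omega_{\mathbb{G}/\Mfg}^{\otimes w}$. Ensuring that the identification respects the full $C_2$-Mackey functor structure rather than merely the underlying abelian groups, and that all isomorphisms intertwine the $\pi^\R_{*,*,*} C\ta$-module structure on both sides, is the other delicate point; multiplicativity of the shear construction and of the cobar resolution should make this automatic once the unit case is handled.
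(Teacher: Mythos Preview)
Your outline is broadly in the right spirit and agrees with the paper in its first half: one does identify $C\ta \otimes \Gamma_*(X)$ by passing through the filtered model and taking associated graded, and the cobar resolution of $\MU_{\R,2}$ is indeed what drives the argument. However, two points deserve attention.

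First, there is a key technical input you gloss over. The passage from the Mackey-functor comodule $\piu_{*\rho}^{C_2}(\MU_{\R,2}\otimes X)$ to an expression involving only $(\MU_2)_*(\Phi^e X)$ is not automatic: it requires the observation that for $\MU_{\R,2}$-projective $X$ the Mackey functors $\piu_{n\rho}^{C_2}(\MU_{\R,2}\otimes X)$ are \emph{constant}, i.e.\ of the form $\underline{(\MU_2)_{2n}(\Phi^e X)}$. This is the content of the paper's \Cref{lem:MUR-uZ-mod}, and it is precisely what allows one to factor through $\Phi^e$ and pull the $\pi^{C_2}_{\bullet+\bullet\sigma}\uZt$ factor outside. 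In the paper this constancy is also what makes the $\rho$-twist functor $\twist^\rho$ interact correctly with the comparison; your invocation of \Cref{thm:slice-comp} does not cover this, as that result is specifically about tensor powers of $\MGL$.

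Second, your route to the $\Ext$ formula via an $\MGL_2$-Adams spectral sequence on $\Gamma_*(X)$ is more circuitous than the paper's and introduces an unjustified step: to identify the $E_1$-terms as you claim, you would need $\MGL_2^{\otimes n}\otimes\Gamma_*(X)\simeq\Gamma_*(\MU_{\R,2}^{\otimes n}\otimes X)$, which does not follow from lax monoidality of $\Gamma_*$ alone. The paper instead packages the entire cobar argument into a single ``main lemma'' (\Cref{lem:main-lem}) identifying $\Gr\circ i_*\circ\Gamma_*$ on $\Sp_{C_2,i2}^{\mathrm{proj}}$ with $\twist^\rho\circ\underline{(-)}\circ p_*\circ h\circ\Phi^e$ as lax symmetric monoidal functors. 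Once this is in place, the object $C\ta\otimes\Gamma_*(X)$ sits in the heart of the algebraic category, and the $\Ext$ formula drops out of a direct computation of $\pi^{C_2}_{p+q\sigma}\underline{A}$ for $A\in\Ab$ (\Cref{lem:underline-homotopy}); no further spectral sequence is needed. Because the main lemma is an identification of lax symmetric monoidal functors, the compatibility with the $\pi^\R_{*,*,*}C\ta$-module structure is automatic, resolving the concern in your final paragraph.
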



\todo{There's a commented rmk here that should be exported to somewhere else.}

The proofs of Theorems \ref{thm:Cta} and \ref{thm:Cta-homotopy} will be quite long, and correctly handling the symmetric monoidal structures involved requires us to take a rather circuitous route. For this reason, before proceeding we provide a sketch of the argument.

The proof of the equivalence
\[ \Mod ( \SHR^{\at} _{i2} ; C\ta ) \simeq \Mod ( \Sp_{C_2, i2}; \uZt) \otimes_{\Z} \mathrm{IndCoh}(\mathcal{M}_{\mathrm{fg}}) \]
has three main steps and one significant subtlety.
First, we 
produce symmetric monoidal functors into each side of \Cref{eqn:cta} from $\Sp_{C_2,i2}^{\Fil}$.
Second, we show that there are commutative algebras $R_1$ and $R_2$ in $\Sp_{C_2, i2}^{\Fil}$ such that the left-hand-side is equivalent to $\Mod ( \Sp_{C_2, i2}^{\Fil} ; R_1)$ and the right-hand-side is equivalent to $\Mod ( \Sp_{C_2, i2}^{\Fil} ; R_2)$.
Third, we examine $R_1$ and $R_2$ directly and find that they are in fact equivalent.

The functor into $C\ta$-modules from $\Sp_{C_2,i2}^{\Fil}$ is the composite of the functor $i_*$ from \Cref{sec:top} with $C\ta \otimes -$.
The functor into the right-hand-side is more delicate to construct. A first guess would be to use the equivalence $\Sp_{C_2,i2}^{\Fil} \cong \Sp_{C_2,i2} \otimes \Sp^{\Fil}$ and tensor the following pair of maps:
\[ \Sp_{C_2, i2} \to \Mod ( \Sp_{C_2, i2} ; \uZt ), \]
\[ \Sp^{\Fil} \to \Mod ( \Sp^{\Fil} ; \Z) \xrightarrow{\Gr} \Mod ( \Sp^{\Gr} ; \Z ) \to \mathrm{IndCoh}(\mathcal{M}_{\mathrm{fg}}) . \]
In fact, this does not produce the correct functor; in $C\ta$-modules the periodicity class $v_1$ lives in $C_2$-degree $\rho$, but the given tensor product functor puts $v_1$ in $C_2$-degree 0. In order to fix this, we twist by a functor
\[ \Mod (\Sp_{C_2,i2}^{\Gr}; \uZt) \xrightarrow{\mathrm{tw}^{\rho}} \Mod (\Sp_{C_2,i2}^{\Gr}; \uZt)\]
that has the effect of tensoring with $\Ss^{n\rho}$ on the $n^{\mathrm{th}}$ graded piece.
The presence of this twist, and its interaction with the symmetric monoidal structure, is the main subtlety of the argument.
\todo{The absence of this twist in the GIKR argument indicates a lack of rigor.}

The second step is straightforward, and demonstrates the power of higher algebra in proving ``Koszul duality'' statements as a corollary of Barr--Beck--Lurie monadicity. At this point we have two commutative algebras $R_1$ and $R_2$, and all we need to do is show they are equivalent.

The third step relies on special properties of $R_1$ and $R_2$.
Both commutative algebras come equippped with a preferred presentation as a totalization of a cosimplicial diagram of commutative algebras. What we do is show that these cosimplicial diagrams are levelwise equivalent. Each cosimplicial diagram takes values in in the heart of a $t$-structure, and in particular is determined by $1$-categorical data. Thus, it is genuinely possible to write down a comparison map by hand and check that it is an equivalence, without concern for the potential infinity of higher coherence data.


\begin{enumerate}
  \item In \Cref{subsec:cats}, we review the categories which appear in the right-hand-side of \Cref{thm:Cta}. We also construct a $t$-structure on the right-hand-side of \Cref{thm:Cta} which will play an important role in our argument.
\item In \Cref{subsec:twists}, we collect some material on twisted $t$-structures on categories of graded and filtered objects, as well as on twisting isomorphisms between them.
\item In \Cref{subsec:slice}, we relate twisted $t$-structures to the slice filtration in $C_2$-equivariant homotopy theory. This finishes the construction of the comparison functors.
\item In \Cref{subsec:reds}, we use Koszul duality to reduce the proof of Theorems \ref{thm:Cta} and \ref{thm:Cta-homotopy} to understanding a specific pair of commutative algebras.
\item In \Cref{subsec:main-lem}, we prove \Cref{lem:main-lem}, which is the key result that allows us to compare the commutative algebras $R_1$ and $R_2$ above. Using this lemma, we then complete the proof of Theorems \ref{thm:Cta} and \ref{thm:Cta-homotopy}.
\end{enumerate}
\todo{This list still needs some attention}



\subsection{Categories of interest} \label{subsec:cats}\ 

In this subsection we set notation for working in the various categories of interest.
In particular, we aim for a practical understanding of the category
\[ \Mod ( \Sp_{C_2, i2}; \uZt) \otimes_{\Z} \mathrm{IndCoh}(\mathcal{M}_{\mathrm{fg}}) \]
that appears on the right-hand-side of \Cref{thm:Cta}. Proceeding from inside out,
we start by fixing notation for the category of $\Z$-modules:

\begin{dfn}
  Let $\Abh$ denote the abelian category of discrete abelian groups.
  Let $\Ab$ denote the category of $\Z$-modules with its standard $t$-structure,
  so that $\Abh$ is the heart of $\Ab$.
\end{dfn}



Next, we describe the $C_2$-equivariant analog of $\Ab$:

\begin{dfn} \label{dfn:mackey}
  Let $M(C_2)^{\heartsuit}$ denote the abelian category of discrete Mackey functors for the group $C_2$,
  which may be explicitly described as follows.
  A Mackey functor $B \in M(C_2) ^{\heartsuit}$ consists of the following:
  \begin{itemize}
    \item abelian groups $B(C_2)$ and $B(\ast)$,
    \item an involution $\sigma : B(C_2) \to B(C_2)$,
    \item homomorphisms $r : B(\ast) \to B(C_2)$ and $t : B(C_2) \to B(\ast)$,
     which satisfy the relations $\sigma \circ r = r$, $t \circ \sigma = t$ and $r \circ t = 1 + \sigma$.
  \end{itemize}
  We will somtimes write $[C_2]$ for the endomorphism $t \circ r$ of $B(*)$.
  We let $M(C_2)$ denote the unbounded derived category of $M(C_2)^{\heartsuit}$ equipped with its standard $t$-structure.\footnote{As defined, for example, in \cite[Definition 1.3.5.8]{HA}.}
\end{dfn}

In our setting, $M(C_2)$ arises naturally because it is equivalent to the category of $\piu_0 ^{C_2} \Ss$-modules in $\Sp_{C_2}$ \cite[Theorem 5.10]{SpectrumMackey}.
This description equips $M(C_2)$ with a symmetric monoidal structure compatible with the $t$-structure. The induced symmetric monoidal structure on $M(C_2)^{\heartsuit}$ is known as the box product and admits an explicit description as in \cite{CpBox}.
\todo{Probably there's a better cite for this, but this one has a nice modern proof.}
\todo{rwb: Its always unnerved me that I don't know an explicit description of the monoidal structure on Mackey functors.}

\begin{exm}
  Given a discrete abelian group $A$, we may consider the constant Mackey functor $\underline{A} \in M(C_2)^{\heartsuit}$. This Mackey functor is determined by $\underline{A} (C_2) = \underline{A} (\ast) = A$, $r = \sigma = \mathrm{id}_{A}$ and $t=2 \cdot \mathrm{id}_{A}$.
  This construction provides an exact colimit-preserving functor
  \[\_ : \Abh \to M(C_2)^{\heartsuit}.\]
  Examining the explicit description of the box product in \cite{CpBox}, we find that $\_$ acquires the structure of a (non-unital) tensor-product preserving lax symmetric monoidal functor.
  In particular, we obtain a commutative algebra $\uZ$ in $M(C_2)^{\heartsuit}$.
\end{exm}
\todo{rwb: Given we describe nothing about the monoidal structure, why is this functor lax-mon?}

\begin{dfn}
  Let $\underline{\mathrm{Ab}}$ denote the the symmetric monoidal category of $\uZ$-modules in $M(C_2)$ (or, equivalently, $\Sp_{C_2}$), equipped with its standard $t$-structure. This is equivalent to the derived category of $\Mod(M(C_2)^{\heartsuit}; \uZ)$; in particular, we have $\uAbh = \Mod(M(C_2) ^{\heartsuit}; \uZ)$.
\end{dfn}

\begin{rmk} \label{rmk:underline}
  Since $\Z$ is the unit of $\Abh$, we learn that $\_$ factors through an exact colimit-preserving symmetric monoidal functor
  \[\_ : \Abh \to \uAbh.\]
  Since $\_$ is exact, colimit-preserving and symmetric monoidal it extends uniquely to a colimit-preserving symmetric monoidal functor $\_ : \Ab \to \uAb$ of derived categories. Moreover, the pair of functors $A \to A(*)$ and $A \to A(C_2)$ extend to limit-preserving functors on the level of the derived category. Since this pair of functors is jointly conservative, we can use the fact that $\underline{A}(*) \cong A$ and $\underline{A}(C_2) \cong A$ to conclude that $\_$ preserves limits as well.
 
\end{rmk}

Before proceeding to explicitly describe $\IndCoh(\Mfg)$, we record a few lemmas for later use. 

\begin{lem} \label{lem:const-mackey}
  The functor $\_ : \Abh \to \uAbh$ is fully faithful and left adjoint to the functor
  $B \mapsto B(\ast)$.
  Its essential image is therefore a coreflective subcategory of $\uAbh$, with coreflector given by the counit map $\underline{B(\ast)} \to B$.
  In particular, for any $B$ in the essential image of $\_$, there are canonical isomorphisms
  \[B \cong \underline{B(\ast)} \cong \underline{B(C_2)}.\]
\end{lem}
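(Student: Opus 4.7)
The plan is to establish the adjunction $\_ \dashv ev_\ast$, where $ev_\ast(B) := B(\ast)$, by a direct computation using the explicit description of Mackey functor morphisms from \Cref{dfn:mackey}, and then to deduce the other claims formally.

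To verify the adjunction, I will unpack what a morphism $f : \underline{A} \to B$ amounts to: a pair $(f_\ast : A \to B(\ast),\ f_e : A \to B(C_2))$ of abelian group maps compatible with $r$, $t$, and $\sigma$. Using $r_{\underline{A}} = 1$, the $r$-compatibility will force $f_e = r_B \circ f_\ast$; the $\sigma$-compatibility will then be automatic via the Mackey axiom $\sigma_B \circ r_B = r_B$; and the $t$-compatibility (with $t_{\underline{A}} = 2$) will reduce to the condition $t_B \circ r_B \circ f_\ast = 2 f_\ast$ on $B(\ast)$. The crux of the argument will be the observation that because $B$ is a $\uZ$-module and $t_{\uZ}(1) = 2$ already holds in $\uZ$, Frobenius reciprocity forces $t_B \circ r_B$ to act as multiplication by $2$ on $B(\ast)$, making the condition vacuous. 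Thus $f$ is freely determined by $f_\ast$, producing a natural bijection
\[ \Hom_{\uAbh}(\underline{A}, B) \cong \Hom_{\Abh}(A, B(\ast)) \]
which exhibits the adjunction.

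Fully faithfulness follows immediately from this computation, since the unit $A \to ev_\ast(\underline{A}) = A$ is the identity. The general fact that a fully faithful left adjoint makes its essential image a coreflective subcategory of the codomain --- with coreflector $B \mapsto \underline{B(\ast)}$ and counit $\underline{B(\ast)} \to B$ --- then supplies the middle assertion. For the final claim, I will note that any $B$ in the essential image is of the form $\underline{A}$ for some $A$, so $B(\ast) = B(C_2) = A$ and hence $B \cong \underline{B(\ast)} \cong \underline{B(C_2)}$ canonically. I expect no real obstacle: the only mildly subtle point will be the appeal to the $\uZ$-module structure (rather than just the Mackey functor structure) to dispose of the $t$-compatibility constraint --- without it the adjunction would fail, and one would instead only recover maps into the subgroup $\{b \in B(\ast) : t_B r_B(b) = 2b\}$.
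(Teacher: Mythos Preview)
Your proposal is correct and is precisely the kind of direct verification the paper has in mind: the paper's own proof is the single word ``Clear.'' Your appeal to the $\uZ$-module structure to force $t_B \circ r_B = 2$ on $B(\ast)$ is exactly the content of the subsequent \Cref{lem:uabh-image}, and your Frobenius-reciprocity justification for it is sound.
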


\begin{proof} Clear.
\end{proof}

\begin{lem} \label{lem:uabh-image}
  The forgetful functor $\uAbh \to M(C_2)^{\heartsuit}$ is fully faithful, with image spanned by the Mackey functors $B$ for which $[C_2]=2$ as endomorphisms of $B(\ast)$.
\end{lem}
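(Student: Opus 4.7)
My plan is to identify the forgetful functor $\uAbh \to M(C_2)^{\heartsuit}$ as restriction of scalars along a surjective map of commutative algebras in $M(C_2)^\heartsuit$, and then apply the standard fact that such restrictions are fully faithful with image cut out by vanishing of the kernel ideal.

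\textbf{Step 1: the Burnside Mackey functor as unit.} The box product $\boxtimes$ on $M(C_2)^\heartsuit$ has unit the Burnside Mackey functor $A$ defined by $A(*) = \Z\{1, [C_2]\}/([C_2]^2 - 2[C_2])$ and $A(C_2) = \Z$, with $r(1) = 1$, $r([C_2]) = 2$, $t(1) = [C_2]$, $\sigma = \mathrm{id}$. Since $A$ is the unit, every Mackey functor is canonically an $A$-module and the tautological identification $M(C_2)^\heartsuit \simeq \Mod_A(M(C_2)^\heartsuit)$ holds. In particular, $\uAbh = \Mod_{\uZ}(M(C_2)^\heartsuit) = \Mod_{\uZ}(\Mod_A(M(C_2)^\heartsuit))$, and the forgetful functor of the lemma is restriction of scalars along the unit map $A \to \uZ$.

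\textbf{Step 2: compute the kernel ideal.} I would verify that $A \to \uZ$ is the quotient of commutative algebras sending $[C_2] \mapsto 2$; on level $C_2$ it is the identity $\Z \to \Z$. Let $I \subset A$ denote the kernel. A direct computation shows $I$ is the sub-Mackey functor with $I(*) = \Z \cdot ([C_2] - 2)$ and $I(C_2) = 0$: the restriction of $[C_2] - 2$ is $2 - 2 = 0$, and the submodule generated is closed under $A$-multiplication since $[C_2]([C_2] - 2) = 0$ in $A(*)$. In particular, $I$ is generated as an $A$-module by the single element $[C_2] - 2 \in A(*)$.

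\textbf{Step 3: apply the general restriction-of-scalars fact.} For any surjection $R \twoheadrightarrow R'$ of commutative algebras in a symmetric monoidal abelian category with kernel ideal $I$, the restriction functor $\Mod_{R'} \to \Mod_R$ is fully faithful with essential image $\{M \in \Mod_R : I \cdot M = 0\}$; this follows formally since the $R'$-module structure on such an $M$ is unique and recovered by descending the $R$-action. Applied here, $\uAbh \hookrightarrow M(C_2)^\heartsuit$ is fully faithful with image the Mackey functors $B$ such that $I \cdot B = 0$.

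\textbf{Step 4: translate the vanishing condition.} It remains to observe that $I \cdot B = 0$ is equivalent to the endomorphism $[C_2] = t \circ r$ of $B(*)$ equaling $2$. Since $I$ is generated by $[C_2] - 2 \in A(*)$ and the action of $[C_2] \in A(*)$ on $B(*)$ (coming from the unit $A \boxtimes B \to B$ via Frobenius reciprocity $t(1) \cdot y = t(1 \cdot r(y))$) is precisely $t_B \circ r_B$, the condition becomes $t_B \circ r_B = 2 \cdot \mathrm{id}_{B(*)}$. At level $C_2$ there is no further condition since $r([C_2] - 2) = 0$ already in $A(C_2)$, so $I$ acts trivially on any $B(C_2)$ automatically. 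This gives exactly the characterization of the image claimed in the lemma. The only real content is the computation of the kernel $I$ in Step 2; the rest is formal.
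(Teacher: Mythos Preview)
Your proof is correct and takes essentially the same approach as the paper: identify $\uZ$ as the quotient of the Burnside Mackey functor by the relation $[C_2]=2$, then invoke the standard restriction-of-scalars fact for a surjection of commutative algebras. The paper's proof is simply a terse two-sentence version of exactly what you have written out in detail.
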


\begin{proof}
  The unit of $M(C_2)^{\heartsuit}$ is the Burnside Mackey functor $\underline{\mathcal{A}}$, whose values are given by $\underline{\mathcal{A}} (C_2) = \Z$ and $\underline{\mathcal{A}} (\ast) = \Z[[C_2]]/([C_2]^2 - 2[C_2])$.
  As a commutative algebra in $M(C_2)^{\heartsuit}$, $\uZ$ is the quotient of $\underline{\mathcal{A}}$ by the relation $[C_2]=2$.
  The result follows.
\end{proof}

The following definition and construction sum up what we need to know about categories of quasicoherent and ind-coherent sheaves on $\Mfg$:

\begin{dfn} \label{dfn:IndCoh}
  We let $\Mfg$ denote the moduli stack of formal groups and let $\QCoh (\Mfg)$ denote the category of quasicoherent sheaves on $\Mfg$.
  It is equivalent to the derived category of evenly graded $\MU_* \MU$-comodules \cite[Remarks 2.38 and 3.14]{GoerssMfg}.
%

  Let $\mathcal{D} \subset \QCoh(\Mfg)$ denote the thick subcategory generated by the sheaves $\omega_{\mathbb{G}/\Mfg} ^{\otimes k}$ for $k \in \Z$, where $\omega_{\mathbb{G} / \Mfg}$ is the sheaf of invariant differentials on the universal formal group $\mathbb{G} / \Mfg$. We define $\IndCoh(\Mfg) \coloneqq \Ind(\mathcal{D})$.
  The category $\IndCoh(\Mfg)$ is equivalent to Hovey's category of evenly graded stable comodules $\mathrm{Stable}_{\MU_* \MU}$, c.f. \cite[Remark 4.30]{LocDual}.
  %

  Both $\QCoh(\Mfg)$ and $\IndCoh(\Mfg)$ are naturally stable presentably symmetric monoidal categories and come equipped with compatible $t$-structures whose hearts are equivalent to the $1$-category of $\MU_* \MU$-comodules. \todo{Come back to this and remark that D = Perf and justify that Indcoh = Indperf here.}
\end{dfn}

\begin{cnstr}
  Since $\QCoh(\Mfg)$ is equivalent to the derived category of a Grothendieck abelian category, it is equipped with the structure of a colimit-preserving symmetric monoidal functor $\Ab \to \QCoh(\Mfg)$.
  Letting $\Ab ^{\mathrm{fin}}$ denote the full subcategory of compact objects, this restricts to an exact symmetric monoidal functor $\Ab ^{\mathrm{fin}} \to \D$, where $\D$ is defined as in \Cref{dfn:IndCoh}.
  Taking $\Ind$ and using the fact that $\Ab$ is compactly generated, we obtain a colimit-preserving symmetric monoidal functor
  \[\Ab \simeq \Ind(\Ab ^{\mathrm{fin}}) \to \Ind(\D) = \IndCoh(\Mfg).\]

  Using this functor and the functor $\Ab \xrightarrow{\_} \uAb \xrightarrow{- \otimes_{\uZ} \uZt} \uAb_{i2}$, we are able to define the stable presentably symmetric monoidal category
  \[\uAb_{i2} \otimes_{\Z} \IndCoh(\Mfg),\]
  which appeared in \Cref{thm:Cta} with slightly different notation.
\end{cnstr}

The core of our understanding of $\uAb_{i2} \otimes_{\Z} \IndCoh(\Mfg)$ rests on the following lemma, where we put a $t$-structure on this category whose heart we can describe explicitly.

\begin{lem} \label{lem:Cta-t-structure}
  There exists a $t$-structure on $\uAb_{i2} \otimes_{\Ab} \IndCoh(\Mfg)$ with the following properties:
  \begin{enumerate}
    \item It is compatible with the symmetric monoidal structure, accessible, compatible with filtered colimits and right complete.
    \item The natural functor
      \[\uAb_{i2, \geq 0} \otimes _{\Ab_{\geq 0}} \IndCoh(\Mfg)_{\geq 0} \to \uAb_{i2} \otimes_{\Ab} \IndCoh(\Mfg)\]
      is fully faithful with essential image the connective objects.
    \item The natural functor
      \[\uAb_{i2} ^{\heartsuit} \otimes_{\Ab^{\heartsuit}} \IndCoh(\Mfg)^{\heartsuit} \to \uAb_{i2} \otimes_{\Ab} \IndCoh(\Mfg)\]
      is fully faithful with essential image the heart of $\uAb_{i2} \otimes_{\Ab} \IndCoh(\Mfg)$.
  \end{enumerate}
\end{lem}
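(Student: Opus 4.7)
The plan is to invoke the general theory of $t$-structures on tensor products of presentable stable $\infty$-categories. Both $\uAb_{i2}$ and $\IndCoh(\Mfg)$ are $\Ab$-linear presentable symmetric monoidal categories whose $t$-structures are accessible, compatible with filtered colimits, right complete, and compatible with the symmetric monoidal structure, and in each case the structure map from $\Ab$ is right $t$-exact. I would first define
\[ (\uAb_{i2} \otimes_{\Ab} \IndCoh(\Mfg))_{\geq 0} \coloneqq \uAb_{i2,\geq 0} \otimes_{\Ab_{\geq 0}} \IndCoh(\Mfg)_{\geq 0}, \]
viewed as a full subcategory of $\uAb_{i2} \otimes_{\Ab} \IndCoh(\Mfg)$ via the natural functor. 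Closure under colimits, extensions, and tensor products is immediate from the corresponding closure properties on each factor, and the fact that the unit lies in the subcategory follows from right $t$-exactness of the structure maps. This produces a $t$-structure with the compatibility properties of item (1); right completeness is checked by noting that the intersection of the subcategories $(\CC)_{\geq n}$ over all $n$ vanishes on a set of generators.

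Part (2) is essentially a restatement that the natural comparison functor is fully faithful onto the connective part. This can be handled via the universal properties of relative tensor products: a bicocontinuous $\Ab_{\geq 0}$-linear functor on connective categories extends uniquely to an $\Ab$-linear functor after stabilization, and the extension preserves and reflects the connective subcategory. Concretely, I would reduce to checking fully faithfulness on pure tensors of compact generators, where it follows from a direct computation of mapping spectra using the connective-to-stable adjunction.

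Part (3) is the main obstacle and requires the most care. The strategy is to identify the heart with the ordinary (underived) tensor product of abelian categories $\uAbh_{i2} \otimes_{\Abh} \IndCoh(\Mfg)^\heartsuit$ and then observe this is equivalent to the description in the lemma. The key input is that $\IndCoh(\Mfg)^\heartsuit$ is generated under colimits by the sheaves $\omega_{\mathbb{G}/\Mfg}^{\otimes k}$, whose global sections are shifts of $\MU_\ast$ and hence flat over $\Z$. This flatness ensures that derived tensor products against objects from $\uAbh_{i2}$ collapse to underived ones, so the natural map from the Deligne tensor product of hearts to the heart of the tensor product is fully faithful. Essential surjectivity reduces to showing that the heart is generated under colimits by pure tensors $\underline{A} \otimes \omega^{\otimes k}$, which follows from the generation statement for the connective part and $t$-exactness of truncation. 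The difficulty lies precisely in managing the derived corrections; flatness of the $\omega^{\otimes k}$ is what makes the argument go through.
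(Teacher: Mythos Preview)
Your approach is plausible but takes a substantially different route from the paper, and your treatment of (2) has a real gap.

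The paper's argument is almost entirely formal, resting on the theory of Grothendieck prestable categories from \cite[Appendix~C]{SAG}. The two inputs are: (i) the category $\Groth_\infty$ of Grothendieck prestable categories is symmetric monoidal under the usual Lurie tensor product of presentable categories \cite[Theorem~C.4.2.1]{SAG}, and for a prestable $\mathcal{E}$ the stabilization $\Sp \otimes \mathcal{E}$ carries a $t$-structure with connective part the image of $\Sigma^\infty$; and (ii) the functor $\tau_{\leq 0} : \Groth_\infty \to \Groth_1$ taking discrete objects is symmetric monoidal \cite[Remark~C.5.4.20]{SAG}. Since each of $\uAb_{i2}$, $\Ab$, $\IndCoh(\Mfg)$ is the stabilization of its connective part, the tensor product $\uAb_{i2} \otimes_{\Ab} \IndCoh(\Mfg)$ is the stabilization of $\uAb_{i2,\geq 0} \otimes_{\Ab_{\geq 0}} \IndCoh(\Mfg)_{\geq 0}$, which is prestable by (i); this gives (1) and (2) at once (compatibility with filtered colimits coming from compact generation). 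Part (3) then drops out of (ii): the heart is $\tau_{\leq 0}$ of the connective part, and $\tau_{\leq 0}$ commutes with the relative tensor product. No flatness of any specific objects enters.

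Your flatness argument for (3) is a legitimate ad hoc substitute in this example, but your treatment of (2) is where the difficulty hides. You assert that full faithfulness ``reduces to compact generators and a direct computation of mapping spectra,'' but mapping spectra in a relative Lurie tensor product are not computed objectwise on pure tensors; one has no direct formula to plug into. The content of (2) is precisely that the Lurie tensor product of Grothendieck prestable categories is again prestable, i.e.\ embeds fully faithfully in its stabilization. That is exactly \cite[Theorem~C.4.2.1]{SAG}, and it is not a triviality you can bypass by checking on generators. Either cite that theorem, or supply an independent argument; the sketch as written does not.
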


Using the explicit description of $\uAbh$ given by \Cref{dfn:mackey} and \Cref{lem:uabh-image} and the fact that $\IndCoh(\Mfg)^{\heartsuit}$ is equivalent to the category of evenly-graded $\MU_* \MU$-comodules, we obtain the following description of
\[\uAb_{i2}^{\heartsuit} \otimes_{\Ab^{\heartsuit}} \IndCoh(\Mfg)^{\heartsuit}:\]
this is the category of pairs of $(\MU_2)_*\MU_2$-comodules $A(C_2)$ and $A(\ast)$ equipped with following structures (all of which are compatible with the comodule structure):
\begin{itemize}
  \item maps $r : A(\ast) \to A(C_2)$ and $t : A(C_2) \to A(\ast)$, along with an involution $\sigma : A(C_2) \to A(C_2)$,
  \item which satisfy the relations $\sigma \circ r = r$, $t \circ \sigma = t$, $t \circ r = 2$ and $r \circ t = 1 + \sigma$.
\end{itemize}

In other words, $\uAb^{\heartsuit}_{i2} \otimes_{\Ab^{\heartsuit}} \IndCoh(\Mfg)^{\heartsuit}$ is the category of evenly graded comodules over the Hopf algebroid $(\underline{(\MU_2)_{*}}, \underline{(\MU_2)_* \MU_2})$ in graded $C_2$-Mackey functors.

\begin{rmk}
  Using the fact that $$((\underline{\MU_{\R,2}})_{*\rho}, (\underline{\MU_{\R,2}})_{*\rho} \MU_{\R,2}) \cong (\underline{(\MU_2)_{2*}}, \underline{(\MU_2)_{2*} \MU_2}),$$ see \cite[Theorems 2.25 and 2.28]{HuKriz}, this category may equally well be described as the category of graded comodules over the Hopf algebroid $((\underline{\MU_{\R,2}})_{*\rho}, \underline({\MU_{\R,2}})_{*\rho} \MU_{\R,2})$ in graded $C_2$-Mackey functors.
\end{rmk}

The proof of \Cref{lem:Cta-t-structure} is not difficult, but it will rely on material from \cite[Appendix C]{SAG} which we presently recount. We begin with a simple method for producing $t$-structures. 

\begin{cnstr}
  Given a pointed presentable category $\CC$, we let 
  $\Sp \otimes \CC \simeq \Sp(\CC)$
  denote the category of spectrum objects in $\CC$. It is a stable presentable category.

  The category $\Sp \otimes \CC$ admits a $t$-structure with $(\Sp \otimes \CC)_{\geq 0}$ equal to the essential image of the functor $\Sigma^{\infty} : \CC \to \Sp \otimes \CC$.
  This $t$-structure is accessible and right complete.

  Note that if $\CC$ is endowed with the structure of a presentably symmetric monoidal category, then $\Sp \otimes \CC$ naturally inherits this structure and the $t$-structure defined above is compatible with it.
\end{cnstr}

In fact, the following lemma, which follows from \cite[Remark C.3.1.5]{SAG}, demonstrates that this construction is the universal way to produce a (well-behaved) $t$-structure on a presentable stable category. 

\begin{lem} \label{lem:conn-enough}
  Suppose that $(\CC, \CC_{\geq 0})$ is a stable presentably symmetric monoidal category with compatible $t$-structure. Suppose that the $t$-structure is accessible, compatible with filtered colimits and right complete.

  Then there is a natural equivalence of symmetric monoidal categories with compatible $t$-structure
  \[(\CC, \CC_{\geq 0}) \simeq (\Sp \otimes \CC_{\geq 0}, (\Sp \otimes \CC_{\geq 0})_{\geq 0}).\]
\end{lem}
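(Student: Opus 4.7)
The plan is to invoke the universal property of stabilization to build the comparison, and then use right completeness of the $t$-structure together with the symmetric monoidal compatibility to identify it with the desired equivalence. First, observe that accessibility of the $t$-structure together with compatibility with filtered colimits implies $\CC_{\geq 0} \subset \CC$ is presentable and closed under all colimits, so the inclusion $\iota : \CC_{\geq 0} \hookrightarrow \CC$ is a pointed colimit-preserving functor. By the universal property of stabilization \cite[Corollary 1.4.4.5]{HA}, $\iota$ extends uniquely to a colimit-preserving exact functor $\widetilde{\iota} : \Sp \otimes \CC_{\geq 0} \to \CC$, and unraveling the definitions one sees that $\widetilde{\iota}$ sends a spectrum object $(X_n)$ of $\CC_{\geq 0}$ to $\colim_n \Omega^n \iota(X_n) \in \CC$.

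Next I would construct a candidate inverse $F : \CC \to \Sp \otimes \CC_{\geq 0}$ by the rule $F(X)_n = \tau_{\geq 0} \Sigma^n X$, with structure maps arising from the natural equivalences $\tau_{\geq 0} \Sigma^n X \simeq \tau_{\geq 0} \Omega \Sigma^{n+1} X$ (using that $\Omega \tau_{\geq 0} Y \simeq \tau_{\geq -1} \Omega Y$ and that $\tau_{\geq 0} \tau_{\geq -1} = \tau_{\geq 0}$). The composite $\widetilde{\iota} \circ F$ sends $X \in \CC$ to $\colim_n \Omega^n \tau_{\geq 0} \Sigma^n X \simeq \colim_n \tau_{\geq -n} X$, which recovers $X$ precisely by right completeness. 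Conversely, $F \circ \widetilde{\iota} \simeq \mathrm{id}$ can be checked on the generating subcategory $\Sigma^{\infty} \CC_{\geq 0} \subset \Sp \otimes \CC_{\geq 0}$, where both sides agree with $\iota$ on $\CC_{\geq 0}$ and the spectrum-object structure is determined by the adjunction. Thus $\widetilde{\iota}$ is an equivalence of stable presentable categories, recovering \cite[Proposition 1.4.5.13]{HA}.

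To upgrade to symmetric monoidal categories, note that compatibility of the $t$-structure with $\otimes$ means $\CC_{\geq 0}$ contains the unit and is closed under tensor products, hence inherits a presentably symmetric monoidal structure for which $\iota$ is symmetric monoidal. By the universal property of symmetric monoidal stabilization \cite[Remark C.3.1.5]{SAG} (compare \cite[Corollary 4.8.2.19]{HA}), $\Sp \otimes \CC_{\geq 0}$ inherits a unique presentably symmetric monoidal structure for which $\Sigma^{\infty}$ is symmetric monoidal, and $\widetilde{\iota}$ is automatically a symmetric monoidal equivalence. Finally, the connective part of the $t$-structure on $\Sp \otimes \CC_{\geq 0}$ is by construction the essential image of $\Sigma^{\infty}$, which under $\widetilde{\iota}$ corresponds to $\iota(\CC_{\geq 0}) = \CC_{\geq 0}$, so the equivalence is compatible with the $t$-structures on both sides.

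The main obstacle is really just bookkeeping: checking that the three universal properties (stabilization, symmetric monoidal stabilization, identification of connective covers) are compatible and apply simultaneously under our hypotheses. Accessibility ensures $\CC_{\geq 0}$ is presentable so stabilization makes sense as a presentable category; compatibility with filtered colimits ensures $\iota$ preserves all colimits; and right completeness is precisely the input needed to show that $F$ is a quasi-inverse. No actual computation is required.
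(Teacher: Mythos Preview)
Your proposal is correct and follows the same approach as the paper, which simply cites \cite[Remark C.3.1.5]{SAG} without further argument; you have essentially unpacked that reference, also invoking \cite[Proposition 1.4.3.6]{HA} for the underlying (non-monoidal) equivalence. The only step that could use a word more of justification is checking $F \circ \widetilde{\iota} \simeq \mathrm{id}$ ``on generators,'' since $F$ is not obviously colimit-preserving---but once $\widetilde{\iota} \circ F \simeq \mathrm{id}$ is established, $\widetilde{\iota}$ is essentially surjective, and full faithfulness on the prestable generators $\Sigma^{\infty}\CC_{\geq 0}$ (where $\widetilde{\iota}$ restricts to the inclusion $\iota$) propagates to all of $\Sp \otimes \CC_{\geq 0}$ by exactness and colimit-preservation of $\widetilde{\iota}$.
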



In conclusion, we may as well work with the categories of connective objects. For $\Ab, \uAb_{i2}$, and $\IndCoh(\Mfg)$, these are all Grothendieck prestable categories in the sense of \cite[Definition C.1.4.2]{SAG}.

\begin{rec} \label{rec:groth-ab}
  We let $\Groth_{\infty}$ denote the category of Grothendieck prestable categories \cite[Definition C.3.0.5]{SAG}. By \cite[Theorem C.4.2.1]{SAG}, $\Groth_{\infty}$ is a symmetric monoidal category with tensor product given by the usual tensor product of presentable categories and unit $\Sp_{\geq 0}$.

  Moreover, let $\Groth_{1}$ denote the category of Grothendieck abelian $1$-categories. It is a symmetric monoidal category with tensor product given by the usual tensor product of presentable categories and unit $\Abh$ \cite[Corollary C.5.4.19]{SAG}. There is a functor $\tau_{\leq 0} : \Groth_{\infty} \to \Groth_{1}$ sending a Grothendieck prestable category to its subcategory of discrete objects.
  By \cite[Remark C.5.4.20]{SAG}, the functor $\tau_{\leq 0}$ is symmetric monoidal.

  We summarize this in the following span of symmetric monoidal categories 
  \begin{center}
    \begin{tikzcd}
      & \Groth_{\infty} \ar[dl, "\Sp \otimes -"] \ar[dr, "\tau_{\leq 0}"] \\
      \mathrm{Pr}^{L,\mathrm{Stab}} & & \Groth_{1}.
    \end{tikzcd}
  \end{center}
\end{rec}

\begin{proof}[Proof (of \Cref{lem:Cta-t-structure}).]
  Using \Cref{lem:conn-enough} and \cite[Remark C.4.2.3]{SAG}, we have
  \[\Sp \otimes \left(\uAb_{i2, \geq 0} \otimes_{\Ab_{\geq 0}} \IndCoh(\Mfg)_{\geq 0} \right) \simeq \uAb_{i2} \otimes_{\Ab} \IndCoh(\Mfg).\]
  Moreover, since $\uAb_{i2, \geq 0} \otimes_{\Ab_{\geq 0}} \IndCoh(\Mfg)_{\geq 0}$ is prestable, the $\Sigma^{\infty}$ functor identifies it with $\left(\Sp \otimes \left(\uAb_{i2, \geq 0} \otimes_{\Ab_{\geq 0}} \IndCoh(\Mfg)_{\geq 0} \right)\right)_{\geq 0}$.

  This immediately implies the first two properties, with the exception of compatibility with filtered colimits. This follows from the fact that $\uAb_{i2, \geq 0} \otimes_{\Ab_{\geq 0}} \IndCoh(\Mfg)_{\geq 0}$ is compactly generated, which holds becuase each one of $\uAb_{i2, \geq 0}$, $\Ab_{\geq 0}$ and $\IndCoh(\Mfg)_{\geq 0}$ is compactly generated \cite[Corollary C.6.2.3]{SAG}.

  The third property follows from the fact that $\CC^{\heartsuit} = \tau_{\leq 0} \left( \CC_{\geq 0} \right)$ and the fact that $\tau_{\leq 0}: \Groth_{\infty} \to \Groth_1$ is symmetric monoidal.
\end{proof}

\subsection{Twistings I: $t$-structures on filtered objects} \label{subsec:twists}\ 

%
We begin this subsection with a discussion of several natural $t$-structures on categories of graded and filtered objects, obtained by twisting a given $t$-structure by a Picard element.
In the filtered case our constructions are a straightforward generalization of a construction of Beilinson. Although these constructions are simple, they have the effect of modifying the symmetric monoidal structure on the heart of a category.
Next, we explicitly describe this modification at the level of the heart, where it admits a description in terms of an Euler characteristic. Read another way, there is an Euler characteristic obstruction to producing symmetric monoidal twisting functors. 
We close the subsection by producing symmetric monoidal twisting functors whenever this Euler characteristic obstruction vanishes.

\begin{dfn} \label{dfn:L-twist}
  For the remainder of this subsection, we fix the following data:
  a stable, presentably symmetric monoidal category $\mathcal{C}$
  equipped with a compatible $t$-structure\footnote{Here compatible means that a tensor product of objects which are $\geq 0$ is $\geq 0$ itself, and that the unit is $\geq 0$.}
  and a Picard element $\mathcal{L}$.

  We then define the $\mathcal{L}$-twisted $t$-structure on $\mathcal{C}^{\Gr}$ and $\mathcal{C}^{\Fil}$ as follows:\todo{Cite something to show this allowed?}
  \begin{itemize}
  \item An object $X_\bullet$ of $\mathcal{C}^{\Gr}$ is $\geq 0$ in the $\mathcal{L}$-twisted $t$-structure if, for all $n$, $\mathcal{L}^{\otimes -n} \otimes X_n$ is $\geq 0$ in the original $t$-structure.
  \item Similarly, an object $X_\bullet$ of $\mathcal{C}^{\Fil}$ is $\geq 0$ in the $\mathcal{L}$-twisted $t$-structure if $\mathcal{L} ^{\otimes -n} \otimes X_n$ is $\geq 0$ in the original $t$-structure, for all $n$.
  \end{itemize}
  We denote the hearts of these $t$-structures by $\CC^{\Gr, \mathcal{L}, \heartsuit}$ and $\CC^{\Fil, \mathcal{L}, \heartsuit}$ respectively.
  We denote the $i^{\mathrm{th}}$ $\mathcal{L}$-twisted $t$-structure homotopy objects by $\piu_{i} ^{\mathcal{L}, \Gr}$ and $\piu_{i} ^{\mathcal{L}, \Fil}$ respectively.
  When $\mathcal{L} = \o$, we frequently omit it from the notation.
\end{dfn}

We summarize the basic properties of this definition in the following lemma.

\begin{lem} \label{prop:twist-grade} \label{prop:twist-filt} \label{prop:twist-1-conn}
  In the graded case:
  \begin{enumerate}
  \item The $\mathcal{L}$-twisted $t$-structure on $\CC^{\Gr}$ is compatible with the symmetric monoidal structure.
  \item An object $X_\bullet$ is $< 0$ in the $\mathcal{L}$-twisted $t$-structure if and only if, for all $n$, $\mathcal{L}^{\otimes -n} \otimes X_n$ is $< 0$ in the original $t$-structure.
  \item The $t$-structure homotopy groups are determined by the following formula:
\[ (\underline{\pi}_0^{\mathcal{L}, \Gr} X_\bullet )_n \cong \mathcal{L}^{\otimes n} \underline{\pi}_0(\mathcal{L}^{\otimes -n} \otimes X_n). \]
  \end{enumerate}
  Assuming that $\mathcal{L} \geq 0$, we obtain similar results in the filtered case:
  \begin{enumerate}
  \item[(1')] The $\mathcal{L}$-twisted $t$-structure on $\CC^{\Fil}$ is compatible with the symmetric monoidal structure.
  \item[(2')] An object $X_\bullet$ is $< 0$ in the $\mathcal{L}$-twisted $t$-structure if and only if $\mathcal{L}^{\otimes -n} \otimes X_n < 0$.
  \item[(3')] The $t$-structure homotopy groups are given the by following formula:
  \[ (\underline{\pi}_0^{\mathcal{L}, \Fil} X_\bullet )_n \cong \mathcal{L}^{\otimes n} \underline{\pi}_0(\mathcal{L}^{\otimes -n} \otimes X_n). \]

  \end{enumerate}

  Under the stronger assumption that $\mathcal{L} \geq 1$, 
  the functor $\CC^{\Gr} \to \CC^{\Fil}$ which is right adjoint to the associated graded functor
  restricts to an equivalence of symmetric monoidal $1$-categories
  \[ \CC^{\Gr, \mathcal{L}, \heartsuit} \simeq \CC^{\Fil, \mathcal{L}, \heartsuit}. \]
\end{lem}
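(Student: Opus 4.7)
The plan is to handle the graded case first, where the argument reduces to working with products of $t$-structures, and then to adapt the argument to the filtered case. For the graded part, one observes that $\CC^{\Gr}$ decomposes as $\prod_{n \in \Z} \CC$, and the $\mathcal{L}$-twisted $t$-structure is simply the product of the shifted $t$-structures $\mathcal{L}^{\otimes n} \otimes \CC_{\geq 0}$ at position $n$. Since tensoring with an invertible object preserves $t$-structure axioms, this immediately produces a $t$-structure, gives the formula $(\underline{\pi}_0^{\mathcal{L}, \Gr} X_\bullet)_n \cong \mathcal{L}^{\otimes n} \underline{\pi}_0(\mathcal{L}^{\otimes -n} \otimes X_n)$, and characterizes coconnective objects levelwise. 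Compatibility with Day convolution then falls out: if $\mathcal{L}^{\otimes -i} \otimes X_i \geq 0$ and $\mathcal{L}^{\otimes -j} \otimes Y_j \geq 0$ in $\CC$, then $\mathcal{L}^{\otimes -(i+j)} \otimes X_i \otimes Y_j \geq 0$, and summing over $i+j=n$ shows $(X\otimes Y)_\bullet \geq 0$ in the twisted sense; the unit $\o$ in degree $0$ lies in the heart.

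For the filtered case, the hypothesis $\mathcal{L} \geq 0$ is what lets the levelwise truncation assemble into a filtered object. Concretely, define $\tau_{\geq 0}^{\mathcal{L}, \Fil}$ by applying $\tau_{\geq n \mathcal{L}}$ at filtration degree $n$; the transition maps $X_{n+1} \to X_n$ induce compatible maps on truncations because $(n+1)$-connectivity in the $\mathcal{L}$-shifted sense implies $n$-connectivity, which uses $\mathcal{L} \geq 0$. This construction gives a right adjoint to the inclusion of $\CC^{\Fil, \mathcal{L}, \geq 0}$ and hence a $t$-structure by the usual criterion. Compatibility with Day convolution on $\CC^{\Fil}$ is then inherited from the compatibility in the graded case, since the formula $(X \otimes Y)_n \simeq \colim_{i+j \geq n} X_i \otimes Y_j$ preserves the twisted connectivity. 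The $\underline{\pi}_0$ formula is verified by computing the truncation position by position.

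For the final equivalence of hearts under the stronger hypothesis $\mathcal{L} \geq 1$, the key observation is that the filtration maps of a twisted-heart object are forced to vanish. Indeed, for $X_\bullet \in \CC^{\Fil, \mathcal{L}, \heartsuit}$ the map $X_{n+1} \to X_n$, after tensoring with $\mathcal{L}^{\otimes -n}$, becomes a map $\mathcal{L} \otimes A \to B$ with $A, B \in \CC^{\heartsuit}$; but $\mathcal{L} \geq 1$ implies $\mathcal{L} \otimes A \in \CC_{\geq 1}$, so any map to a discrete object vanishes. Consequently, a heart object in $\CC^{\Fil, \mathcal{L}, \heartsuit}$ is determined by its associated graded, and the right adjoint to $\Gr$ (whose value at a graded object is the "cosplit" filtered object with zero transition maps) provides the inverse equivalence. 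Symmetric monoidality of this identification follows because on graded heart objects the Day convolution is the one used in the $\mathcal{L}$-twisted graded heart, and passage through the right adjoint preserves this structure.

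The main obstacle I anticipate is the bookkeeping for symmetric monoidality of the right adjoint to $\Gr$ at the level of hearts. One must verify that the $t$-structure truncation and the right adjoint to $\Gr$ interact correctly with Day convolution, which ultimately reduces to the vanishing of filtration maps in the heart together with the projection formula for Day convolution; this is where the hypothesis $\mathcal{L} \geq 1$ (rather than merely $\geq 0$) is essential.
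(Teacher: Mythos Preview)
Your argument is correct and, for the graded and filtered statements (1)--(3) and (1$'$)--(3$'$), follows essentially the same route as the paper: the paper simply declares (1)--(3) ``clear'' and proves (1$'$), (2$'$) via the Day convolution formula and a direct analysis of mapping spaces, just as you do. Your phrasing via a levelwise connective cover is a constructive rewording of the same verification.

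For the final heart equivalence under $\mathcal{L}\ge 1$, your approach is genuinely different from the paper's. You argue directly that every transition map $X_{n+1}\to X_n$ in a filtered heart object must vanish, since after untwisting it becomes a map from an object of $\CC_{\ge 1}$ to an object of $\CC^{\heartsuit}$; hence heart objects are determined by their graded pieces and the right adjoint to $\Gr$ provides the inverse. The paper instead uses the identification $\CC^{\Gr}\simeq \Mod(\CC^{\Fil};C\tau)$, so that $\CC^{\Gr,\mathcal{L},\heartsuit}\simeq \Mod(\CC^{\Fil,\mathcal{L},\heartsuit};\underline{\pi}_0^{\mathcal{L}}C\tau)$, and then checks by explicit computation (using $\mathcal{L}\ge 1$) that the unit map $\underline{\pi}_0^{\mathcal{L}}\o_{\CC^{\Fil}}\to \underline{\pi}_0^{\mathcal{L}}C\tau$ is an isomorphism, so the forgetful functor is a symmetric monoidal equivalence. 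Your argument is more elementary and makes the geometric content (vanishing of transition maps) transparent; the paper's argument is more structural and has the advantage that symmetric monoidality is automatic, since the equivalence $\CC^{\Gr}\simeq \Mod(\CC^{\Fil};C\tau)$ is symmetric monoidal from the outset. This is exactly the ``main obstacle'' you flagged: in your approach the lax monoidal structure on the right adjoint to $\Gr$ must be checked to be strong on hearts, which takes a little extra work, whereas the paper's route sidesteps this entirely.
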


\begin{ntn}
  We write $- \otimes^{\heartsuit, \mathcal{L}} -$ for the tensor product induced on either $\CC^{\Gr, \mathcal{L}, \heartsuit}$ or $\CC^{\Fil, \mathcal{L}, \heartsuit}$. In the case that $\mathcal{L} = \o$, we simply write $- \otimes ^{\heartsuit} -$.
\end{ntn}

%

\begin{proof}
  The statements (1), (2) and (3) are all clear. We therefore restrct ourselves to the filtered case for the rest of the proof.

  From the expression of the tensor product as a Day convolution, we observe that a tensor product of connective objects has $n^{\mathrm{th}}$ term presented as a colimit over a diagram of connective objects tensored with $\mathcal{L}$ at least $n$ times. Since $\mathcal{L} \geq 0$ we may conclude that (1') holds.

  The expression for the homotopy objects in (3') follows from (2') in a straightforward way.
  We now prove (2').
  Suppose that $X_\bullet$ is an object such that $\mathcal{L}^{\otimes -n} \otimes X_n < 0$.
  We want to show that $X_\bullet < 0$, i.e. that it recieves no maps from $Y_\bullet \geq 0$.
  Using the condition that $\mathcal{L} \geq 0$, we learn that $[Y_n, X_m] = 0$ for all $n > m$, which is enough to imply that $[Y_\bullet, X_\bullet] = 0$.

  Suppose that $X_\bullet < 0$; then we would like to show that $\mathcal{L}^{\otimes -n} \otimes X_n < 0$.
  Associated to every $Y \geq 0$ in $\CC$ we can consider the filtered object $Y \otimes \mathcal{L}^{\otimes n}(n)$, which is depicted below:
  \[ \dots \to 0 \to 0 \to Y \otimes \mathcal{L} ^{\otimes n} \xrightarrow{\id} Y \otimes \mathcal{L} ^{\otimes n} \xrightarrow{\id} \dots \]
Here, the first nonzero object occurs at position $n$.
  Using the asssumption that $\mathcal{L} \geq 0$, we can conclude that $Y \otimes \mathcal{L}^{\otimes n}(n) \geq 0$. Now, on mapping out of this object we have
  \[ * \simeq \Omega^{\infty} \Map_{\CC^{\Fil}} ( Y \otimes \mathcal{L}^{\otimes n}(n), X) \simeq \Omega^\infty \Map_\CC ( Y \otimes \mathcal{L}^{\otimes n}, X_n), \]
  which implies that $\mathcal{L}^{\otimes -n} \otimes X_n < 0$ for all $n$, as desired.

  We now proceed to prove the final statement.
  Let $\tau : \o_{\CC^{\Fil}} (-1) \to \o_{\CC^{\Fil}}$ denote the shift map in $\CC^{\Fil}$. Then $C\tau$ admits an $\E_\infty$-algebra structure in $\CC^{\Fil}$ such that there is an equivalence of symmetric monoidal categories $\CC^{\Gr} \simeq \Mod(\CC^{\Fil}; C\tau)$ (cf. \Cref{prop:mod-fil}). Using the assumption that $\mathcal{L} \geq 0$, we learn that $\o_{\CC^{\Fil}}$ and $C\tau$ are $\geq 0$. As a consequence, we find that $\CC^{\Gr, \mathcal{L}, \heartsuit}$ may be identified with $\Mod(\CC^{\Fil, \mathcal{L}, \heartsuit}; \piu_0 ^{\mathcal{L}} C\tau)$.

  The proposition will therefore follow if we prove that $\piu_{0} ^{\mathcal{L}} \o_{\CC^{\Fil}} \to \piu_0 ^{\mathcal{L}} C\tau$ is an equivalence.
  By \Cref{prop:twist-filt}, we find that
  \[(\piu_{0} ^{\mathcal{L}, \Fil} \o_{\CC^{\Fil}})_n \cong \begin{cases} \mathcal{L}^{\otimes n} \pi_0 (\mathcal{L} ^{\otimes -n}) & \text{ if } n \leq 0, \\ 0 & \text{ otherwise.} \end{cases}\]
    Since $\mathcal{L} \geq 1$ by assumption, we find for $n < 0$ that $\mathcal{L}^{\otimes n} \pi_0 \mathcal{L} ^{\otimes -n} \cong 0$, whereas for $n=0$ we just get $\pi_0 \o_{\CC}$.

  On the other hand, we have that
  \[(\piu_0 ^{\mathcal{L}, \Fil} C\tau)_n \cong \begin{cases} \pi_0 \o_{\CC} & \text{ if } n = 0, \\ 0 & \text{ otherwise.} \end{cases}\]
  It follows from this that the map $\piu_{0} ^{\mathcal{L}, \Fil} \o_{\CC^{\Fil}} \to \piu_0 ^{\mathcal{L}, \Fil} C\tau$ is an equivalence, as desired.
\end{proof}

We can now profitably define the twist functors.

\begin{cnstr} \label{cnstr:twist}
  Given a symmetric monoidal Grothendieck abelian $1$-category $\mathcal{A}$ and an invertible object $a \in \mathcal{A}$, we can construct a monoidal functor
  $i_a : \Z \to \mathcal{A}$ which sends $1$ to $a$.\footnote{By symmetric monoidal Grothendieck abelian $1$-category, we mean a commutative algebra object of the symmetric monoidal category $\mathrm{Groth}_1$ discussed in \Cref{rec:groth-ab}. In other words, we assume that the tensor product commutes with colimits in each variable.}
  If the swap map $s_{a,a}$ is the identity, then we can make this functor symmetric monoidal.
  Tensoring up with $\Abh$, we obtain a monoidal (or symmetric monoidal) functor
  \[ i_a : \Ab^{\Gr, \heartsuit} \xrightarrow{\o(1) \mapsto a} \mathcal{A}. \]
  
  In our situation of interest, we take $\mathcal{A} = \CC^{\Gr, \mathcal{L}, \heartsuit}$ and $a = (\piu_0 \o) \otimes \mathcal{L}(1)$. Then we can define the functor $\twist^{\mathcal{L}}$ as the composite
  \[ \CC^{\Gr, \heartsuit} \cong \CC^\heartsuit \otimes \Ab^{\Gr, \heartsuit} \xrightarrow{\mathrm{Id} \otimes i_{(\piu_0 \o) \otimes \mathcal{L}(1)}} \CC^\heartsuit \otimes \CC^{\Gr, \mathcal{L}, \heartsuit} \xrightarrow{\otimes} \CC^{\Gr, \mathcal{L}, \heartsuit}. \]
  On objects this has the effect of sending $\{X_n\}$ to $\{X_n \otimes \mathcal{L}^{\otimes n}\}$.
  It is easy to see that this is a monoidal equivalence between $\CC^{\Gr, \heartsuit}$ and $\CC^{\Gr, \mathcal{L}, \heartsuit}$, and is further symmetric monoidal if $s_{a, a}$ is the identity. 
\end{cnstr}

The above construction succinctly explains how much the symmetric monoidal structure on $\CC^{\Gr, \mathcal{L}, \heartsuit}$ has been twisted in terms of the quantity $s_{a, a}$. We now give a description of this quantity in terms of the following standard definition:

\begin{dfn}
  Given a dualizable object $X$,
  with dual $X^\vee$, unit $\eta$, and counit $\epsilon$,
  the trace of an endomorphism $f: X \to X$ is the element $\mathrm{tr}(f) \in \pi_0 \End(\o)$ given by the composite
  \[ \o \xrightarrow{\eta} X \otimes X^\vee \xrightarrow{ f \otimes X^\vee} X \otimes X^\vee \xrightarrow{s_{X,X^\vee}} X^{\vee} \otimes X \xrightarrow{\epsilon} \o. \]
  The trace of the identity is called the Euler characteristic and denoted $\chi(X)$.
\end{dfn}

A simple diagram chase tells us that the Euler characteristic is multiplicative and defines an $\End(\o)$-linear map $\mathrm{tr} : \End(X) \to \End(\o)$. Moreover, one can compute that the trace of the swap map $s_{X,X}$ is equal to $\chi(X)$. In the case where $X$ is invertible the trace map is an isomorphism, and we can use this to conclude that the condition $s_{a,a} = \mathrm{Id}_{a \otimes a}$ in \Cref{cnstr:twist} is equivalent to asking that $\chi(a) = 1$. Specializing further to the case $a = (\piu_0 \o) \otimes \mathcal{L}(1)$, we note that the diagram computing the Euler characteristic of $(\piu_0 \o) \otimes \mathcal{L}(1)$ is $\piu_0^{\mathcal{L},\Gr}$ applied to the diagram computing the Euler characteristic of $\mathcal{L}(1)$ in the ambient category $\CC^{\Gr}$. This implies that
\[ \chi(a) = \chi(\mathcal{L}(1)) = \chi(\mathcal{L})\chi(\o(1)) = \chi(\mathcal{L}) \cdot 1. \]
Thus, we have proved:

\begin{lem} \label{prop:twist}
  The functor $\twist^{\mathcal{L}}$ can be made symmetric monoidal if $\chi(\mathcal{L}) = 1$.
\end{lem}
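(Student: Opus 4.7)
The plan is to package the observations made in the paragraph immediately preceding the lemma into a clean proof. By \Cref{cnstr:twist}, the construction of $\twist^{\mathcal{L}}$ already produces a symmetric monoidal functor provided the monoidal functor $i_a : \Ab^{\Gr,\heartsuit} \to \CC^{\Gr,\mathcal{L},\heartsuit}$ sending $\o(1)$ to $a \coloneqq (\piu_0\o)\otimes \mathcal{L}(1)$ admits a symmetric monoidal refinement. The category $\Ab^{\Gr,\heartsuit}$ is the free presentably symmetric monoidal abelian category generated by an invertible object with trivial symmetry, so producing such a refinement is equivalent to verifying the single equation $s_{a,a} = \id_{a\otimes a}$. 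It therefore suffices to show that $\chi(\mathcal{L}) = 1$ forces $s_{a,a}$ to be the identity.

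First, since $a$ is invertible in $\CC^{\Gr,\mathcal{L},\heartsuit}$, the trace pairing $\tr : \End(a\otimes a) \to \End(\o)$ is an isomorphism, so $s_{a,a}$ is determined by $\tr(s_{a,a})$; and by the standard string-diagram identity $\tr(s_{X,X}) = \chi(X)$ for any dualizable $X$, we reduce to computing $\chi(a)$ in the heart. Second, I would exploit multiplicativity of the Euler characteristic to write
\[ \chi(a) = \chi\bigl((\piu_0\o)\otimes \mathcal{L}(1)\bigr) = \chi(\piu_0\o) \cdot \chi(\mathcal{L}(1)) = \chi(\mathcal{L}(1)), \]
where the last equality uses that $\piu_0\o$ is the unit of $\CC^{\heartsuit}$ and hence has Euler characteristic $1$.

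Finally, I would compare $\chi(\mathcal{L}(1))$ in $\CC^{\Gr,\mathcal{L},\heartsuit}$ with $\chi(\mathcal{L})$ in $\CC$. The point is that the diagram computing $\chi(\mathcal{L}(1))$ lives entirely in the dualizable subcategory of $\CC^{\Gr,\mathcal{L}}$, and applying the symmetric monoidal truncation $\piu_0^{\mathcal{L},\Gr}$ to the diagram computing $\chi(\mathcal{L}(1))$ inside $\CC^{\Gr,\mathcal{L}}$ gives the corresponding diagram in the heart. Since $\mathcal{L}(1)$ is by construction the object placed in graded degree $1$ with no twist relative to $\mathcal{L}$, unravelling the formula in \Cref{prop:twist-grade}(3) identifies its Euler characteristic with that of $\mathcal{L}$ in $\CC$, yielding $\chi(a) = \chi(\mathcal{L}) = 1$ by hypothesis. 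Combining the three steps gives $s_{a,a} = \id_{a\otimes a}$, which completes the proof.

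The main obstacle, such as it is, lies in the last step: one must carefully track that passing to the twisted heart preserves traces of endomorphisms of dualizable objects (which in turn rests on $\piu_0^{\mathcal{L},\Gr}$ being lax symmetric monoidal and strictly symmetric monoidal on dualizables), and that $\chi(\o(1)) = 1$ in the twisted setting so no stray factors appear. Everything else is formal manipulation of traces.
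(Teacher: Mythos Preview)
Your argument is correct and is essentially the paper's own proof: reduce to $s_{a,a}=\id$, use invertibility of $a$ to convert this into $\chi(a)=1$, and then identify $\chi(a)$ with $\chi(\mathcal{L})$ by noting that the diagram computing $\chi(a)$ in the heart is $\piu_0^{\mathcal{L},\Gr}$ applied to the diagram computing $\chi(\mathcal{L}(1))=\chi(\mathcal{L})\cdot\chi(\o(1))=\chi(\mathcal{L})$ in the ambient $\CC^{\Gr}$. One small remark: your ``Second'' step, factoring $\chi(a)=\chi(\piu_0\o)\cdot\chi(\mathcal{L}(1))$ in the heart, is vacuous---since $\piu_0\o$ is the unit of the heart, $(\piu_0\o)\otimes\mathcal{L}(1)$ and $\mathcal{L}(1)$ name the same object there, so this step says $\chi(a)=\chi(a)$; the paper avoids this by doing the multiplicativity step in $\CC^{\Gr}$ rather than in the heart, which is also what your ``Finally'' paragraph actually does.
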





Conversely, if we assume that $\twist^{\mathcal{L}}$ is symmetric monoidal, then we can make the following Euler characteristic computation:
\[ 1 = \twist^{\mathcal{L}} (1) = \twist^{\mathcal{L}}( \chi (\o(1))) = \chi ( \twist^{\mathcal{L}}(\o(1))) = \chi ( \mathcal{L}(1) ) = \chi (\mathcal{L}). \]

\begin{rmk}
  Examining the failure of $\twist^{\mathcal{L}}$ to be symmetric monoidal, we find that in general the symmetric monoidal structure on $\CC^{\Gr, \mathcal{L}, \heartsuit}$ is twisted by the Euler characteristic $\chi(\mathcal{L})$.
\end{rmk}

\begin{exm}
  If we take $\mathcal{C} = \Sp$ with its usual $t$-structure, then the $\Ss^1$-twisted category $\Sp^{\Gr, \Ss^1, \heartsuit}$ is equivalent to the symmetric monoidal category of graded abelian groups with the Koszul sign convention, since $\chi(\Ss^1) = -1$.

  On the other hand, $\Sp^{\Gr, \Ss^2, \heartsuit}$ is symmetric monoidally equivalent to the category of graded abelian groups by \Cref{prop:twist}, since $\chi(\Ss^2) = 1$. \todo{Define $\chi$ above, note that it is $2$-torsion for a Picard element.}
\end{exm}
%

%

\begin{lem} \label{lem:D-twist}
  Suppose that $\CC$ is equivalent to the derived category of its heart $\CC^{\heartsuit}$ as a symmetric monoidal category with compatible $t$-structure. Then there exists an equivalence of monoidal categories $\twist^{\mathcal{L}} : \CC^{\Gr} \simeq \CC^{\Gr}$ making the following diagram of monoidal functors commute:
  \begin{center}
    \begin{tikzcd}
      \CC^{\Gr, \heartsuit} \ar[r, "\twist^{\mathcal{L}}"] \ar[d] & \CC^{\Gr, \mathcal{L}, \heartsuit} \ar[d] \\
      \CC^{\Gr} \ar[r, "\twist^{\mathcal{L}}"] & \CC^{\Gr}.
    \end{tikzcd}
  \end{center}
  If $\chi(\mathcal{L}) = 1$, then the above square naturally lifts to a diagram of symmetric monoidal functors.
  On objects, $\twist^{\mathcal{L}} : \CC^{\Gr} \to \CC^{\Gr}$ is given by $\{X_n\} \mapsto \{X_n \otimes \mathcal{L}^{\otimes n}\}$.
\end{lem}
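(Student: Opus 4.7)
The plan is to lift Construction \ref{cnstr:twist} from hearts to the full derived categories, leveraging the hypothesis $\CC \simeq D(\CC^{\heartsuit})$. First I would establish that $\CC^{\Gr} \simeq D(\CC^{\Gr, \heartsuit})$ as a symmetric monoidal category with compatible $t$-structure: this follows from the hypothesis together with the fact that taking graded objects commutes with derived category formation for Grothendieck abelian categories, and that Day convolution lifts to the derived category. Next I would establish the analogous identification $\CC^{\Gr} \simeq D(\CC^{\Gr, \mathcal{L}, \heartsuit})$: the underlying (non-monoidal) self-equivalence of $\CC^{\Gr}$ sending $\{X_n\} \mapsto \{X_n \otimes \mathcal{L}^{\otimes -n}\}$ plainly exists on the level of underlying categories and carries the $\mathcal{L}$-twisted $t$-structure of Definition \ref{dfn:L-twist} to the standard one, so the ``derived category of its heart'' property transfers.

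With both identifications in place, I would apply the derived-category functor — which sends colimit-preserving monoidal functors of Grothendieck abelian categories to monoidal functors of their derived categories — to the monoidal equivalence $\twist^{\mathcal{L}} : \CC^{\Gr, \heartsuit} \to \CC^{\Gr, \mathcal{L}, \heartsuit}$ of Construction \ref{cnstr:twist}. Composing with the identifications above yields the desired monoidal self-equivalence $\twist^{\mathcal{L}} : \CC^{\Gr} \simeq \CC^{\Gr}$. Commutativity of the square is built into the construction, as by design the composite factors through the heart-level twist. The formula $\{X_n\} \mapsto \{X_n \otimes \mathcal{L}^{\otimes n}\}$ on general objects is then forced by $t$-exactness: both $\twist^{\mathcal{L}}$ and the naive pointwise twist are exact colimit-preserving functors agreeing on connective generators, so they agree everywhere.

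When $\chi(\mathcal{L}) = 1$, Proposition \ref{prop:twist} upgrades the heart-level $\twist^{\mathcal{L}}$ to a symmetric monoidal equivalence of abelian categories, and this extra coherence extends through derived-category formation and the subsequent identifications to produce a symmetric monoidal lift. The main obstacle I expect is the symmetric monoidal refinement of the identification $\CC^{\Gr} \simeq D(\CC^{\Gr, \mathcal{L}, \heartsuit})$: while the non-monoidal transport of $t$-structures via $\{X_n\} \mapsto \{X_n \otimes \mathcal{L}^{\otimes -n}\}$ is easy, verifying that the tensor product and (when applicable) symmetry on the $\mathcal{L}$-twisted heart — induced from the Day convolution on $\CC^{\Gr}$, and compatible with that $t$-structure by Proposition \ref{prop:twist-grade} — match the structure one expects on the heart of $D(\CC^{\Gr, \mathcal{L}, \heartsuit})$ requires some bookkeeping. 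This ultimately reduces to the abelian-category statement already packaged into Construction \ref{cnstr:twist} plus the universal property of the derived category of a Grothendieck abelian category whose $t$-structure is accessible, right complete, and compatible with filtered colimits.
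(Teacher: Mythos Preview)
Your proposal is correct and follows essentially the same approach as the paper: identify $\CC^{\Gr}$ (with its usual and $\mathcal{L}$-twisted $t$-structures respectively) with $\mathcal{D}(\CC^{\Gr,\heartsuit})$ and $\mathcal{D}(\CC^{\Gr,\mathcal{L},\heartsuit})$, then take derived categories of the heart-level $\twist^{\mathcal{L}}$ from Construction~\ref{cnstr:twist}, invoking Proposition~\ref{prop:twist} for the symmetric monoidal upgrade when $\chi(\mathcal{L})=1$. Your discussion is in fact more careful than the paper's, which simply asserts the two derived-category identifications without the bookkeeping you flag.
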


\begin{proof}
  The assumption implies that $\CC^{\Gr}$, equipped with the usual $t$-structure, is equivalent as a symmetric monoidal category with compatible $t$-struture to $\mathcal{D}(\CC^{\Gr, \heartsuit})$.
  It also implies that $\CC^{\Gr}$, when equipped with the $\mathcal{L}$-twisted $t$-structure, is equivalent as a symmteric monoidal category with compatible $t$-struture to $\mathcal{D}(\CC^{\Gr, \mathcal{L}, \heartsuit})$.

  It follows that the equivalence of monoidal $1$-categories $\twist^{\mathcal{L}} : \CC^{\Gr, \heartsuit} \to \CC^{\Gr, \mathcal{L}, \heartsuit}$ of \Cref{prop:twist} determines a compatible equivalence of monoidal categories $\twist^{\mathcal{L}}: \CC^{\Gr} \to \CC^{\Gr}$, via taking derived categories.
  If $\chi(\mathcal{L}) = 1$, these are equivalences of symmetric monoidal categories by \Cref{prop:twist}.
\end{proof}

\subsection{Twistings II: the slice filtration} \label{subsec:slice}\ 

In this short subsection, we specialize the material from the previous subsection to the case of interest. This means we look at $\Sp_{C_2, i2}$ with Picard element $\Ss_2 ^\rho$.
In order to connect this with $\MU_{\R,2}$ and the material from \Cref{sec:top}, we relate the $\rho$-twisted $t$-structure to the regular slice filtration.

\begin{exm}
  If we take $\mathcal{C}= \Sp_{C_2}$, then the following Euler characteristic computations provide the three possible nontrivial twists on graded Mackey functors:
  \[ \chi(\Ss^{\sigma}) = 1 - [C_2], \qquad \chi(\Ss^1) = -1, \qquad \chi(\Ss^{\rho}) = [C_2] - 1. \]
  To verify this, one uses the compatibility of the Euler characteristic with the symmetric monoidal functors $\Phi^{e}$ and $\Phi^{C_2}$.
\end{exm}

\begin{exm}
  Write $\uAb_{i2}^{\Gr, \rho, \heartsuit}$ for $\uAb_{i2}^{\Gr, \Sigma^{\rho} \uZt, \heartsuit}$.
  Then, since the image of $\chi(\Ss^{\rho})$ in $\pi_0 \uZt$ is $1$,
  there is a symmetric monoidal equivalence
  \[\twist^{\rho} : \uAb_{i2}^{\Gr, \heartsuit} \xrightarrow{\simeq} \uAb_{i2}^{\Gr, \rho, \heartsuit},\]
  by \Cref{prop:twist}. Applying \Cref{lem:D-twist}, we can then upgrade this to a diagram of symmetric monoidal functors
  \begin{center}
    \begin{tikzcd}
      \uAb_{i2} ^{\Gr, \heartsuit} \ar[r, "\twist^{\rho}"] \ar[d] & \uAb_{i2} ^{\Gr, \rho, \heartsuit} \ar[d] \\
      \uAb_{i2} ^{\Gr} \ar[r, "\twist^{\rho}"] & \uAb_{i2} ^{\Gr}.
    \end{tikzcd}
  \end{center}
\end{exm}

\todo{We're gonna need to either explain more about how the slices work, cite somebody or write more shit here.}
\todo{We need to fix a convention does pi n live in deg n (composite of two truncation functors) or in deg 0.}

\begin{cnv}
  From this point on, our category $\CC$ will be one of $\Sp_{C_2, i2}$, $M(C_2)_{i2}$, $\uAb_{i2}$ or $\Ab_{i2}$ with its usual $t$-structure, and the Picard element we work with will be $\Ss^{\rho} _2, \Sigma^{\rho} \piu_{0} ^{C_2} \Ss_2, \Sigma^{\rho} \uZt$ or $\Sigma^{2} \Zt$, respectively. In order to reduce the notational burden, we denote the former three Picard objects by $\rho$ and the final Picard object by $2$ in superscripts. So, in the example of $\Sp_{C_2, i2}$, we will write $\tau_{\geq 0} ^\rho$ for the connective cover, $\piu_0 ^\rho$ for the $0^{\mathrm{th}}$ homotopy object and $\Sp_{C_2, i2} ^{\Fil, \rho, \heartsuit}$ for the heart of $\Sp_{C_2, i2} ^{\Fil}$.
\end{cnv}

This key result of this section is the following:

\begin{lem}\label{prop:twist-slice}
  Let $E \in \Sp_{C_2, i2}$. Then
  \[\tau_{\geq 0} ^{\rho} Y(E) \simeq \left( \dots \to P_4 E \to P_2 E \to P_0 E \to P_{-2} E \to P_{-4} E \to \dots \right), \]
  where $Y$ is the functor that sends an object to the associated constant filtered object. 
  If we further suppose that $\piu_{n\rho-1} ^{C_2} E = 0$ for all $n$, there is a natural equivalence
  \[\Gr (\tau_{\geq 0} ^{\rho} Y(E)) \simeq \piu_0 ^{\rho} Y(E) \cong \{\Sigma^{n\rho} \piu_{n\rho} ^{C_2} E\}.\]
  Finally, if $\piu_{n\rho} E$ is a constant Mackey functor for all $n$, then
  $ \piu_0 ^{\rho} Y(E) $
  factors through the full subcategory
  \[\uAb_{i2} ^{\Gr, \rho, \heartsuit} \subset M(C_2)_{i2} ^{\Gr, \rho, \heartsuit} \simeq \Sp_{C_2, i2} ^{\Fil, \rho, \heartsuit}.\]
\end{lem}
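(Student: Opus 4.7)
The plan is to establish the three claims in sequence, each resting on the identification of the $\rho$-twisted $t$-structure on $\Sp_{C_2, i2}^{\Fil}$ with the regular slice filtration on $\Sp_{C_2, i2}$. Using $\Phi^e \Ss^{n\rho} \simeq \Ss^{2n}$ and $\Phi^{C_2} \Ss^{n\rho} \simeq \Ss^n$, suspension by $\Ss^{n\rho}$ shifts regular slice connectivity by $2n$, so by \Cref{dfn:L-twist} a filtered object $X_\bullet$ is $\rho$-twisted connective exactly when $X_n$ is regular slice $2n$-connective. The universal such object mapping to the constant tower $Y(E)$ is then produced by applying the right adjoint $P_{2n} : \Sp_{C_2, i2} \to \Sp_{C_2, i2}^{\geq 2n}$ in each position, with structure maps coming from the containments $\Sp_{C_2, i2}^{\geq 2n+2} \subset \Sp_{C_2, i2}^{\geq 2n}$; this yields $\tau_{\geq 0}^\rho Y(E) \simeq P_{2\bullet} E$.

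For the second claim, the associated graded of $P_{2\bullet} E$ is $\{P^{2n}_{2n} E\}$, the tower of even regular slices, while \Cref{prop:twist-filt}(3') gives
\[ (\piu_0^\rho Y(E))_n \simeq \Sigma^{n\rho} \piu_0 (\Sigma^{-n\rho} E) \simeq P^{2n}_{2n} E, \]
so the two graded objects agree naturally. To match this with $\{\Sigma^{n\rho} \piu_{n\rho}^{C_2} E\}$, I invoke the structural fact that every even regular slice of a $C_2$-spectrum is of the form $\Sigma^{n\rho} H\underline{M_n}$ for some Mackey functor $\underline{M_n}$, and then compute $\underline{M_n}$ by applying $\piu_{n\rho}^{C_2}$ to the cofiber sequence $P_{2n+1} E \to P_{2n} E \to P^{2n}_{2n} E$: the hypothesis $\piu_{n\rho - 1}^{C_2} E = 0$ forces $P_{2n+1} E$ to contribute trivially in the relevant degree, yielding $\underline{M_n} \simeq \piu_{n\rho}^{C_2} E$. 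The main obstacle is this last identification, which requires careful bookkeeping with the regular slice spectral sequence to confirm that the vanishing condition kills exactly the terms obstructing the comparison.

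For the final assertion, the equivalence $\CC^{\Gr, \rho, \heartsuit} \simeq \CC^{\Fil, \rho, \heartsuit}$ from \Cref{prop:twist-1-conn} reduces the problem to lifting the graded object $\{\Sigma^{n\rho} \piu_{n\rho}^{C_2} E\}$ to $\uAb_{i2}^{\Gr, \rho, \heartsuit}$. Under the hypothesis that each $\piu_{n\rho}^{C_2} E$ is a constant Mackey functor, the fully faithful inclusion $\underline{(-)} : \Ab_{i2}^\heartsuit \hookrightarrow \uAb_{i2}^\heartsuit$ from \Cref{lem:const-mackey} promotes each entry canonically to a $\uZt$-module ($2$-completeness being automatic from $E \in \Sp_{C_2, i2}$), and this promotion is functorial in $n$, giving the desired factorization.
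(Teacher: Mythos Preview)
Your treatment of the first and third statements follows the paper's line. The gap is in the second statement, where you have misplaced the role of the hypothesis $\piu_{n\rho-1}^{C_2} E = 0$.

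The associated graded of the tower $P_{2\bullet} E$ in position $n$ is $\cofib(P_{2n+2} E \to P_{2n} E)$, which is $P^{2n+1}_{2n} E$ --- it records \emph{both} the $2n$th and $(2n+1)$st regular slices, not just the even one. So your opening assertion ``the associated graded of $P_{2\bullet} E$ is $\{P^{2n}_{2n} E\}$'' is false without further input. Conversely, the identification $P^{2n}_{2n} E \simeq \Sigma^{n\rho}\piu_{n\rho}^{C_2} E$ holds unconditionally: regular slice $0$- and $1$-connectivity coincide with $t$-structure $0$- and $1$-connectivity, so $P^0_0 = \piu_0$, and then one shifts by $\Ss^{n\rho}$. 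In particular, your displayed chain $\Sigma^{n\rho}\piu_0(\Sigma^{-n\rho} E) \simeq P^{2n}_{2n} E$ \emph{already is} the identification with $\Sigma^{n\rho}\piu_{n\rho}^{C_2} E$ (since $\piu_0(\Sigma^{-n\rho} E) = \piu_{n\rho}^{C_2} E$ by definition), and the subsequent paragraph invoking the hypothesis to compute $M_n$ is redundant.

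The paper's argument runs the other way: the hypothesis forces the \emph{odd} regular slices of $E$ to vanish, which collapses $\cofib(P_{2n+2} E \to P_{2n} E)$ down to the single even slice $P^{2n}_{2n} E \simeq \Sigma^{n\rho}\piu_{n\rho}^{C_2} E$, matching the formula for $\piu_0^\rho$ from \Cref{prop:twist-grade}(3'). That is where the hypothesis does its work --- killing the odd slices so that the even slice tower has associated graded concentrated in single slices --- not in identifying $M_n$.
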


\begin{proof}
  This lemma has three statements.
  For the first statement, it suffices to note the following:
  \begin{enumerate}
    \item A $C_2$-spectrum is $t$-structure $0$-connective if and only if it is regular slice $0$-connective.
\item A $C_2$-spectrum $E$ is regular slice $2n$-connective if and only if $E \otimes \Ss^{-n\rho}$ is regular slice $0$-connective.
  \end{enumerate}

  For the second statement, since $\Ss^{\rho} _2 \geq 1$, we can use the final statement of \Cref{prop:twist-1-conn} to obtain a canonical map
  \[\Gr (\tau_{\geq 0} ^{\rho} Y(E)) \to \piu_0 ^{\rho} Y(E).\]
  On the one hand, we have
  $ \piu_0 ^\rho Y(E) \cong \{\Sigma^{n\rho} \piu_{n\rho} ^{C_2} E\} $
  by \Cref{prop:twist-grade}(3'). \todo{I dont 100\% follow this argument.}
  Since by assumption the odd slices of $E$ vanish, we learn that
  the $(2n)^{\mathrm{th}}$ slice of $E$ is equivalent to $\Sigma^{n\rho} \piu_{n\rho} ^{C_2} E$ and
  that there are cofiber sequences
  \[ P_{2n+2} E \to P_{2n} E \to \Sigma^{n\rho} \piu_{n\rho} ^{C_2} E. \]
  The desired equivalence now follows from the first statement.

  The third statement now follows from the fact that any discrete $C_2$-Mackey functor which is constant admits a (necessarily unique) $\uZ$-module structure.
\end{proof}

\subsection{Reductions} \label{subsec:reds}\ 

In this subsection, we express each side of the equivalence (\ref{eqn:cta}) of \Cref{thm:Cta} as the category of modules over some commutative algebra in $\Sp_{C_2, i2} ^{\Fil}$.
Almost everything in this subsection is a standard application of Lurie's theory of higher algebra.
We have extracted the necessary material in \Cref{app:boilerplate}, where we present it in a digested form. 

We begin with the left-hand-side of (\ref{eqn:cta}).

\begin{lem} \label{lem:gr-of-comparison}
  The cofiber of $\ta : \Ss_2^{0,0,-1} \to \Ss_2^{0,0,0}$ is a commutative algebra in $\SHR_{i2}$. Furthermore, there is an equivalence of symmetric monoidal categories
  \[ \mathrm{Mod}(\SHRat; C\ta) \simeq \mathrm{Mod}( \Sp_{C_2, i2}^{\Gr}; \Gr( R_\bullet ) ), \]
	where $R_{\bullet} =\Tot^{*}\left( P_{2\bullet} \MU_{\mathbb{R},2}^{\otimes *+1} \right)$ 
\end{lem}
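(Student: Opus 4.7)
The plan is to leverage the filtered model of \Cref{thm:filt-model} and the standard identification of graded spectra with $C\tau$-modules in filtered spectra, in order to reduce the statement to a formal base-change computation.

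First I would address the commutative algebra structure on $C\ta$. Recall from \Cref{prop:top-diagram} that the symmetric monoidal left adjoint $i^* : \Sp^{\Fil}_{C_2,i2} \to \SHRat$ sends the shift map $\tau : \o(-1) \to \o$ to $\ta : \Ss^{0,0,-1}_2 \to \Ss^{0,0,0}_2$. In $\Sp^{\Fil}_{C_2,i2}$, the cofiber $C\tau$ carries a canonical commutative algebra structure: in fact it is the image of the unit under the symmetric monoidal associated graded functor $\Gr : \Sp^{\Fil}_{C_2,i2} \to \Sp^{\Gr}_{C_2,i2}$, which exhibits an equivalence of symmetric monoidal categories $\Sp^{\Gr}_{C_2,i2} \simeq \Mod(\Sp^{\Fil}_{C_2,i2}; C\tau)$. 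Since $i^*$ is symmetric monoidal, $C\ta \simeq i^*(C\tau)$ inherits a commutative algebra structure.

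Next I would establish the equivalence of module categories by a short chain of symmetric monoidal equivalences. Using \Cref{thm:filt-model} to identify $\SHRat \simeq \Mod(\Sp^{\Fil}_{C_2,i2}; R_\bullet)$ and the formula $i^*(X) \simeq X \otimes R_\bullet$, we obtain an equivalence $C\ta \simeq C\tau \otimes R_\bullet$ of commutative algebras in $\Sp^{\Fil}_{C_2,i2}$ over $R_\bullet$. By the standard base-change equivalence for modules (\Cref{lem:tensor-mod}), this gives
\[ \Mod(\SHRat; C\ta) \simeq \Mod(\Sp^{\Fil}_{C_2,i2}; C\tau \otimes R_\bullet). \]
Applying the same base-change along the symmetric monoidal functor $\Gr = -\otimes C\tau : \Sp^{\Fil}_{C_2,i2} \to \Sp^{\Gr}_{C_2,i2}$, and using that $C\tau \otimes R_\bullet \simeq \Gr(R_\bullet)$ as a commutative algebra in $\Sp^{\Gr}_{C_2,i2}$, yields
\[ \Mod(\Sp^{\Fil}_{C_2,i2}; C\tau \otimes R_\bullet) \simeq \Mod(\Sp^{\Gr}_{C_2,i2}; \Gr(R_\bullet)), \]
and composing gives the desired equivalence.

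Since everything reduces to iterated applications of symmetric monoidal base change, I do not expect any serious obstacle. The only subtlety worth flagging is to verify that the two commutative algebra structures on $C\ta$ (one coming from viewing it as $i^*(C\tau)$ and one that might arise from an ad hoc cofiber sequence argument) agree, which is automatic from the symmetric monoidality of $i^*$. Everything else is formal bookkeeping with relative tensor products of presentable symmetric monoidal categories, for which the framework of \Cref{app:boilerplate} is designed.
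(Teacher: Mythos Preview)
Your proposal is correct and follows essentially the same argument as the paper: both use that $C\ta \simeq i^*(C\tau)$ inherits a commutative algebra structure from the symmetric monoidal functor $i^*$ of \Cref{prop:top-diagram}, then apply \Cref{lem:tensor-mod} together with the filtered model of \Cref{thm:filt-model} and the identification $\Sp_{C_2,i2}^{\Gr} \simeq \Mod(\Sp_{C_2,i2}^{\Fil}; C\tau)$ to obtain the chain of equivalences. The only cosmetic difference is that the paper phrases the intermediate step as a relative tensor product of presentable categories $\Sp_{C_2,i2}^{\Gr} \otimes_{\Sp_{C_2,i2}^{\Fil}} \SHRat$, whereas you phrase it as $\Mod(\Sp_{C_2,i2}^{\Fil}; C\tau \otimes R_\bullet)$; these are equivalent by \Cref{lem:tensor-mod}.
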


\begin{proof}
  Recall that the shift map $\Ss_2(-1) \to \Ss_2$ in $\Sp_{C_2, i2}^{\Fil}$ is denoted by $\tau$.
    Tensoring the equivalence of \Cref{exm:fil-ctau} with $\Sp_{C_2, i2}$ and applying \Cref{lem:tensor-mod}, we see that there is an equivalence of presentably symmetric monoidal categories
    \[ \Sp_{C_2, i2} ^{\Gr} \simeq \Mod ( \Sp_{C_2, i2} ^{\Fil} ; C\tau ). \]

    Under the symmetric monoidal functor $i^* : \Sp_{C_2, i2} ^{\Fil} \to \SHRat$ constructed in \Cref{prop:top-diagram}, the commutative algebra $C\tau$ maps to $C\ta$. Therefore, using \Cref{lem:tensor-mod}, we have 
    \[ \mathrm{Mod}(\SHRat; C\ta) \simeq \Sp_{C_2, i2}^{\Gr} \otimes_{\Sp_{C_2, i2} ^{\Fil}} \SHRat. \]
    Using \Cref{thm:filt-model}, there is an equivalence of presentably symmetric monoidal categories under $\Sp_{C_2, i2}^{\Fil}$ between $\SHRat$ and $\mathrm{Mod}( \Sp^{\Fil}_{C_2,i2} ; R_\bullet )$. Tensoring down along the associated graded ring map and using \Cref{lem:tensor-mod}, we obtain equivalences
    \begin{align*}
      \Sp_{C_2, i2}^{\Gr} \otimes_{\Sp_{C_2, i2}^{\Fil}} \SHRat
      &\simeq \Sp_{C_2,i2}^{\Gr} \otimes_{\Sp_{C_2,i2}^{\Fil}} \mathrm{Mod}( \Sp_{C_2,i2}^{\Fil} ; R_\bullet ) \\
      &\simeq \mathrm{Mod}( \Sp_{C_2,i2}^{\Fil} ; \Gr(R_{\bullet}) ). \qedhere
    \end{align*}
      %

\end{proof}

Now we proceed to the right-hand-side of (\ref{eqn:cta}).

\begin{lem} \label{lem:omega-euler-one}
  The Euler characteristic of $\omega_{\G/\Mfg} \in \IndCoh(\Mfg)^{\heartsuit}$ is equal to 1.
  As a consequence, \Cref{cnstr:twist} provides a symmetric monoidal left adjoint
  \[ p^* : \Ab^{\Gr, \heartsuit} \to \IndCoh(\Mfg)^{\heartsuit} \]
  which sends $\Z(1)$ to $\omega_{\G/\Mfg}$.
\end{lem}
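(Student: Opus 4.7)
The plan is to reduce the Euler characteristic computation to a trivial case via pullback, and then invoke \Cref{cnstr:twist} together with \Cref{prop:twist} for the final sentence.

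First I would observe that $\omega_{\G/\Mfg}$ is a line bundle on $\Mfg$, and hence is an invertible object in $\IndCoh(\Mfg)$ whose dual is $\omega_{\G/\Mfg}^{\otimes -1}$. In particular, $\chi(\omega_{\G/\Mfg})$ is a well-defined element of $\pi_0 \End_{\IndCoh(\Mfg)}(\o) = H^0(\Mfg, \mathcal{O}_{\Mfg}) = \Z$.

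Next I would exploit the fact, immediate from the definition, that $\chi$ is preserved by symmetric monoidal functors between symmetric monoidal stable categories. Applying this to a flat cover $p : \Spec L \to \Mfg$ (for example the one coming from the Lazard ring, or a representable cover induced by $\MU_{*}$), we obtain a symmetric monoidal functor $p^* : \IndCoh(\Mfg) \to \Mod_L$, and the composite $\Z \to L$ on endomorphisms of the unit is the usual unit map, which is injective. On the other hand, $p^*\omega_{\G/\Mfg}$ is a rank one free $L$-module (since every line bundle on an affine scheme is free up to replacing $L$ by a suitable faithfully flat extension, or since we may choose the cover so that the pullback is globally trivialized by a choice of invariant differential for the universal formal group law). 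Hence $\chi(p^*\omega_{\G/\Mfg}) = \chi(\o_{\Mod_L}) = 1$ in $L$, so by injectivity of $\Z \hookrightarrow L$ we conclude $\chi(\omega_{\G/\Mfg}) = 1$ in $\Z$.

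With $\chi(\omega_{\G/\Mfg}) = 1$ established, the second statement is an immediate application of \Cref{cnstr:twist} (applied to the symmetric monoidal abelian category $\mathcal{A} = \IndCoh(\Mfg)^\heartsuit$ and the invertible object $a = \omega_{\G/\Mfg}$) combined with the discussion following \Cref{prop:twist}, which shows that the vanishing obstruction to upgrading the monoidal functor $i_a$ to a symmetric monoidal one is precisely the condition $\chi(a) = 1$. The main obstacle is essentially verifying that the Euler characteristic is well-behaved under the pullback functor to an affine cover, but this is standard once one models $\IndCoh(\Mfg)$ in terms of evenly graded $\MU_*\MU$-comodules as in \Cref{dfn:IndCoh} — the pullback then becomes the forgetful functor to graded $L$-modules, which is clearly symmetric monoidal.
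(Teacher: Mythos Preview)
Your proposal is correct and follows essentially the same argument as the paper: both pull back along the flat cover $\Spec L \to \Mfg$, use that the Euler characteristic is preserved by symmetric monoidal functors together with injectivity of $\Z \hookrightarrow L$, and observe that $\omega_{\G/\Mfg}$ pulls back to a free rank-one $L$-module. The paper's proof is merely a terser version of yours.
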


\begin{proof}
  Let $L$ denote the Lazard ring. Then the flat cover $\Spec(L) \to \Mfg$ determines a pullback map $\IndCoh(\Mfg) \to \Mod_{L}$. This determines an injective pullback map
  $\pi_0 \End(\mathcal{O}_{\Mfg}) \to \pi_0 \End(L)$.
  Since the pullback of $\omega_{\G/\Mfg}$ is equivalent to $L$ we learn that $\chi(\omega_{\G/\Mfg}) = 1$.
\end{proof}

Since $p^*$ sends a family of compact dualizable objects (the powers of $\Z(1)$) to a family of compact dualizable generators (the powers of $\omega_{\G/\Mfg}$), we may apply \Cref{prop:rigid} to obtain the following lemma:

\begin{lem}\label{lem:MU-koszul}
  There is an equivalence of presentably symmetric monoidal categories
  \[ \mathrm{IndCoh}(\mathcal{M}_{\mathrm{fg}}) \simeq \mathrm{Mod}(\Ab^{\Gr}; p_*\mathcal{O}_{\Mfg} ). \]
\end{lem}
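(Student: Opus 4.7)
The plan is to derive \Cref{lem:MU-koszul} as a direct application of \Cref{prop:rigid}, which is the Barr--Beck--Lurie style recognition principle for when a presentably symmetric monoidal category is a module category over the image of the unit under a lax symmetric monoidal right adjoint. The work is therefore almost entirely in verifying the hypotheses of \Cref{prop:rigid} for the adjunction $p^* \dashv p_*$ produced by \Cref{lem:omega-euler-one}.

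First I would note that $\Ab^{\Gr}$ is rigidly compactly generated by the shifted units $\{\Z(n)\}_{n \in \Z}$, each of which is compact and dualizable with $\Z(n)^{\vee} \simeq \Z(-n)$. Next, I would recall from \Cref{dfn:IndCoh} that $\IndCoh(\Mfg)$ is by definition $\Ind$ of the thick subcategory of $\QCoh(\Mfg)$ generated by $\{\omega_{\G/\Mfg}^{\otimes k}\}_{k \in \Z}$, so these line bundles form a family of compact dualizable generators. By \Cref{lem:omega-euler-one} (or rather its derived extension) the symmetric monoidal left adjoint $p^*$ sends the generator $\Z(n)$ to $\omega_{\G/\Mfg}^{\otimes n}$, so $p^*$ takes a family of compact dualizable generators to a family of compact dualizable generators. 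This is precisely the hypothesis of \Cref{prop:rigid}.

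Applying \Cref{prop:rigid} then identifies $\IndCoh(\Mfg)$ with $\Mod(\Ab^{\Gr}; p_*\mathcal{O}_{\Mfg})$ as presentably symmetric monoidal categories, where the commutative algebra structure on $p_*\mathcal{O}_{\Mfg}$ comes from the lax symmetric monoidal structure on $p_*$ applied to the unit map $p^*\o_{\Ab^{\Gr}} \to \o_{\IndCoh(\Mfg)}$.

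The main step requiring care is the verification that the heart-level functor constructed in \Cref{lem:omega-euler-one} extends to a genuine symmetric monoidal left adjoint $p^* : \Ab^{\Gr} \to \IndCoh(\Mfg)$ at the derived level, in a way compatible with sending $\Z(n)$ to $\omega_{\G/\Mfg}^{\otimes n}$; this is where the appeal to \Cref{lem:D-twist} (or its underlying formalism) is needed, using the fact that both source and target are equivalent to the derived categories of their hearts (for $\IndCoh(\Mfg)$ this follows from the classical identification of $\IndCoh(\Mfg)^{\heartsuit}$ with evenly graded $\MU_*\MU$-comodules and the resulting homological algebra). Once this is in hand, the rest of the argument is formal.
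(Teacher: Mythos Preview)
Your proposal is correct and follows exactly the paper's route: the paper's entire proof is the sentence immediately preceding the lemma, noting that $p^*$ carries the compact dualizable generators $\Z(n)$ of $\Ab^{\Gr}$ to the compact dualizable generators $\omega_{\G/\Mfg}^{\otimes n}$ of $\IndCoh(\Mfg)$, so \Cref{prop:rigid} applies. You are right to flag the passage from the heart-level $p^*$ of \Cref{lem:omega-euler-one} to a stable-level symmetric monoidal left adjoint as the one place needing care; the paper leaves this implicit.

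One small correction to your justification of that step: $\IndCoh(\Mfg)$ is \emph{not} the derived category of its heart---that category is $\QCoh(\Mfg)$ (see \Cref{dfn:IndCoh}), and the two differ. The cleanest fix is to derive the exact symmetric monoidal functor of \Cref{lem:omega-euler-one} to obtain $p^* : \Ab^{\Gr} \to \QCoh(\Mfg)$, observe that it sends compact objects into the thick subcategory $\mathcal{D}$ generated by the $\omega_{\G/\Mfg}^{\otimes n}$, and then $\Ind$-extend to get $p^* : \Ab^{\Gr} \to \Ind(\mathcal{D}) = \IndCoh(\Mfg)$, exactly parallel to the construction of $\Ab \to \IndCoh(\Mfg)$ already given in the paper.
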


  

Using \Cref{lem:MU-koszul} and \Cref{lem:tensor-mod}, we obtain the following corollary:

\begin{cor}\label{lem:uMU-koszul}
  There is an equivalence of presentably symmetric monoidal categories
  \[ \uAb_{i2} \otimes_{\Ab} \mathrm{IndCoh}(\mathcal{M}_{\mathrm{fg}}) \simeq \Mod ( \uAb_{i2}^{\Gr}; \underline{p_*\mathcal{O}_{\Mfg}} \otimes_{\uZ} \uZt ).  \]
\end{cor}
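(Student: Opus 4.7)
The plan is to base change the equivalence of Lemma \ref{lem:MU-koszul} along a natural symmetric monoidal functor $\Ab \to \uAb_{i2}$. Recall from \Cref{rmk:underline} that the constant Mackey functor functor $\_ : \Ab \to \uAb$ is a symmetric monoidal left adjoint (and in fact also a right adjoint). Postcomposing with the symmetric monoidal $2$-completion $- \otimes_{\uZ} \uZt : \uAb \to \uAb_{i2}$ yields a symmetric monoidal left adjoint $\Ab \to \uAb_{i2}$, so $\uAb_{i2}$ is canonically an $\Ab$-linear presentably symmetric monoidal category and the relative tensor product $\uAb_{i2} \otimes_{\Ab} -$ is well-defined.

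Starting from the equivalence $\IndCoh(\Mfg) \simeq \Mod(\Ab^{\Gr}; p_* \mathcal{O}_{\Mfg})$ of \Cref{lem:MU-koszul}, I would tensor both sides with $\uAb_{i2}$ over $\Ab$. The left-hand side produces $\uAb_{i2} \otimes_{\Ab} \IndCoh(\Mfg)$ by definition. For the right-hand side, I would apply \Cref{lem:tensor-mod}, which identifies the tensor product of a module category with modules over the image of the structure algebra under base change. Combined with the identification
\[ \uAb_{i2} \otimes_{\Ab} \Ab^{\Gr} \simeq \uAb_{i2}^{\Gr}, \]
which is formal from $\Ab^{\Gr} \simeq \Ab \otimes \Sp^{\Gr}$ and associativity of the tensor product (noting that graded objects are obtained by tensoring with $\Sp^{\Gr}$), this yields $\Mod(\uAb_{i2}^{\Gr}; R')$, where $R'$ is the image of $p_* \mathcal{O}_{\Mfg}$ in $\uAb_{i2}^{\Gr}$.

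The final task is to identify $R'$. Since base change of commutative algebras commutes with the given factorization $\Ab \to \uAb \to \uAb_{i2}$, and the first step sends $p_* \mathcal{O}_{\Mfg}$ to $\underline{p_* \mathcal{O}_{\Mfg}}$ (because $\_$ is symmetric monoidal and colimit-preserving, hence commutes with the graded commutative algebra structure), and the second step tensors with $\uZt$ over $\uZ$, we obtain $R' \simeq \underline{p_* \mathcal{O}_{\Mfg}} \otimes_{\uZ} \uZt$, as desired. No step here presents a real obstacle—this is a routine application of the symmetric monoidal boilerplate compiled in \Cref{app:boilerplate}.
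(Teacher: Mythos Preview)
Your proposal is correct and follows exactly the approach the paper takes: the paper's entire proof is the sentence ``Using \Cref{lem:MU-koszul} and \Cref{lem:tensor-mod} we obtain the following corollary,'' and you have simply unpacked that sentence with the appropriate identifications of the base-changed category and algebra.
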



\subsection{The main lemma} \label{subsec:main-lem}\

In this subsection, we prove our main lemma, \Cref{lem:main-lem}, and use it to prove Theorems \ref{thm:Cta} and \ref{thm:Cta-homotopy}. This lemma gives us an explicit formula for $\Gr(i_*(\Gamma_*(-)))$ on a restricted class of objects. Once we have this formula the remaining work is relatively easy.
We begin with a definition and a couple of useful lemmas.

\begin{dfn}\ 
  \begin{itemize}
  \item Let $\Sp_{C_2, i2} ^{\mathrm{proj}}$ denote the full subcategory of $E \in \Sp_{C_2, i2}$ for which $\MU_{\R,2} \otimes E$ is a retract of a sum of pure suspensions of $\MU_{\R,2}$ (a suspension $\Sigma^{a+b\sigma} \MU_{\R,2}$ is said to be pure if $a=b$). 
  \item Let $\Sp_{i2} ^{\mathrm{proj}}$ denote the full subcategory of $E \in \Sp_{i2}$ for which $\MU_{2} \otimes E$ is a retract of a sum of even suspensions of $\MU_2$.
  \end{itemize}
  Since the underlying spectrum of $\MU_{\R, 2}$ is $\MU_2$, the underlying spectrum of an object of $\Sp_{C_2, i2} ^{\mathrm{proj}}$ is contained in $\Sp_{i2} ^{\mathrm{proj}}$.
  Note that both $\Sp_{C_2,i2} ^{\mathrm{proj}}$ and $\Sp_{i2} ^{\mathrm{proj}}$ contain units and are closed under tensor product, so each inherits the structure of a symmetric monoidal category.
\end{dfn}

\begin{lem} \label{lem:MUR-uZ-mod}
  Suppose that $E \in \Sp_{C_2, i2} ^{\mathrm{proj}}$. Then 
  \[\underline{(\MU_{\R,2})}_{n\rho} ^{C_2} (E) \cong \underline{(\MU_2)_{2*} (E)}.\]
  As a consequence, $\underline{(\MU_{\R,2})}_{n\rho} ^{C_2} (E)$ acquires the structure of a $\uZt$-module.
\end{lem}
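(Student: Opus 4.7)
The strategy is to reduce to the case $E = \MU_{\R,2}$ (and pure suspensions thereof), for which the result is the Hu--Kriz identification $\piu_{n\rho}^{C_2}(\MU_{\R,2}) \cong \underline{(\MU_2)_{2n}}$ of \cite[Theorems 2.25 and 2.28]{HuKriz}, already recalled in the remark immediately preceding the lemma. The first step is to unfold the hypothesis $E \in \Sp_{C_2,i2}^{\mathrm{proj}}$: by definition, the $\MU_{\R,2}$-module $\MU_{\R,2} \otimes E$ is a retract of a sum $\bigoplus_\alpha \Sigma^{n_\alpha \rho} \MU_{\R,2}$ of pure suspensions.

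Next I would apply $\piu_{n\rho}^{C_2}$, which preserves retracts and sums. Combined with Hu--Kriz, this exhibits $\underline{(\MU_{\R,2})}_{n\rho}^{C_2}(E)$ as a retract of
\[\bigoplus_\alpha \piu_{(n-n_\alpha)\rho}^{C_2}(\MU_{\R,2}) \cong \bigoplus_\alpha \underline{(\MU_2)_{2(n-n_\alpha)}},\]
which is a sum of constant Mackey functors. By Lemma \ref{lem:const-mackey}, the constant Mackey functor embedding $\underline{(\_)}: \Abh \to \uAbh$ is fully faithful and preserves sums, so its essential image is closed under sums and retracts. Thus $\underline{(\MU_{\R,2})}_{n\rho}^{C_2}(E)$ is itself a constant Mackey functor. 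Its underlying value is pinned down by
\[\pi_{n\rho}^e(\MU_{\R,2} \otimes E) \cong \pi_{2n}(\Phi^e\MU_{\R,2} \otimes \Phi^e E) \cong (\MU_2)_{2n}(\Phi^e E),\]
using $\Phi^e \MU_{\R,2} \simeq \MU_2$, giving the claimed isomorphism $\underline{(\MU_{\R,2})}_{n\rho}^{C_2}(E) \cong \underline{(\MU_2)_{2*}(E)}$.

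For the consequence, the $\uZt$-module structure comes for free: by Remark \ref{rmk:underline}, the symmetric monoidal functor $\underline{(\_)}: \Ab_{i2} \to \uAb_{i2}$ sends $\Z_2$ to $\uZt$, so the constant Mackey functor on any $2$-complete abelian group canonically acquires a $\uZt$-module structure, and this is visibly natural in $E$. The only potentially subtle point in the argument is the closure of constant Mackey functors under retracts, but this is handled cleanly by the full faithfulness in Lemma \ref{lem:const-mackey}; there is no substantive obstacle beyond invoking the Hu--Kriz computation.
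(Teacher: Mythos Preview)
Your proof is correct and follows exactly the same approach as the paper: reduce to the case $E = \Ss_2$ via the retract-of-sums hypothesis, then invoke Hu--Kriz. The paper's proof is a single sentence (``By definition of $\Sp_{C_2, i2}^{\mathrm{proj}}$, it suffices to note that this is true for $E = \Ss_2$, which follows from \cite[Theorem 2.28]{HuKriz}''), and you have simply unpacked the details that make this reduction work---in particular, closure of constant Mackey functors under sums and retracts via \Cref{lem:const-mackey}, and the identification of the underlying abelian group via $\Phi^e$.
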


\begin{proof}
  By definition of $\Sp_{C_2, i2} ^{\mathrm{proj}}$, it suffices to note that this is true for $E = \Ss_2$, which follows from \cite[Theorem 2.28]{HuKriz}.
\end{proof}

\begin{lem} \label{lem:main-lem}
  Let $h$ denote the symmetric monoidal functor $h : \Sp_{i2} ^{\mathrm{proj}} \to \IndCoh(\Mfg)_{i2} ^{\heartsuit}$
  which sends a spectrum to its associated $((\MU_2)_*, (\MU_2)_*\MU_2 )$-comodule.  
  There is a commutative diagram of lax symmetric monoidal functors
  \begin{center}
    \begin{tikzcd}
      \Sp_{C_2, i2} ^{\mathrm{proj}} \ar[rrrr,"i_* \circ \Gamma_*"] \ar[d, "\Phi^e"] & & & &
      \Sp_{C_2, i2} ^{\Fil} \ar[d, "\Gr"] \\
      \Sp^{\mathrm{proj}} _{i2} \ar[r, "h"] &
      \IndCoh(\Mfg)_{i2} ^{\heartsuit} \ar[r, "p_*"] &
      \Ab^{\Gr}_{i2} \ar[r, "\_"] &
      \uAb^{\Gr}_{i2} \ar[r, "\twist^\rho"] &
      \Sp_{C_2,i2} ^{\Gr}.
    \end{tikzcd}
  \end{center}

\end{lem}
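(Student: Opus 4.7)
The plan is to compute $\Gr \circ i_* \circ \Gamma_*(E)$ explicitly for $E \in \Sp_{C_2, i2}^{\mathrm{proj}}$ and match the result with $\mathrm{tw}^{\rho}\bigl(\underline{p_*(h(\Phi^e E))}\bigr)$ term by term. The starting point is \Cref{thm:filt-model} together with \Cref{cnstr:gamma-star}, which identify, as commutative algebras in $\Sp_{C_2, i2}^{\Fil}$,
\[ i_*(\Gamma_*(E)) \;\simeq\; \mathrm{Sh}(P_{2\bullet}; \MU_{\R,2})(E) \;\simeq\; \Tot^{k}\bigl(P_{2\bullet}(\MU_{\R,2}^{\otimes k+1} \otimes E)\bigr), \]
where $\bullet$ is the filtration degree and $k$ the cosimplicial degree. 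This expression is naturally lax symmetric monoidal in $E$ through $E \mapsto \MU_{\R,2}^{\otimes \bullet + 1} \otimes E$, so it suffices to work with this model.

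My next step is to pass to the associated graded. Using completeness of the slice filtration on each tensor power (which holds because $\MU_{\R,2}^{\otimes k+1} \otimes E$ is bounded below), $\Gr$ commutes past the totalization and produces at position $n$ the totalization of the cosimplicial object of $2n$-th slices of $\MU_{\R,2}^{\otimes k+1} \otimes E$, the odd slices vanishing by \Cref{thm:slice-comp} since $E \in \Sp_{C_2,i2}^{\mathrm{proj}}$. By the regular-slice formula (combined again with \Cref{thm:slice-comp} and the projectivity hypothesis), the $2n$-th slice is canonically equivalent to $\Sigma^{n\rho}\,\piu_{n\rho}^{C_2}(\MU_{\R,2}^{\otimes k+1} \otimes E)$, and by \Cref{lem:MUR-uZ-mod} this Mackey functor is constant, equal to $\underline{(\MU_2)_{2n}(\MU_2^{\otimes k+1} \otimes \Phi^e E)}$. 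Using that $\underline{(-)}$ preserves limits (\Cref{rmk:underline}), at graded position $n$ I obtain
\[ \Gr_n\bigl(i_*\Gamma_*(E)\bigr) \;\simeq\; \Sigma^{n\rho}\,\underline{\Tot^{k}\bigl((\MU_2)_{2n}(\MU_2^{\otimes k+1} \otimes \Phi^e E)\bigr)}. \]

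To finish, I would use that $\Phi^e E \in \Sp_{i2}^{\mathrm{proj}}$: the augmented cosimplicial graded abelian group $(\MU_2)_{2n}(\MU_2^{\otimes k+1} \otimes \Phi^e E)$ is the standard cobar resolution of the comodule $h(\Phi^e E) = (\MU_2)_{2*}(\Phi^e E)$, so its totalization in $\Ab_{i2}^{\heartsuit}$ is concentrated in degree zero and equals $p_*(h(\Phi^e E))_n = (\MU_2)_{2n}(\Phi^e E)$. Combining these identifications yields $\Sigma^{n\rho}\,\underline{(\MU_2)_{2n}(\Phi^e E)}$, which matches $\mathrm{tw}^{\rho}(\underline{p_*(h(\Phi^e E))})_n$, giving the pointwise identification.

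The main obstacle will be arranging all of this coherently as an equivalence of lax symmetric monoidal functors rather than just a pointwise equivalence. Each ingredient is individually lax symmetric monoidal: $\Gamma_*$ and $i_*$ by construction, $\Gr$ and $\Tot^{k}$ as standard operations on filtered and cosimplicial objects, the slice identification of \Cref{thm:slice-comp} multiplicatively, and $\underline{(-)}$ together with $\mathrm{tw}^{\rho}$ via the Euler characteristic computation underpinning \Cref{prop:twist}. But making the $\Sigma^{n\rho}$ appear in the right place compatibly with the tensor structure forces one to factor the output through the heart $\uAb_{i2}^{\Gr,\rho,\heartsuit}$ of the $\rho$-twisted $t$-structure prepared in \Cref{subsec:twists} and related to the slice filtration in \Cref{subsec:slice}. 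I expect the bulk of the bookkeeping to be verifying that this factorization is natural and symmetric monoidal, and that the commutation of $\Gr$ with the outer totalization preserves multiplicativity.
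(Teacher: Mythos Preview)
Your approach is essentially the same as the paper's: both compute $\Gr \circ i_* \circ \Gamma_*$ by passing to the associated graded of the sheared cobar tower, identifying the slices with $\Sigma^{n\rho}$ of constant Mackey functors via \Cref{lem:MUR-uZ-mod}, pulling the underline past the totalization, and matching the result with $\twist^\rho \circ \underline{(-)} \circ p_* \circ h \circ \Phi^e$. The paper organizes this as one large commuting diagram of lax symmetric monoidal functors (with intermediate stops at $\uAb_{i2}^{\Gr,\rho,\heartsuit,\Delta}$ and its untwisted analog) rather than a pointwise computation followed by a separate coherence check, which is exactly the bookkeeping you anticipate in your final paragraph.

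There is, however, a genuine error in your third paragraph. You write that the totalization of the cosimplicial abelian group $(\MU_2)_{2n}(\MU_2^{\otimes k+1}\otimes \Phi^e E)$ ``in $\Ab_{i2}^{\heartsuit}$ is concentrated in degree zero and equals $p_*(h(\Phi^e E))_n = (\MU_2)_{2n}(\Phi^e E)$.'' This is false: the cobar complex is a resolution \emph{in comodules}, not in abelian groups, so its totalization in $\Ab_{i2}$ is not discrete. Rather, its homotopy groups are $\Ext^{*,2n}_{(\MU_2)_*\MU_2}((\MU_2)_*, (\MU_2)_*\Phi^e E)$, and this complex is precisely what $(p_* h(\Phi^e E))_n$ denotes, since $p_*$ is the right adjoint into the derived category $\Ab^{\Gr}$, not into its heart. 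This is why the target of $p_*$ in the statement of the lemma is $\Ab^{\Gr}_{i2}$ rather than $\Ab^{\Gr,\heartsuit}_{i2}$, and it is the content of the paper's remark that ``the factorization along the left side is a corollary of our ability to identify the $\mathrm{E}_2$-page of the Adams--Novikov spectral sequence.'' Your displayed equivalence $\Gr_n(i_*\Gamma_*(E)) \simeq \Sigma^{n\rho}\,\underline{\Tot^k(\cdots)}$ is correct and does match $\twist^\rho(\underline{p_* h(\Phi^e E)})_n$, but for the right reason: both sides are the $n\rho$-suspended underline of the same Ext complex, not of the bare homology group $(\MU_2)_{2n}(\Phi^e E)$.
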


\begin{proof}



  Our argument will rest on the existence of the following commuting diagram of lax symmetric monoidal functors.
  \begin{center}
    \begin{tikzcd}
      & \Sp_{i2} ^{\mathrm{proj}} \ar[ddl, "h"] \ar[d, "- \otimes \mathrm{cb}(\MU_2)"] & &
      \Sp_{C_2,i2} ^{\mathrm{proj}} \ar[d, "- \otimes \mathrm{cb}(\MU_{\R,i2})"] \ar[ddl, dashed, "f"'] \ar[ll, "\Phi^e"] \\
      & \Sp_{i2}^{\Delta} \ar[d, "\piu_0^2Y"] & &
      \Sp_{C_2,i2} ^{\Delta} \ar[ll, "\Phi^e"] \ar[r,"\tau_{\geq 0} ^\rho Y"'] \ar[d, "\piu_{0}^{\rho} Y"'] &
      \Sp_{C_2,i2} ^{\Fil, \Delta} \ar[d, "\Gr"] \\
      \IndCoh(\Mfg)_{i2}^{\heartsuit} \ar[ddr, "p_*"] & \Ab_{i2}^{\Gr,2,\heartsuit,\Delta} \ar[d, "\twist^{-2}"] &
      \uAb_{i2} ^{\Gr, \rho,\heartsuit, \Delta} \ar[l, "\Phi^e"] \ar[d, "\twist^{-\rho}"] \ar[r, "\mathrm{forget}"] &
      \Sp_{C_2,i2} ^{\Gr, \rho, \heartsuit, \Delta} \ar[r] & \Sp_{C_2,i2}^{\Gr, \Delta} \ar[dd, "\Tot"] \\
      & \Ab_{i2}^{\Gr,\heartsuit,\Delta} \ar[d, "\Tot"] &
      \uAb_{i2}^{\Gr, \heartsuit, \Delta} \ar[l, "\Phi^e"] \ar[d, "\Tot"] & & \\
      & \Ab_{i2}^{\Gr} &
      \uAb_{i2}^{\Gr} \ar[r, "\twist^{\rho}"] \ar[l, "\Phi^e"] &
      \uAb_{i2}^{\Gr} \ar[r,"\mathrm{forget}"] &
      \Sp_{C_2, i2} ^{\Gr}.
    \end{tikzcd}
  \end{center}

  The squares formed by the underlying functor, $\Phi^e$, commute since underlying commutes with limits and colimits, the underlying of $\Ss_2^\rho$ is $\Ss_2^2$ and the underlying of $\MU_{\R,2}$ is $\MU_2$.
  In grid position $(4,3)$ we have implicitly made use of the equivalence $\Sp_{C_2} ^{\Fil,\rho,\heartsuit,\Delta} \simeq \Sp_{C_2} ^{\Gr, \rho, \heartsuit, \Delta}$ to identify the target of $\piu_0 ^{\rho}Y$ with the latter. The upper-right square commutes by \Cref{prop:twist-slice}. The dashed arrow, labelled $f$, is unique (and lax symmetric monoidal) if it exists since the forgetful functor is fully-faithful. The dashed arrow then exists by \Cref{lem:MUR-uZ-mod}. The large bottom-right square commutes because $\twist^{\rho}$ commutes with $\Tot$. Finally, the existence of the factorization along the left side is a corollary of our ability to identify the $\mathrm{E}_2$-page of the Adams--Novikov spectral sequence (see \cite{Adams}). 

  Using the fact that $\Gr$ and $\Tot$ commute, we have equivalences
  \[ \Gr \circ i_* \circ \Gamma_* \simeq \Tot  \Gr ( \tau_{\geq 0}^\rho Y ( - \otimes \mathrm{cb}(\MU_{\R,2}))) \simeq \twist^{\rho} (\Tot (\twist^{-\rho}( f(-)))). \]
  By \Cref{lem:MUR-uZ-mod}, the functor $f$ (and its composition with the twist) lands in the full subcategory of the target which is spanned, in each grading and cosimplicial degree, by $\uZt$-modules for which the restriction map $r$ is an equivalence. Thus, we have an equivalence of lax symmetric monoidal functors
  \[ \twist^{\rho} (\Tot (\twist^{-\rho}( f(-)))) \simeq \twist^{\rho} (\Tot (\underline{\Phi^e \twist^{-\rho}( f(-))})). \]
  Since $\_$ commutes with limits we then have further equivalences,
  \[ \twist^{\rho} (\Tot (\underline{\Phi^e \twist^{-\rho}( f(-))}))
  \simeq \twist^{\rho} ( \underline{\Tot (\Phi^e \twist^{-\rho}( f(-))) } )
  \simeq \twist^{\rho} ( \underline{  p_* h \Phi^e(-) } ). \qedhere \]
\end{proof}

Finally, we are able to prove our main theorems:

\begin{proof}[Proof of \Cref{thm:Cta} and \Cref{thm:Cta-homotopy} (part 1).]
  We have a chain of symmetric monoidal equivalences:

  \begin{align*}
    \mathrm{Mod}(\SHRat; C\ta)
    &\xrightarrow[\ref{lem:gr-of-comparison}]{\simeq} \mathrm{Mod}( \Sp_{C_2, i2}^{\Gr}; \Gr(R_{\bullet}) ) 
    \xrightarrow[\ref{lem:main-lem}]{\simeq} \mathrm{Mod}( \uAb_{i2}^{\Gr}; \twist^{\rho} (\underline{p_*\mathcal{O}_{\Mfg}}) ) \\
    &\xrightarrow[\twist^{-\rho}]{\simeq} \mathrm{Mod}( \uAb_{i2}^{\Gr}; \underline{p_*\mathcal{O}_{\Mfg}} ) 
    \xrightarrow[\ref{lem:uMU-koszul}]{\simeq} \uAb_{i2} \otimes_{\Ab} \IndCoh(\Mfg)
  \end{align*}

  The claim about the $\Sp_{C_2,i2}$-algebra structure follows once we know $\Sp_{C_2,i2}$ acts through the ambient category at each step. 
  The key points here are
  that the equivalence from \Cref{lem:main-lem} was induced by an equivalence of algebras and
  that the twist functor is an $\uAb_{i2}$-algebra map.

  Using the $\Sp_{C_2,i2}$-linearity, the claim about Picard elements reduces to tracking $C\ta \otimes \Ss_2^{0,0,1}$ around the diagram. Under the first equivalence, $C\ta \otimes \Ss_2^{0,0,1}$ goes to $\o (1)$. The twist by $-\rho$ sends this to $\Sigma ^{\rho}\o (1)$. The final equivalence sends $\o(1)$ to $\uZt \otimes \omega_{\mathbb{G}/\Mfg}$. Altogether we learn that $C\ta \otimes \Ss_2^{0,0,1}$ is sent to $\Sigma^{-\rho}\uZ_2 \otimes \omega_{\mathbb{G}/\Mfg}$, as desired.

  Now suppose that $X \in \Sp_{C_2}^{\mathrm{proj}}$.
  Then, using \Cref{lem:main-lem}, we have
  \[\Gr(i_* (\Gamma_* X)) \simeq \twist^{\rho} (\underline{p_* h \Phi^e (X) }). \]
  Continuing along the sequence of equivalences above, we see that this object is sent to
  $\uZt \otimes h(\Phi^e(X))$, as desired. \qedhere

\end{proof}

In order to calculate the homotopy groups of $C\ta$-modules, we begin with a simple lemma.

\begin{lem} \label{lem:underline-homotopy}
  Given any $A \in \Ab$, we have
  \[\pi_{p+q\sigma} ^{C_2} \underline{A} \cong \bigoplus_{a} \pi_{p-a} \left(A \otimes \pi_{a+q\sigma} \uZ \right).\]
\end{lem}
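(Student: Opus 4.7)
The plan is to recognize $\underline{A}$ as a derived tensor product and then apply the universal coefficient theorem.

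By \Cref{rmk:underline}, the constant Mackey-functor functor $\_ \colon \Ab \to \uAb$ is a colimit-preserving symmetric monoidal functor sending $\Z$ to $\uZ$. Consequently, for any $A \in \Ab$ there is a natural equivalence
\[ \underline{A} \simeq A \otimes_{\Z}^{L} \uZ, \]
where $\uZ$ is viewed as a $\Z$-algebra spectrum in $\Sp_{C_2}$ via its unit map $\Z \to \pi_0 \uZ$. The lemma thus reduces to a statement about the homotopy groups of this derived tensor product.

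Next I would reduce to the case that $A$ is a discrete abelian group. Because $\Z$ has global cohomological dimension one, $\mathrm{Ext}^{2}_{\Z}$ vanishes and every object of $\Ab \simeq D(\Z)$ is (non-canonically) equivalent to the direct sum of its homotopy groups, $A \simeq \bigoplus_{n} \Sigma^{n} \pi_{n} A$. Both sides of the claimed formula are additive in $A$, and a shift of $A$ by $n$ corresponds to reindexing $a$ by $n$ on the right-hand side, so it suffices to treat a discrete $A \in \Abh$.

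For such an $A$ I would pick a length-one free resolution $0 \to F_{1} \to F_{0} \to A \to 0$ and apply $\_$ to obtain a cofiber sequence $\underline{F_{1}} \to \underline{F_{0}} \to \underline{A}$ in $\uAb$. Since each $\underline{F_{i}}$ is a coproduct of copies of $\uZ$, we have $\pi_{p+q\sigma}^{C_{2}} \underline{F_{i}} \cong F_{i} \otimes_{\Z} \pi_{p+q\sigma}\uZ$, and the induced long exact sequence yields a short exact sequence
\[ 0 \to A \otimes_{\Z} \pi_{p+q\sigma}\uZ \to \pi_{p+q\sigma}^{C_{2}}\underline{A} \to \mathrm{Tor}_{1}^{\Z}\bigl(A, \pi_{p-1+q\sigma}\uZ\bigr) \to 0, \]
whose outer terms are precisely the two nonzero summands (corresponding to $a = p$ and $a = p-1$) on the right-hand side of the claimed formula.

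The main obstacle is exhibiting a splitting of this short exact sequence. This is an instance of the universal coefficient theorem for $\Z$-module spectra, which splits non-canonically because $\Z$ is hereditary: concretely, the $\Z$-module spectrum $A \otimes_{\Z}^{L} \pi_{a+q\sigma}\uZ$ has homotopy concentrated in degrees $0$ and $1$, and the same $\mathrm{Ext}^{2}_{\Z} = 0$ argument used above shows that any such $\Z$-module spectrum decomposes as a direct sum of its two Eilenberg--MacLane pieces. Transferring this splitting across the equivalence $\underline{A} \simeq A \otimes_{\Z}^{L} \uZ$ identifies $\pi_{p+q\sigma}^{C_{2}}\underline{A}$ with $\bigoplus_{a} \pi_{p-a}(A \otimes \pi_{a+q\sigma}\uZ)$, as desired.
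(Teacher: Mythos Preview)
Your argument is correct in outline: both you and the paper ultimately rest on the identification $\underline{A}\simeq A\otimes_{\Z}^{L}\uZ$ together with the fact that $\Z$-module spectra split as sums of their Eilenberg--MacLane pieces. The difference is in where the splitting is applied. The paper observes that the colimit-preserving functor $A\mapsto\Map(\Ss^{q\sigma},\underline{A})$ from $\Ab$ to $\Ab$ must be $A\mapsto A\otimes\Map(\Ss^{q\sigma},\uZ)$, splits the $\Z$-module $\Map(\Ss^{q\sigma},\uZ)$ once as $\bigoplus_{a}\Sigma^{a}\pi_{a+q\sigma}\uZ$, and reads off the answer by taking $\pi_{p}$. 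This handles all $A\in\Ab$ at once and avoids both the reduction to discrete $A$ and the UCT short exact sequence.

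One point to tighten in your Step~4: the ``concrete'' justification you give---splitting $A\otimes_{\Z}^{L}\pi_{a+q\sigma}\uZ$ for each fixed $a$---does not by itself split the UCT sequence, which mixes the contributions from $a=p$ and $a=p-1$. What actually splits that sequence is the decomposition of $\Map(\Ss^{q\sigma},\uZ)$ (equivalently of $\Map(\Ss^{q\sigma},\underline{A})$) as a $\Z$-module spectrum, and that is precisely the paper's key step. Your appeal to the standard UCT splitting over a PID is of course valid, but if you unwind it you will find yourself doing exactly what the paper does.
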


\begin{proof}
  Consider the composite
  $ \Ab \to \uAb \to \Ab^{\Gr} $
  given by
  $ A \mapsto \underline{A} \mapsto \{\Map(\Ss^{q\sigma}, \underline{A})\} $.
  This composite is colimit-preserving, hence is equivalent to
  $ A \mapsto \{A \otimes \Map(\Ss^{q\sigma} , \underline{\Z})\} $.
  Now, we have
  \[\Map(\Ss^{q\sigma} , \underline{\Z}) \simeq \bigoplus_a \Sigma^{a} \pi_{a+q\sigma} \uZ \]
  because it is a $\Z$-module, and the result follows by applying $\pi_p$.
\end{proof}

\begin{proof}[Proof of \Cref{thm:Cta} and \Cref{thm:Cta-homotopy} (part 2).]
  We now turn to our assertions about homotopy groups.
  Tracing through the various equivalences, we find that for $X$ which is $\MU_{\R,2}$-projective
  \[ \pi_{p,q,w}^\R(C\ta \otimes X) \cong \pi_{(p-w) + (q-w)\sigma}^{C_2} ( (\uZt \otimes_{\Zt} p_*h \Phi^e (X))_w ). \]
  We can commute taking the $w^{\mathrm{th}}$ component past the tensor product and then apply \Cref{lem:underline-homotopy} and the fact that $p_*h$ computes $\Ext_{(\MU_2)_*\MU_2}$ to conclude that
  \begin{align*}
    \pi_{p,q,w}^\R(C\ta \otimes X)
    &\cong \pi_{(p-w) + (q-w)\sigma}^{C_2} (\uZt \otimes_{\Zt} (p_*h \Phi^e (X))_w ) \\
    &\cong \bigoplus_a \pi_{p-w-a} (\pi_{a+(q-w)\sigma}\uZt \otimes_{\Zt} \Ext_{(\MU_2)_*\MU_2}^{*,2w}((\MU_2)_*, (\MU_2)_* \Phi^e (X) )) \\
    &\cong \bigoplus_a \pi_{p-w-a} (\Ext_{(\MU_2)_*\MU_2}^{*,2w}((\MU_2)_*, \pi_{a+(q-w)\sigma}\uZt \otimes_{\Zt} (\MU_2)_* \Phi^e (X) )) \\
    &\cong \bigoplus_a \Ext_{(\MU_2)_*\MU_2}^{a+w-p,2w}((\MU_2)_*, \pi_{a+(q-w)\sigma}\uZt \otimes_{\Zt} (\MU_2)_* \Phi^e (X) ). 
  \end{align*}
  Since $\pi_{a+q\sigma} ^{C_2} \uZt$ is isomorphic to $\Z_2$ or $\F_2$, and $(\MU_2)_* X$ is torsion-free (since it is a projective $(\MU_2)_*$-module), the tensor product inside the Ext can be taken in a $1$-categorical sense.
\end{proof}

\section{The Chow $t$-structure and $\nur$} \label{sec:t-structures}
In this section, we discuss the Chow $t$-structure on $\SHRat$ and how it can be used to define an interesting lax symmetric monoidal functor
\[\nur : \Sp_{C_2,i2} \to \SHRat.\]
In the context of $\mathrm{DM}(k)$, the Chow $t$-structure was first studied by Bondarko \cite{Bondarko}.\footnote{The publication history of this definition is somewhat complicated. In the published version of \cite{Bondarko}, the Chow $t$-structure is only conjectured, not shown, to exist. However, in a later version of \cite{Bondarko} which appeared on the arXiv, Bondarko gave a construction of the Chow $t$-structure.}
In the context of $\SHC^{\at}$ it was studied by Pstragowski in \cite{Pstragowski}.
The Chow $t$-structure on $\SH(k)$ is the subject of forthcoming work of Bachmann--Kong--Wang--Xu, who we thank for carefully explaining their results to us. Among their results is a determination of the heart of the Chow $t$-structure in full generality.\footnote{In the time since the first version of this work was posted, the theorems of Bachmann--Kong--Wang--Xu appeared in \cite{Chow}}



In \Cref{sec:chow}, we define the Chow $t$-structure on $\SHRat$, determine its heart and give a formula for the $t$-structure homotopy objects.
In \Cref{sec:nu}, we use the Chow $t$-structure to define the functor
\[\nur : \Sp_{C_2} \to \SHRat.\]
We prove several basic properties of this functor, compare it with the functor $\Gamma_*$ defined in \Cref{sec:top}, and show that several Artin-Tate $\R$-motivic spectra may be recovered from their Betti realizations via $\nur$.
%
%
%
\subsection{The Chow $t$-structure} \label{sec:chow}\ 

In this section, we define the Chow $t$-structure on Artin-Tate $\R$-motivic spectra, compute its heart, and describe the $t$-structure homotopy objects, which we denote $\pi^{\ch} _k$.

\begin{dfn}
  By \cite[Proposition 1.2.1.16]{HA}, we can construct a $t$-structure on $\SHRat$ with
	\begin{itemize}
	\item connective part $(\SHRat)^{\cn} _{\geq 0}$ generated under colimits and extensions by \[\{ \Ss_2^{n+k_1,n+k_2,n}\ \vert \ k_1 \geq 0 \text{ and } k_1 + k_2 \geq 0 \}.\]
	\item $(-1)$-coconnective part $(\SHRat)^{\cn} _{< 0}$ spanned by those $X \in \SHRat$ with $\pi_{n+k_1, n+k_2, n} ^{\R} X = 0$ for all $n,k_1,k_2 \in \Z$ satisfying $k_1 \geq 0$ and $k_1+k_2 \geq 0$.
	\end{itemize}
  We call this $t$-structure the \emph{Chow $t$-structure} on $\SHRat$.
%
%
%
%
  Moreover, we let $\tau_{\geq 0} ^{\cn} : \SHRat \to (\SHRat)^{\cn} _{\geq 0}$ denote the connective cover with respect to the Chow $t$-structure, and let $\pi_n ^{\ch} : \SHRat \to \SHR_{i2}^{\at, \ch}$ denote the homotopy object functors.
\end{dfn}

The following result is immediate from the definition:
\begin{prop}
  The Chow $t$-structure on $\SHRat$ is compatible with the symmetric monoidal structure, right complete and compatible with filtered colimits.
  Moreover, $\Sigma^{n,n,n}$ restricts to an equivalence $\Sigma^{n,n,n} : (\SHRat)^{\cn} _{\geq 0} \to (\SHRat)^{\cn} _{\geq 0}$.
\end{prop}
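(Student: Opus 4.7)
The plan is to verify each of the four assertions (symmetric monoidal compatibility, right completeness, filtered colimit compatibility, and the $\Sigma^{n,n,n}$-equivariance of $(\SHRat)^{\cn}_{\geq 0}$) directly from the generating description of $(\SHRat)^{\cn}_{\geq 0}$. All four are essentially formal consequences of the fact that the $t$-structure is generated by a set $\mathcal{G} = \{\Ss_2^{n+k_1,n+k_2,n} : k_1 \geq 0,\ k_1+k_2 \geq 0\}$ of compact, dualizable objects which is closed under the tensor product and under $\Sigma^{\pm(n,n,n)}$.

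For symmetric monoidal compatibility, I would first note that the unit $\Ss^{0,0,0}_2$ lies in $\mathcal{G}$ (take $n=k_1=k_2=0$), so it is connective. Because $\otimes$ commutes with colimits in each variable, to show $X \otimes Y \in (\SHRat)^{\cn}_{\geq 0}$ whenever $X,Y$ are connective, it suffices to check it on pairs in $\mathcal{G}$. But
$$\Ss_2^{n+k_1,n+k_2,n} \otimes \Ss_2^{m+l_1, m+l_2, m} \ \simeq\  \Ss_2^{(n+m)+(k_1+l_1),(n+m)+(k_2+l_2),n+m},$$
and $(k_1+l_1) \geq 0$ and $(k_1+l_1)+(k_2+l_2) \geq 0$, so this lies in $\mathcal{G}$. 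The $\Sigma^{n,n,n}$ statement follows by the same bookkeeping: the self-equivalence $\Sigma^{n,n,n} \colon \SHRat \to \SHRat$ (given by tensoring with the invertible object $\Ss_2^{n,n,n}$) carries the generating set $\mathcal{G}$ into itself (shift $m \mapsto m+n$, keep $k_1,k_2$), and similarly for its inverse $\Sigma^{-n,-n,-n}$, so it restricts to an auto-equivalence of $(\SHRat)^{\cn}_{\geq 0}$.

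Compatibility with filtered colimits amounts to showing $(\SHRat)^{\cn}_{\leq -1}$ is closed under filtered colimits. By definition it is cut out by the vanishing of $\pi_{n+k_1,n+k_2,n}^{\R}$, and these tri-graded homotopy groups are $\pi_0$ of mapping spectra out of compact objects, hence commute with filtered colimits. For right completeness, using \cite[Prop.~1.2.1.19]{HA} (or unwinding directly), it suffices to verify $\bigcap_{N}\, (\SHRat)^{\cn}_{\leq -N} = 0$. If $X$ lies in this intersection then for every $N \in \Z$ the shift $\Sigma^{N}X$ is in $(\SHRat)^{\cn}_{\leq -1}$, so $\pi^{\R}_{n+k_1-N,\,n+k_2,\,n}X = 0$ whenever $k_1 \geq 0$ and $k_1+k_2 \geq 0$. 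Given arbitrary $(p,q,w)$, set $n = w$, $k_2 = q-w$, and choose $N$ large enough that both $k_1 := p-w+N \geq \max(0, w-q)$ and $k_1+k_2 = p+q-2w+N \geq 0$; then $\pi^{\R}_{p,q,w}X = 0$, so $X = 0$.

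I don't anticipate serious obstacles: each verification reduces to an elementary check on the generating set, and the key combinatorial input—that the region $\{(p,q,w) : p \geq w,\ p+q \geq 2w\}$ defining $\mathcal{G}$ is closed under coordinatewise addition and under the diagonal shift $(p,q,w) \mapsto (p+n,q+n,w+n)$—is immediate. The only mildly delicate point is producing the witnesses $(n,k_1,k_2,N)$ for right completeness, but this is just the observation that the generating region, though a proper sub-cone of $\Z^3$, becomes cofinal in all of $\Z^3$ once we shift by large enough $N$.
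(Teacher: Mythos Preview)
Your proof is correct and follows exactly the approach the paper intends: the paper's own ``proof'' is simply the assertion that the result is clear from the definition, and your argument is precisely the unwinding of that claim via the generating set $\mathcal{G}$. The only thing I would add is that for right completeness via \cite[Proposition 1.2.1.19]{HA} you also need $(\SHRat)^{\cn}_{\geq 0}$ closed under countable coproducts, but this is automatic since it is generated under colimits.
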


The main two theorems that we prove in this section are as follows:

\begin{thm} \label{thm:chow-heart}
  Let $\SHRatch$ denote the heart of $\SHRat$ with respect to the Chow $t$-structure. Then there is a symmetric monoidal equivalence of categories
  \[\SHRatch \simeq \Comod(\uAbh_{i2}; \underline{(\MU_2)_{2*} \MU_2}),\]
  where $\Comod(\uAbh; \underline{(\MU_2)_{2*} \MU_2})$ is the category of evenly-graded $(\underline{(\MU_2)_{*}}, \underline{(\MU_2)_{*} \MU_2})$-comodules in $C_2$-Mackey functors.
\end{thm}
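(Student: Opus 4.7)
The plan is to leverage Theorem~\ref{thm:Cta}, which identifies $\Mod(\SHRat; C\ta)$ with $\uAb_{i2} \otimes_\Z \IndCoh(\Mfg)$, together with the $t$-structure on the latter provided by Lemma~\ref{lem:Cta-t-structure}, whose heart is precisely the Hopf algebroid comodule category appearing in the statement. I would show that the symmetric monoidal functor $C\ta \otimes -: \SHRat \to \Mod(\SHRat; C\ta)$ is $t$-exact with respect to the Chow $t$-structure on the source and induces a symmetric monoidal equivalence on hearts.

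For $t$-exactness, I would check the statement on the defining generators. Under the equivalence of Theorem~\ref{thm:Cta}, the Chow-connective generator $\Ss_2^{p,q,w}$ is sent to $\Sigma^{(p-w)+(q-w)\sigma}\uZt \otimes \omega_{\G/\Mfg}^{\otimes w}$; the Chow-connectivity conditions $p-w \geq 0$ and $(p-w)+(q-w) \geq 0$ are exactly the conditions that $\Sigma^{(p-w)+(q-w)\sigma}$ be connective as a $C_2$-equivariant object in the standard $t$-structure (both the fixed-point degree $p-w$ and the underlying degree $p+q-2w$ are non-negative). Since $\uZt \otimes \omega_{\G/\Mfg}^{\otimes w}$ lies in the heart of $\uAb_{i2} \otimes_\Z \IndCoh(\Mfg)$, this shows $C\ta \otimes -$ preserves connective objects. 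The coconnective direction then follows from the long exact sequence of Chow homotopy applied to the cofiber sequence $\Sigma^{0,0,-1}X \xrightarrow{\ta} X \to C\ta \otimes X$.

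To show that the induced functor on hearts is an equivalence, I would argue by $\ta$-adic reconstruction. For $X$ in the Chow heart, I would first verify that $X[\ta^{-1}] = 0$: by Theorem~\ref{thm:comparison-invert-ta}, $X[\ta^{-1}]$ is the image under $c_{\C/\R}$ of a $C_2$-spectrum, and the Chow connectivity/coconnectivity constraints force this $C_2$-spectrum to vanish. Next, I would verify that $X$ is $\ta$-complete via an accessibility/convergence argument using that the Chow $t$-structure is compatible with filtered colimits. Then the $\ta$-adic filtration $\cdots \to \Sigma^{0,0,-2}X \xrightarrow{\ta} \Sigma^{0,0,-1}X \xrightarrow{\ta} X$ reconstructs $X$ from its successive quotients, each a suspension of $C\ta \otimes X$, yielding fully faithfulness of $C\ta \otimes -$ on hearts. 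Essential surjectivity follows by noting that the comodule heart is generated under colimits by the objects $\underline{A} \otimes \omega_{\G/\Mfg}^{\otimes w}$, each of which arises as $C\ta \otimes -$ applied to an appropriate Chow truncation of $\nur(\underline{A}) \otimes \Ss_2^{w,w,w}$, using the $\nur$ functor constructed in Section~\ref{sec:nu}.

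The main obstacle will be establishing the two inputs to the $\ta$-adic reconstruction: vanishing of $X[\ta^{-1}]$ on the Chow heart, and $\ta$-completeness of Chow heart objects. Both require careful bookkeeping of how the generating constraints $p \geq w$ and $p+q \geq 2w$ interact with the $\ta$-Bockstein tower, and constitute the technical core of the argument. Once these are in hand, the symmetric monoidal refinement of the equivalence is automatic from the fact that $C\ta \otimes -$ is already symmetric monoidal.
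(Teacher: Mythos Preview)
Your approach contains a genuine error: the free functor $C\ta \otimes - : \SHRat \to \Mod(\SHRat; C\ta)$ is \emph{not} $t$-exact. It does preserve connective objects (your generator check is fine), but it fails to preserve coconnective objects, so it does not restrict to a functor between hearts. Concretely, for $X$ in the Chow heart the map $\ta : \Sigma^{0,0,-1}X \to X$ is zero (since $\Ss_2^{0,0,-1}$ is Chow $\geq 1$), and hence the underlying object of $C\ta \otimes X$ splits as $X \oplus \Sigma^{1,0,-1}X$. The second summand is Chow $\geq 2$ but typically not Chow $\leq 0$: already for $X = C\ta$ one has
\[
\pi^\R_{1,-1,-1}\bigl(\Sigma^{1,0,-1}C\ta\bigr) \;=\; \pi^\R_{0,-1,0}(C\ta) \;\cong\; \F_2,
\]
and $(p,q,w)=(1,-1,-1)$ lies in the region $k_1 \geq 1$, $k_1+k_2 \geq 1$ that must vanish for Chow $\leq 0$. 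Your long-exact-sequence argument for the coconnective direction tacitly assumes that $\Sigma^{0,0,-1}X$ is Chow $\leq -1$ whenever $X$ is Chow $\leq 0$; but tensoring with the invertible object $\Ss_2^{0,0,-1}$ raises Chow connectivity without any accompanying control on Chow coconnectivity.

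The paper's argument runs through the \emph{forgetful} functor rather than the free one. One first checks (much as you do on generators) that the tensor $t$-structure on $\Mod(\SHRat;C\ta)$ is precisely the one induced from the Chow $t$-structure via the forgetful functor; this is Proposition~\ref{prop:t-compare}. The decisive observation, which replaces your entire $\ta$-adic reconstruction program, is then the single computation $\pi_0^{\ch}\,\Ss_2 \simeq C\ta$ of Lemma~\ref{lem:Cta-pi0}: since $C\ta$ is already the \emph{unit} of $\SHRatch$, the forgetful functor $\Mod(\SHRatch;C\ta) \to \SHRatch$ is a symmetric monoidal equivalence for free. No $\ta$-completeness, no vanishing of $X[\ta^{-1}]$, and no $\nur$-based essential surjectivity argument is needed.
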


\begin{thm} \label{thm:pi0-MGL}
  Given any $X \in \SHRat$ and $n,k \in \Z$, there is an isomorphism
  \[(\pi_k ^{\ch} X)_n \cong \underline{(\MGL_2)}_{n+k,n,n} (X).\]
\end{thm}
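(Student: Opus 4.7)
The plan is to reduce the statement to the case $k=0$ and then identify $\pi_0^{\ch}$ with the functor $X \mapsto \{\underline{(\MGL_2)}_{n,n,n}^{\R}(X)\}_n$, packaged as a comodule via the diagonal of $\MGL_2$. For the reduction, I would simply note that simplicial suspension $\Sigma^{1,0,0}$ shifts the Chow $t$-structure by $1$, so that $\pi_k^{\ch}(X) \simeq \pi_0^{\ch}(\Sigma^{-k,0,0} X)$; combined with the grading shift from Tate suspension $\Sigma^{1,1,1}$, this gives $(\pi_k^{\ch} X)_n \cong (\pi_0^{\ch}(\Sigma^{-k,0,0}X))_n \cong \underline{(\MGL_2)}^{\R}_{n,n,n}(\Sigma^{-k,0,0}X) \cong \underline{(\MGL_2)}^{\R}_{n+k,n,n}(X)$ once the $k=0$ case is established.

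For the $k=0$ case, I would introduce the lax symmetric monoidal functor $\phi : \SHRat \to \uAbh_{i2}^{\Gr}$ with $\phi(X)_n = \underline{(\MGL_2)}_{n,n,n}^{\R}(X)$, and upgrade it through the forgetful functor $\SHRatch \to \uAbh_{i2}^{\Gr}$ using the coaction of $\MGL_2 \otimes \MGL_2$ on $\MGL_2 \otimes X$. By Corollary 5.19, $\MGL_2 \simeq \Gamma_*(\MU_{\R,2})$, which, combined with the filtered-model description of $\SHRat$ from Theorem 5.1 and the behavior of the slice filtration developed in Section 5, shows that $\MGL_2$ lies in $\SHRatch$ and, via Theorem 7.2, corresponds to the Hopf algebroid $(\underline{(\MU_2)_{2*}}, \underline{(\MU_2)_{2*}\MU_2})$ viewed as a comodule over itself. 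Consequently, $\phi|_{\SHRatch}$ is naturally identified with the composite of the equivalence of Theorem 7.2 with the forgetful functor to graded Mackey functors.

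It then suffices to show that $\phi$ is a cohomological functor whose restriction to the heart is the forgetful functor; granted this, the universal property of the truncation $\pi_0^{\ch}$ (together with conservativity of the forgetful functor from comodules to graded Mackey functors on the heart) gives the desired identification. The cohomological property is inherited from the long exact sequences of $\MGL_2$-homology. The crux is then to verify that $\phi$ sends objects of $(\SHRat)^{\ch}_{>0}$ to zero with the correct grading: by colimit-closure, this reduces to verifying that $\underline{(\MGL_2)}^{\R}_{n,n,n}(\Ss_2^{n+k_1,n+k_2,n}) = 0$ whenever $k_1 \geq 0$, $k_1+k_2 \geq 0$, and the overall Chow degree is positive. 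This reduces, after Tate-desuspension, to computing $\underline{(\MGL_2)}^{\R}_{-k_1,-k_2,0}(\Ss_2)$, i.e., the Mackey-functor-valued $\MGL_2$-cohomology of a point in such tri-degrees.

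The main obstacle will be this vanishing computation for $\MGL_2$-cohomology of a point. I would attack it using Betti realization together with Heard's slice comparison (Theorem 5.7): Betti realization identifies the relevant cohomology with Mackey-functor-valued $\MU_{\R,2}$-homotopy groups $\underline{\pi}^{C_2}_{-k_1+(-k_2)\sigma}(\MU_{\R,2})$, whose vanishing in the indicated range is known by the work of Hu--Kriz (and can also be read off the even regular slice spectral sequence), using that $\MU_{\R,2}$ is regular slice $0$-connective with slices concentrated in even degrees. Once this vanishing is in hand, the rest of the plan slots together cleanly.
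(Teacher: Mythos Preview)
Your overall strategy is in the right spirit and, once patched, converges to essentially the same ingredients the paper uses; but as written there are two genuine gaps.

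\textbf{Gap 1: the ``Tate-desuspension'' reduction is wrong.} You claim it suffices to check $\underline{(\MGL_2)}^{\R}_{n,n,n}(\Ss_2^{n+k_1,n+k_2,n})=0$, i.e.\ you have silently set the weight of the test sphere equal to the weight $n$ of the homology degree. But $\phi$ must vanish in \emph{every} grading on \emph{every} generator, so you really need $\underline{\pi}^{\R}_{n'-k_1,\,n'-k_2,\,n'}(\MGL_2)=0$ for all $n'\in\Z$. For $n'\le 0$ your identification with $\underline{\pi}^{C_2}_{-k_1-k_2\sigma}(\MU_{\R,2})$ (via $P_{2n'}\MU_{\R,2}=\MU_{\R,2}$) is fine, but for $n'>0$ the relevant $C_2$-spectrum is the slice cover $P_{2n'}\MU_{\R,2}$, not $\MU_{\R,2}$ itself, and the vanishing must be deduced from regular slice $2n'$-connectivity. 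This is exactly how the paper argues in Lemma~\ref{lem:MGL-tensor-thing}, invoking Proposition~\ref{prop:P-MGL} and then checking that $\Ss^{(n'-k_1)+(n'-k_2)\sigma}$ lies strictly below the slice connectivity on both $\Phi^e$ and $\Phi^{C_2}$.

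\textbf{Gap 2: you never treat the Chow-coconnective side.} Showing that $\phi$ vanishes on $(\SHRat)^{\ch}_{>0}$ is only half of what the ``universal property of $\pi_0^{\ch}$'' needs: you must also show $\phi$ vanishes on $(\SHRat)^{\ch}_{<0}$. This does \emph{not} reduce to a computation for generating spheres (coconnective objects are characterized by a mapping-in condition, not generated under colimits). The paper handles this with a different trick: write $\MGL_2=\varinjlim_m \MGL(m,m)_2$, use that each $\mathbb{D}(\MGL(m,m)_2)$ is Chow connective (Proposition~\ref{prop:MGL-chow-conn}), and dualize to see that tensoring with $\MGL_2$ preserves Chow coconnectivity (as in the proof of Lemma~\ref{lem:MGL-heart-tensor}).

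\textbf{Comparison with the paper.} The paper organizes things in the opposite direction: rather than starting from $\MGL_2$-homology and trying to recognize it as $\pi_0^{\ch}$, it starts from $\pi_0^{\ch}(X)$ (already known to live in the comodule category by Theorem~\ref{thm:chow-heart}), applies the extended-comodule/forgetful adjunction to reduce to $\Map(\Ss^{n,n,n},\pi_0^{\ch}(X)\otimes\MGL_2)$, and then uses Lemmas~\ref{lem:MGL-heart-tensor} and~\ref{lem:MGL-tensor-thing} to replace $\pi_0^{\ch}(X)$ by $X$. The two lemmas encode precisely the two vanishing facts you need (on the coconnective and connective sides, respectively), so once you fill your gaps the content is the same; the paper's packaging just avoids appealing to any vague ``universal property'' and makes the role of the finite Grassmannian duals explicit.
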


\begin{rec}
In \Cref{thm:Cta}, we constructed a symmetric monoidal equivalence
\[ \Mod(\SHRat; C\ta) \simeq \Mod(\Sp_{C_2, i2}; \uZt) \otimes_{\Z} \IndCoh(\Mfg). \]
Moreover, in \Cref{lem:Cta-t-structure} we equipped $\Mod(\Sp_{C_2, i2}; \uZt) \otimes_{\Z} \IndCoh(\Mfg)$ with a $t$-structure whose heart is identified with $\Comod(\uAbh; \underline{(\MU_2)_{2*} \MU_2})$.
  We call the induced $t$-structure on $\Mod(\SHRat; C\ta)$ the \emph{tensor $t$-structure} and write $\Mod(\SHRat; C\ta)^{\th}$ for the heart.
\end{rec}

To prove \Cref{thm:chow-heart}, it will suffice to prove the following:

\begin{prop} \label{prop:t-compare}
  The tensor $t$-structure on $\Mod(\SHRat; C\ta)$ is induced by the Chow $t$-structure on $\SHRat$: a $C\ta$-module $X$ is tensor (co)connective if and only if its underlying Artin-Tate $\R$-motivic spectrum is Chow (co)connective.

  Moreover, the induced symmetric monoidal functor
  \[\Mod(\SHRat; C\ta)^{\th} \to \SHR_{i2}^{\at, \ch}\]
  is an equivalence of categories.
\end{prop}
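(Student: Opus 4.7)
My plan is to match the Chow $t$-structure on $\SHRat$ with the tensor $t$-structure on $\Mod(\SHRat; C\ta)$ via the forgetful functor, by showing the two $t$-structures have corresponding connective generators, and then deduce the heart equivalence formally.

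\emph{Matching connective generators.} By \Cref{lem:Cta-t-structure} and the construction of the tensor product of $t$-structures, the tensor-connective part of $\Mod(\Sp_{C_2, i2}; \uZt) \otimes_{\Z} \IndCoh(\Mfg)$ is generated under colimits and extensions by objects of the form $\Sigma^{k_1 + k_2 \sigma}\uZt \otimes \omega_{\G/\Mfg}^{\otimes n}$ with $\Sigma^{k_1 + k_2\sigma}\uZt \in \uAb_{i2, \geq 0}$ and $n \in \Z$. A direct computation using the chart for $\pi^{C_2}_{\star}\uZt$ recalled in \Cref{thm:eqpthom} shows that $\Sigma^{k_1 + k_2\sigma}\uZt$ is connective precisely when $k_1 \geq 0$ and $k_1 + k_2 \geq 0$. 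Under the identification of Picard elements in \Cref{thm:Cta}, these generators correspond exactly to the free $C\ta$-modules on the Chow-connective generating spheres $\Ss_2^{n+k_1, n+k_2, n}$.

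\emph{$t$-exactness of the forgetful functor.} Suppose $Y \in \Mod(\SHRat; C\ta)$ has Chow-connective underlying object. Then the bar resolution
\[ Y \simeq \left| [n] \mapsto C\ta \otimes (C\ta^{\otimes n} \otimes Y) \right| \]
in $\Mod(\SHRat; C\ta)$ realizes $Y$ as a colimit of free $C\ta$-modules on Chow-connective objects (here I use that $C\ta$ is itself Chow-connective together with compatibility of the Chow $t$-structure with the monoidal structure), so $Y$ is tensor-connective. The converse and both coconnective directions follow from the adjunction formula
\[ \Map_{\Mod(\SHRat; C\ta)}\!\bigl(C\ta \otimes \Ss_2^{n+k_1, n+k_2, n},\, Y\bigr) \simeq \Map_{\SHRat}\!\bigl(\Ss_2^{n+k_1, n+k_2, n},\, Y\bigr) = \pi^{\R}_{n+k_1, n+k_2, n} Y, \]
identifying tensor-$\leq -1$ with underlying Chow-$\leq -1$. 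This proves the first sentence of the proposition.

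\emph{Heart equivalence.} The $t$-exactness just established yields a symmetric monoidal functor $\Mod(\SHRat; C\ta)^{\th} \to \SHRatch$. For essential surjectivity and full faithfulness, I would argue that $\ta$ acts nullhomotopically on every Chow-heart object. Given $X \in \SHRatch$, both $\Sigma^{0,0,-1}X$ and $X$ lie in the Chow-heart (using that $\Ss_2^{0,0,-1}$ corresponds to $n=-1,\ k_1 = k_2 = 1$ and so is a Chow-connective generator, and a check that it is also Chow-coconnective), and the map $\ta: \Sigma^{0,0,-1}X \to X$ lies in a Hom-group that vanishes by the generator analysis. The resulting canonical splitting $C\ta \otimes X \simeq X \oplus \Sigma^{1,0,-1} X$ picks out $X$ as the $0$-th Chow-homotopy object of $C\ta \otimes X$, providing a canonical lift to $\Mod^{\th}$; the same nullness of $\ta$ forces every $\SHRat$-map between Chow-heart objects to be automatically $C\ta$-linear.

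The main obstacle is the connectivity criterion for $\Sigma^{k_1 + k_2 \sigma}\uZt$, where the ``negative cone'' contribution to $\pi^{C_2}_{\star}\uZt$ must be shown not to violate the naive criterion $k_1, k_1 + k_2 \geq 0$; more subtle still is the assertion that $\Ss_2^{0,0,-1}$ lies in the Chow-heart, which amounts to the vanishing $\pi_k^{\ch} \Ss_2^{0,0,-1} = 0$ for $k \neq 0$ and can be checked via the formula of \Cref{thm:pi0-MGL} together with the Chow-connectivity of the relevant suspensions.
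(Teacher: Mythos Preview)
Your approach to matching connective generators and deducing $t$-exactness of the forgetful functor is essentially the same as the paper's. The paper uses the splitting $C\ta \otimes X \simeq X \oplus \Sigma^{1,0,-1}X$ (valid for any $C\ta$-module $X$, since $\ta$ acts by zero) in place of your bar resolution to pass from Chow-connective to tensor-connective; both work.

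There is a genuine error in your heart argument: $\Ss_2^{0,0,-1}$ is \emph{not} in the Chow heart. \Cref{lem:Cta-pi0} shows $\Ss_2^{0,0,-1} \geq 1$ in the Chow $t$-structure (write it as $\Sigma\,\Ss_2^{0,1,0} \otimes \Ss_2^{-1,-1,-1}$ and use that $\Ss_2^{0,1,0} \geq 0$). This does not break your conclusion---indeed it makes $\ta = 0$ on heart objects immediate, since $\Sigma^{0,0,-1}X \geq 1$ while $X \leq 0$---but your subsequent step of lifting heart objects and maps to coherent $C\ta$-modules by hand is not a complete argument: knowing that $\ta$ is null on $X$ gives a homotopy-class splitting, not a canonical $C\ta$-module structure together with functoriality. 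The paper avoids this entirely. Once the tensor $t$-structure is known to be induced by the Chow one and $C\ta$ is known to lie in the Chow heart, it is formal that $\Mod(\SHRat; C\ta)^{\th} \simeq \Mod(\SHRatch; C\ta)$. Then the equivalence $\pi_0^{\ch}\Ss_2 \simeq C\ta$ from \Cref{lem:Cta-pi0} says that $C\ta$ is the \emph{unit} of $\SHRatch$, so the forgetful functor $\Mod(\SHRatch; C\ta) \to \SHRatch$ is automatically a symmetric monoidal equivalence.
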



First, we need a lemma:

\begin{lem} \label{lem:Cta-pi0}
  We have $C\ta \in \SHRatch$ and the unit map $\Ss_2 ^{0,0,0} \to C\ta$ induces an equivalence $\pi_0 ^{\ch} \Ss_2 ^{0,0,0} \simeq C\ta$.
\end{lem}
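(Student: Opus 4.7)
The plan is to verify $C\ta \in \SHRatch$ by separately checking Chow connectivity and coconnectivity, then deduce $\pi_0^{\ch}\Ss_2 \simeq C\ta$ from the cofiber sequence defining $C\ta$.

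\emph{Connectivity.} The cofiber sequence $\Ss_2^{0,0,-1} \xrightarrow{\ta} \Ss_2^{0,0,0} \to C\ta$ expresses $C\ta$ as the cofiber of a map between Chow-connective generators: writing $(p,q,w) = (n+k_1, n+k_2, n)$, the tridegrees $(0,0,0)$ and $(0,0,-1)$ correspond respectively to $(n,k_1,k_2) = (0,0,0)$ and $(-1,1,1)$, both satisfying $k_1, k_1+k_2 \geq 0$. Hence $C\ta \in (\SHRat)^{\cn}_{\geq 0}$.

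\emph{Coconnectivity.} Unwinding the generators, Chow coconnectivity of $C\ta$ amounts to the vanishing $\pi_{p,q,w}^\R C\ta = 0$ whenever $p \geq w+1$ and $p+q \geq 2w+1$. Substituting into the explicit formula of \Cref{thm:Cta-homotopy},
\[\pi_{p,q,w}^\R C\ta \cong \bigoplus_{s \geq 0} \Ext^{s, 2w}_{\MU_*\MU}\bigl(\MU_*, \MU_* \otimes \pi^{C_2}_{(s+p-w)+(q-w)\sigma}\uZt\bigr),\]
it suffices to check that each coefficient $\pi^{C_2}_{\alpha+\beta\sigma}\uZt$ vanishes, where $\alpha = s+p-w$ and $\beta = q-w$. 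In the Chow-coconnective range we have $\alpha \geq s+1 \geq 1$ and $\alpha+\beta = s + (p+q-2w) \geq s+1 \geq 1$. Comparing with the explicit support description recalled in \Cref{thm:eqpthom} (cf.\ \Cref{uZ-fig})---namely, nonvanishing of $\pi^{C_2}_{\alpha+\beta\sigma}\uZt$ requires either $\alpha \geq 0$ even with $\alpha+\beta \leq 0$, or $\alpha \leq -2$ even with $\alpha+\beta = 0$, or $\alpha \leq -3$ odd with $\alpha+\beta \geq 0$---none of these regions is compatible with $\alpha \geq 1$ and $\alpha+\beta \geq 1$. Every summand therefore vanishes, and $C\ta$ lies in the Chow heart.

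\emph{Identification of $\pi_0^{\ch}\Ss_2$.} The fiber of the unit map $\Ss_2 \to C\ta$ is $\Ss_2^{0,0,-1}$. A direct index check shows $\Ss_2^{-1,0,-1}$ is itself a Chow-connective generator (take $(n,k_1,k_2) = (-1,0,1)$), so $\Ss_2^{0,0,-1} \simeq \Sigma\Ss_2^{-1,0,-1}$ lies in $(\SHRat)^{\cn}_{\geq 1}$. The resulting long exact sequence of Chow homotopy objects collapses at positions $-1$ and $0$, yielding an isomorphism $\pi_0^{\ch}\Ss_2 \xrightarrow{\sim} \pi_0^{\ch}C\ta$; by the previous two paragraphs the right-hand side equals $C\ta$ itself.

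The main obstacle is the coconnectivity step, where one must match the geometric support of the $C_2$-equivariant homotopy of $\uZt$ against the Chow inequalities on tridegrees. The connectivity and long-exact-sequence steps are routine index chases on the tri-indexed sphere labels.
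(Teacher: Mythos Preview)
Your proof is correct and follows essentially the same strategy as the paper: establish Chow connectivity from the defining cofiber sequence, verify Chow coconnectivity via the explicit formula for $\pi^{\R}_{*,*,*}C\ta$ from \Cref{thm:Cta} (the paper defers this to \Cref{thm:omnibus-vanishing}(2), whose proof is exactly your argument), and deduce the $\pi_0^{\ch}$ statement from $\Ss_2^{0,0,-1}\geq 1$. Your direct observation that $\Ss_2^{-1,0,-1}$ is already a Chow-connective generator is in fact slightly cleaner than the paper's route, which factors $\Ss_2^{0,0,-1}\simeq \Sigma\Ss_2^{0,1,0}\otimes\Ss_2^{-1,-1,-1}$ and then invokes the cofiber sequence $\Spec(\C)_{+}\to\Ss_2^{0,0,0}\to\Ss_2^{0,1,0}$ to see $\Ss_2^{0,1,0}\geq 0$.
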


\begin{proof}
  First, we note that $\Ss_2^{0,0,-1} \geq 1$ in the Chow $t$-structure.
  Since $$\Ss_2^{0,0,-1} \simeq \Sigma \Ss_2^{0,1,0} \otimes \Ss_2^{-1,-1,-1},$$ it suffices to show that $\Ss_2^{0,1,0} \geq 0$.
  This follows immediately from the cofiber sequence $\Sigma^{\infty} _+ \Spec(\C)_2 \to \Ss_2^{0,0,0} \to \Ss_2^{0,1,0}$.

  Combining this fact with the cofiber sequence $\Ss_2^{0,0,-1} \xrightarrow{\ta} \Ss_2^{0,0,0} \to C\ta$, we find that $C\ta \geq 0$ and that $\Ss_2^{0,0,0} \to C\ta$ induces an equivalence on $\pi_0 ^{\ch}$.

  To conclude, it suffices to show that $C\ta \leq 0$.
  By definition, we must show that $\pi_{n+k_1, n+k_2, n} ^{\R} C\ta = 0$ for all $n,k_1,k_2$ satisfying $k_1 > 0$ and $k_1 + k_2 > 0$.
  This follows directly from \Cref{thm:omnibus-vanishing}(2).
\end{proof}

We note down the following corollary for later use: 

\begin{cor} \label{cor:cta-homotopy}
  Suppose that $X \in \SHRat$ is a filtered colimit of Artin-Tate $\R$-motivic spectra that each admit a finite cell structure with all cells of the form $\Ss_2^{n,n,n}$.
  Then $X \otimes C\ta \in \SHRatch$ and $X \to X \otimes C\ta$ induces an equivalence $\pi_0 ^{\ch} X \to X \otimes C\ta$.
\end{cor}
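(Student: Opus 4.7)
The strategy is to reduce to the case where $X$ itself admits a finite cell structure of the prescribed form by a filtered-colimit argument, and then to run an induction on the cell filtration. For the reduction, both the Chow heart and the functor $\pi_0^{\ch}$ are compatible with filtered colimits (the Chow $t$-structure is stated to be compatible with filtered colimits, so its connective and coconnective parts are each closed under them), and $- \otimes C\ta$ preserves all colimits. Hence if the corollary holds for each term in a filtered diagram, it holds for the colimit.

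The core of the argument is a simple sufficient criterion: if $X$ is Chow-connective and $X \otimes C\ta$ lies in $\SHRatch$, then the natural map $X \to X \otimes C\ta$ identifies $X \otimes C\ta$ with $\pi_0^{\ch} X$. Indeed, tensoring the cofiber sequence $\Ss_2^{0,0,-1} \xrightarrow{\ta} \Ss_2^{0,0,0} \to C\ta$ with $X$ yields the fiber sequence $X \otimes \Ss_2^{0,0,-1} \to X \to X \otimes C\ta$. By the proof of Lemma \ref{lem:Cta-pi0}, $\Ss_2^{0,0,-1}$ is Chow-$1$-connective, so compatibility of the Chow $t$-structure with $\otimes$ makes $X \otimes \Ss_2^{0,0,-1}$ Chow-$1$-connective. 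The long exact sequence in Chow homotopy then collapses: the relevant fragment reads $0 \to \pi_0^{\ch} X \to X \otimes C\ta \to 0$, using that $\pi_0^{\ch}(X \otimes \Ss_2^{0,0,-1}) = \pi_{-1}^{\ch}(X \otimes \Ss_2^{0,0,-1}) = 0$ and that $X \otimes C\ta \in \SHRatch$ ensures $\pi_1^{\ch}(X \otimes C\ta) = 0$ and $\pi_0^{\ch}(X \otimes C\ta) = X \otimes C\ta$.

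It remains to verify Chow-connectivity of $X$ and the heart condition $X \otimes C\ta \in \SHRatch$ for a finite cell object built from cells of the form $\Ss_2^{n,n,n}$. Connectivity is automatic from the definition of $(\SHRat)^{\cn}_{\geq 0}$, which contains every $\Ss_2^{n,n,n}$ as a generator and is closed under colimits and extensions. For the heart condition, we induct on the number of cells. In the base case $X = \Ss_2^{n,n,n}$, the functor $\Sigma^{n,n,n}$ is $t$-exact: by assumption it restricts to an equivalence on the connective part, and since the coconnective part is determined as the right orthogonal of $(\SHRat)^{\cn}_{\geq 1}$, the inverse $\Sigma^{-n,-n,-n}$ preserving connectivity forces $\Sigma^{n,n,n}$ to preserve coconnectivity as well. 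Applied to $C\ta \in \SHRatch$ (Lemma \ref{lem:Cta-pi0}), we obtain $\Ss_2^{n,n,n} \otimes C\ta \in \SHRatch$. For the inductive step, attaching a cell $\Ss_2^{n_i,n_i,n_i}$ gives a cofiber sequence $X_{i-1} \to X_i \to \Ss_2^{n_i,n_i,n_i}$; tensoring with $C\ta$ produces a cofiber sequence whose outer terms lie in $\SHRatch$ by the inductive hypothesis and the base case. The long exact sequence then forces $\pi_k^{\ch}(X_i \otimes C\ta) = 0$ for $k \neq 0$ (squeezed between zero groups on either side), so $X_i \otimes C\ta \in \SHRatch$, completing the induction.

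The only delicate bookkeeping is verifying that $\Sigma^{n,n,n}$ is genuinely $t$-exact rather than merely connectivity-preserving, but this is a formal consequence of its being an auto-equivalence of $\SHRat$ under which the connective part is invariant. Beyond that, every step is an elementary consequence of Lemma \ref{lem:Cta-pi0} and the basic structure of the Chow $t$-structure.
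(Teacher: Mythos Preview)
Your proof is correct and follows essentially the same approach as the paper. The paper's version is terser: it simply asserts that the class of $X$ satisfying the conclusions is closed under filtered colimits and extensions, immediately reducing to the single-cell case $X = \Ss_2^{n,n,n}$, and then invokes the $t$-exactness of $\Sigma^{n,n,n}$ to reduce to \Cref{lem:Cta-pi0}. Your induction on cells and the isolated ``general criterion'' (Chow-connective plus $X \otimes C\ta$ in the heart implies the conclusion) are precisely what unpacks the paper's ``closed under extensions'' claim, so the two arguments are the same in substance.
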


\begin{proof}
  It is clear that the collection of $X$ which satisfy the conclusions of the corollary is closed under filtered colimits and extensions, so it suffices to assume that $X \simeq \Ss_2^{n,n,n}$
  Since $\Sigma^{n,n,n}$ is an automorphism of the Chow $t$-structure, we may reduce to the case of $X \simeq \Ss_2^{0,0,0}$, which is precisely \Cref{lem:Cta-pi0}.
\end{proof}


\begin{proof}[Proof of \Cref{prop:t-compare}]
  We first show that the tensor $t$-structure is induced by the Chow $t$-structure.
  We begin by showing that the connective part of the tensor $t$-structure on $\Mod(\SHRat; C\ta)$ is generated under colimits and extensions by
  \[ \{ \Sigma^{n+k_1, n+k_2,n} C\ta \ \vert \ k_1 \geq 0 \text{ and } k_1 + k_2 \geq 0 \}. \]

  The $t$-structure on $\IndCoh(\Mfg)$ has connective part generated under colimits and extensions by $\omega^{\otimes n} _{\mathbb{G}/ \Mfg}$ for $n \in \Z$.
  On the other hand, the $t$-structure on $\Mod(\Sp_{C_2, i2}; \uZt)$ has connective part generated under colimits and extensions by $\Sigma^{k_1+ k_2 \sigma} \uZt$, where $k_1 \geq 0$ and $k_1 + k_2 \geq 0$.

  By definition, it follows that the connective part of the induced $t$-structure on $\Mod(\Sp_{C_2, i2}; \uZt) \otimes_{\Z} \IndCoh(\Mfg)$ is generated under colimits and extensions by the tensor products $\Sigma^{k_1 + k_2 \sigma} \uZt \otimes \omega^{\otimes n}_{\mathbb{G} / \Mfg}$, where $k_1 \geq 0$ and $k_1 + k_2 \geq 0$. Under the equivalence of \Cref{thm:Cta}, these correspond to $\Sigma^{n+k_1, n+k_2, n} C\ta$, as desired. \todo{Check sign of $n$.}

  It follows directly from the above identification of the tensor connective category that a $C\ta$-module is is tensor coconnective if and only if it is Chow coconnective.
  By Chow connectivity of $C\ta$, which follows from \Cref{lem:Cta-pi0}, it also follows that a $C\ta$-module is Chow connective if it is tensor connective.
  On the other hand, if $X$ is Chow connective, then $X \otimes C\ta$ is clearly tensor connective. Using the equivalence $X \otimes C\ta \simeq X \oplus \Sigma^{1,0,-1} X$ (since $X$ is a $C\ta$-module), we see that $X$ itself is also tensor connective.

  As a consequence of the above, we obtain a symmetric monoidal functor
  \[ \Mod(\SHRat; C\ta)^{\th} \to \SHRatch. \]
  Since the tensor $t$-structure is induced by the Chow $t$-structure, the fact that $C\ta$ lies in the Chow heart from \Cref{lem:Cta-pi0} implies that the above functor factors through a symmetric monoidal equivalence
  \[ \Mod(\SHRat; C\ta)^{\th} \simeq \Mod(\SHRatch; C\ta). \]
  Finally, the equivalence $\pi_0 ^{\ch} \Ss_2 ^{0,0,0} \simeq C\ta$ of \Cref{lem:Cta-pi0} implies that $C\ta$ is the unit of $\SHRatch$, so that the forgetful functor
  \[\Mod(\SHRatch; C\ta) \to \SHRatch\]
  is a symmetric monoidal equivalence.
\end{proof}

We now move on to the proof of \Cref{thm:pi0-MGL}. First we need to recall some basic facts about cell structures on finite approximations of $\MGL$.

\begin{dfn}
  Let $\mathrm{Gr}_k (\mathbb{A}^{n+k})$ denote the Grassmannian scheme of $k$-planes in $\mathbb{A}^{n+k}$, and let $\gamma$ denote the tautological $k$-dimensional vector bundle over $\mathrm{Gr}_k (\mathbb{A}^{n+k})$.

  We let $\MGL(n,k)$ denote the Thom spectrum of the virtual bundle $\gamma - k\cdot \mathrm{triv}$. Then there is a canonical map $\MGL (n_1,k_1) \to \MGL(n_2, k_2)$ whenever $n_2 \geq n_1$ and $k_2 \geq n_2$, and we have $\varinjlim \MGL(n,n) \simeq \MGL$.
\end{dfn}

\begin{prop}[\cite{DICell}] \label{prop:MGL-chow-conn} \label{cor:MGL-dual-Chow-conn}
  The $\R$-motivic spectrum $\MGL(n,k)$ admits a finite cell structure with cells of the form $\Ss^{m,m,m}$.

  As a consequence, $\MGL(n,k)_2$, its Spanier-Whitehead dual $\mathbb{D}(\MGL(n,k)_2)$, and $\MGL_2$ are all Chow connective.
\end{prop}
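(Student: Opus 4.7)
The plan is to build a finite cell structure on $\MGL(n,k)$ using the Schubert cell decomposition of the Grassmannian, and then to deduce the three Chow connectivity statements as immediate consequences.

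First, I would recall that the Grassmannian $\mathrm{Gr}_k(\mathbb{A}^{n+k})$ admits a decomposition into Schubert cells $C_\lambda$ indexed by partitions $\lambda$ fitting in a $k \times n$ rectangle, with $C_\lambda \cong \mathbb{A}^{|\lambda|}$. Filtering by closures of Schubert cells ordered by dimension and applying the homotopy purity theorem \cite[Theorem 3.2.23]{MorelVoevodsky} inductively produces a finite filtration of $\Sigma^\infty_+ \mathrm{Gr}_k(\mathbb{A}^{n+k})$ whose successive quotients are Thom spectra of trivial bundles over affine spaces. Each such quotient is equivalent to $\Ss^{2|\lambda|, |\lambda|}$ in the usual bigraded motivic notation, which corresponds to $\Ss^{|\lambda|, |\lambda|, |\lambda|}$ in the trigraded notation of this paper (since these are Tate spheres, setting $w = |\lambda|$). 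This is essentially \cite[Example 2.12]{DICell}.

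Next, I would promote this to a cell structure on $\MGL(n,k) = \mathrm{Th}(\gamma - k\cdot\mathrm{triv})$. The tautological bundle $\gamma$ restricts to a trivial rank $k$ bundle on each Schubert cell $C_\lambda$, so Thomification of the cell filtration on the Grassmannian produces a finite cell structure on $\mathrm{Th}(\gamma)$ with cells $\Ss^{k+|\lambda|, k+|\lambda|, k+|\lambda|}$. Desuspending by $\Ss^{k,k,k}$ to account for the $-k\cdot\mathrm{triv}$ twist yields a finite cell structure on $\MGL(n,k)$ with cells of the form $\Ss^{|\lambda|, |\lambda|, |\lambda|}$ for $|\lambda| \geq 0$. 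Finiteness is immediate from the finiteness of the set of partitions fitting in a $k \times n$ box.

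For the Chow connectivity consequences, I would observe that every $\Ss^{m,m,m}$ lies in $(\SHRat)^{\cn}_{\geq 0}$ by taking $n = m$, $k_1 = k_2 = 0$ in the generating set of the Chow $t$-structure. Since the connective part is closed under finite colimits and extensions, the finite cell structure just constructed shows that $\MGL(n,k)_2$ is Chow connective. Dualizing the cell structure gives a finite cell structure on $\mathbb{D}(\MGL(n,k)_2)$ with cells $\Ss^{-|\lambda|, -|\lambda|, -|\lambda|}$; these are still of the form $\Ss^{m,m,m}$ (with $m \leq 0$) and are therefore still Chow connective by the same reasoning. Finally, since $\MGL_2 \simeq \varinjlim_n \MGL(n,n)_2$ and the Chow $t$-structure is compatible with filtered colimits, $\MGL_2$ is also Chow connective.

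The main obstacle, such as it is, is simply bookkeeping the grading conventions: verifying that Thomification of a Tate cell carries the expected tri-graded shift, and that desuspension by $-k\cdot\mathrm{triv}$ restores the diagonal form $\Ss^{m,m,m}$. Once these translations between bi- and trigraded notation are handled correctly, Chow connectivity of all three spectra follows directly from the definition of the Chow $t$-structure and its compatibility with filtered colimits.
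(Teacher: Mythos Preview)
Your approach is essentially the paper's own: the cell structure comes from the Schubert stratification (which is precisely what \cite{DICell} does), and Chow connectivity then follows because the generators $\Ss^{m,m,m}$ lie in $(\SHRat)^{\cn}_{\geq 0}$, the dual has cells $\Ss^{-m,-m,-m}$, and the Chow $t$-structure is compatible with filtered colimits.

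There is one point you gloss over that the paper treats carefully. The Chow $t$-structure lives on $\SHRat$, whose objects are $\Ss_2$-modules, so you need to know that $2$-completion does not disturb the cell structure. For $\MGL(n,k)$ this is innocuous since it is dualizable, hence $\MGL(n,k)_2 \simeq \MGL(n,k) \otimes \Ss_2$ by \Cref{prop:compl-facts}(1); but your claim ``$\MGL_2 \simeq \varinjlim_n \MGL(n,n)_2$'' is exactly the statement $\MGL_2 \simeq \MGL \otimes \Ss_2$, which is the nontrivial content of \Cref{thm:complete-compare} (since $\MGL$ is not dualizable, one cannot simply commute the completion past the colimit). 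The paper invokes both of these results explicitly; once you do as well, your argument is complete and matches the paper's.
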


\begin{proof}
  The cell structure is a consequence of \cite{DICell}.
  It follows from \Cref{thm:complete-compare} and \Cref{prop:compl-facts}(1) that there are equivalences:
  \begin{align*}
    &\MGL(n,k) \otimes \Ss_2 \simeq \MGL(n,k)_2 \\
    &\mathbb{D} (\MGL(n,k)) \otimes \Ss_2 \simeq \mathbb{D} (\MGL(n,k)_2) \\
    &\MGL \otimes \Ss_2 \simeq \MGL_2.
  \end{align*}
  Therefore these spectra continue to have the same cell structure after $2$-completion, from which the statements about Chow connectivity follow immediately.
\end{proof}

%
%

\begin{prop} \label{prop:MGL-Cta}
  The map $\MGL_2 \to \MGL_2 \otimes C\ta$ induces an equivalence $\pi_0 ^{\ch} \MGL_2 \simeq \MGL_2 \otimes C\ta$, and the corresponding $\underline{(\MU_2)_{2*} \MU_2}$-comodule is $\underline{(\MU_2)_{2*} \MU_2}$.
\end{prop}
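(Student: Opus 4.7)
The plan is to deduce both halves of the proposition directly from results already established in the paper, and in particular to identify the heart-level comodule using the machinery of \Cref{thm:Cta-homotopy}. I expect no serious obstacle; the main task is to check that $\MGL_2$ falls under the hypotheses of the previously proved statements.

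For the first claim, that $\MGL_2 \to \MGL_2 \otimes C\ta$ induces an equivalence $\pi_0^{\ch}\MGL_2 \simeq \MGL_2 \otimes C\ta$, I would simply verify that $\MGL_2$ satisfies the hypothesis of \Cref{cor:cta-homotopy}. By \Cref{prop:MGL-chow-conn}, each finite approximation $\MGL(n,n)$ admits a finite cell structure with cells of the form $\Ss^{m,m,m}$, and the same remains true after $2$-completion. Since $\MGL_2 \simeq \varinjlim_n \MGL(n,n)_2$ as an object of $\SHRat$, this realizes $\MGL_2$ as a filtered colimit of the required form, and \Cref{cor:cta-homotopy} delivers the claim at once.

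For the identification of the corresponding comodule, I would combine \Cref{cor:gamma-MGL}, which provides an equivalence of commutative algebras $\MGL_2 \simeq \Gamma_*(\MU_{\R,2})$, with the explicit formula of \Cref{thm:Cta-homotopy}. The $C_2$-spectrum $\MU_{\R,2}$ is trivially $\MU_{\R,2}$-projective, so the theorem applies and identifies $C\ta \otimes \Gamma_*(\MU_{\R,2})$, under the equivalence of \Cref{thm:Cta}, with the $\underline{\pi}^{C_2}_{*\rho}(\MU_{\R,2} \otimes \MU_{\R,2})$-comodule $\underline{\pi}^{C_2}_{*\rho}(\MU_{\R,2} \otimes \MU_{\R,2})$. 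Finally, \Cref{lem:MUR-uZ-mod} identifies this object of $\uAbh_{i2}$ with $\underline{(\MU_2)_{2*}\MU_2}$, which is precisely the comodule described in the statement. The only thing to keep in mind throughout is that the $\Sp_{C_2, i2}$-linear structure used in \Cref{thm:Cta} is compatible with the one coming from $c_{\C/\R}$, so that the comodule structure from \Cref{thm:Cta-homotopy} really matches the heart-level comodule structure extracted from the Chow $t$-structure via \Cref{prop:t-compare}.
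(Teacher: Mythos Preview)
Your proposal is correct and follows essentially the same route as the paper: the first claim via \Cref{cor:cta-homotopy} and \Cref{prop:MGL-chow-conn}, and the second via \Cref{cor:gamma-MGL} together with \Cref{thm:Cta-homotopy} applied to $X=\MU_{\R,2}$, with \Cref{prop:t-compare} ensuring the comodule identification is the Chow-heart one. The only cosmetic difference is that you cite \Cref{lem:MUR-uZ-mod} for the identification $\piu_{*\rho}^{C_2}(\MU_{\R,2}\otimes\MU_{\R,2})\cong\underline{(\MU_2)_{2*}\MU_2}$, whereas the paper records this isomorphism directly.
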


\begin{proof}
  The first part of the statement follows directly from \Cref{cor:cta-homotopy} and \Cref{prop:MGL-chow-conn}.
  On the other hand, \Cref{cor:gamma-MGL} states that $\MGL_2 \simeq \Gamma_* (\MU_{\R,2})$, so that \Cref{thm:Cta-homotopy} and \Cref{prop:t-compare} together imply that $\MGL_2 \otimes C\ta$ coreponds to $\piu_{*\rho} ^{C_2} (\MU_{\R,2} \otimes \MU_{\R,2}) \cong \underline{(\MU_2)_{2*} \MU_2}$.
\end{proof}

\begin{ntn}
  We let $\otimes^{\heartsuit}$ denote the symmetric monoidal structure on $\SHRatch \simeq \Comod(\uAbh_2; \underline{(\MU_2)_{2*} \MU_2})$.

  Given a graded $\underline{(\MU_2)_{2*} \MU_2}$-comodule $M_*$, we let $M[n]_*$ denote the shift with $M[n]_{k+n} = M_{k}$.
\end{ntn}

\begin{lem} \label{lem:MGL-heart-tensor}
  Let $M_*$ be a graded $\underline{(\MU_2)_{2*} \MU_2}$-comodule in $C_2$-Mackey functors, viewed as a element of $\SHRatch$.
  Then $M_* \otimes \MGL_2$ lies in $\SHRatch$ and corresponds to $M_* \otimes^{\heartsuit} \underline{(\MU_2)_{2*} \MU_2}$.
\end{lem}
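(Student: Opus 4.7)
The strategy is to leverage the equivalence $\SHRatch \simeq \Mod(\SHRat;C\ta)^{\th}$ from \Cref{prop:t-compare}, under which $\underline{M}_*$ acquires a canonical $C\ta$-module structure. The projection formula for the commutative algebra $C\ta$ then yields an identification in $\SHRat$:
\[
\underline{M}_* \otimes \MGL_2 \simeq \underline{M}_* \otimes_{C\ta} (\MGL_2 \otimes C\ta).
\]
This reduces the problem to understanding the relative tensor product of $\underline{M}_*$ with $\MGL_2 \otimes C\ta$ inside $\Mod(\SHRat; C\ta)$.

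Next, I would argue that this relative tensor product lies in the tensor heart $\Mod(\SHRat;C\ta)^{\th}$. By \Cref{prop:MGL-Cta}, $\MGL_2 \otimes C\ta$ lies in this heart and corresponds to $\underline{(\MU_2)_{2*}\MU_2}$. To transfer from $\MGL_2 \otimes C\ta$ being in the heart to $\underline{M}_* \otimes_{C\ta}(\MGL_2 \otimes C\ta)$ being in the heart, I would use the description $\Mod(\SHRat; C\ta) \simeq \Mod(\Sp_{C_2, i2};\uZt) \otimes_{\Z}\IndCoh(\Mfg)$ from \Cref{thm:Cta}, where $\underline{(\MU_2)_{2*}\MU_2}$ corresponds to $\uZt \otimes_\Z (\MU_2)_{2*}\MU_2$. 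Flatness of $(\MU_2)_{2*}\MU_2$ over $\MU_*$ (since $\MU_*\MU$ is a polynomial algebra over $\MU_*$) and freeness of it as a $\Z_2$-module make this object flat in each tensor factor separately; hence tensoring with it is $t$-exact, so the tensor product with any heart object lands again in the heart. Applying the forgetful functor $\Mod(\SHRat;C\ta) \to \SHRat$, which is $t$-exact because the tensor $t$-structure is induced by the Chow $t$-structure (\Cref{prop:t-compare}), then yields $\underline{M}_* \otimes \MGL_2 \in \SHRatch$.

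Finally, the symmetric monoidal structure $\otimes^{\heartsuit}$ on $\SHRatch$ is by construction obtained from the monoidal structure on $\Mod(\SHRat; C\ta)$ by applying $\pi_0^{\th}$. Since we have shown that $\underline{M}_* \otimes_{C\ta}(\MGL_2 \otimes C\ta)$ already lives in the heart, $\pi_0^{\th}$ acts as the identity on it, giving
\[
\underline{M}_* \otimes \MGL_2 \simeq \underline{M}_* \otimes^{\heartsuit} \underline{(\MU_2)_{2*}\MU_2},
\]
which is the desired identification.

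The main obstacle is verifying that tensoring with $\underline{(\MU_2)_{2*}\MU_2}$ is $t$-exact on $\Mod(\SHRat; C\ta)$; this separates into a flatness statement in the Mackey-functor direction and the formal-group direction, each of which is an independently classical fact but which must be glued together correctly through the tensor product description of \Cref{thm:Cta}. Once this is in hand, the remaining manipulations are formal consequences of the projection formula and the definition of $\otimes^{\heartsuit}$.
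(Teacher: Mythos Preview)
Your approach is correct but takes a genuinely different route from the paper's. The paper proves the two halves of heart membership separately and directly in $\SHRat$: Chow connectivity of $\underline{M}_* \otimes \MGL_2$ is immediate from Chow connectivity of $\MGL_2$ (\Cref{prop:MGL-chow-conn}), while Chow coconnectivity is obtained by a vanishing computation
\[
\pi^{\R}_{n+k_1,n+k_2,n}(\underline{M}_* \otimes \MGL_2) \cong \varinjlim_m \,[\Ss_2^{n+k_1,n+k_2,n} \otimes \mathbb{D}(\MGL(m,m)_2),\, \underline{M}_*]_{\SHRat} = 0,
\]
using that the Spanier--Whitehead duals of the finite approximations $\MGL(m,m)_2$ are themselves Chow connective. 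The identification with $\underline{M}_* \otimes^{\heartsuit} \underline{(\MU_2)_{2*}\MU_2}$ then follows from $\pi_0^{\ch}(\underline{M}_* \otimes \MGL_2) \simeq \underline{M}_* \otimes^{\heartsuit} \pi_0^{\ch}\MGL_2$ and \Cref{prop:MGL-Cta}.

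Your projection-formula-plus-flatness argument is more structural: it bypasses the finite approximations and the duality trick entirely, at the cost of unpacking $t$-exactness inside $\uAb_{i2} \otimes_{\Z} \IndCoh(\Mfg)$. One small simplification in your check: the Mackey-functor direction is actually trivial, since $\underline{(\MU_2)_{2*}\MU_2}$ is a constant Mackey functor and hence corresponds to a simple tensor $\uZt \boxtimes (\MU_2)_{2*}\MU_2$, so you are tensoring with the unit on the $\uAb_{i2}$ side. The only content is the freeness of $(\MU_2)_*\MU_2$ over $(\MU_2)_*$ in the $\IndCoh(\Mfg)$ factor. The paper's argument is more elementary and stays within the Chow $t$-structure machinery; yours makes fuller use of the algebraic description of $C\ta$-modules from \Cref{thm:Cta} already in hand.
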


\begin{proof}
  It follows from \Cref{prop:MGL-chow-conn} that $M_* \otimes \MGL_2$ is Chow connective.
  As a consequence, \Cref{prop:MGL-Cta} implies that there are equivalences
  \begin{align*}
    \pi_0 ^{\ch} (M_* \otimes \MGL_2) &\simeq M_* \otimes^{\heartsuit} \pi_0 ^{\ch} \MGL_2 \\
    &\simeq M_* \otimes^{\heartsuit} \underline{(\MU_2)_{2*} \MU_2}.
  \end{align*}

  It therefore suffices to show that $M_* \otimes \MGL_2$ is Chow coconnective.
  For this, we compute for $n,k_1, k_2 \in \Z$ with $k_1 > 0$ and $k_1+ k_2 > 0$:
  \begin{align*}
    \pi_{n+k_1, n+k_2, n} ^{\R} (M _* \otimes \MGL_2) &\cong [\Ss_2^{n+k_1, n+k_2, n}, M _* \otimes \MGL_2]_{\SHRat} \\
    &\cong \varinjlim [\Ss_2^{n+k_1, n+k_2, n}, M_* \otimes \MGL(m,m)_2]_{\SHRat} \\
    &\cong \varinjlim [\Ss_2^{n+k_1, n+k_2, n} \otimes \mathbb{D} (\MGL(m,m)_2), M_*]_{\SHRat} \\
    &\cong 0
  \end{align*}
  since $\mathbb{D} (\MGL(m,m)_2)$ is Chow connective by \Cref{prop:MGL-chow-conn}.
  It follows that $M_* \otimes \MGL_2$ is Chow coconnective, as desired.
\end{proof}

\begin{lem} \label{lem:MGL-tensor-thing}
  Suppose that $X$ is Chow connective. Then the map $X \otimes \MGL_2 \to \pi_0 ^{\ch} (X) \otimes \MGL_2$ induces an equivalence on $\piu_{n-k_1,n-k_2,n} ^{\R}$ for all $n,k_1,k_2 \in \Z$ with $k_1 \geq 0$ and $k_1 + k_2 \geq 0$.
\end{lem}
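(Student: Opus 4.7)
The fiber of the map $X \otimes \MGL_2 \to \pi_0^{\ch}(X) \otimes \MGL_2$ is $F = \tau_{\geq 1}^{\ch}(X) \otimes \MGL_2$, and the long exact sequence in $\piu^{\R}_{*,*,*}$ reduces the claimed equivalence on $\piu^{\R}_{n-k_1,n-k_2,n}$ to the single vanishing statement $\piu^{\R}_{n-k_1,n-k_2,n}(F) = 0$ for all $n$ and all $k_1, k_2$ with $k_1 \geq 0$ and $k_1+k_2 \geq 0$. The companion vanishing at the shifted degree $(n-k_1-1, n-k_2, n)$ needed for surjectivity falls into the same range on replacing $k_1$ by $k_1+1$ (since $k_1+1 \geq 1$ and $(k_1+1)+k_2 \geq 1$). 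First I would observe that $F$ is Chow $\geq 1$: $\tau_{\geq 1}^{\ch}(X)$ is so by construction, $\MGL_2$ is Chow $\geq 0$ by \Cref{prop:MGL-chow-conn}, and the Chow $t$-structure is compatible with the tensor product.

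The main step is to pass to the algebraic model by tensoring with $C\ta$. By \Cref{thm:Cta} and \Cref{prop:t-compare}, the object $F \otimes C\ta$ lives in $\Mod(\Sp_{C_2,i2};\uZt) \otimes_{\Z} \IndCoh(\Mfg)$ and is tensor $\geq 1$ there. The tensor $\geq 1$ subcategory is generated under colimits and extensions by the shifts $\Sigma^{1+k_1+k_2\sigma}\uZt \otimes \omega_{\G/\Mfg}^{\otimes n}$ with $k_1, k_1+k_2 \geq 0$, which under the picard dictionary of \Cref{thm:Cta} correspond to $C\ta \otimes \Ss_2^{n+k_1+1, n+k_2, n}$. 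Computing $\piu^{\R}_{n-k_1',n-k_2',n}$ of such generators (using \Cref{thm:Cta-homotopy} combined with \Cref{lem:underline-homotopy} to express the answer as $\Ext$ of an $\underline{(\MU_2)_{2*}\MU_2}$-comodule tensored with an equivariant coefficient group) produces an Ext-degree of the form $s = a + (w-p) + 1$ in weight $2w$, and the numerical conditions $w \geq p$, $2w \geq p+q$ combined with the shape of $(\MU_2)_*$-weights force this Ext to vanish.

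Finally, I would propagate the vanishing from $F \otimes C\ta$ to $F$ itself via an $\ta$-Bockstein argument, using the cofiber sequence
\[ \Sigma^{0,0,-1}F \xrightarrow{\ta} F \to F \otimes C\ta \]
and its iterates: the successive layers are weight-shifts of $F \otimes C\ta$, whose $\piu^{\R}_{n-k_1,n-k_2,n}$ all vanish in the prescribed range because the defining inequalities $k_1, k_1+k_2 \geq 0$ are preserved under the reindexing that accompanies the weight shifts. The key input needed is convergence of the Bockstein tower, which follows from the boundedness of cells of the approximations $\MGL(i,i)_2 \to \MGL_2$ of \Cref{prop:MGL-chow-conn} (each cell is of the form $\Ss^{m,m,m}$ with $m$ in a finite range, so tensor products with $F$ are bounded in weight). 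The principal obstacle in writing the full argument is precisely this convergence claim, as the algebraic Ext computation in the previous paragraph is a routine application of \Cref{sec:cta}.
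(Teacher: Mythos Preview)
Your reduction to showing $\piu^{\R}_{n-k_1,n-k_2,n}(F)=0$ for $F=\tau^{\ch}_{\geq 1}(X)\otimes\MGL_2$ and $k_1,k_1+k_2\geq 0$ is correct, and matches the paper's first move. But the route you then take, through $C\ta$-modules followed by a $\ta$-Bockstein argument, has a genuine gap at exactly the point you flag: convergence. Your proposed justification, that the cells of $\MGL(i,i)_2$ lie in a bounded weight range so ``tensor products with $F$ are bounded in weight,'' does not hold: the other factor $\tau^{\ch}_{\geq 1}(X)$ is an arbitrary Chow $1$-connective object and need not be bounded in weight at all. All the Bockstein layers vanishing in the given range tells you is that multiplication by $\ta$ is an isomorphism on $\piu^{\R}_{n-k_1,n-k_2,n}(F)$; without $\ta$-completeness of $F$ (which is not available in general) or some independent vanishing at a base weight, this does not force the group to be zero.

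The paper avoids this entirely by performing the reduction-to-generators step \emph{before} any passage to $C\ta$ or Bockstein arguments. The vanishing condition on $\piu^{\R}_{n-k_1,n-k_2,n}(Y\otimes\MGL_2)$ is closed under colimits and extensions in $Y$, so it suffices to verify it for the generators $Y=\Ss_2^{m+a_1,m+a_2,m}$ (with $a_1>0$, $a_1+a_2>0$) of the Chow $1$-connective subcategory. For such a sphere, $Y\otimes\MGL_2$ is a trigraded shift of $\MGL_2$ itself, and the question becomes: $\piu^{\R}_{n-k_1,n-k_2,n}(\MGL_2)=0$ for $k_1>0$, $k_1+k_2>0$. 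Now the identification $i_*\MGL_2\simeq P_{2\bullet}\MU_{\R,2}$ of \Cref{prop:P-MGL} gives $\piu^{\R}_{n-k_1,n-k_2,n}(\MGL_2)\cong\piu^{C_2}_{(n-k_1)+(n-k_2)\sigma}(P_{2n}\MU_{\R,2})$, which vanishes by the slice connectivity of $P_{2n}\MU_{\R,2}$: the underlying dimension $2n-k_1-k_2<2n$ and the fixed-point dimension $n-k_1<n$ both lie below the required thresholds. No Bockstein, no convergence, no Ext computation is needed. Your detour through the algebraic model is in fact redundant: the same closure-under-colimits-and-extensions argument you invoke inside $\Mod(C\ta)$ works just as well one step earlier in $\SHRat$, where it lands you directly on a computable object.
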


\begin{proof}
  It suffices to show that $\piu_{n-k_1, n-k_2, n} ^{\R} Y \otimes \MGL_2 = 0$ for all Chow $1$-connective $Y$.
  Since this condition is closed under filtered colimits, suspensions and extensions, it is closed under all colimits and extensions. It therefore suffices to assume that $Y = \Ss_2^{m+a_1, m+a_2, m}$ for $m,a_1,a_2 \in \Z$ satisfying $a_1 > 0$ and $a_1 + a_2 > 0$.
  In other words, we must show that $\piu_{n-k_1, n-k_2, n} ^{\R} \MGL_2$ for all $n,k_1, k_2 \in \Z$ now satisfying $k_1 > 0$ and $k_1 + k_2 > 0$.

  By \Cref{prop:P-MGL}, there is an isomorphism $\piu_{n-k_1, n-k_2, n} ^{\R} \MGL_2 \cong \piu_{n-k_1 + (n-k_2)\sigma} ^{C_2} P_{2n} \MU_{\R,2}$. This is zero because $\Phi^{e} (\Ss_2^{n-k_1 + (n-k_2)\sigma}) \simeq \Ss_2^{2n-k_1-k_2}$ is of dimension $< 2n$ and $\Phi^{C_2} (\Ss_2^{n-k_1 + (n-k_2) \sigma}) \simeq \Ss_2^{n-k_1}$ is of dimension $< n$. 
\end{proof}

\begin{proof}[Proof of \Cref{thm:pi0-MGL}]
%
  We have
  \begin{align*}
    \pi_{0} ^{\ch} (X)_n &\cong \uHom_{\underline{(\MU_2)_{2*}}-\mathrm{mod}} (\underline{(\MU_2)_{2*}}[n], \pi_0 ^{\ch} (X)) \\
    &\cong \uHom_{\underline{(\MU_2)_{2*} \MU_2}-\mathrm{comod}} (\underline{(\MU_2)_{2*}}, \pi_0 ^{\ch} (X) \otimes^{\heartsuit} \underline{(\MU_2)_{2*} \MU_2}) \\
    &\cong \piu_0 ^{C_2} \Map_{\SHRat} (\Ss_2^{n,n,n}, \pi_0 ^{\ch} (X) \otimes \MGL_2) \\
    &\cong \piu_0 ^{C_2} \Map_{\SHRat} (\Ss_2^{n,n,n}, X \otimes \MGL_2) \\
    &\cong \underline{(\MGL_2)}^{\R}_{n+k,n,n} (X),
  \end{align*}
  where the third and fourth isomorphisms follow from \Cref{lem:MGL-heart-tensor} and \Cref{lem:MGL-tensor-thing}, respectively.
\end{proof}

%

Finally, we record the following proposition for use in \Cref{sec:nu}.

\begin{prop} \label{prop:complete-compare}
  An Artin-Tate $\R$-motivic spectrum $X \in \SHRat$ is left complete with respect to the Chow $t$-structure if and only if it is $\MGL_2$-local.
\end{prop}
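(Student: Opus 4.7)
The plan is to identify the $\MGL_2$-local and Chow left complete subcategories by showing their acyclic classes coincide. Both are reflective in $\SHRat$, reflected by $(-)^{\wedge}_{\MGL_2}$ (the $\MGL_2$-nilpotent completion) and $\lim_n \tau_{\leq n}^{\ch}(-)$ (the Chow left completion) respectively. A Milnor-sequence argument -- using that for each $k \in \Z$, the Postnikov tower $\{\pi_k^{\ch}(\tau_{\leq n}^{\ch} Y)\}_n$ is eventually constant equal to $\pi_k^{\ch}(Y)$, together with right completeness of the Chow $t$-structure -- shows that the acyclic class for Chow left completion is exactly the class of $Y \in \SHRat$ with $\pi_k^{\ch}(Y) = 0$ for all $k \in \Z$. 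Thus the proposition reduces to
\[ Y \otimes \MGL_2 \simeq 0 \quad \iff \quad \pi_k^{\ch}(Y) = 0 \text{ for all } k \in \Z. \]

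The forward implication is immediate from \Cref{thm:pi0-MGL}, which identifies $(\pi_k^{\ch} Y)_n \cong \underline{(\MGL_2)}_{n+k,n,n}^{\R}(Y)$. For the reverse implication, set $Z := Y \otimes \MGL_2$ and note that the Mackey-functor value of $\underline{\pi}^{\R}_{p,q,w}$ at $*$ is the trigraded $\R$-motivic homotopy group, while its value at $C_2$ is $\pi_{p+q,w}^{\C}$ of the base change to $\C$. The hypothesis, read through \Cref{thm:pi0-MGL}, then gives two facts: (i) $\pi_{p,w,w}^{\R}(Z) = 0$ for all $p, w$; and (ii) $\pi_{*,*}^{\C}(Z_\C) = 0$, so $Z_\C \simeq 0$. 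From (ii) we obtain $(\Spec \C)_+ \otimes Z \simeq i_*(Z_\C) \simeq 0$. Smashing the cofiber sequence $(\Spec\C)_+ \to \Ss \xrightarrow{a} \Ss_2^{0,1,0}$ with $Z$ and applying $\pi_{p,q,w}^{\R}$ then yields isomorphisms $\pi_{p,q,w}^{\R}(Z) \simeq \pi_{p,q-1,w}^{\R}(Z)$ for all trigradings, which combined with (i) gives $\pi_{p,q,w}^{\R}(Z) = 0$ for all $(p,q,w)$. Together with (ii) controlling the value at $C_2$, this gives $\underline{\pi}^{\R}_{p,q,w}(Z) = 0$ for every tridegree, hence $Z \simeq 0$.

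The key ingredients are thus: (a) the Milnor identification of Chow-left-completion acyclics with ``all $\pi_k^{\ch} = 0$'', which relies on right completeness of the Chow $t$-structure; (b) \Cref{thm:pi0-MGL} to translate Chow homotopy into $\MGL_2$-homology at both Mackey-functor values; and (c) the $a$-cofiber sequence to propagate diagonal vanishing to all trigradings. No serious obstacle is expected: the two-sided induction on $q$ in step (c) proceeds via genuine isomorphisms at each stage (rather than limits or colimits), so no convergence issue arises, and the argument is a direct consequence of the computations already in hand.
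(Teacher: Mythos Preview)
Your proposal is correct and follows essentially the same route as the paper: reduce to showing that the $\MGL_2$-acyclics coincide with the objects having vanishing Chow homotopy, then invoke \Cref{thm:pi0-MGL}. The paper packages the reverse implication into a separate lemma (\Cref{lem:conservative-funcs}) asserting that the Mackey-functor-valued functors $\piu_{n+k,n,n}^{\R}$ are jointly conservative, proved in one line via the $C_2$-enrichment of $\SHRat$; your argument unwinds exactly this fact by hand, reading off the $C_2$-value to kill $Z_\C$ and then using the $a$-cofiber sequence to propagate diagonal vanishing across all $q$. These are the same maneuver expressed in two dialects.
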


The proof will make use of the following lemma:

\begin{lem} \label{lem:conservative-funcs}
  The functors $X \mapsto \piu_{n+k,n,n} ^{\R} X$ are jointly conservative on $\SHRat$.
\end{lem}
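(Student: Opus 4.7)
The plan is to unpack the vanishing of the Mackey functor $\piu_{n+k,n,n}^{\R} X$ into vanishing of two ordinary homotopy groups, then to deduce $X \simeq 0$ via an $a$-periodicity argument.

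First I would compute the Mackey functor values. The value at $\ast$ is the $C_2$-fixed-point homotopy of the $c_{\C/\R}$-enriched mapping spectrum $\Map_{\SHRat}(\Ss_2^{n,n,n}, X)$, which by the identification $c_{\C/\R}(\Ss^k) = \Ss_2^{k,0,0}$ equals $\pi_{n+k,n,n}^{\R} X$. The value at $C_2/e$ is the underlying $\pi_k$ of the same mapping spectrum; by the compatibility of the $c_{\C/\R}$-enrichment with base change (\Cref{prop:comp}), the underlying is $\Map_{\SHC^{\at}_{i2}}(\Ss_2^{2n,n}, X_{\C})$, so the value at $C_2/e$ computes $\pi^{\C}_{2n+k,n}(X_{\C})$. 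Applying the corollary identifying $\pi^{\R}_{p,q,w}(X \otimes Ca) \cong \pi^{\C}_{p+q,w}(X_{\C})$, this in turn equals $\pi^{\R}_{n+k,n,n}(X \otimes Ca)$. Hence the hypothesis is equivalent to
\[\pi^{\R}_{n+k,n,n}(X) = 0 \quad \text{and} \quad \pi^{\R}_{n+k,n,n}(X \otimes Ca) = 0 \qquad \text{for all } n, k \in \Z.\]

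Next, the second vanishing together with the corollary shows $\pi^{\C}_{s,w}(X_{\C}) = 0$ for all $s, w \in \Z$, so that $X_{\C} \simeq 0$ by compact generation of $\SHC^{\at}_{i2}$ by the bi-graded spheres. Under the identification $\SHC^{\at}_{i2} \simeq \Mod(\SHRat; Ca)$, base change to $\C$ is tensoring with $Ca$, so $X \otimes Ca \simeq 0$. Combined with the cofiber sequence $\Ss_2^{0,-1,0} \xrightarrow{a} \Ss_2^{0,0,0} \to Ca$ tensored with $X$, this shows that $a$ acts invertibly on $X$.

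Finally, $a$-periodicity implies that multiplication by $a$ induces isomorphisms $\pi^{\R}_{p,q,w}(X) \xrightarrow{\cong} \pi^{\R}_{p,q-1,w}(X)$, so for fixed $p$ and $w$ the group $\pi^{\R}_{p,q,w}(X)$ is independent of $q$. Iterating from the Tate slice $q=w$ and invoking the first vanishing condition $\pi^{\R}_{p,w,w}(X) = 0$, one obtains $\pi^{\R}_{p,q,w}(X) = 0$ for all $p, q, w$. Because the tri-graded spheres $\{\Ss_2^{p,q,w}\}$ are compact generators of $\SHRat$, the collection $\{\pi^{\R}_{p,q,w}\}$ is jointly conservative, and so $X \simeq 0$. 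The main subtlety lies in correctly identifying the value of the Mackey functor at $C_2/e$; after that step, the argument is a routine $a$-periodicity propagation from the Tate slice to all tri-gradings.
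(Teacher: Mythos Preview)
Your proof is correct. The paper takes a much terser route: it notes that $\piu_{n+k,n,n}^{\R} X \cong \piu_k^{C_2}\,\Map_{\SHRat}(\Ss_2^{n,n,n}, X)$ and then appeals to two black boxes --- that the $C_2$-enriched mapping spectra out of the pure spheres $\Ss_2^{n,n,n}$ are jointly conservative (because tensoring with $c_{\C/\R}(\Ss^{p+q\sigma})$ recovers every tri-graded sphere), and that the functors $Y \mapsto \piu_k^{C_2} Y$ are jointly conservative on $\Sp_{C_2}$. Your argument instead unpacks the second black box directly in the motivic category: the vanishing at $C_2/e$ forces $X_\C \simeq 0$, hence $X \otimes Ca \simeq 0$ and $a$ acts invertibly on $X$, and then $a$-periodicity propagates the vanishing at $\ast$ from the Tate slice $q=w$ to all $q$. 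This is precisely the isotropy-separation proof of conservativity of $\piu_k^{C_2}$ on $\Sp_{C_2}$, transported through $c_{\C/\R}$. The paper's version is cleaner to state; yours makes the equivariant mechanism explicit and avoids citing the $\Sp_{C_2}$ fact externally.
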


\begin{proof}
  Since the functors $X \mapsto \Map_{\SHRat} (\Ss_2^{n,n,n}, X)$ are jointly conservative, it suffices to note that the functors $Y \mapsto \piu_{k} ^{C_2} Y$ are jointly conservative on $\Sp_{C_2}$.
\end{proof}

\begin{proof}[Proof of \Cref{prop:complete-compare}]
  Since the Chow $t$-structure is right complete, it is equivalent to show that $X \otimes \MGL_2 \simeq 0$ if and only if $\pi_{n} ^{\ch} X = 0$ for all $n$. This is an immediate consequence of \Cref{thm:pi0-MGL} and \Cref{lem:conservative-funcs}.
\end{proof}

\subsection{The functor $\nur$} \label{sec:nu}\ 

In this section, we will study the functor defined below:

\begin{dfn}
  We define a lax symmetric monoidal functor
  \[\nur : \Sp_{C_2, i2} \to \SHRat \]
  to be the composite
  \[\Sp_{C_2, i2} \xrightarrow{\b^{-1}} \Mod(\SHRat; \Ss_2[\ta^{-1}]) \xrightarrow{\tau_{\geq 0} ^{\cn}} \SHRat.\]
\end{dfn}

One of the main results of this section is the following theorem, which summarizes the basic properties of $\nur$:

\begin{thm} \label{thm:nu-properties}
  Let $X, Y, Z \in \Sp_{C_2, i2}$. The lax symmetric monoidal functor $\nur : \Sp_{C_2, i2} \to \SHRat$ satisfies the following properties:
  \begin{enumerate}
    \item There is a natural equivalence $\b(\nur(X)) \simeq X$.
    \item The functor $X \mapsto \nur(X)$ commutes with filtered colimits.
    \item Given any $k \in \Z$, there is a natural equivalence $\nur(\Sigma^{k\rho} X) \simeq \Sigma^{k,k,k} \nur(X)$.
    \item Suppose that $X \to Y \to Z$ is a cofiber sequence. Then
      \[\nur (X) \to \nur (Y) \to \nur (Z)\]
      is a cofiber sequence if and only if $\piu_{n\rho-1} ^{C_2} (X \otimes \MU_{\R, 2}) \to \piu_{n\rho-1} ^{C_2} (Y \otimes \MU_{\R, 2})$ is a monomorphism for all $n$.
    \item Suppose that $X$ is a filtered colimit of $C_2$-spectra that admit finite cell structures with all cells of the form $\Ss_2^{n\rho}$ for $n \in \Z$. Then, for all $Y$, the natural map
      \[\nur (X) \otimes \nur(Y) \to \nur(X \otimes Y)\]
      is an equivalence.
    \item The $C_2$-spectrum $X$ is $\MU_{\R, 2}$-local if and only if $\nur (X)$ is $\MGL_2$-local. 
  \end{enumerate}
\end{thm}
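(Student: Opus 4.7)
The plan is to identify the $\MGL_2$-Amitsur completion of $\nur(X)$ with $\nur$ applied to the $\MU_{\R,2}$-Amitsur completion of $X$, and then to read off both directions of the biconditional using property~(1) and \Cref{prop:complete-compare}.

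First, I would combine property~(5) with the identification $\nur(\MU_{\R,2}) \simeq \MGL_2$: since $\MU_{\R,2}$ admits a cell decomposition with cells of the form $\Ss_2^{n\rho}$ (inherited from the classical $\MU$-cell structure after $2$-completion), property~(5) with $\MU_{\R,2}$ as the first argument gives a natural equivalence
\[
\MGL_2 \otimes \nur(Y) \simeq \nur(\MU_{\R,2} \otimes Y)
\]
for any $Y$; iterating gives $\MGL_2^{\otimes n+1} \otimes \nur(Y) \simeq \nur(\MU_{\R,2}^{\otimes n+1} \otimes Y)$ for every $n \geq 0$. Second, I would observe that $\nur$ preserves all limits, being the composite of the equivalence $\b^{-1}$, the inclusion $\Mod(\SHRat; \Ss_2[\ta^{-1}]) \hookrightarrow \SHRat$ (right adjoint to $\ta$-localization), and the Chow-connective cover $\tau_{\geq 0}^{\cn}$ (right adjoint to the inclusion of Chow-connective objects).

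Combining these two facts yields the central identity
\[
\Tot\bigl(\MGL_2^{\otimes \bullet + 1} \otimes \nur(X)\bigr) \simeq \nur\bigl(\Tot(\MU_{\R,2}^{\otimes \bullet + 1} \otimes X)\bigr) = \nur\bigl(X^{\wedge}_{\MU_{\R,2}}\bigr),
\]
identifying the $\MGL_2$-Amitsur completion of $\nur(X)$ with $\nur$ applied to the $\MU_{\R,2}$-Amitsur completion of $X$. Applying $\b$ and invoking property~(1), the natural completion map $\nur(X) \to \nur(X)^{\wedge}_{\MGL_2}$ is carried onto $X \to X^{\wedge}_{\MU_{\R,2}}$, so one is an equivalence if and only if the other is. I would then finish by invoking \Cref{prop:complete-compare} to identify $\MGL_2$-locality of $\nur(X)$ with Chow left completeness, together with the analogous equivalence ``$\MU_{\R,2}$-local $\Leftrightarrow$ $\MU_{\R,2}$-Amitsur complete'' on $\Sp_{C_2, i2}$, which follows from standard Bousfield completion theory since $\MU_{\R,2}$ is a connective commutative algebra.

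The main obstacle will be cleanly bridging the Chow Postnikov filtration with the $\MGL_2$-Amitsur filtration on $\nur(X)$, so that \Cref{prop:complete-compare} actually converts ``Chow left complete'' into ``$\MGL_2$-Amitsur complete.'' \Cref{thm:pi0-MGL} computes the Chow homotopy of a Chow-connective object in terms of $\MGL_2$-homology, so the two filtrations share the same associated graded structure; the convergence statements should then match, but verifying this identification carefully (and its analog for $\MU_{\R,2}$ on $\Sp_{C_2, i2}$) is where the real technical work lies.
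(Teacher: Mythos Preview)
Your approach to part~(6) is considerably more roundabout than the paper's, and the obstacle you identify at the end is a genuine gap.

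The paper's proof is essentially one observation. Since $\b$ is a symmetric monoidal equivalence onto the $\ta$-local subcategory with $\b(\MGL_2) \simeq \MU_{\R,2}$, the spectrum $X$ is $\MU_{\R,2}$-local if and only if $\b^{-1}(X)$ is $\MGL_2$-local. Applying \Cref{prop:complete-compare} to both $\b^{-1}(X)$ and to $\nur(X) = \tau_{\geq 0}^{\cn}(\b^{-1}(X))$, the question reduces to whether $\b^{-1}(X)$ is Chow left complete if and only if its Chow connective cover is. But this is a general fact in any $t$-structure: an object $W$ is left complete iff $\varprojlim_n \tau_{\geq n} W \simeq 0$, and since $\tau_{\geq n}(\tau_{\geq 0} W) = \tau_{\geq n} W$ for $n \geq 0$, the objects $W$ and $\tau_{\geq 0} W$ are simultaneously left complete. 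No Amitsur towers enter.

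Two specific problems with your route. First, your claim that $\nur$ preserves all limits is not correctly justified: viewed as landing in $\SHRat$, the functor $\tau_{\geq 0}^{\cn}$ is the composite of the right adjoint $\SHRat \to (\SHRat)^{\cn}_{\geq 0}$ with the inclusion back into $\SHRat$, and that inclusion is a \emph{left} adjoint. Already for the standard $t$-structure on spectra, a sequential limit of connective objects need not be connective (a $\varprojlim^1$ term can contribute to $\pi_{-1}$), so $\tau_{\geq 0}$ as an endofunctor does not preserve such limits. Second, and more seriously, the bridge you need at the end---that $\MGL_2$-locality coincides with $\MGL_2$-Amitsur-completeness on $\nur(X)$, and likewise for $\MU_{\R,2}$ on $\Sp_{C_2,i2}$---is not ``standard Bousfield completion theory.'' Nilpotent completion is in general strictly stronger than Bousfield localization; making them agree requires a convergence argument you have not supplied, and matching associated gradeds (via \Cref{thm:pi0-MGL}) does not by itself give this. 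The paper sidesteps all of this by applying \Cref{prop:complete-compare} directly to $\b^{-1}(X)$, not just to $\nur(X)$, and then invoking the trivial invariance of left completeness under connective cover.
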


We also compare $\nur$ to the functor $\Gamma_* : \Sp_{C_2, i2} \to \SHRat$ constructed in \Cref{cnstr:gamma-star}.

\begin{thm} \label{thm:Gamma-nu-comp}
  For $X \in \Sp_{C_2, i2}$, there is a natural equivalence
  \[\Gamma_* (X) \simeq \nur (X) ^{\wedge} _{\MGL_2}.\]
\end{thm}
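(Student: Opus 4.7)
The plan is to realize both $\Gamma_*(X)$ and $\nur(X)^{\wedge}_{\MGL_2}$ as totalizations of the same $\MGL_2$-Adams cobar complex and thus identify them. First I would show that $\Gamma_*(X)$ is $\MGL_2$-complete (equivalently Chow-complete by \Cref{prop:complete-compare}). The construction of $\Gamma_*$ via the shearing $\mathrm{Sh}(P_{2\bullet}; \MU_{\R,2})$ realizes $\Gamma_*(X)$ as a totalization along the $\MU_{\R,2}$-Adams tower. Using the lax symmetric monoidal structure of $\Gamma_*$ together with the equivalence $\Gamma_*(\MU_{\R,2}) \simeq \MGL_2$ from \Cref{cor:gamma-MGL}, this is identified with the $\MGL_2$-Adams cobar complex totalization for $\Gamma_*(X)$, exhibiting it as its own $\MGL_2$-completion.

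Next I would analyze $\nur(X)^{\wedge}_{\MGL_2}$ in parallel fashion. \Cref{thm:nu-properties}(5) applied to $\MU_{\R,2}$ (which is built from cells of the form $\Ss_2^{n\rho}$ via its Thom spectrum construction) yields the key equivalences $\MGL_2^{\otimes k+1} \otimes \nur(X) \simeq \nur(\MU_{\R,2}^{\otimes k+1} \otimes X)$. Totalizing along the resulting $\MGL_2$-Adams cobar complex gives $\nur(X)^{\wedge}_{\MGL_2} \simeq \mathrm{Tot}(\nur(\MU_{\R,2}^{\otimes \bullet+1} \otimes X))$. By the first paragraph's lax-monoidal computation we also have $\Gamma_*(X) \simeq \mathrm{Tot}(\Gamma_*(\MU_{\R,2}^{\otimes \bullet+1} \otimes X))$, so the proof reduces to showing $\nur(Y) \simeq \Gamma_*(Y)$ functorially for each $\MU_{\R,2}$-local object $Y$ of the form $\MU_{\R,2}^{\otimes k+1} \otimes X$.

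For this reduced comparison, I would construct a natural transformation $\nur \to \Gamma_*$ on $\MU_{\R,2}$-local objects via the universal property of the Chow-connective cover defining $\nur$ together with the Chow-connectivity of $\Gamma_*(Y)$, the latter following from the filtered model $\SHRat \simeq \Mod(\Sp^{\Fil}_{C_2, i2}; R_\bullet)$ and \Cref{prop:MGL-chow-conn}. To show this transformation is an equivalence, I would use that both sides are $\MGL_2$-complete and Chow-connective, so by \Cref{thm:pi0-MGL} it suffices to check it induces an isomorphism on $\MGL_2$-homology, reducing via iteration and the identifications of the previous paragraphs to the base equivalence $\nur(\MU_{\R,2}) \simeq \MGL_2 \simeq \Gamma_*(\MU_{\R,2})$. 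The main obstacle is setting up this comparison on $\MU_{\R,2}$-local objects coherently, so that the natural transformation is compatible with the cosimplicial structure and the resulting equivalence of totalizations is canonical.
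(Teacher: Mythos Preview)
Your overall strategy---reducing both sides to totalizations of cobar complexes indexed on $\MU_{\R,2}$---is the same as the paper's, but your execution has gaps that the paper avoids by working directly in filtered $C_2$-spectra via $i_*$.

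The most serious gap is in your first paragraph. Lax monoidality of $\Gamma_*$ only gives you a \emph{map} $\Gamma_*(X) \otimes \MGL_2^{\otimes k+1} \to \Gamma_*(X \otimes \MU_{\R,2}^{\otimes k+1})$; it does not by itself show this map is an equivalence, which is what you need to identify the $\MU_{\R,2}$-Adams tower for $X$ (transported through $\Gamma_*$) with the $\MGL_2$-Adams tower for $\Gamma_*(X)$. Proving this equivalence is real work. A second issue: the universal property of the Chow cover gives maps \emph{into} $\nur(Y)$ from Chow-connective sources, so the natural transformation you can build (given a map $\Gamma_*(Y) \to \b^{-1}(Y)$ and Chow-connectivity of $\Gamma_*(Y)$) goes $\Gamma_*(Y) \to \nur(Y)$, not the other direction. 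Finally, your justification for Chow-connectivity of $\Gamma_*(Y)$ via \Cref{prop:MGL-chow-conn} doesn't apply directly---that result is about $\MGL(n,k)$, not about images of $\Gamma_*$---and what you actually need is a translation between slice-connectivity of $i_*(\Gamma_*(Y))$ and Chow-connectivity of $\Gamma_*(Y)$.

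The paper sidesteps all of this by never constructing a comparison map in $\SHRat$. Instead it applies $i_*$ to the map $\nur(X \otimes \cb(\MU_{\R,2})) \to \b^{-1}(X \otimes \cb(\MU_{\R,2}))$ and invokes \Cref{prop:Chow-MGL}(3), which says directly that for $\MGL_2$-tensors the Chow cover in $\SHRat$ corresponds under $i_*$ to the even slice cover $P_{2\bullet}$ in $\Sp_{C_2}$. Since $i_*\Gamma_*(Y)$ is \emph{defined} via $P_{2\bullet}$, this gives the levelwise identification $i_*(\nur(X \otimes \MU_{\R,2}^{\otimes k+1})) \simeq i_*(\Gamma_*(X \otimes \MU_{\R,2}^{\otimes k+1}))$ immediately, with the cosimplicial coherence coming for free from the naturality of the factorization. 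Totalizing and using that $i_*$ is an equivalence finishes the proof. Filling your gaps would essentially amount to reproving \Cref{prop:Chow-MGL}, so the paper's route is the more direct one.
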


As evidence that the functor $\nur$ is a good way to produce Artin-Tate $\R$-motivic spectra, we prove that several $\R$-motivic spectra of interest may be recovered from their Betti realizations via $\nur$:

\begin{thm}\label{thm:recover}
  There are natural equivalences of commutative rings in $\SHRat$:
  \begin{align*}
    \MGL_2 &\simeq \nur (\MU_{\R,2}) \\
    \HFt &\simeq \nur (\uFt) \\
    \HZt &\simeq \nur (\uZt) \\
    \kgl_2 &\simeq \nur (\ku_{\R,2}) \\
    \kq_2 &\simeq \nur (\ko_{C_2, 2}).
  \end{align*}
\end{thm}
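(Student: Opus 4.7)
The strategy I would take is to reduce the five equivalences to the case of \Cref{cor:gamma-MGL} via \Cref{thm:Gamma-nu-comp}, which states that $\Gamma_*(X) \simeq \nur(X)^{\wedge}_{\MGL_2}$. First, each of the $C_2$-equivariant spectra $\MU_{\R,2}$, $\uFt$, $\uZt$, $\ku_{\R,2}$, $\ko_{C_2,2}$ is $\MU_{\R,2}$-local, because each admits an $\MU_{\R,2}$-module structure via standard orientation or Thom-class maps (e.g.\ the composite $\MU_\R \to \uZ \to \uFt$, the complex orientation $\MU_\R \to \ku_\R$, and the unit map $\ko \to \ku$ in the Hermitian case). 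By \Cref{thm:nu-properties}(6), $\nur(X)$ is therefore $\MGL_2$-local for each such $X$; by \Cref{prop:complete-compare}, $\nur(X)$ is then Chow left complete, which forces the $\MGL_2$-completion map $\nur(X) \to \nur(X)^{\wedge}_{\MGL_2}$ to be an equivalence. Combining this with \Cref{thm:Gamma-nu-comp} gives equivalences of commutative rings $\nur(X) \simeq \Gamma_*(X)$ for each $X$ in our list.

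It then remains to identify $\Gamma_*(X) \simeq E$ as commutative rings in $\SHRat$ for each of the five pairs. The case $\MGL_2 \simeq \Gamma_*(\MU_{\R,2})$ is precisely \Cref{cor:gamma-MGL}. For the remaining four cases, each ring map $\MU_{\R,2} \to X$, together with the lax symmetric monoidality of $\Gamma_*$ from \Cref{cnstr:gamma-star}, produces a ring map $\MGL_2 \simeq \Gamma_*(\MU_{\R,2}) \to \Gamma_*(X)$, and compatibility with Betti realization follows from \Cref{thm:nu-properties}(1) applied via $\nur(X) \simeq \Gamma_*(X)$, giving $\b\,\Gamma_*(X) \simeq X$. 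One then matches $\Gamma_*(X)$ with $E$ as $\MGL_2$-algebras case by case: for $\HZt$ and $\HFt$ one uses the universal property of the Thom class, realizing these as quotients of $\MGL_2$ by regular sequences in $\pi_{*\rho}^{C_2}\MU_{\R,2}$; for $\kgl_2$ one uses the motivic Bott class or Snaith's theorem; and for $\kq_2$ one reduces to the $\kgl_2$ case via its identification with the $C_2$-Galois-equivariant completion of $\kgl_2$.

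The main obstacle I expect is the final case-by-case identification of $\Gamma_*(X)$ with $E$ for $E \neq \MGL_2$. Although the setup is entirely uniform through the reduction $\nur(X) \simeq \Gamma_*(X)$, each of the four remaining identifications relies on its own orientation-theoretic universal property (or on Bott periodicity, or on a Galois-descent argument in the $\kq_2$ case), and none of them follows formally from the previous steps. The matching of multiplicative structures in particular must be checked by hand in each instance.
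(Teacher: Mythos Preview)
Your reduction to $\nur(X)\simeq\Gamma_*(X)$ is fine, but it does not actually make progress: as the paper itself notes in \Cref{rmk:gamma-nu-same}, the identifications $E\simeq\Gamma_*(\b(E))$ are \emph{deduced} from \Cref{thm:recover}, not used as input. So your step~4, ``identify $\Gamma_*(X)\simeq E$ case by case,'' is equivalent to the original problem, and the universal-property arguments you sketch (quotients of $\MGL_2$ by regular sequences, a Snaith-type argument for $\kgl_2$, a Galois-descent argument for $\kq_2$) would each require substantial work that is neither in the paper nor formal. In particular, $\Gamma_*$ is only lax monoidal and not exact, so it does not automatically take $\MU_{\R,2}/(a_1,a_2,\dots)$ to $\MGL_2/(a_1,a_2,\dots)$; one would need to invoke the exactness criterion of \Cref{thm:nu-properties}(4) at every stage of an infinite colimit and control the result. (A small additional wrinkle: your justification that $\ko_{C_2,2}$ is $\MU_{\R,2}$-local via ``the unit map $\ko\to\ku$'' does not give a module structure; one needs a different argument, e.g.\ a retract argument after tensoring with $C\eta$.)

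The paper instead works entirely on the motivic side and avoids all of this. It proves a direct criterion, \Cref{prop:slice-simple-nu}: for any $X\in\SHRat$, one has $L_{\MGL_2}X\simeq L_{\MGL_2}\nur(\b(X))$ as commutative algebras if and only if $\MGL_2\otimes X$ is \emph{slice simple}, meaning $\Map_{\SHRat}(\Ss_2^{0,0,n},\MGL_2\otimes X)\simeq P_{2n}(\b(X)\otimes\MU_{\R,2})$. The five equivalences then reduce to checking slice simplicity of $\MGL_2\otimes E$. For $E=\MGL_2,\HZt,\HFt,\kgl_2$, one observes that $\MGL_2\otimes E$ splits as a sum of pure suspensions of $E$, so it suffices that $E$ itself be slice simple; this follows from \Cref{prop:P-MGL}, \Cref{lem:Zcalc}, and Heard's comparison \cite{SliceComp} respectively. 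For $\kq_2$, one uses $\kq_2\otimes C\eta\simeq\kgl_2$ and the vanishing of $\eta$ in $\MGL_2$ to exhibit $\MGL_2\otimes\kq_2$ as a retract of $\MGL_2\otimes\kgl_2$. This is both more uniform and shorter than the orientation-theoretic route you propose.
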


\begin{rmk}
  In light of \Cref{thm:recover}, it would be reasonable to define an $\R$-motivic spectrum of motivic modular forms as $\nur (\tmf_{C_2,2})$, assuming that one had a suitable $C_2$-equivariant spectrum of connective topological modular forms $\tmf_{C_2,2}$.
  Since no such equivariant spectrum has yet been constructed, we leave this to future work. \todo{Mention why previous attempts don't work? Errors in the literature? And perhaps mention that $\TMF_{C_2}$ is known to exist?}
\end{rmk}

\begin{rmk} \label{rmk:gamma-nu-same}
  Since all of the Artin-Tate $\R$-motivic spectra in question are $\MGL_2$-complete, in light of \Cref{thm:Gamma-nu-comp} we may replace $\nur$ in \Cref{thm:recover} by $\Gamma_*$.
\end{rmk}

We now begin with the proof of \Cref{thm:nu-properties}.

\begin{proof}[Proof of \Cref{thm:nu-properties}] \

  \textbf{Proof of (1):} Equivalently, we are to show that $\nur(X)[\ta^{-1}] \simeq \b^{-1} (X)$.
  Using the cofiber sequence $\nur(X) \to \b^{-1} (X) \to \tau_{\leq -1} ^{\cn} (\b^{-1} (X))$, we find that the cofiber of $\nur(X) [\ta^{-1}] \to \b^{-1} (X)[\ta^{-1}] \simeq \b^{-1} (X)$ is may be expressed as $\varinjlim \Sigma^{0,0,m} \tau_{\leq -1} ^{\cn} (\b^{-1} (X))$.

  Now, we know that $\pi_{n+k_1, n+k_2, n} ^{\R} (\tau_{\leq -1} ^{\cn} (\b^{-1} (X))) \cong 0$ for all $n,k_1,k_2 \in \Z$ with $k_1 > 0$ and $k_1+k_2 > 0$.
  It follows that $\pi_{n+k_1, n+k_2, n} ^{\R} ( \Sigma^{0,0,m} \tau_{\leq -1} ^{\cn} (\b^{-1} (X))) \cong 0$ for all $n,k_1,k_2 \in \Z$ with $k_1 > -m$ and $k_1+k_2 > -2m$.
  This implies that $\varinjlim \Sigma^{0,0,m} \tau_{\leq -1} ^{\cn} (\b^{-1} (X))$ is null, as desired.

  \textbf{Proof of (2):} This is immediate from the fact that the Chow $t$-structure is compatible with filtered colimits.

  \textbf{Proof of (3):} It is clear that there is a natural equivalence $\b^{-1} (\Sigma^{k\rho} X) \simeq \Sigma^{k,k,k} \b^{-1} (X)$.
  Moreover, since $\Sigma^{m,m,m}$ induces an automorphism of the Chow $t$-structure, we find further that $\tau_{\geq 0} ^{\cn}$ commutes with $\Sigma^{m,m,m}$.
  Combining these facts, we obtain the desired result.

  \textbf{Proof of (4):} It is clear that $X \to Y \to Z$ is a cofiber sequence if and only if $\b^{-1} (X) \to \b^{-1} (Y) \to \b^{-1} (Z)$ is.
  Applying a general fact about $t$-strutures, we see that
  \[\tau_{\geq 0} ^\cn (\b^{-1} (X)) \to \tau_{\geq 0} ^\cn (\b^{-1} (Y)) \to \tau_{\geq 0} ^\cn (\b^{-1} (Z)) \]
  is a cofiber sequence if and only if $\pi_{-1} ^{\ch} (\b^{-1} (X)) \to \pi_{-1} ^{\ch} (\b^{-1} (Y))$ is a monomorphism.
  The result then follows from \Cref{thm:pi0-MGL} and the isomorphism $\piu_{p,q,w} ^\R (\MGL_2 \otimes \b^{-1} (X)) \cong \piu_{p+q\sigma} ^{C_2} (\MU_{\R, 2} \otimes X)$.

  \textbf{Proof of (5):} By compatibility with filtered colimits, we may assume that $X$ admits a finite cell structure with all cells of the form $\Ss_2^{n\rho}$. Let us write
  \[\Ss_2^{n_1 \rho} \simeq X_1 \to X_2 \to \dots \to X_k = X\]
  with $X_i / X_{i-1} \simeq \Ss_2^{n_i \rho}$.
  Then it is easy to prove that $\piu_{n\rho-1} ^{C_2} (X_i \otimes \MU_{\R, 2}) = 0$ for all $n$ and $i$ by induction, so that by (3) and (4) there are cofiber sequences
  \[\nur (X_{i-1}) \to \nur (X_i) \to \Ss_2^{n_i, n_i, n_i}.\]

  It follows that $\mathbb{D}(\nur(X))$ is Chow connective. To show that the natural map $\nur(X) \otimes \nur(Y) \to \nur(X \otimes Y)$ is an equivalence, it suffices to show that it induces an equivalence on $\Omega^{\infty} \Map_{\SHRat} (Z, -)$ for all Chow connective $Z$.
  For such $Z$, we have
  \begin{align*}
    \Omega^{\infty} \Map_{\SHRat} (Z, \nur(X) \otimes \nur(Y))
    &\simeq  \Omega^{\infty} \Map_{\SHRat} (Z \otimes \mathbb{D} (\nur(X)), \nur(Y)) \\
    &\simeq \Omega^{\infty} \Map_{\SHRat} (Z \otimes \mathbb{D} (\nur(X)), \b^{-1} (Y)) \\
    &\simeq \Omega^{\infty} \Map_{\SHRat} (Z, \nur(X) \otimes \b^{-1} (Y)) \\
    &\simeq \Omega^{\infty} \Map_{\SHRat} (Z, \b^{-1} (X) \otimes \b^{-1} (Y)) \\
    &\simeq \Omega^{\infty} \Map_{\SHRat} (Z, \nur(X \otimes Y)),
  \end{align*}
  as desired. We have used Chow connectivity of $\mathbb{D} (\nur(X))$ in the second equivalence and (1) in the fourth.

  \textbf{Proof of (6):} It is clear that $X$ is $\MU_{\R, 2}$-local if and only $\b^{-1} (X)$ is $\MGL_2$-local.
  By \Cref{prop:complete-compare}, it suffices to show that $\b^{-1} (X)$ is left complete with respect to the Chow $t$-structure if and only if $\nur(X) = \tau_{\geq 0} ^{\cn} (\b^{-1} (X))$ is.
  This follows from the fact that left completeness is invariant under taking connective cover.
\end{proof}

We now move on to the proof of \Cref{thm:Gamma-nu-comp}.
We begin by studying the interaction of the Chow $t$-structure with $\MGL_2$.

\begin{prop} \label{prop:Chow-MGL}
  Let $X \in \SHRat$. Then the following statements hold:
  \begin{enumerate}
    \item If $X$ is Chow connective, then
      \[\Map_{\SHRat} (\Ss_2^{0,0,n}, \MGL_2 \otimes X)\]
      is regular slice $2n$-connective for all $n$.
    \item There is a natural equivalence
      \[\MGL_2 \otimes (\tau^{\cn} _{\geq 0} X) \simeq \tau_{\geq 0} ^{\cn} (\MGL_2 \otimes X).\]
    \item The natural map
      \[\Map_{\SHRat} (\Ss_2^{0,0,n}, \MGL_2 \otimes \tau_{\geq 0} ^{\cn} X) \to \Map_{\SHRat} (\Ss_2^{0,0,n}, \MGL_2 \otimes X)\]
      induced by the counit $\tau_{\geq 0} ^{\cn} X \to X$ is equivalent to the canonical map
      \[P_{2n} \Map_{\SHRat} (\Ss_2^{0,0,n}, \MGL_2 \otimes X) \to \Map_{\SHRat} (\Ss_2^{0,0,n}, \MGL_2 \otimes X).\]
  \end{enumerate}
\end{prop}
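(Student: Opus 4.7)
The plan is to prove (1) by reducing to generators, bootstrap to (2) using the cell structure of $\MGL_2$, and deduce (3) from (1), (2), and a dual form of (1).

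For (1), I would reduce to generators: the class of $X$ for which $\Map_{\SHRat}(\Ss_2^{0,0,n}, \MGL_2 \otimes X)$ is regular slice $2n$-connective is closed under colimits and extensions, so it suffices to check on the generators $\Ss_2^{m+k_1, m+k_2, m}$ of $(\SHRat)^{\cn}_{\geq 0}$ with $k_1 \geq 0$ and $k_1 + k_2 \geq 0$. On such a sphere, using the identification $\Map_{\SHRat}(\Ss_2^{0,0,n}, \MGL_2) \simeq P_{2n}\MU_{\R,2}$ from \Cref{prop:P-MGL} together with the commutation $\Sigma^{m\rho} P_{2(n-m)}\MU_{\R,2} \simeq P_{2n} \Sigma^{m\rho}\MU_{\R,2}$ of the slice cover with $\rho$-suspension, the mapping object rewrites as $\Sigma^{k_1+k_2\sigma} P_{2n}(\Sigma^{m\rho}\MU_{\R,2})$. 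Since $\Phi^e \Sigma^{k_1+k_2\sigma} = \Sigma^{k_1+k_2} \Phi^e$ and $\Phi^{C_2} \Sigma^{k_1+k_2\sigma} = \Sigma^{k_1} \Phi^{C_2}$, the constraints $k_1 \geq 0$ and $k_1+k_2 \geq 0$ ensure $\Sigma^{k_1+k_2\sigma}$ preserves regular slice $2n$-connectivity.

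For (2), Part (1) applied to $\tau_{\geq 0}^{\cn} X$ shows that $\MGL_2 \otimes \tau_{\geq 0}^{\cn} X$ is Chow connective, so from the cofiber sequence $\MGL_2 \otimes \tau_{\geq 0}^{\cn} X \to \MGL_2 \otimes X \to \MGL_2 \otimes \tau_{<0}^{\cn} X$ it suffices to show that $\MGL_2 \otimes Y$ is Chow $(-1)$-coconnective whenever $Y$ is. I would exploit the cell structure from \Cref{prop:MGL-chow-conn}: $\MGL_2 \simeq \colim_m \MGL(m,m)_2$, with each finite stage built from cells of the form $\Ss_2^{k,k,k}$. The vanishing condition defining Chow $(-1)$-coconnectivity is invariant under $\Sigma^{k,k,k}$ (the substitution $n \mapsto n-k$ permutes the required vanishing of $\pi^{\R}_{n+k_1, n+k_2, n}$), and Chow $(-1)$-coconnective is closed under extensions and under filtered colimits (by compatibility of the Chow $t$-structure with filtered colimits). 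Hence $\MGL(m,m)_2 \otimes Y$ is Chow $(-1)$-coconnective for each $m$, and so is the filtered colimit $\MGL_2 \otimes Y$.

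For (3), I would apply $\Map_{\SHRat}(\Ss_2^{0,0,n}, -)$ to the Chow truncation sequence of $Y := \MGL_2 \otimes X$. Using Part (2) to identify $\tau_{\geq 0}^{\cn} Y \simeq \MGL_2 \otimes \tau_{\geq 0}^{\cn} X$, Part (1) implies the first term is regular slice $\geq 2n$. A dual of Part (1)---which I would establish from the adjunction between $c_{\C/\R}(-) \otimes \Ss_2^{0,0,n}: \Sp_{C_2, i2} \to \SHRat$ and $\Map_{\SHRat}(\Ss_2^{0,0,n}, -)$ together with the observation that $c_{\C/\R}(Z) \otimes \Ss_2^{0,0,n}$ lies in $(\SHRat)^{\cn}_{\geq 0}$ whenever $Z$ is regular slice $2n$-connective (checked on a set of generators of the regular slice $t$-structure)---shows the cofiber is regular slice $< 2n$. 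Therefore the first map realizes $P_{2n}\Map_{\SHRat}(\Ss_2^{0,0,n}, Y) \to \Map_{\SHRat}(\Ss_2^{0,0,n}, Y)$, proving (3). The main obstacle is Part (2): a purely formal argument along the lines of ``right $t$-exactness plus preservation of the heart implies $t$-exactness'' fails here, and the explicit cell structure of $\MGL_2$ is essential.
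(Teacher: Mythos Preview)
Your proposal is correct and largely parallel to the paper's argument, with some genuine differences worth noting.

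For (1), your reduction to generating spheres and the slice-connectivity bookkeeping match the paper exactly.

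For (2), you and the paper both rely on the pure cell structure of the finite stages $\MGL(m,m)_2$, but from opposite sides. The paper passes to duals: for compact Chow-connective $Y$ it writes
\[
\Omega^{\infty}\Map\bigl(Y,(\tau^{\cn}_{\ge 0}X)\otimes\MGL_2\bigr)\simeq
\varinjlim_k \Omega^{\infty}\Map\bigl(Y\otimes\mathbb{D}(\MGL(k,k)_2),\tau^{\cn}_{\ge 0}X\bigr)
\]
and uses Chow connectivity of $\mathbb{D}(\MGL(k,k)_2)$. You instead argue directly that $\MGL_2\otimes(-)$ preserves Chow $(-1)$-coconnectivity because this class is closed under extensions, filtered colimits, and the $\Sigma^{k,k,k}$-twists appearing in the cell structure. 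These are equivalent maneuvers; yours avoids dualizing, the paper's avoids invoking compatibility of the coconnective part with filtered colimits.

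For (3) the routes diverge more substantially. The paper uses (2) to identify $\MGL_2\otimes\tau^{\cn}_{\ge 0}X\simeq\tau^{\cn}_{\ge 0}(\MGL_2\otimes X)$, then checks the factored map $A\to P_{2n}M$ is an equivalence by applying $\Omega^{\infty}\Sigma^{-n\rho}$ and invoking Chow connectivity of $\Ss_2^{n,n,n}$. Your ``dual of (1)'' argument instead shows the cofiber $\Map_{\SHRat}(\Ss_2^{0,0,n},\tau^{\cn}_{<0}(\MGL_2\otimes X))$ is slice $<2n$ via the adjunction with $c_{\C/\R}(-)\otimes\Ss_2^{0,0,n}$. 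One point to be careful about in your write-up: rather than checking generators of $\Sp_{C_2}^{\ge 2n}$ directly (which gets delicate for $n<0$), it is cleaner to observe that $Z$ is slice $\ge 2n$ iff $\Sigma^{-n\rho}Z$ is connective, so
\[
c_{\C/\R}(Z)\otimes\Ss_2^{0,0,n}\simeq\Sigma^{n,n,n}\,c_{\C/\R}(\Sigma^{-n\rho}Z),
\]
reducing to the claim that $c_{\C/\R}$ sends connective $C_2$-spectra to Chow-connective objects—this follows from checking the generators $\Ss^0$ and $C_{2,+}\mapsto\Spec(\C)$, the latter being Chow connective via the cofiber sequence $\Ss_2^{0,-1,0}\xrightarrow{a}\Ss_2^{0,0,0}\to\Spec(\C)$. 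With that adjustment your (3) is airtight.
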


\begin{proof}
  We begin with the proof of (1).
  Since the full subcategory of $X$ for which $\Map_{\SHRat} (\Ss_2^{0,0,n}, \MGL_2 \otimes X)$ is regular slice $2n$-connective is closed under colimits and extensions, it suffices to prove this for $X = \Ss_2^{m+k_1,m+k_2,m}$ for all $m,k_1,k_2 \in \Z$ where $k_1 \geq 0$ and $k_1 + k_2 \geq 0$.

%
  Now, we have
  \[\Map_{\SHRat} (\Ss_2^{0,0,n}, \MGL_2 \otimes \Ss_2^{m+k_1,m+k_2,m}) \simeq \Sigma^{m+k_1+(m+k_2)\sigma} \Map_{\SHRat} (\Ss_2^{0,0,n-m}, \MGL_2).\]
  Since $\Sigma^{m+k_1+(m+k_2)\sigma}$ of a regular slice $2(n-m)$-connective $C_2$-spectrum is regular slice $2n$-connective, this reduces us to the case where $X = \Ss_2^{0,0,0}$, where this follows from \Cref{prop:P-MGL}.

  We now prove (2).
  Since $\MGL_2$ is Chow connective, there is a natural map 
  \[(\tau_{\geq 0} ^{\cn} X) \otimes \MGL_2 \to \tau_{\geq 0} ^{\cn} (X \otimes \MGL_2).\]
  To show that this is an equivalence, it suffices to show that
  \[\Omega^{\infty} \Map_{\SHRat} (Y, (\tau_{\geq 0} ^{\cn} X) \otimes \MGL_2) \to \Omega^{\infty} \Map_{\SHRat} (Y, \tau_{\geq 0} ^{\cn} (X \otimes \MGL_2))\]
  is an equivalence for all $Y$ compact and Chow connective.
  This is equivalent to the map
  \[\Omega^{\infty} \Map_{\SHRat} (Y, (\tau_{\geq 0} ^{\cn} X) \otimes \MGL_2) \to \Omega^{\infty} \Map_{\SHRat} (Y, X \otimes \MGL_2).\]
  Now, using compactness of $Y$, the identification $\MGL_2 \simeq \varinjlim_{k} \MGL(k,k)_2$, and duality, we may rewrite this map as
  \[\varinjlim_k \Omega^{\infty} \Map_{\SHRat} (Y \otimes \mathbb{D} (\MGL(k,k)_2), (\tau_{\geq 0} ^{\cn} X)) \to \varinjlim_k \Omega^{\infty} \Map_{\SHRat} (Y \otimes \mathbb{D} (\MGL(k,k)_2), X).\]
  This is an equivalence by \Cref{cor:MGL-dual-Chow-conn}.

  Finally, we prove (3).
  By (1), the map
  \[\Map_{\SHRat} (\Ss_2^{0,0,n}, \MGL_2 \otimes \tau_{\geq 0} ^{\cn} X) \to \Map_{\SHRat} (\Ss_2^{0,0,n}, \MGL_2 \otimes X)\]
  factors through a map
  \[\Map_{\SHRat} (\Ss_2^{0,0,n}, \MGL_2 \otimes \tau_{\geq 0} ^{\cn} X) \to P_{2n} \Map_{\SHRat} (\Ss_2^{0,0,n}, \MGL_2 \otimes X).\]

  Since both sides are regular slice $2n$-connective, it suffices to show that this map is an equivalence after applying $\Omega^{\infty} \Sigma^{-n\rho}$. Making some basic manipulations, this is equivalent to showing that
  \[\Omega^{\infty} \Map_{\SHRat} (\Ss_2^{n,n,n}, \MGL_2 \otimes \tau_{\geq 0} ^{\cn} X) \to \Omega^{\infty} \Map_{\SHRat} (\Ss_2^{n,n,n}, \MGL_2 \otimes X)\]
  is an equivalence.
  This is equivalence by (2) and the Chow connectivity of $\Ss_2^{n,n,n}$.
\end{proof}

We are now ready to prove \Cref{thm:Gamma-nu-comp}.
%
  %
  %
  %
%
\begin{proof}[Proof of \Cref{thm:Gamma-nu-comp}]
  In this proof, we freely use the notation of \Cref{sec:top} and \Cref{app:compare}, in particular the functors $i_*$, $\tau_{\geq 0} ^{2\slice}$, and $\cb$.

  Let $X$ be a $C_2$-spectrum. By \Cref{prop:Chow-MGL}(2), there are equivalences
  \begin{align*}
    \nur(X)^{\wedge} _{\MGL_2}
    &\simeq \Tot^{\bullet} (\nur(X) \otimes \cb(\MGL_2)) \\
    &\simeq \Tot^{\bullet} (\nur(X \otimes \cb(\MU_{\R, 2}))).
  \end{align*}
  Applying $i_*$ to the map $\nur(X \otimes \cb(\MU_{\R, 2})) \to \b^{-1} (X \otimes \cb(\MU_{\R, 2}))$ of $\cb(\MGL_2)$-modules in cosimplicial Artin-Tate $\R$-motivic spectra, we obtain a map
  \[i_* (\nur (X \otimes \cb(\MU_{\R, 2}))) \to X \otimes \cb(\MU_{\R, 2})\]
  of $i_* (\cb(\MGL_2)) \simeq \tau_{\geq 0}^{2\slice} \cb(\MU_{\R, 2})$-modules in cosimplicial filtered $C_2$-spectra, where the target is constant in the filtered direction.

  By \Cref{prop:Chow-MGL}(3), this factors through an equivalence
  \[i_* (\nur (X \otimes \cb(\MU_{\R, 2}))) \xrightarrow{\sim} \tau_{\geq 0} ^{2\slice} (X \otimes \cb(\MU_{\R, 2}))\]
  of $\tau_{\geq 0} ^{2\slice} \cb(\MU_{\R, 2})$-modules.
  Totalizing, we obtain an equivalence
  \[i_* (\nur (X)^{\wedge} _{\MGL_2}) \simeq i_* (\Gamma_* (X))\]
  of $R_{\bullet}$-modules.
  Finally, translating back along the equivalence $i_*$, we obtain the desired equivalence
  \[\nur(X)^{\wedge} _{\MGL_2} \simeq \Gamma_* (X)\]
  in $\SHRat$.
%
\end{proof}

%
%
We now move on to the proof of \Cref{thm:recover}.
We will prove \Cref{thm:recover} as an application of a general criterion for there to be an $\MGL_2$-local equivalence $X \simeq \nur (\b(X))$. This will be based on the following definition:


\begin{dfn}
  We say that $X \in \SHRat$ is \emph{slice simple} if, for all $n \in \Z$, the natural map
  \[ \Map_{\SHRat} (\Ss_2^{0,0,n}, X) \to \b(X)\]
  factors through an equivalence
  \[ \Map_{\SHRat} (\Ss_2^{0,0,n}, X) \simeq P_{2n} \b(X).\]
\end{dfn}

\begin{prop} \label{prop:slice-simple-nu}
  Let $X \in \SHR^{\at}$, and let $L_{\MGL_2}$ denote $\MGL_2$-localization. Then $L_{\MGL_2} X \simeq L_{\MGL_2} \nur (\b(X))$ if and only if $\MGL_2 \otimes X$ is slice simple.

  If $X$ is a commutative algebra, then the equivalence $L_{\MGL_2} X \simeq L_{\MGL_2} \nur (\b(X))$ is one of commutative algebras.
%
%
\end{prop}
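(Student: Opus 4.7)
The plan is to translate to the filtered model of Theorem~\ref{thm:filt-model} and exploit Proposition~\ref{prop:P-MGL}. Tensoring the filtered-model equivalence with $\MGL_2$ and using that $i_* \MGL_2 \simeq P_{2\bullet}\MU_{\R,2}$, we obtain a symmetric monoidal equivalence
\[ i_* : \Mod(\SHRat; \MGL_2) \xrightarrow{\simeq} \Mod(\Sp_{C_2, i2}^{\Fil}; P_{2\bullet}\MU_{\R,2}), \]
reducing the comparison of $\MGL_2$-modules in $\SHRat$ to one in the filtered category. The key explicit computation is
\[ i_*(\MGL_2^{\otimes k+1} \otimes \nur(\b(X)))_n \simeq P_{2n}(\MU_{\R,2}^{\otimes k+1} \otimes \b(X)), \]
with structure maps and maps to the constant filtered object given by canonical slice truncations. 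This follows from iterated application of Proposition~\ref{prop:Chow-MGL}(2), symmetric monoidality of $\b^{-1}$ (yielding $\MGL_2 \otimes \b^{-1}(W) \simeq \b^{-1}(\MU_{\R,2} \otimes W)$, since the $\ta$-local subcategory is closed under tensor products), Proposition~\ref{prop:Chow-MGL}(3), and the identification $\Map_{\SHRat}(\Ss_2^{0,0,n}, \b^{-1}(Z)) \simeq Z$ for $Z \in \Sp_{C_2, i2}$ (because $\Ss_2^{0,0,n}[\ta^{-1}] \simeq \Ss_2^0[\ta^{-1}]$). Since the regular slice filtration is multiplicative, this is automatically an equivalence of $P_{2\bullet}\MU_{\R,2}$-modules.

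For the forward direction, the slice-simple hypothesis on $\MGL_2 \otimes X$ is precisely the statement that $i_*(\MGL_2 \otimes X) \simeq P_{2\bullet}(\MU_{\R,2} \otimes \b(X))$ over the constant filtered object on $\MU_{\R,2} \otimes \b(X)$. Combined with the $k=0$ case above, this gives an equivalence of $P_{2\bullet}\MU_{\R,2}$-modules $i_*(\MGL_2 \otimes X) \simeq i_*(\MGL_2 \otimes \nur(\b(X)))$. Lifting along the equivalence of the previous paragraph yields $\MGL_2 \otimes X \simeq \MGL_2 \otimes \nur(\b(X))$ as $\MGL_2$-modules in $\SHRat$. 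Since the Adams cobar $\MGL_2^{\otimes \bullet+1} \otimes Z$ depends functorially on $\MGL_2 \otimes Z$ as an $\MGL_2$-module, the equivalence extends to a cosimplicial equivalence of cobar complexes; totalizing (and using Proposition~\ref{prop:complete-compare} to identify $L_{\MGL_2}$ with Chow left completion, which in turn agrees with the Adams totalization) yields $L_{\MGL_2}X \simeq L_{\MGL_2}\nur(\b(X))$. For the converse, $L_{\MGL_2}X \simeq L_{\MGL_2}\nur(\b(X))$ implies $\MGL_2 \otimes X \simeq \MGL_2 \otimes \nur(\b(X))$ after tensoring with $\MGL_2$ (since $Z \to L_{\MGL_2}Z$ is always an $\MGL_2$-equivalence), and applying $i_*$ recovers slice simplicity.

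The commutative algebra upgrade follows because $\nur$ is lax symmetric monoidal (being the composite of the symmetric monoidal inverse Betti functor with lax monoidal Chow connective cover), every equivalence used above is symmetric monoidal and natural, and $\Tot$ of a cosimplicial commutative algebra is a commutative algebra. The main obstacle is coherence bookkeeping: the slice-simple identification is stated only at the level of underlying filtered objects, but must be upgraded to one of $P_{2\bullet}\MU_{\R,2}$-modules (which reduces to the multiplicativity of the regular slice filtration) and then propagated through the Adams cobar in a compatible manner; these reduce to standard naturality arguments once the main computation is in hand.
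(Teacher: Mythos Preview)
Your converse direction is fine and matches the paper. The forward direction, however, contains a genuine error: the claim that ``the Adams cobar $\MGL_2^{\otimes \bullet+1} \otimes Z$ depends functorially on $\MGL_2 \otimes Z$ as an $\MGL_2$-module'' is false. Writing $M = E \otimes Z$ and $T = E \otimes -$, the last coface map $d^{n+1}: T^{n+1}Z \to T^{n+2}Z$ is $T^{n}(T\eta_Z)$, not $T^{n}(\eta_{TZ})$; the former sends $e \otimes z \mapsto e \otimes 1 \otimes z$ rather than $m \mapsto 1 \otimes m$, and this genuinely depends on $Z$, not just on $M$. A concrete counterexample in $\Sp$ with $E = \F_2$: take $Z_1 = S^0 \vee S^1$ and $Z_2 = S/2$. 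Then $\F_2 \otimes Z_1 \simeq \F_2 \otimes Z_2$ as $\F_2$-modules (both are $\F_2 \oplus \Sigma\F_2$), but their $\F_2$-nilpotent completions are $(S^0)^\wedge_2 \vee (S^1)^\wedge_2$ and $S/2$, which are not equivalent. So an abstract $\MGL_2$-module equivalence does not propagate to the cobar totalization, and hence does not yield $L_{\MGL_2}X \simeq L_{\MGL_2}\nur(\b(X))$.

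The paper avoids this entirely by producing an actual zigzag of maps
\[
X \longleftarrow \tau_{\geq 0}^{\cn} X \longrightarrow \tau_{\geq 0}^{\cn}(X[\ta^{-1}]) = \nur(\b(X))
\]
in $\SHRat$ and then checking, via slice simplicity and \Cref{prop:Chow-MGL}(3), that both arrows become equivalences after applying $\Map_{\SHRat}(\Ss_2^{0,0,n}, - \otimes \MGL_2)$ for all $n$; hence both are $\MGL_2$-equivalences and $L_{\MGL_2}$ of the zigzag collapses. Because these are honest maps rather than abstract module identifications, no cobar functoriality claim is needed. You could salvage your argument by observing that your module equivalence is constructed \emph{over} the constant object $Y(\MU_{\R,2}\otimes\b(X))$ and using that compatibility to manufacture a zigzag --- but once you do so you have essentially rediscovered the paper's proof, and your detour through the filtered model and Adams totalization becomes unnecessary.
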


\begin{proof}
  Suppose that $L_{\MGL_2} X \simeq L_{\MGL_2} \nur(\b(X))$, so that $\MGL_2 \otimes X \simeq \MGL_2 \otimes \nur(\b(X))$.
  Then we have
  \[\Map_{\SHRat} (\Ss_2^{0,0,n}, \MGL_2 \otimes X) \simeq \Map_{\SHRat} (\Ss_2^{0,0,n}, \MGL_2 \otimes \nur(\b(X))\]
  as $C_2$-spectra over $\b(X)$.
  By \Cref{prop:Chow-MGL}(3), the map
  \[\Map_{\SHRat} (\Ss_2^{0,0,n}, \MGL_2 \otimes \nur(\b(X)) \to \b(\MGL_2 \otimes X)\]
  factors through an equivalence
  \[\Map_{\SHRat} (\Ss_2^{0,0,n}, \MGL_2 \otimes \nur(\b(X)) \simeq P_{2n} \b(\MGL_2 \otimes X),\]
  so that $\MGL_2 \otimes X$ is slice simple.

  Now suppose that $\MGL_2 \otimes X$ is slice simple, and
%
%
  consider the following diagram:
  \begin{center}
    \begin{tikzcd}
      X \ar[r] & X[\ta^{-1}] & \\
      \tau_{\geq 0} ^{\cn} X \ar[r] \ar[u] & \tau_{\geq 0} ^{\cn} (X[\ta^{-1}]). \ar[u] 
    \end{tikzcd}
  \end{center}
  If $X$ is a commutative algebra, then this is a diagram of commutative algebras.
  Applying $\Map_{\SHRat} (\Ss_2^{0,0,n}, - \otimes \MGL_2)$, we obtain the diagram:
  \begin{center}
    \begin{tikzcd}
      \Map_{\SHRat} (\Ss_2^{0,0,n}, X \otimes \MGL_2) \ar[r] & \b(X) \otimes \MU_{\R, 2} & \\
      \Map_{\SHRat} (\Ss_2^{0,0,n}, (\tau_{\geq 0} ^{\cn} X) \otimes \MGL_2) \ar[r] \ar[u] & \Map_{\SHRat} (\Ss_2^{0,0,n}, (\tau_{\geq 0} ^{\cn} (X[\ta^{-1}])) \otimes \MGL_2). \ar[u] 
    \end{tikzcd}
  \end{center}
  Finally, applying \Cref{prop:Chow-MGL}(3) and slice simplicity of $\MGL_2 \otimes X$, we may identify this diagram with
  \begin{center}
    \begin{tikzcd}
      P_{2n} (\b(X) \otimes \MU_{\R, 2}) \ar[r] & \b(X) \otimes \MU_{\R, 2} & \\
      P_{2n} (\b(X) \otimes \MU_{\R, 2}) \ar[r] \ar[u] & P_{2n} (\b(X) \otimes \MU_{\R, 2}). \ar[u] 
    \end{tikzcd}
  \end{center}
  In conclusion, we find that the maps $\tau_{\geq 0} ^{\cn} X \to X$ and $\tau_{\geq 0} ^{\cn} X \to \tau_{\geq 0} ^{\cn} (X[\ta^{-1}]) \simeq \nur(\b(X))$ are $\MGL_2$-equivalences, as desired.
%
%
%
%
%
%
\end{proof}

To prove \Cref{thm:recover}, we now need to show that the $\R$-motivic spectra in question are slice simple after being tensored with $\MGL_2$. To this end, we record the following simple proposition:
\begin{prop} \label{prop:sl-si-cond}
  The collection of slice simple Artin-Tate $\R$-motivic spectra is closed under direct sums, retracts, and pure suspensions $\Sigma^{n,n,n}$.
%
\end{prop}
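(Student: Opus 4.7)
The plan is to verify each closure property directly from the definition by showing that the three functors involved in the definition of slice simplicity --- namely $\Map_{\SHRat}(\Ss_2^{0,0,n},-)$, $\b$, and $P_{2n}$ --- all interact nicely with direct sums, retracts, and the operation $\Sigma^{n,n,n}$. The verification for direct sums and retracts should be essentially formal, so I expect the minor technical step to be the compatibility with $\Sigma^{n,n,n}$, which I address last.

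First, for direct sums $X \simeq \bigoplus_i X_i$: the object $\Ss_2^{0,0,n}$ is compact in $\SHRat$, so $\Map_{\SHRat}(\Ss_2^{0,0,n},-)$ commutes with arbitrary direct sums; the Betti realization $\b$ preserves all colimits as a left adjoint; and the slice cover $P_{2n}$ also preserves direct sums, since $\Sp_{C_2}^{\geq 2n}$ is closed under colimits (so taking the $P_{2n}$-decomposition termwise produces a decomposition of the direct sum into slice-connective and slice-coconnective parts). Since the natural comparison map is assembled coherently from these three functors, it is identified with the direct sum of the comparison maps for the $X_i$, each of which is an equivalence by hypothesis. Closure under retracts is immediate, since every functor preserves retracts and an equivalence with a map that factors through $P_{2n}\b(-)$ yields the same on any retract.

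For pure suspensions $\Sigma^{n,n,n}$, the key input is that $\Ss^{n,n,n} \simeq c_{\C/\R}(\Ss^{n\rho}) \otimes \Ss^{0,0,0}$ in $\SHRat$, equivalently that $\b(\Ss^{n,n,n}) \simeq \Ss^{n\rho}$ as recorded in \Cref{prop:comp}. From this, using the enrichment of $\SHRat$ over $\Sp_{C_2}$ via $c_{\C/\R}$, one computes naturally
\[ \Map_{\SHRat}(\Ss_2^{0,0,m}, \Sigma^{n,n,n} X) \simeq \Sigma^{n\rho} \Map_{\SHRat}(\Ss_2^{0,0,m-n}, X), \]
while on the target side $\b(\Sigma^{n,n,n} X) \simeq \Sigma^{n\rho} \b(X)$. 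Finally, $\Sigma^{n\rho}$ shifts the regular slice filtration by $2n$, giving $P_{2m}(\Sigma^{n\rho} Y) \simeq \Sigma^{n\rho} P_{2(m-n)} Y$ for any $Y$. Under these three natural identifications the canonical comparison map for $\Sigma^{n,n,n} X$ in degree $m$ becomes $\Sigma^{n\rho}$ applied to the comparison map for $X$ in degree $m-n$, which is an equivalence by slice simplicity of $X$. Thus $\Sigma^{n,n,n} X$ is slice simple, completing the proof.
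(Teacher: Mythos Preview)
Your proof is correct and is precisely the routine verification the paper has in mind; in fact the paper does not give a proof at all, introducing the result as a ``simple proposition'' and stating it without argument. Your treatment of each closure property is the natural one, and the key identification $P_{2m}(\Sigma^{n\rho}Y)\simeq \Sigma^{n\rho}P_{2(m-n)}Y$ for the pure suspension case is exactly the fact recorded in the paper's proof of \Cref{prop:twist-slice}.
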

%
%
%
%
\begin{proof}[Proof of \Cref{thm:recover}]
  By \Cref{prop:slice-simple-nu}, it suffices to show that $\MGL_2 \otimes E$ is slice simple for $E=\MGL_2, \HFt, \HZt, \kgl_2, \kq_2$. Indeed, each of these $E$ is $\MGL_2$-local, and $\nur(\b(E))$ is also $\MGL_2$-local by \Cref{thm:nu-properties}(6), since $\b(E)$ is $\MU_{\R, 2}$-local.

  Now, for $E=\MGL_2, \HFt, \HZt, \kgl_2$, $\MGL_2 \otimes E$ is a direct sum of pure suspensions of $E$, so it suffices to show that $E$ itself is slice simple.
  For $\MGL_2$, this follows from \Cref{prop:P-MGL}. For $\HZt$ and $\HFt$, it follows from \Cref{lem:Zcalc}.
  For $\kgl_2$, it follows from \Cref{thm:eff-fil} and \cite[Theorem 1.1]{SliceComp}.

%
  Finally, for the case of $\kq_2$, we use the equivalence $\kq_2 \otimes C\eta \simeq \kgl_2$ and the fact that $\eta = 0$ in $\pi_{0,1,1} ^\R \MGL_2$ to deduce that $\MGL_2 \otimes \kgl_2 \simeq \MGL_2 \otimes \kq_2 \otimes C\eta \simeq \MGL_2 \otimes \kq_2 \oplus \Sigma^{1,1,1} \MGL_2 \otimes \kq_2$.
  It follows that $\MGL_2 \otimes \kq_2$ is a retract of $\MGL_2 \otimes \kgl_2$, so that $\MGL_2 \otimes \kq_2$ is slice simple.
\end{proof}

\section{The $a$-local category} \label{sec:a-local}
In this section, we study the $a$-localized category, $\Mod(\SHR^{\at}_{i2} ; \Ss_2 [a^{-1}])$.
The authors find this category to be the most enduring mystery in our study of Artin--Tate motivic spectra over $\R$.
Some of the behavior of this category is the subject of \Cref{cnj:72} and \Cref{cnj:73}.

In the category of $C_2$-equivariant spectra,
inverting the class $a_{\sigma} \in \pi_{-\sigma}^{C_2} \Ss$ corresponds to taking geometric fixed points.
As a consequence, since $a = c_{\C/\R}(a_\sigma)$, we may interpret $\Mod(\SHR^{\at}_{i2} ; \Ss_2 [a^{-1}])$ as the natural target for some kind of ``motivic geometric fixed points'' operation.
Using $\ta$, we may also view the category $\Mod(\SHRat ; \Ss_2 [a^{-1}])$ as a deformation of $\Sp_{i2}$ with algebraic special fiber.
Since $\Phi^{C_2} \MU_{\R,2} \simeq \MO$, one might imagine that this degeneration is related to $\Syn_{\MO} \simeq \Syn_{\F_2}$. In \Cref{prop:a-inv-compare}, we will construct a comparison functor between $\Mod(\SHRat ; \Ss_2 [a^{-1}])$ and $\Syn_{\F_2}$. However, it is not an equivalence.

Another feature of the equivariant setting is the existence of the diagram,
\begin{center} \begin{tikzcd}
    & \Sp_{C_2} \ar[dr, "\Phi^{C_2}"] & \\
    \Sp \ar[ur, "\mathrm{Nm}_e^{C_2}"] \ar[rr, "\mathrm{Id}"] & & \Sp.
\end{tikzcd} \end{center}
This suggests we study the composite $\mathrm{Nm}_\C^\R(-)[a^{-1}]$.
Although the norm functor itself is not exact, in \Cref{lem:norm-invert} we observe that this composite preserves all colimits. However, it is not an equivalence.

Altogether, we conclude that the $a$-local category sits between the even $\BP$-synthetic category and the $\F_2$-synthetic category in a nontrivial way. In fact, although we are able to identify the three algebraic categories which arise as the special fibers, the maps between them are surprisingly nontrival. 

\subsection{The $a$-local category as a deformation}\

The symmetric monoidal category $\Mod(\SHR; \Ss[a^{-1}, \ta^{-1}])$ is equivalent to $\Sp$ by \cite{RealEtale} and \Cref{cor:rels}. As a consequence, there is an equivalence \footnote{Of course, this can also be deduced directly from \Cref{thm:comparison-invert-ta} and the equivalence $\Mod(\Sp_{C_2, i2} ; \Ss_2 [a_\sigma ^{-1}]) \simeq \Sp_{i2}$.}
\[ \Mod(\SHRat; \Ss_2[a^{-1}, \ta^{-1}]) \simeq \Sp_{i2}. \]
On the other hand, the following is an immediate corollary of \Cref{thm:Cta}.

\begin{cor}
  There is an equivalence of symmetric monoidal categories
  \[\Mod(\SHRat; C\ta[a^{-1}]) \simeq \Mod(\Sp_{i2}; \Phi^{C_2} \uZt) \otimes_\Z \IndCoh(\Mfg).\]
  On homotopy groups, this induces an isomorphism \footnote{Since $\abs{a} = (0,-1,0)$, the trigraded homotopy groups of an $a$-local $\R$-motivic spectrum are periodic in the $q$ degree. This is reflected by the fact that the given formula has no dependence on $q$.}
  \[ \pi_{p,q,w}^\R(C\ta[a^{-1}]) \cong \bigoplus_{w+2b-s = p} \left( \F_2\{u^{2b}\} \otimes_{\F_2} \Ext_{\BP_*\BP}^{s,2w}(\BP_*, \BP_*/2)\right). \]
\end{cor}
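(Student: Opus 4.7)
The plan is to derive the equivalence by chaining together the four symmetric monoidal equivalences sketched in the introduction, and then to read off the homotopy-group formula by feeding the explicit answer from \Cref{thm:Cta} through the last two equivalences.

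First I would establish the base-change identity
\[ \Mod(\SHRat; C\ta[a^{-1}]) \simeq \Sp_{i2} \otimes_{\Sp_{C_2, i2}} \Mod(\SHRat; C\ta). \]
The inputs are: (a) $\SHRat$ is an $\Sp_{C_2, i2}$-algebra via $c_{\C/\R}$; (b) $c_{\C/\R}(a_\sigma) = a$, so that $\Ss_2[a^{-1}] \simeq c_{\C/\R}(\Ss_2[a_\sigma^{-1}])$; and (c) the identification $\Mod(\Sp_{C_2, i2}; \Ss_2[a_\sigma^{-1}]) \simeq \Sp_{i2}$ recalled earlier (realizing geometric fixed points as an $a_\sigma$-localization). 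Packaging this through \Cref{lem:tensor-mod} gives the displayed base change.

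Second, substituting the right-hand side of \Cref{thm:Cta} and reassociating the resulting tensor product yields
\begin{align*}
  \Sp_{i2} \otimes_{\Sp_{C_2, i2}} \Mod(\Sp_{C_2,i2}; \uZt) \otimes_\Z \IndCoh(\Mfg)
  &\simeq \Mod(\Sp_{i2}; \Phi^{C_2}\uZt) \otimes_\Z \IndCoh(\Mfg),
\end{align*}
where a further application of \Cref{lem:tensor-mod} identifies the base change of $\uZt$-modules along the symmetric monoidal functor $\Phi^{C_2}$ with modules over $\Phi^{C_2}\uZt$. This is the first claimed equivalence; under it, $\Phi^{C_2}\uZt \simeq \F_2[u_{2\sigma}]$ with $|u_{2\sigma}| = 2$, as recorded in the introduction.

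For the tri-graded homotopy formula, I would start from the formula of \Cref{thm:Cta},
\[ \pi_{p,q,w}^\R(C\ta) \cong \bigoplus_{w+a-s = p} \Ext_{\MU_*\MU}^{s,2w}(\MU_*, \MU_* \otimes \pi_{a+(q-w)\sigma}^{C_2}\uZt), \]
and apply $a$-localization termwise. The effect is to replace $\pi_{a+(q-w)\sigma}^{C_2}\uZt$ by its $a_\sigma$-localization $\pi_{a}\Phi^{C_2}\uZt$, which is $\F_2\{u_{2\sigma}^{a/2}\}$ when $a$ is even and zero otherwise, and which no longer depends on $q$ (consistent with $q$-periodicity after $a$ is inverted). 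Writing $a = 2a'$, using the standard identification $\Ext_{\MU_*\MU}^{s,2w}(\MU_*, \MU_*/2) \cong \Ext_{\BP_*\BP}^{s,2w}(\BP_*, \BP_*/2)$ at the prime $2$, and reindexing so that $w + 2a' - s = p$, the direct sum reassembles into the displayed formula $\bigoplus_{w+2a-s=p} \F_2\{u^{2a}\} \otimes_{\F_2} \Ext_{\BP_*\BP}^{s,2w}(\BP_*, \BP_*/2)$.

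The main obstacle is essentially bookkeeping: verifying that the $\Sp_{C_2, i2}$-linear structure on $\SHRat$ really does promote the elementary identification $\Sp_{C_2, i2}[a_\sigma^{-1}] \simeq \Sp_{i2}$ to a base-change statement on module categories, and carefully tracking the grading conventions (in particular the factor of two from $u_{2\sigma} = u^2$) so that the final summation index $w+2a-s=p$ matches the summation index $w+a-s=p$ of \Cref{thm:Cta} after the $a$-localization collapses onto the even part.
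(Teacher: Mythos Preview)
Your proposal is correct and follows essentially the same approach as the paper: the paper treats this as an immediate corollary of \Cref{thm:Cta}, and the chain of equivalences you wrote down is exactly the one sketched in the introduction (base-change along $\Phi^{C_2}$ via \Cref{lem:tensor-mod}, using $c_{\C/\R}(a_\sigma)=a$ and $\Mod(\Sp_{C_2,i2};\Ss_2[a_\sigma^{-1}])\simeq\Sp_{i2}$). Your derivation of the homotopy-group formula by $a_\sigma$-localizing the coefficient Mackey functor in the formula of \Cref{thm:Cta} and reindexing is likewise the intended argument.
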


\begin{rec}
  There is an isomorphism $\pi_* \Phi^{C_2} \uZt \cong \F_2 [y]$, where $\abs{y} = 2$. Viewing $\Phi^{C_2} \uZt$ as $\uZt [a_{\sigma}^{-1}]$, $y$ may be identified with $\frac{u_{2\sigma}}{a_\sigma ^2} = \frac{u_{\sigma} ^2}{a_\sigma ^2}$.
\end{rec}

The presence of the non-nilpotent element $u$ in the homotopy of $C\ta[a^{-1}]$ prevents this category from being equivalent to either a $\BP$-synthetic category or an $\F_2$-synthetic category.

The simplest way to gain computational access to the $a$-local category seems to be the $\ta$-Bockstein spectral sequence.

\subsection{The $a$-local norm}\ 

Equivariantly, the composition of the norm functor with geometric fixed points is the identity.
As we shall see, over $\R$ the situation is not so straightforward.

\begin{lem}\label{lem:norm-invert}
  The composite
  \[ \SHC_{i2}^{\at} \xrightarrow{\mathrm{Nm}_\C^\R} \SHRat \xrightarrow{(-)[a^{-1}]} \Mod ( \SHRat ; \Ss_2[a^{-1}] ) \]
  is a symmetric monoidal left adjoint.
  On Picard elements this composite sends $\Ss_2^{s,w}$ to $\Ss_2^{s,*,2w}[a^{-1}]$.
  The map $\tau$ is sent to $\ta^2$.
\end{lem}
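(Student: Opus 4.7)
The plan is to verify the three assertions in turn. For the claim that the composite is a symmetric monoidal left adjoint, symmetric monoidality is immediate since both $\mathrm{Nm}_\C^\R$ and $a$-inversion are symmetric monoidal. For colimit preservation, recall from \Cref{rec:norms} that $\mathrm{Nm}_\C^\R$ commutes with sifted colimits and satisfies
\[ \mathrm{Nm}_\C^\R(X \oplus Y) \simeq \mathrm{Nm}_\C^\R(X) \oplus i_*(X \otimes Y) \oplus \mathrm{Nm}_\C^\R(Y). \]
Since $i_*(X \otimes Y)$ is a module over $i_*(\o_\C) \simeq Ca$, and $a$ acts as zero on any $Ca$-module, the cross term vanishes after inverting $a$. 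Hence the composite is additive, and combined with preservation of sifted colimits this gives preservation of all small colimits; existence of a right adjoint then follows from the adjoint functor theorem.

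The Picard identification follows from the formula $\mathrm{Nm}_\C^\R(\Ss_2^{s,w}) \simeq \Ss_2^{s,s,2w}$ of \Cref{rec:norms}: since $a \in \pi^\R_{0,-1,0}$ shifts only the $q$-index, inverting $a$ makes $\Ss_2^{s,s,2w}[a^{-1}]$ equivalent to $\Ss_2^{s,*,2w}[a^{-1}]$ for every $*$.

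For the claim $\mathrm{Nm}_\C^\R(\tau)[a^{-1}] = \ta^2$, both classes live in $\pi^\R_{0,0,-2} \Ss_2[a^{-1}]$ by the Picard calculation applied to $\tau : \Ss_2^{0,-1} \to \Ss_2^{0,0}$. I would compare them via Betti realization. By the compatibility in \Cref{rec:norms},
\[ \b(\mathrm{Nm}_\C^\R(\tau)) \simeq \mathrm{Nm}_e^{C_2}(\b(\tau)) = \mathrm{Nm}_e^{C_2}(1) = 1 \in \pi_0^{C_2} \Ss_2, \]
using \Cref{thm:ta} and the fact that the symmetric monoidal functor $\mathrm{Nm}_e^{C_2}$ preserves the identity. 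Similarly $\b(\ta^2) = \b(\ta)^2 = 1$ by \Cref{thm:ta}. After $a$-inversion, $\b$ factors through geometric fixed points, since $\b(a) = a_\sigma$ and $\Phi^{C_2}$ is realized by inverting $a_\sigma$ (\Cref{prop:comp}). Thus both classes map to $1 \in \pi_0 \Ss_2$ under $\Phi^{C_2} \circ \b$.

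The hard part will be deducing equality in $\pi^\R_{0,0,-2}\Ss_2[a^{-1}]$ from equality of images under $\Phi^{C_2}\circ\b$. For this I expect to show that the induced map $\pi^\R_{0,0,-2} \Ss_2[a^{-1}] \to \pi_0 \Ss_2$ is an isomorphism. One strategy runs the $\ta$-Bockstein spectral sequence for $\Ss_2[a^{-1}]$: its $E_1$-page is determined by $\pi^\R_{*,*,*}(C\ta[a^{-1}])$ from \Cref{thm:Cta}, and it converges to a graded object whose $\ta$-inverted piece is $\pi_* \Ss_2$ under the identification $\Mod(\SHRat; \Ss_2[a^{-1}, \ta^{-1}]) \simeq \Sp_{i2}$; sparseness of the $C\ta[a^{-1}]$-homotopy near tridegree $(0,0,-2)$ then forces the target to be generated by $\ta^2$ times a lift of $1 \in \pi_0 \Ss_2$. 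Alternatively, one might trace $\mathrm{Nm}_\C^\R$ through the explicit cofiber-sequence construction of $\tau_k$ from \Cref{sec:ta}, using the splitting $\mathrm{Res}_{\C/\R}(\G_m) \simeq \G_m \times Q$ of $\R$-group schemes to factor the norm of the root-of-unity inclusion through the $Q$-component that defines $\ta_k^2$.
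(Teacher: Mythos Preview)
Your handling of the first two claims matches the paper's proof exactly.

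For the third claim, your approach is correct in outline but unnecessarily circuitous. The paper proves the stronger statement that $\mathrm{Nm}_\C^\R(\tau) = \ta^2$ already in $\pi^\R_{0,0,-2}\Ss_2$, \emph{before} inverting $a$. The key observation, which you miss, is that by \Cref{prop:weak-one-sided} (equivalently, by the vanishing of $\pi^\R_{*,*,w}C\ta$ for $w<0$ from \Cref{lem:Cta-neg-w}), the $\ta$-inversion map $\pi^\R_{0,0,-2}\Ss_2 \to \pi^{C_2}_0\Ss_2$ is already an isomorphism. Hence $\mathrm{Nm}_\C^\R(\tau)$ is determined by
\[
\mathrm{Nm}_\C^\R(\tau)[\ta^{-1}] = \mathrm{Nm}_e^{C_2}(\tau[\tau^{-1}]) = \mathrm{Nm}_e^{C_2}(1) = 1,
\]
so it must equal $\ta^2$. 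No $a$-inversion is needed for this step.

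Your strategy of first inverting $a$ and then proving $\pi^\R_{0,0,-2}\Ss_2[a^{-1}] \to \pi_0\Ss_2$ is an isomorphism does work, but the most direct route to that isomorphism passes through the paper's argument anyway: one has $\pi^\R_{0,0,-2}\Ss_2[a^{-1}] = \varinjlim_n \pi^\R_{0,-n,-2}\Ss_2 \cong \varinjlim_n \pi^{C_2}_{-n\sigma}\Ss_2 = \pi_0\Phi^{C_2}\Ss_2$, where the middle isomorphism is precisely the $\ta$-inversion isomorphism you could have used directly. Your proposed $\ta$-Bockstein and geometric alternatives would also eventually succeed, but they are detours around a two-line argument.
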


\begin{proof}
  The norm functor $\mathrm{Nm}_\C^\R$ is symmetric monoidal and commutes with sifted colimits \cite[Proposition 4.5]{norms}. Since we compose it with a symmetric monoidal left adjoint, in order to prove the first claim we need only show the composite preserves binary sums. From \cite[Corollary 5.13]{norms}, we have a formula
  \[ \mathrm{Nm}_\C^\R(X \oplus Y) \simeq \mathrm{Nm}_\C^\R(X) \oplus i_*(X \otimes Y) \oplus \mathrm{Nm}_\C^\R(Y), \]
  where $i$ is the inclusion $\R \to \C$. Since the functor $i_*$ lands in modules over the cofiber of $a$, the middle term vanishes upon inverting $a$, as desired.
  The claim about Picard elements is just a restatement of what we already know about Picard elements from \Cref{rec:norms}.

  Now we examine $\mathrm{Nm}_\C^\R(\tau) \in \pi_{0,0,-2}^\R \Ss_2$.
  Since the invert $\ta$ map
  \[\pi_{0,0,-2} ^{\R} \Ss_2 \to \pi_{0,0,-2} ^{\R} \Ss_2 [\ta^{-1}] \cong \pi_{0,0} ^{C_2} \Ss_2\]
  is an isomorphism which sends $\ta$ to $1$, it suffices to note that  \[\mathrm{Nm}_\C^\R(\tau)[\ta^{-1}] = \mathrm{Nm}_e^{C_2}((\tau)[\tau^{-1}]) = \mathrm{Nm}_e^{C_2}(1) = 1. \qedhere\]
\end{proof}

\begin{rmk}
  Despite the fact that upon inverting $\ta$ the functor $\mathrm{Nm}_\C ^\R [a^{-1}]$ becomes the identity on $\Sp_{i2}$, for degree reasons the class $\eta$ in $\pi_{1,1}^\C \Ss_2$ cannot map to the class $1 \otimes \alpha_{1}$ in the homotopy of $C\ta[a^{-1}]$ which one might expect detects $\eta$. \todo{This needs more details. $\Ext^1$ where?}
\end{rmk}

\begin{qst} \label{cnj:72}
Is there an equivalence $\mathrm{Nm}^{\R} _\C (C\tau) \simeq C\ta^2$ of commutative algebras? Postcomposing with the quotient map $C\ta^2 \to C\ta$, one obtains functors
\[ \Mod ( \SHCat ; C\tau) \to \Mod ( \SHRat ; C\ta^2 [a^{-1}]) \to \Mod ( \SHRat ; C\ta [a^{-1}]).\]
  Can the composite functor above be identified with the composite functor below?
\begin{center} \begin{tikzcd}
  \IndCoh(\Mfg)_{i2} \ar[d, "\F_2 \otimes -"] \ar[r] & \Mod(\Ab; \Phi^{C_2}\uZt ) \otimes_{\Zt} \IndCoh(\Mfg)_{i2} \\
    \mathrm{Vect}_{\F_2} \otimes_{\Zt} \IndCoh(\Mfg)_{i2} \ar[r, "\mathrm{Frobenius}"] & \mathrm{Vect}_{\F_2} \otimes_{\Zt} \IndCoh(\Mfg)_{i2} \ar[u, hook]
\end{tikzcd} \end{center}
\end{qst}

\subsection{Mapping down to $\F_2$-synthetic spectra}\

In this section, we will construct a realization functor from the $a$-local category to the category of $\F_2$-synthetic spectra. In some ways, this functor is even more surprising than the norm functor. While the norm functor seems to double degrees on the special fiber, \Cref{cnj:73} suggests that inverting $a$ cuts degrees in half.  

\begin{prop} \label{prop:a-inv-compare}
  There is a symmetric monoidal left adjoint
  \[ \Re_{\F_2} : \mathrm{Mod}( \SHRat ; \Ss_2 [a^{-1}] ) \to \Syn_{\F_2, i\tau}, \]
  which sends $\Ss_2^{p,q,w}$ to $\Ss_\tau^{p,w}$ and $\ta$ to $\tau$.
\end{prop}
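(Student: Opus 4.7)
The plan is to exhibit both sides as categories of modules over commutative algebras in filtered spectra and to compare them at the level of filtered algebras, in analogy with how \Cref{thm:filt-model} relates to the work of \cite{cmmf} over $\C$.

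First, I would combine the Galois reconstruction $\SHRat \simeq \Mod(\Sp^{\Fil}_{C_2, i2}; R_\bullet)$ of \Cref{thm:filt-model} with the identification $\Sp_{i2} \simeq \Mod(\Sp_{C_2, i2}; \Ss_2[a_\sigma^{-1}])$ implemented by $\Phi^{C_2}$ (this is the content of the proposition describing $\Sp_{C_2}$ as a $1$-parameter deformation). Tensoring over $\Sp_{C_2, i2}$ and applying \Cref{lem:tensor-mod}, I obtain a symmetric monoidal equivalence
\[ \Mod(\SHRat; \Ss_2[a^{-1}]) \simeq \Mod(\Sp^{\Fil}_{i2}; \Phi^{C_2} R_\bullet), \]
and by monoidality of $\Phi^{C_2}$ together with its commutation with totalizations of the cobar complex, the target commutative algebra rewrites as $\Phi^{C_2} R_\bullet \simeq \mathrm{Tot}^*(\Phi^{C_2} P_{2\bullet}\MU_{\R, 2}^{\otimes *+1})$.

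The heart of the argument is to identify $\Phi^{C_2}(P_{2n}\MU_{\R, 2}^{\otimes k}) \simeq \tau_{\geq n}(\MO_2^{\otimes k})$. Regular slice $2n$-connectivity forces $n$-connectivity on geometric fixed points, so there is a canonical map $\Phi^{C_2} P_{2n} X \to \tau_{\geq n} \Phi^{C_2} X$; for $X = \MU_{\R, 2}^{\otimes k}$ this should be an equivalence, which one verifies by inducting up the regular slice tower using HHR's identification of the slices of $\MU_{\R}^{\otimes k}$ together with the fact that $\Phi^{C_2}$ is exact and respects the known splittings. Granting this, $\Phi^{C_2} R_\bullet$ takes the form $\mathrm{Tot}^*(\tau_{\geq \bullet}\MO_2^{\otimes *+1})$.

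On the $\F_2$-synthetic side, I would extend the filtered-model description of \cite{cmmf} from $\MU$ to $\F_2$ (and more generally to any connective Adams-type cohomology theory), obtaining $\Syn_{\F_2, i\tau} \simeq \Mod(\Sp_{i2}^{\Fil}; R^{\F_2}_\bullet)$ with $R^{\F_2}_\bullet \simeq \mathrm{Tot}^*(\tau_{\geq \bullet}\F_2^{\otimes *+1})$, and then use the equivalence $\Syn_{\MO, i\tau} \simeq \Syn_{\F_2, i\tau}$ coming from the splitting of $\MO_2$ as a sum of shifts $\Sigma^{n_i}\F_2$ with $n_i \geq 0$. The ring map $\MO_2 \to \F_2$ picking off the degree-zero summand then upgrades to a map $\Phi^{C_2} R_\bullet \to R^{\F_2}_\bullet$ of commutative algebras in $\Sp^{\Fil}_{i2}$, and extension of scalars along it yields the desired symmetric monoidal left adjoint $\Re_{\F_2}$. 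To verify the Picard behaviour, note that under Galois reconstruction $\Ss_2^{p,q,w}$ corresponds to $c_{\C/\R}(\Ss_2^{p+q\sigma})(w)$; applying $\Phi^{C_2}$ kills the $q\sigma$ direction and yields $\Ss_2^{p}(w)$, which base-changes to $\Ss_\tau^{p,w}$. The element $\ta$ is tautologically the filtration-shift map $\o(-1) \to \o$ in $\Sp_{i2}^{\Fil}$, which is sent to $\tau$.

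The main obstacle is the slice-vs-connective cover identity $\Phi^{C_2}(P_{2n}\MU_{\R, 2}^{\otimes k}) \simeq \tau_{\geq n}\MO_2^{\otimes k}$ and the extension of the filtered-model description of $\Syn_{\F_2, i\tau}$ beyond the $\MU$-case treated in \cite{cmmf}; both are essentially formal once one is willing to track connectivity conventions carefully, and after this the construction of $\Re_{\F_2}$ and the checks on Picard elements are direct.
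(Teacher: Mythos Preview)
Your overall strategy coincides with the paper's: realize $\Mod(\SHRat;\Ss_2[a^{-1}])$ as modules over $\Phi^{C_2}R_\bullet$ in $\Sp^{\Fil}_{i2}$, realize $\Syn_{\F_2,i\tau}\simeq\Syn_{\MO,i\tau}$ as modules over $\Tot^{*}(\tau_{\geq\bullet}\MO^{\,*+1})$ (the paper supplies exactly this as \Cref{prop:syn-to-fil}, for any Adams-type $E$), and base-change along a map of commutative algebras.

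There is, however, a genuine error in the middle of your argument. You assert that $\Phi^{C_2}$ commutes with the totalization and that
\[
\Phi^{C_2}\bigl(P_{2n}\MU_{\R,2}^{\otimes k}\bigr)\;\simeq\;\tau_{\geq n}\bigl(\MO_2^{\otimes k}\bigr)
\]
is an equivalence. Neither holds. The functor $\Phi^{C_2}$ is a symmetric monoidal \emph{left} adjoint and does not preserve limits, so one only has a comparison map $\Phi^{C_2}\Tot^{*}\to\Tot^{*}\Phi^{C_2}$. More seriously, the slice-vs-Postnikov identification fails already on the slices: for instance $\Phi^{C_2}$ of the $0$th slice $\uZ$ of $\MU_\R$ is $\F_2[u_{2\sigma}]$ with $|u_{2\sigma}|=2$, not $\F_2$. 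Consequently $\Phi^{C_2}R_\bullet$ is \emph{not} equivalent to $\Tot^{*}(\tau_{\geq\bullet}\MO^{\,*+1})$; indeed, if it were, the functor $\Re_{\F_2}$ would be an equivalence, and the paper explicitly notes just before the proposition that it is not.

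The fix is painless and yields exactly the paper's proof: replace your two claimed equivalences by the canonical comparison \emph{maps}. Regular slice $2n$-connectivity guarantees $n$-connectivity on geometric fixed points, so one has
\[
\Phi^{C_2}R_\bullet \xrightarrow{\ \simeq\ } \Phi^{C_2}\Tot^{*}\bigl(P_{2\bullet}\MU_{\R,2}^{\,*+1}\bigr)\longrightarrow \Tot^{*}\Phi^{C_2}\bigl(P_{2\bullet}\MU_{\R,2}^{\,*+1}\bigr)\longrightarrow \Tot^{*}\tau_{\geq\bullet}\bigl(\MO^{\,*+1}\bigr),
\]
a map of commutative algebras in $\Sp^{\Fil}$; extension of scalars along it gives $\Re_{\F_2}$. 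Since the proposition only asserts the existence of a symmetric monoidal left adjoint, a map (rather than an equivalence) of algebras is all that is needed, and your checks on Picard elements and on $\ta\mapsto\tau$ go through unchanged.
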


\begin{proof}
  We begin by noting that under the equivalence
  $ \SHRat \simeq \Mod(\Sp_{C_2, i2} ^{\Fil}; R_\bullet) \simeq \Mod(\Sp_{C_2} ^{\Fil}; R_\bullet)$ from \Cref{thm:filt-model},
  inverting $a$ becomes levelwise geometric fixed points.
  Thus, $\Mod(\SHRat; \Ss_2 [a^{-1}])$ is equivalent to modules over the commutative algebra $\Phi^{C_2}R_\bullet$ 
  obtained by applying $\Phi^{C_2}$ levelwise. 
  Now recall that
  \[ R_\bullet \coloneqq \Tot^* \left( P_{2\bullet} \MU_{\R,2}^{* + 1} \right). \]
  We may then construct a chain of comparison maps
  \begin{align*}
    \Phi^{C_2} R_\bullet &\xrightarrow{\simeq}
    \Phi^{C_2} \Tot^* \left( P_{2\bullet} \MU_{\R,2}^{* + 1} \right) \to
    \Tot^{*} \Phi ^{C_2} \left( P_{2\bullet} \MU_{\R,2}^{* + 1} \right) \\
    &\to \Tot^{*} \tau_{\geq \bullet} \left( \Phi^{C_2}  \MU_{\R,2}^{* + 1} \right) \xrightarrow{\simeq}
    \Tot^{*} \tau_{\geq \bullet} \left( \MO^{* + 1} \right),
  \end{align*}
  where the key step is a use of the fact that 
  the geometric fixed points of a regular slice $2n$-connective $C_2$-spectrum are $n$-connective.
  This is used to produce the map across the line-break.

  In \Cref{prop:syn-to-fil}, we produced a symmetric monoidal equivalence
  \[\Mod \left(\Sp^{\Fil} ; \Tot^{*} \tau_{\geq \bullet} \left( \MO^{* + 1} \right) \right) \simeq \Syn^{\mathrm{cell}}_{\MO, i\tau}.\]
  Since the category of $\MO$-finite-projective spectra is equivalent to the category of $\F_2$-finite-projective spectra, and a map $X \to Y$ of spectra is an $\MO_*$-surjection if and only if it's an $(\F_2)_*$-surjection, there is a symmetric monoidal equivalence $\Syn_{\MO} \simeq \Syn_{\F_2}$. Finally, we can use the fact that $\Syn_{\F_2}^{\mathrm{cell}} \simeq \Syn_{\F_2}$ to drop the decoration. Altogether, base-change along this ring map produces a symmetric monoidal left adjoint
  \[ \mathrm{Mod}( \SHRat ; \Ss_2 [a^{-1}] ) \to \Syn_{\F_2, i\tau}. \qedhere \]
     
\end{proof}

Much of what was said in this section can be summarized with the existence of the following diagram of symmetric monoidal left adjoints:

\begin{center}
  \begin{tikzcd}
    \Sp_{i2} \ar[r, "\mathrm{Id}"] &
    \Sp_{i2} \ar[r, "\mathrm{Id}"] &
    \Sp_{i2}  \\
    \SHC_{i2} ^{\at} \ar[u, "{(-)[\tau^{-1}]}"'] \ar[d, " - \otimes C\tau"] \ar[r, "{(\mathrm{Nm}_\C^\R(-))[a^{-1}]}"'] &
    \SHRat \ar[u, "{(-)[\ta^{-1}]}"'] \ar[d, " - \otimes C\ta"] \ar[r, "\Re_{\F_2}"'] &
    \Syn_{\F_2, i\tau} \ar[u, "{(-)[\tau^{-1}]}"] \ar[d, " - \otimes C\tau"'] \\
    \IndCoh(\Mfg)_{i2} \ar[r] &
    \Mod(\Ab ; \Phi^{C_2} \uZ_2) \otimes_{\Zt} \IndCoh(\Mfg)_{i2} \ar[r] &
    \Mod ( \Syn_{\F_2} ; C\tau).
  \end{tikzcd}
\end{center}

The bottom right corner can be described in terms of comodules over the dual Steenrod algebra \cite[Section 4.5]{Pstragowski}.  The maps from the left to the right are comparison maps from the Adams--Novikov to the Adams spectral sequence.

\begin{qst} \label{cnj:73}
Is the map from the bottom middle to the bottom right induced by the map of Hopf algebroids $(\BP_*, \BP_*\BP) \to (\F_2, \A)$ which sends $t_i$ to $\zeta_i$? Note that this map is \emph{not} the usual Thom reduction map that sends $t_i$ to $\zeta_i^2$.
\end{qst}




\section{Completions} \label{sec:completion}
In this paper correctly handling a variety of different types of completion has been a key point.
The purpose of this section is to give a single uniform source of information on completeness questions and how we handle them.

In \Cref{sec:2vsi2complete}, we show that $2$-completion agrees with tensoring with $\Ss_2$ for some important $\R$-motivic spectra. Since these $\R$-motivic spectra are cellular, this implies that their $2$-completions in $\SHR$ lie in $\SHRat$ and that that their cell decompositions carry over unchanged to the $2$-completion.
In the case of $\MGL$, this is an important technical point in the rest of the paper.

In \Cref{sec:ta-comp}, we record the fact that every dualizable object of $\SHRat$ is $\ta$-complete.

Finally, in \Cref{sec:a-comp}, we make some remarks about $a$-completion.

\subsection{2 completion is sometimes i2 completion}\label{sec:2vsi2complete} \

The main goal of this section is to prove the following theorem:

\begin{thm}\label{thm:complete-compare}
  There are natural equivalences in $\SHR$:
  \begin{align*}
    &\MGL \otimes \Ss_2 \to \MGL_2 \\
    &\HZ \otimes \Ss_2 \to \HZ_2 \\
    &\kgl \otimes \Ss_2 \to \kgl_2 \\
    &\kq \otimes \Ss_2 \to \kq_2.
  \end{align*}
\end{thm}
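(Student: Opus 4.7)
The plan is to establish the stated equivalences by exploiting the cellular structure of each listed spectrum. For any dualizable $X \in \SHR$ the map $X \otimes \Ss_2 \to X_2$ is automatically an equivalence, since tensoring with a dualizable object commutes with the limit defining $2$-completion. The spectra $\MGL$, $\HZ$, $\kgl$, $\kq$ are not themselves dualizable, but each is cellular with enough finiteness to reduce the problem to the dualizable case.

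First I would reformulate the claim via the Milnor short exact sequence: it suffices to show, in each tri-degree, that the tower $\{\pi^{\R}_{p,q,w}(X/2^n)\}_n$ is Mittag--Leffler and that the termwise expression $\pi^{\R}_{p,q,w}(X \otimes \Ss_2)$ agrees with its inverse limit, since this forces the comparison map to be an equivalence on bigraded homotopy. Next I would dispatch the four cases by invoking known computations of the $2$-primary bigraded homotopy of each spectrum together with their cellular decompositions. For $\MGL$, I would present it as the filtered colimit of the dualizable Thom spectra $\MGL(n,k)$ (compare \Cref{prop:MGL-chow-conn}) and invoke Hoyois's extension of the Hopkins--Morel theorem; the resulting groups are torsion-free in the required range, so the Mittag--Leffler condition is automatic. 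For $\HZ$, I would use Voevodsky's computation of mod-$2^n$ motivic cohomology of $\R$ (a consequence of the Bloch--Kato conjecture), which yields finitely generated homotopy in each tri-degree. For $\kgl$, I would reduce to the $\HZ$ case by induction up the effective slice tower, whose layers are shifts of $\HZ$. Finally, for $\kq$, I would use the cofiber sequence $\Sigma^{1,1,1}\kq \xrightarrow{\eta} \kq \to \kgl$: once the equivalence is known for $\kgl$, a five-lemma argument applied to the resulting Milnor short exact sequences yields the equivalence for $\kq$.

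The main obstacle will be the reduction for $\kq$, which requires care because of the $\eta$-torsion in $\pi^{\R}_{*,*,*}\kq$ and the fact that the tower $\{\kq/2^n\}$ does not split as cleanly as that of $\MGL$. However, the cofiber sequence with $\kgl$ provides a clean inductive path, and the other three cases are essentially formal consequences of known bigraded homotopy computations together with the cellular structure of each spectrum.
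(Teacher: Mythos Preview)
Your strategy for $\MGL$, $\HZ$, and $\kgl$ is in the right spirit, though the paper's argument is cleaner: rather than invoking explicit homotopy-group computations and Mittag--Leffler conditions, the paper isolates a connectivity criterion (if $X = \varinjlim X(n)$ with the maps $X(n) \to X$ becoming arbitrarily connective in the homotopy $t$-structure, and each $X(n) \otimes \Ss_2 \to X(n)_2$ is an equivalence, then so is $X \otimes \Ss_2 \to X_2$). This handles all three via the cell structures on $\MGL(n,k)$ and the Hopkins--Morel presentations $\HZ \simeq \MGL/(a_1,a_2,\dots)$ and $\kgl \simeq \MGL/(a_2,a_3,\dots)$, without ever computing a homotopy group. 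Your slice-tower induction for $\kgl$ is a different route that would also require a convergence argument, but it is not obviously wrong.

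The genuine gap is in your treatment of $\kq$. The cofiber sequence
\[
\Sigma^{1,1,1}\kq \xrightarrow{\eta} \kq \to \kgl
\]
has $\kq$ on \emph{two} of its three terms, so no form of the five-lemma can deduce the result for $\kq$ from the result for $\kgl$ alone. What the cofiber sequence does buy you is that the fiber $F$ of $\kq \otimes \Ss_2 \to \kq_2$ satisfies $F \otimes C\eta \simeq 0$, i.e.\ $\eta$ acts invertibly on $F$. Combined with the trivial observation that $F \otimes C(2) \simeq 0$, this reduces the problem to showing that
\[
(\kq \otimes \Ss_2)[1/2,1/\eta] \to \kq_2[1/2,1/\eta]
\]
is an equivalence. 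This remaining step is not formal: the paper uses Bachmann's theorem that $\SHR_{(2)}[1/2,1/\eta] \simeq \Mod_{\Q}$ (via real \'etale realization), together with an explicit computation of $\pi_{*,0,0}^{\R}(\kq_2[1/\rho])$ that in turn relies on the $\ta$-inversion equivalence proved earlier in the paper and the identification of $\Phi^{C_2}(\ko_{C_2})$. Your proposal does not supply any substitute for this input, and without it the argument for $\kq$ does not close.
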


The proof for $\MGL$, $\HZ$ and $\kgl$ will be completely independent of the rest of this paper. On the other hand, the proof for $\kq$ will use results from Sections \ref{sec:ta} through \ref{sec:top}.

We begin with some basic facts about the situation.
In the following, we will make use of the homotopy $t$-structure on the category of $\SHR$ of $\R$-motivic spectra.
We refer to the reader to \cite[\S 2.1]{HM} for basic facts about the homotopy $t$-structure.

\begin{prop} \label{prop:compl-facts}
  Given $X \in \SHR$, the following statements are true:
  \begin{enumerate}
    \item If $X$ is dualizable, then
  \[X \otimes \Ss_2 \to X_2\]
  is an equivalence.

%
\item Suppose that $\varinjlim X(n) \simeq X$ and that the connectivity of the maps $X(n) \to X$ tend to infinity with respect to the homotopy $t$-structure. Then if
  \[X(n) \otimes \Ss_2 \to X(n)_2\]
  is an equivalence for all $n$, we also learn that
  \[X \otimes \Ss_2 \to X_2\]
  is an equivalence.
  \end{enumerate}
\end{prop}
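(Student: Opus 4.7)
The plan is to handle the three parts in order; (1) is a dualizability calculation, (2) is a Milnor sequence argument, and (3) combines them to compare the two operations on $X$ through the tower $\{X(n)\}$.

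\textbf{Parts (1) and (2).} For (1), I would use that $X \otimes (-)$ commutes with all limits when $X$ is dualizable. Writing $\Ss_2 \simeq \lim_k \Ss/2^k$ then gives
\[X \otimes \Ss_2 \simeq \lim_k (X \otimes \Ss/2^k) \simeq \lim_k X/2^k \simeq X_2.\]
For (2), I would invoke the Milnor short exact sequence
\[0 \to \lim{}^1_k \pi_{i+1}(X/2^k) \to \pi_i(X_2) \to \lim_k \pi_i(X/2^k) \to 0\]
coming from $X_2 \simeq \lim_k X/2^k$. When $X$ is $n$-connective each $X/2^k$ is also $n$-connective (the cofiber sequence $X \xrightarrow{2^k} X \to X/2^k$ preserves this), so both terms vanish for $i \leq n-2$, proving the claim.

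\textbf{Part (3).} Let $C(n)$ denote the cofiber of $X(n) \to X$, which is $c(n)$-connective with $c(n) \to \infty$ by hypothesis. Applying both $(-) \otimes \Ss_2$ and $(-)_2$ to the cofiber sequence $X(n) \to X \to C(n)$ and comparing via the natural transformation $\mu_Y \colon Y \otimes \Ss_2 \to Y_2$ produces a map of cofiber sequences. Its induced sequence on vertical cofibers,
\[\cof(\mu_{X(n)}) \to \cof(\mu_X) \to \cof(\mu_{C(n)}),\]
is again a cofiber sequence in the stable category $\SHR$. The hypothesis forces the first term to vanish, so $\cof(\mu_X) \simeq \cof(\mu_{C(n)})$ for every $n$. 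Part (2) bounds $C(n)_2$ below by $(c(n)-1)$-connectivity, while $C(n) \otimes \Ss_2$ is $c(n)$-connective provided $\Ss_2$ is connective in the homotopy $t$-structure on $\SHR$ and this $t$-structure is compatible with the tensor product. A short long-exact-sequence argument then gives that $\cof(\mu_{C(n)})$ is $(c(n)-1)$-connective. Since $c(n) \to \infty$, $\cof(\mu_X)$ is infinitely connective, hence zero by right completeness of the homotopy $t$-structure.

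The main obstacle I anticipate is the cluster of technical checks concerning the homotopy $t$-structure on $\SHR$: connectivity of $\Ss_2$ (which follows from applying the Milnor-sequence argument of (2) to the sphere), compatibility with the tensor product, and right completeness. Each is standard, but they need to be assembled cleanly for the concluding step of (3) to go through.
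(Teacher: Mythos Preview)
Your proof is essentially correct and, for parts (1) and (2), matches the paper's argument almost exactly---the paper phrases (2) via the fiber presentation $X_2 \simeq \fib\bigl(\prod_k X/2^k \to \prod_k X/2^k\bigr)$ together with the fact that products preserve connectivity, rather than via the Milnor sequence, but this is the same content.

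For (3) your route differs slightly from the paper's. The paper uses that $(-)\otimes\Ss_2$ commutes with colimits to reduce to showing $\varinjlim X(n)_2 \to X_2$ is an equivalence, i.e.\ that $\varinjlim (X/X(n))_2 \simeq 0$; this follows from (2) and left completeness. You instead fix $n$, identify $\cof(\mu_X)$ with $\cof(\mu_{C(n)})$, and bound the latter's connectivity. Both approaches rest on the same two ingredients: part (2) and the vanishing of infinitely connective objects. The paper's route is marginally cleaner since it never needs a connectivity estimate on $C(n)\otimes\Ss_2$, and hence avoids invoking compatibility of the $t$-structure with the tensor product.

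Two small corrections. First, the property you invoke at the end---that an object which is $N$-connective for all $N$ must vanish---is \emph{left} completeness of the homotopy $t$-structure, not right completeness; the paper uses the correct term. Second, applying (2) to $\Ss$ only yields that $\Ss_2$ is $(-1)$-connective, not $0$-connective as you assert. This does not damage your argument (you only need the connectivity of $\cof(\mu_{C(n)})$ to tend to infinity, and $c(n)-2$ works just as well as $c(n)-1$), but the claim as written is slightly off.
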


\begin{proof}
  To prove (1), it simply suffices to note that since $X$ is dualizable, the functor $- \otimes X$ preserves limits.

%
  To prove (2), we note that, since tensoring with $\Ss_2$ preserves colimits, it is equivalent to show that
  \[\varinjlim X(n)_2 \to X_2\]
  is an equivalence, i.e. that
  \[\varinjlim (X_2 / X(n)_2) \simeq \varinjlim (X/X(n))_2 \simeq 0 .\]

  Since $\{\Ss^{p,w,w} \otimes S_+ \vert S \in \Sm/\R \text{ and } p,w \in \Z\}$ forms a set of compact generators for $\SHR$, it suffices to show that
  \[\Map_{\SHR} (\Ss^{p,w,w} \otimes S_+, \varinjlim(X/X(n))_2 ) \simeq \varinjlim \left( \Map_{\SHR} (\Ss^{p,w,w} \otimes S_+, X/X(n))_2 \right)\]
  is equivalent to $0$ for all $S \in \Sm/\R$ and $p,w \in \Z$.

  This follows from the definition of the homotopy $t$-structure, which implies that, for fixed $S \in \Sm/\R$ and $p,w \in \Z$, the connectivity of the spectrum
  \[\Map_{\SHR} (\Ss^{p,w,w} \otimes S_+, X/X(n))\]
  goes to infinity as $n$ goes to infinity.
%
\end{proof}

The key input to our proof of \Cref{thm:complete-compare} will be the following:

\begin{lem}\label{lem:cof-conn}
  Let $\MGL(n,k)$ denote the Thom spectrum of the bundle $\gamma - k \cdot \triv$ over $\mathrm{Gr}_k (\mathbb{A}^{n+k})$.
  Then the cofiber of the natural map $\MGL(n,n) \to \MGL$ is $n$-connective with respect to the homotopy $t$-structure.
\end{lem}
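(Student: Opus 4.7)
The plan is to build a cell structure on each $\MGL(k,k)$ indexed by Young diagrams in a $k \times k$ box, trace through how these cells interact with the directed system $\{\MGL(k,k)\}$, and conclude using Morel's connectivity theorem.

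First, I would recall that the Bialynicki--Birula / Schubert decomposition stratifies $\mathrm{Gr}_k(\AA^{2k})$ into Schubert cells $B_\lambda \cong \AA^{|\lambda|}$ indexed by Young diagrams $\lambda$ fitting in the $k \times k$ box. As in \cite{DICell} (whose cellularity argument underlies \Cref{prop:MGL-chow-conn}), this stratification yields a finite cell structure on $\MGL(k,k)$ with exactly one cell $\Ss^{|\lambda|,|\lambda|,|\lambda|}$ per such $\lambda$. The directed system maps $\MGL(k,k) \to \MGL(k+1,k+1)$ are induced by the compatible inclusions of Grassmannians together with the identification of the corresponding virtual bundles; they realize the inclusion of Young diagrams from the $k \times k$ box into the $(k+1) \times (k+1)$ box and in particular are subcomplex inclusions.

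Since $\MGL \simeq \colim_k \MGL(k,k)$, it follows that $\MGL/\MGL(n,n) \simeq \colim_{k \geq n} \MGL(k,k)/\MGL(n,n)$, and each term in this colimit is built by successive cell attachments from cells $\Ss^{|\lambda|,|\lambda|,|\lambda|}$ where $\lambda$ is a Young diagram in the $k \times k$ box but not the $n \times n$ box. Any such $\lambda$ must have either a part of size $\geq n+1$ or length $\geq n+1$, so $|\lambda| \geq n+1$.

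Finally, I would check that $\Ss^{m,m,m}$ is $m$-connective with respect to the homotopy $t$-structure on $\SHR$ for every $m \geq 0$. Since $\Sigma^{\infty}_+ \G_m$ is connective as the suspension spectrum of a smooth $\R$-scheme, its retract $\Sigma^{\infty} \G_m$ is connective, and consequently $\Sigma^{\infty} \P^1_\R \simeq \Sigma \Sigma^{\infty} \G_m \simeq \Ss^{1,1,1}$ is $1$-connective; compatibility of the homotopy $t$-structure with the symmetric monoidal structure then yields that $\Ss^{m,m,m} \simeq (\Ss^{1,1,1})^{\otimes m}$ is $m$-connective. Combined with closure of the connective subcategory under extensions and colimits, this shows that $\MGL/\MGL(n,n)$ is $(n+1)$-connective, hence $n$-connective as claimed. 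The main technical point to extract from \cite{DICell} is the compatibility of the Schubert-indexed cell structure on $\MGL(k,k)$ with the directed system; once this is in place, the combinatorics of Young diagrams and Morel's theorem combine straightforwardly.
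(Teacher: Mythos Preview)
Your argument is correct. The paper's own proof is a one-line citation: ``This is an immediate consequence of \cite[Lemma 3.4]{HM}.'' What you have written is essentially an unpacking of the cell-structure argument that lies behind that citation (Hoyois in turn relies on the Schubert-type decomposition you describe). Your combinatorial bound $|\lambda| \geq n+1$ for diagrams not contained in the $n \times n$ box is right, and your verification that $\Ss^{m,m,m} \simeq (\P^1_\R)^{\otimes m}$ is $m$-connective via the compatibility of the homotopy $t$-structure with the monoidal structure is clean. The one place you rightly flag as needing care---compatibility of the Schubert cell structures with the transition maps $\MGL(k,k) \to \MGL(k+1,k+1)$---is exactly the point that \cite{HM} handles, so in the end both routes rest on the same input; you have simply made the mechanism visible rather than deferring to the reference.
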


\begin{proof}
  This is an immediate consequence of \cite[Lemma 3.4]{HM}.
%
\end{proof}
%
%
%
%
%
%
%
%
%
\begin{proof}[Proof of \Cref{thm:complete-compare} for $\MGL$, $\HZ$ and $\kgl$]
  Since $\MGL(n,k)$ is dualizable, the result for $\MGL$ follows from the combination of \Cref{prop:compl-facts}(1,2) with \Cref{lem:cof-conn}.

  We now turn to the case of $\HZ$.
  By the Hopkins--Morel theorem \cite{HM}, there is an equivalence
  \[ \MGL / (a_1, a_2, \dots) \simeq \HZ \]
  for certain classes $a_i \in \pi_{i,i,i} ^\R \MGL$.
  In other words,
  \[\varinjlim \MGL / (a_1, a_2, \dots, a_k) \simeq \HZ.\]
  By the result for $\MGL$, we know that the result holds for all $\MGL / (a_1, a_2, \dots, a_k)$. By \Cref{prop:compl-facts}(2), it therefore suffices to show that the connectivity of
  \[\MGL / (a_1, a_2, \dots, a_k) \to \MGL / (a_1, a_2, \dots) \simeq \HZ\]
  goes to infinity.
  To do this, it suffices to show that the connectivity of
  \[\MGL / (a_1, a_2, \dots, a_k) \to \MGL / (a_1, a_2, \dots, a_k, a_{k+1})\]
  goes to infinity, which follows from the cofiber sequences
  \[\MGL / (a_1, a_2, \dots, a_k) \to \MGL / (a_1, a_2, \dots, a_k, a_{k+1}) \to \Sigma^{k+2, k+1,k+1} \MGL / (a_1, a_2, \dots, a_k).\]

  Finally, we handle the case of $\kgl$.
  Combining the Hopkins--Morel theorem and convergence of the effective slice towers for $\MGL$ and $\kgl$ \cite[Lemmas 8.10 and 8.11]{HM} with \cite[Proposition 5.4]{Spitzweck}, we find that there is an equivalence
  \[\MGL / (a_2, a_3, \dots) \simeq \kgl. \]
  We may therefore prove the result for $\kgl$ exactly as we did for $\HZ$.
\end{proof}

Before we move on to the case of $\kq$, we collect some facts that we will need:
\begin{prop} \label{prop:completion-useful}
  The following are true:
  \begin{enumerate}
    \item Suppose that $X \in \SHRat$ is effective slice connective. Then the natural map
      \[\Map_{\SHR} (\Ss^{0,0,n}, X) \to \b (X)\]
      is an equivalence for any $n \leq 0$.
    \item In $C_2$-equivariant spectra, there is a canonical equivalence
      \[\Phi^{C_2} (\ko_{C_2, 2}) \simeq \Phi^{C_2} (\ko_{C_2})_2.\]
  \end{enumerate}
\end{prop}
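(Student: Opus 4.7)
For part (1), the plan is to invoke \Cref{thm:eff-fil} directly. That result provides a natural equivalence of lax symmetric monoidal functors
\[ i_* \simeq \b \circ f_\bullet \colon \SHRat \to \Sp_{C_2, i2}^{\Fil}, \]
where $i_*(X)_n \simeq \Map_{\SHRat}(\Ss_2^{0,0,n}, X)$. Since $X \in \SHRat$ is $2$-complete, I would first note that $\Map_{\SHR}(\Ss^{0,0,n}, X) \simeq \Map_{\SHRat}(\Ss_2^{0,0,n}, X)$ by adjunction. Applying \Cref{thm:eff-fil} identifies this with $\b(f_n X)$. The hypothesis that $X$ is effective slice connective puts $X$ in $\SHR^{\eff}_{i2, \geq 0}$, and since $\SHR^{\eff}_{i2, \geq 0} \subseteq \SHR^{\eff}_{i2, \geq n}$ for all $n \leq 0$, the counit $f_n X \to X$ is an equivalence in this range. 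Applying $\b$ yields the desired identification with $\b(X)$.

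For part (2), the plan is to show that $\Phi^{C_2}$ commutes with $2$-completion on $\ko_{C_2}$ by a connectivity argument. The first input is that $\Phi^{C_2}$ is a symmetric monoidal left adjoint, hence exact, so for any $k$ we have
\[ \Phi^{C_2}(\ko_{C_2}/2^k) \simeq \Phi^{C_2}(\ko_{C_2})/2^k. \]
The second input is boundedness below: $\ko_{C_2}$ is connective by definition, while $\Phi^{C_2}(\ko_{C_2})$ may be shown bounded below by reducing to the regular slice tower of $\ko_{C_2}$, whose slices have the form $\Sigma^{n\rho}$ of a constant Mackey functor and therefore have bounded-below geometric fixed points.

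With these facts in hand, I would compare the canonical map
\[ \Phi^{C_2}(\ko_{C_2})_2 \to \Phi^{C_2}(\ko_{C_2,2}) \]
using the Milnor presentation $X_2 \simeq \fib\bigl(\prod_k X/2^k \to \prod_k X/2^k\bigr)$ for both $X = \ko_{C_2}$ and $X = \Phi^{C_2}(\ko_{C_2})$. Because each tower consists of uniformly bounded-below objects, the countable products appearing are degreewise finite sums, so $\Phi^{C_2}$ commutes with them; since $\Phi^{C_2}$ preserves fiber sequences, it therefore commutes with the Milnor limit. Combined with the monoidality identification above, this yields the equivalence.

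The main obstacle will be establishing that $\Phi^{C_2}(\ko_{C_2})$ is bounded below with sufficiently controlled connectivity; once this is in place the Milnor/connectivity argument is routine, and is of the same flavor as the proof of \Cref{thm:complete-compare}.
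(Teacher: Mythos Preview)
Your argument for part (1) is correct and essentially identical to the paper's.

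For part (2), there is a genuine gap. The claim that ``the countable products appearing are degreewise finite sums'' is false: uniform boundedness below of the $\ko_{C_2}/2^k$ does not make $\prod_k \ko_{C_2}/2^k$ a finite sum in each degree (for that you would need the connectivities to tend to infinity, which they do not). Since $\Phi^{C_2}$ is only a left adjoint, you cannot conclude that it commutes with this infinite product or with the sequential limit computing $2$-completion, and the Milnor argument breaks down at exactly this point. Your attempt to establish bounded-belowness of $\Phi^{C_2}(\ko_{C_2})$ via the slice tower is also more work than needed and does not rescue the product step.

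The paper instead uses the isotropy separation cofiber sequence
\[
(\ko_{C_2})_{hC_2} \to (\ko_{C_2})^{C_2} \to \Phi^{C_2}(\ko_{C_2}).
\]
Since genuine fixed points $(-)^{C_2}$ is a right adjoint, it commutes with $2$-completion, so by the cofiber sequence it suffices to show that homotopy orbits commutes with $2$-completion on $\ko_{C_2}$. Identifying $(\ko_{C_2})_{hC_2}$ with $\ko \otimes \RP^{\infty}_+$, this reduces to showing $\ko_2 \otimes \RP^{\infty}_+ \simeq (\ko \otimes \RP^{\infty}_+)_2$, which follows from connectivity of $\ko$ and the fact that $\RP^{\infty}_+$ is of finite type. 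This sidesteps the limit-commutation problem entirely.
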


\begin{proof}
  Given $X \in \SHRat$, it follows from \Cref{thm:comparison-invert-ta}
  that $\b(X) \simeq \varinjlim \Map_{\SHR} (\Ss^{0,0,n}, X)$.
  Part (1) therefore follows from \Cref{thm:eff-fil}.
%

  We now prove (2). By the cofiber sequence
  \[(\ko_{C_2})_{hC_2} \to (\ko_{C_2})^{C_2} \to \Phi^{C_2} (\ko_{C_2})\]
  and the fact that $(-)^{C_2}$ commutes with limits, it suffices to show that the canonical map
  \[(\ko_{C_2,2})_{hC_2} \to (\ko_{C_2})_{hC_2, 2}\]
  is an equivalence.
  This map is equivalent to
  \[\ko_{2} \otimes \RP^{\infty}_{+} \to (\ko \otimes \RP^{\infty} _+ )_2, \]
  so this follows from connectivity of $\ko$ and the fact that $\RP^\infty _+$ is of finite type. 
%
\end{proof}

The key input will be the following special case:

\begin{prop} \label{prop:2-eta-invert-win}
  The map
  \[(\kq \otimes \Ss_2)[1/2, 1/\eta] \to \kq_2 [1/2, 1/\eta]\]
  is an equivalence.
\end{prop}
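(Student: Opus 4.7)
The plan is to analyze the fiber $\Phi$ of the comparison map $\kq \otimes \Ss_2 \to \kq_2$ in two steps: first show that $\Phi$ is $\eta$-local (so that inverting $\eta$ does nothing), then show that $\Phi[1/2] = 0$.

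\textbf{Step 1 ($\eta$-locality of $\Phi$).} The motivic cofiber sequence $\Sigma^{1,1,1}\kq \xrightarrow{\eta} \kq \to \kgl$ gives, after tensoring with $\Ss_2$, a cofiber sequence $\Sigma^{1,1,1}(\kq\otimes\Ss_2) \xrightarrow{\eta} \kq\otimes\Ss_2 \to \kgl\otimes\Ss_2$. On the other hand, $2$-completion is a Bousfield localization that preserves finite fiber sequences (computed via $\lim_k(-)/2^k$ applied termwise), so we also get a cofiber sequence $\Sigma^{1,1,1}\kq_2 \xrightarrow{\eta} \kq_2 \to \kgl_2$. The right-hand vertical map in the resulting ladder is an equivalence by \Cref{thm:complete-compare} (which has already been proved for $\kgl$), so taking fibers of the verticals yields a cofiber sequence $\Sigma^{1,1,1}\Phi \xrightarrow{\eta} \Phi \to 0$, i.e.\ $\eta$ acts invertibly on $\Phi$. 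Hence $\Phi[\eta^{-1}] \simeq \Phi$, and it remains to show $\Phi[1/2] \simeq 0$.

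\textbf{Step 2 ($\Phi[1/2] = 0$).} The fiber $\Phi$ may be identified with the homotopy limit of the Milnor $\lim^1$-obstruction: the cofiber of $\kq\otimes\Ss_2 \to \kq_2 \simeq \lim_k \kq/2^k$ has homotopy groups computed (via the Milnor sequence) by $\lim^1_k \pi_{*,*,*}^\R(\kq/2^k)$, so $\Phi$ is a $(-1)$-shift thereof. The strategy is then to observe that $\pi_{*,*,*}^\R(\kq)[1/\eta, 1/2]$ is finitely generated in each tri-degree: combining the already-known $\eta$-periodicity of $\kq[\eta^{-1}]$ (an identification with a motivic Witt/KW-type spectrum, whose tri-graded homotopy over $\R$ after inverting $2$ reduces to finitely generated $\Z[1/2]$-modules determined by the Witt groups of $\R$) with the fact that $\eta$ already acts invertibly on $\Phi$ by Step 1, we conclude that the relevant $\lim^1$ term computing $\Phi[1/2]$ is $\lim^1$ of a tower of finitely generated $\Z[1/2]$-modules with surjective transition maps, which vanishes.

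\textbf{Main obstacle.} The technical heart of the argument is Step 2: giving a clean description of $\kq[\eta^{-1}, 1/2]$ and verifying tri-degreewise finite generation of its homotopy groups over $\R$. A cleaner route that avoids an explicit Witt-theoretic identification is to proceed via the effective slice filtration (as in the $\kgl$ case of \Cref{thm:complete-compare}), using the Hopkins--Morel-type description of $\kq$ together with the vanishing of $\kgl[\eta^{-1}]$, and to show each slice of $\kq[\eta^{-1}, 1/2]$ is of finite type so that the $\Ss_2$-tensor and the $2$-completion agree at the level of the associated graded, hence on the filtered object.
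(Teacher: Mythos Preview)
Your Step 1 is correct, but note that it is exactly the reduction the paper already performs \emph{en route} to this proposition: in the proof of \Cref{thm:complete-compare} for $\kq$, the equivalence $\kq \otimes C\eta \simeq \kgl$ together with the $\kgl$ case is used to reduce the full statement to \Cref{prop:2-eta-invert-win}. So Step 1 does not actually advance the proof of \Cref{prop:2-eta-invert-win} itself; it re-derives the reduction that led here.

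Step 2, however, has a genuine gap. The Milnor sequence computes $\pi_*(\kq_2)=\pi_*(\lim_k \kq/2^k)$ in terms of $\lim$ and $\lim^1$ of $\pi_*(\kq/2^k)$; it says nothing about the fiber $\Phi$ of $\kq\otimes\Ss_2\to\kq_2$. That fiber measures the failure of $\kq\otimes(-)$ to commute with the inverse limit $\lim_k\Ss/2^k$, which is an entirely different obstruction from the failure of $\pi_*$ to commute with a limit. There is no identification of $\pi_*(\Phi)$ with any $\lim^1$ as you claim. (Concretely, your claim would require $\pi_*(\kq\otimes\Ss_2)\cong \lim_k\pi_*(\kq/2^k)$, which would only hold if $\kq\otimes(-)$ already commuted with the limit---precisely what is in question.) The proposed fix via finite generation of $\pi_*(\kq[1/\eta,1/2])$ does not connect to this obstruction either, and the alternative ``Hopkins--Morel for $\kq$'' route is not available: there is no known presentation of $\kq$ as $\MGL$ modulo a regular sequence.

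The paper's approach is quite different and avoids these issues entirely. The key input is Bachmann's theorem (\Cref{prop:invert-2-eta-rho}) that $\SHR_{(2)}[1/2,1/\eta]\simeq\Mod_\Q$ via $X\mapsto X(\R)$. This collapses the problem to a comparison of graded $\Q$-vector spaces, namely the single-graded groups $\pi_{*,0,0}^\R$ of both sides. One then computes $\pi_{*,0,0}^\R(\kq_{(2)}[1/2,1/\eta])\cong\Q[\beta]$ from the known $\eta$-periodic homotopy of $\kq$, $\pi_{*,0,0}^\R(\Ss_2[1/2,1/\eta])\cong\Q_2$ via \Cref{thm:comparison-invert-ta} and $\Phi^{C_2}$, and $\pi_{*,0,0}^\R(\kq_2[1/2,1/\eta])\cong\Q_2[\beta]$ via the identification $\Map_{\SHR}(\Ss^{0,0,n},\kq_2)\simeq\ko_{C_2,2}$ and the known computation of $\Phi^{C_2}(\ko_{C_2})$. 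The map in question is then a map $\Q[\beta]\otimes_\Q\Q_2\to\Q_2[\beta]$ which is visibly an isomorphism.
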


This will make essential use of the following result of Bachmann.

\begin{prop} \label{prop:invert-2-eta-rho}
  The assignment $X \mapsto X(\R)$ induces an equivalence
  \[\SHR[1/\rho] \simeq \Sp.\]
  Moreover, we have
  \[\SHR_{(2)} [1/2, 1/\eta] = \SHR_{(2)} [1/2, 1/\rho] \simeq \Mod_\Q.\]
\end{prop}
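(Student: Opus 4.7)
The first equivalence $\SHR[1/\rho] \simeq \Sp$ is Bachmann's foundational real étale theorem (see \cite{RealEtale}): inverting $\rho$ realizes $\SHR$ as the real étale localization, after which the functor of real points $X \mapsto X(\R)$ is a symmetric monoidal equivalence onto $\Sp$. I would simply cite this.

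For the second claim, the plan is to first identify $\SHR_{(2)}[1/2]$ with the full rationalization $\SHR_\Q$: $(-)_{(2)}$ inverts all odd primes, and the subsequent $[1/2]$ completes the rationalization. It then suffices to prove $\SHR_\Q[1/\eta] \simeq \SHR_\Q[1/\rho]$, apply the first part of the proposition to the right-hand side, and note $\Sp_\Q \simeq \Mod_\Q$. To produce the comparison, I would exploit the idempotent decomposition
\[
\pi_{0,0,0}^\R \Ss_\Q \;\cong\; \Q\{1, [C_2]\}/([C_2]^2 - 2[C_2]) \;\cong\; \Q \times \Q,
\]
furnished by the orthogonal idempotents $e_+ = 1 - [C_2]/2$ and $e_- = [C_2]/2$. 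These lift to an orthogonal decomposition $\SHR_\Q \simeq e_+ \SHR_\Q \oplus e_- \SHR_\Q$.

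On the $e_+$-summand, $[C_2]$ acts as zero, so by the relation $\eta \rho = 2 - [C_2]$ of \Cref{cor:rels} (or rather the analogous relation recalled in the introduction's remark on Morel's computation), the product $\eta\rho$ acts as the scalar $2 \in \Q^\times$. Thus inverting either of $\eta$ or $\rho$ makes the other one automatically invertible, and we obtain $\SHR_\Q[1/\rho] \simeq e_+ \SHR_\Q \simeq \SHR_\Q[1/\rho][1/\eta]$. Combined with Bachmann's theorem this already yields $e_+ \SHR_\Q \simeq \Mod_\Q$.

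The main step is therefore to show that $\eta$ acts as zero on $e_-\SHR_\Q$, so that inverting $\eta$ annihilates this summand and $\SHR_\Q[1/\eta] \simeq e_+\SHR_\Q$. On $e_-$ the element $[C_2]$ acts as $2$, so it suffices to establish the identity $\eta \cdot [C_2] = 0$ in $\pi_{0,0,0}^\R \Ss$; this is precisely the Milnor--Witt relation $\eta \cdot h = 0$ with $h = 1 + \langle -1\rangle = [C_2]$, accessed through Morel's identification $\pi_{n,n,n}^\R \Ss \cong K_n^{MW}(\R)$. Granted this, $2\eta = 0$ on $e_-$, which rationally forces $\eta = 0$. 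Putting everything together gives
\[
\SHR_\Q[1/\eta] \;\simeq\; e_+ \SHR_\Q \;\simeq\; \SHR_\Q[1/\rho] \;\simeq\; \Mod_\Q,
\]
as desired. The only non-formal input beyond the first part is Morel's computation of $\pi_{0,0,0}^\R \Ss$; everything else is an exercise in idempotent decompositions.
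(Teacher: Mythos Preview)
Your argument is sound and amounts to spelling out what the paper simply cites: the paper's proof invokes \cite[Theorem~35 and Proposition~36]{RealEtale} for the first equivalence and \cite[Lemma~39]{RealEtale} for the second, whereas you reconstruct the idempotent-splitting argument behind the latter (the same $e_\pm$ decomposition is recalled in the paper's \Cref{sec:odd}).

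One small gap: the displayed equivalence $\SHR_\Q[1/\rho] \simeq e_+\SHR_\Q$ presupposes that $e_-\SHR_\Q[1/\rho]=0$, which you justify for $\eta$ via $\eta h=0$ but never for $\rho$. It is easily repaired---either note that $\rho$ vanishes over~$\C$, so $[C_2]\cdot\rho = t(r(\rho)) = t(0) = 0$ and hence $2\rho=0$ on $e_-$ by the same mechanism you use for $\eta$; or observe that by the first part $\SHR_\Q[1/\rho]\simeq\Mod_\Q$ carries no nontrivial idempotents while $e_+$ manifestly survives, forcing $e_-\mapsto 0$---but the step should be made explicit.
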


\begin{proof}
  The first statement follows from \cite[Theorem 35 and Proposition 36]{RealEtale}.
  The second statement is a consequence of the first and \cite[Lemma 39]{RealEtale}.
\end{proof}

Given \Cref{prop:invert-2-eta-rho}, \Cref{prop:2-eta-invert-win} is reduced to the following proposition:

\begin{prop}
  There are isomorphisms:
  \begin{align*}
    &\pi_{*,0,0} ^{\R} (\kq_{(2)}[1/2, 1/\eta]) \cong \Q[\beta], \abs{\beta} = 4 \\
    &\pi_{*,0,0} ^{\R} (\Ss_2 [1/2, 1/\eta]) \cong \Q_2 \\
    &\pi_{*,0,0} ^{\R} (\kq_2 [1/2, 1/\eta]) \cong \Q_2 [\beta], \abs{\beta} = 4.
  \end{align*}
\end{prop}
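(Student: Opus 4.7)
The plan is to apply Bachmann's realization equivalence $\SHR_{(2)}[1/2, 1/\eta] \simeq \Mod_\Q$ from \Cref{prop:invert-2-eta-rho}, which is induced by $X \mapsto X(\R)$. Under this equivalence, $\pi_{p,0,0}^\R (X[1/2, 1/\eta])$ corresponds to $\pi_p$ of the image of $X$ in $\Mod_\Q$, since $\Ss^{p,0,0}$ realizes to $\Ss^p$. The proof thereby reduces to identifying the three images in $\Mod_\Q$ and reading off their homotopy groups.

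For $\kq_{(2)}[1/2, 1/\eta]$, the image is $\kq(\R)_{\Q} \simeq \ko_\Q$, using the standard identification of $\R$-motivic hermitian $K$-theory with connective real $K$-theory. Since $\ko_\Q \simeq \bigoplus_{k \geq 0} \Sigma^{4k} \H\Q$, we obtain $\pi_* \ko_\Q \cong \Q[\beta]$ with $|\beta|=4$. For the 2-completed cases, I would identify the images of $\Ss_2$ and $\kq_2$ with $\Ss_2$ and $\ko_2$ in $\Sp$ respectively; inverting $2$ then yields $\H\Q_2$, with homotopy $\Q_2$ concentrated in degree $0$, and $\ko_2[1/2] \simeq \bigoplus_{k \geq 0} \Sigma^{4k} \H\Q_2$, with homotopy $\Q_2[\beta]$.

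The main obstacle is verifying the compatibility of $\R$-realization with 2-completion, i.e., that the natural map $X_2(\R) \to X(\R)_2$ is an equivalence for $X = \Ss, \kq$. Modulo $2$, this holds automatically: since the symmetric monoidal left adjoint $(-)(\R)$ preserves cofibers, we have $X_2(\R)/2 \simeq (X_2/2)(\R) \simeq (X/2)(\R) \simeq X(\R)/2 \simeq X(\R)_2/2$. It then remains to verify that $X_2(\R)$ is itself $2$-complete in $\Sp$; for $X = \Ss, \kq$, this should follow from a finite type argument on the tower $\{X/2^k\}$ using that the tri-graded homotopy groups are of finite type in each degree, so that the Milnor sequence for the inverse limit is well-controlled by the realization. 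Alternatively, one may bypass this compatibility question entirely by computing $\pi_{p,0,0}^\R X_2[1/2, 1/\eta]$ directly via an arithmetic fracture argument: the $2$-completion of the tri-graded homotopy, followed by rationalization, yields the $\Q_2$-vectorization of the integral rational answer already computed, giving $\Q_2$ in degree $0$ for $X = \Ss$ and $\Q_2[\beta]$ for $X = \kq$.
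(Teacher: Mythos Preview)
Your overall strategy—apply Bachmann's equivalence $\SHR_{(2)}[1/2,1/\eta]\simeq\Mod_\Q$ directly—is natural, but the resolution of the compatibility issue you flag is where the argument breaks down.

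For the $2$-completed cases, both of your proposed fixes are inadequate. The finite-type argument is a sketch at best: real realization is a symmetric monoidal \emph{left} adjoint, so there is no reason for it to commute with the inverse limit defining $\Ss_2$ or $\kq_2$, and controlling this via Milnor sequences on trigraded homotopy requires knowing those groups are degreewise finitely generated, which you have not established (and which, for $\kq_2$, is close to what we are trying to prove). The arithmetic-fracture bypass is circular in a subtler way: asserting that $\pi_{*,0,0}^\R X_2[1/2,1/\eta]$ is the $\Q_2$-vectorization of the rational answer presumes that $2$-completion of $X$ interacts with $[1/2,1/\eta]$ in the expected way, but inverting $\eta$ moves you into trigradings $(*,q,w)$ with $q=w\neq 0$, and you have no control over $\pi_{*,*,*}^\R \kq_2$ there without already knowing $\kq_2\in\SHRat$. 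Remember that the whole point of this proposition is to \emph{prove} that $\kq\otimes\Ss_2\to\kq_2$ is an equivalence, so you cannot assume $\kq_2$ is Artin--Tate.

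The paper takes a genuinely different route. It exploits the factorization $\rho=\ta\cdot a$ together with the $\ta$-local equivalence $\Mod(\SHRat;\Ss_2[\ta^{-1}])\simeq\Sp_{C_2,i2}$ (\Cref{thm:comparison-invert-ta}), which \emph{is} proved for the $i2$-complete category. Thus for $\Ss_2$ one computes $\pi_{*,0,0}^\R(\Ss_2)[1/\rho]\cong\pi_*^{C_2}(\Ss_2)[1/a_\sigma]\cong\pi_*\Phi^{C_2}(\Ss_2)\cong\pi_*\Ss_2$, and only then invokes Bachmann to replace $[1/\rho]$ by $[1/\eta]$ after inverting $2$. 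For $\kq_2$ one cannot even do this directly, since $\kq_2$ is not yet known to be Artin--Tate; instead the paper computes $\Map_{\SHR}(\Ss^{0,0,n},\kq_2)\simeq\varprojlim\b(\kq/2^k)\simeq\ko_{C_2,2}$ using the effective-slice criterion \Cref{prop:completion-useful}(1) on the finite complexes $\kq/2^k$, then invokes a separate lemma (\Cref{prop:completion-useful}(2)) showing $\Phi^{C_2}(\ko_{C_2,2})\simeq\Phi^{C_2}(\ko_{C_2})_2$. For the first isomorphism the paper also differs from you: rather than asserting $\kq(\R)\simeq\ko$, it cites the computation $\pi_{*,0,0}^\R\kq[1/\eta]\cong\mathrm{W}(\R)[\beta]\cong\Z[\beta]$ from the literature on $\eta$-periodic motivic homotopy.
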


\begin{proof}
  Let $\mathrm{W} (\R)$ denote the Witt group of the real numbers. There is an isomorphism $\mathrm{W} (\R) \cong \Z$.
  It follows from \cite[Section 6.3.2]{etaPeriodic} that
  \[\pi_{*,0,0} ^{\R} \kq[1/\eta] \cong \mathrm{W} (\R) [\beta] \cong \Z[\beta],\]
  from which we deduce the first isomorphism.

  By \Cref{thm:comparison-invert-ta}, Betti realization induces an isomorphism 
  \[\pi_{*,*,0} ^{\R} (\Ss_2)[1/\ta] \cong \pi_{*+*\sigma} ^{C_2} (\Ss_2).\]
  As a consequence, we find
  \begin{align*}
    \pi_{*,0,0} ^{\R} (\Ss_2)[1/\rho]
    &\cong \pi_{*,0,0} ^{\R} (\Ss_2)[1/\ta, 1/a] \\
    &\cong \pi_{*} ^{C_2} (\Ss_2)[1/a_\sigma] \\
    &\cong \pi_* \Phi^{C_2} (\Ss_2) \\
    &\cong \pi_* \Ss_2.
  \end{align*}
  Inverting $2$ and applying \Cref{prop:invert-2-eta-rho}, we deduce the second isomorphism.

  For the third isomorphism, we make use of the sequence of equivalences
  \begin{align*}
    \Map_{\SHR} (\Ss^{0,0,n}, \kq_2)
    &\simeq \varprojlim \Map_{\SHR} (\Ss^{0,0,n}, \kq/2^k) \\
    &\simeq \varprojlim \b(\kq/2^k) \\
    &\simeq \b(\kq)_2 \\
    &\simeq \ko_{C_2,2},
  \end{align*}
  where the second equivalence follows from \Cref{prop:completion-useful}(1) and the fourth equivalence follows from \cite[Corollary 2.30]{Kong}.

  As a consequence, we find that
  \begin{align*}
    \pi_{*,0,0} ^\R \kq_2 [1/\rho] 
    &\cong \pi_{*,0,0} ^\R \kq_2 [1/\ta, 1/a] \\
    &\cong \pi_{*} ^{C_2} \ko_{C_2,2} [1/a_\sigma] \\
    &\cong \pi_{*} \Phi^{C_2} (\ko_{C_2,2}) \\
    &\cong \pi_* \Phi^{C_2} (\ko_{C_2})_2 \\
    &\cong \Z_2 [\beta],
  \end{align*}
  where the fourth isomorphism follows from \Cref{prop:completion-useful}(2) and the fifth isomorphism follows from \cite[Propositon 10.18]{koC2}.
  Inverting $2$ and applying \Cref{prop:invert-2-eta-rho}, we obtain the result.
\end{proof}

\begin{proof}[Proof of \Cref{thm:complete-compare} for $\kq$]
  We want to prove that
  \[\kq \otimes \Ss_2 \to \kq_2\]
  is an equivalence.
  It is clearly an equivalence after smashing with $C(2)$, so it suffices to show that it is an equivalence after inverting $2$.
  Moreover, by the equivalence $\kq \otimes C(\eta) \simeq \kgl$ and the $\kgl$ case, it is also an equivalence after smashing with $C(\eta)$.
  It therefore suffices to show that
  \[(\kq \otimes \Ss_2)[1/2, 1/\eta] \to \kq_2 [1/2, 1/\eta]\]
  is an equivalence, which is \Cref{prop:2-eta-invert-win}.
%
%
%
%
%
 %
%
%
 %
\end{proof}

\subsection{Ta completion} \label{sec:ta-comp}\

At several points we have suggested that from a computational viewpoint the $\ta$-Bockstein spectral sequence is probably the best way to gain access to various objects. Here we show such computations often converge to the correct answer.

\begin{prop}\label{prop:unit-ta-complete}
 The unit in $\SHRat$ is $\ta$-complete.
\end{prop}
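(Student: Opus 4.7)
The plan is to transport the question to the filtered model of \Cref{thm:filt-model} and then reduce it to a vanishing statement for the inverse limit of regular slice covers.

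Under the equivalence $\SHRat \simeq \Mod(\Sp^{\Fil}_{C_2, i2}; R_\bullet)$, the unit $\Ss_2 \in \SHRat$ corresponds to $R_\bullet$ regarded as a module over itself, and the element $\ta$ corresponds to the shift map $\tau$ of $\Sp^{\Fil}_{C_2, i2}$ pulled back along $i^*$. Since $\tau$-completeness of an $R_\bullet$-module depends only on its underlying filtered $C_2$-spectrum, it suffices to show that $R_\bullet$ is $\tau$-complete in $\Sp^{\Fil}_{C_2, i2}$.

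The next step would be to invoke the standard characterization of $\tau$-completeness for filtered objects: $X_\bullet$ is $\tau$-complete if and only if $\lim_{n \to +\infty} X_n \simeq 0$. Indeed, the $\tau$-inverted unit is the constant filtered spectrum $\Ss_2[\tau^{-1}] \simeq \colim_w \Ss_2(w)$, so mapping it into $X_\bullet$ computes $\lim_w X_w$. Applying this to $R_\bullet \simeq \Tot^*(P_{2\bullet}\MU_{\R,2}^{\otimes *+1})$ and commuting $\lim_n$ past the totalization, the problem is reduced to establishing
\[\lim_{n \to +\infty} P_{2n}\, \MU_{\R,2}^{\otimes k+1} \simeq 0 \qquad \text{in } \Sp_{C_2,i2} \text{ for each } k\geq 0.\]

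For this last step, I would use that the regular slice $2n$-connective subcategory $\Sp_{C_2, i2}^{\geq 2n}$ is coreflective and therefore closed under limits. For every $m$, the tail $\{P_{2n}Y\}_{n \geq m}$ lies in $\Sp_{C_2, i2}^{\geq 2m}$, hence so does its limit; consequently $\lim_n P_{2n}Y$ lies in the intersection $\bigcap_m \Sp_{C_2, i2}^{\geq 2m}$. Any object $Z$ in this intersection has $\Phi^e(Z)$ and $\Phi^{C_2}(Z)$ simultaneously infinitely connective in $\Sp_{i2}$, hence both null (since the $t$-structure on $\Sp_{i2}$ is separated), so by joint conservativity of $(\Phi^e, \Phi^{C_2})$ we obtain $Z \simeq 0$.

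The substantive content is really in the reduction rather than the final computation: once one sets up the correct characterization of $\tau$-completeness in $\Sp^{\Fil}_{C_2, i2}$ and checks that $\Tot$ commutes with the relevant inverse limit in $n$, the slice-connectivity argument is immediate. The main thing to be careful about is the direction of the filtration: in the paper's decreasing-arrow convention one needs the limit as $n \to +\infty$ (the ``highly refined'' end) rather than $n \to -\infty$, which matches the fact that $R_n$ in the latter range just recovers the $\MU_{\R,2}$-nilpotent completion of the sphere.
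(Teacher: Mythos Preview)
Your strategy of passing to the filtered model and reducing to $\lim_{n}P_{2n}\MU_{\R,2}^{\otimes k+1}\simeq 0$ is sound, and genuinely different from the paper's argument (which shows $\HZt$ is $\ta$-complete via the vanishing $\pi^{\R}_{p,q,w}\HZt=0$ for $w>0$, then writes $\Ss_2$ as the totalization of the $\HZt$-cobar complex and uses that limits of $\ta$-complete objects are $\ta$-complete).

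However, your final step has a real gap. A \emph{coreflective} subcategory is one whose inclusion has a right adjoint; such an inclusion is a \emph{left} adjoint and hence preserves colimits, not limits. It is \emph{reflective} subcategories that are closed under limits. The regular-slice-connective subcategories $\Sp_{C_2,i2}^{\geq 2m}$ are not closed under limits: with the paper's characterization (via connectivity of $\Phi^e$ and $\Phi^{C_2}$), this would require $\Phi^{C_2}$ to preserve limits, but $\Phi^{C_2}$ is only a left adjoint. So you cannot conclude that $\lim_n P_{2n}Y$ lies in $\bigcap_m\Sp_{C_2,i2}^{\geq 2m}$, and you therefore cannot deduce that $\Phi^{C_2}$ of the limit is infinitely connective.

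The fix is to replace the pair $(\Phi^e,\Phi^{C_2})$ by the pair $(\Phi^e,(-)^{C_2})$, both of which \emph{do} preserve limits and are still jointly conservative. Since $\Phi^e(P_{2n}Y)$ is $2n$-connective, $\Phi^e(\lim_n P_{2n}Y)=\lim_n\Phi^e(P_{2n}Y)=0$ by separatedness. For the categorical fixed points, the isotropy separation sequence
\[
(P_{2n}Y)_{hC_2}\to(P_{2n}Y)^{C_2}\to\Phi^{C_2}(P_{2n}Y)
\]
has outer terms that are $2n$- and $n$-connective respectively (the first because homotopy orbits preserve connectivity of $\Phi^e(P_{2n}Y)$), so $(P_{2n}Y)^{C_2}$ is $n$-connective; hence $(\lim_n P_{2n}Y)^{C_2}=\lim_n(P_{2n}Y)^{C_2}=0$ as well. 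With this repair your argument goes through.
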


Before proving this proposition we give a corollary of it.

\begin{cor}\label{cor:dualizable-ta-complete}
  Every dualizable object of $\SHgc$ is $\ta$-complete.
\end{cor}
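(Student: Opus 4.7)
The plan is to deduce \Cref{cor:dualizable-ta-complete} from \Cref{prop:unit-ta-complete} via the standard trick that dualizable objects allow one to pull tensor products past limits. Concretely, for any $X \in \SHgc$ the $\ta$-completion is computed as
\[ X^\wedge_\ta \simeq \lim_n X/\ta^n, \]
and since $\ta : \Ss^{0,0,-1} \to \Ss^{0,0,0}$ is a map between the unit and a shift of it, the cofiber $X/\ta^n$ is naturally identified with $(\o/\ta^n) \otimes X$. Thus
\[ X^\wedge_\ta \simeq \lim_n \bigl( (\o/\ta^n) \otimes X \bigr). \]

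The key step is to commute the limit past the tensor product. This is precisely where dualizability enters: if $X$ is dualizable with dual $X^\vee$, then the functor $-\otimes X$ is both a left and a right adjoint (with right adjoint $-\otimes X^\vee$, using self-duality of dualizable objects), hence commutes with all limits. Therefore
\[ \lim_n \bigl( (\o/\ta^n) \otimes X \bigr) \simeq \bigl( \lim_n \o/\ta^n \bigr) \otimes X \simeq \o^\wedge_\ta \otimes X. \]
Applying \Cref{prop:unit-ta-complete}, the right-hand side is equivalent to $\o \otimes X \simeq X$, so the natural map $X \to X^\wedge_\ta$ is an equivalence, as desired.

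I do not anticipate any real obstacle here; everything is formal once \Cref{prop:unit-ta-complete} is in hand. The only minor point to verify carefully is that the identification $X/\ta^n \simeq (\o/\ta^n) \otimes X$ is natural in $n$ so that it upgrades to an equivalence of towers, but this is immediate from the fact that $-\otimes X$ preserves the colimits defining each $\o/\ta^n$ and the connecting maps between them.
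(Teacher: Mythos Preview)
Your argument is correct and is precisely the standard argument the paper has in mind: the paper states this corollary without proof, treating it as an immediate consequence of \Cref{prop:unit-ta-complete}, and your use of the fact that tensoring with a dualizable object commutes with limits is exactly the intended reasoning. One tiny phrasing quibble: the parenthetical ``using self-duality of dualizable objects'' is unnecessary and slightly misleading --- all you need is that $-\otimes X$ has right adjoint $-\otimes X^\vee$ when $X$ is dualizable, which is the defining property of dualizability rather than anything about self-duality.
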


Now we turn to the proof of this proposition.
We begin with the following lemma.

\begin{lem} \label{lem:Z-ta-complete}
  $\HZt$ is $\ta$-complete.
\end{lem}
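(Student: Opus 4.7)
The plan is to verify $\ta$-completeness of $\HZt$ by showing that the inverse limit $\lim_n \Sigma^{0,0,-n} \HZt$, with transition maps given by multiplication by $\ta$, vanishes in $\SHRat$. This is equivalent to the standard formulation of $\ta$-completeness, namely that $\HZt \to \lim_n \HZt/\ta^n$ is an equivalence, since the fiber of this map is precisely $\lim_n \Sigma^{0,0,-n} \HZt$.

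Since the tri-graded spheres $\Ss^{p,q,w}_2$ form a family of compact generators for $\SHRat$, it suffices to check that $\pi^{\R}_{p,q,w}(\lim_n \Sigma^{0,0,-n} \HZt) = 0$ for every $(p,q,w) \in \Z^3$. For a sequential limit in a stable category, these homotopy groups fit into a Milnor $\lim^1$ short exact sequence involving $\lim_n \pi^{\R}_{p,q,w+n} \HZt$ and $\lim^1_n \pi^{\R}_{p,q,w+n+1} \HZt$, so it is enough to show that both of these vanish.

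The key input is Proposition \ref{thm:pthomology}, which gives $\pi^{\R}_{*,*,*} \HZt \cong (\pi^{C_2}_{*+*\sigma} \uZt)[\ta]$ with $|\ta| = (0,0,-1)$. In particular, $\pi^{\R}_{p,q,w} \HZt = 0$ whenever $w > 0$. Consequently, for any fixed $(p,q,w)$, the homotopy tower $\{\pi^{\R}_{p,q,w+n} \HZt\}_n$ is eventually zero (precisely, once $n > -w$), so both $\lim$ and $\lim^1$ trivially vanish. This forces every tri-graded homotopy group of $\lim_n \Sigma^{0,0,-n} \HZt$ to be zero, whence the limit is zero and $\HZt$ is $\ta$-complete. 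There is no real obstacle here: the entire argument reduces to the weight-vanishing of $\pi^{\R}_{*,*,*} \HZt$ established in the previous subsection.
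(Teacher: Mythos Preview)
Your proof is correct and follows essentially the same approach as the paper's: reduce to checking that $\varprojlim_n \Sigma^{0,0,-n}\HZt$ has vanishing tri-graded homotopy, use compactness of the spheres to pass to a Milnor sequence, and invoke the vanishing $\pi^{\R}_{p,q,w}\HZt = 0$ for $w>0$ (which makes the tower eventually zero). The paper's proof is terser but identical in substance; your citation of Proposition~\ref{thm:pthomology} for the weight-vanishing is equivalent to the paper's appeal to this fact.
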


\begin{proof}
  Since the tri-graded spheres are a family of compact generators it will suffice to show that
  \[ \varprojlim \left( \cdots \to \pi_{p,q,2} ^\R (\HZt) \to \pi_{p,q,1} ^\R (\HZt) \to \pi_{p,q,0} ^\R (\HZt) \right) \]
  is zero and there is no $\mathrm{lim}^1$ term. Both of these claims follow from the fact that $\pi_{p,q,w}^\R (\HZt) = 0$ for $w \geq 0$.
\end{proof}

\begin{proof}[Proof of \Cref{prop:unit-ta-complete}]
  Since the $\HZt$-Adams spectral sequence converges \cite{MAdamsConv}  and limits of complete objects are complete we may use \Cref{lem:Z-ta-complete} to conclude.
\end{proof}

\subsection{$a$-completion} \label{sec:a-comp}\ 

Almost none of the objects we have encountered are $a$-complete.  This stands in contrast to $C_2$-equivariant homotopy theory, where the Segal conjecture implies that the $2$-completed unit is $a$-complete. To see explicit failures of completeness, note that if an object $X$ is $a$-complete then $C\ta \otimes X $ is also $a$-complete. On the other hand, the homotopy groups of $C\ta$ contain copies of the homotopy of $\uZt$, which is not $a$-complete.

\begin{exm}
  The elements $\frac{\theta}{a^n} \otimes 1$ in the homotopy of $C\ta$ give an explicit example of non-$a$-completeness.
  If we consider the image of $\frac{\theta}{a^n}$ under the connecting map $\delta: \Sigma^{-1,0,1}C\ta \to \Ss$ we obtain an example of an infinitely $a$-divisible element in the sphere. \footnote{This image is non-trivial for $n$ sufficiently large, though we don't prove it. Of course if the image is trivial for all $n$, then a lift of these classes to the sphere gives a different example of non-completeness.}    
\end{exm}

On the other hand,
it often happens that after inverting $\ta$ objects becomes $a$-complete.

\begin{lem}\label{lem:using-lin}
  On dualizable objects of $\SHRat$ the following functors are equivalent:
  $(-)[\ta^{-1}]$, $(-)_a^\wedge[\ta^{-1}]$ and $(-)[\ta^{-1}]_a^\wedge$.
\end{lem}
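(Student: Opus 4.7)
The plan is to deduce all three equivalences from Lin's theorem (the $C_2$-equivariant Segal conjecture at the prime $2$), which asserts that $\Ss_{C_2, 2}$ is $a_\sigma$-complete in $\Sp_{C_2, i2}$. Since the subcategory of $a_\sigma$-complete objects is thick, this immediately implies that every dualizable object of $\Sp_{C_2, i2}$ is $a_\sigma$-complete. This is the source of the label ``using-lin.''

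First I would show that $(-)[\ta^{-1}]$ and $((-)[\ta^{-1}])_a^\wedge$ agree on dualizables. By \Cref{thm:comparison-invert-ta}, Betti realization provides a symmetric monoidal equivalence between the $\ta$-local subcategory of $\SHRat$ and $\Sp_{C_2, i2}$, sending $a$ to $a_\sigma$. In particular, $a$-completion of a $\ta$-local object in $\SHRat$ corresponds to $a_\sigma$-completion in $\Sp_{C_2, i2}$ (note that each $Y/a^n$ is $\ta$-local when $Y$ is, and $\ta$-local objects are closed under limits). For $X$ dualizable in $\SHRat$, $X[\ta^{-1}]$ is dualizable in $\Sp_{C_2, i2}$, hence $a_\sigma$-complete by Lin. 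Therefore $X[\ta^{-1}]$ is already $a$-complete, giving the equivalence $X[\ta^{-1}] \simeq (X[\ta^{-1}])_a^\wedge$.

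Next I would establish $(-)[\ta^{-1}] \simeq (-)_a^\wedge [\ta^{-1}]$. Since $X$ is dualizable, tensoring with $X$ preserves limits, so $X_a^\wedge \simeq X \otimes \Ss_a^\wedge$ and consequently $X_a^\wedge [\ta^{-1}] \simeq X \otimes (\Ss_a^\wedge [\ta^{-1}])$. It therefore suffices to prove the case $X = \Ss$, namely $\Ss_a^\wedge [\ta^{-1}] \simeq \Ss[\ta^{-1}]$. Applying the previous step to $X = \Ss$ shows that $\Ss[\ta^{-1}]$ is $a$-complete, so the universal property of $a$-completion produces a factorization $\Ss \to \Ss_a^\wedge \to \Ss[\ta^{-1}]$. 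Inverting $\ta$ in this factorization and in the map $\Ss \to \Ss_a^\wedge$ yields maps $\Ss[\ta^{-1}] \rightleftarrows \Ss_a^\wedge [\ta^{-1}]$; the composition $\Ss[\ta^{-1}] \to \Ss_a^\wedge [\ta^{-1}] \to \Ss[\ta^{-1}]$ is the $\ta$-localization of $\Ss \to \Ss_a^\wedge \to \Ss[\ta^{-1}] = \Ss \to \Ss[\ta^{-1}]$, hence the identity.

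The main obstacle is verifying that the reverse composition $\Ss_a^\wedge[\ta^{-1}] \to \Ss[\ta^{-1}] \to \Ss_a^\wedge [\ta^{-1}]$ is also the identity. By the universal property of $a$-completion applied to $\Ss_a^\wedge$, this would follow once we know that $\Ss_a^\wedge[\ta^{-1}]$ is itself $a$-complete, because then both this composition and the natural $\ta$-localization map $\Ss_a^\wedge \to \Ss_a^\wedge[\ta^{-1}]$ are determined by their common precomposition with $\Ss \to \Ss_a^\wedge$. Under the identification of the $\ta$-local subcategory with $\Sp_{C_2, i2}$, this reduces to showing that $\b(\Ss_a^\wedge)$ is $a_\sigma$-complete; writing $\Ss_a^\wedge = \lim_n \Ss/a^n$ and noting that $\b(\Ss/a^n) \simeq \Ss_{C_2, 2}/a_\sigma^n$, we must show that the natural map
\[\b(\lim_n \Ss/a^n) \longrightarrow \lim_n \b(\Ss/a^n) \simeq \Ss_{C_2, 2}\]
is an equivalence (the final equivalence being Lin's theorem). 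Since $\b$ is a left adjoint and does not a priori preserve this limit, this requires a Mittag-Leffler-type argument exploiting that the $a_\sigma$-adic tower for $\Ss_{C_2, 2}$ in $\Sp_{C_2, i2}$ stabilizes in each bidegree; together with \Cref{cor:dualizable-ta-complete}, which guarantees $\ta$-completeness along the tower, this finishes the proof.
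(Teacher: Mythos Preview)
Your first step --- using Lin's theorem together with the equivalence $\Mod(\SHRat;\Ss_2[\ta^{-1}])\simeq\Sp_{C_2,i2}$ to show that $X[\ta^{-1}]$ is already $a$-complete for dualizable $X$ --- matches the paper's argument exactly.

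The second step has a genuine gap. After the universal-property manipulations you reduce to showing that $\b$ preserves the inverse limit $\lim_n \Ss/a^n$; but since $\b$ is (on $\ta$-complete objects) essentially $\ta$-inversion, this is precisely the limit--colimit interchange you set out to prove. You then invoke a Mittag--Leffler argument based on the claim that the $a_\sigma$-adic tower $\{\Ss_{C_2,2}/a_\sigma^n\}$ stabilizes in each bidegree. That claim is false: the fiber of $\Ss_{C_2,2}/a_\sigma^{n+1}\to\Ss_{C_2,2}/a_\sigma^{n}$ is $\Sigma^{-n\sigma}(C_2)_+\otimes\Ss_2$, whose $\pi_{p+q\sigma}^{C_2}$ is $\pi_{p+q+n}\Ss_2$, which is nonzero for infinitely many $n$. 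So the tower does not stabilize, and the Mittag--Leffler argument does not go through.

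The paper handles the interchange more directly. For dualizable $X$ there is an $N$ such that $\pi^\R_{p,q,w}(C\ta\otimes X)=0$ for all $w\le N$. Since $|a|=(0,-1,0)$ has weight~$0$, the same bound holds for $C\ta\otimes X/a^n$ for every $n$. Hence in any fixed tridegree the $\ta$-localization colimit $\pi^\R_{p,q,w}(\Sigma^{0,0,m}(X/a^n))$ stabilizes once $m>w-N$, \emph{uniformly in $n$}. This uniform stabilization is what makes the colimit over $m$ commute with the inverse limit over $n$ on homotopy, giving $(-)_a^\wedge[\ta^{-1}]\simeq(-)[\ta^{-1}]_a^\wedge$; then your first step (Lin) identifies both with $(-)[\ta^{-1}]$. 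The key input you are missing is this weight-direction vanishing for $C\ta\otimes X$, not any stabilization of the $a_\sigma$-adic tower.
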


\begin{proof}
  For any dualizable object $X$ there is an $n$ such that $\pi_{p,q,w} ^\R (C\ta \otimes X) = 0$ for $w \leq n$. Using this we learn that on the level of trigraded homotopy groups the colimit inverting $\ta$ commutes with the inverse limit completing at $a$. Now, we only need to explain why $(-)[\ta^{-1}]$ is already $a$-complete on dualizable inputs. If we invert $\ta$ on a dualizable $i2$-complete $\R$-motivic spectrum we get a dualizable $i2$-complete $C_2$-spectrum. Thus, we're reduced to showing that dualizable $i2$-complete $C_2$-spectra are $a_\sigma$-complete. Using the fact that tensoring with a dualizable object commutes with limits it suffices to show this is true for the unit. The 2-complete sphere in $C_2$-spectra is $a_\sigma$-complete as a consequence of Lin's theorem \cite{Lin}.
\end{proof}

The tension this introduces is heightened when one recalls Gregersen's motivic analog of Lin's theorem, which is proved over any field of characteristic zero \cite{gregersen}. However, Gregersen's result only asserts a $\pi_{**}$-isomorphism, i.e. an equivalence after reflecting into the Tate category. 
In fact, a careful examination of the formula for the homotopy groups of $C\ta$ reveals that the copies of the negative cone (from which the non-$a$-completeness originates) all lie below the plane of Tate spheres. Therefore, upon running the $\ta$-Bockstein spectral sequence the tri-degrees $(p,w,w)$ never receive contributions from copies of the negative cone. 



\section{Odd primes} \label{sec:odd}
In this section we show that, when working at an odd prime $p$, the category $\SHR^{\at}_{ip}$ admits a simple description in terms of $\SHCat$. The corresponding result for Tate spectra is discussed in \cite[p.23-24]{BehrensShah}, following results of \cite{RealEtale}.

\begin{prop}\label{prop:odd}
  There is an equivalence of categories
  \[ \SHR^{\at}_{ip} \simeq \Sp_{ip} \times \SHCat \times \SHCat. \]
\end{prop}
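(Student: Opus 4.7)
My plan is to prove this by exploiting Burnside ring idempotents together with the behavior of $C_2$-equivariant structures at odd primes.

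The first step is to produce a two-fold decomposition via Burnside idempotents. The ring $\pi_{0,0,0}^{\R}\Ss_{ip}$ contains the Burnside ring $A(C_2) \otimes \Z_{(p)} \cong \Z_{(p)}[C_2]/([C_2]^2 - 2[C_2])$, which at odd $p$ carries the orthogonal idempotents $e_2 = [C_2]/2$ and $e_1 = 1 - [C_2]/2$. These lift to central idempotents of the unit $\Ss_{ip}$ and produce a product decomposition $\SHR^{\at}_{ip} \simeq e_1 \SHR^{\at}_{ip} \times e_2 \SHR^{\at}_{ip}$.

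The second step is to identify $e_1 \SHR^{\at}_{ip} \simeq \Sp_{ip}$. In this subcategory $[C_2] = 0$, so $\Spec(\C)_+ = 0$ and the map $a : \Ss_{ip}^{0,0,0} \to \Ss_{ip}^{0,-1,0}$ is an equivalence. Working through the odd-prime analogue of \Cref{thm:Cta}, the special fiber $\Mod(\SHR^{\at}_{ip}; C\ta[a^{-1}])$ is governed by $\Phi^{C_2}\uZ_{ip} \cong \F_2[u_{2\sigma}]_{ip}$, which vanishes at odd $p$ since $\F_2$ is $2$-torsion. Consequently $C\ta = 0$ in $e_1 \SHR^{\at}_{ip}$, so $\ta$ is invertible. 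Combined with \Cref{thm:comparison-invert-ta}, this identifies $e_1 \SHR^{\at}_{ip}$ with the $e_1$-factor of $\Sp_{C_2, ip}$; under the standard odd-prime splitting $\Sp_{C_2, ip} \simeq \Sp_{ip} \times \Sp_{ip}$ (given by the Borel and geometric fixed-point factors), $e_1$ corresponds to the geometric factor, yielding $e_1 \SHR^{\at}_{ip} \simeq \Sp_{ip}$.

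The third step is the harder one: identifying $e_2 \SHR^{\at}_{ip} \simeq \SHC^{\at}_{ip} \times \SHC^{\at}_{ip}$. The equivalence $\Mod(\SHR^{\at}_{ip}; \Spec(\C)_+) \simeq \SHC^{\at}_{ip}$, together with the relation $\Spec(\C)_+ \otimes_{\Ss} \Spec(\C)_+ \simeq \Spec(\C)_+ \oplus \Spec(\C)_+$ (from $\Spec(\C \otimes_{\R} \C) \simeq \Spec(\C) \sqcup \Spec(\C)$), endows $\SHC^{\at}_{ip}$ with a natural $C_2$-action by Galois conjugation over $e_2 \SHR^{\at}_{ip}$. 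At odd primes, the group algebra $\Z_{ip}[C_2]$ splits as $\Z_{ip} \times \Z_{ip}$ via its orthogonal idempotents, which trivializes the $C_2$-action on $\SHC^{\at}_{ip}$ into eigen-components and produces the desired two-fold split of $e_2 \SHR^{\at}_{ip}$ into two copies of $\SHC^{\at}_{ip}$, one for each Galois embedding $\R \hookrightarrow \C$. Combining the three steps yields the claimed equivalence.

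The main obstacle is the final Galois descent step: while the odd-prime decomposition of $\Z_{ip}[C_2]$-modules formally suggests an eigenspace decomposition, making this precise at the level of presentably symmetric monoidal categories requires careful bookkeeping of the Galois action on $\SHC^{\at}_{ip}$ and checking that both eigencomponents are genuinely equivalent (as symmetric monoidal categories) to $\SHC^{\at}_{ip}$ itself rather than to some twisted form. Once this is established, the product decomposition follows and one may verify it is an equivalence by checking on the tri-graded sphere generators.
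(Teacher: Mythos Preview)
Your initial Burnside-idempotent decomposition into $e_1$ and $e_2$ factors matches the paper's plus/minus split. Where your proposal diverges, and runs into real gaps, is in how you identify each factor.

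For the $e_1$ factor, you invoke ``the odd-prime analogue of \Cref{thm:Cta}'' and then \Cref{thm:comparison-invert-ta}. Neither theorem is established at odd primes in this paper: both are proved only for $i2$-completion, and indeed the present proposition is the paper's explanation for \emph{why} the odd-prime story is so much simpler that no such machinery is needed. Citing them here is circular. The paper instead observes that in the $e_1$ factor one has $[C_2]=0$, whence (via $\eta\rho = 2-[C_2]$ and invertibility of $2$) the element $\rho$ is invertible; then Bachmann's real-\'etale theorem $\SHR[\rho^{-1}] \simeq \Sp$ directly identifies this factor with $\Sp_{ip}$.

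For the $e_2$ factor, your Galois-descent sketch is, as you acknowledge, incomplete. The paper takes a concrete and rather different route. First it shows that $a=0$ in the plus part (since $a = c_{\C/\R}(a_\sigma)$ and $a_\sigma$ dies once $2$ is inverted and one $\eta$-completes), so that $\Spec(\C)_+ \simeq \Ss^{0,0,0}_{p,\eta} \oplus \Ss^{1,-1,0}_{p,\eta}$. The key technical input is then a vanishing lemma: $\pi^{\R}_{s,q,w}\Ss_{p,\eta}=0$ for all odd $q$. This is proved by a motivic Adams spectral sequence argument over $\MFp$, using the known description of the odd-primary dual Steenrod algebra together with the tri-graded homology of a point (which one computes to be $\F_p[u_{2\sigma}^{\pm},\ta]$ in the plus part). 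From this vanishing, the plus category splits into subcategories generated by spheres with even versus odd $q$, each of which is identified with $\SHCat$ via the splitting of $\Spec(\C)_+$. Your descent idea would ultimately need this same vanishing to show the $C_2$-action on $\SHCat$ is tractable, so the Adams spectral sequence computation is not avoidable.
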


We begin with a well-known decomposition of $\SHR^{\at} _{ip}$ into its $+$ and $-$ components. Our treatment of this is heavily inspired by \cite[pp.23-25]{BehrensShah}

\begin{dfn}
  Let $\SHR_{ip}^{\at,+}$ denote the category $\Mod ( \SHR^{\at} ; \Ss_{p,\eta} )$. 
\end{dfn}

\begin{lem}
  There is an equivalence of categories
  \[\SHR^{\at} _{ip} \simeq \SHR^{\at,+} _{ip} \times \Sp_{ip}.\]
\end{lem}

\begin{proof}
  Let $\epsilon = -\eta \rho - 1 \in \pi_{0,0,0} ^{\R} \Ss$, which is known to be equal to the swap map
  \[\epsilon = s_{\Ss^{1,1,1}} : \Ss^{1,1,1} \otimes \Ss^{1,1,1} \xrightarrow{\sim} \Ss^{1,1,1} \otimes \Ss^{1,1,1}. \]
  It is clear from this description that $\epsilon^2 = 1$, so that after inverting $2$ we have a decomposition of the unit into $\pm 1$-eigenspaces
  \[\Ss [1/2] \simeq \Ss [1/2]^+ \times \Ss[1/2]^-.\]
  Now, $\epsilon = +1$ implies that $\eta\rho = -2$, so that $\eta$ and $\rho$ must be units in $\pi_{*,*,*} ^{\R} \Ss[1/2]^+$.
  On the other hand, the fact that $\epsilon = s_{\Ss^{1,1,1}}$ implies that the graded ring
  $\pi_{a,a,a}^{\R} \Ss[1/2]^{-}$
  must obey the Koszul sign rule, so that $2 \eta^2 = 2 \rho^2 = 0$.

  As a consequence, we find that $\Ss[1/2]^{+} \simeq \Ss[1/2,1/\eta] \simeq \Ss[1/2,1/\rho]$ and that $\Ss[1/2]^- \simeq \Ss[1/2]_\eta \simeq \Ss[1/2]_\rho$.
  To conclude, we base change from $\Ss[1/2]$ to $\Ss_p$ and use the equivalence $\SHR[1/\rho] \simeq \Sp$ of Bachmann \cite{RealEtale}.
\end{proof}
%
%
%

\Cref{prop:odd} will now follow from showing that $\SHR_{ip}^{\at,+}$ splits as a product of two copies of $\SHCat$. This splitting will come from a decomposition of the category into two blocks (in the sense of representation theory). We begin with the following pair of lemmas.

\begin{lem} \label{lem:odd-split}
  In $\SHR_{ip}^{\at,+}$, there is a splitting $\Spec(\C) \simeq \Ss_{p,\eta}^{0,0,0} \oplus \Ss_{p,\eta}^{1,-1,0}$.
\end{lem}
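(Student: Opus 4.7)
The plan is to exploit the identification $a = c_{\C/\R}(a_\sigma)$ and the fact that the plus summand of $\Sp_{C_2,ip}$ trivializes $a_\sigma$ at odd primes, thereby forcing the defining cofiber sequence for $\Spec(\C)$ to split.

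First, I would recall the cofiber sequence~\eqref{eq:pic-C},
\[ \Spec(\C) \to \o_\R \xrightarrow{a} \Ss^{\C} = \Ss^{0,1,0}, \]
and pass to $\SHR^{\at,+}_{ip}$. If the map $a:\Ss_{p,\eta}^{0,0,0}\to \Ss_{p,\eta}^{0,1,0}$ becomes null in this category, then the sequence splits into a direct sum $\Spec(\C)\simeq \Ss_{p,\eta}^{0,0,0}\oplus \Sigma^{-1}\Ss_{p,\eta}^{0,1,0}=\Ss_{p,\eta}^{0,0,0}\oplus \Ss_{p,\eta}^{-1,1,0}$, and it remains only to identify this latter Picard element with the one in the statement.

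To show that $a=0$ in the plus summand I would invoke Heller--Ormsby (\Cref{thm:c-ff}), which identifies $c_{\C/\R}:\Sp_{C_2,ip}\to \SHR^{\mathrm{A}}_{ip}$ as a symmetric monoidal equivalence, together with \Cref{cor:betti-iso} which records the equality $a = c_{\C/\R}(a_\sigma)$. Since $c_{\C/\R}$ is symmetric monoidal it preserves the idempotent $e_+=2^{-1}[C_2]$ and therefore restricts to an equivalence of plus summands. Thus it suffices to prove that $a_\sigma$ is null in $\Sp_{C_2,ip}^+$. At odd primes, the Burnside ring $A(C_2)_p$ splits as $\Z_p\times\Z_p$, and the plus factor $\Sp_{C_2,ip}^+$ is equivalent to $\Sp_{ip}$ (via the underlying spectrum, up to the $e_+$-projection). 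Under this equivalence $\Ss^{\sigma}$ corresponds to an odd suspension of the sphere by an Euler characteristic computation, so that $a_\sigma\in \pi_{-\sigma}^{C_2,+}\Ss_{ip}$ corresponds to a class in $\pi_{-1}\Ss_{ip}$, which vanishes at odd primes.

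Finally, to match the Picard labelling of the statement I would identify $\Ss_{p,\eta}^{-1,1,0}\simeq \Ss_{p,\eta}^{1,-1,0}$ in $\SHR^{\at,+}_{ip}$, equivalently trivialize the Picard class of $\Ss^{2,-2,0}$ in the plus part. Under Betti this corresponds to the triviality of $\Ss^{2-2\sigma}$ in $\Sp_{C_2,ip}^+\simeq\Sp_{ip}$, which is immediate from the same Picard computation in the plus summand since $\mathrm{Pic}(\Sp_{ip})\cong \Z$. Lifting this triviality to $\SHR^{\at,+}_{ip}$ via Heller--Ormsby (applied to the relevant Artin object) closes the argument.

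The main obstacle I anticipate is this last Picard identification: while the vanishing of $a$ in the plus summand is essentially formal once one has \Cref{thm:c-ff} and \Cref{cor:betti-iso}, matching the two invertible objects $\Ss^{-1,1,0}_{p,\eta}$ and $\Ss^{1,-1,0}_{p,\eta}$ requires carefully exploiting that the relevant Picard subgroup of $\SHR^{\at,+}_{ip}$ is detected on the Artin part via Heller--Ormsby, where the identification of $\Ss^\sigma$ with an ordinary sphere is available.
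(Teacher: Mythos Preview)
Your core strategy---reduce to showing $a=0$ in $\SHR^{\at,+}_{ip}$, then use full faithfulness of $c_{\C/\R}$ together with the vanishing of $a_\sigma$ in the plus summand of $\Sp_{C_2,ip}$---is exactly what the paper does, and the paper's entire proof consists of those two observations.

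The only discrepancy is bookkeeping. You work with the fiber sequence $\Spec(\C) \to \Ss^{0,0,0} \xrightarrow{a} \Ss^{0,1,0}$, so that $a=0$ yields $\Spec(\C)\simeq \Ss^{0,0,0}\oplus \Ss^{-1,1,0}$, mismatching the stated summand and forcing you into a Picard identification. The paper instead uses the rotated sequence
\[
\Ss^{0,-1,0}\xrightarrow{a}\Ss^{0,0,0}\to \Spec(\C)
\]
(recall $\Spec(\C)=Ca$ by definition of $a$), so that $a=0$ immediately gives $\Spec(\C)\simeq \Ss^{0,0,0}\oplus \Sigma\Ss^{0,-1,0}=\Ss^{0,0,0}\oplus\Ss^{1,-1,0}$. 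Your ``main obstacle'' is entirely self-imposed and disappears with this choice of rotation; there is no Picard comparison to make. For what it is worth, your extra step does go through (the spheres in question are Artin, so the equivalence $\Ss^{2-2\sigma}\simeq\Ss^0$ in $\Sp_{C_2,ip}^{+}$ transports via $c_{\C/\R}$), but it is unnecessary.
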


\begin{proof}
  It suffices to show that $a : \Ss_{p,\eta}^{0,-1,0} \to \Ss_{p,\eta}$ is zero.
  This follows from the facts that $c_{\C/\R}$ is fully-faithful (see \cite{HellerOrmsbyII}) and that $a_{\sigma}=0$ in the $C_2$-equivariant category after inverting $2$ and $\eta$-completing (indeed, $a_{\sigma} \eta = 2$).
\end{proof}

\begin{lem} \label{lem:odd-vanishing}
  The groups $\pi_{s,q,w}^\R \Ss_{p,\eta}$ are zero for $q$ odd.
\end{lem}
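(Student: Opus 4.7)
The plan is to use the splitting from Lemma \ref{lem:odd-split} together with the base-change adjunction $(i^*, i_*)$ to reduce the computation of $\pi^\R_{*,*,*}(\Ss_{p,\eta})$ to the known $\C$-motivic homotopy groups, and then extract the vanishing from the parity of $q$.

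\textbf{Step 1 (Decomposition of $\Spec(\C)$).} Apply Lemma \ref{lem:odd-split} to write $\Spec(\C) \simeq \Ss^{0,0,0}_{p,\eta} \oplus \Ss^{1,-1,0}_{p,\eta}$ in $\SHR^{\at,+}_{ip}$. Tensoring with $\Ss_{p,\eta}$ yields
\[
\pi^\R_{s,q,w}(\Spec(\C) \otimes \Ss_{p,\eta}) \cong \pi^\R_{s,q,w}(\Ss_{p,\eta}) \oplus \pi^\R_{s-1,q+1,w}(\Ss_{p,\eta}).
\]

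\textbf{Step 2 (Base change to $\C$).} Since the $\C$-motivic Hopf map $\eta \in \pi^\C_{1,1}\Ss$ is $2$-torsion, it vanishes after $p$-completion for odd $p$; hence $i^*\Ss_{p,\eta} \simeq \Ss^\C_p$. Using $i_* i^* X \simeq \Spec(\C) \otimes X$, the formula $i^*\Ss^{s,q,w}_{\R} \simeq \Ss^{s+q,w}_{\C}$, and the adjunction gives
\[
\pi^\R_{s,q,w}(\Spec(\C) \otimes \Ss_{p,\eta}) \cong \pi^\C_{s+q,w}(\Ss^\C_p).
\]
Combining Steps 1 and 2 produces the key decomposition
\[
\pi^\C_{s+q,w}(\Ss^\C_p) \cong \pi^\R_{s,q,w}(\Ss_{p,\eta}) \oplus \pi^\R_{s-1,q+1,w}(\Ss_{p,\eta}). \qquad (\star)
\]

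\textbf{Step 3 (Picard periodicity).} From $\Spec(\C) \otimes \Spec(\C) \simeq \Spec(\C)^{\oplus 2}$, substituting the splitting and cancelling gives $(\Ss^{1,-1,0})^{\otimes 2} \simeq \Ss$ in $\SHR^{\at,+}_{ip}$. Consequently $\pi^\R_{s,q,w}(\Ss_{p,\eta}) \cong \pi^\R_{s-2,q+2,w}(\Ss_{p,\eta})$, so only the parity of $q$ (and the values of $s+q$ and $w$) matters in $(\star)$.

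\textbf{Step 4 (The vanishing).} This is the main obstacle. The two summands in $(\star)$ correspond to the $\Ss$- and $\Ss^{1,-1,0}$-summands of $\Spec(\C)$, which are the $(+1)$- and $(-1)$-eigenspaces, respectively, for the $C_2=\mathrm{Gal}(\C/\R)$-action on $\Spec(\C)$ that swaps the two sheets. Under the isomorphism of Step 2 this matches a Galois action on $\pi^\C_{s+q,w}(\Ss^\C_p)$. The plan is to show that in the plus part the Galois action has parity governed by $q$, so that $\pi^\R_{s,q,w}(\Ss_{p,\eta})$ is exactly the $q$-parity-appropriate eigenspace; since 2 is a unit, the other eigenspace contributes zero when $q$ is odd. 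Concretely, one can anchor the argument by treating the base case of a single tridegree (e.g.\ $(s,q,w) = (0,1,0)$, where $\pi^\C_{1,0}(\Ss^\C_p) = 0$ at odd $p$ already forces vanishing) and then propagating via the periodicity of Step 3 together with multiplicativity in $(\star)$; the general case reduces to checking that the $\Ss^{1,-1,0}$-summand maps trivially into $\pi^\R_{*,q,w}(\Ss_{p,\eta})$ precisely when $q$ is odd, which is the content of the Galois-parity calculation.
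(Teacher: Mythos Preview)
Your Steps 1--3 are fine: the splitting of $\Spec(\C)$, the base-change identification, and the $(\Ss^{1,-1,0})^{\otimes 2}\simeq\Ss$ periodicity are all correct and give the decomposition $(\star)$. The problem is that Step 4 is not a proof. You have reduced the lemma to the assertion that, for each fixed $(s+q,w)$, the ``$q$-odd'' summand $B$ of $\pi^\C_{s+q,w}(\Ss^\C_p)$ vanishes, and then you restate this as a Galois-parity claim without verifying it. Two specific issues:
\begin{itemize}
\item The base case $(s,q,w)=(0,1,0)$ only shows $B=0$ when $s+q=1$ and $w=0$. The periodicity of Step 3 moves $(s,q)\mapsto(s-2,q+2)$ and therefore fixes $s+q$ and $w$; it cannot connect different values of $(s+q,w)$. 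So ``anchoring'' at one tridegree and ``propagating via periodicity together with multiplicativity'' does not reach the general case.
\item Identifying $A$ and $B$ with the $(\pm 1)$-eigenspaces of the conjugation action on $\pi^\C_{s+q,w}(\Ss^\C_p)$ is correct, but you then need to prove that this action is trivial. That is a genuine computation (for instance, conjugation does act nontrivially on the construction of $\tau$, so one must argue carefully that it nonetheless fixes the resulting homotopy class). You have not supplied this argument, and the sentence ``which is the content of the Galois-parity calculation'' is exactly the statement of the lemma in disguise.
\end{itemize}

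For comparison, the paper avoids the Galois-action question entirely by running the $\MFp$-Adams spectral sequence over $\R$ in the plus part. The two inputs are that $\pi^\R_{*,*,*}\MFp\cong\F_p[u_{2\sigma}^{\pm},\ta]$ with $|u_{2\sigma}|=(2,-2,0)$, and that $\MFp\otimes\MFp$ splits as a sum of $\Sigma^{a,b,b}\MFp$ with $b$ even (since $p^i-1$ is even for odd $p$). Together these force the entire $E_1$-page to vanish in odd $q$, and convergence finishes the argument. If you want to salvage your approach, the cleanest route is probably to compute the conjugation action on the $\C$-motivic Adams $E_1$-page directly; but note that this ends up using essentially the same parity observation about the dual Steenrod algebra generators that drives the paper's proof.
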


The proof of this lemma will take us farther afield, so we defer it for the moment.

\begin{proof}[Proof of \Cref{prop:odd}.]
  We need to show that
  \[ \SHR_{ip}^{\at,+} \simeq \SHCat \times \SHCat. \]
  Let $\A_{\mathrm{even}}$ (resp. $\A_{\mathrm{odd}}$) denote the stable full subcategory of $\SHR_{ip}^{\at,+}$ generated under colimits by the objects $\Ss_{p,\eta}^{s,q,w}$ for $s,w \in \Z$ and $q$ even  (resp. odd).
  
  We begin by showing that if $A \in \A_{\mathrm{even}}$ and $B \in \A_{\mathrm{odd}}$, then there are no nontrivial maps between $A$ and $B$. It suffices to show this for compact generators, where it follows from \Cref{lem:odd-vanishing}. From this we may conclude that $\SHR_{ip}^{\at,+} \simeq \A_{\mathrm{even}} \times \A_{\mathrm{odd}}$.

  Since tensoring with $\Ss^{0,1,0}$ provides an equivalence between $\A_{\mathrm{even}}$ and $\A_{\mathrm{odd}}$ it will now suffice to show that the composite
  \[ \A_{\mathrm{even}} \to \SHR^{\at,+}_{ip} \to \SHCat \]
  is an equivalence. Since $\Ss^{s,0,w}$ is sent to $\Ss^{s,w}$ we know this map hits a family of compact generators, so it will suffice to show that it is fully faithful. By the usual argument it will suffice to have fully-faithfulness on compact generators. Thus, we are reduced to showing that the map
  \[ \pi_{s,q,w}^\R\Ss_{p,\eta} \to \pi_{s+q,w}^\C\Ss_{p,\eta}. \]
  is an isomorphism for $q$ even. Using \Cref{lem:odd-split} this map factors as
  \[ \pi_{s,q,w}^\R\Ss_{p,\eta} \hookrightarrow \pi_{s,q,w}^\R( \Ss_{p,\eta}^{0,0,0} \oplus \Ss_{p,\eta}^{1,-1,0} ) \xrightarrow{\cong} \pi_{s+q,w}^\C\Ss_{p,\eta}, \]
  where the first map is the inclusion of the left summand.
  We conclude by noting that the relevant homotopy group of the right summand vanishes by \Cref{lem:odd-vanishing}.
\end{proof}

We now return to proving \Cref{lem:odd-vanishing}.
The proof will be via an Adams spectral sequence argument, so we begin by computing the homology of a point. Note that $\MFp$ is $\eta$-complete, and so inside of $\SHR_{ip}^{\at,+}$.

\begin{lem}\label{lem:odd-pt}
  The tri-graded homotopy of $\MFp$ is given by
  \[ \pi_{s,q,w}^\R \MFp \cong \F_p[u_{2\sigma}^{\pm}, \ta], \]
  where $|u_{2\sigma}| = (2,-2,0)$.
\end{lem}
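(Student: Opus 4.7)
The strategy is to reduce to the $\C$-motivic case using the splitting of $\Spec(\C)$ from \Cref{lem:odd-split}, then to pin down the remaining parity with $\ta$ and $u_{2\sigma}$ to obtain the polynomial ring structure. A preliminary point is that $\MFp$ naturally lies in the plus category $\SHR^{\at,+}_{ip}$ at odd $p$: it is $p$-complete since its homotopy is $p$-torsion, and $\eta$ acts as zero on $\HZ$, hence on $\MFp$, so $\eta$-completeness is automatic.

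Granting this, \Cref{lem:odd-split} yields $\Spec(\C)\otimes \MFp \simeq \MFp \oplus \Sigma^{1,-1,0}\MFp$. Taking trigraded homotopy groups and using the base-change isomorphism $\pi^{\R}_{s,q,w}(\Spec(\C)\otimes X) \cong \pi^{\C}_{s+q,w}X$ from \Cref{rec:fields}, I would obtain the key additive identity
\[
  \pi^{\R}_{s,q,w}\MFp \,\oplus\, \pi^{\R}_{s-1,q+1,w}\MFp \;\cong\; \pi^{\C}_{s+q,w}\MFp.
\]
Voevodsky's computation $\pi^{\C}_{*,*}\MFp \cong \F_p[\tau]$ with $|\tau|=(0,-1)$ shows the right-hand side is $\F_p$ exactly when $s+q = 0$ and $w \le 0$, and $0$ otherwise. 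Hence $\pi^{\R}_{s,q,w}\MFp = 0$ outside the locus $\{s+q=0,\,w\le 0\}$.

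On that locus, writing $f(s,w) = \dim_{\F_p}\pi^{\R}_{s,-s,w}\MFp$, the identity becomes $f(s,w) + f(s-1,w) = 1$, so $f$ alternates in $s$. The class $\ta \in \pi^{\R}_{0,0,-1}\Ss_p$ from \Cref{thm:ta} base-changes to $\tau$, so $\ta^{-w}$ is nonzero in $\pi^{\R}_{0,0,w}\MFp$ for every $w \le 0$ (its base change is $\tau^{-w}\neq 0$). Together with $f(0,w) \le 1$ from the identity, this forces $f(0,w)=1$ and therefore $f(s,w)=1$ precisely when $s$ is even.

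To promote this additive computation to the claimed polynomial structure, I would fix a generator $u_{2\sigma} \in \pi^{\R}_{2,-2,0}\MFp \cong \F_p$ and show it is invertible by observing that the multiplicative pairing $\pi^{\R}_{-2,2,0}\MFp \otimes \pi^{\R}_{2,-2,0}\MFp \to \pi^{\R}_{0,0,0}\MFp \cong \F_p$ is non-degenerate; this is checked after base change to $\C$, where both factors are generators of $\pi^{\C}_{0,0}\MFp$ and the pairing sends $1\otimes 1$ to $1$. The resulting ring map $\F_p[u_{2\sigma}^{\pm}, \ta] \to \pi^{\R}_{*,*,*}\MFp$ is then a degree-wise isomorphism. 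The main obstacle I anticipate is the preliminary $\eta$-completeness step, which must be justified without circularly invoking the very formula being computed; once this is in place, everything else is a direct consequence of \Cref{lem:odd-split}, \Cref{thm:ta}, and Voevodsky's Bloch--Kato computation over $\C$.
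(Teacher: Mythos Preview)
Your proof is correct and follows essentially the same route as the paper's. Both arguments tensor $\MFp$ with the splitting of $\Spec(\C)$ from \Cref{lem:odd-split}, identify the resulting trigraded homotopy with $\F_p[u_\sigma^{\pm},\ta]$ via Voevodsky's computation over $\C$, and then use $\ta$ to pin down the parity. The paper compresses your parity recursion and invertibility check into the single sentence ``the homology of a point over $\R$ is an index~2 subalgebra which contains $\ta$; there is a unique such subalgebra,'' but the content is identical. Your explicit treatment of the preliminary $\eta$-completeness (via $\eta=0$ on any orientable theory) matches the paper's one-line remark ``$\MFp$ is $\eta$-complete so it lives inside the plus part.''
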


\begin{proof}
  From the computation of the homology of a point over $\C$ in \cite{Voe03} we may conclude that $\pi_{s,q,w}^\R(\Spec(\C) \otimes \MFp) \cong \F_p[u_\sigma^{\pm}, \ta]$. Using the splitting of $\Spec(\C)$ we may conclude that the homology of a point over $\R$ is an index 2 subalgebra which contains $\ta$ (since $\ta$ is defined in the sphere). There is a unique such subalgebra.
\end{proof}

\begin{lem}\label{lem:odd-dual-enough}
  The tensor product $\MFp \otimes \MFp$ splits as a sum of tri-graded suspensions of copies of $\MFp$ whose $q$-components are even.
\end{lem}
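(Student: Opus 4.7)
The plan is to reduce the tri-graded statement to the bigraded splitting of Voevodsky and Hoyois, then use the explicit bidegrees of the generators of the mod-$p$ dual motivic Steenrod algebra to check the parity condition on $q$.

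First, I would invoke \cite[Theorem 1.1(3)]{HoyoisSteenrod}, which over a field of characteristic zero produces a splitting $\MFp \otimes \MFp \simeq \bigoplus_\alpha \Sigma^{s_\alpha, w_\alpha} \MFp$ as a direct sum of Tate bi-graded suspensions of $\MFp$, where the monomials range over an $\MFp_{*,*}$-basis coming from the generators $\tau_i$ and $\xi_i$ of the motivic dual Steenrod algebra \cite{Voe03,Voe10}. Translating into the tri-graded notation of this paper via the identification $\Ss^{s,w} \simeq \Ss^{s-w,w,w}$, this becomes a splitting
\[ \MFp \otimes \MFp \simeq \bigoplus_\alpha \Sigma^{s_\alpha - w_\alpha,\, w_\alpha,\, w_\alpha} \MFp, \]
so the $q$-component of each summand equals its motivic weight $w_\alpha$. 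The claim therefore reduces to the statement that every $w_\alpha$ appearing is even.

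Next, I would read off the bidegrees of the generators from the standard computation of the mod-$p$ motivic Steenrod algebra at an odd prime $p$: one has $\abs{\tau_i} = (2p^i - 1,\, p^i - 1)$ and $\abs{\xi_i} = (2(p^i-1),\, p^i - 1)$ in bi-grading, so in each case the motivic weight is $p^i - 1$. Since $p$ is odd, $p^i$ is odd and hence $p^i - 1$ is even. The weight of any monomial in the $\tau_i$ and $\xi_i$ is then a non-negative integer combination of the even numbers $p^i - 1$, and is therefore itself even. Combining this with the previous paragraph yields the desired parity of the $q$-components.

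I do not expect any serious obstacle here: the result is essentially a parity bookkeeping exercise on top of the standard Voevodsky--Hoyois splitting. The only point requiring care is verifying that Hoyois's bi-graded splitting really does pass to the tri-graded setting in the claimed form, which is a direct matter of translating between the conventions $\Ss^{s,w} \simeq \Ss^{s-w,w,w}$ used for Tate spheres.
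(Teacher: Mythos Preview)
Your proposal is correct and follows essentially the same approach as the paper's proof: invoke the Voevodsky--Hoyois splitting of $\MFp \otimes \MFp$ into Tate-bigraded suspensions indexed by monomials in $\tau_i$ and $\xi_i$, observe that in tri-graded terms the $q$-component equals the motivic weight, and note that each generator has weight $p^i-1$, which is even since $p$ is odd. The paper writes the tri-graded degrees $|\tau_i|=(p^i,p^i-1,p^i-1)$ and $|\xi_i|=(p^i-1,p^i-1,p^i-1)$ directly rather than going through the bi-graded translation, but the content is identical.
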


\begin{proof}
  From \cite[Theorem 1.1 (3)]{HoyoisSteenrod} (which follows from work of Voevodsky \cite{Voe03, Voe10} in case of $\R$ where we work) we know that $\MFp \otimes \MFp$ decomposes as a sum of copies of $\Sigma^{a,b,b} \MFp $. More specifically, the copies of $\MFp$ are indexed by monomials in the $\xi_i$ and $\tau_i$ which live in degrees,
  \[ |\xi_i| = (p^i - 1, p^i - 1, p^i - 1) \qquad \text{ and } \qquad |\tau_i| = (p^i, p^i - 1, p^i - 1). \]
  Since $p$ is odd the $q$-component of every such monomial is even.
\end{proof}

\begin{proof}[Proof of \Cref{lem:odd-vanishing}.]
  Since the motivic Adams spectral sequence for the sphere converges strongly to $\Ss_{p,\eta}$ by \cite{MAdamsConv},
  it will suffice to show the desired vanishing result on the $\mathrm{E}_1$-page.  
  This spectral sequence takes the form,
  \[ E_1^{s,t} = \pi_{t,q,w}(\HFt^{\otimes {s+1}}) \implies \pi_{t-s,q,w} \Ss_{p,\eta}. \]
  Using \Cref{lem:odd-pt} and \Cref{lem:odd-dual-enough} we may conclude that the spectral sequence is zero at the $\mathrm{E}_1$ page for $q$ odd.
\end{proof}

\section{Examples and computations} \label{sec:computations}
In this section we employ technology from the rest of the paper to give example, concrete computations of trigraded homotopy groups.  First, we discuss vanishing regions in the homotopy groups of objects obtained from the sphere by either killing or inverting $a$ and $\ta$.  Then we give a detailed discussion of $\kq_2$, which we hope will be a useful guide to computationally minded stable homotopy theorists.

\subsection{Vanishing regions for trigraded homotopy}\

The coarsest information about the trigraded homotopy groups of Artin--Tate $\R$-motivic spectra comes from understanding which regions contain only zero groups. We summarize what we know in the following omnibus theorem.

\begin{thm}\label{thm:omnibus-vanishing}
  By either coning or inverting $a \in \pi_{0,-1,0}^{\R} \Ss$ and $\ta \in \pi_{0,0,-1}^{\R} \Ss_2$, we may build $9$ natural objects. The trigraded homotopy groups of these objects are concentrated in the following regions.
  \begin{enumerate}
  \item $\pi_{p,q,w}^\R (\Ss_2)$
    is concentrated in the region,
    \[ \{ p+q \geq w \geq 0 \} \cup \{p+q \geq 0, w \leq 0\} \cup \{ p \geq 0 \}. \]

  \item $\pi_{p,q,w}^\R (C\ta)$
    is concentrated in the region,
    \[ \{ 0 \leq p \leq 2w-q, w \geq 0 \} \cup \{ w-q \leq p \leq w-2, w \geq 0 \}. \]

  \item $\pi_{p,q,w}^\R (\Ss_2[\ta^{-1}])$
    is periodic in the $w$-degree and concentrated in the region,
    \[ \{ p \geq 0 \} \cup \{ p+q \geq 0 \}. \]

  \item $\pi_{p,q,w}^\R (Ca)$
    is periodic along lines of the form $(1,-1,0)$ and concentrated in the region,
    \[ \{ 0 \leq w \leq p+q \} \cup \{ 0 \leq p+q, w \leq 0 \}. \]
    
  \item $\pi_{p,q,w}^\R (Ca \otimes C\ta)$
    is periodic along lines of the form $(1,-1,0)$ and concentrated in the region,
    \[ \{ w \leq p+q \leq 2w \}. \]
    
  \item $\pi_{p,q,w}^\R (Ca \otimes \Ss_2[\ta^{-1}])$
    is periodic in the $w$-degree, periodic along lines of the form $(1,-1,0)$ and concentrated in the region,
    \[ \{p+q \geq 0\}. \]
    
  \item $\pi_{p,q,w}^\R (\Ss_2 [a^{-1}])$
    is periodic in the $q$-degree and concentrated in the region,
    \[ \{ p \geq 0 \}. \] 

  \item $\pi_{p,q,w}^\R (\Ss_2 [a^{-1}] \otimes C\ta)$
    is periodic in the $q$-degree and concentrated in the region,
    \[ \{ p \geq 0, w \geq 0\}. \]
    
  \item $\pi_{p,q,w}^\R (\Ss_2 [a^{-1}, \ta^{-1}])$
    is periodic in the $q$-degree, periodic in the $w$-degree and concentrated in the region,
    \[ \{ p \geq 0 \}. \]

  \end{enumerate}
\end{thm}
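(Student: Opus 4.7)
The nine objects in this theorem correspond exactly to the $3 \times 3$ table of degenerations discussed in the introduction, and I would organize the proof around that decomposition, leaning heavily on identifications that have already been established.

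The plan is to first dispatch the periodicities, which are essentially formal. Inverting $\ta$ and $a$ produces $w$-periodicity and $q$-periodicity respectively, directly from the degrees $|\ta|=(0,0,-1)$ and $|a|=(0,-1,0)$. The periodicity of any $Ca$-module along $(1,-1,0)$ follows from the equivalence $Ca \simeq \Spec(\C)_+$ combined with base change: under $(-)_\C$ the sphere $\Ss^{1,-1,0}$ maps to $\Ss^{0,0,0}$ in $\SHC$, so tensoring over $Ca$ with $\Ss^{1,-1,0}$ acts as the identity. With the periodicities in hand, in each case it suffices to describe a single fundamental domain of the relevant plane.

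Next, I would run through the eight nontrivial cells using the identifications from the paper. Cases (4), (5), (6) all live over $Ca$, and the base-change isomorphism $\pi^{\R}_{p,q,w}(Ca \otimes X) \cong \pi^{\C}_{p+q,w}(X_\C)$ reduces them to classical $\C$-motivic vanishing statements: Morel's connectivity theorem for (4), the $\Ext^{s,w}_{\BP_*\BP}$ formula of \Cref{thm:C-main} for (5), and the classical $\pi_n\Ss_p$ vanishing for $n<0$ for (6). Cases (3), (6), (9) are $\ta$-inverted and reduce, via \Cref{thm:comparison-invert-ta}, to $C_2$-equivariant stable homotopy: the vanishing ranges are then read off from the known vanishing of $\pi^{C_2}_{p+q\sigma}\Ss_2$ and $\pi_*$ (classical). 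Cases (7), (8), (9) are $a$-inverted and follow from the calculation given in the introduction that $\Mod(\SHRat; C\ta[a^{-1}])\simeq \Mod(\Sp; \F_2[u_{2\sigma}]) \otimes_\Z \IndCoh(\Mfg)$, combined with the standard vanishing of $\Ext^{s,t}_{\BP_*\BP}$. Case (2), for $C\ta$, is handled directly by the Ext formula in \Cref{thm:Cta-homotopy}: nonvanishing requires $s \geq 0$, $2w \geq 0$, and that $\pi^{C_2}_{a+(q-w)\sigma}\uZt \neq 0$, whose explicit shape is recorded in \Cref{uZ-fig}; translating these constraints through the identification $w+a-s=p$ produces exactly the region claimed.

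The main obstacle is Case (1), the vanishing for $\pi_{p,q,w}^\R(\Ss_2)$ itself, and here the naive strategy is to run the mod-$2$ motivic Adams spectral sequence, whose strong convergence over $\R$ we have already invoked elsewhere \cite{MAdamsConv}. The $E_1$-page is controlled by the tri-graded dual Steenrod algebra (\Cref{thm:Steenrod}) and the homology of a point (\Cref{thm:pthomology}), and one must show that the region $\{p < 0, \ p+q < w\text{ or }w < 0, \ p+q < 0\}$ has no contributions on $E_1$. The delicate part is that the tri-graded dual Steenrod algebra contains the negative cone of $\pi^{C_2}_{\star}\uFt$, so one cannot use the cleaner positive-cone bound available in the $\C$-motivic case; one must instead exploit that every occurrence of $\theta/(a^j u^k)$ is multiplied by a power of $\ta$ strictly large enough to keep the tridegrees within the claimed region. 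Making this bookkeeping precise, and then propagating it through the spectral sequence by the usual argument that differentials and extensions preserve the vanishing wedge, is the principal computational step I would expect to absorb most of the work.
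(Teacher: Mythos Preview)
Your organization and the handling of cases (2)--(6) and (9) match the paper's argument essentially exactly: the periodicities are formal, the $Ca$-module cases reduce to $\C$-motivic vanishing, the $\ta$-inverted cases reduce to $C_2$-equivariant vanishing via \Cref{thm:comparison-invert-ta}, and (2) is read off directly from the $\Ext$ formula in \Cref{thm:Cta} together with the shape of $\pi^{C_2}_{\star}\uZt$.

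The substantive divergence is in case (1), and here the paper's route is much shorter than yours. Rather than running the $\HFt$-Adams spectral sequence and wrestling with the negative cone, the paper observes that $\Ss_2$ is $\ta$-complete (\Cref{prop:unit-ta-complete}) and therefore the $\ta$-Bockstein spectral sequence converges; its $E_1$-page is $\pi^{\R}_{*,*,*}(C\ta)[\ta]$, so the vanishing region for (1) is obtained immediately by propagating the region from (2) along powers of $\ta$. All the delicate bookkeeping you anticipate disappears, because the negative-cone contributions are already packaged into the single computation of $\pi^{\R}_{*,*,*}(C\ta)$. Your Adams approach would presumably work, but there is no need for it once (2) is in hand.

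A smaller point: your treatment of (7) and (8) via the identification of $\Mod(\SHRat; C\ta[a^{-1}])$ only directly addresses (8); it says nothing about $\Ss_2[a^{-1}]$ itself. The paper instead derives (7) and (8) by inverting $a$ in (1) and (2) respectively: since $|a|=(0,-1,0)$, one examines which tridegrees $(p,q-n,w)$ can lie in the region for (1) (resp.\ (2)) as $n\to\infty$, and only the $q$-independent constraints survive. This closes the gap in your (7) argument and avoids any appeal to $\ta$-completeness of $a$-local objects.
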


\begin{proof}
	We begin with (4), (5) and (6), where we have killed $a$.  In the identification of $Ca$-modules with the $\C$-motivic category, the trigraded homotopy group $\pi_{p,q,w} Ca$ is identified with the bigraded stem $\pi_{p+q,w}\Ss_2^\C$. Hence, $\pi_{p,q,w} Ca \cong \pi_{p+m,q-m,w} Ca$ for any integer $m$.  That $\pi_{p,q,w} Ca$ vanishes outside the indicated regions then reduces to well-known vanishing statements for the $\C$-motivic stable stems, as displayed on \cite[p.2]{GI}.  Similarly, again as indicated on \cite[p.2]{GI}, (5) and (6) are consequences of \cite[Theorem 6.7]{LevineComparison}.  Analogously, statements $(3)$ and $(9)$ follow from combining \Cref{thm:comparison-invert-ta} with the known vanishing regions in $C_2$-equivariant stable stems (for such vanishing regions, see e.g. \cite[Figure 5]{c2stems}).
	
	Upon inverting $a$ in (1) and (2) we recover (7) and (8), respectively, so it remains only to prove (1) and (2).  In fact, since \Cref{prop:unit-ta-complete} proves that $\Ss_2$ is $\ta$-complete, we may obtain (1) from (2) by examining the $\mathrm{E}_1$-page of the $\ta$-Bockstein spectral sequence. Finally, in order to prove (2) we directly examine the homotopy of $C\ta$.
	  In \Cref{thm:Cta} we computed that  
  \[ \pi_{p,q,w}^\R(C\ta) \cong \bigoplus_{w+a-s = p} \Ext_{(\MU_2)_* \MU_2 }^{s,2w}((\MU_2)_*, (\MU_2)_* \otimes_{\Z_2} \pi_{a + (q-w) \sigma}^{C_2} \uZt). \]
  The result then follows by combining the following vanishing results:
  \begin{itemize}
    \item $\Ext_{(\MU_2)_* \MU_2 }^{s,2w} ((\MU_2)_*, (\MU_2)_*)$ and $\Ext_{(\MU_2)_* \MU_2 }^{s,2w} ((\MU_2)_*, (\MU_2)_*/2)$ are concentrated in the region $\{ 0 \leq s \leq 2w \}$.
  \item $\pi_{p + q\sigma} ^{C_2} \uZt$ is concentrated in the region $\{ p \geq 0, p+q \leq 0\} \cup \{ p \leq -2, p+q \geq 0 \}$.
  \end{itemize}



\end{proof}

\begin{rmk}
  As a corollary of the vanishing region for $C\ta$ and the $\ta$-completeness of the unit, we recover \cite[Theorem 1.1]{c2stems}, which describes the region in which $\b : \pi_{p,w,w}^\R \Ss_2 \to \pi_{p+w\sigma}^{C_2} \Ss_2$ is an isomorphism.
\end{rmk}
  
 \subsection{The homotopy of $\kq_2$}\
 To aid the computationally minded reader, we give below some charts of the trigraded homotopy groups of $\mathrm{kq}_2$.
These charts are meant to be read in concert with those of \cite[\S 6.5]{Kong}, and the notation here matches that in Kong's work.
Indeed, all of the information within these charts is easily accessed within Kong's work, and we merely repackage it here.

We focus on the homotopy groups $\pi_{p,q,w} (\mathrm{kq}_2 \otimes C\ta)$ and $\pi_{p,q,w}(\mathrm{kq}_2)$ for $p=0$ and $p=-1$.
In the language of \cite{Kong}, this corresponds to a focus on \emph{coweights} $0$ and $-1$.

\begin{figure}[ht]
\centering
\includegraphics[trim=100 470 100 100, clip, width=\textwidth, scale=0.6]{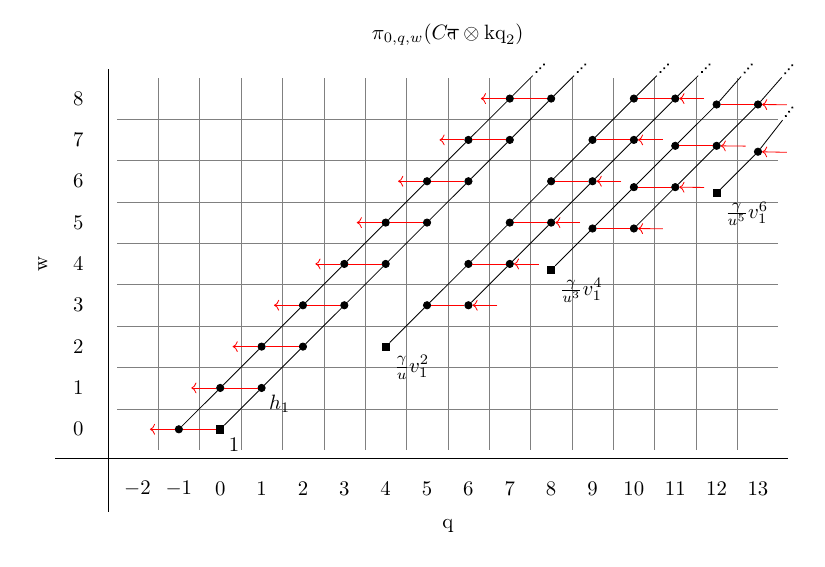}
\end{figure}

Red lines denote multiplication by $a \in \pi_{0,-1,0} \Ss$.  Black lines denote multiplication by the motivic $\eta:\mathbb{G}_m \to \Ss$, which in our grading convention lives in $\pi_{0,1,1} \Ss$.

\newpage

\begin{figure}[ht]
\centering
\includegraphics[trim=100 470 100 120, clip, width=\textwidth, scale=0.6]{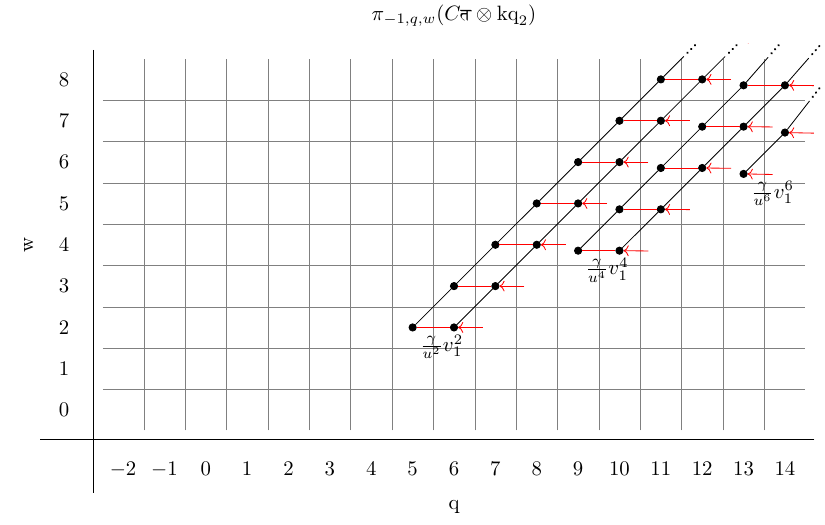}
\end{figure}

Below, we depict some of the $\mathrm{E}_{\infty}$-page of the $\ta$-Bockstein spectral sequence for $\pi_{*,*,*} \mathrm{kq}_2$.
We follow the convention introduced in \cite[\S A.2]{Boundaries} and \cite{BurklundExtension} of using blue symbols to denote $\ta$-torsion classes, while black symbols denote $\ta$-torsion free classes.
In particular, black symbols contribute not only to the homotopy group corresponding to the box in which they appear, but also to the groups corresponding to boxes directly below where they appear.
In the language of \cite{Kong}, the blue dots in our charts contain information about which dots on the $\mathrm{E}_1$-page of the $C_2$-effective spectral sequence are targets of differentials, as opposed to sources of differentials.

\begin{figure}[ht]
\centering
\includegraphics[trim=100 470 100 100, clip, width=\textwidth, scale=0.6]{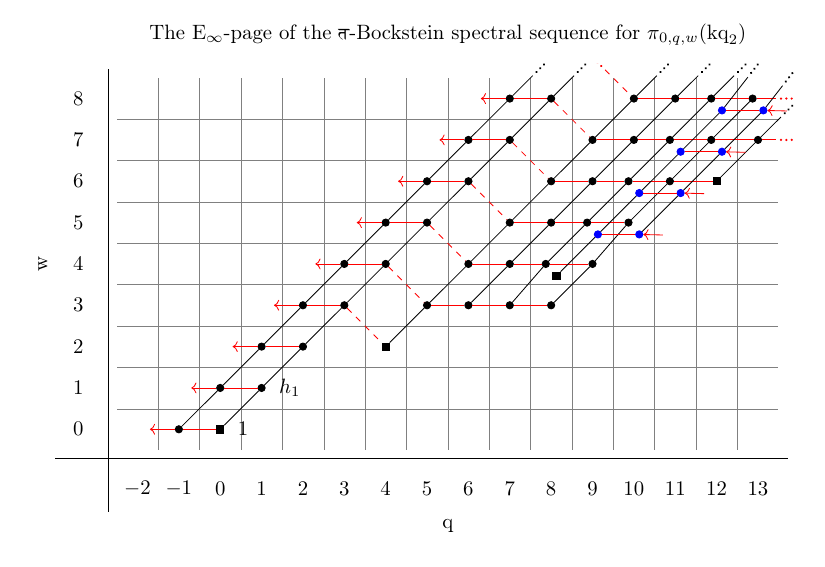}
\end{figure}

\begin{figure}[ht]
\centering
\includegraphics[trim=100 470 100 100, clip, width=\textwidth, scale=0.6]{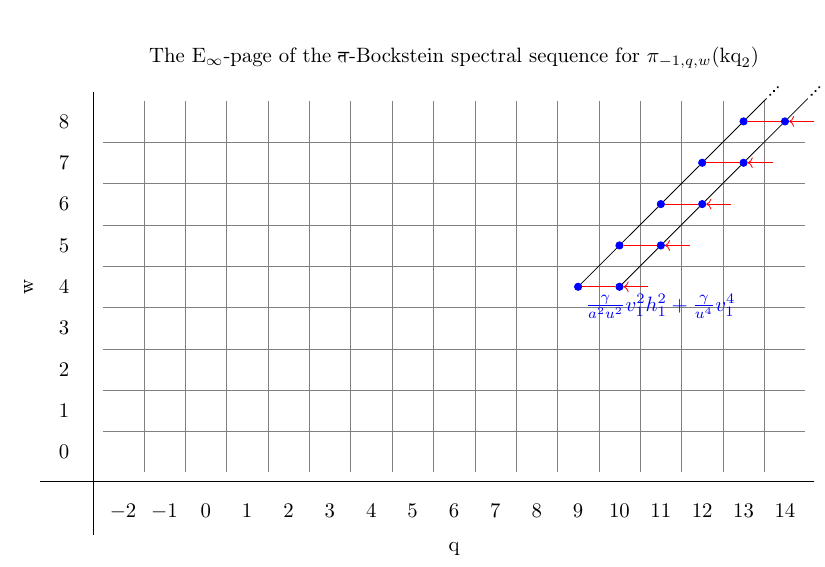}
\end{figure}

\begin{rmk}
The groups $\pi_{-1,*,*}\mathrm{kq}_2$ and $\pi_{0,*,*}\mathrm{kq}_2$ assemble, via the multiplication by $a$ long exact sequence, into the homotopy groups $\pi_{0,*,*}\left(Ca \otimes \mathrm{kq}_2\right)$.  These groups in turn record the bigraded homotopy of the $\C$-motivic $\mathrm{kq}_2$.  The reader may note an interesting extension, of the form 
\[2 \mathbb{Z}_2[\tau] \oplus \tau \mathbb{Z}_2[\tau] \to \mathbb{Z}_2[\tau] \to \mathbb{F}_2,\]
which appears when computing $\pi_{0,8,*}(\mathrm{kq}_2^{\wedge} \otimes Ca)$.
This extension is related to the orange dashed line in the final chart of \cite{Kong}.
\end{rmk}

\appendix

\section{Recollections on compact rigid generation} \label{app:boilerplate}
In this appendix we recall some useful material on compact rigid generation.
Most of this material has appeared elsewhere and all of it is certainly known to experts.
This appendix was mostly included for the convenience of the authors,
though we do hope the reader finds it to be a concise summary of a basic technique in higher algebra.

\begin{dfn}
  A stable, presentably monoidal category $\CC$ is \emph{rigidly generated} if
  it has a family of compact dualizable generators. \footnote{Here and throughout this appendix we only require that an object has a one-sided dual. The side on which an object has a dual will not be important.}
\end{dfn}

Our goal will be to study some properties of monoidal left adjoints out of a rigidly generated category.
This begins with the following construction.

\begin{cnstr}
  Given an $\En$-monoidal left adjoint
  $ f^* : \CC \to \mathcal{D} $,
  its right adjoint $f_*$ is lax $\En$-monoidal.
  In particular, $f_* (\o)$ is an $\En$-ring,
  so we obtain a factorization of $f^*$ as an $\mathbb{E}_{n-1}$-monoidal left adjoint into
  \[ \CC \xrightarrow{ - \otimes f_*\o} \Mod ( \CC ; f_*\o ) \xrightarrow{g^*} \mathcal{D}. \]
  We will say that the adjunction $f$ is \emph{$0$-affine} if $g$ is an adjoint equivalence.
  Note that being $0$-affine is a property of the underlying monoidal functor. \todo{This really should have a cite to Lurie for the functor Mod out of some cartesian fibration.}
\end{cnstr}

The main result of this appendix is a convenient criterion for $f$ to be $0$-affine.
Before we can state that result we make a preparatory definition.

\begin{dfn}
  Given a monoidal left adjoint $f^* : \CC \to \mathcal{D}$,
  we may consider the projection map,
  \[ X \otimes f_*Y \xrightarrow{\eta_f} f_*f^*(X \otimes f_*Y) \simeq f_*(f^*X \otimes f^*f_*Y) \xrightarrow{\epsilon_f} f_*(f^*X \otimes Y). \]
  We will say that $f$ \emph{satisfies the projection formula at $X$},
  if for all $Y$ the projection map is an equivalence.
  If $f$ satisfies the projection formula at $X$ for all $X$, then we say that $f$ \emph{satisfies the projection formula}.
\end{dfn}

\begin{prop} \label{prop:rigid}
  Given a monoidal left adjoint $f^* : \CC \to \mathcal{D}$ between presentable categories, the following statements are true:
  \begin{enumerate}
  \item $f$ satisfies the projection formula for dualizable objects in $\CC$.
  \item If $\CC$ is rigidly generated and $\mathcal{D}$ has compact unit, then $f_*$ preserves colimits.
  \item If $\CC$ is rigidly generated  and $f_*$ preserves colimits, then $f$ satisfies the projection formula.
  \item $f_*$ is conservative if and only if the essential image of $f^*$ contains a family of generators.
  \item If $f$ satisfies the projection formula, $f_*$ preserves colimits and $f_*$ is conservative, then $f$ is $0$-affine.
  \end{enumerate}
\end{prop}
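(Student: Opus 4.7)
The plan is to handle the five parts roughly in order, with each step serving as input to the next.

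For (1), I would observe that if $X$ is dualizable with dual $X^\vee$, then the functor $X \otimes -$ is both left and right adjoint to $X^\vee \otimes -$, so in particular tensoring with $X$ commutes with all limits. The projection map can then be checked by applying $\Map_\CC(Z, -)$ and using the adjunction $(f^*, f_*)$ together with the fact that $f^*$, being strong monoidal, preserves dualizable objects: both sides represent $Z \mapsto \Map_\D(f^* Z \otimes f^*(X^\vee), Y)$.

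For (2), the key point is that if $G \in \CC$ is compact and dualizable and $\o_\D$ is compact in $\D$, then $f^* G$ is compact in $\D$. Indeed, $f^* G$ is dualizable with dual $f^*(G^\vee)$, so $\Map_\D(f^* G, -) \simeq \Map_\D(\o_\D, f^*(G^\vee) \otimes -)$, which preserves colimits because tensoring with a dualizable object is a left adjoint and $\o_\D$ is compact. Taking $G$ in the compact rigid generating set, $\Map_\CC(G, f_*(-)) \simeq \Map_\D(f^* G, -)$ therefore preserves colimits, and since these generators jointly detect colimits, $f_*$ preserves colimits. For (3), both sides of the projection map are colimit-preserving functors of $X$ (the right side uses that $f_*$ preserves colimits and that $f^*$ and $- \otimes Y$ are left adjoints), and (1) says they agree on dualizable objects; since the compact dualizable generators build all of $\CC$ under colimits, the projection formula holds for every $X$.

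Part (4) is a formal unwinding: $f_*$ is conservative if and only if $f_* Z \simeq 0 \Rightarrow Z \simeq 0$, which by adjunction is the condition that $\{\Map_\D(f^* X, Z) \simeq 0 \text{ for all } X\} \Rightarrow Z \simeq 0$, i.e.\ exactly that $\{f^* X\}_{X \in \CC}$ is a generating family in $\D$.

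The main content is (5), which I would prove by two applications of the Barr--Beck--Lurie monadicity theorem \cite[Theorem 4.7.3.5]{HA}. The right adjoint $f_*$ is conservative and preserves all colimits (in particular $f_*$-split geometric realisations), so $f^* \dashv f_*$ is monadic; thus $\D \simeq \mathrm{LMod}_T(\CC)$ for the monad $T = f_* f^*$. The projection formula gives a natural equivalence of endofunctors $T(X) \simeq f_*(f^* X \otimes \o_\D) \simeq X \otimes f_* \o_\D$, and by chasing the unit and multiplication one checks this is an equivalence of monads between $T$ and the monad associated to the algebra $f_* \o_\D$. Hence $\D \simeq \mathrm{LMod}_T(\CC) \simeq \Mod(\CC; f_* \o_\D)$, and this equivalence is compatible by construction with the canonical comparison functor $g^*$ factoring $f^*$, giving the desired $0$-affineness. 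The main obstacle is the bookkeeping in this last step: promoting the pointwise equivalence $T(X) \simeq X \otimes f_* \o_\D$ to an equivalence of monads, and then promoting the resulting equivalence of module categories to one of $\mathbb{E}_{n-1}$-monoidal categories under $\CC$; I would address this by appealing to the universal property of $\Mod(\CC; f_* \o_\D)$ as the initial $\CC$-algebra equipped with a map of algebras from $f_* \o_\D$.
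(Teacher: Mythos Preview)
Your proofs of (1)--(4) are essentially the same as the paper's: (1) is a Yoneda-style check using that $f^*$ preserves duals, (2) uses that dualizable objects over a compact unit are compact, (3) reduces to (1) by colimit-preservation, and the paper simply writes ``Clear'' for (4).

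The genuine divergence is in (5). You invoke Barr--Beck--Lurie monadicity and then identify the monad $f_* f^*$ with $(-)\otimes f_*\o_\D$ via the projection formula. This is correct, but as you anticipate, the bookkeeping in promoting a pointwise equivalence of endofunctors to an equivalence of monads (and then matching the resulting equivalence with the specific comparison functor $g^*$) is the sore point. The paper takes a more elementary and direct route that bypasses this entirely: it checks that $g^*$ is fully faithful by showing the unit $X \to g_* g^* X$ is an equivalence, first for free $f_*\o$-modules (where it follows immediately from the projection formula with $Y=\o$), and then for all of $\Mod(\CC; f_*\o)$ since free modules generate and both sides commute with colimits. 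Essential surjectivity then follows from (4). This avoids any monad-comparison argument, and since $g^*$ is already $\E_{n-1}$-monoidal by construction, an equivalence of underlying categories is automatically monoidal---so your final worry about promoting to a monoidal equivalence is also moot in either approach.
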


Our arguments follow those from \cite{MNN} fairly closely, though the hypotheses are somewhat different. \todo{Are there more things to cite here?} Before proceeding to the proof of this proposition we give some examples to demonstrate its effectiveness.

\begin{exm} \label{exm:fil-ctau}
In appendix \Cref{app:fil} we study the category $\Sp^{\Fil}$ of filtered spectra, where we denote the shift map $\o(-1) \to \o$ by $\tau$.
  The associated graded functor $\Gr : \Sp^{\Fil} \to \Sp^{\Gr}$ satisfies the conditions of \Cref{prop:rigid}, so we obtain an equivalence of presentably symmetric monoidal categories
  \[ \Sp^{\Gr} \cong \Mod ( \Sp^{\Fil} ; C\tau ). \]
  Here $C\tau$ acquires a commutative algebra structure from the fact that it equivalent to the image of the unit under the right adjoint of $\Gr$.
\end{exm}

\begin{exm} \label{exm:fil-inv-tau}
  Again working with $\Sp^{\Fil}$,
  the realization functor $\Re : \Sp^{\Fil} \to \Sp$ satisfies the conditions of \Cref{prop:rigid}, so we obtain an equivalence of presentably symmetric monoidal categories
  \[ \Sp \cong \Mod ( \Sp^{\Fil} ; \o[\tau^{-1}] ). \]
  Here $\o[\tau^{-1}]$ acquires a commutative algebra structure from the fact that it is equivalent to the image of the unit under $Y$ (the right adjoint of $\Re$).
\end{exm}

\begin{exm}
  Given a finite extension of characteristic zero fields $\ell/k$ we have a symmetric monoidal left adjoint $ i^* : \SH(k) \to \SH(\ell) $.
Using resolution of singularities, Nagata compactification and purity we know that both of these categories are generated by the motives of smooth projective schemes, each of which is dualizable \cite{MotivicModules}. In short,
   both the source and target categories are rigidly generated, with families of compact dualizable generators given by $\Ss^m \otimes (\P^1)^{\otimes n} \otimes X$ where $X$ is a smooth projective scheme over the base.
  
  In order to show that $i$ is 0-affine we only need to show that the image of $i^*$ contains a family of generators. Given a smooth projective $\ell$-scheme $X$ the projection formula \Cref{prop:rigid}(1) gives us an equivalence
  \[ i_*(\Ss^m \otimes (\P^1)^{\otimes n} \otimes X) \simeq \Ss^m \otimes (\P^1)^{\otimes n} \otimes i_*X. \]
  Since for field extensions $i_{\sharp} = i_*$, we learn that $i_*X = X$ where the second copy of $X$ is considered as a $k$-scheme. It will now suffice to show that $X$ is a retract of $i^*i_*X$. At the level of schemes this means looking at $ X \times_{\Spec(\ell)} \Spec(\ell) \times_{\Spec(k)} \Spec(\ell) $. Using the maps $\ell \to \ell \otimes_k \ell \to \ell$ we may conclude.

  Stated more explicitly,
  we have an equivalence of presentably symmetric monoidal categories
  \[ \SH(\ell) \cong \Mod( \SHk ; \Spec(\ell) ). \]
  Using the fact that $i$ restricts to the subcategories of Artin--Tate objects the same argument provides an equivalence of presentably symmetric monoidal categories
  \[ \SH(\ell)^{\at} \cong \Mod ( \SHkat ; \Spec(\ell) ). \]
\end{exm}

\begin{exm} \label{exm:c2-underlying}
  The category of $C_2$-spectra is rigidly generated and the underlying spectrum functor, $\Phi^e$, is essentially surjective.  We may thus apply \Cref{prop:rigid} to obtain a symmetric monoidal equivalence
  \[ \Mod( \Sp_{C_2} ; R_1) \simeq \Sp, \]
  where $R_1$ is the image of $\Ss$ under the right adjoint to $\Phi^e$.
  Since $\Phi^e$ can be described as homotopy fixed points (or homotopy orbits) composed with $(C_{2})_+ \otimes -$, we find that its right adjoint is given by tensoring with $(C_{2})_+ \simeq Ca_\sigma$. Therefore, $R_1 \simeq Ca_\sigma$.

  Similarly, with $\Phi^e$ replaced by $\Phi^{C_2}$ we obtain a symmetric monoidal equivalence
  \[ \Mod( \Sp_{C_2} ; R_2) \simeq \Sp, \]
  where $R_2$ is the image of $\Ss$ under the right adjoint to geometric fixed points.
  We can compute that $R_2 \simeq \Ss[a_\sigma^{-1}]$ using the presentation of $C_2$-spectra via an isotropy separation square. 
\end{exm}

\begin{exm} \label{exm:c2-graded}
  Taking loops on the map $i : S^1 \to B\Pic(\Sp_{C_2})$ which sends a generator to $\Ss^{\sigma}$ we get a monoidal functor $i : \Z \to \Pic(\Sp_{C_2})$. Embedding the target into $\Sp_{C_2}$ and tensoring up to spectra we obtain a monoidal left adjoint,
  \[ i^* : \Sp^{\Gr} \to \Sp_{C_2} \]
  which sends $\Ss(1)$ to $\Ss^{\sigma}$. Since $\Sp^{\Gr}$ is rigidly generated, the unit in $\Sp_{C_2}$ is compact and the representation spheres generate $\Sp_{C_2}$ we may apply \Cref{prop:rigid} to conclude that $i$ is $0$-affine. Stated more explicitly,
  we have an equivalence of presentable categories,
  \[ \Sp_{C_2} \cong \Mod( \Sp^{\Gr} ; i_*\Ss ). \]
  The graded ring $i_*\Ss$ has $n^{\mathrm{th}}$ term given by $\Map^{\Sp} ( \Ss^{n\sigma}, \Ss )$.
  Expressed in terms of stunted projective spaces we have $(i_*\Ss)_n \simeq \Sigma \R \mathrm{P}_{-\infty}^{-n-1}$. 
\end{exm}

\begin{proof}[Proof (of \Cref{prop:rigid}(1)).]    
  Consider the following commutative diagram, which is natural in $X \in \CC^{\mathrm{dbl}}$, $Y \in \mathcal{D}$ and $Z \in \mathcal{C}$. 
  {\footnotesize
    \begin{center} \begin{tikzcd}
        \Map_{\mathcal{C}}(X^\vee \otimes Z, f_*Y) \ar[r] \ar[rr, "\simeq", bend left=10] &
        \Map_{\mathcal{D}}(f^*(X^\vee \otimes Z), f^*f_*Y) \ar[d, "\simeq"] \ar[r] &
        \Map_{\mathcal{D}}(f^*(X^\vee \otimes Z), Y) \ar[d, "\simeq"] \\        
        & \Map_{\mathcal{D}}(f^*(X^\vee) \otimes f^*Z, f^*f_*Y) \ar[d, "\simeq"] \ar[r] &
        \Map_{\mathcal{D}}(f^*(X^\vee) \otimes f^*Z, Y) \ar[d, "\simeq"] \\
        \Map_{\mathcal{C}}(Z, X \otimes f_*Y) \ar[d] \ar[uu, no head, "\simeq"] &
        \Map_{\mathcal{D}}((f^*X)^\vee \otimes f^*Z, f^*f_*Y) \ar[d, "\simeq"] \ar[r] &
        \Map_{\mathcal{D}}((f^*X)^\vee \otimes f^*Z, Y) \ar[d, "\simeq"] \\
        \Map_{\mathcal{D}}(f^*Z, f^*(X \otimes f_*Y)) \ar[r, "\simeq"] &
        \Map_{\mathcal{D}}(f^*Z , f^*X \otimes f^*f_*Y) \ar[r] &
        \Map_{\mathcal{D}}(f^*Z , f^*X \otimes Y) 
    \end{tikzcd} \end{center}
  }
  The rectangle on the left commutes due to the compatibility of dualization with the monoidal structure on $f^*$. The remaining squares commute for easier reasons. Now observe that starting in the middle of the left side and proceeding counter-clockwise gives the projection map, while proceeding clockwise given an equivalence.
\end{proof}

\begin{proof}[Proof (of \Cref{prop:rigid}(2)).]
  Since $\CC$ is rigidly generated it has a family of compact dualizable generators.
  Monoidal functors send dualizable objects to dualizable objects.
  Since the unit of $\mathcal{D}$ is compact we learn that $f^*$ sends dualizable objects to compact objects.
  Thus, since $f^*$ sends a family of compact generators to compact objects its right adjoint $f_*$ preserves colimits.
\end{proof}

\begin{proof}[Proof (of \Cref{prop:rigid}(3)).]
  The projection formula asks that the natural projection map
  \[ X \otimes f_*Y \to f_*(f^*X \otimes Y) \]
  be an equivalence.
  Using the hypotheses that $f_*$ preserves colimits and $\CC$ is rigidly generated we can reduce to the case where $X$ is a compact dualizable generator. 
  We may now use \Cref{prop:rigid}(1) to conclude.
\end{proof}

\begin{proof}[Proof (of \Cref{prop:rigid}(4)).]
  Clear.
\end{proof}

\begin{proof}[Proof (of \Cref{prop:rigid}(5)).]
  We begin by showing that $g^*$ is fully faithful.
  This is equivalent to showing that the unit map
  $X \to g_*g^*X$ is an equivalence.
  Applying the projection formula with $Y = \o$, we can conclude that this is true for induced $f_*\o$-modules.
  Since $f_*$ preserves colimits and $\Mod( \CC ; f_*\o)$ is generated by induced $f_*\o$-modules this is sufficient to conclude.

  Now, we show that $g^*$ is essentially surjective.
  By \Cref{prop:rigid}(4) we know the essential image of $f^*$ contains a family of generators (and $g^*$ has the same property).
  Using fully-faithfulness we can now conclude that $g^*$ is essentially surjective.  
\end{proof}

We close the appendix with another useful lemma.

\begin{lem} \label{lem:tensor-mod}
  Suppose that $\CC$ and $\D$ are stable presentably symmetric monoidal categories, and let $R$ be a commutative algebra in $\CC$.
  Given a symmetric monoidal left adjoint $f^* : \CC \to \mathcal{D}$, 
  there is an equivalence of presentably symmetric monoidal categories
  \[\Mod(\D; f^*R) \simeq \Mod(\CC; R) \otimes_\CC \D.\]
\end{lem}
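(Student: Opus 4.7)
The plan is to derive this as a formal consequence of the universal property of base change for modules in presentably symmetric monoidal categories, a standard result in Lurie's Higher Algebra (roughly Theorem 4.8.5.16 together with the compatibility of module categories with relative tensor products).

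First, I would construct a symmetric monoidal left adjoint
\[ F : \Mod(\CC; R) \otimes_\CC \D \longrightarrow \Mod(\D; f^*R). \]
Since $f^* : \CC \to \D$ is symmetric monoidal and carries $R$ to $f^*R$, it lifts canonically to a $\CC$-linear symmetric monoidal left adjoint $\widetilde{f^*} : \Mod(\CC;R) \to \Mod(\D;f^*R)$. Similarly, the base change $X \mapsto f^*R \otimes X$ defines a $\CC$-linear symmetric monoidal left adjoint $\D \to \Mod(\D; f^*R)$. These two functors agree (up to canonical equivalence) after restriction along $\CC \to \Mod(\CC;R)$ and $\CC \to \D$, so by the universal property of the relative tensor product in $\mathrm{CAlg}(\mathrm{Pr}^L)$, they assemble into the desired $F$.

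Second, I would show $F$ is an equivalence by exhibiting that both sides satisfy the same universal property as presentably symmetric monoidal $\D$-linear categories. On the target, $\Mod(\D; f^*R)$ is the universal presentably symmetric monoidal $\D$-linear category in which $f^*R$ is the unit (this is the module-category universal property from HA 4.8.5). On the source, one checks that $\Mod(\CC; R) \otimes_\CC \D$ is, by construction, the pushout of $\Mod(\CC;R)$ and $\D$ over $\CC$ in $\mathrm{CAlg}(\mathrm{Pr}^L)$, and that $\Mod(\CC;R)$ itself is the universal $\CC$-linear presentably symmetric monoidal category in which $R$ becomes the unit. A formal pushout argument then identifies $\Mod(\CC;R) \otimes_\CC \D$ as the universal $\D$-linear presentably symmetric monoidal category in which $f^*R$ becomes the unit, which is exactly the universal property of $\Mod(\D; f^*R)$.

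The main obstacle is rigorously matching the two universal properties; concretely, one must verify that the commutative algebra structure on $f^*R$ induced from the pushout construction coincides with its native algebra structure in $\D$, and that the $\D$-linear structure is correctly identified on both sides. Once this coherence check is made, $F$ is formally an equivalence of presentably symmetric monoidal categories. Alternatively, one can bypass the universal-property argument by checking the equivalence on a set of compact generators of the form $M \otimes X$ with $M \in \Mod(\CC;R)$ and $X \in \D$, verifying the mapping-space identity via a direct base-change computation using that $f^*$ is symmetric monoidal; this avoids abstract machinery but requires more bookkeeping.
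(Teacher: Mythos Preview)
Your proposal is correct and takes essentially the same approach as the paper: the paper's entire proof is the single sentence ``This follows from \cite[Theorem 4.8.5.16]{HA} after unraveling the definitions,'' and your argument is precisely an unraveling of that citation via the universal property of module categories in $\mathrm{CAlg}(\mathrm{Pr}^L)$.
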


\begin{proof}
  This follows from \cite[Theorem 4.8.5.16]{HA} after unraveling the definitions.
\end{proof}

\begin{exm}  
  Given a stable presentably symmetric monoidal category $\CC$ and a commutative algebra $R \in \CC^{\Fil}$, this lemma provides an equivalence
  \[ \Sp^{\Gr} \otimes_{\Sp^{\Fil}} \Mod ( \CC^{\Fil} ; R ) \simeq \CC^{\Gr} \otimes_{\CC^{\Fil}} \Mod ( \CC^{\Fil} ; R ) \simeq \Mod ( \CC^{\Gr} ; \Gr(R) ) \]
\end{exm}

\section{Recollections on filtered objects} \label{app:fil}


In this appendix we give a more detailed introduction to filtered objects.
The results here are well-known (cf. \cite{RotationInvariance}), and we aim mainly to fix notation.

\begin{cnv}
  In this appendix, $\CC$ will denote a stable, presentable category.
  We will use $\o$ to denote the unit of $\CC$ when it is monoidal.
\end{cnv}

\begin{dfn}
  We let $\Z$ denote the symmetric monoidal category with underlying category given by the discrete set $\Z$ and symmetric monoidal structure given by addition.

  Similarly, we let $\Z^{\Fil}$ denote the symmetric monoidal category with underlying category given by the poset $\Z$ with its order $\leq$, so that there is a unique map $n \to m$ whenever $n \leq m$, and symmetric monoidal structure given by addition.
\end{dfn}

\begin{dfn}
  Given a category $\CC$, we let $\CC^{\Fil} \coloneqq \Fun(\Z^{\Fil,op}, \CC)$ denote the category of filtered objects in $\CC$.
  Objects of $\CC^{\Fil}$ are diagrams
  \[ \dots \to C_2 \to C_1 \to C_0 \to C_{-1} \to C_{-2} \to \dots \]
  in the category $\CC$. We will sometimes use the notation $C_\bullet$ for an object of $\CC^{\Fil}$.
  \begin{itemize}
  \item There is a natural left adjoint $c : \CC \to \CC^{\Fil}$, which sends $C$ to the constant object
    \[ \dots \to 0 \to 0 \to C \xrightarrow{\id} C \xrightarrow{\id} C \xrightarrow{\id} \dots \]
    that is equal to $C$ in nonpositive degrees and $0$ in positive degrees.
  \item There is a natural left adjoint $Y : \CC \to \CC^{\Fil}$, which sends $C$ to the constant object
    \[ \dots \xrightarrow{\id} C \xrightarrow{\id} C \xrightarrow{\id}  C \xrightarrow{\id} C \xrightarrow{\id} C \xrightarrow{\id} \dots \]
    that is equal to $C$ in each degree.
  \item The functor $Y$ admits a left adjoint $\Re : \CC^{\Fil} \to \CC$, which sends
    $C_\bullet$ to $\varinjlim_{n} C_{-n}$.
  \item The category $\CC^{\Fil}$ admits natural automorphisms $(k) : \CC^{\Fil} \to \CC^{\Fil}$ that send $C_\bullet$ to $C_{\bullet - k}$.
  \item There is a natural transformation $\tau : (-1) \to \mathrm{Id}$, which captures the shift map in the filtration. We depict $\tau$ on $cX$ below,
  \begin{center}
    \begin{tikzcd}
      \dots \ar[r] & 0 \ar[r] \ar[d] & 0 \ar[r] \ar[d] & 0 \ar[r] \ar[d] & X \ar[r] \ar[d] & X \ar[d] \ar[r] & \dots \\
      \dots \ar[r] & 0 \ar[r] & 0 \ar[r] & X \ar[r] & X \ar[r] & X \ar[r] & \dots
    \end{tikzcd}
  \end{center}
  If $\CC$ is monoidal, then we will refer to the cofiber of $\tau : \o(-1) \to \o$ as $C\tau$.
  \end{itemize}

  We let $\CC^{\Gr} \coloneqq \Fun(\Z^{op}, \CC)$ denote the category of graded objects in $\CC$.
  Objects of $\CC^{\Gr}$ are collections $\{C_n\}_n$ of objects of $\CC$.
  We will sometimes use the notation $C_*$ for an object of $\CC^{\Gr}$.
  \begin{itemize}
  \item There is a natural fully-faithful left adjoint $c : \CC \to \CC^{\Gr}$, which sends $C$ to $C_*$ with $C_0 = C$ and $C_k = 0$ for $k \neq 0$.
  \item There is a natural left adjoint $\Gr : \CC^{\Fil} \to \CC^{\Gr}$, which sends
  \[ \dots \to C_2 \to C_1 \to C_0 \to C_{-1} \to C_{-2} \to \dots \]
  to $\{C_n / C_{n+1}\}_n$.
  \end{itemize}
  
  If $\CC$ is a presentably $\mathbb{E}_{n}$-monoidal category,
  then $\CC^{\Fil}$ and $\CC^{\Gr}$ inherit the structure of $\En$-monoidal categories under Day convolution,
  and the functors $c$, $Y$, $\Re$, $c$ and $\Gr$ are all $\En$-monoidal.
\end{dfn}

\begin{rmk}
  Using the assumption that $\CC$ is stable and presentable we can offer another description of the categories of filtered and graded objects, which is often useful in proofs.
  \[ \CC^{\Fil} \simeq \Sp^{\Fil} \otimes\ \CC \quad \text{ and } \quad \CC^{\Gr} \simeq \Sp^{\Gr} \otimes\ \CC. \]
  Since it is well-known that the analogs of $c$, $Y$, $\Re$, $c$ and $\Gr$ are symmetric monoidal in the case of spectra, the claims about $\En$ monoidality made above follow by tensoring up. 
\end{rmk}

Using the fact that $\o(1) \otimes \o(-1) \simeq \o$, we learn that if $X$ is a dualizable object of $\CC$, then $c(X)(n)$ is a dualizable object of $\CC^{\Fil}$. Similarly, we have that if $\{X_\alpha\}$ is a set of compact (dualizable) generators of $\CC$, then $\{c(X_\alpha)(k)\}$ is a set of compact (dualizable) generators of $\CC^{\Fil}$.






\begin{lem} \label{prop:mod-fil}
  Given an $\En$-monoidal category $\CC$,
  the image of $\o$ under the right adjoint of $\Re$ is $\o[\tau^{-1}]$;
  therefore this object is an $\En$-algebra and there is an equivalence of $\mathbb{E}_{n-1}$-monoidal categories
  $ \Mod(\CC^{\Fil}; \o[\tau^{-1}]) \simeq \CC $.
  Similarly,
  the image of $\o \in \CC^{\Gr}$ under the right adjoint of $\Gr$ is $C\tau$;
  therefore this object is an $\En$-algebra and there is an equivalence of $\mathbb{E}_{n-1}$-monoidal categories
  $ \Mod(\CC^{\Fil}; C\tau) \simeq \CC^{\Gr}. $
  Moreover, the functors $- \otimes \o[\tau^{-1}]$ and $- \otimes C\tau$ are identified under these equivalences with $\Re$ and $\Gr$, respectively.
\end{lem}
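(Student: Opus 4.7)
The plan is to apply Proposition~\ref{prop:rigid} to the two $\En$-monoidal left adjoints $\Re : \CC^{\Fil} \to \CC$ and $\Gr : \CC^{\Fil} \to \CC^{\Gr}$. Rigid generation of $\CC^{\Fil}$ is already recorded in the text preceding the lemma: if $\{X_\alpha\}$ is a set of compact dualizable generators of $\CC$, then $\{c(X_\alpha)(k)\}_{\alpha, k}$ is such a set for $\CC^{\Fil}$. The right adjoint $Y$ of $\Re$ is colimit-preserving (colimits in $\CC^{\Fil}$ are computed pointwise and $Y(X)$ is constant in each degree) and is conservative (since $Y(X)_0 \simeq X$). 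Parts (3) and (5) of Proposition~\ref{prop:rigid} then show $\Re$ is $0$-affine, producing an equivalence $\Mod(\CC^{\Fil}; Y(\o)) \simeq \CC$ under which base change corresponds to $\Re$. A direct computation of the colimit $\colim_k \o(k)$ taken along the maps induced by $\tau$ identifies $Y(\o)$ with $\o[\tau^{-1}]$.

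The same recipe applies to $\Gr$. Its right adjoint $R$ is colimit-preserving (most cleanly verified by the reduction to spectra described below) and conservative, so Proposition~\ref{prop:rigid} yields $\Mod(\CC^{\Fil}; R(\o)) \simeq \CC^{\Gr}$. To identify $R(\o)$, I would use the cofiber sequence $\o(-1) \xrightarrow{\tau} \o \to C\tau$ to observe that $C\tau$ is the filtered object which equals $\o$ in degree $0$ and vanishes in all other degrees. Consequently $\Gr(C\tau)$ is the unit of $\CC^{\Gr}$, and the natural adjoint map $C\tau \to R(\o)$ may be checked to be an equivalence by testing against the compact dualizable generators $c(X_\alpha)(k)$, where both mapping spaces are computed directly from the cofiber sequence.

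A cleaner route, which I would use in practice, is to reduce everything to the case $\CC = \Sp$. Combining the $\En$-monoidal equivalences $\CC^{\Fil} \simeq \Sp^{\Fil} \otimes \CC$ and $\CC^{\Gr} \simeq \Sp^{\Gr} \otimes \CC$ with an $\En$-monoidal analogue of Lemma~\ref{lem:tensor-mod}, one obtains both equivalences as $\CC$-linear extensions of Examples \ref{exm:fil-ctau} and \ref{exm:fil-inv-tau}. The $\mathbb{E}_{n-1}$-monoidal structure on the resulting equivalences arises for free: the right adjoint of an $\En$-monoidal left adjoint is lax $\En$-monoidal, so the image of the unit inherits an $\En$-algebra structure, and the module category over an $\En$-algebra is $\mathbb{E}_{n-1}$-monoidal. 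The main obstacle I anticipate is the bookkeeping required to verify that the two approaches produce the same $\En$-algebra structure on $\o[\tau^{-1}]$ and on $C\tau$, and that the identification of base change with $\Re$ (respectively $\Gr$) is functorial at the level of $\mathbb{E}_{n-1}$-monoidal functors; the individual inputs—rigid generation, projection formula, conservativity, and the direct computation of $Y(\o)$ and $R(\o)$—are each routine.
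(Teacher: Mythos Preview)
Your preferred ``cleaner route'' is exactly the paper's proof: the paper reduces to $\CC = \Sp$ via $\CC^{\Fil} \simeq \Sp^{\Fil} \otimes \CC$, invokes Examples~\ref{exm:fil-ctau} and~\ref{exm:fil-inv-tau}, and tensors back up. One minor caveat on your first, more direct approach: the lemma only assumes $\CC$ is stable presentably $\En$-monoidal, not rigidly generated, so the sentence ``rigid generation of $\CC^{\Fil}$ is already recorded in the text preceding the lemma'' overreads a conditional statement; the reduction-to-spectra route avoids this hypothesis entirely, which is presumably why the paper takes it.
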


\begin{proof}
  The case of spectra was handled in Examples \ref{exm:fil-ctau} and \ref{exm:fil-inv-tau}.
  For a general $\CC$ we just tensor the case of spectra with $\CC$.
\end{proof}

\section{A machine for deforming homotopy theories} \label{app:def}
In this appendix, we have two goals.
The first is to identify a technique which produces 1-parameter deformations in homotopy theory.
This technique is essentially an elaboration of \cite[Definition 3.2]{cmmf}.
The second goal is to provide a recognition criterion for 1-parameter deformations.
This criterion will be applied in \Cref{sec:top} of the main paper to identify $\SHRat$ with the category of modules over a commutative algebra in $i2$-complete filtered $C_2$-spectra.

The approach to deforming categories taken here is different than the approach taken in Pstr\k{a}gowski's theory of synthetic spectra \cite{Pstragowski}, which is a theory specifically about deformations of $\Sp$. 
When our construction here and Pstr\k{a}gowki's construction are both defined, we will show that they are equivalent up to a minor completion issue. This in turn suggests that a more general version of Pstr\k{a}gowski's approach is possible, where an arbitrary (symmetric monoidal) presentable category is deformed. 

This appendix is not intended to be a definitive treatment of deformations.  Instead, we view at as an illustration of a variety of elementary techniques, which in combination produce a large collection of interesting examples.

\subsection{Constructing deformations} \label{app:def-const}\

In this section we will give techniques for producing 1-parameter deformations.

\begin{cnv}
  In this section, $\CC$ will denote a stable presentably symmetric monoidal category.
  We will use $\o$ to denote the unit of $\CC$, and assume the notation of \Cref{app:fil} 
\end{cnv}

The deformations we produce will all be of the form
\[ \Mod ( \CC^{\Fil} ; R ) \]
for some kind of algebra $R$.\footnote{Although much of the material in this appendix applies for $\En$-algebras in $\En$-monoidal categories, the extra generality was not necessary for this work.}
Therefore, what we really do is give methods for constructing algebras in filtered objects.

We will produce commutative algebras in $\CC^{\Fil}$ through the following method.
Given a lax symmetric monoidal functor $F : \CC \to \CC^{\Fil}$, the image of the unit $F(\o)$ is a commutative algebra in $\CC^{\Fil}$. If we let $L$ denote the composite $\Re \circ F$, then we have the following proposition.

\begin{prop}
  The presentably symmetric monoidal category $\Mod(\CC^{\Fil}; F(\o))$ is a 1-parameter deformation in the sense that,
  \begin{enumerate}
  \item There is a colimit-preserving symmetric monoidal functor out of $\CC^{\Fil}$ with target $\Mod(\CC^{\Fil}; F(\o))$.
  \item The generic fiber is given by $\Mod(\CC; L(\o))$, in the sense that
    there is an equivalences of presentably symmetric monoidal categories
    \[\Mod(\CC^{\Fil}; F(\o)[\tau^{-1}]) \simeq \Mod(\CC; L(\o)). \]
  \item The special fiber is given by $\Mod(\CC^{\Gr}; \Gr F(\o) )$, in the sense that
    there is an equivalences of presentably symmetric monoidal categories
    \[\Mod(\CC^{\Fil}; F(\o) \otimes C\tau) \simeq \Mod(\CC^{\Gr}; \Gr(F(\o))).\]
  \end{enumerate}
  Moreover, when viewed as a lax symmetric monoidal functor, $F$ factors as
  \[ \CC \xrightarrow{G} \Mod(\CC^{\Fil}; F(\o)) \to \CC^{\Fil}. \]
\end{prop}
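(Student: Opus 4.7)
The entire proposition is an almost formal consequence of two building blocks already available in the paper: the identifications $\CC \simeq \Mod(\CC^{\Fil}; \o[\tau^{-1}])$ and $\CC^{\Gr} \simeq \Mod(\CC^{\Fil}; C\tau)$ from \Cref{prop:mod-fil}, together with the base-change formula for relative tensor products of presentable module categories, \Cref{lem:tensor-mod}. My plan is to treat the four claims in turn, with the formal manipulations front and center.

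For part (1), I would simply observe that the unit map $\o_{\CC^{\Fil}} \to F(\o)$ of the commutative algebra $F(\o)$ produces, by base-change, a colimit-preserving symmetric monoidal functor
\[
- \otimes F(\o) : \CC^{\Fil} \longrightarrow \Mod(\CC^{\Fil}; F(\o)).
\]
This is a standard construction and requires no argument.

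For parts (2) and (3), I would apply \Cref{lem:tensor-mod} to the symmetric monoidal left adjoints $\Re : \CC^{\Fil} \to \CC$ and $\Gr : \CC^{\Fil} \to \CC^{\Gr}$ respectively. For (2), $L(\o) = \Re F(\o)$, so \Cref{lem:tensor-mod} gives
\[
\Mod(\CC; L(\o)) \simeq \Mod(\CC^{\Fil}; F(\o)) \otimes_{\CC^{\Fil}} \CC.
\]
Using \Cref{prop:mod-fil} to rewrite the second factor as $\Mod(\CC^{\Fil}; \o[\tau^{-1}])$ and applying \Cref{lem:tensor-mod} a second time, this becomes $\Mod(\CC^{\Fil}; F(\o) \otimes_{\CC^{\Fil}} \o[\tau^{-1}]) \simeq \Mod(\CC^{\Fil}; F(\o)[\tau^{-1}])$, as desired. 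Part (3) is identical with $\Gr$ in place of $\Re$ and $C\tau$ in place of $\o[\tau^{-1}]$.

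The factorization in the \emph{Moreover} clause is the one step requiring a bit of care. The lax symmetric monoidal structure on $F$ supplies comparison maps $F(\o) \otimes F(X) \to F(\o \otimes X) \simeq F(X)$ which exhibit $F(X)$ as an $F(\o)$-module naturally in $X$; unwinding the symmetry data shows these module structures assemble into a lax symmetric monoidal lift $G : \CC \to \Mod(\CC^{\Fil}; F(\o))$ whose composition with the forgetful functor is $F$. At the level of $\infty$-categories this is the content of the universal property of $\Mod$ as a functor from commutative algebras, recorded in \cite[\S 3.4 and \S 4.5]{HA}; concretely, a lax symmetric monoidal functor into a presentably symmetric monoidal category is the same data as a lax symmetric monoidal functor into modules over the image of the unit. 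This last bookkeeping about symmetric monoidal structures is the only mild obstacle, and it is resolved entirely by invoking \cite{HA}.
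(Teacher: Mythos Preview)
Your proposal is correct and is exactly the argument the paper has in mind: the paper's own proof is a one-line ``This follows immediately from \Cref{prop:mod-fil} and \Cref{lem:tensor-mod},'' and you have simply unpacked those two citations carefully. The only addition is your more explicit treatment of the \emph{Moreover} clause via the lax-monoidal-functor/module-over-image-of-unit correspondence, which the paper leaves implicit.
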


\begin{proof}
  This follows immediately from \Cref{prop:mod-fil} and \Cref{lem:tensor-mod}. \todo{update when this breaks.}
\end{proof}

We will refer to lax symmetric monoidal functors $T : \CC \to \CC^{\Fil}$ as \emph{tower functors} and give several examples.

\begin{exm}
  The functors $c$ and $Y$ are tower functors.
\end{exm}

\begin{exm}
  The functor $\tau_{\geq \bullet} : \Sp \to \Sp^{\Fil}$ is a tower functor.
\end{exm}

\begin{exm}
  If $\CC$ has a $t$-structure which is compatible with the tensor product in the sense that the unit is connective and the tensor product of two connective objects is connective, then
  \[ \tau_{\geq \bullet} : \CC \to \CC^{\Fil} \]
  is a tower functor. More generally we have a tower functor
  $ \tau_{\geq m \bullet} : \CC \to \CC^{\Fil} $ for natural numbers $m$.
\end{exm}

\begin{cnstr} \label{cnstr:trunc-tower}
  Suppose we are given a collection $A$ of coreflective subcategories $\CC^A_{\geq k} \subset \CC$,
  such that
  \begin{itemize}
  \item if $X \in \CC^A_{\geq k}$ and $Y \in \CC^A_{\geq \ell}$, then $X \otimes Y \in \CC^A_{\geq k+\ell}$.
  \item $\CC^A_{\geq k+1} \subset \CC^A_{\geq k}$ and $\o \in \CC^A_{\geq 0}$.
  \end{itemize}

  Let $\tau_{\geq k} : \CC \to \CC^A_{\geq k}$ denote the right adjoints of the inclusions. 
  Then, we can assemble these categories into a single coreflective subcategory $\CC_{\geq 0}^{\Fil, A}$
  consisting of filtered objects
  \[\dots \to X_2 \to X_1 \to X_0 \to X_{-1} \to X_{-2} \to \dots\]
  for which $X_i \in \CC^A_{\geq i}$.
  The right adjoint $\tau_{\geq 0}^A : \CC^{\Fil} \to \CC^{\Fil, A}_{\geq 0}$ to the inclusion is given by applying $\tau_{\geq i}$ in position $i$.

  Our assumptions guarantee that $\CC_{\geq 0}^{\Fil, A}$ is closed under the tensor product and so admits a natural presentably symmetric monoidal structure, for which the inclusion $\CC^{\Fil, A}_{\geq 0} \subset \CC^{\Fil}$ is a symmetric monoidal functor. As a consequence, we obtain a lax symmetric monoidal endofunctor  
   \[ \tau^A_{\geq 0} : \CC^{\Fil} \to \CC^{\Fil}. \]
\end{cnstr}

\begin{exm}
  If $\CC$ is the category of $G$-equivariant spectra for some finite group $G$, then we can let $\CC^{\mathrm{slice}}_{\geq k}$ be the regular slice $k$-connective $G$-spectra.\footnote{Note that we cannot use the classical slice filtration here, unless $m$ is divisible by $\abs{G}$ in which case it agrees with the regular one, since it is not compatible with the tensor product.} As a variant we could also take the regular slice $mk$-connective subcategories for some positive integer $m$.
\end{exm}

Each of the above examples of tower functors can be described as a composite of $Y$ with an appropriately chosen $\tau_{\geq 0}$.
This construction can be considered a generalization of the twisted $t$-structures on filtered objects considered in \Cref{subsec:twists}.
Thus, so far we have only produced ``truncation type towers.''
We now give a construction which takes in a tower functor $T$ and a commutative algebra in $\CC$ and produces a new tower functor. This construction will have the effect of shearing the Adams spectral sequence based on $E$ along the tower $T$. To begin, we review the monoidal properties of the cobar construction.

\begin{cnstr}
  Since the coproduct of commutative algebras in $\CC$ is given by the tensor product, 
  the cobar construction can be upgraded into a functor
  \[ \mathrm{cb} : \CAlg (\CC) \to \CAlg (\CC^{\Delta}) .\]

  

\end{cnstr}

\begin{cnstr}\label{cnstr:shear}
  Given a tower functor $T$ and a commutative algebra $E$ in $\CC$, we define a new tower functor $\mathrm{Sh}(T ; E)$ which is the composite,
  \[\CC \xrightarrow{- \otimes \mathrm{cb}(E)} \CC^{\Delta} \xrightarrow{T} \CC^{\Fil, \Delta} \xrightarrow{\Tot} \CC^{\Fil}.\]
\end{cnstr}

On spectra, the tower functor $\mathrm{Sh}(\tau_{\geq \bullet} ; \F_p)$ produces the d\'ecalage of the $\F_p$-Adams tower on the input. More generally, given a $t$-structure t that is compatible with the monoidal structure, we can let $\CC^t_{\geq n}$ be the $n$-connective objects.
Then $\mathrm{Sh}(\tau_{\geq 0}^t ; E)(X)$ captures the $E$-based Adams spectral sequence for the $t$-structure homotopy groups of $X$. As an analog of the fact that the $E$-Adams spectral sequence for $E$ collapses, we have the following:

\begin{exm} \label{exm:E-collapse}
  The cosimplicial diagram $E \otimes \mathrm{cb}(E)$ admits a contracting homotopy, and therefore the totalization commutes with any functor. In particular, we obtain an equivalence of commutative algebras,
  $ \mathrm{Sh}(T ; E)(E) \simeq T(E) $.
  \todo{Is this right?}
\end{exm}

We close with a simple lemma that lets us identify the generic fiber in certain cases.

\begin{lem}\label{lem:easy-conv}
  Suppose we are given a collection of coreflective subcategories $A$ that satisfy the conditions of \Cref{cnstr:trunc-tower}, together with a commutative algebra $E \in \CC^A_{\geq 0}$.
  Then for any $X \in \CC_{\geq k}^A$ there is an equivalence
  $\Re(\mathrm{Sh}(\tau_{\geq 0}^A ; E)) \simeq X^{\wedge} _E$, where the second object is the $E$-nilpotent completion of $X$.
\end{lem}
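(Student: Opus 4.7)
The plan is to unwind $\mathrm{Sh}(T;E)(X)$ in each filtration degree, observe that below filtration $k$ the filtered object is essentially constant with value $\Tot_n(X \otimes E^{\otimes n+1}) = X^\wedge_E$, and then conclude by taking the realization (a colimit over decreasing filtration).

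First, I would unwind the definition. Writing $T = \tau^A_{\geq 0} \circ Y$, we have
\[ \mathrm{Sh}(T; E)(X) = \Tot_{n \in \Delta}\, T(X \otimes E^{\otimes n+1}) \in \CC^{\Fil}, \]
so at each filtration degree $i \in \Z$ the component is
\[ \mathrm{Sh}(T;E)(X)_i \simeq \Tot_{n}\, \tau^A_{\geq i}(X \otimes E^{\otimes n+1}), \]
because the evaluation-at-$i$ functor $\CC^{\Fil} \to \CC$ is a right adjoint (limits in functor categories are computed pointwise) and therefore commutes with $\Tot$.

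The key input is a stability observation: since $E \in \CC^A_{\geq 0}$ and $X \in \CC^A_{\geq k}$, the multiplicativity hypothesis on $A$ gives $X \otimes E^{\otimes n+1} \in \CC^A_{\geq k}$ for every $n \geq 0$. Consequently, for every $i \leq k$ the counit $\tau^A_{\geq i}(X \otimes E^{\otimes n+1}) \to X \otimes E^{\otimes n+1}$ is an equivalence, and the transition maps in the filtered object $T(X \otimes E^{\otimes n+1})$ are equivalences in filtration degrees $\leq k$. Plugging this into the formula above, we obtain $\mathrm{Sh}(T;E)(X)_i \simeq \Tot_n (X \otimes E^{\otimes n+1}) = X^\wedge_E$ for every $i \leq k$, and the transition maps between these are equivalences.

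Finally, since $\Re$ is the colimit over filtration $i \to -\infty$, and the filtered object $\mathrm{Sh}(T;E)(X)$ is essentially constant with value $X^\wedge_E$ below filtration $k$, we conclude $\Re\, \mathrm{Sh}(T;E)(X) \simeq X^\wedge_E$. The only delicate point is the interchange of evaluation-at-$i$ with $\Tot$, but this is automatic from the pointwise formula for limits of functors; everything else is a direct application of the connectivity hypothesis to make the truncation operation invisible in sufficiently negative filtration.
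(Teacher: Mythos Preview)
Your proof is correct and follows essentially the same approach as the paper: both arguments observe that the multiplicativity of $A$ forces $X \otimes E^{\otimes n+1} \in \CC^A_{\geq k}$, so in filtration degrees $i \leq k$ the truncation $\tau^A_{\geq i}$ acts as the identity and the filtered object is constant with value $X^\wedge_E$, whence the realization recovers $X^\wedge_E$. The paper phrases this via the natural transformation $\tau^A_{\geq 0} \to Y$ and the resulting map $\mathrm{Sh}(\tau^A_{\geq 0};E) \to Y(-)^\wedge_E$, whereas you compute pointwise in filtration; these are the same argument.
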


\begin{proof}
  By construction $\tau^A_{\geq 0}$ comes equipped with a natural transformation $\tau^A_{\geq 0} \to Y$.
  Since $\mathrm{Sh}(Y ; E) \simeq Y(-)^{\wedge}_E$, we have a natural transformation
  \[ \mathrm{Sh}(\tau_{\geq 0}^A ; E) \to  Y(X)^{\wedge}_E.\]
  In sufficiently negative degrees the $\tau_{\geq i}$'s have no effect by hypothesis, so 
  the totalizations are levelwise equivalences.  There is thus an equivalence
  \[ \Re(\mathrm{Sh}(\tau_{\geq 0}^A ; E)) \simeq  \Re Y(X)^{\wedge} _E \simeq (X)^{\wedge} _E.\qedhere \]
\end{proof}

\begin{rmk}
 Under the assumptions of \Cref{lem:easy-conv}, we have an equivalence
\[ \Mod(\CC^{\Fil}; \mathrm{Sh}(\tau_{\geq 0}^A ; E)(\o)[\tau^{-1}]) \simeq \Mod(\CC; \o^{\wedge}_{E}). \]
\end{rmk}

\subsection{Recognizing Deformations} \label{app:compare}\ 

In this section, we give one answer to the following open-ended question:

\begin{qst}
  Given a pair of presentably symmetric monoidal categories $\CCdef$ and $\CC$,
  when can we identify $\CCdef$ with $\Mod(\CC^{\Fil}; R)$ for some commutative algebra $R \in \CC^{\Fil}$?
\end{qst}

This section grew out of a recognition that our original arguments in \Cref{sec:top}, which related $\SHRat$ and  $\Sp_{C_2,i2}$, used only very general information.\todo{Refine this}

\begin{dfn} \label{dfn:def-pair}
  A deformation pair consists of the following data:
  \begin{itemize}
  \item A diagram of symmetric monoidal left adjoints
    \begin{center} \begin{tikzcd}
        & \CCdef \ar[dr, "\Re"] & \\
        \CC \ar[ur, "c"] \ar[rr, "\mathrm{Id}"] & & \CC.
    \end{tikzcd} \end{center}
  \item A map of abelian groups
    \[ i_0 : \Z \to \ker \left( \pi_0\Pic(\CCdef) \to \pi_0\Pic(\CC) \right). \]
    We denote the invertible object corresponding to $i_0(n)$ by $\o(n)$.
  \end{itemize}
  These data are subject to the following conditions:
  \begin{itemize}
  \item For $a \leq b$, the map on mapping spaces
    \[ \Hom_{\CCdef} (\o(a),\o(b)) \to \Hom_{\CC} (\o,\o) \]
    induced by $\Re$ is an equivalence.
  \item The category $\CC$ is rigidly generated, and there is a set
    $\{C_\alpha\}$ of compact dualizable objects in $\CC$
    with the property that $\{c(C_{\alpha}) \otimes \o(n)\}$
    is a set of compact dualizable generators for $\CCdef$.
  \end{itemize}
\end{dfn}


\begin{exm}
  Suppose that $\CC$ is a rigidly generated, stable, presentably symmetric monoidal category.
  Then it is easy to verify that $(\CC^{\Fil}, \CC)$ is a deformation pair where
  $i_0 : \Z \to \Pic(\CC^{\Fil})$ sends $n$ to $\o(n)$.
\end{exm}

\begin{exm} \label{exm:syn-part-one}
  Let $E$ denote an Adams type homology theory, and let $\Syn_E$ denote Pstr\k{a}gowski's category of $E$-synthetic spectra \cite{Pstragowski}.
  This category is equipped with a natural notion of bigraded sphere, and we let $\Syn_E ^{\cell} \subset \Syn_E$ denote the \emph{cellular} subcategory generated under colimits by $\Ss^{p,q}$.\footnote{ When $E$ is $\F_p$ or $\MU$, then $\Syn_E ^{\cell} = \Syn_E$.}
  There is a natural realization functor $\Syn_E \to \Sp$, as well as a symmetric monoidal left adjoint $\Sp \to \Syn_E$ provided by \cite[Corollary 4.8.2.19]{HA}.
  
  Then $(\Syn_E ^{\cell}, \Sp)$ is a deformation pair, 
  where the map $i_0$ picks out the spheres $\Ss^{0,n}$.
  The first condition is satisfied by \cite[Corollary 4.12]{Pstragowski}, and the second is satisfied since we restricted to the full subcategory generated by the bi-graded spheres.
\end{exm}


\begin{exm}
  Another example is given by $(\SHC^{\cell}_{ip}, \Sp_{ip})$ where the map $i_0$ picks out the Tate twists.
  The realization functor here is Betti realization, and we set $c$ to be the unital, colimit-preserving map in from $\Sp_{ip}$ as in \Cref{exm:Sp-unit}.
	By restricting to cellular objects the second condition is automatically satisfied. The first condition is ultimately a corollary of the vanishing of the homotopy of $C\tau$ in positive Chow degrees.
  This example is discussed at length in \cite{cmmf}.

\end{exm}

Given a deformation pair $(\CCdef, \CC)$, we would like to construct a symmetric monoidal left adjoint
$ \CC^{\Fil} \to \CCdef $
to which we may apply \Cref{prop:rigid}.
We will do this in two steps:
\begin{enumerate}
\item We construct a symmetric monoidal functor $i : \Z^{\Fil} \to \CCdef$, which sends $n$ to $\o(n)$.
\item We tensor $i^{\Fil}$ up to $\CC$ using $c$.
\end{enumerate}



\begin{cnstr} \label{lem:fil-build}
  Given a deformation pair $(\CCdef, \CC)$, we construct a square of symmetric monoidal functors,
  \begin{center} \begin{tikzcd}
      \Z^{\Fil} \ar[r,"i"] \ar[d] & \mathcal{C}_{\mathrm{def}} \ar[d, "\Re"] \\
      * \ar[r] & \mathcal{C} 
  \end{tikzcd} \end{center}
  such that $i(n) = \o(n)$.
\end{cnstr}
  

\begin{proof}[Details.]
  Let $\mathcal{D}$ denote the full subcategory of $\CC$ on the unit.
  Let $\mathcal{D}_{\mathrm{def}}$ denote the full subcategory of $\mathcal{C}_{\mathrm{def}}$ on the objects $\o(n)$ in the image of $i_0$.
  Since $\mathcal{D}_{\mathrm{def}}$ and $\mathcal{D}$ are closed under the tensor product they are each symmetric monoidal categories.
 


  We now form the following diagram of symmetric monoidal categories
  \begin{center} \begin{tikzcd}
      \mathcal{D}' \ar[r, "f"] &
      \mathcal{D}_{\mathrm{def}} \times \Z^{\Fil} \ar[r, "\Re \times \mathrm{Id}"] \ar[d, "\pi_1"] &
      \mathcal{D} \times \Z^{\Fil} \ar[r, "\pi_2"] \ar[d, "\pi_1"] &
      \Z^{\Fil} \ar[d] \\
      & \mathcal{D}_{\mathrm{def}} \ar[r, "\Re"] &
      \mathcal{D} \ar[r] & *,
  \end{tikzcd} \end{center}
  where 
  $\mathcal{D}'$ is the full subcategory of $\mathcal{D}_{\mathrm{def}} \times \Z^{\Fil}$ spanned by the objects $(\o(n),n)$. Since $\mathcal{D}'$ is closed under the monoidal structure, it canonically inherits a symmetric monoidal structure from $\mathcal{D}_{\mathrm{def}} \times \Z^{\Fil}$.

  We claim that the composite $\mathcal{D}' \to \mathcal{D} \times \Z^{\Fil}$ is an equivalence.
  The objects of $\mathcal{D}'$ may be identified with pairs $(\o(n),n)$ and the objects of $\mathcal{D} \times \Z^{\Fil}$ may be identified with pairs $(\o, n)$. 
  The mapping spaces are given by:
  \[ \Hom_{\mathcal{D}'}((\o(n),n), (\o(m),m)) = \begin{cases} \Hom_{\mathcal{D}_{\mathrm{def}}}(\o(n),\o(m)) & n \leq m \\ \emptyset & n > m \end{cases} \]
  \[ \Hom_{\mathcal{D}\times \Z^{\Fil}} ((\o,n), (\o,m)) = \begin{cases} \Hom_{\mathcal{D}}(\o,\o) & n \leq m \\ \emptyset & n > m, \end{cases} \]
  with $\Re$ giving the maps between these mapping spaces.  Each of these maps of mapping spaces is an equivalence, by hypothesis.

  The composition of symmetric monoidal functors
  \[ \Z^{\Fil} \xrightarrow{\iota} \mathcal{D} \times \Z^{\Fil} \xleftarrow{\simeq} \mathcal{D}' \to \mathcal{D}_{\mathrm{def}} \times \Z^{\Fil} \to \mathcal{D}_{\mathrm{def}} \subset \CCdef \]
  is the desired symmetric monoidal functor $\Z^{\Fil} \to \CCdef$, and indeed sends $n$ to $\mathbb{1}(n)$.
  The square of \Cref{lem:fil-build} commutes, because
  \[ \Re \circ \pi_1 \circ f \circ (\Re \times \mathrm{Id})^{-1} \circ \iota = \pi_1 \circ (\Re \times \mathrm{Id}) \circ f \circ ((\Re \times \mathrm{Id}) \circ f)^{-1} \circ \iota = \pi_1 \circ \iota = * .\] \qedhere

\end{proof}


\begin{cnstr}\label{cnstr:filt-diagram}
  Now that we have the symmetric monoidal functor $i : \Z^{\Fil} \to \CCdef$, we may induce it up to a
  symmetric monoidal left adjoint,
  $ i^* : \Sp^{\Fil} \to \CCdef $.
  Tensoring up to $\CC$ using $c$, we can use \Cref{lem:fil-build} to build a diagram of symmetric monoidal left adjoints,
  \begin{center} \begin{tikzcd}
      \CC \ar[r, "c"] \ar[d, "\mathrm{Id}"] & \CC^{\Fil} \ar[r, "\Re"] \ar[d, "i^*"] & \CC \ar[d, "\mathrm{Id}"] \\
      \CC \ar[r, "c"]  & \CCdef \ar[r, "\Re"] & \CC .
  \end{tikzcd} \end{center}  
  
\end{cnstr}

\begin{prop}\label{prop:filt-mod}
  Suppose that $(\CCdef, \CC)$ is a deformation pair.
  Then, the symmetric monoidal left adjoint $i^*$ from \Cref{lem:fil-build} is $0$-affine.
  Stated more explicitly, we have a diagram of symmetric monoidal left adjoints as shown.
  \begin{center} \begin{tikzcd}
      \CC \ar[r, "c"] \ar[d, "\mathrm{Id}"] &
      \CC^{\Fil} \ar[dr, "i^*"] \ar[r, "- \otimes i_*\o"] &
      \Mod ( \CC^{\Fil} ; i_*\o ) \ar[d, "\simeq"] \ar[r, "\Re"] & 
      \CC \ar[d, "\mathrm{Id}"] \\
      \CC \ar[rr, "c"]  & &
      \CCdef \ar[r, "\Re"] & \CC .
  \end{tikzcd} \end{center}  
\end{prop}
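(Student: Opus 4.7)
The plan is to verify the three hypotheses of \Cref{prop:rigid}(5) for the symmetric monoidal left adjoint $i^*: \CC^{\Fil} \to \CCdef$ from \Cref{cnstr:filt-diagram}: that $i_*$ preserves colimits, that $i$ satisfies the projection formula, and that $i_*$ is conservative. Together these imply $0$-affineness of $i^*$, which unpacks to the claimed equivalence $\Mod(\CC^{\Fil}; i_*\o) \simeq \CCdef$ together with the displayed commuting diagram.

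First I would record two preliminary observations. On the one hand, if $\{C_\alpha\}$ is a family of compact dualizable generators of $\CC$, then $\{c(C_\alpha)(n)\}_{\alpha,\, n \in \Z}$ is a family of compact dualizable generators of $\CC^{\Fil}$ (as noted in \Cref{app:fil}), so $\CC^{\Fil}$ is rigidly generated. On the other hand, unraveling \Cref{cnstr:filt-diagram} --- which produces $i^*$ by tensoring the symmetric monoidal left adjoint $\Sp^{\Fil} \to \CCdef$ determined by $n \mapsto \o(n)$ with the action of $\CC$ on $\CCdef$ through $c$ --- one identifies $i^*(c(C_\alpha)(n)) \simeq c(C_\alpha) \otimes \o(n)$ in $\CCdef$. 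By the defining property of a deformation pair these objects form a set of compact dualizable generators of $\CCdef$; in particular $\CCdef$ is rigidly generated (so its unit, being always dualizable, is compact), and the essential image of $i^*$ contains a family of generators.

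With these two preparations in hand, the three hypotheses follow by successive application of the earlier parts of \Cref{prop:rigid}: part (4) gives that $i_*$ is conservative, part (2) applied to $\CC^{\Fil}$ rigidly generated and $\CCdef$ with compact unit gives that $i_*$ preserves colimits, and part (3) then upgrades this to the projection formula for $i$. Applying part (5) yields $0$-affineness of $i^*$, which is precisely the assertion of the proposition. The proof is therefore essentially bookkeeping; the only step requiring any care is the identification of $i^*(c(C_\alpha)(n))$ with $c(C_\alpha) \otimes \o(n)$, which amounts to chasing through the construction of $i^*$ in \Cref{lem:fil-build} and \Cref{cnstr:filt-diagram} using that $i^*$ is symmetric monoidal and intertwines the two $\CC$-actions via $c$.
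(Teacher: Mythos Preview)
Your proposal is correct and follows essentially the same approach as the paper: verify that $\CC^{\Fil}$ is rigidly generated, that $\CCdef$ has compact unit, and that the essential image of $i^*$ contains a family of generators (via the identification $i^*(c(C_\alpha)(n)) \simeq c(C_\alpha) \otimes \o(n)$), then invoke \Cref{prop:rigid}. The paper's version is terser---it simply asserts that both categories are rigidly generated and that the image contains generators, leaving the reader to thread parts (2)--(5) of \Cref{prop:rigid} together---but the argument is the same.
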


\begin{proof}
  \Cref{cnstr:filt-diagram} produces most of the desired diagram.
  The remaining claims will follow from an application of \Cref{prop:rigid}.

  Recall that by hypothesis $\CC$ and $\CCdef$ are both rigidly generated.
  Therefore, in order to apply \Cref{prop:rigid} we only need to check that the essential image of $i^*$ contains a family of generators.
  Since $i^*(C_\alpha \otimes \o(n)) \simeq c(C_\alpha) \otimes \o(n)$, this is true by hypothesis.
\end{proof}

\begin{rmk} \label{rmk:i-lower-hom}
  The $n^{\mathrm{th}}$ piece of $i_*X$ can be extracted by taking the $\CC$-enriched mapping object,
  i.e. there is an equivalence $ (i_*X)_n \simeq \Map^{\CC}( \o(n), i_*X ) $.
  Now, since the adjunction $i$ is $\CC$-linear, we have an equivalence $\Map^{\CC}( \o(n), i_*X) \simeq \Map^{\CC}( i^*\o(n), X)$.
\end{rmk}

We close by considering once again the deformation context of synthetic spectra (\Cref{exm:syn-part-one}). Our aim will be to show that we can (nearly) identify $i_*\o$ with a commutative algebra produced via the constructions from the previous subsection.

\begin{prop}\label{prop:syn-to-fil}
  Given an Adams-type, commutative algebra $E$ in $\Sp$,
  there is an equivalence of presentably symmetric monoidal categories,
  \[ \Mod( \Syn_E^{\cell} ; \o_\tau^{\wedge} ) \simeq \Mod( \Sp^{\Fil} ; \mathrm{Sh}(\tau_{\geq \bullet} ; E)(\Ss)). \]
\end{prop}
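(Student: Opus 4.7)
The plan is to apply the deformation-pair machinery of this appendix to the pair $(\Syn_E^{\cell}, \Sp)$ and then identify the resulting commutative algebra in filtered spectra with the shear algebra of \Cref{cnstr:shear}. By \Cref{exm:syn-part-one}, $(\Syn_E^{\cell}, \Sp)$ is a deformation pair, so \Cref{prop:filt-mod} yields a symmetric monoidal equivalence
\[ \Syn_E^{\cell} \simeq \Mod(\Sp^{\Fil}; A_\bullet), \qquad A_\bullet \coloneqq i_*\Ss_{\Syn_E}, \]
under which the synthetic $\tau$ corresponds to the shift map in $\Sp^{\Fil}$ (since $i^*$ is symmetric monoidal and sends $\o(-1) \to \o$ to $\Ss^{0,-1} \to \Ss^{0,0}$).

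Next I would take modules over the $\tau$-complete unit on both sides. Because the equivalence is symmetric monoidal and intertwines the two notions of $\tau$, and because $\tau$-completion is a symmetric monoidal localization, this produces an equivalence
\[ \Mod(\Syn_E^{\cell}; \o_\tau^{\wedge}) \simeq \Mod(\Sp^{\Fil}; (A_\bullet)_\tau^{\wedge}). \]
It therefore suffices to exhibit an equivalence of commutative algebras $(A_\bullet)_\tau^{\wedge} \simeq \mathrm{Sh}(\tau_{\geq \bullet}; E)(\Ss)$ in $\Sp^{\Fil}$.

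For this identification I would use \Cref{rmk:i-lower-hom} to write
\[ ((A_\bullet)_\tau^{\wedge})_n \simeq \Map_{\Syn_E^{\cell}}\!\bigl(\Ss^{0,n},\, (\Ss_{\Syn_E})_\tau^{\wedge}\bigr). \]
The key computational input from \cite{Pstragowski} is that (i) the $\tau$-complete synthetic sphere admits an $E$-Adams presentation $(\Ss_{\Syn_E})_\tau^{\wedge} \simeq \mathrm{Tot}(\nu \,\mathrm{cb}(E))$, where $\nu$ is the synthetic analogue functor, and (ii) $\nu$ carries connective covers to mapping spectra in the sense that for any bounded-below $E$-module $M$ one has a natural equivalence $\Map_{\Syn_E^{\cell}}(\Ss^{0,n}, \nu M) \simeq \tau_{\geq n} M$. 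Commuting $\Map(\Ss^{0,n},-)$ past the totalization and applying this to each term of the Adams cobar gives
\[ ((A_\bullet)_\tau^{\wedge})_n \simeq \mathrm{Tot}\bigl(\tau_{\geq n}\,\mathrm{cb}(E)\bigr) \simeq \mathrm{Sh}(\tau_{\geq \bullet}; E)(\Ss)_n, \]
with the tower structure in $n$ induced on both sides by the canonical maps $\tau_{\geq n} \to \tau_{\geq n-1}$.

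The main obstacle will be promoting this level-wise equivalence to an equivalence of commutative algebras in $\Sp^{\Fil}$, rather than a mere equivalence of filtered spectra. I expect this to follow from the lax symmetric monoidality of $\nu$ and of the connective-cover tower functor (so that the identification of each $(A_\bullet)_n$ is natural with respect to cosimplicial multiplication on $\mathrm{cb}(E)$), combined with the fact that both algebra structures arise from the same source: the $E_\infty$-ring structure on $E$ inducing the cobar, together with the lax symmetric monoidal structure on the truncation tower. A secondary subtlety is the restriction to the cellular subcategory, but cellularity is preserved by $\tau$-completion on compact dualizable objects, so this causes no trouble.
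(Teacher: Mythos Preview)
Your proposal is correct and follows the same route as the paper: apply \Cref{prop:filt-mod} to the deformation pair of \Cref{exm:syn-part-one}, then identify the $\tau$-completion of $i_*\Ss$ via the $\nu E$-Adams resolution and the fact (Pstr\k{a}gowski, Proposition~4.21) that $i_*\nu(E^{\otimes k})\simeq \tau_{\geq \bullet}E^{\otimes k}$.

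The one place where the paper is cleaner than your sketch is exactly the ``main obstacle'' you flag. Rather than first proving a levelwise equivalence and then promoting it to commutative algebras, the paper never leaves the world of commutative algebras: it starts from the lax monoidal natural transformation $i_*(-)\to Y(\Re(-))$ applied to the cosimplicial commutative algebra $\nu\,\mathrm{cb}(E)$, giving a map of cosimplicial commutative algebras $i_*\nu(E^{\otimes k})\to Y(E^{\otimes k})$ in $\Sp^{\Fil}$. Since the source lands in the coreflective subcategory $\Sp^{\Fil}_{\geq 0}$ of \Cref{cnstr:trunc-tower} (this is the connectivity input from Pstr\k{a}gowski), the map automatically factors through the coreflector $\tau_{\geq \bullet}$, and the factorization $i_*\nu(E^{\otimes k})\xrightarrow{\simeq}\tau_{\geq \bullet}E^{\otimes k}$ is a map of cosimplicial commutative algebras for free. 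Totalizing then yields the desired equivalence of commutative algebras without any separate promotion argument. This is the same ``produce a comparison by factoring a trivial map through a truncation'' maneuver used in \Cref{sec:top}.
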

   
\begin{proof}
  In \Cref{exm:syn-part-one} we showed that $\Syn_E^{\cell}$ and $\Sp$
  form a deformation pair. Using \Cref{prop:filt-mod} we obtain an adjunction $i$ and an equivalence,
  $ \Syn_E^{\cell} \simeq \Mod( \Sp^{\Fil} ; i_*\Ss)$. The proposition will now follow from an identification of the $\tau$-completion of $i_*\Ss$. 
  
  The identification of $i_*\Ss$ will follow the pattern established in \Cref{sec:top}.  
  We begin by identifying $i_* \nu (E^{\otimes k})$.
  By \cite[Proposition 4.21]{Pstragowski} we know that the $n^{\mathrm{th}}$ piece of this object is $n$-connective. Therefore, the natural comparison map $i_* \nu (E^{\otimes k}) \to Y(E^{\otimes k})$ factors as
  \[ i_*\nu ( E^{\otimes k} ) \to \tau_{\geq \bullet} E^{\otimes k} \to Y(E^{\otimes k}). \]
  Examining \cite[Proposition 4.21]{Pstragowski} more closely we can actually conclude that the first map is an equivalence.
  
  Now, we can pass to totalizations and observe the chain of equivalences:
  \begin{align*}
    (i_*\nu \Ss)_\tau^{\wedge}
    &\xrightarrow{\simeq} i_*((\nu \Ss)_\tau^{\wedge})
    \xrightarrow{\simeq} i_* \mathrm{Tot}^* (\mathrm{cb}(\nu E)) 
    \xrightarrow{\simeq} i_* \mathrm{Tot}^* (\nu (\mathrm{cb}(E))) \\
    &\xrightarrow{\simeq} \mathrm{Tot}^* (i_* \nu (\mathrm{cb}(E)))
    \xrightarrow{\simeq} \mathrm{Tot}^* (\tau_{\geq \bullet} \mathrm{cb}(E))
    \xrightarrow{\simeq} \mathrm{Sh}(\tau_{\geq \bullet} ; E)(\Ss). 
  \end{align*}  
  The first equivalence uses that $i_*$ is a right adjoint.
  The second equivalence follows from \cite[Proposition A.11]{Boundaries}.
  The third equivalence follows from \cite[Lemma 4.24]{Pstragowski}, together with the assumption that $E$ is Adams type.
\end{proof}



\bibliographystyle{alpha}
\bibliography{bibliography}

\begin{thebibliography}{BKWX22}

\bibitem[Ada95]{Adams}
J.~F. Adams.
\newblock {\em Stable homotopy and generalised homology}.
\newblock Chicago Lectures in Mathematics. University of Chicago Press,
  Chicago, IL, 1995.
\newblock Reprint of the 1974 original.

\bibitem[Bac18]{RealEtale}
Tom Bachmann.
\newblock Motivic and real \'{e}tale stable homotopy theory.
\newblock {\em Compos. Math.}, 154(5):883--917, 2018.

\bibitem[Bac20]{TomEtaleI}
Tom Bachmann.
\newblock Rigidity in {\'e}tale motivic stable homotopy theory.
\newblock {\em Algebr. Geom. Topol.}, 2020.

\bibitem[BE{\O}20]{TomEtaleII}
Tom Bachmann, Elden Elmanto, and Paul~Arne {\O}stv{\ae}r.
\newblock Stable motivic invariants are eventually {\'e}tale local.
\newblock 2020.
\newblock \href{https://arxiv.org/abs/2003.04006}{arxiv:2003.04006}.

\bibitem[BGI20]{c2stems}
Eva Belmont, Bertrand Guillou, and Daniel Isaksen.
\newblock {$C_2$}-equivariant and {$\R$}-motivic stable stems, ii.
\newblock 2020.
\newblock \href{https://arxiv.org/abs/2001.02251}{arXiv:2001.02251}.

\bibitem[BH20a]{etaPeriodic}
Tom Bachmann and Michael~{J}. Hopkins.
\newblock $\eta$-periodic motivic stable homotopy theory over fields.
\newblock 2020.
\newblock \href{https://arxiv.org/abs/2005.06778}{arxiv:2005.06778}.

\bibitem[BH20b]{norms}
Tom Bachmann and Marc Hoyois.
\newblock Norms in motivic homotopy theory.
\newblock {\em Astérisque}, 2020.

\bibitem[BHS19]{Boundaries}
Robert Burklund, Jeremy Hahn, and Andrew Senger.
\newblock On the boundaries of highly connected, almost closed manifolds.
\newblock 2019.
\newblock \href{https://arxiv.org/abs/1910.14116}{arXiv:1910.14116}.

\bibitem[BHV18]{LocDual}
Tobias Barthel, Drew Heard, and Gabriel Valenzuela.
\newblock Local duality in algebra and topology.
\newblock {\em Adv. Math.}, 335:563--663, 2018.

\bibitem[BKWX22]{Chow}
Tom Bachmann, Hana~Jia Kong, Guozhen Wang, and Zhouli Xu.
\newblock The {C}how {$t$}-structure on the {$\infty$}-category of motivic
  spectra.
\newblock {\em Ann. of Math. (2)}, 195(2):707--773, 2022.

\bibitem[Bon10]{Bondarko}
M.~V. Bondarko.
\newblock Weight structures vs. {$t$}-structures; weight filtrations, spectral
  sequences, and complexes (for motives and in general).
\newblock {\em J. K-Theory}, 6(3):387--504, 2010.

\bibitem[BS20]{BehrensShah}
Mark Behrens and Jay Shah.
\newblock {$C_2$}-equivariant stable homotopy from real motivic stable
  homotopy.
\newblock {\em Ann. K-Theory}, 5(3):411--464, 2020.

\bibitem[Bur20]{BurklundExtension}
Robert Burklund.
\newblock An extension in the {A}dams spectral sequence in dimension 54.
\newblock 2020.
\newblock \href{https://arxiv.org/abs/2005.08910}{arxiv:2005.08910}.

\bibitem[DI05]{DICell}
Daniel Dugger and Daniel~C. Isaksen.
\newblock Motivic cell structures.
\newblock {\em Algebr. Geom. Topol.}, 5:615--652, 2005.

\bibitem[DI10]{DI}
Daniel Dugger and Daniel~C. Isaksen.
\newblock The motivic {A}dams spectral sequence.
\newblock {\em Geom. Topol.}, 14(2):967--1014, 2010.

\bibitem[ES19]{EldenJay}
Elden Elmanto and Jay Shah.
\newblock Scheiderer motives and equivariant higher topos theory.
\newblock 2019.
\newblock \href{https://arxiv.org/abs/1912.11557}{arxiv:1912.11557}.

\bibitem[Ghe18]{Ctau}
Bogdan Gheorghe.
\newblock The motivic cofiber of {$\tau$}.
\newblock {\em Doc. Math.}, 23:1077--1127, 2018.

\bibitem[GHIR20]{koC2}
Bertrand~J. Guillou, Michael~A. Hill, Daniel~C. Isaksen, and Douglas~Conner
  Ravenel.
\newblock The cohomology of {$C_2$}-equivariant {$\mathcal A(1)$} and the
  homotopy of {${\rm ko}_{C_2}$}.
\newblock {\em Tunis. J. Math.}, 2(3):567--632, 2020.

\bibitem[GI17]{GI}
Bogdan Gheorghe and Daniel~C. Isaksen.
\newblock The structure of motivic homotopy groups.
\newblock {\em Bol. Soc. Mat. Mex. (3)}, 23(1):389--397, 2017.

\bibitem[GIKR18]{cmmf}
Bogdan Gheorghe, Daniel~C. Isaksen, Achim Krause, and Nicolas Ricka.
\newblock C-motivic modular forms.
\newblock 2018.
\newblock \href{https://arxiv.org/abs/1810.11050}{arxiv:1810.11050}.

\bibitem[{Goe}08]{GoerssMfg}
Paul~G. {Goerss}.
\newblock {Quasi-coherent sheaves on the moduli stack of formal groups}.
\newblock 2008.
\newblock \href{https://arxiv.org/abs/0802.0996}{arxiv:0802.0996}.

\bibitem[Gre12]{gregersen}
Thomas Gregersen.
\newblock {\em A {S}inger construction in motivic homotopy theory}.
\newblock PhD thesis, University of Oslo, 2012.

\bibitem[GWX20]{ctauactalol}
Bogdan Gheorghe, Guozhen Wang, and Zhouli Xu.
\newblock The special fiber of the motivic deformation of the stable homotopy
  category is algebraic.
\newblock {\em Acta Math.}, 2020.

\bibitem[Hea19]{SliceComp}
Drew Heard.
\newblock On equivariant and motivic slices.
\newblock {\em Algebr. Geom. Topol.}, 19(7):3641--3681, 2019.

\bibitem[HHR16]{HHR}
M.~A. Hill, M.~J. Hopkins, and D.~C. Ravenel.
\newblock On the nonexistence of elements of {K}ervaire invariant one.
\newblock {\em Ann. of Math. (2)}, 184(1):1--262, 2016.

\bibitem[HK01]{HuKriz}
Po~Hu and Igor Kriz.
\newblock Real-oriented homotopy theory and an analogue of the
  {A}dams-{N}ovikov spectral sequence.
\newblock {\em Topology}, 40(2):317--399, 2001.

\bibitem[HKO11a]{MAdamsConv}
P.~Hu, I.~Kriz, and K.~Ormsby.
\newblock Convergence of the motivic {A}dams spectral sequence.
\newblock {\em J. K-Theory}, 7(3):573--596, 2011.

\bibitem[HKO11b]{HKO}
Po~Hu, Igor Kriz, and Kyle Ormsby.
\newblock Remarks on motivic homotopy theory over algebraically closed fields.
\newblock {\em J. K-Theory}, 7(1):55--89, 2011.

\bibitem[HK{\O}17]{HoyoisSteenrod}
Marc Hoyois, Shane Kelly, and Paul~Arne {\O}stv{\ae}r.
\newblock The motivic {S}teenrod algebra in positive characteristic.
\newblock {\em J. Eur. Math. Soc. (JEMS)}, 19(12):3813--3849, 2017.

\bibitem[HO16]{HellerOrmsby}
J.~Heller and K.~Ormsby.
\newblock Galois equivariance and stable motivic homotopy theory.
\newblock {\em Trans. Amer. Math. Soc.}, 368(11):8047--8077, 2016.

\bibitem[HO18]{HellerOrmsbyII}
J.~Heller and K.~Ormsby.
\newblock The stable {G}alois correspondence for real closed fields.
\newblock In {\em New directions in homotopy theory}, volume 707 of {\em
  Contemp. Math.}, pages 1--9. Amer. Math. Soc., Providence, RI, 2018.

\bibitem[Hoy15]{HM}
Marc Hoyois.
\newblock From algebraic cobordism to motivic cohomology.
\newblock {\em J. Reine Angew. Math.}, 702:173--226, 2015.

\bibitem[Hoy17]{hoyois}
Marc Hoyois.
\newblock The six operations in equivariant motivic homotopy theory.
\newblock {\em Adv. Math.}, 305:197--279, 2017.

\bibitem[Hu05]{HuPicard}
Po~Hu.
\newblock On the {P}icard group of the stable {$\Bbb A^1$}-homotopy category.
\newblock {\em Topology}, 44(3):609--640, 2005.

\bibitem[Isa19]{StableStems}
Daniel~C. Isaksen.
\newblock Stable stems.
\newblock {\em Mem. Amer. Math. Soc.}, 262(1269):viii+159, 2019.

\bibitem[Kon20]{Kong}
Hana~Jia Kong.
\newblock The {$C_2$}-effective spectral sequence for {$C_2$}-equivariant
  connective real {$K$}-theory.
\newblock 2020.
\newblock \href{https://arxiv.org/abs/2004.00806}{arxiv:2004.00806}.

\bibitem[Lan67]{LandweberI}
Peter~S. Landweber.
\newblock Fixed point free conjugations on complex manifolds.
\newblock {\em Ann. of Math. (2)}, 86:491--502, 1967.

\bibitem[Lan68]{LandweberII}
Peter~S. Landweber.
\newblock Conjugations on complex manifolds and equivariant homotopy of {$MU$}.
\newblock {\em Bull. Amer. Math. Soc.}, 74:271--274, 1968.

\bibitem[Lev14]{LevineComparison}
Marc Levine.
\newblock A comparison of motivic and classical stable homotopy theories.
\newblock {\em J. Topol.}, 7(2):327--362, 2014.

\bibitem[Lev15]{LevineSlicemU}
Marc Levine.
\newblock The {A}dams-{N}ovikov spectral sequence and {V}oevodsky's slice
  tower.
\newblock {\em Geom. Topol.}, 19(5):2691--2740, 2015.

\bibitem[Lin80]{Lin}
Wen~Hsiung Lin.
\newblock On conjectures of {M}ahowald, {S}egal and {S}ullivan.
\newblock {\em Math. Proc. Cambridge Philos. Soc.}, 87(3):449--458, 1980.

\bibitem[Loy17]{CpBox}
Kaitlyn Loyd.
\newblock Box {P}roduct of {$C_p$}-{M}ackey {F}unctors.
\newblock 2017.
\newblock \href{https://arxiv.org/abs/1710.08560}{arxiv:1710.08560}.

\bibitem[Lur15]{RotationInvariance}
Jacob Lurie.
\newblock Rotation {I}nvariance in {A}lgebraic {K}-theory.
\newblock 2015.
\newblock \newline {A}vailable at
  \href{http://www.math.ias.edu/~lurie/}{http://www.math.ias.edu/~lurie/}.

\bibitem[Lur17]{HA}
Jacob Lurie.
\newblock Higher {A}lgebra.
\newblock 2017.
\newblock \newline {A}vailable at
  \href{http://www.math.ias.edu/~lurie/}{http://www.math.ias.edu/~lurie/}.

\bibitem[Lur18a]{ECII}
Jacob Lurie.
\newblock Elliptic {C}ohomology {II}: {O}rientations.
\newblock 2018.
\newblock \newline {A}vailable at
  \href{http://www.math.ias.edu/~lurie/}{http://www.math.ias.edu/~lurie/}.

\bibitem[Lur18b]{SAG}
Jacob Lurie.
\newblock Spectral {A}lgebraic {G}eometry.
\newblock 2018.
\newblock \newline {A}vailable at
  \href{http://www.math.ias.edu/~lurie/}{http://www.math.ias.edu/~lurie/}.

\bibitem[MNN17]{MNN}
Akhil Mathew, Niko Naumann, and Justin Noel.
\newblock Nilpotence and descent in equivariant stable homotopy theory.
\newblock {\em Adv. Math.}, 305:994--1084, 2017.

\bibitem[Mor05]{MorelConn}
Fabien Morel.
\newblock The stable {${\Bbb A}^1$}-connectivity theorems.
\newblock {\em $K$-Theory}, 35(1-2):1--68, 2005.

\bibitem[Mor06]{Morelpizero}
Fabien Morel.
\newblock {$\Bbb A^1$}-algebraic topology.
\newblock In {\em International {C}ongress of {M}athematicians. {V}ol. {II}},
  pages 1035--1059. Eur. Math. Soc., Z\"{u}rich, 2006.

\bibitem[MV99]{MorelVoevodsky}
Fabien Morel and Vladimir Voevodsky.
\newblock {${\bf A}^1$}-homotopy theory of schemes.
\newblock {\em Inst. Hautes \'{E}tudes Sci. Publ. Math.}, (90):45--143 (2001),
  1999.

\bibitem[Pel13]{Pelaez}
Pablo Pelaez.
\newblock On the functoriality of the slice filtration.
\newblock {\em J. K-Theory}, 11(1):55--71, 2013.

\bibitem[Pos11]{DMRecon}
Leonid Positselski.
\newblock Mixed {A}rtin-{T}ate motives with finite coefficients.
\newblock {\em Mosc. Math. J.}, 11(2):317--402, 407--408, 2011.

\bibitem[Pos14]{GalKosz}
Leonid Positselski.
\newblock Galois cohomology of a number field is {K}oszul.
\newblock {\em J. Number Theory}, 145:126--152, 2014.

\bibitem[Pst18]{Pstragowski}
Piotr Pstr\k{a}gowski.
\newblock Synthetic spectra and the cellular motivic category.
\newblock 2018.
\newblock \href{https://arxiv.org/abs/1803.01804}{arxiv:1803.01804}.

\bibitem[PSW20]{SpectrumMackey}
Irakli {Patchkoria}, Beren {Sanders}, and Christian {Wimmer}.
\newblock {The spectrum of derived Mackey functors}.
\newblock 2020.
\newblock \href{https://arxiv.org/abs/2008.02368}{arxiv:2008.02368}.

\bibitem[ROsr08]{MotivicModules}
Oliver R\"{o}ndigs and Paul~Arne \O~stv\ae r.
\newblock Modules over motivic cohomology.
\newblock {\em Adv. Math.}, 219(2):689--727, 2008.

\bibitem[Spi10]{Spitzweck}
Markus Spitzweck.
\newblock Relations between slices and quotients of the algebraic cobordism
  spectrum.
\newblock {\em Homology Homotopy Appl.}, 12(2):335--351, 2010.

\bibitem[Tho85]{Thomason}
R.~W. Thomason.
\newblock Algebraic {$K$}-theory and \'{e}tale cohomology.
\newblock {\em Ann. Sci. \'{E}cole Norm. Sup. (4)}, 18(3):437--552, 1985.

\bibitem[Voe02]{VoeOpenProbs}
Vladimir Voevodsky.
\newblock Open problems in the motivic stable homotopy theory. {I}.
\newblock In {\em Motives, polylogarithms and {H}odge theory, {P}art {I}
  ({I}rvine, {CA}, 1998)}, volume~3 of {\em Int. Press Lect. Ser.}, pages
  3--34. Int. Press, Somerville, MA, 2002.

\bibitem[Voe03a]{Voe03b}
Vladimir Voevodsky.
\newblock Motivic cohomology with {${\bf Z}/2$}-coefficients.
\newblock {\em Publ. Math. Inst. Hautes \'{E}tudes Sci.}, (98):59--104, 2003.

\bibitem[Voe03b]{Voe03}
Vladimir Voevodsky.
\newblock Reduced power operations in motivic cohomology.
\newblock {\em Publ. Math. Inst. Hautes \'{E}tudes Sci.}, (98):1--57, 2003.

\bibitem[Voe10]{Voe10}
Vladimir Voevodsky.
\newblock Motivic {E}ilenberg-{M}aclane spaces.
\newblock {\em Publ. Math. Inst. Hautes \'{E}tudes Sci.}, (112):1--99, 2010.

\bibitem[Wen10]{Wendt}
Matthias Wendt.
\newblock More examples of motivic cell structures.
\newblock 2010.
\newblock \href{https://arxiv.org/abs/1012.0454}{arxiv:1012.0454}.

\end{thebibliography}

\end{document}